\newtheorem{Theorem}{Theorem}[section]
\newtheorem{Lemma}[Theorem]{Lemma}
\newtheorem{Corollary}[Theorem]{Corollary}
\newtheorem{Remark}[Theorem]{Remark}
\def\@thmcountersep{-}
\numberwithin{equation}{section}
\begin{document}

\begin{center} {\large\bf
 Almost positive links have negative signature}
\end{center}
\begin{center}
J\'ozef H. Przytycki and Kouki Taniyama 
\end{center}
Draft 1990-1991; Corrections July-September 1999; Completion May-July 2008, February-April 2009\\ 

\begin{center}
\vspace{0.1in}\ \\
\end{center}
        \begin{quotation}\ \\
       {\bf Abstract.}\\
       {\it
We analyze properties of links which have diagrams with a 
small number of negative crossings. 
We show that if a nontrivial link has a diagram with 
all crossings positive except possibly one, then the signature of 
the link is negative. If a link diagram has two 
negative crossings, we show that the signature of the link is 
nonpositive with the exception of the left-handed Hopf link (with 
possible trivial components). We also characterize those links 
which have signature zero and diagrams with two negative 
crossings. In particular, we show that if a nontrivial knot has a diagram 
with two negative crossings then the signature of the knot is negative, unless 
the knot is a twist knot with negative clasp.
We completely determine all trivial link diagrams with two or fewer negative crossings.
For a knot diagram with three negative crossings, the signature of the knot is nonpositive 
except the left-handed trefoil knot. These results generalize those of L.~Rudolph, 
T.~Cochran, E.~Gompf, P.~Traczyk, and J.~H.~Przytycki, solve 
Conjecture 5 of \cite{P-2}, and give a partial answer to Problem 2.8 of 
\cite{Co-G} about knots dominating the trefoil knot or the trivial knot.
We also describe all unknotting number one positive knots.}
\end{quotation}

\section {Introduction}
This paper is a sequel to \cite{P-2} and \cite{T-1,T-2} and uses ideas 
from these papers.  K.~Murasugi, following C.~A.~Giller \cite{Gi} 
showed that nontrivial positive alternating 
links have negative signature. In 1982 L.~Rudolph showed that 
nontrivial positive braids have negative signature (in our 
orientation convention), \cite{R-1}. These  results are generalized in 
\cite{Co-G,Tr,P-2} to the theorem that nontrivial positive links 
have negative signature. 
This theorem was first proven (for knots) by T.~Cochran and 
R.~Gompf in the summer of 1985 (at MSRI) but the paper \cite{Co-G} was 
 written only two years later. The theorem was 
independently proven by P.~Traczyk in summer of 1987 \cite{Tr}, and by 
J.~H.~Przytycki in September of 1987 \cite{P-2}. 
In this paper\footnote{Collaboration on this paper began when Przytycki noticed that 
the affirmative answer to a question he asked in 1987 \cite{P-2} followed from Taniyama's master 
 thesis (see \cite{T-1,T-2}). The question was
whether nontrivial almost positive link has negative signature.
 The first draft of the paper was written in 1990-1991. 
} we generalize the theorem to show 
that nontrivial almost positive links and 2-almost positive 
links (except twist knots, the left-handed Hopf link, and the 
connected or disjoint sum of the left-handed Hopf link, and a 
$(2,2k)$-torus link) have negative signature. 

To describe precisely the content of our paper we start with a few
preliminary definitions.

We consider oriented links in ${\mathbb S}^3$ and oriented link diagrams on 
${\mathbb S}^2$ up to ambient isotopy of ${\mathbb S}^3$ and ${\mathbb S}^2$ respectively. 
A link diagram is {\it $m$-almost positive} if all but $m$ of its crossings are positive 
$ \left(
\begin{minipage}{10pt}
\scalebox{0.1}{\includegraphics{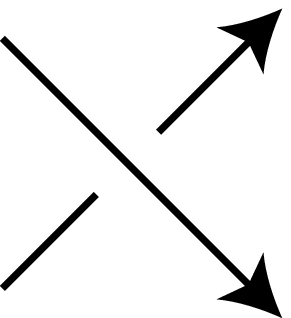}}
\end{minipage}
\right) $.

A $0$-almost positive diagram is called positive and a $1$-almost positive diagram is called almost positive. 
A link is $m$-almost positive if it has an $m$-almost positive diagram. 
In particular a 0-almost positive link is called positive and
a 1-almost positive link is called almost positive. For links $L_1$ and $L_2$, we define the 
relation $\geq$ as follows: 
$L_1$ is greater than or equal to $L_2$, denoted $L_1\ge L_2$, if  $L_2$ can be obtained from $L_1$ 
by changing some positive crossings to negative crossings. 
This relation is weaker than the relation in \cite{Co-G} and different from, 
but related to, the relation of \cite{T-1}.  
We denote by $\preceq$ the relation defined on knots in \cite{Co-G}.\footnote{ According to 
\cite{Co-G}, $K_1 \succeq K_2$ if $K_1$ is concordant 
to $K_2$ inside of a 4-manifold with 
positive definite intersection form whose concordance annulus is homologically trivial. 
See Definition 2.1 in \cite{Co-G}.} 
Here we only need the fact that if $K_1 \geq K_2$ then $K_1 \succeq K_2$.
It was observed by Giller \cite{Gi} and utilized by Murasugi, Cochran and Lickorish \cite{Co-Li}
that if links $L_1$ and $L_2$ are oriented links which differ only by 
one crossing at which $L_1$ is positive and $L_2$ is negative then, 
$$\sigma(L_2)-2\leq\sigma(L_1)\leq\sigma(L_2)$$ where $\sigma(L)$ denotes the signature of $L$.
Thus for links $L_1$ and $L_2$ with $L_1\ge L_2$ we have the inequality $\sigma(L_1)\leq\sigma(L_2)$. 
We call this the Giller inequality in our paper\footnote{Giller was motivated by J.~H.~Conway \cite{Con}, 
and used the inequality to show that the signature of knots is a skein equivalence invariant. 
Conway, in turn, was influenced by Murasugi's papers on signature \cite{M-1,M-2}.}.
This inequality is a basic tool used in the paper.
In Section 2 we work with positive links. There we will prove the following result: 

\begin{Theorem}\label{p1}
 Let $K$ be a nontrivial positive knot.
Then $K\ge$ (2,5)-torus knot or $K$ is a connected sum of some pretzel knots of type 
$L(p_1,p_2,p_3)$, where $p_1$, $p_2$ and $p_3$ are 
positive odd numbers; see Fig. \ref{pretzel-knot}. 
\end{Theorem}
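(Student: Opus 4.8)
The plan is to argue on a positive diagram of $K$ through its Seifert graph, reducing to the case of two Seifert circles by induction. First I would fix a connected reduced positive diagram $D$ of $K$, apply Seifert's algorithm, and form the Seifert graph $G=G(D)$, whose vertices are the Seifert circles and whose edges are the crossings; since $D$ is positive all edges are alike, and since $G$ records a Seifert surface we have $b_{1}(G)=c-s+1=2g$, where $c$ and $s$ count the crossings and Seifert circles and $g\ge 1$ is the genus of that surface. I would then strongly reduce $D$: remove nugatory crossings and contract any Seifert circle meeting the rest of the diagram in exactly two crossings running to two distinct neighbours. Writing $\bar G$ for the resulting simple plane graph, each edge $e$ labelled by the multiplicity $p_{e}\ge 1$ of its bundle of parallel crossings, a cut vertex of $\bar G$ gives a connected-sum splitting $K=A\# B$, which I dispose of first by induction on crossing number: each factor is either $\ge T(2,5)$ or a connected sum of pretzels $L(p_{1},p_{2},p_{3})$. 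If some factor $A$ satisfies $A\ge T(2,5)$, I realize $K$ as the connected sum of a diagram of $A$ witnessing this with positive diagrams of the remaining factors and then switch both the crossings witnessing $A\ge T(2,5)$ and a subset of the other crossings producing a descending, hence unknotted, diagram of those factors; this gives $K\ge T(2,5)$. Otherwise every factor is a pretzel sum, hence so is $K$. Thus it remains to treat $\bar G$ two-connected.

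Next I would split according to the shape of the two-connected graph $\bar G$. If $\bar G$ has exactly two vertices joined by one or two bundles, then all crossings lie between the same pair of Seifert circles and $K$ is a $(2,m)$-torus knot or a connected sum of two such; applying to each torus factor the elementary facts that $T(2,3)=L(1,1,1)$ while $T(2,m)\ge T(2,5)$ for odd $m\ge 5$ (switching one crossing in the twist region turns $\sigma_{1}^{\,n}$ into $\sigma_{1}^{\,n-2}$), together with the connected-sum reduction above, settles this subcase. If $\bar G$ has exactly two vertices joined by three bundles of odd multiplicities, then $D$ is the standard pretzel diagram and $K=L(p_{1},p_{2},p_{3})$, the second alternative. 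In the remaining cases—$\bar G$ has at least three vertices, or two vertices with four or more bundles, or three bundles not all of odd multiplicity—I claim $K\ge T(2,5)$. Here the strong reduction pays off: once $s\ge 3$ it forces every Seifert circle to meet at least three crossings, so $2c\ge 3s$ and $b_{1}(G)\ge s/2+1\ge 3$; and in the two-vertex subcases the requirement that $K$ be a knot forces the multiplicities to sum to an odd number at least five, so $b_{1}(G)\ge 4$ there as well. Since $b_{1}(G)=2g$ is even, every such case has $g\ge 2$, leaving room for the four bands of a genus-two Seifert surface.

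The heart of the proof—and the step I expect to be the real obstacle—is the explicit domination $K\ge T(2,5)$ in this last regime. The plan is to earmark one twist region of $D$ to become the five crossings of $T(2,5)$ and to exploit the extra independent cycles guaranteed by $b_{1}(G)\ge 4$ to eliminate everything else. Concretely I would switch a subset of the crossings outside the chosen region, chosen to leave a descending diagram there, then simplify by Reidemeister moves and verify that the freed Seifert circles collapse, leaving a single twist region of at least five positive crossings, from which $T(2,5)$ follows by the move $\sigma_{1}^{\,n}\mapsto\sigma_{1}^{\,n-2}$. The difficulty is one of global control: performing a local crossing switch is trivial, but certifying that the fully reduced diagram is exactly $T(2,5)$—not another positive knot, nor a split or trivial diagram—demands a careful induction. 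I would package it as a finite repertoire of moves on $\bar G$ (contracting a bundle, deleting an edge whose crossings have been switched away, absorbing a liberated Seifert circle), arranged so that each move either strictly simplifies $\bar G$ towards a single bundle of five edges or else terminates in the theta graph of the pretzel case, with the bound $g\ge 2$ ensuring that at least five positive crossings always survive to the end.
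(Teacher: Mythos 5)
Your proposal contains two structural errors in the Seifert--graph framework before the main step is even reached. First, a cut vertex of the Seifert graph does \emph{not} give a connected-sum splitting $K=A\#B$: it gives a Murasugi (star) product decomposition along a Seifert circle. For example, the closure of the positive braid $(\sigma_1\sigma_2)^4$, the $(3,4)$-torus knot, has Seifert graph a path on three vertices with two bundles of multiplicity $4$; the middle vertex is a cut vertex, yet the knot is prime (and the two star factors are $(2,4)$-torus \emph{links}, so your induction on knot factors does not even apply). Second, your dictionary between graphs and knots is inverted. A diagram whose Seifert graph is two vertices joined by three bundles of multiplicities $p_1,p_2,p_3$ has all crossings joining the same pair of Seifert circles, so it is the $(2,p_1+p_2+p_3)$-torus knot, \emph{not} the pretzel $L(p_1,p_2,p_3)$. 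The genuine pretzel exceptions of the theorem have anti-parallel twist regions, and Seifert's algorithm turns an anti-parallel region with $p_i$ crossings into a \emph{path} with $p_i-1$ internal degree-two circles; thus the standard positive diagram of $L(3,3,3)$ has a two-connected generalized theta graph on $8$ vertices and falls squarely into your ``remaining cases,'' where you claim $K\geq T(2,5)$. That claim is false there: by the Giller inequality $L(3,3,3)\geq T(2,5)$ would force $\sigma(L(3,3,3))\leq\sigma(T(2,5))=-4$, whereas $\sigma(L(3,3,3))=-2$. Your ``strong reduction'' is the source of this confusion: contracting a Seifert circle that meets exactly two crossings running to distinct neighbours is not a knot-preserving (nor domination-preserving) move --- applied to $L(3,3,3)$ it collapses the diagram to a trefoil diagram --- so you may neither perform it nor use the resulting degree bound $2c\geq 3s$.

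Even setting these aside, the heart of the theorem --- producing the domination $K\geq T(2,5)$ in the generic case --- is exactly the step you defer, and the sketched plan (earmark one twist region of $D$ to become the five crossings of $T(2,5)$, switch everything else to be descending) cannot work as stated: a reduced positive diagram in the generic regime need not contain any multi-crossing bundle at all, so the five crossings of the target torus knot must in general be assembled from crossings scattered over the diagram after switching, not read off from a single twist region. This global step is what the paper's proof actually supplies, and by a different mechanism: it works with the underlying projection rather than the Seifert graph, chooses a $1$-gon and passes to the complementary $2$-string tangle projection, and runs a permutation-pattern case analysis realized by the descending algorithm (Lemma \ref{-4-lemma} and Corollary \ref{-4-lemma-corollary1}) showing that the tangle projection either carries $T(-4)$, $R(T(-1,-2))+T(-2)$, or $T(-3,2)$ --- each of which closes up to the $(2,5)$-torus knot (Fig. \ref{five-one}) --- or has one of two special forms, which yields the pretzel-type exceptional projections (Lemma \ref{five-one-projection-lemma}); primeness and connected sums are handled at the projection level, where they are unproblematic. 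Your approach would need a correct substitute for that entire analysis, together with a sound treatment of series (anti-parallel) versus parallel structure in the Seifert graph.
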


\begin{figure}[htbp]
      \begin{center}
\scalebox{0.58}{\includegraphics*{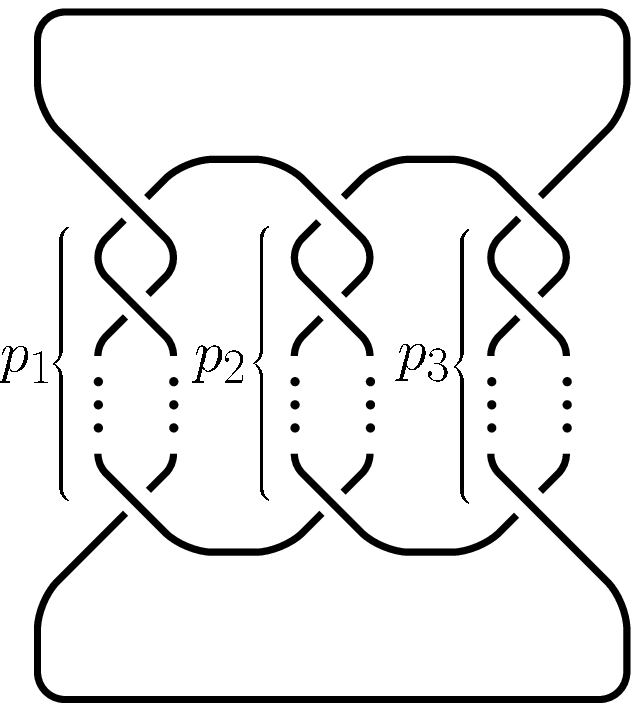}}
      \end{center}
   \caption{}
  \label{pretzel-knot}
\end{figure} 

%

\begin{Corollary}\label{p2}
 Let $K$ be a nontrivial positive knot. Then $K\succeq (2,5)$-torus knot or $K$ is a connected sum 
of some pretzel knots of type $L(p_1,p_2,p_3)$, where $p_1$, $p_2$ and $p_3$ are positive odd numbers.
\end{Corollary}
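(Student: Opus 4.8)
The plan is to obtain this immediately from Theorem~\ref{p1} by weakening the relation $\ge$ to the relation $\succeq$. Theorem~\ref{p1} already establishes the desired dichotomy for a nontrivial positive knot $K$: either $K\ge (2,5)$-torus knot, or $K$ is a connected sum of pretzel knots of type $L(p_1,p_2,p_3)$ with $p_1,p_2,p_3$ positive odd numbers. The second alternative is verbatim the same as the second alternative in the corollary, so nothing needs to be done there. Only the first alternative must be translated from $\ge$ to $\succeq$.

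First I would invoke Theorem~\ref{p1} to split into the two cases. In the case $K\ge(2,5)$-torus knot, I would apply the implication recorded in the introduction, namely that $K_1\ge K_2$ implies $K_1\succeq K_2$ (which follows from the definition of $\succeq$ in \cite{Co-G}, since changing a positive crossing to a negative one is realized by a concordance inside a positive definite 4-manifold with homologically trivial concordance annulus). This yields $K\succeq(2,5)$-torus knot, which is exactly the first alternative of the corollary. In the remaining case the conclusion is already in the required form.

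There is essentially no obstacle here: all the substantive content lives in Theorem~\ref{p1}, and the corollary is a routine consequence once the comparison $\ge\,\Rightarrow\,\succeq$ is in hand. The only point worth checking carefully is that this comparison applies to the specific crossing changes used in the proof of Theorem~\ref{p1}, i.e.\ that each such change is a switch of a \emph{positive} crossing to a negative one (so that $K$ dominates the target with respect to $\ge$ and hence $\succeq$); by construction in Theorem~\ref{p1} this is automatic, since $K$ is positive and we only compare it to diagrams obtained by making some of its crossings negative.
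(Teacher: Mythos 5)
Your proposal is correct and matches the paper's own (implicit) argument: the corollary is obtained from Theorem~\ref{p1} together with the fact, recorded in the introduction and imported from \cite{Co-G}, that $K_1\ge K_2$ implies $K_1\succeq K_2$. Nothing further is needed, since the pretzel alternative is identical in both statements.
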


\begin{Corollary}\label{p3}
 Let $K$ be a nontrivial positive knot.
Then either the signature $\sigma(K)\leq-4$, or $K$ is a pretzel 
knot $L(p_1,p_2,p_3)$, where $p_1$, $p_2$ and $p_3$ are 
positive odd numbers. We have $\sigma(L(p_1,p_2,p_3))=-2$. 
\end{Corollary}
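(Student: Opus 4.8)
The plan is to derive the statement directly from Theorem~\ref{p1}, which supplies a dichotomy for every nontrivial positive knot $K$: either $K \ge (2,5)$-torus knot, or $K$ is a connected sum of pretzel knots of type $L(p_1,p_2,p_3)$ with each $p_i$ a positive odd number. I would treat these two alternatives separately, using in the first case the Giller inequality and in the second case the additivity of the signature under connected sum, together with the computation $\sigma(L(p_1,p_2,p_3)) = -2$.

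In the first case, the Giller inequality established in the introduction gives $\sigma(K) \le \sigma((2,5)\text{-torus knot})$ whenever $K \ge (2,5)$-torus knot. Since the $(2,n)$-torus knot has signature $-(n-1)$ in our orientation convention, we have $\sigma((2,5)\text{-torus knot}) = -4$, and hence $\sigma(K) \le -4$, which is the first alternative of the Corollary. In the second case, write $K = K_1 \# \cdots \# K_n$ as a connected sum of $n \ge 1$ pretzel knots $K_i = L(p_1^{i},p_2^{i},p_3^{i})$ of the stated type. Because the signature is additive under connected sum and each summand has signature $-2$, we obtain $\sigma(K) = -2n$. If $n = 1$ then $K$ is itself a pretzel knot $L(p_1,p_2,p_3)$ with $\sigma(K) = -2$, the exceptional alternative; if $n \ge 2$ then $\sigma(K) = -2n \le -4$. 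Combining the two cases yields exactly the claimed disjunction.

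The one genuinely computational step, and the main obstacle, is to verify that $\sigma(L(p_1,p_2,p_3)) = -2$ for all positive odd $p_1,p_2,p_3$. For this I would use that, when all three parameters are odd, the two-disk--three-band pretzel surface is orientable and is a genus-one Seifert surface, so $H_1$ has rank two and the symmetrized Seifert form $V + V^{T}$ is a $2\times 2$ integral matrix. A direct linking-number computation in the basis given by the loops running through bands $1,2$ and through bands $2,3$ shows that $\det(V+V^{T}) = \pm(p_1p_2 + p_2p_3 + p_3p_1)$, which is nonzero for positive $p_i$; hence the form is definite. Its sign is fixed by the common handedness of the three bands, which is anchored by the trefoil $L(1,1,1) = (2,3)$-torus knot, whose signature is $-2$. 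Thus $V + V^{T}$ is negative definite and $\sigma(L(p_1,p_2,p_3)) = -2$. The only care needed is to track the orientation convention so that positive crossings produce a negative definite, rather than positive definite, form.
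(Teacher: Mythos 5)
Your proof is correct and follows essentially the same route as the paper: there Corollary \ref{p3} is deduced (as part of Corollary \ref{positive-corollary}) from the structure theorem via the Giller inequality and the signature values $\sigma\bigl((2,5)\text{-torus knot}\bigr)=-4$ and $\sigma\bigl(L(p_1,p_2,p_3)\bigr)=-2$ (the latter asserted rather than computed), with additivity of the signature under connected sum disposing of sums of two or more pretzel summands exactly as you do. One small repair to your Seifert-matrix step: nonvanishing of $\det(V+V^{T})$ alone does not force a $2\times 2$ symmetric form to be definite (negative determinant would give an indefinite form of signature $0$), but since $\det(-A)=\det(A)$ in rank two, your computation gives $\det(V+V^{T})=p_1p_2+p_2p_3+p_3p_1>0$ unambiguously, and this positivity, combined with your anchoring at the trefoil $L(1,1,1)$, does yield negative definiteness and hence $\sigma=-2$.
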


In Section 3 we analyze almost positive links. We prove 
there in particular: 

\begin{Theorem}\label{ap1} Let $L$ be a 
nontrivial almost positive link. Then $L\ge$ right-handed trefoil knot (plus trivial components), 
or $L\ge$ right-handed Hopf link (plus trivial components). 
\end{Theorem}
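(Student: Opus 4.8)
The plan is to prove Theorem \ref{ap1} by analyzing an almost positive diagram $D$ of $L$ with exactly one negative crossing, and showing that after cleaning up the diagram we can always "see" either a right-handed trefoil or a right-handed Hopf link sitting inside it (up to switching some positive crossings, which is exactly what the relation $\ge$ permits). Let me think about how to set this up carefully.

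First I would reduce to a minimal almost positive diagram: among all almost positive diagrams of $L$, take one with the fewest crossings. In such a diagram there are no reducible (nugatory) crossings, and I would want to rule out Reidemeister-I-removable kinks on the positive part. The key structural observation is to focus on the single negative crossing $c$. Since $L$ is nontrivial, the diagram $D$ cannot be trivialized, and in particular $c$ cannot simply be undone. I would smooth the negative crossing $c$ according to its orientation; this produces a positive diagram $D'$ of some link $L'$ (the oriented smoothing of a negative crossing respects orientations and leaves only positive crossings). The relationship between $L$ and $L'$ via the skein/smoothing move is what lets me transfer structure.

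The core of the argument is a local analysis near the negative crossing. The two arcs passing through $c$ must, because the diagram is nontrivial and minimal, interact with the positive part of the diagram nontrivially — otherwise the negative crossing would be nugatory or removable by a Reidemeister move, contradicting minimality. Concretely, I expect to argue that the two strands at $c$ must meet again at some positive crossing(s): if they form a clasp-like pattern with a neighboring positive crossing, I directly extract a right-handed Hopf link by changing all other positive crossings to negative (legal under $\ge$), collapsing everything except that clasp. If instead the minimal configuration forces the two strands at $c$ to participate in a region bounded by (at least) three crossings of the appropriate sign pattern, then by switching the remaining positive crossings to negative I can strip the diagram down to a standard right-handed trefoil diagram. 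The trivial components arise from any split or Reidemeister-I pieces that get isolated during this reduction, which is why the statement allows "plus trivial components."

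The main obstacle I anticipate is the combinatorial case analysis ensuring that the negative crossing $c$ is always "linked" into the positive part in one of these two standard ways, and ruling out that the minimal diagram could present $L$ as trivial (which would contradict the hypothesis) or as something from which neither a trefoil nor a Hopf link descends. Making the minimality and no-nugatory-crossing arguments airtight — so that the strands at $c$ genuinely cannot be separated from the rest — is where the real work lies; this likely requires a careful examination of the faces of the diagram adjacent to $c$ and how the orientations constrain the possible local pictures. I would structure this as: (1) pass to a minimal almost positive diagram; (2) establish that $c$'s two strands re-meet the positive part; (3) split into the "clasp" case yielding the Hopf link and the "triple" case yielding the trefoil; and (4) verify that all discarded pieces become trivial components, concluding $L \ge$ right-handed trefoil or $L \ge$ right-handed Hopf link plus trivial components in each case.
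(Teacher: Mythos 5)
Your overall strategy---work directly with an almost positive diagram, analyze how the two strands of the negative crossing $c$ meet the positive part, and then change positive crossings to exhibit a trefoil or Hopf link---is the same in spirit as the paper's (Theorem 3.2, proved via Lemma 3.1, is exactly such a case analysis on the negative crossing). But your proposal has a genuine gap at its central step: the relation $\geq$ only permits changing positive crossings to negative ones; it does not permit ``collapsing everything except that clasp.'' After the switches you still have the same underlying projection, and you must \emph{prove} that the resulting diagram represents the right-handed trefoil or Hopf link plus trivial components. That is precisely what the paper's machinery supplies: descending-algorithm arguments, constrained by the fact that the single negative crossing is fixed and can never be switched, packaged into the tangle lemmas of Section 2 (notably Lemmas 2.1, 2.2, 2.6, 2.7 and 2.10) and above all into Lemma 3.1, whose long case analysis is the real content of the theorem. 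Without such a mechanism your steps (3)--(4) are assertions, not proofs.

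Two further concrete problems. First, your dichotomy is wrong in detail: a bigon formed by the negative crossing $c$ together with a neighboring positive crossing has crossings of \emph{opposite} signs, so it is removable by a Reidemeister II move and yields the unlink, not the right-handed Hopf link. In the paper, when $c$ is a mixed crossing the Hopf link comes from the complementary \emph{positive} tangle of $c$: by Lemma 2.10 its projection carries the tangle $T(1/3)$ (three positive half-twists) unless it is $\hat T(n)$, and one of those three positive crossings cancels against $c$, leaving the two positive crossings of the right-handed Hopf link; when $c$ is a self-crossing, the trefoil comes from Lemma 3.1 applied to the complementary tangle of a suitable positive crossing. Second, there exist reduced almost positive diagrams that dominate neither the trefoil nor the Hopf link---the exceptional diagrams of Fig.\ 1.2---and they all represent trivial links (Theorem 1.5). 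A correct proof must isolate exactly these configurations in the case analysis, so that the hypothesis that $L$ is nontrivial excludes them; your appeal to minimality does not accomplish this (the paper needs only reducedness, not minimality), and your smoothing of $c$ is a red herring, since the skein/smoothing move bears no direct relation to $\geq$ and is never used in the rest of your argument.
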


\begin{Theorem}\label{ap2}
 Let $\tilde {L}$ be an almost positive diagram representing a trivial link. 
 Then $\tilde L$ can be reduced to one of the diagrams of Fig. \ref{almost-positive-trivial} by first 
Reidemeister moves (reducing the number of crossings), and deleting trivial circles. 
\end{Theorem}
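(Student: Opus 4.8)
The plan is to first pass to a \emph{reduced} diagram, then dispose of the positive case by a Seifert-characteristic count, and finally analyze the single negative crossing by relating the diagram to positive diagrams whose links are already constrained by the results of Section~2.

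First I would apply first Reidemeister moves to remove every nugatory crossing and delete every trivial circle, arriving at a reduced almost positive diagram $\tilde L'$ that still represents the trivial link and still has at most one negative crossing. Since these are exactly the moves permitted in the statement, and since Reidemeister~II is \emph{not} allowed, it suffices to show that the reduced almost positive diagrams of trivial links are precisely the diagrams of Fig.~\ref{almost-positive-trivial}.

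The positive subcase (no negative crossing) I would dispose of by a Seifert-characteristic count. Seifert's algorithm applied to a positive diagram produces a surface $F$ with $\chi(F)=s-c$, where $s$ and $c$ are the numbers of Seifert circles and crossings, and for positive diagrams this $F$ realizes the maximal Euler characteristic of the link (the Bennequin-type sharpness underlying Section~2). Since the trivial $\mu$-component link has maximal Euler characteristic $\mu$, with the maximal-$\chi$ surface a disjoint union of disks, the Seifert graph of $\tilde L'$ must be a forest; but a forest containing an edge has a leaf, and a leaf vertex is a Seifert circle meeting only one crossing, i.e.\ a nugatory crossing, contradicting reducedness. Hence a reduced positive diagram of a trivial link has no crossings, and is a union of trivial circles.

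The main case, and the principal obstacle, is a reduced diagram $\tilde L'$ with exactly one negative crossing $d$. Here I would switch $d$ to a positive crossing to obtain a positive diagram $D^{+}$ of a link $L^{+}$ with $L^{+}\ge$ (the trivial link), and also smooth $d$ compatibly with orientation to obtain a positive diagram $D^{0}$. Because $L^{+}$ differs from the trivial link by a single crossing change, it is one crossing from trivial (unknotting number at most one in the knot case), while being positive it is severely constrained: the Giller inequality gives $\sigma(L^{+})\le 0$, and together with Corollary~\ref{p3} and Theorem~\ref{p1} this forces $L^{+}$ to be small, the natural candidates being the right-handed trefoil and the right-handed Hopf link that already appear in Theorem~\ref{ap1}. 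The hard part is to upgrade this information about the \emph{link} $L^{+}$ into a statement about the \emph{diagram} $\tilde L'$: I would examine the Seifert circles incident to $d$, exploit the forest structure forced on the remaining positive part by the characteristic count of the previous paragraph, and argue that the negative crossing must sit inside a bigon (a Reidemeister~II configuration) or a small local tangle, so that no large reduced almost positive diagram of the trivial link can exist. Once the local picture at $d$ is pinned down, the finitely many surviving configurations are exactly the diagrams of Fig.~\ref{almost-positive-trivial}, which completes the proof.
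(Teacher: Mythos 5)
Your first two steps are fine in outline: the reduction to a reduced diagram (with the caveat that a nugatory crossing is not in general an R1 kink, so one must peel off innermost kinks inductively), and the purely positive case, where your Seifert-graph/forest count is correct, though it imports Cromwell-type sharpness of Seifert's algorithm for positive diagrams, which is external to the paper; the paper's own Section~2 tools give the same conclusion more cheaply, since a positive diagram of a trivial link can have no mixed crossings and no non-almost-trivial component (otherwise the trivial link would dominate the right-handed Hopf link or trefoil, contradicting the Giller inequality). But under the paper's definition an almost positive diagram has \emph{exactly} one negative crossing, so the entire content of Theorem~\ref{ap2} lies in your final case, and there your proposal contains no proof — only a description of what a proof would need to accomplish.

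Concretely, three things go wrong in that case. First, the constraint you extract from the Giller inequality, $\sigma(L^{+})\le 0$, is vacuous: every positive link has non-positive signature, so it rules out nothing. Second, the ``forest structure'' you propose to exploit does not exist here: the Euler-characteristic count forces the Seifert graph to be a forest only for a positive diagram of a \emph{trivial} link, and neither $D^{+}$ (the crossing change at $d$) nor $D^{0}$ (the smoothing at $d$) represents a trivial link in general, so no sharpness statement constrains them. Third, and essentially: even complete knowledge of the link type of $L^{+}$ (via Theorem~\ref{p1}, Corollary~\ref{p3}, or the unknotting-number-one classification of Corollary~\ref{5.7}) cannot ``pin down the local picture at $d$,'' because a positive link has infinitely many positive diagrams and the theorem is a statement about the diagram $\tilde L$ itself. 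This passage from link-level to diagram-level information is exactly where the paper does its work: Lemma~\ref{almost-positive-tangle-lemma} (a lengthy case analysis of almost positive tangles using spines, multiplicity, and the over-and-under technique) together with Lemma~\ref{one-three-lemma} are assembled into Theorem~\ref{almost-positive-theorem}, which establishes the dichotomy that a reduced almost positive diagram either is one of the diagrams of Fig.~\ref{almost-positive-trivial} (plus trivial circles) or represents a link dominating the right-handed trefoil or right-handed Hopf link; Theorem~\ref{ap2} then follows because a trivial link, having zero signature, can dominate neither. Your sentence ``argue that the negative crossing must sit inside a bigon or a small local tangle, so that no large reduced almost positive diagram of the trivial link can exist'' is precisely this dichotomy, asserted rather than proved, so the proposal as it stands does not establish the theorem.
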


\begin{figure}[htbp]
      \begin{center}
\scalebox{0.5}{\includegraphics*{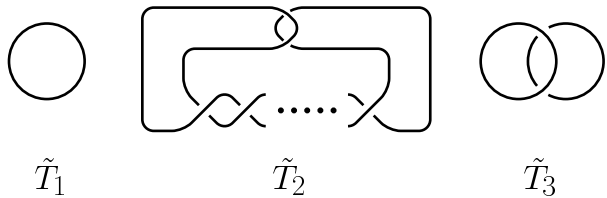}}
      \end{center}
   \caption{Almost positive diagrams representing a trivial link}
  \label{almost-positive-trivial}
\end{figure} 

%

\begin{Corollary}\label{ap3} Let $K$ be a nontrivial almost positive knot. Then $K\succeq$ right-handed trefoil knot. 
\end{Corollary}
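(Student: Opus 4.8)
The plan is to deduce this corollary directly from Theorem \ref{ap1}, using only the elementary fact that the relation $\ge$ respects the number of components. Since a knot is in particular a link, I would begin by applying Theorem \ref{ap1} to the nontrivial almost positive knot $K$. This gives one of two alternatives: either $K \ge$ right-handed trefoil knot (plus trivial components), or $K \ge$ right-handed Hopf link (plus trivial components).

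The decisive observation is that $K \ge L_2$ means $L_2$ is obtained from $K$ by changing some positive crossings to negative ones, and a crossing change is a local modification that never alters the number of link components. Hence any $L_2$ with $K \ge L_2$ has the same number of components as $K$, namely one. This eliminates the second alternative outright, since the right-handed Hopf link already has two components and adjoining trivial circles only increases the count. For the same reason, in the first alternative no trivial components can actually occur, as each would push the component count above one. Thus the only surviving possibility is $K \ge$ right-handed trefoil knot, with no extra components.

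Finally I would invoke the fact recorded in the introduction that $K_1 \ge K_2$ implies $K_1 \succeq K_2$. Applied to $K \ge$ right-handed trefoil knot, this yields $K \succeq$ right-handed trefoil knot, completing the proof. The argument is essentially bookkeeping layered on top of Theorem \ref{ap1}, so I do not anticipate a genuine obstacle; the only point requiring care is the simple but decisive remark that crossing changes preserve the number of components, which is what forces both the Hopf-link branch and every extraneous trivial component to drop out, leaving a clean knot-level domination.
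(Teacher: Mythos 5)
Your proof is correct and is essentially the argument the paper intends: the paper declares Corollary \ref{ap3} an immediate consequence of Theorem \ref{almost-positive-theorem} (equivalently of Theorem \ref{ap1}), and the "immediate" step is exactly your bookkeeping — crossing changes preserve the underlying projection, hence the component count, which kills the Hopf-link branch and any trivial components, after which $K\ge$ trefoil gives $K\succeq$ trefoil by the fact recorded in the introduction.
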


\begin{Corollary}\label{ap4}
Nontrivial almost positive links have negative signature.
\end{Corollary}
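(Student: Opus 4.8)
The plan is to deduce this immediately from Theorem \ref{ap1} together with the Giller inequality, reducing the statement to a computation of the signatures of two specific reference links. Let $L$ be a nontrivial almost positive link. By Theorem \ref{ap1}, either $L \ge T$, where $T$ is the right-handed trefoil knot together with some (possibly zero) trivial components, or $L \ge H$, where $H$ is the right-handed Hopf link together with some trivial components. Since $L \ge L'$ implies $\sigma(L) \le \sigma(L')$ by the Giller inequality, it suffices to show that both of these reference links have negative signature.

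For this I would first observe that adjoining trivial (split unknotted) components to a link does not change its signature: a split diagram admits a Seifert surface that is a disjoint union, so the associated Seifert matrix is a block sum of the original matrix with the (empty) matrix of an unknot, whence the signatures add and the unknot contributes $0$. Thus it is enough to compute $\sigma$ of the right-handed trefoil and of the right-handed Hopf link. In the orientation and sign convention of this paper (under which positive links have negative signature), a direct Seifert-matrix computation gives $\sigma(\text{right-handed trefoil}) = -2$ and $\sigma(\text{right-handed Hopf link}) = -1$, both of which are negative.

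Combining these facts, in the first case $\sigma(L) \le \sigma(T) = -2 < 0$, and in the second case $\sigma(L) \le \sigma(H) = -1 < 0$; so in either case $\sigma(L) < 0$, as required.

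I do not expect any serious obstacle at this stage: the entire force of the corollary is carried by Theorem \ref{ap1}, and once that structural dichotomy is in hand the remaining steps (invariance of signature under split union with unknots, and the two base-case computations) are routine. The one point demanding care is the bookkeeping of the sign convention, to ensure that the Giller inequality is applied in the correct direction and that the right-handed (positive) trefoil and Hopf link indeed receive negative, rather than positive, signature.
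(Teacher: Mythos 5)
Your proposal is correct and is essentially the paper's own argument: the paper states that Corollary \ref{ap4} is an immediate consequence of Theorem \ref{almost-positive-theorem} (the diagrammatic form of Theorem \ref{ap1}), and the "immediate" step is exactly what you spell out — the Giller inequality applied to the dichotomy $L\ge$ right-handed trefoil or $L\ge$ right-handed Hopf link (plus trivial components), together with $\sigma(\text{trefoil})=-2$, $\sigma(\text{Hopf})=-1$ in the paper's convention and the invariance of signature under adding split unknotted components. Your sign-convention bookkeeping also matches the paper's (cf. Corollaries \ref{p3} and \ref{2ap3}), so there is nothing to fix.
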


In Section 4 we analyze 2-almost positive links. In  particular, we prove: 

\begin{Theorem}\label{2ap1}
Let $\tilde L$ be a nontrivial 2-almost positive link.
Then

\begin{enumerate}
\item[(1)]
$L\geq$ right-handed trefoil knot (plus trivial components), or

\item[(2)]
$L\geq$ $6_2$ (Fig. \ref{6_2-Whitehead-3-comp} (a)) (plus trivial components), or

\item[(3)]
$L\geq$ right-handed Hopf link (plus trivial components), or

\item[(4)]
$L\geq$ disjoint or connected sum of right-handed trefoil knot and left handed Hopf link
(plus trivial components), or

\item[(5)]
$L\geq$ Whitehead link (Fig. \ref{6_2-Whitehead-3-comp} (b)) (plus trivial components), or

\item[(6)]
$L\geq$ disjoint and/or connected sum of two right-handed Hopf links and a left-handed
Hopf link (plus trivial components), or

\item[(7)]
$L\geq$ disjoint or connected sum of $(2,4)$-torus link (Fig. \ref{6_2-Whitehead-3-comp} (c)) and a left-handed Hopf link (plus trivial components), or

\item[(8)]
$L\geq$ the link of Fig. \ref{6_2-Whitehead-3-comp} (d) (plus trivial components), or

\item[(9)]
$\tilde L$ is a twist knot with negative clasp (Fig. \ref{negative-twist-knot}) (plus trivial components), or

\item[(10)]
$L$ is a disjoint or connected sum of left-handed Hopf link and $(2,n)$-torus link with anti-parallel orientation of components (Fig. \ref{torus-link2}) (plus trivial components), or

\item[(11)]
$L$ is a left-handed Hopf link (plus trivial components).
\end{enumerate}
\end{Theorem}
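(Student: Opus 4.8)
The plan is to work with a reduced $2$-almost positive diagram $\tilde L$ and reduce every case to a short list of irreducible cores that can be read off directly; the reductions lower the crossing number, so they also support an induction. First I would normalize $\tilde L$ by deleting trivial circles, removing nugatory crossings with first Reidemeister moves, and cancelling positive bigons by Reidemeister II, so that no positive crossing is superfluous. I then pass to the Seifert graph $\Gamma$, whose vertices are the Seifert circles and whose edges are the crossings labelled by sign; thus $\Gamma$ has exactly two negative edges and all others positive. The basic engine is that a positive twist region---several parallel positive edges joining one pair of Seifert circles---collapses under the relation $\ge$: changing alternate positive crossings to negative and cancelling them in pairs by Reidemeister II exhibits $L$ as dominating the link in which only one or two of those crossings remain. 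Hence every positive multi-edge of $\Gamma$ may be assumed to have multiplicity at most two, and the task becomes the classification of the finitely many resulting irreducible configurations.

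The decisive dichotomy is whether the two negative crossings form a \emph{left-handed Hopf clasp}---a pair of antiparallel negative crossings bounding a left-handed Hopf summand. Suppose they do not. If the two negative edges lie in different blocks of $\Gamma$, then each block carries a positive or almost-positive link; by transitivity of $\ge$ together with Theorems \ref{p1} and \ref{ap1} such a block dominates the right-handed trefoil or the right-handed Hopf link, and the whole of $L$ dominates one of these, giving items (1) and (3). If instead both negative edges lie in a single $2$-connected block but do not clasp, a direct inspection of the few reduced configurations shows the core to be $6_2$, the Whitehead link, or the link of item (8), i.e.\ items (2), (5) and (8). In every no-clasp case the core has negative signature, so the Giller inequality yields $\sigma(L)<0$, while Theorem \ref{ap2} accounts for trivial pieces and supplies the ``plus trivial components'' clauses.

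Now suppose the two negative crossings do form a left-handed Hopf clasp. Since the left-handed Hopf link has positive signature, this clasp cannot be absorbed by domination and must survive as a genuine summand. After collapsing the positive twist regions, the remaining positive part attaches to the clasp as a connected or disjoint sum, and Theorem \ref{p1} identifies the positive summand: a trivial summand gives the bare left-handed Hopf link of item (11); right-handed Hopf summands give item (6); a right-handed trefoil summand gives item (4); a $(2,4)$-torus summand gives item (7); and an antiparallel $(2,n)$-torus summand gives item (10). The one configuration in which the clasp does not split off as a separate component is when its two strands are tied together through a positive twist region into a single knot, which is precisely the twist knot with negative clasp of item (9).

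The step I expect to be the main obstacle is the same-block analysis underlying items (2), (5), (8) and (9): proving that, once all positive bigons and nugatory crossings are gone, a $2$-connected Seifert graph with only two negative edges has so restricted a shape that no irreducible configurations beyond the listed ones can occur. The subtle point throughout is to separate cleanly the negative-signature cores from the three clasp-type exceptions---the twist knots with negative clasp, the left-handed Hopf link, and its sums with antiparallel $(2,n)$-torus links---since these are exactly the $2$-almost positive links with nonnegative signature and must be isolated directly rather than produced by domination and the Giller inequality.
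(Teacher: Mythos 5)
Your proposal correctly identifies the overall shape of the result (reduce by domination to a short list of cores, and isolate the positive-signature exceptions), and your twist-region collapse and the different-blocks reduction are sound (the latter is essentially the paper's first step: decompose into prime factors and apply Theorem \ref{almost-positive-theorem} when the negative crossings fall in different factors). But there is a genuine gap at exactly the point you flag as "the main obstacle," and it is not a technicality. Your claim that, after normalization, "a direct inspection of the few reduced configurations" settles the same-block case is unjustified: bounding the multiplicity of positive multi-edges in the Seifert graph does \emph{not} leave finitely many configurations, because the Seifert graph can have arbitrarily many vertices (long chains and cycles of Seifert circles), and the abstract signed graph does not capture the planar nesting of Seifert circles, which is what actually governs whether domination targets like $6_2$ or the Whitehead link can be realized. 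This infinite residual case analysis is where the paper does almost all of its work: it takes complementary tangle diagrams at crossings and classifies 2-string tangle projections and 2-almost positive tangle diagrams with vertical and X-connections (Lemmas \ref{lemma1}--\ref{lemma7}, resting on Lemmas \ref{hook-tangle}, \ref{vertical-trefoil-tangle-lemma}, \ref{one-three-lemma} and \ref{-4-lemma}), proved by descending-algorithm and crossing-permutation arguments. Nothing in your outline substitutes for this.

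The second gap is the clasp dichotomy itself. You define a "left-handed Hopf clasp" as a pair of negative crossings \emph{bounding a left-handed Hopf summand}, which presupposes the very sum decomposition you need to prove; and the justification that the clasp "cannot be absorbed by domination and must survive as a genuine summand" because the left-handed Hopf link has positive signature is a non-sequitur. Signature obstructs which links $L$ can dominate; it says nothing about the diagrammatic structure of $\tilde L$, and conclusions (4), (6), (7), (10), (11) are structural statements that $L$ literally is (or dominates) a connected or disjoint sum of specific pieces. The paper obtains these decompositions, and isolates the exceptional diagrams of cases (9)--(11), from the diagram-level tangle classification (Lemma \ref{lemma2}, Lemma \ref{lemma5}, and the case analysis in Theorem \ref{2-almost-positive-theorem}), not from any signature argument; indeed the Whitehead link and the knot $6_2$ also arise from diagrams whose two negative crossings form a clasp, so your dichotomy, even if made precise, would not separate the exceptional cases from the dominating ones.
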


\begin{figure}[htbp]
      \begin{center}
\scalebox{0.5}{\includegraphics*{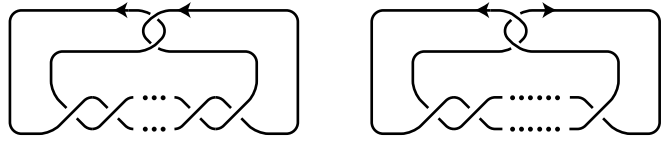}}
      \end{center}
   \caption{Twist knots with negative clasp}
  \label{negative-twist-knot}
\end{figure} 

%

%
\begin{figure}[htbp]
      \begin{center}
\scalebox{0.5}{\includegraphics*{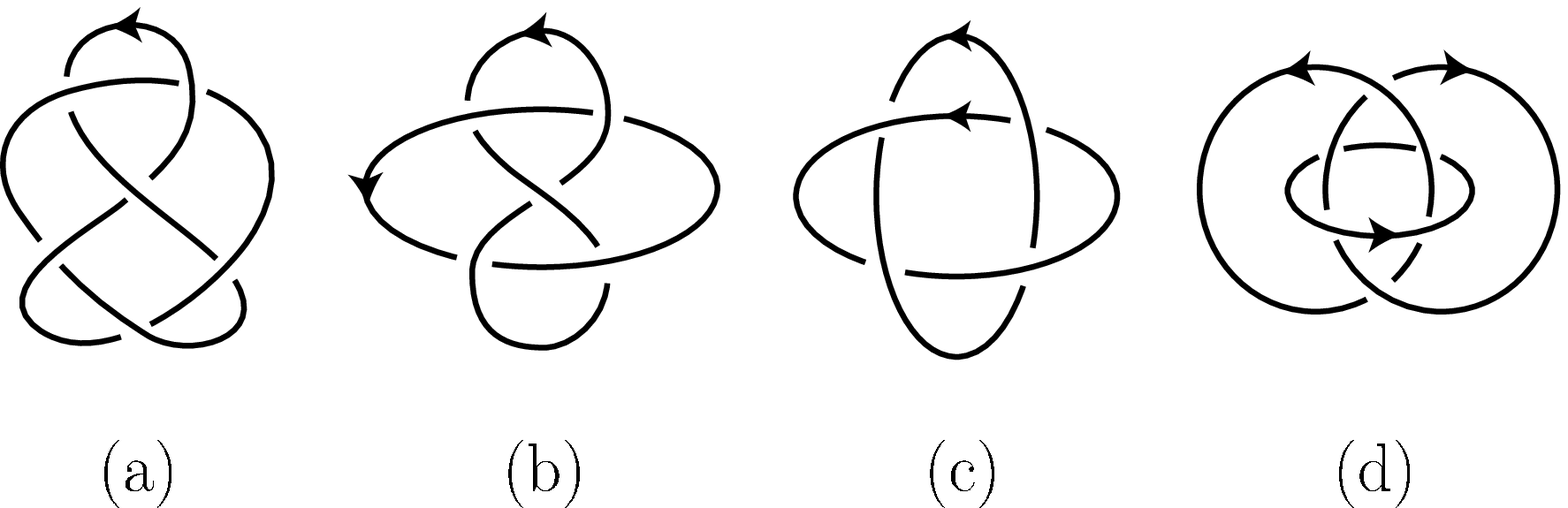}}
      \end{center}
   \caption{}
  \label{6_2-Whitehead-3-comp}
\end{figure} 

%

%
\begin{figure}[htbp]
      \begin{center}
\scalebox{0.5}{\includegraphics*{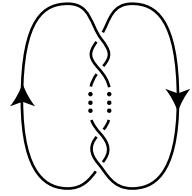}}
      \end{center}
   \caption{$(2,n)$-torus link with anti-parallel orientation of components}
  \label{torus-link2}
\end{figure} 

%

\begin{Theorem}\label{2ap2}
Let $\tilde {L}$ be a 2-almost 
positive diagram (drawn on $S^2=R^2\cup \infty$) representing a trivial link. Then $\tilde L$ 
can be obtained from the diagrams in Fig. \ref{almost-positive-trivial} and Fig. \ref{2ap-trivial},
or their $\pi$-rotation along $y$ axis, by performing some combination of diagram disjoint sum operation, 
diagram connected sum operation, and first and second 
Reidemeister moves which increase the number of crossings.  
\end{Theorem}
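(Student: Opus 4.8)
The plan is to argue by induction on the number of crossings of $\tilde L$, treating the statement as a normal-form result: every reducible feature of $\tilde L$ is removed by one of the allowed inverse operations, and whatever remains must be one of the finitely many diagrams of Fig.~\ref{almost-positive-trivial} and Fig.~\ref{2ap-trivial}. First I would dispose of the easy reductions. If $\tilde L$ contains a nugatory crossing, a removable bigon, a trivial circle, or splits as a nontrivial disjoint or connected sum, I remove or decompose it and apply the inductive hypothesis to the simpler pieces; since deleting a trivial circle, a reducing first or second Reidemeister move, and the disjoint/connected sum decompositions are exactly the inverses of the operations permitted in the statement, this reconstructs $\tilde L$ in the required form. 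Hence it suffices to classify \emph{reduced} diagrams: 2-almost positive, prime, with no nugatory crossing and no reducing Reidemeister move.

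So suppose $\tilde L$ is reduced and represents the trivial link, and let $c_1,c_2$ be its two negative crossings. The main device is to change one negative crossing, say $c_1$, to positive, producing an almost positive diagram $\tilde L_1$ of a link $L_1$ with $L_1\ge L$; by the Giller inequality $-2\le\sigma(L_1)\le\sigma(L)=0$. There are two cases. If $L_1$ is nontrivial, then Corollary~\ref{ap4} gives $\sigma(L_1)<0$, while Theorem~\ref{ap1} gives $L_1\ge$ right-handed trefoil (plus trivial components) or $L_1\ge$ right-handed Hopf link (plus trivial components). In the trefoil case $\sigma(L_1)\le-2$, forcing $\sigma(L_1)=-2$, so $L_1$ sits minimally above the trefoil; in the Hopf case $\sigma(L_1)\in\{-1,-2\}$. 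Each such tightness constraint, read back through the Seifert-circle structure of the positive part of $\tilde L_1$, pins down the local tangle around $c_1$. If instead $L_1$ is trivial, then $\tilde L_1$ is an almost positive trivial diagram; since a crossing change preserves the underlying projection, $\tilde L_1$ still has no nugatory crossing and no trivial circle, so Theorem~\ref{ap2} forces it to be a base diagram of Fig.~\ref{almost-positive-trivial}, after which re-inserting the negative crossing at $c_1$ identifies $\tilde L$.

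Combining the information obtained from $c_1$ with the symmetric information from $c_2$ --- and using the $\pi$-rotation about the $y$-axis to halve the number of configurations --- I would then show that the positive part of $\tilde L$, together with the placement of the two negative crossings, is forced into the finite list. The hard part is precisely this last combinatorial synthesis: translating the statements ``$L_1$ is minimal above the trefoil'' and ``$L_1$ is minimal above the Hopf link'' into exact local pictures, and then understanding the mutual position of $c_1$ and $c_2$ (whether they form a negative clasp, lie in parallel Seifert bands, or link a common circle) so as to rule out the infinite families that reducedness and triviality must exclude. I expect the signature and Giller bounds to prune most configurations cheaply, with the genuine effort lying in the book-keeping that shows no reduced trivial diagram escapes Fig.~\ref{almost-positive-trivial} and Fig.~\ref{2ap-trivial}.
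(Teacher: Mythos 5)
Your reduction framework is sound, and your Case B is essentially right: crossing changes preserve the underlying projection, so once $\tilde L$ is prime with no kinks, no reducible bigons and no trivial circles, Theorem \ref{ap2} forces the almost positive diagram $\tilde L_1$ to be literally one of the diagrams of Fig. \ref{almost-positive-trivial}, and what remains is a finite check of which crossing changes of those diagrams yield trivial links. (One point you should make explicit: a nugatory crossing that is not a kink cannot be removed by a decreasing first Reidemeister move, but such a diagram is a diagram connected sum of a kinked diagram with another diagram, so it is still absorbed by your induction.) The genuine gap is Case A, which is the heart of the theorem.

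In Case A the only information you extract concerns the \emph{link} $L_1$: it dominates the right-handed trefoil or the right-handed Hopf link, and $\sigma(L_1)\in\{-1,-2\}$. These are global invariants of $L_1$; they carry no information about where $c_1$ sits inside the diagram $\tilde L_1$, and ``signature tightness pins down the local tangle around $c_1$'' is not an argument --- there are infinitely many almost positive diagrams of links with $\sigma=-1$ or $-2$ dominating the Hopf link or the trefoil. What you would actually have to classify is the set of pairs (almost positive diagram, positive crossing) whose crossing change produces the trivial link, i.e.\ a diagram-level unknotting statement that is as hard as the theorem itself; even the link-level analogue in this paper (Corollary \ref{5.7}, on unknotting number one positive knots) requires Kobayashi's and Scharlemann--Thompson's genus-one theorems, not signature bounds. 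The paper never routes through $L_1$: it proves the stronger classification Theorem \ref{2-almost-positive-theorem} by examining, inside $\tilde L$ itself, the complementary tangle diagram at suitable crossings and invoking the tangle lemmas of Section 4 (Lemmas \ref{lemma1}--\ref{lemma4}, above all the long case analyses of Lemma \ref{lemma2} and Lemma \ref{lemma5}, together with Lemmas \ref{lemma6} and \ref{lemma7}); these show that unless the diagram is on the explicit list, $\tilde L$ itself dominates a link of negative signature, contradicting $\sigma(L)=0$ directly via the Giller inequality, and Theorem \ref{2ap2} then falls out as an immediate corollary. None of that combinatorial work is replaced by your Giller/signature pruning, so the proposal as it stands does not prove the theorem.
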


\begin{figure}[htbp]
      \begin{center}
\scalebox{0.6}{\includegraphics*{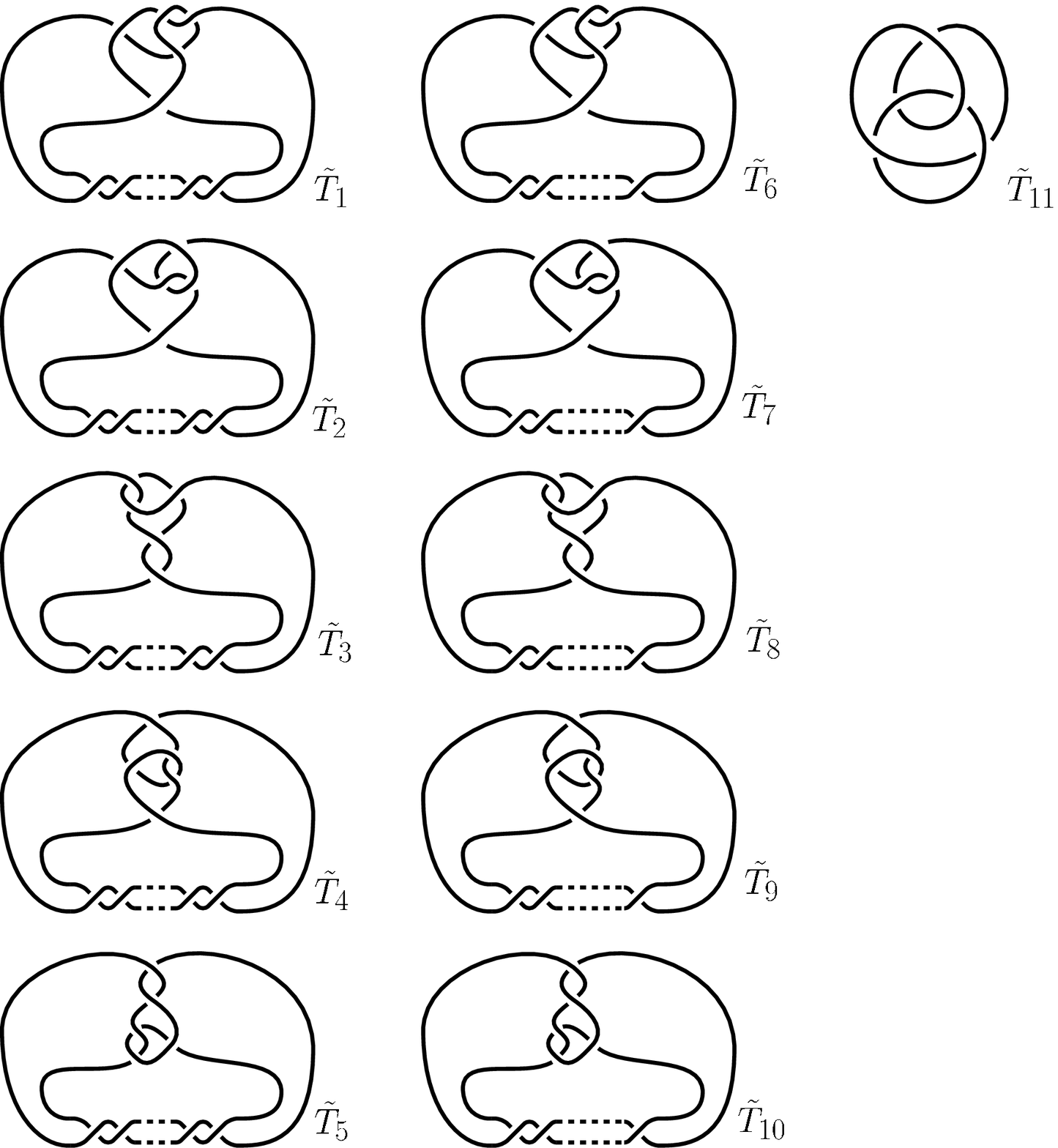}}
      \end{center}
   \caption{}
  \label{2ap-trivial}
\end{figure} 

%

\begin{Corollary}\label{2ap3}
 A nontrivial 2-almost positive link $L$ has nonnegative signature if and only if 
\begin{enumerate}
\item[(1)]
 $L$ is a twist knot with negative clasp (with, possibly, trivial components), in which case  
$\sigma(L)=0$, or 

\item[(2)]
 $L$ is a left-handed Hopf link (with, possibly, trivial 
components),  in which case $\sigma(L)=1$, or 

\item[(3)]
 $L$ is a disjoint or connected sum of left-handed Hopf link 
and $(2,n)$-torus link with anti-parallel orientation of components 
(Fig. \ref{torus-link2}) (with, possibly, trivial components), in which case $\sigma(L)=0$. 
\end{enumerate}
\end{Corollary}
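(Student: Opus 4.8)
The plan is to combine the structural dichotomy of Theorem~\ref{2ap1} with the Giller inequality and a handful of direct signature computations. The statement is an \emph{if and only if}, and I would treat the two directions separately: the ``if'' direction is a short calculation verifying that each of the three listed families realizes the asserted nonnegative signature, while the ``only if'' direction is a case-by-case elimination over the eleven alternatives of Theorem~\ref{2ap1}.

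For the ``only if'' direction, suppose $L$ is a nontrivial $2$-almost positive link with $\sigma(L)\ge 0$. By Theorem~\ref{2ap1}, $L$ satisfies at least one of the eleven conditions. First I would record the signatures of the base links appearing in conditions (1)--(8): the right-handed trefoil has $\sigma=-2$, the right-handed Hopf link has $\sigma=-1$, the left-handed Hopf link has $\sigma=+1$, and the $(2,4)$-torus link has $\sigma=-3$ (while $6_2$, the Whitehead link, and the link of Fig.~\ref{6_2-Whitehead-3-comp}(d) are computed to have strictly negative signature from a Seifert or Goeritz matrix). Using that the signature is additive under both disjoint sum and connected sum and is unchanged by adjoining trivial components, each composite link in (4), (6), (7) is strictly negative as well (for instance $(-2)+(+1)=-1$ in (4) and $(-3)+(+1)=-2$ in (7)). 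Thus in every one of the cases (1)--(8) the dominated link $L'$ satisfies $\sigma(L')\le -1$, and the Giller inequality $\sigma(L)\le\sigma(L')$ forces $\sigma(L)<0$, contradicting $\sigma(L)\ge 0$. Hence $L$ must fall under (9), (10), or (11); since these three conditions describe $L$ itself rather than merely a link it dominates, $L$ is one of the three families listed in the corollary.

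For the ``if'' direction and the precise signature values, I would compute directly. A twist knot with negative clasp bounds a genus-one Seifert surface whose symmetrized Seifert form is an indefinite $2\times 2$ matrix (its determinant is negative for the negative-clasp sign), so $\sigma=0$, giving case~(1). The left-handed Hopf link has $\sigma=+1$, giving case~(2). For case~(3), the $(2,n)$-torus link with anti-parallel orientation cobounds an annulus on the torus, so its symmetrized Seifert form is a single nonzero $1\times 1$ block and its signature is $-1$; adjoining the left-handed Hopf link's $+1$ and invoking additivity yields $\sigma=0$ for the sum.

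The main obstacle I expect is not the structural logic --- which is forced cleanly by Theorem~\ref{2ap1} once all signs are known --- but the reliable bookkeeping of the individual signatures, together with the orientation conventions. In particular one must verify (i) that \emph{every} base link in (1)--(8) is strictly negative, the delicate entries being $6_2$, the Whitehead link, and the link of Fig.~\ref{6_2-Whitehead-3-comp}(d), whose signs must be pinned down rather than guessed; and (ii) that the cancellation in case~(10) is exact, i.e.\ that the anti-parallel $(2,n)$-torus link contributes exactly $-1$ so that it annihilates the $+1$ of the left-handed Hopf link. Getting the orientation-dependent signs right in the anti-parallel torus family and in the negative-clasp twist family is where the real care is needed.
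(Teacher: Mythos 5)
Your proposal is correct and takes essentially the same route as the paper: the paper obtains Corollary~\ref{2ap3} as an ``immediate'' consequence of the classification in Theorem~\ref{2ap1} (i.e.\ Theorem~\ref{2-almost-positive-theorem}), with precisely the bookkeeping you describe --- the Giller inequality eliminates cases (1)--(8) because each dominated link has strictly negative signature, while direct computation gives $\sigma=0,+1,0$ in cases (9)--(11). This is the same pattern the paper makes explicit in its proof of Corollary~\ref{positive-corollary} for positive links.
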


In Section 5 we consider 3-almost positive knots and prove: 

\begin{Theorem}\label{3ap1}
Let $K$ be a 3-almost positive knot. 
Then either $K\geq$ trivial knot or $K$ is the left-handed trefoil knot (plus positive knots as connected summands). 
\end{Theorem}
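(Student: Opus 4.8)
The plan is to reduce to Theorem~\ref{2ap1} by resolving one of the three negative crossings, and to control the outcome by a descending-diagram argument that produces the domination $K\ge$ trivial knot in every case except the left-handed trefoil. First I would fix a reduced $3$-almost positive diagram $D$ of $K$ with exactly three negative crossings $c_1,c_2,c_3$ (the subcases of one or two negative crossings already follow from the stronger Corollaries~\ref{ap4} and \ref{2ap3}), with no nugatory crossing and with the minimal number of crossings among all such diagrams. I would immediately dispose of connected sums: if $D=D_1\#D_2$ with $D_2$ positive, then $K=K_1\#K_2$ with $K_2$ positive, and since both the relation $\ge$ and the left-handed trefoil conclusion behave additively under connected sum, it suffices to treat $K_1$. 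Iterating, I may assume $D$ has no positive connected summand; this is exactly what accounts for the parenthetical ``plus positive knots as connected summands'' in the statement.

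The conceptual key is a sufficient condition for $K\ge$ trivial knot. Orient $K$ and pick a base point; traversing $D$, call a crossing \emph{good} if its over-strand is met before its under-strand. A diagram all of whose crossings are good is descending, hence represents the unknot. Moreover, changing the over/under information at a crossing reverses its sign and leaves the goodness of all other crossings unchanged; since by assumption the only negative crossings are $c_1,c_2,c_3$, if these three are good then every bad crossing is positive, so turning all bad crossings good is effected entirely by positive-to-negative changes. Hence \emph{if some base point makes $c_1,c_2,c_3$ simultaneously good, then $K\ge$ trivial knot}. A negative crossing $c_i$ is good precisely when the base point lies on one of the two arcs cut out by its two passage points; writing $A_i$ for this good arc, the condition becomes $A_1\cap A_2\cap A_3\neq\emptyset$. (For the standard left-handed trefoil one checks directly that these three arcs have empty common intersection, consistent with $\sigma=2>0$ making it impossible for the left-handed trefoil to dominate the unknot by the Giller inequality.)

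It remains to analyze the obstruction case $A_1\cap A_2\cap A_3=\emptyset$, and this is the main difficulty. Here I would apply the oriented resolution to one negative crossing, say $c_3$; this gives a $2$-almost positive diagram of a two-component link $L_0$, to which Theorem~\ref{2ap1} applies, placing $L_0$ in one of the cases (1)--(11). If $L_0\ge M$ via positive-to-negative changes, the same changes applied to $D$ produce $D'$ with $K\ge K':=[D']$, where $K'$ is the negative fusion of $M$ at the site of $c_3$ (a knot, since the two local strands lie on the two components merged by the fusion). For each case I would identify $K'$ and exhibit a base point realizing the good-arc condition, so that $K'\ge$ trivial and hence $K\ge$ trivial—except when the assembled pieces are forced to build the left-handed trefoil, which happens exactly through the left-handed Hopf summands appearing in cases (4), (6), (7), (10), (11).

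The hard part is precisely this bookkeeping: showing that empty common intersection of the three good arcs, combined with minimality and the absence of positive connected summands, leaves no possibility other than $K$ being the left-handed trefoil, while every other assembly admits a base point making all three negative crossings good. I expect the genuinely delicate subcase to be the fully interleaved one, in which $c_1,c_2,c_3$ alternate around the circle in the trefoil pattern; there one must rule out the existence of any positive crossing whose flip would separate the three good arcs, so that the core of $K$ can only be the left-handed trefoil itself.
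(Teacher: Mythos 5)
Your opening criterion is sound: if some base point makes all three negative crossings ``good,'' then every bad crossing is positive and the descending algorithm yields $K\ge$ trivial knot. This is, in effect, a one-sided version of what the paper does with Lemma~\ref{diagram-reducing-lemma}. But note that the paper's step is genuinely stronger: Lemma~\ref{diagram-reducing-lemma} allows the loop at a negative crossing to lie consistently over \emph{or} consistently under the rest, and is applied loop by loop, so the only configuration surviving that step is the fully interleaved alternating (left-trefoil) pattern of Fig.~\ref{3ap-proof1}. Your criterion survives more configurations: for example, a diagram whose three negative crossings are curls met under-passage first has three pairwise disjoint good arcs, hence empty triple intersection, even though such a diagram is handled instantly by Lemma~\ref{diagram-reducing-lemma}. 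So your ``obstruction case'' is strictly larger than the trefoil pattern, and your plan must (but does not) account for these extra configurations. A smaller slip: the subcases with at most two negative crossings do not ``follow from Corollaries~\ref{ap4} and \ref{2ap3}''---those are signature statements and cannot produce a domination $K\ge$ trivial knot; you need Theorems~\ref{ap1} and \ref{2ap1} plus the facts that the right-handed trefoil, $6_2$, and the negative-clasp twist knots each dominate the unknot.

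The genuine gap is the mechanism you propose for the obstruction case. You smooth $c_3$, apply Theorem~\ref{2ap1} to the resulting $2$-almost positive two-component link $L_0$, and then want to ``apply the same changes to $D$.'' But $L_0\ge M$ is a relation between link types: as $\ge$ is defined and as Theorem~\ref{2-almost-positive-theorem} is actually proved, the domination is realized through Lemmas~\ref{reducing-lemma} and \ref{diagram-reducing-lemma}, that is, by passing to \emph{minors}---smoothing self-crossings and deleting whole arcs of the diagram---not merely by switching crossings inside the smoothed diagram $D_0$. There are therefore no ``same changes'' to transport back to $D$; in particular, a deleted arc may run through the smoothing site of $c_3$, and then the corresponding operation simply does not exist in $D$. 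Even in the favorable situation where the domination were realized by crossing switches inside $D_0$, the knot $K'$ you call ``the negative fusion of $M$ at the site of $c_3$'' is not determined by the link type of $M$: it depends on where the fusion band is attached, so the promised case-by-case identification of $K'$, and the claim that only the left-handed-Hopf cases (4), (6), (7), (10), (11) can assemble into the left-handed trefoil, cannot be carried out at the level of Theorem~\ref{2ap1}'s conclusions---you are thrown back into exactly the diagrammatic analysis you were trying to avoid. (You also omit the case where $L_0$ is the trivial two-component link, about which Theorem~\ref{2ap1} says nothing.) By contrast, the paper never smooths a negative crossing: after the loop reduction it analyzes the six positive arcs $\tilde a_1,\dots,\tilde a_6$ joining the three negative crossings and their spines directly, showing that any interaction among them permits a reduction to at most two negative crossings (hence $K\ge$ trivial knot), and that otherwise the diagram is literally a standard left-handed trefoil with positive connected summands.
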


\begin{Corollary}\label{3ap2}
Let $K$ be a 3-almost positive knot, then 
\begin{enumerate}
\item[(a)] either $\sigma(K) \leq 0$ or $K$ is the left-handed trefoil knot. 
\item[(b)] either $K\succeq$ trivial knot or $K$ is the left-handed trefoil knot (plus positive knots as connected summands). 
\end{enumerate}
\end{Corollary}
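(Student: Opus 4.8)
The plan is to derive Corollary \ref{3ap2} directly from Theorem \ref{3ap1}, which provides the dichotomy for any 3-almost positive knot $K$: either $K \geq$ trivial knot, or $K$ is the left-handed trefoil knot with some positive knots attached as connected summands. Part (b) is then essentially immediate. In the first case I would invoke the implication recorded in the introduction, namely that $K_1 \geq K_2$ forces $K_1 \succeq K_2$, to conclude $K \succeq$ trivial knot; the second case is verbatim the exceptional alternative in the statement. Thus (b) requires only translating $\geq$ into $\succeq$.

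For part (a) I would again split along the Theorem \ref{3ap1} dichotomy. If $K \geq$ trivial knot, the Giller inequality gives $\sigma(K) \leq \sigma(\text{trivial knot}) = 0$, which is the desired bound. The interesting case is when $K = T \mathbin{\#} P$, where $T$ denotes the left-handed trefoil and $P$ is a (possibly trivial) positive knot. Here I would use additivity of the signature under connected sum together with $\sigma(T) = 2$ to write $\sigma(K) = 2 + \sigma(P)$.

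The one point that needs care is the sign of $\sigma(P)$. If $P$ is trivial, then $K = T$ is the left-handed trefoil, which is exactly the exception allowed in (a) and satisfies $\sigma(K) = 2 > 0$. If $P$ is nontrivial, then by Corollary \ref{p3} every nontrivial positive knot obeys $\sigma(P) \leq -2$, so $\sigma(K) = 2 + \sigma(P) \leq 0$, again as required. This exhausts all cases and completes (a).

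The main obstacle, such as it is, is purely bookkeeping: one must verify that the presence of positive connected summands cannot push the signature back above zero. This is guaranteed precisely because nontrivial positive knots contribute at most $-2$ to the signature, exactly cancelling the $+2$ coming from the left-handed trefoil factor; the only way to end up with positive signature is to have no nontrivial positive summand at all, which is the single genuine exception recorded in the corollary.
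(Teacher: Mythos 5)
Your proposal is correct and follows essentially the same route as the paper: the paper deduces the corollary immediately from Theorem \ref{3ap1} together with the fact that the connected sum of the left-handed trefoil with a nontrivial positive knot has non-positive signature, which is exactly what your case analysis establishes via the Giller inequality, additivity of the signature under connected sum, and the bound $\sigma(P)\leq -2$ from Corollary \ref{p3}. Your write-up merely makes explicit the bookkeeping the paper leaves to the reader.
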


In Section 6 we use our previous results to characterize the unknotting number one positive links, 2-almost positive 
amphicheiral links, and 2-almost positive slice links. We also analyze homology 3-spheres not 
bounding a (rational) homology 4-ball, the Tristram-Levine signatures and the Jones polynomial. 
In particular, we prove:

\begin{Theorem}\label{appl1}
\begin{enumerate}
\item[(a)] If a positive knot has unknotting number one then it is a positive twist knot.
\item[(b)] If $K$ is a $2$-almost positive knot different from a twist knot with a negative clasp 
then $K(1/n)$ (i.e. $1/n$ surgery on
  $K$, $n>0$) is a homology 3-sphere that does not bound a compact, smooth homology $4$-ball. 
Furthermore, $K(1/n)$ has a nontrivial Floer homology.
\item[(c)] If $K$ is a non-trivial 2-almost positive knot different from the stevedore's knot then $K$ is not a slice knot.
\item[(d)] If $K$ is a non-trivial 2-almost positive knot different from the figure eight knot then $K$ is not amphicheiral.
\end{enumerate}
\end{Theorem}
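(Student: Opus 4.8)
The four parts share a template: a classical inequality first forces the relevant invariant of $K$ to an extremal value, the structural and signature results of the paper then collapse $K$ onto a short list of model knots, and a finer invariant isolates the stated exception. I will treat them in turn.

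For (a), the Murasugi bound $|\sigma(K)|\le 2u(K)$ gives $|\sigma(K)|\le 2$ when $u(K)=1$. A nontrivial positive knot with $\sigma(K)>-4$ must fall under the second alternative of Corollary \ref{p3}, so $K=L(p_1,p_2,p_3)$ is a single pretzel knot with $p_i$ positive odd and $\sigma(K)=-2$. It remains to decide which such pretzels have unknotting number one. The twist knots $L(1,1,p_3)$ unknot by a single clasp change (the altered clasp cancels against the band), and since the change must raise $\sigma$ from $-2$ to $0$ it is, by the Giller inequality, a positive-to-negative change, so these are exactly the positive twist knots. To exclude every other pretzel I would apply the Montesinos trick: $u(K)=1$ forces the double branched cover $\Sigma_2(K)$ to be a half-integer $(\det(K)/2)$-surgery on a knot in $S^3$, so Lickorish's linking-form condition must hold on the cyclic group $H_1(\Sigma_2(L(p_1,p_2,p_3)))$ of order $p_1p_2+p_2p_3+p_3p_1$. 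Computing this form from the Goeritz matrix of the pretzel and asking when a generator has self-linking $\pm 2/\det(K)$ should leave precisely the cases with two of the $p_i$ equal to $1$. This linking-form computation is the main obstacle of part (a).

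For (b), Corollary \ref{2ap3} shows that a $2$-almost positive knot has $\sigma(K)<0$ unless it is a negative-clasp twist knot, and Theorem \ref{2ap1} refines this to $K\ge J$ for a model knot $J$ (the right-handed trefoil, possibly through the intermediate $6_2$), whence $K\succeq J$ in the Cochran--Gompf sense. Since the signature by itself does not obstruct a surgery from bounding a homology ball, I would pass to a gauge-theoretic invariant: the surgeries $J(1/n)$ are Seifert-fibered homology spheres (e.g. the Poincar\'e sphere for the trefoil with $n=1$) with nonvanishing Fr\o yshov/Fintushel--Stern instanton invariant, so they bound no homology $4$-ball, and the positive-definite concordance $4$-manifold witnessing $K\succeq J$ lets one transport this obstruction, and the accompanying nontrivial Floer homology, to $K(1/n)$. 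Equivalently, in Heegaard Floer terms one uses monotonicity of $V_0$ under positive-to-negative crossing changes to get $V_0(K)\ge V_0(J)>0$, hence $d(K(1/n))=-2V_0(K)<0$, which cannot happen for a homology sphere bounding a homology ball. The technical heart is this cobordism transfer of the invariant.

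Parts (c) and (d) reuse the same reduction but with classical obstructions. A slice knot has $\sigma(K)=0$, so by Corollary \ref{2ap3} a nontrivial $2$-almost positive slice knot is a negative-clasp twist knot; applying the Fox--Milnor condition to the Alexander polynomial $\Delta(t)=kt-(2k+1)+kt^{-1}$ of the $k$-twist knot singles out the stevedore's knot, proving (c). An amphicheiral knot likewise has $\sigma(K)=0$ and so reduces to a negative-clasp twist knot; here the palindromy $V_K(t)=V_K(t^{-1})$ forced by amphicheirality (or a nonzero higher Tristram--Levine signature) fails for every such twist knot except the figure-eight, proving (d). Both are one-parameter verifications once the reduction is in place, so the genuine difficulty of the theorem lies in parts (a) and (b) as indicated above.
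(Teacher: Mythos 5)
Your reductions via Corollary \ref{p3}, Theorem \ref{2ap1} and Corollary \ref{2ap3} agree with the paper, but in parts (a) and (c) the classical obstruction you then invoke is provably too weak; these are not computations that remain to be done but computations that come out the wrong way. For (a): first, $H_1(\Sigma_2(L(p_1,p_2,p_3)))$ need not be cyclic (for $L(3,3,3)$ the Goeritz matrix $\left(\begin{smallmatrix}6 & -3\\ -3 & 6\end{smallmatrix}\right)$ gives $\mathbb{Z}/3\oplus\mathbb{Z}/9$); second, and fatally, non-twist pretzels can pass Lickorish's test. Take $L(1,3,5)$: here $\det=23$, the Goeritz matrix is $\left(\begin{smallmatrix}4 & -3\\ -3 & 8\end{smallmatrix}\right)$, the linking form takes the value $4/23$ on a generator, and $4\cdot 9^{2}=324\equiv 2 \pmod{23}$, so there \emph{is} a generator of self-linking $2/23$. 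Yet $L(1,3,5)$ is the two-bridge knot $b(23,6)$, not the twist knot $b(23,2)$, and its unknotting number is $2$. So the linking-form computation cannot ``leave precisely the cases with two of the $p_i$ equal to $1$''. The paper closes this step with different, geometric input: these pretzels bound genus-one Seifert surfaces, genus-one unknotting-number-one knots are doubled knots (Scharlemann--Thompson, Kobayashi), and pretzel knots are simple (Kobayashi), so only doubles of the unknot, i.e.\ twist knots, survive. Part (c) has the same defect: Fox--Milnor only forces $\det=4k+1$ to be a perfect square, and infinitely many negative-clasp twist knots satisfy it and are even algebraically slice ($k=l(l+1)$; e.g.\ $k=6$, where $\Delta\doteq(2t+3)(2t^{-1}+3)$). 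Excluding those is exactly the content of the Casson--Gordon paper that the theorem cites; no Alexander-polynomial argument can single out the stevedore knot.

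In (b) your reduction leaves two model knots, the right-handed trefoil and $6_2$, and your obstruction only covers the first: $6_2$ is hyperbolic, so $6_2(1/n)$ is \emph{not} Seifert fibered and the Fintushel--Stern computation does not apply; nor do you (or the paper) have any relation $6_2\geq$ trefoil or $6_2\succeq$ trefoil that would collapse this case into the other. This is precisely the point where the paper needs Akbulut's Kirby-calculus computation that $6_2(1/1)$ bounds a positive-definite ($E_8$) four-manifold, after which the Cochran--Gompf transfer theorem (if $K_1\succeq K_2$ and $K_2(1/1)$ bounds an NSPD manifold, then so does $K_1(1/n)$) and Donaldson's theorem finish the argument, with Floer's observation giving the nontriviality of instanton Floer homology. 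Your Heegaard Floer alternative ($V_0$ monotonicity under positive-to-negative crossing changes together with $d(S^3_{1/n}(K))=-2V_0(K)$) can be made to work, but only after verifying $V_0(6_2)>0$ as a separate input (true, e.g.\ because $6_2$ is alternating with $\sigma=-2$), so the $6_2$ case cannot be waved away in either framework. Finally, in (d) your main route is fine and genuinely different from the paper's (which quotes Schubert's classification of two-bridge knots), but your parenthetical fallback fails: as the paper records in Theorem \ref{Theorem 6.8}(iv), negative-clasp twist knots have identically zero Tristram--Levine signatures, so no higher signature can distinguish them from the figure-eight knot; you must actually use the Jones palindromy (or the writhe invariance of reduced alternating diagrams) there.
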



\section{Positive links }\label{Section 1}

It has been proved in \cite{Co-G} and \cite{P-1} that if $K$ is a nontrivial 
positive knot then $K\geq$ right-handed trefoil knot. 
We generalize this result in this and the next sections.

\subsection{Preliminary terminology}

If we ignore over/under crossing information of a link diagram $\tilde L$, 
then we call it the underlying {\it projection} (or {\it universe}; 
\cite{Ka-1}) of $\tilde L$, and denote it by $\hat L$.
%
In general $\tilde X$ denotes some part of a diagram and $\hat X$ denotes its underlying projection.

 A tangle is composed of properly embedded (oriented) system of arcs in 
a 3-ball ${\mathbb B}^3$. We consider tangles up to ambient isotopy of ${\mathbb B}^3$ 
which is fixed on $\partial  {\mathbb B}^3$. For oriented tangles 
$T_{1}$ and $T_{2}$, we define the relation $\geq$ where
 $T_{1}\geq T_{2}$ if  $T_{2}$ 
can be obtained from $T_{1}$ by changing some positive crossings to 
negative crossings. We consider tangle diagrams
$\tilde T$  and its underlying projection $\hat T$ on the unit disk ${\mathbb B}^2$ up to ambient isotopy which is fixed on 
$\partial {\mathbb B}^2$.
Note that no tangles, tangle diagrams and tangle projections in this 
paper contain closed components unless otherwise stated.
For a link projection $\hat L$ (resp. tangle projection $\hat T$) we denote by ${\rm LINK}(\hat L)$ (resp. ${\rm TANGLE}(\hat T)$) the set of all links (resp. tangles) that has a link diagram (resp. tangle diagram) whose underlying projection is $\hat L$ (resp. $\hat T$).
For two link (resp. tangle) projections $\hat L_1$ and $\hat L_2$ (resp. $\hat T_1$ and $\hat T_2$), 
we define the relation $\hat L_{1}\geq \hat L_{2}$ (resp. $\hat T_{1}\geq \hat T_{2}$) if 
${\rm LINK}(\hat L_1)\supset {\rm LINK}(\hat L_2)$ (resp. ${\rm TANGLE}(\hat T_1)\supset{\rm TANGLE}(\hat T_2)$). 
Then we say that $\hat L_2$ (resp. $\hat T_2$) is a {\it minor of} $\hat L_1$ (resp. $\hat T_1$).

An oriented tangle diagram is {\it m-almost positive} if all its crossings, but exactly $m$, are positive.

A crossing $P$ of $\tilde L$ (resp.\footnote{
Most of the definition we give below apply to the case of a link 
diagram $\tilde L$, its projection
$\hat L$, and a tangle diagram $\tilde T$ and its projection $\hat T$. 
To omit cumbersome notation/repetitions we
will list only one or two of them in definitions, 
unless description differs, in which case we list the difference.}
 $\hat L$, $\tilde T$, 
$\hat T$) is called {\it nugatory} if
$\hat L-P$
(resp. $(\partial {\mathbb B}^2\cup \hat T)-P$) has more components 
than $\hat L$ (resp. $\partial {\mathbb B}^2\cup \hat T$).
A link (resp. tangle) diagram without nugatory 
crossings is called {\it reduced}.

A link (resp. tangle) diagram is called {\it R2-reduced} if it is 
reduced and it does not contain a 2-gon; as illustrated in Fig. \ref{R2}

\begin{figure}[htbp]
      \begin{center}
\scalebox{0.48}{\includegraphics*{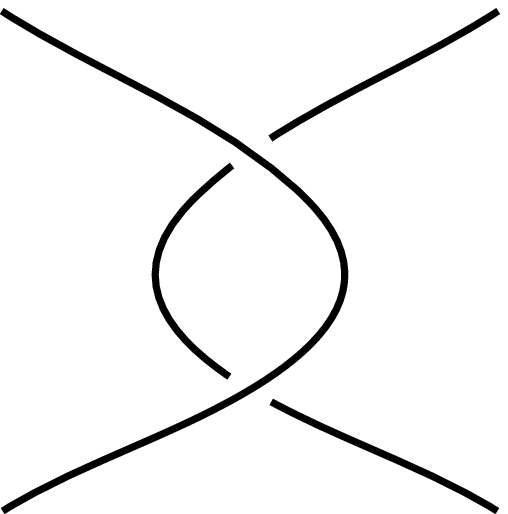}}
      \end{center}
   \caption{}
  \label{R2}
\end{figure} 

%

We say that a link  (resp. tangle) diagram is {\it prime} if it is connected and any simple closed 
curve in ${\mathbb S}^2$ (resp. ${\mathbb B}^2$)
meeting it transversally in two points bounds a trivial disk pair. 
We remark that prime link projections are reduced except a projection of a trivial knot with one crossing.

Let $\hat L$ be a link projection that is not prime. 
Let $\gamma$ be a simple closed curve on ${\mathbb S}^2$ meeting it transversally in two points, 
say $P$ and $Q$, that does not bound trivial disk pair. Let $D_1$ and $D_2$ 
be disks in ${\mathbb S}^2$ bounded by $\gamma$. Let $\alpha$ be a simple arc in 
$\gamma$ joining $P$ and $Q$. Let $\hat L_1=(\hat L\cap D_1)\cup\alpha$ and  $\hat L_2=(\hat L\cap D_2)\cup\alpha$. 
Then we say that $\hat L$ decomposes to $\hat L_1$ and $\hat L_2$. We also say that $\hat L$ is 
a {\it connected sum of $\hat L_1$ and $\hat L_2$}. See Fig. \ref{composite-projection}.
\begin{figure}[htbp]
      \begin{center}
\scalebox{0.58}{\includegraphics*{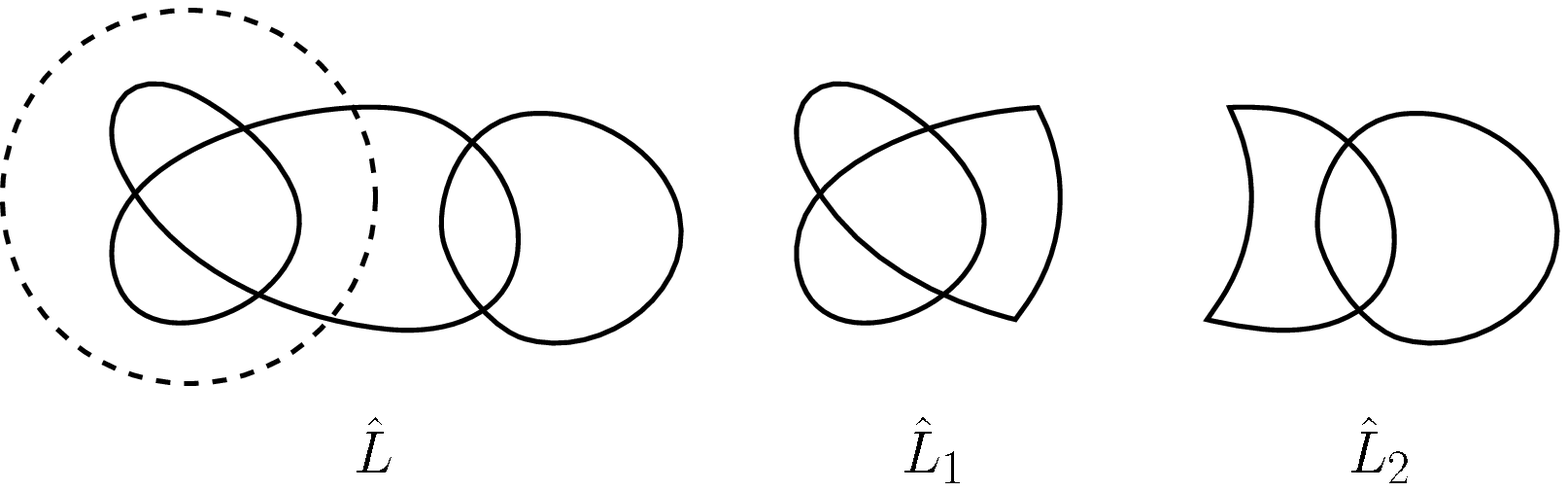}}
      \end{center}
   \caption{}
  \label{composite-projection}
\end{figure} 

%

A {\it self-crossing} of a link (resp. tangle) diagram 
is a crossing involving a single component. 
A {\it mixed crossing} is a crossing between two different components.

We say that two mixed crossings on a component are {\it successive} on 
the component if there is an arc in the component connecting the crossings without mixed crossings on the arc.

A self-crossing $P$ of an arc  component $\tilde t$ of a 
tangle diagram $\tilde T$ divides $\tilde t$ into three
immersed arcs. Let $r(\tilde t,P)$ be the diagram 
obtained from $\tilde t$ by removing the middle immersed arc.\footnote{That is, we smooth 
the crossing $P$ and delete the closed component.}
Let $s(\tilde t,P)$  be the middle part so that 
$\tilde t=r(\tilde t,P)\cup s(\tilde t,P)$.
Let $r(\tilde T,P)$ be the diagram 
obtained from $\tilde T$ by replacing $\tilde t$ by $r(\tilde t,P)$.
Similarly, a self-crossing $P$ of a 
component $\tilde\ell$ of a link diagram $\tilde L$ 
divides $\tilde\ell$  into two immersed arcs. 
We denote one of them by $r(\tilde\ell,P)$ and the other by $s(\tilde\ell,P)$. 
In a similar manner we define $r(\tilde L,P)$ and $s(\tilde L,P)$.

A component $\tilde\ell$ (resp. $\tilde t$) of a 
link (resp. tangle) diagram is said to be {\it almost trivial} if 
 $\hat\ell-P$ (resp. $\hat t-P$) is not connected
for any self-crossing $P$ of $\hat\ell$ 
(resp. $\hat t$).\footnote{That is, a knot $\tilde \ell$ (resp. an arc $\tilde t$)  has only nugatory crossings.}

For a component $\tilde t$ of a tangle 
diagram $\tilde T$ and a component (arc)  
$\alpha$ of $\partial {\mathbb B}^2-\partial\tilde t$, we say that a point $P$
on $\tilde t$ is {\it outermost with respect to} $\alpha$ if
 $\alpha$ and $P$ belong to the same closure of a component of
${\mathbb B}^2-\tilde t$. 
When $\alpha$ contains the point $(1,0)$ (resp. $(-1,0)$) we say that $\tilde t$ is rightmost (resp. leftmost).

For a point $A$ on $\tilde t$  and a component 
$\alpha$ of $\partial {\mathbb B}^2- \partial\tilde t$, we 
define the {\it depth} of $A$ on $\tilde t$, 
$d(A)=d(A,\alpha)$, to be the minimal number
of the transverse intersection points of $\hat t$ and an arc 
joining $\alpha$ and $A$ (avoiding crossing points) which is in general position with respect 
to $\hat t$. In particular $d(A)=0$ if $A$ is outermost.
When $\alpha$ contains the point $(1,0)$ (resp. $(-1,0)$) we denote $d(A,\alpha)=d(A,(1,0))$ (resp. $d(A,\alpha)=d(A,(-1,0))$) and call it the {\it right depth} (resp. {\it left depth}) of $A$ on $\tilde t$.
%

For an oriented arc component  $\hat t$ of a tangle diagram $\hat T$, we introduce inductively 
the following terminology. Let $\Gamma_{0}$ be the set of all self-crossings of $\hat t$. Set $\hat t_0=\hat t$. 
We trace $\hat t_0$ along its orientation starting from its first end point. Let $P_1$ be the first self-crossing 
of $\hat t_0$ we encounter. Let $\hat t_1=r(\hat t_0,P_1)$. 
Then we trace $\hat t_1$ starting from $P_1$ along its orientation. 
Let $P_2$ be the first self-crossing of $\hat t_1$ we encounter. 
Let $\hat t_2=r(\hat t_1,P_2)$. We continue this and finally have $\hat t_{n_1}$ that has no self-crossings. 
We set $\hat t'=\hat t_{n_1}$ and call it the {\it spine} of $\hat t$.

Let $\Gamma_{1}$ be  $\Gamma_{0}- \hat t'$. Namely  $\Gamma_{1}$ is the set of self-crossings of 
$\hat t$ that are not on $\hat t'$.

Set $\hat t'_0=\hat t$ again. We trace again $\hat t'_0$ along its orientation starting from its first end point. 
Let $P'_1$ be the first self-crossing in $\Gamma_{1}$ we encounter. Let $\hat t'_1=r(\hat t'_0,P'_1)$. 
Then we trace $\hat t'_1$ starting from $P'_1$ along its orientation. Let $P'_2$ be the first self-crossing of $\hat t'_1$ in $\Gamma_{1}$ we encounter. Let $\hat t'_2=r(\hat t'_1,P'_2)$. We continue this and finally have $\hat t'_{n_2}$ whose all 
self-crossings are on $\hat t'$. We set $\hat t''=\hat t_{n_2}$ and call it the {\it second spine of} $\hat t$.

We define inductively the $n$-th spine $\hat t^{(n)}$ in a similar manner. 
In particular we define $\Gamma_{n}= \Gamma_{n-1} - \hat t^{(n)}$. 
Of course $\hat t^{(n-1)} \subset \hat t^{(n)}$.

See for example Fig. \ref{spine}.

\begin{figure}[htbp]
      \begin{center}
\scalebox{0.58}{\includegraphics*{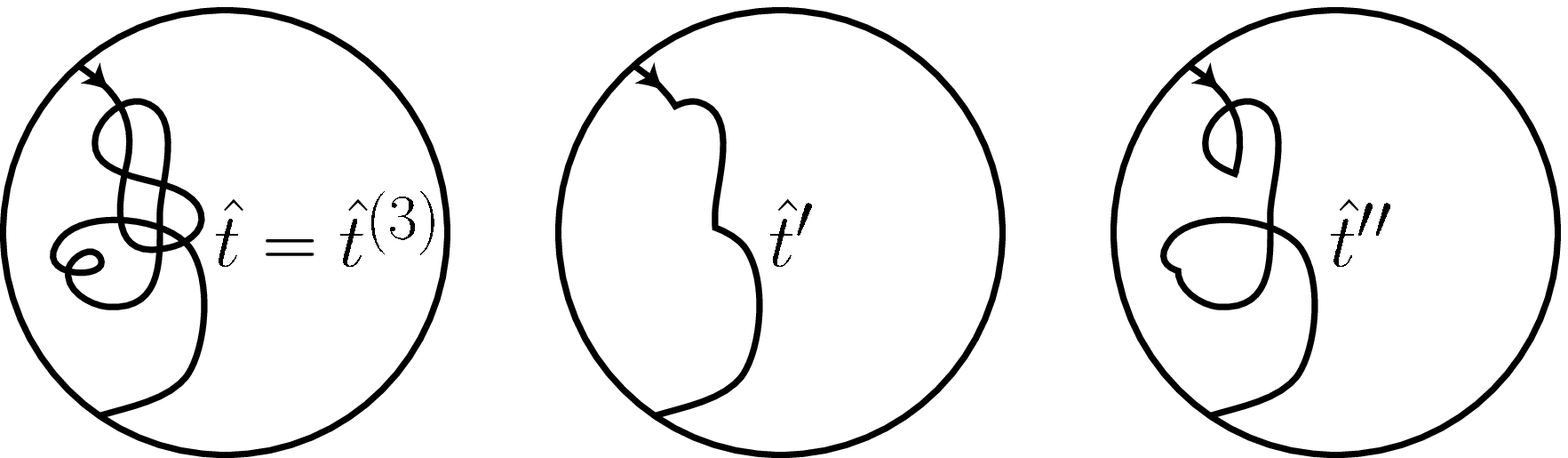}}
      \end{center}
   \caption{}
  \label{spine}
\end{figure} 

%

%
For a point $A$ in an almost trivial arc component $\hat t$, we define the {\it multiplicity} of
$A$ on $\hat t$, $m(A)=m(A,\hat t)$ to be
the minimal number $n$ such that $A$ is contained in $\hat t^{(n+1)}$.
A self-crossing $P$ of $\hat t$ is called the {\it root} of $A$ if $P$ is on $\hat t^{(m(A))}$ and $A$ is on $s(\hat t,P)$. Note that every $A$ with $m(A)\geq1$ has a unique root.
For points $A$ and $B$ in $\hat t$, we say that  $A$ and $B$ 
are {\it related on}
 $\hat t$ if $m(A)=m(B)$ and $A$ and $B$ belong to the same 
component of $\hat t-(\Gamma_0-\Gamma_{m(A)})$. Note that if $m(A)=m(B)\geq1$ then they have the common root $P$. See for example Fig. \ref{related-points}.
\begin{figure}[htbp]
      \begin{center}
\scalebox{0.58}{\includegraphics*{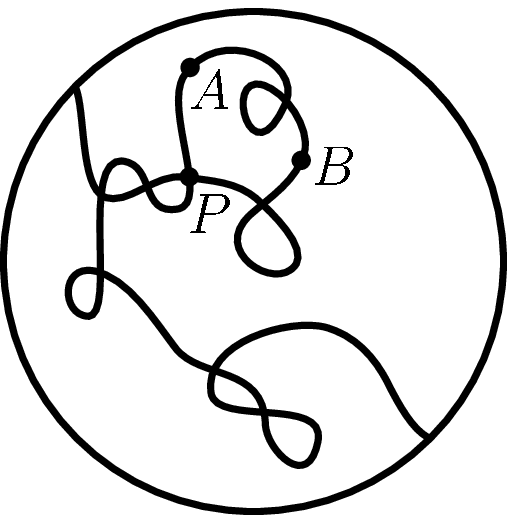}}
      \end{center}
   \caption{}
  \label{related-points}
\end{figure} 

%


We say that a link diagram $\tilde L_{1}$ (resp. $\tilde T_{1}$) is an 
{\it R1 augmentation} of a link diagram $\tilde L_{0}$ (resp. $\tilde T_{0}$) if
$\hat L_{1}$ (resp $\hat T_{1}$) can be obtained from 
$\hat L_{0}$ (resp $\hat T_{0}$) by a finite sequence of the first Reidemeister moves increasing the number of crossings as illustrated in Fig. \ref{R1-move}.

\begin{figure}[htbp]
      \begin{center}
\scalebox{0.58}{\includegraphics*{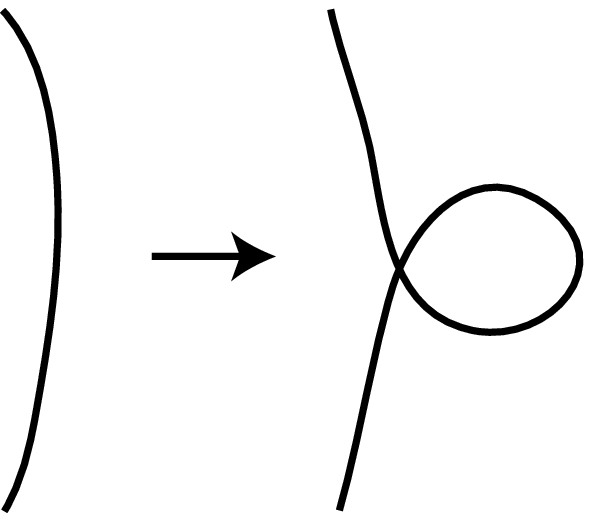}}
      \end{center}
   \caption{}
  \label{R1-move}
\end{figure} 

%

If $\tilde L_{1}$ is an R1 augmentation of
$\tilde L_{0}$ then $\hat L_{0}$ is a subspace of  $\hat L_{1}$
and we say that $\hat L_{1}-\hat L_{0}$ is 
an {\it R1 residual} of $\hat L_{1}$ with respect to $\hat L_{0}$, 
and $\hat L_{0}$ is a {\it core} of $\hat L_{1}$.

Throughout the paper we use the notation introduced in Fig. \ref{tangle-connection} 
for a 2-string tangle diagram (resp. projection). 
Namely the endpoints of the strings are 
denoted by $A_{0}, A_{\infty}, B_{0}$ and $B_{\infty}$, 
and the arc component $A_{0}A_{\infty}$ is denoted
by $\tilde a$ (resp $\hat a$), and the arc component
$B_{0}B_{\infty}$ is denoted
by $\tilde b$ (resp $\hat b$).
Recall that $\tilde x$ denotes a diagram and $\hat x$ denotes a projection.
Let $\tilde a'$ be the spine of $\tilde a$ and $\tilde b'$ the spine of $\tilde b$.
We call $\tilde T'=\tilde a'\cup\tilde b'$ the {\it spine of $\tilde T=\tilde a\cup\tilde b$}.

\begin{figure}[htbp]
      \begin{center}
\scalebox{0.58}{\includegraphics*{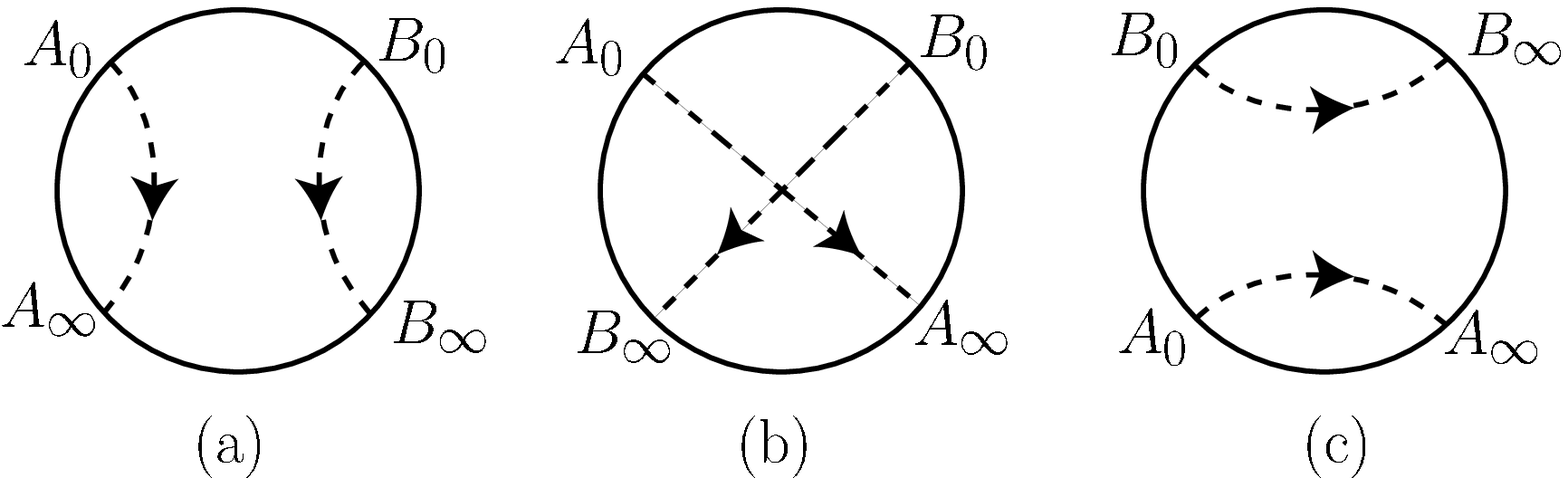}}
      \end{center}
   \caption{}
  \label{tangle-connection}
\end{figure} 

%

We say that the tangle diagram (resp. projection) of Fig. \ref{tangle-connection} (a) 
has a {\it vertical connection} and of Fig. \ref{tangle-connection} (b) has an {\it X-connection}. 
We also sometimes use the tangle diagram (resp. projection) of Fig. \ref{tangle-connection} (c) and say it has a {\it horizontal connection}.
The string is always oriented from $A_{0}$ to $A_{\infty}$ and from $B_{0}$ to $B_{\infty}$ unless otherwise stated.

We denote by $A_{1},A_{2},
\ldots ,A_{n}$ the crossings of $\tilde a$ with $\tilde b$. We order crossings according to orientation of $\tilde a$. 
Similarly, we denote by $B_{1},B_{2},\ldots ,B_{n}$ the crossings of $\tilde b$ with $\tilde a$  
when travelling from $B_{0}$ to $B_{\infty}$. 
The permutation $(\sigma (1), \sigma (2),\ldots ,\sigma (n))$ of 
$(1,2,\ldots ,n)$ is defined by $A_{i}= B_{\sigma (i)}$ for each $i$.
By $A_i^-$ (resp. $A_i^+$) we denote a point on $\hat a$ or $\tilde a$ just before (resp. after) the crossing $A_i$ with respect to the orientation of $\hat a$ or $\tilde a$. Note that both $A_i^-$ and $A_i^+$ are not crossings of the projection or diagram. The points $B_i^-$ and $B_i^+$ on $\hat b$ or $\tilde b$ are defined analogously.
For points $P$ and $Q$ on a tangle projection $\hat T$ that are possibly but
 not necessarily crossings we denote by $PQ$ the immersed arc in $\hat T$ starting 
from $P$ and ending at $Q$ with respect to the orientation of $\hat T$. 
In the case that both of them are mixed crossings there are two such possibilities. 
Then we specify the component. In case that both of them are self-crossings of the same component 
there may be two such possibilities. Then we take another point $R$ and denote it by $PRQ$. 
See Fig. \ref{arc-notation}.

\begin{figure}[htbp]
      \begin{center}
\scalebox{0.58}{\includegraphics*{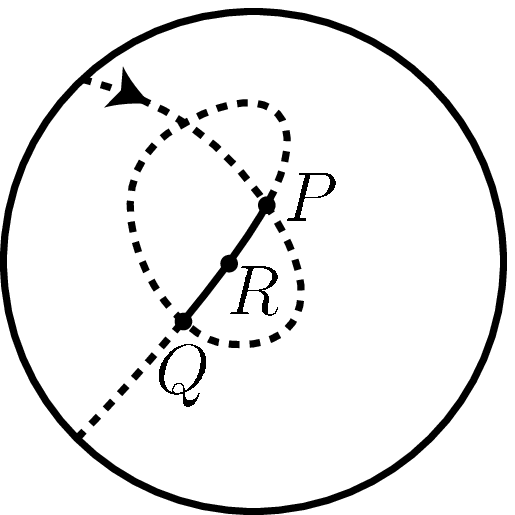}}
      \end{center}
   \caption{}
  \label{arc-notation}
\end{figure} 

%

%
The {\it vertical symmetry} of a 2-string tangle 
diagram is obtained by $\pi$-rotation along the vertical $y$-axis through the center of ${\mathbb B}^2$. Similarly a {\it horizontal symmetry} is obtained by 
$\pi$-rotation along the horizontal $x$-axis through the center of ${\mathbb B}^2$. 
We also use $\pi/2$- (counter-clockwise) rotation.
We denote the image of the vertical symmetry (resp. horizontal symmetry, $\pi/2$-rotation) of a tangle $T$ by $V(T)$ (resp. $H(T)$, $R(T)$). See Fig. \ref{tangle-symmetry}.
\begin{figure}[htbp]
      \begin{center}
\scalebox{0.48}{\includegraphics*{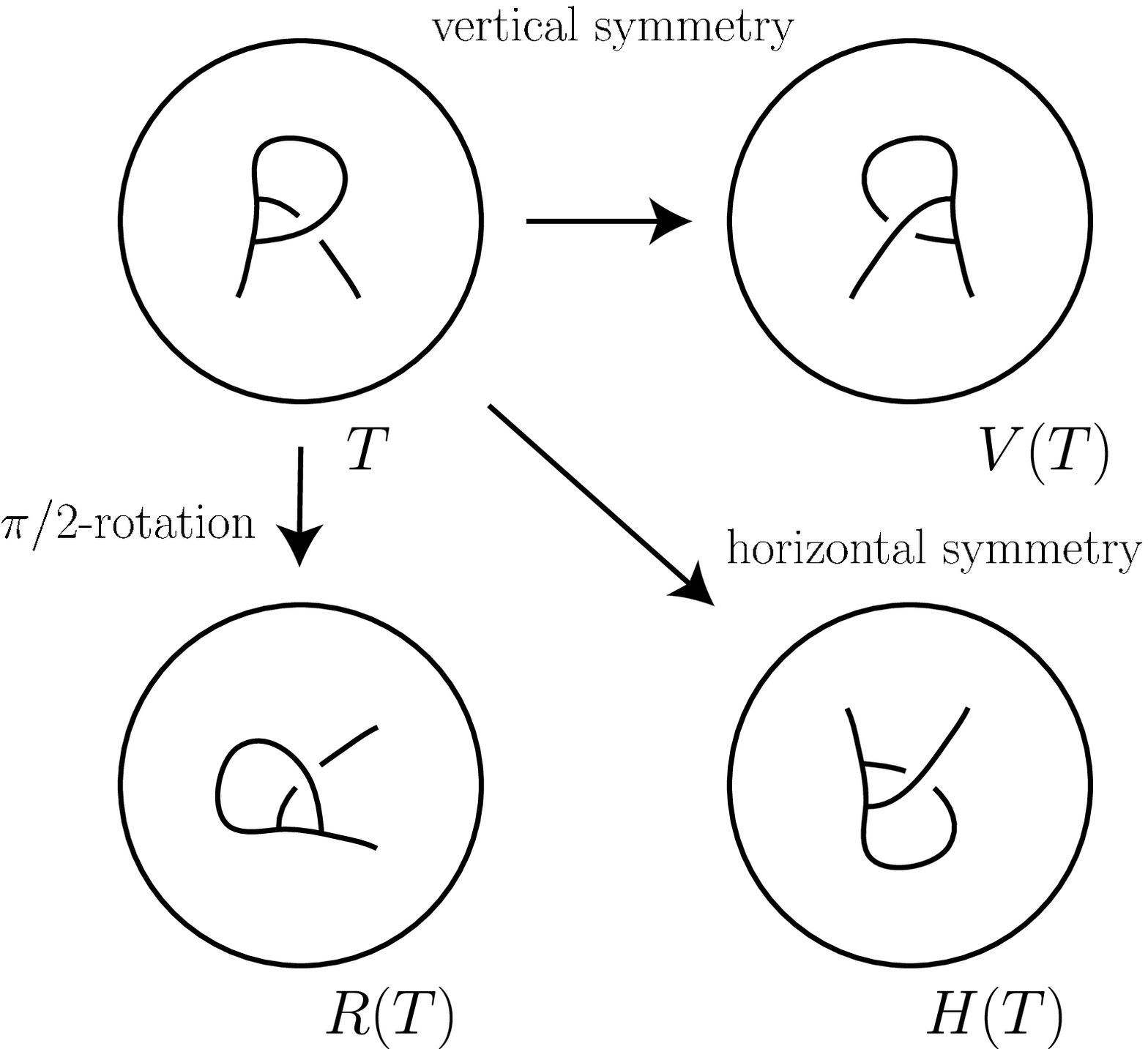}}
      \end{center}
   \caption{}
  \label{tangle-symmetry}
\end{figure} 

%

A {\it flyping} (or {\it Tait flyping}) is a local move of a link or tangle diagram (resp. projection) illustrated in Fig. \ref{flyping} 
where $\tilde S$ (resp. $\hat S$) is any subdiagram (resp. subprojection).

\begin{figure}[htbp]
      \begin{center}
\scalebox{0.48}{\includegraphics*{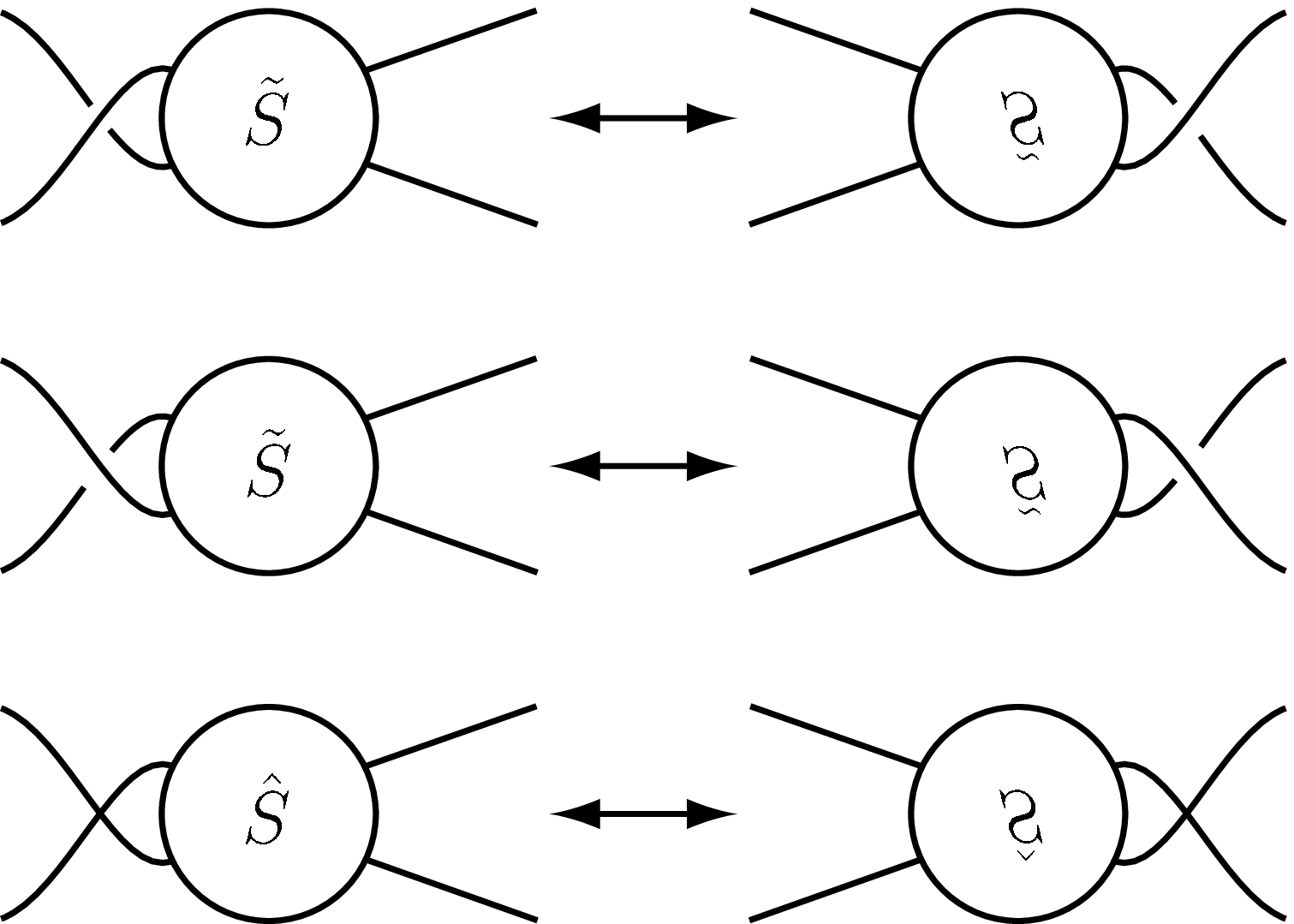}}
      \end{center}
   \caption{}
  \label{flyping}
\end{figure} 

%

If $\tilde L$ (resp. $\tilde T$) is obtained 
from $\tilde L_{0}$ (resp. $\tilde T_{0}$)
by a series of flypings, then we say that 
$\tilde L$ (resp. $\tilde T$) is a {\it flype} of $\tilde L_{0}$
(resp. $\tilde T_{0}$).

Throughout the paper we use notation describing various kinds of twists performed on a diagram 
as illustrated in Fig. \ref{twist-box-diagram1} and Fig. \ref{twist-box-diagram2} 
and on a projection as illustrated in Fig. \ref{twist-box-projection} 
(we code also in the notation the number or character of twists).
Note that the direction of twists in this paper is opposite to the standard Conway's convention. 
We also remark that $n$ is a variable so that even in the same figure $n$ may be different. 
See for example Fig. \ref{twist-box-example}, where a box can denote any positive even number.
\begin{figure}[htbp]
      \begin{center}
\scalebox{0.9}{\includegraphics*{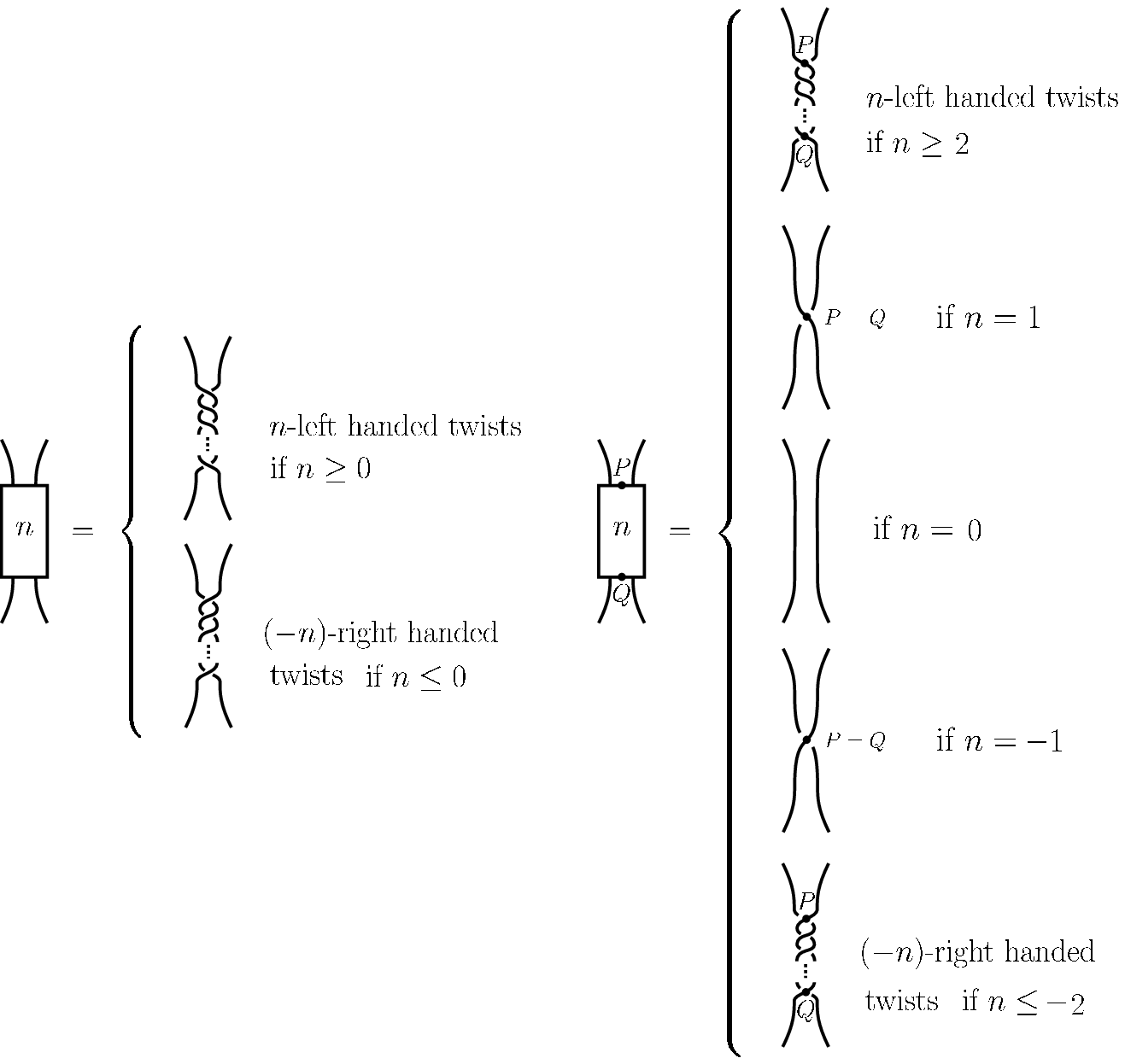}}
      \end{center}
   \caption{}
  \label{twist-box-diagram1}
\end{figure} 

%

%
\begin{figure}[htbp]
      \begin{center}
\scalebox{0.9}{\includegraphics*{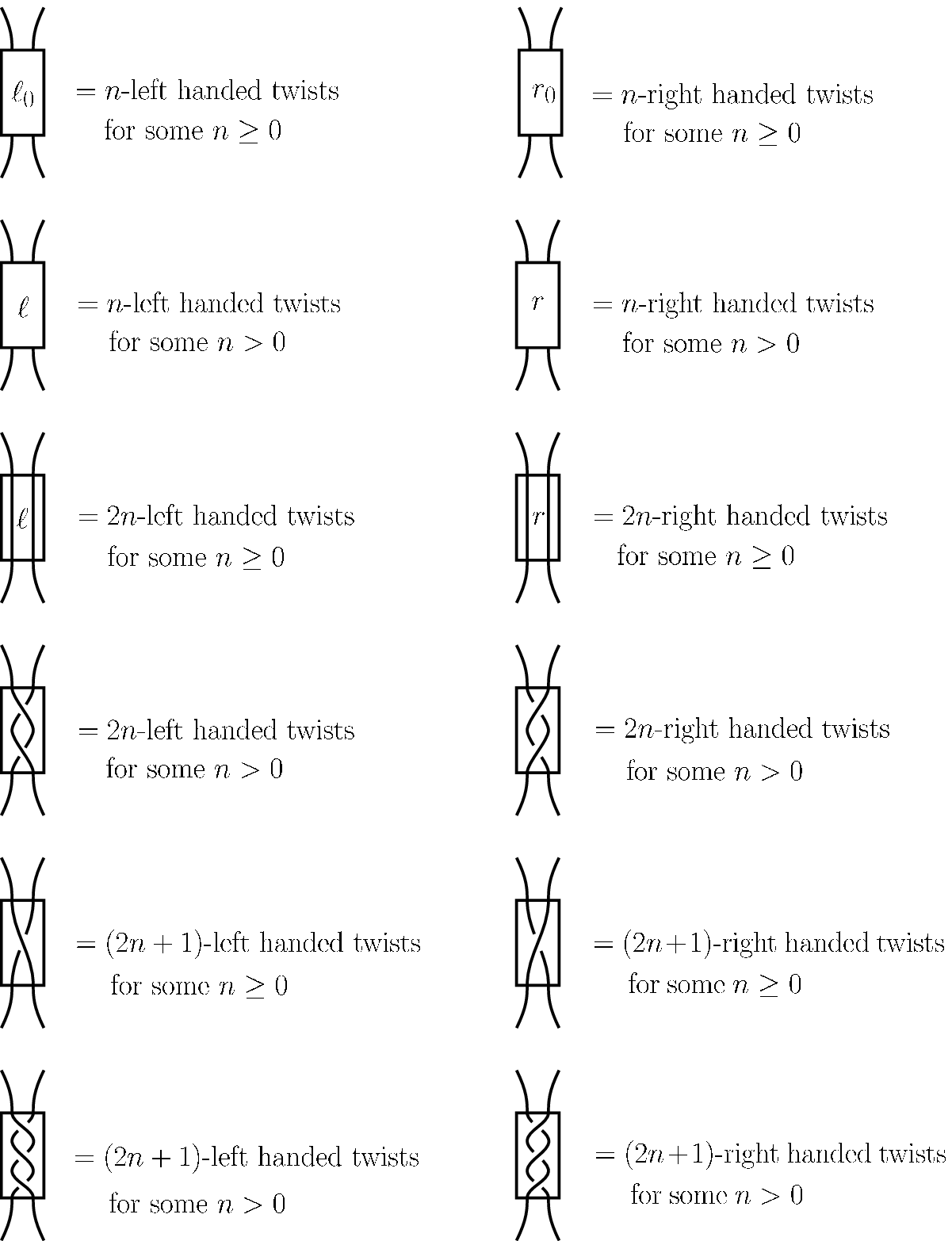}}
      \end{center}
   \caption{}
  \label{twist-box-diagram2}
\end{figure} 

%

%
\begin{figure}[htbp]
      \begin{center}
\scalebox{0.7}{\includegraphics*{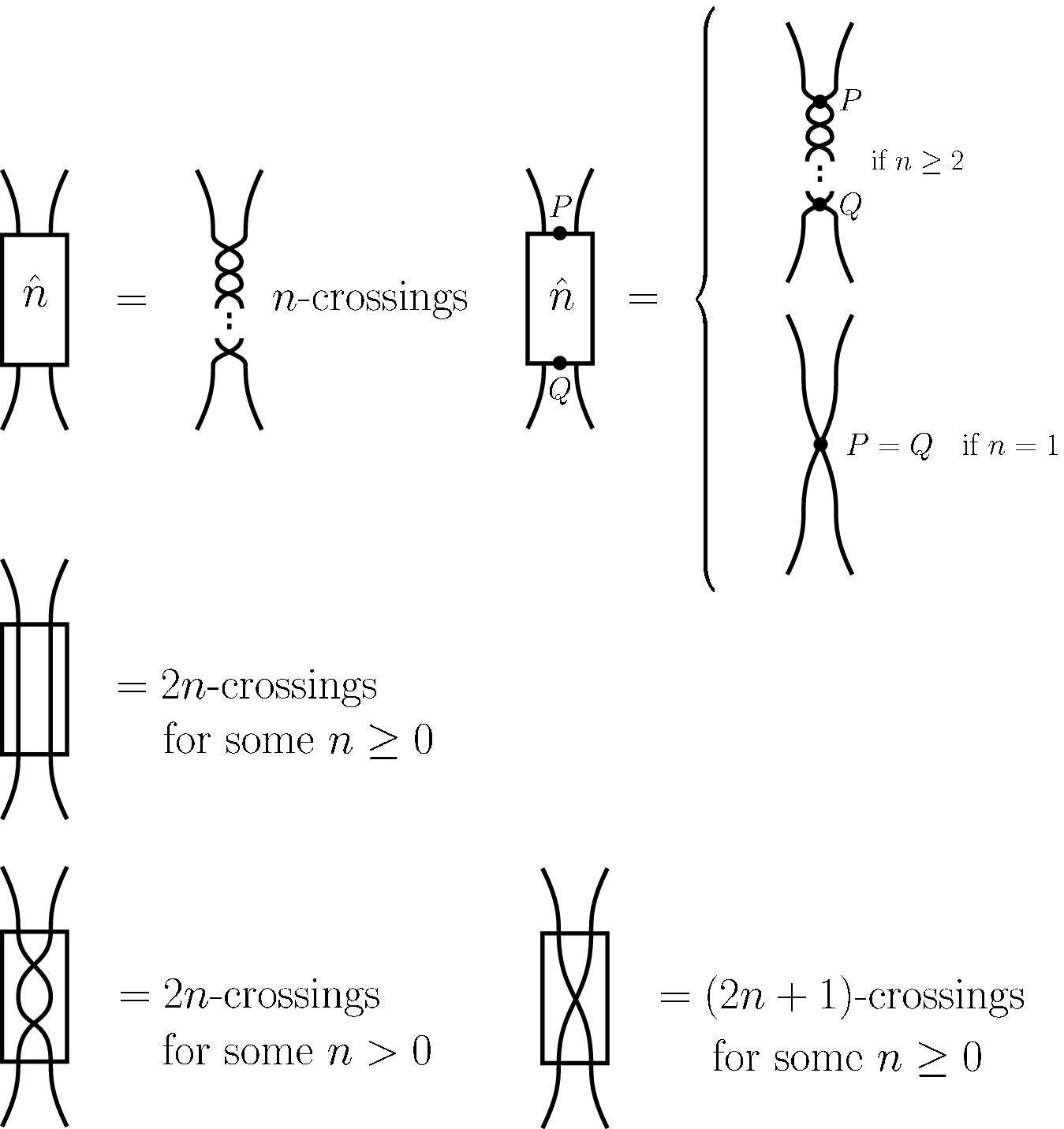}}
      \end{center}
   \caption{}
  \label{twist-box-projection}
\end{figure} 

%

%
\begin{figure}[htbp]
      \begin{center}
\scalebox{0.6}{\includegraphics*{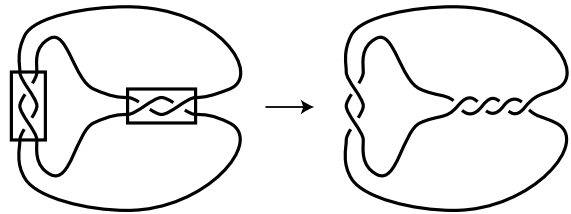}}
      \end{center}
   \caption{\ Every box can be replaced by an even positive number of twists}
  \label{twist-box-example}
\end{figure} 

%

%
By $T_1+T_2$ we denote the sum of tangles $T_1$ and $T_2$ as illustrated in Figure \ref{tangle-notation}. 
Sum of tangle diagrams and sum of tangle projections are similarly defined. 
We also use notations $T(n)$, $T(1/n)$, $\tilde T(n)$, $\tilde T(1/n)$, $\hat T(n)$ and $\hat T(1/n)$ 
where $n$ is an integer as illustrated in Figure \ref{tangle-notation}. 
Note that the notation $T(1/n)$ and $\tilde T(1/n)$ is used only for $n\geq2$ to avoid the confusion 
that $T(1/1)\neq T(1)$ and $\tilde T(1/1)\neq \tilde T(1)$. By $T(a,b)$ we denote the sum $T(a)+T(b)$. 
This notation is based on Conway's tangle notation, but differs from it slightly. 
See Figure \ref{tangle-notation}.
\begin{figure}[htbp]
      \begin{center}
\scalebox{0.58}{\includegraphics*{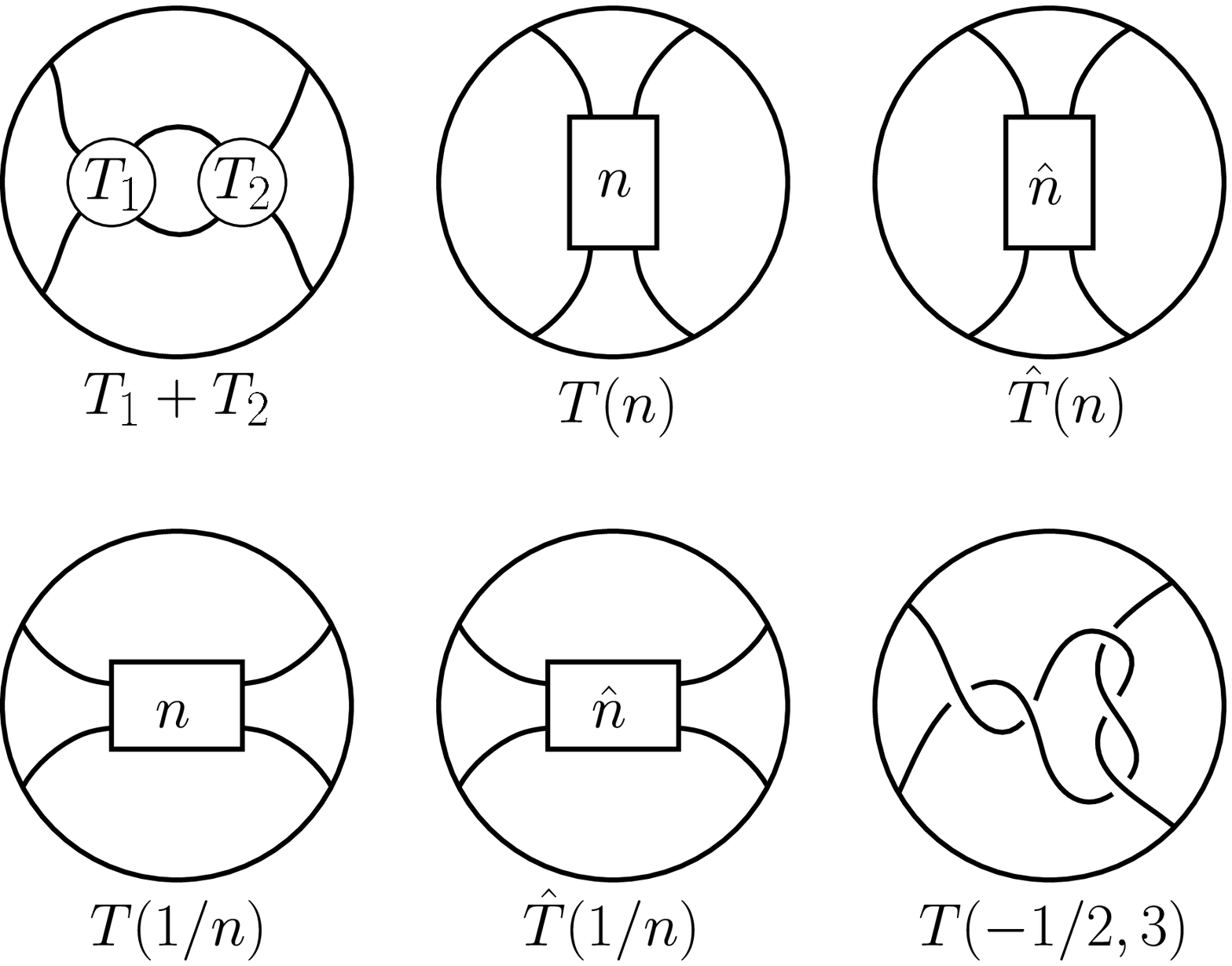}}
      \end{center}
   \caption{}
  \label{tangle-notation}
\end{figure} 

%

For a crossing $P$ of a link diagram $\tilde L$, we say that 
the tangle diagram obtained by deleting a small disk centered at $P$ 
from ${\mathbb S}^2$ is the {\it complementary tangle diagram} 
of $\tilde L$ at $P$.

For a 2-string tangle $T$ we define the {\it denominator} $D(T)$, {\it numerator} $N(T)$, $X_+$-closure $X_+(T)$ and $X_-$-closure $X_-(T)$ as illustrated in Fig. \ref{tangle-closure}.
\begin{figure}[htbp]
      \begin{center}
\scalebox{0.65}{\includegraphics*{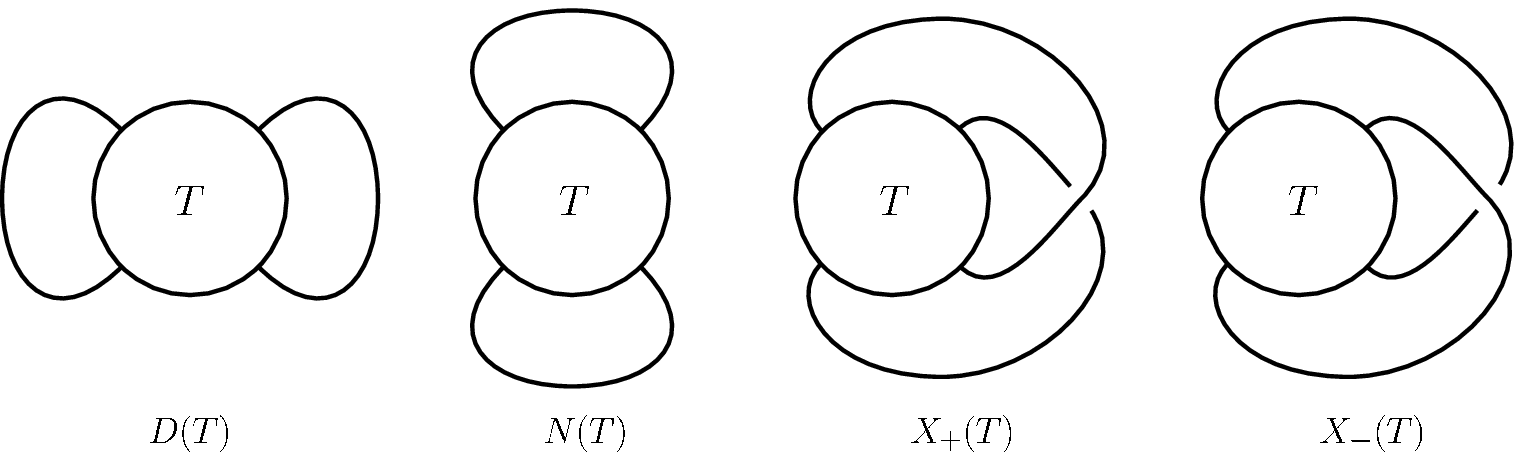}}
      \end{center}
   \caption{}
  \label{tangle-closure}
\end{figure} 

%

Let $\tilde L$ (resp. $\tilde T$) be a positive diagram representing 
a link $L$ (resp. tangle $T$). If the underlying projection  
$\hat L$ (resp. $\hat T$) is an underlying projection of another 
link $L_{0}$ (resp. tangle $T_{0}$), 
then clearly $L\geq L_{0}$ (resp. $T\geq T_{0}$). 
We use this fact throughout the paper without mentioning it explicitly. 
For the convenience we sometimes do not distinguish a link
(resp. tangle) from its diagram and we will write for example that
$\tilde L \geq L_{0}$ meaning that the link represented by $\tilde L$ 
is greater than or equal to $L_{0}$. 

\subsection{Fundamental technique.}\ 
The fundamental technique used in this paper is to apply descending (or ascending) algorithm 
that simplify or trivialize the link or tangle diagrams.
As is well known in knot theory any link projection becomes a trivial link diagram 
if we give over/under crossing information according to descending algorithm. 
Namely we choose an order of components and choose a base point at each component and trace the link 
projection starting from the base point in chosen order. 
We then give over/under crossing information so that we meet each crossing as an over-crossing first time. 
We use descending algorithm to trivialize a part of the diagram often without explicitly mentioning it.

Many of the lemmas in this section follow closely that of \cite{T-1}.
We review them for  completeness as their proofs illustrate our main method.

\begin{Lemma}\label{reducing-lemma}
Let $\hat L$ (resp. $\hat T$) be a link (resp. tangle) projection. 
Let $P$ be a self-crossing of a component $\hat\ell$(resp. $\hat t$). 
Then $\hat L \geq r(\hat L ,P)$
(resp. $\hat T \geq r(\hat T ,P)$).
\end{Lemma}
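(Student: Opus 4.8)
The plan is to unwind the definition of the order on projections. Recall that $\hat L \ge r(\hat L,P)$ means, by definition, that ${\rm LINK}(\hat L)\supseteq {\rm LINK}(r(\hat L,P))$; that is, every link realizable on the projection $r(\hat L,P)$ is also realizable on $\hat L$ (and analogously for the tangle order and ${\rm TANGLE}$). So I would fix an arbitrary $L_0\in {\rm LINK}(r(\hat L,P))$, choose a diagram $D_0$ with underlying projection $r(\hat L,P)$ representing $L_0$, and from it construct a diagram $D$ with underlying projection $\hat L$ that still represents $L_0$.

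To build $D$, note that $\hat L$ is recovered from $r(\hat L,P)$ by re-inserting the deleted loop $s=s(\hat\ell,P)$ and restoring (un-smoothing) the crossing $P$. The crossings of $r(\hat L,P)$ are exactly the crossings of $\hat L$ that are neither $P$ nor incident to $s$, and on every one of these I copy the over/under datum of $D_0$. It then remains to assign the genuinely new crossings of $\hat L$, namely $P$ together with all self-crossings of $s$ and all crossings of $s$ with the rest of the diagram. Here I invoke the descending algorithm recalled at the start of this subsection: tracing $\hat\ell$ from its first passage through $P$ around the loop $s$ and back to $P$, I declare each crossing an over-crossing at its first encounter and make $s$ pass over every other strand it meets; the sign assigned to $P$ itself is immaterial.

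With this assignment the loop $s$ lies entirely above the remainder of $D$ and is unknotted, so it bounds a small disk above the diagram and may be isotoped (keeping it over everything and fixing $P$) down to a trivial loop hanging off $\hat\ell$ at $P$; a final first Reidemeister move at $P$ then deletes this loop and converts $P$ back into the oriented smoothing, so that $\hat\ell$ returns to $r(\hat\ell,P)$. Since these moves touch only $s$ and leave the remaining diagram — which is precisely $D_0$ — unchanged, the result is $D_0$. Hence $D$ has underlying projection $\hat L$ and represents $L_0$, so $L_0\in{\rm LINK}(\hat L)$; as $L_0$ was arbitrary this gives the containment. The tangle case is identical: the loop $s$ sits in the interior of ${\mathbb B}^2$, so its removal is an isotopy fixing $\partial{\mathbb B}^2$, and the copied diagram on $\hat T$ represents the prescribed tangle.

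The one step carrying the real content — and the one I would write out most carefully — is the claim that the re-inserted loop can be stripped away by an ambient isotopy without disturbing the rest of $D$. This rests entirely on the descending-algorithm observation that a based loop whose crossings are all resolved as over on first encounter is unknotted and can be lifted above the remaining diagram; verifying that this lift together with the un-smoothing at $P$ is exactly a first Reidemeister kink, so that $\hat\ell$ is returned precisely to $r(\hat\ell,P)$ and no extra crossings are introduced elsewhere, is the place where care is needed.
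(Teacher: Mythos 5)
Your proposal is correct and follows essentially the same route as the paper's proof: copy the crossing data of a given diagram on $r(\hat L,P)$, make the loop $s(\hat\ell,P)$ pass over everything else, and resolve its self-crossings by the descending algorithm so that it is unknotted and can be stripped away without affecting the rest of the diagram. Your extra care in spelling out why the lifted loop removes via an isotopy and a first Reidemeister move (with the sign at $P$ immaterial) is exactly the step the paper compresses into ``it is clear that $\tilde L'$ is also a diagram of $L$.''
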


\begin{proof} 
Let $\tilde L$ be a diagram of a link $L$ whose underlying projection is $r(\hat L ,P)$. Let $\tilde L'$ 
be a diagram whose underlying projection is $\hat L$ with the following crossing information. 
For the crossings of $r(\hat L ,P)$ it is same as $\tilde L$, and for the crossings between 
$s(\hat L,P)$ and  $r(\hat L ,P)$, $s(\hat L,P)$ is always over $r(\hat L,P)$, and for the crossings 
of $s(\hat L,P)$, it is determined by the descending algorithm so that $s(\tilde L,P)$ itself 
is unknotted. Then it is clear that $\tilde L'$ is also a diagram of $L$ as desired. 
The tangle case is entirely the same and we omit it.
\end{proof} 

The following lemma is a diagram version of Lemma 2.1.

\begin{Lemma}\label{diagram-reducing-lemma}
Let $\tilde L$ (resp. $\tilde T$) be a link (resp. tangle) diagram. 
Let $P$ be a self-crossing of a component $\tilde\ell$ (resp. $\tilde t$). 
Suppose that the following conditions (1) and (2) hold.

(1) The knot represented by $s(\tilde \ell,P)$ (resp. $s(\tilde t,P)$) is greater than or equal to the trivial knot.

(2) Either $s(\tilde \ell,P)$ (resp. $s(\tilde t,P)$) is over $r(\tilde \ell,P)$ (resp. $r(\tilde t,P)$) 
at every negative crossings between them, or $s(\tilde \ell,P)$ (resp. $s(\tilde t,P)$) is under 
$r(\tilde \ell,P)$ (resp. $r(\tilde t,P)$) at every negative crossings between them.

Then $\tilde L \geq r(\tilde L ,P)$ (resp. $\tilde T \geq r(\tilde T ,P)$).
\end{Lemma}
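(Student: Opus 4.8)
The plan is to follow the proof of Lemma~\ref{reducing-lemma} as closely as possible, the essential difference being that the crossing information on $\tilde L$ is now already fixed, so the only modifications at our disposal are those permitted by the relation $\geq$: switching some \emph{positive} crossings to negative. First I would recall the geometric mechanism behind Lemma~\ref{reducing-lemma}. If the loop $s(\tilde\ell,P)$ (resp. $s(\tilde t,P)$) is unknotted and lies entirely over, or entirely under, the remaining arc $r(\tilde\ell,P)$ (resp. $r(\tilde t,P)$), then $s$ can be pushed into one half-space and collapsed, so that every crossing between $s$ and $r$ is removed by second Reidemeister moves and the crossing $P$ disappears by a first Reidemeister move; what remains is precisely $r(\tilde L,P)$ (resp. $r(\tilde T,P)$) with its original crossing data. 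So it suffices to reach such a configuration from $\tilde L$ using only positive-to-negative crossing changes.

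Next I would partition the crossings involving $s$ into the self-crossings of $s$ (the $s$--$s$ crossings, which are exactly the crossings of the knot $s(\tilde\ell,P)$) and the crossings between $s$ and $r$ (the $s$--$r$ crossings). These two sets are disjoint, and neither meets the crossings lying entirely on $r$, which I leave untouched so that the final diagram still carries the crossing data of $r(\tilde L,P)$. Condition~(1) handles the $s$--$s$ crossings: since the knot $s(\tilde\ell,P)$ is greater than or equal to the trivial knot, there is a set of its positive self-crossings whose switching makes $s$ the unknot, and because the orientation of $s$ is inherited from $\tilde\ell$ these are positive crossings of $\tilde L$ as well. I switch exactly those. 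Note that this operation depends only on the $s$--$s$ crossings, so it is independent of how $s$ meets $r$.

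Condition~(2) is what I would use to make the $s$--$r$ crossings uniform. In the first alternative, $s$ is over $r$ at every negative $s$--$r$ crossing; I then leave each such negative crossing alone and switch precisely those positive $s$--$r$ crossings at which $s$ lies under $r$. Each such switch turns a positive crossing into a negative one and at the same time places $s$ over $r$, so afterwards $s$ lies over $r$ at \emph{every} $s$--$r$ crossing. The second alternative is symmetric and yields $s$ under $r$ everywhere. The role of the dichotomy in~(2) is exactly that we are forbidden to switch negative crossings: if the negative $s$--$r$ crossings disagreed in type they could not be uniformized within the relation $\geq$, so the hypothesis is precisely adapted to the method.

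Having performed all these positive-to-negative changes, the loop $s$ is now unknotted and lies consistently over (or consistently under) $r$, so by the mechanism of Lemma~\ref{reducing-lemma} the resulting diagram represents the same link as $r(\tilde L,P)$. As it was obtained from $\tilde L$ by switching only positive crossings, this yields $\tilde L\geq r(\tilde L,P)$, and the tangle case is identical. I expect the point requiring the most care to be the verification that the two groups of switches genuinely do not interfere---that unknotting $s$ and uniformizing the $s$--$r$ crossings are carried out on disjoint crossing sets and that the first does not spoil the removability achieved by the second---together with the observation, recorded above, that condition~(2) is exactly the hypothesis that makes the required uniform configuration reachable using positive-to-negative changes alone.
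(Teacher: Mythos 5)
Your handling of condition (2) --- switching exactly those positive crossings between $s$ and $r$ whose over/under pattern disagrees with that of the negative ones, so that $s$ ends up entirely over (or entirely under) $r$ --- is precisely the paper's first step. The genuine gap is in your handling of condition (1). You assert that, because the knot represented by $s(\tilde\ell,P)$ is greater than or equal to the trivial knot, ``there is a set of its positive self-crossings whose switching makes $s$ the unknot,'' and you then perform those switches inside $\tilde L$. But condition (1) is a statement about the knot \emph{type} represented by $s(\tilde\ell,P)$: the relation $\geq$ is defined by an existential quantifier over diagrams, so it only guarantees that \emph{some} diagram of that knot can be taken to the unknot by positive-to-negative changes --- not that the particular sub-diagram $s(\tilde\ell,P)$ sitting inside $\tilde L$ can be unknotted by switching its own positive self-crossings. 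That stronger, diagram-level statement is nowhere proved, and it is not innocuous: if, for instance, the sub-diagram presented its knot as a connected sum containing an all-negative factor, no positive switches inside it could ever remove that factor, while the knot type could conceivably still dominate the unknot (compare the authors' own open question at the end of Section 5, whether a knot with a left-handed trefoil summand can dominate the trivial knot).

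The paper's proof is arranged precisely to sidestep this, and the repair is a reordering of your argument. Perform only the condition-(2) switches inside $\tilde L$; this produces a diagram $\tilde L'$ with $\tilde L \geq \tilde L'$ in which $s$ lies entirely over (or entirely under) $r$, so $s$ can be lifted off and $\tilde L'$ represents the connected sum of the link represented by $r(\tilde L,P)$ with the knot $K_s$ represented by $s(\tilde\ell,P)$. Now invoke condition (1) at the level of links rather than inside the fixed diagram: since $K_s \geq$ trivial knot (realized in whatever diagram of $K_s$ is convenient) and connected sum is compatible with $\geq$, one gets $r(\tilde L,P)\,\#\,K_s \geq r(\tilde L,P)\,\#\,(\text{trivial knot}) = r(\tilde L,P)$; transitivity of $\geq$ (a relation on links, not on diagrams) then yields $\tilde L \geq r(\tilde L,P)$, and likewise for tangles. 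This keeps your condition-(2) step intact and replaces the unjustified in-diagram unknotting of $s$ with an application of the definition of $\geq$ after an isotopy.
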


\begin{proof} Suppose that $s(\tilde \ell,P)$ is over $r(\tilde \ell,P)$ at every negative crossings between them. Then by changing some positive crossings between them to negative crossings we have a diagram $\tilde L'$ in which $s(\tilde \ell,P)$ is over $r(\tilde \ell,P)$ at every crossings. Note that $\tilde L\geq \tilde L'$. Then we have that $\tilde L'$ is a diagram of a link that is a connected sum of a link represented by $r(\tilde L,P)$ and a knot represented by $s(\tilde \ell,P)$. Then by the condition (1) we have that $\tilde L'\geq r(\tilde L,P)$. The other case is entirely analogous.
\end{proof}

\begin{Lemma}\label{nugatory-erasing}
Let $\hat L_1$ and $\hat L_2$ (resp. $\hat T_1$ and $\hat T_2$) be a link (resp. tangle) projections that differ locally as illustrated in Fig \ref{nugatory} where $P$ is a nugatory crossing of 
$\hat L_1$ (resp. $\hat T_1$) and $R$ is a tangle (resp. subtangle) possibly with some closed components.
Then ${\rm LINK}(\hat L_1)={\rm LINK}(\hat L_2)$ (resp. ${\rm TANGLE}(\hat T_1)={\rm TANGLE}(\hat T_2)$).
\end{Lemma}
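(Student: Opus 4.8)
The plan is to prove that a nugatory crossing can be freely removed (and reinserted) without changing the set of links realizable on the underlying projection. Since $P$ is nugatory in $\hat L_1$, by definition $\hat L_1 - P$ has more connected components than $\hat L_1$; geometrically this means there is a simple closed curve in $\mathbb{S}^2$ (resp. $\mathbb{B}^2$) meeting the projection transversally only at the single crossing $P$, separating $\mathbb{S}^2$ into two disks so that the tangle $R$ lies entirely on one side and $P$ is a ``pinch point.'' The local picture in Fig.~\ref{nugatory} is precisely that $\hat L_1$ looks like a loop at $P$ enclosing $R$, while $\hat L_2$ is the same projection with that loop undone (an $R1$-type reduction at the level of projections). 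My goal is the equality of realizable-link sets ${\rm LINK}(\hat L_1)={\rm LINK}(\hat L_2)$, which requires both inclusions.

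First I would prove ${\rm LINK}(\hat L_1)\subseteq{\rm LINK}(\hat L_2)$. Take any link $L\in{\rm LINK}(\hat L_1)$, realized by a diagram $\tilde L_1$ with underlying projection $\hat L_1$. The crossing $P$, being nugatory, is exactly the kind of crossing removed by a first Reidemeister move: regardless of the over/under information assigned to $P$, the loop can be undone by an $R1$ move. Performing that $R1$ move on $\tilde L_1$ yields a diagram $\tilde L_2$ whose underlying projection is $\hat L_2$ and which represents the same link $L$ (an $R1$ move is an ambient isotopy of $\mathbb{S}^3$, resp. fixes $\partial\mathbb{B}^2$ in the tangle case). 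Hence $L\in{\rm LINK}(\hat L_2)$.

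For the reverse inclusion ${\rm LINK}(\hat L_2)\subseteq{\rm LINK}(\hat L_1)$, I would take a link $L\in{\rm LINK}(\hat L_2)$ realized by $\tilde L_2$ with projection $\hat L_2$, and run the $R1$ move backwards: insert a kink at the appropriate place to introduce the crossing $P$, choosing \emph{either} crossing-sign for $P$. This produces a diagram with underlying projection $\hat L_1$ representing the same link $L$, so $L\in{\rm LINK}(\hat L_1)$. Combining the two inclusions gives the claimed equality; the tangle case is verbatim the same, using that the separating curve meets $\partial\mathbb{B}^2\cup\hat T$ only at $P$ and that the isotopy is fixed on $\partial\mathbb{B}^2$.

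The argument is essentially routine once the geometry is set up correctly, so there is no deep obstacle; the only point demanding care is translating the combinatorial definition of ``nugatory'' (that $\hat L_1-P$ gains a component, resp.\ $(\partial\mathbb{B}^2\cup\hat T_1)-P$ gains a component) into the explicit local loop-enclosing-$R$ picture of Fig.~\ref{nugatory}, and checking that this is exactly an $R1$ configuration irrespective of how $R$ is knotted or linked. In particular I would verify that the separating curve realizing the nugatoriness can be isotoped to the standard small circle around the kink, so that the $R1$ move is genuinely available; once that is confirmed, the fact that an $R1$ move preserves the underlying link type (in both directions, and for either sign of the reinserted crossing) immediately yields both inclusions.
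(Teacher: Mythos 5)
Your two-inclusion structure matches the paper's, but the mechanism you invoke to realize each inclusion --- a first Reidemeister move --- does not exist in general, and the step you yourself flag as ``the only point demanding care'' is exactly where the argument breaks. An R1 move removes a kink whose loop bounds a disk containing \emph{no other part of the diagram}. Here the loop at $P$ encloses the tangle $R$, which may carry crossings and closed components; no isotopy of ${\mathbb S}^2$ (resp.\ ${\mathbb B}^2$) can shrink the separating curve to ``the standard small circle around the kink,'' because the crossings of $R$ lie inside that curve and the loop strand runs through them. So whenever $R$ is nontrivial, the configuration at $P$ is not an R1 configuration and your claimed verification fails. The reverse inclusion has the same defect: inserting a small kink by R1 next to $R$ produces a projection whose new loop is empty, which is not $\hat L_1$ (where the loop encloses $R$), and sliding that loop over $R$ to repair this would create new crossings and change the underlying projection.

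The missing idea --- and the paper's entire proof --- is a genuinely three-dimensional move: given any diagram $\tilde L_1$ with underlying projection $\hat L_1$, rotate by $\pi$ the ball lying over the disk bounded by the separating curve, i.e.\ flip the subtangle $\tilde R$ together with the loop. This ambient isotopy of ${\mathbb S}^3$ (supported away from $\partial {\mathbb B}^2$ in the tangle case, so the boundary stays fixed) eliminates the crossing $P$ whatever its sign, carries the crossing information of $\tilde R$ along, and yields a diagram of the same link whose underlying projection is $\hat L_2$; running the rotation backwards reinserts $P$ and gives the reverse inclusion. The paper's one-line summary is the point you need: $P$ is nugatory for \emph{every} diagram with underlying projection $\hat L_1$, so this rotation is available for every such diagram. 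Your proof becomes correct once ``R1 move'' is replaced throughout by this rotation of $R$; as written, it rests on a move that is unavailable in precisely the nontrivial cases.
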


\begin{proof}
Let $\tilde L_1$ (resp. $\tilde T_1$) be a diagram whose underlying projection is $\hat L_1$ (resp. $\hat T_1$). 
Then by rotating the tangle (resp. subtangle) $\tilde R$ we eliminate the nugatory crossing $P$ and 
obtain the diagram $\tilde L_2$ (resp. $\tilde T_2$) of the same link (resp. tangle) 
whose underlying projection is $\hat L_2$ (resp. $\hat T_2$). 
Thus we have ${\rm LINK}(\hat L_1)\subset{\rm LINK}(\hat L_2)$ 
(resp. ${\rm TANGLE}(\hat T_1)\subset{\rm TANGLE}(\hat T_2)$). The converse holds in the same way.\ 
We can summarize the proof succinctly: $P$ is a nugatory crossing for any $\tilde L_1$ (resp. $\tilde T_1$) 
with underlying projection $\hat L_1$ (resp. $\hat T_1$).
\end{proof} 

%
\begin{figure}[htbp]
      \begin{center}
\scalebox{0.58}{\includegraphics*{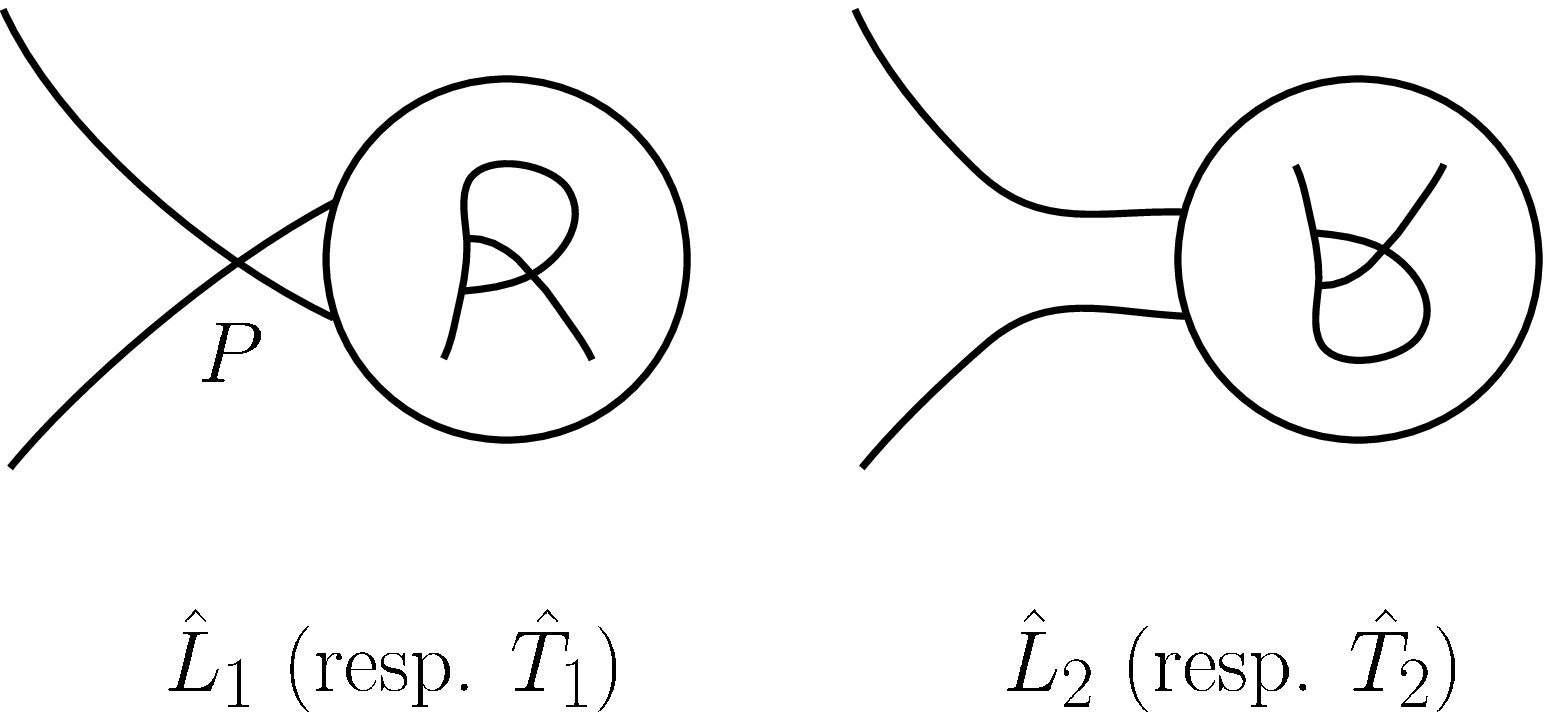}}
      \end{center}
   \caption{}
  \label{nugatory}
\end{figure} 

%


\begin{Lemma}\label{sublemma1}
Let $\hat T_1$ and $\hat T_2$ be tangle projections that differs locally as illustrated in 
Fig. \ref{R2-projections}. Then $\hat T_1$ is greater than or equal to $\hat T_2$.
\end{Lemma}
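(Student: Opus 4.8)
The plan is to unwind the definition of the order on projections: by definition $\hat T_1 \geq \hat T_2$ means exactly ${\rm TANGLE}(\hat T_1) \supset {\rm TANGLE}(\hat T_2)$, so it suffices to show that every tangle admitting a diagram with underlying projection $\hat T_2$ also admits a diagram with underlying projection $\hat T_1$. The two projections of Fig. \ref{R2-projections} agree outside a small disk, and inside that disk $\hat T_1$ carries a 2-gon formed by two crossings of the two local strands, whereas in $\hat T_2$ those strands have been pulled apart. Thus passing from $\hat T_2$ to $\hat T_1$ is the insertion of a bigon at the level of projections, and this is the only place where the two differ.

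First I would take an arbitrary $T \in {\rm TANGLE}(\hat T_2)$ and fix a tangle diagram $\tilde T_2$ with underlying projection $\hat T_2$ representing $T$. I keep every crossing of $\tilde T_2$ outside the distinguished disk unchanged, and inside the disk I assign over/under information to the two crossings of the 2-gon so that one of the two local strands lies entirely over the other. This yields a diagram $\tilde T_1$ whose underlying projection is $\hat T_1$. Because one strand is over the other at both crossings, the 2-gon is removable by a single second Reidemeister move, which is an ambient isotopy of $\mathbb{B}^3$ fixed on $\partial \mathbb{B}^3$; hence $\tilde T_1$ represents the same tangle $T$, so $T \in {\rm TANGLE}(\hat T_1)$. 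Since $T$ was arbitrary, ${\rm TANGLE}(\hat T_1) \supset {\rm TANGLE}(\hat T_2)$, i.e. $\hat T_1 \geq \hat T_2$. This parallels the argument of Lemma \ref{nugatory-erasing}, with one difference: here only the inclusion $\supset$ holds, because assigning the two crossings of the 2-gon with opposite roles produces a clasp, a tangle that in general does not lie in ${\rm TANGLE}(\hat T_2)$; this is precisely why the statement asserts domination rather than equality.

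The step that needs the most care is checking that the chosen assignment really does produce a bigon removable by an R2 move for whichever orientation (parallel or anti-parallel) the two local strands carry. This is routine, since the feasibility of the R2 move depends only on one strand being consistently over the other and not on the orientations; nevertheless it is the point at which the configuration drawn in Fig. \ref{R2-projections} must be matched to a bona fide second Reidemeister move, and I would verify it directly against the figure.
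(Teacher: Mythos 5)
Your proof is correct and follows essentially the same route as the paper: given any diagram over $\hat T_2$, one builds a diagram over $\hat T_1$ by making one local strand pass entirely over the other in the 2-gon and keeping all other crossings unchanged, so that a single second Reidemeister move shows the two diagrams represent the same tangle, giving ${\rm TANGLE}(\hat T_1)\supset{\rm TANGLE}(\hat T_2)$. Your additional remarks (why only inclusion rather than equality holds, and that the R2 move is insensitive to orientation) are consistent with, and slightly more explicit than, the paper's brief argument.
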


\begin{figure}[htbp]
      \begin{center}
\scalebox{0.58}{\includegraphics*{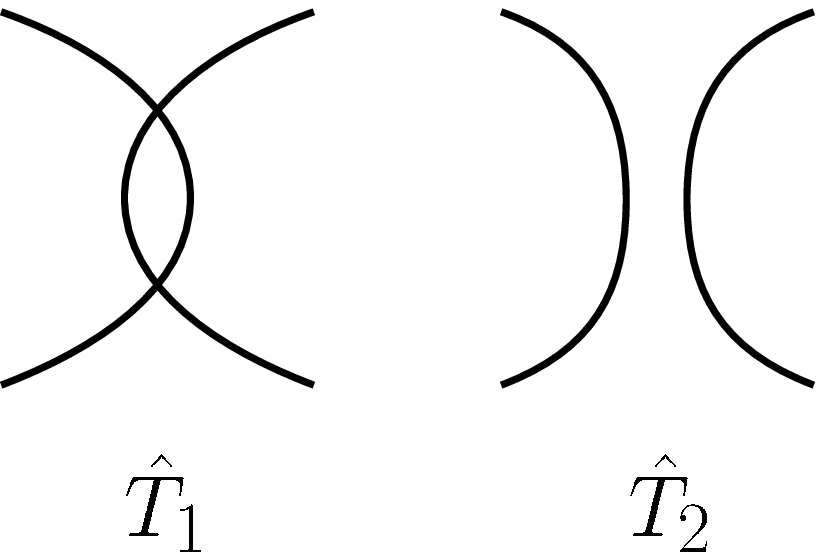}}
      \end{center}
   \caption{}
  \label{R2-projections}
\end{figure} 

%

\begin{proof}
Let $\tilde T_2$ be a diagram whose underlying projection is $\hat T_2$. 
Let $\tilde T_1$ be a diagram whose underlying projection is $\hat T_1$ such that the left string 
is over the right string in Fig. \ref{R2-projections} and other crossings have the same over/under 
crossing information as $\tilde T_2$. Then $\tilde T_1$ and $\tilde T_2$ are transformed into each other 
by a second Reidemeister move. Therefore the result is proved.
\end{proof}

Similarly we have the following lemma.

\begin{Lemma}\label{sublemma1-2}
Let $\hat T_1$ and $\hat T_2$ be tangle projections that differs locally as illustrated in Fig. \ref{R2-projections-2}. Then $\hat T_1$ is greater than or equal to $\hat T_2$.
\end{Lemma}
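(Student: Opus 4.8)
The plan is to follow verbatim the strategy of the proof of Lemma~\ref{sublemma1}, since $\hat T_1$ and $\hat T_2$ again differ only by the presence of a bigon (2-gon) in $\hat T_1$ that is absent from $\hat T_2$; the sole change is the local configuration (orientation and/or which strands bound the bigon) recorded in Fig.~\ref{R2-projections-2}. Concretely, proving $\hat T_1 \geq \hat T_2$ amounts to showing ${\rm TANGLE}(\hat T_1) \supset {\rm TANGLE}(\hat T_2)$, so I would start with an arbitrary tangle $T \in {\rm TANGLE}(\hat T_2)$ and fix a diagram $\tilde T_2$ with underlying projection $\hat T_2$ that represents $T$.

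First I would build a diagram $\tilde T_1$ whose underlying projection is $\hat T_1$ as follows. At every crossing that $\hat T_1$ shares with $\hat T_2$ I copy the over/under information of $\tilde T_2$. At the two additional crossings of $\hat T_1$ that bound the bigon of Fig.~\ref{R2-projections-2} I assign the crossing information so that one of the two local strands lies entirely over the other, exactly as in Lemma~\ref{sublemma1} where the left string is chosen to pass over the right. With this choice the two bigon crossings form a cancelling pair.

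Next I would observe that $\tilde T_1$ and $\tilde T_2$ are then related by a single second Reidemeister move that removes (equivalently, introduces) this bigon, all other crossings being untouched. Hence $\tilde T_1$ represents the same tangle $T$, so $T \in {\rm TANGLE}(\hat T_1)$. Since $T$ was arbitrary, ${\rm TANGLE}(\hat T_2) \subset {\rm TANGLE}(\hat T_1)$, which is precisely $\hat T_1 \geq \hat T_2$.

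The only point requiring genuine verification---and the sole place where Fig.~\ref{R2-projections-2} enters rather than Fig.~\ref{R2-projections}---is that the chosen crossing assignment is compatible with the local picture, i.e. that pulling one strand over the other really is an admissible type~II move for this particular orientation and strand configuration. I expect this to be immediate from inspection of Fig.~\ref{R2-projections-2}: a bigon always admits an R2 move once the over-strand is selected consistently, and the difference from Lemma~\ref{sublemma1} amounts at most to interchanging ``left over right'' with ``right over left'' (or the analogous over/under swap), which does not affect the argument. This is why the lemma can legitimately be asserted as following ``similarly.''
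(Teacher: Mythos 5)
Your proposal is correct and is essentially the paper's own argument: the paper omits the proof of this lemma, stating it is ``quite similar'' to that of Lemma~\ref{sublemma1}, and your construction---copying the crossing information of $\tilde T_2$, making one strand of the bigon pass entirely over the other, and cancelling the two crossings by a single second Reidemeister move---is exactly that proof adapted to the configuration of Fig.~\ref{R2-projections-2}. Your closing observation, that the only point to check is that a coherently over/under bigon always admits an R2 move regardless of orientation, is precisely why the paper felt justified in omitting the details.
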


\begin{figure}[htbp]
      \begin{center}
\scalebox{0.58}{\includegraphics*{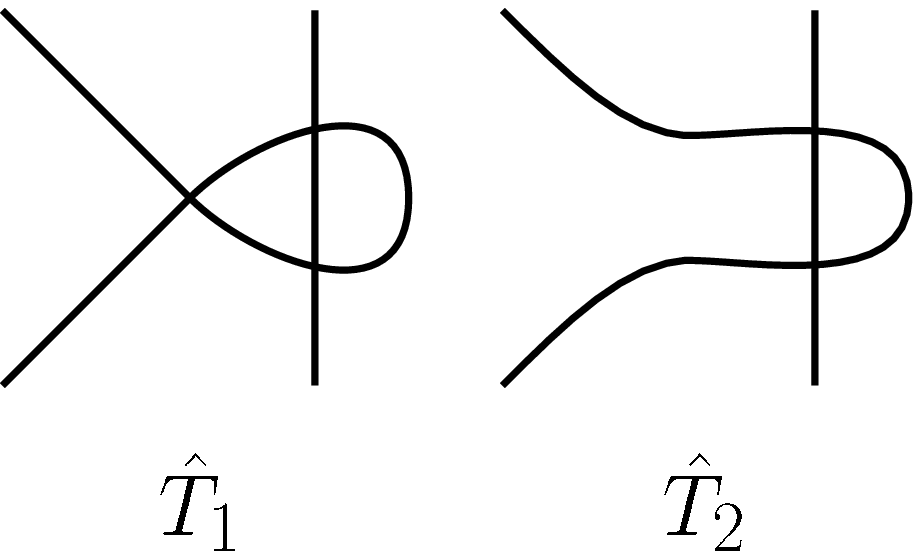}}
      \end{center}
   \caption{}
  \label{R2-projections-2}
\end{figure} 

%

The proof is quite similar and we omit it.


In \cite{P-1} and \cite{T-1}
it is shown that a knot projection $\hat K$
is a projection of a trefoil knot if and only if $\hat K$ is not 
almost trivial, and the link projection $\hat L$ is a projection 
of a Hopf link (plus, possibly, trivial components) if and only if 
$\hat L$ has mixed crossings (compare also \cite{Co-G}). For tangle projections
the analogous results hold:

\begin{Lemma}\label{local-trefoil-lemma}
 A 1-string tangle projection $\hat T$ is a projection of the tangle
in Fig. \ref{local-trefoil} (we call it a local trefoil) if and only if $\hat T$ is not 
an almost trivial tangle.
\end{Lemma}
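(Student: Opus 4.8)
The plan is to reduce the statement to the \emph{knot} characterization already recalled just above --- that a knot projection $\hat K$ realizes a trefoil (i.e. the trefoil lies in ${\rm LINK}(\hat K)$) if and only if $\hat K$ is not almost trivial --- by closing the single string of $\hat T$ up into a knot. Concretely, I write $\hat t$ for the arc of $\hat T$, with endpoints $E_0,E_1$ on $\partial{\mathbb B}^2$, and let $c$ be an arc in the complementary disk ${\mathbb S}^2\setminus{\mathbb B}^2$ joining $E_0$ to $E_1$ and meeting $\hat T$ only at these two endpoints. Then $\hat K=\hat t\cup c$ is a knot projection on ${\mathbb S}^2$ whose crossings are exactly the self-crossings of $\hat t$, and every over/under assignment on $\hat T$ extends, without introducing new crossings, to a diagram on $\hat K$ whose knot is the closure of the corresponding tangle obtained by capping the arc with the trivial arc $c$.

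First I would verify the two ``dictionary'' facts that make the reduction work. \emph{(a) Almost-triviality is preserved under this closure.} For a self-crossing $P$ of $\hat t$ the endpoints $E_0,E_1$ always lie in the complementary part $r(\hat t,P)$ --- they are the extreme points of the arc, never on the middle loop $s(\hat t,P)$ --- so adjoining $c$, which joins two points of $r(\hat t,P)$, can never reconnect that loop to the rest. Hence $\hat t-P$ is disconnected precisely when $\hat K-P$ is, and therefore $\hat T$ is almost trivial if and only if $\hat K$ is. \emph{(b) Realizing the local trefoil corresponds to realizing the trefoil.} A $1$-string tangle (a ball--arc pair taken up to isotopy fixing $\partial{\mathbb B}^3$) is determined by the knot type of its \emph{local knot}, namely the knot obtained by capping its arc with the trivial external arc $c$. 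Consequently, for a crossing assignment producing the tangle $T$ on $\hat T$ and the knot $K$ on $\hat K$, one has $T$ equal to the local trefoil of Fig.~\ref{local-trefoil} if and only if $K$ is the trefoil; so the local trefoil lies in ${\rm TANGLE}(\hat T)$ exactly when the trefoil lies in ${\rm LINK}(\hat K)$.

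Combining (a), (b) with the cited result of \cite{P-1} and \cite{T-1} then yields the lemma directly: the local trefoil lies in ${\rm TANGLE}(\hat T)$ $\iff$ the trefoil lies in ${\rm LINK}(\hat K)$ $\iff$ $\hat K$ is not almost trivial $\iff$ $\hat T$ is not almost trivial. I would also record the trivial implication self-containedly as a check: if $\hat T$ \emph{is} almost trivial then every self-crossing is nugatory, so by repeatedly untwisting innermost loops (Lemma~\ref{nugatory-erasing}) the projection reduces to a crossingless arc and ${\rm TANGLE}(\hat T)$ consists of the trivial tangle alone, which is not the (knotted) local trefoil.

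The main obstacle is Bridge (b): one must justify with care that a $1$-string tangle is classified by the knot type of its closure --- i.e. that capping the arc with the fixed unknotted, unlinked arc $c$ converts the isotopy type of the ball--arc pair into its local knot type, and that this is a genuine bijection onto knot types rather than merely a well-defined invariant. The parity phenomenon underlying the whole statement (that no arc shadow carries exactly one or two essential crossings, so that the smallest non--almost--trivial shadow already contains a full three-crossing trefoil pattern) is precisely what is packaged inside the cited knot theorem, which is why the reduction lets me avoid reproving it. Should a self-contained argument be preferred, the alternative is to locate a non-nugatory self-crossing $P$, pass to an innermost loop $s(\hat t,P)$ still meeting $r(\hat t,P)$, extract from it a three-crossing local trefoil pattern, and trivialize the remainder by the descending algorithm via Lemmas~\ref{reducing-lemma} and \ref{diagram-reducing-lemma}; that route is more laborious and is exactly where the spine and multiplicity bookkeeping developed above would be invoked.
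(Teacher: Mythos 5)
Your reduction-to-the-knot-case strategy is genuinely different from the paper's proof (which is direct: remove nugatory crossings by Lemma~\ref{nugatory-erasing}, locate a tear drop 1-gon, use non-nugatority to find a return crossing $Q$, and apply the descending algorithm except at $Q$), and the strategy could in principle work; but as written your dictionary fact (a) contains a false step, and it is used in exactly the direction where it fails. Your claim is that $\hat t-P$ is disconnected precisely when $\hat K-P$ is, justified by saying that attaching $c$ cannot reconnect the loop $s(\hat t,P)$ to the rest. That justification overlooks that $\hat t-P$ has \emph{three} pieces, not two: the loop $s(\hat t,P)$ and the two end-pieces of $r(\hat t,P)$, and attaching $c$ \emph{does} reconnect the two end-pieces to each other. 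Concretely, take $\hat T$ to be the standard three-crossing overhand (local trefoil) shadow with Gauss sequence $C_1C_2C_3C_1C_2C_3$, and $P=C_1$: the initial segment from the endpoint to $C_1$ carries no crossings, so $\hat t-C_1$ is disconnected, yet $\hat K-C_1$ is connected (the trefoil shadow is reduced). So the crossing-by-crossing equivalence is false; only the implication ``$\hat t-P$ connected $\Rightarrow$ $\hat K-P$ connected'' survives. That surviving direction is enough for the ``if'' half of the lemma, but your chain for the ``only if'' half needs ``$\hat K$ not almost trivial $\Rightarrow$ $\hat T$ not almost trivial,'' i.e.\ ``every crossing disconnects $\hat t$ $\Rightarrow$ every crossing is nugatory in $\hat K$,'' and this is a genuine planarity fact, not a formality: one must invoke the Jordan-curve/Gauss-parity argument that the loop at a non-nugatory crossing meets the complementary part in an even number ($\geq 2$) of further crossings, and then exhibit from two such interleaving crossings one whose removal leaves $\hat t$ connected. (The same unproved identification is hidden in your ``self-contained check,'' where ``almost trivial $\Rightarrow$ every self-crossing is nugatory'' is asserted; with the paper's formal definition of almost trivial via $\hat t-P$, that is precisely the nontrivial step, and it fails for curves on the torus, so no purely local argument can give it.)

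Separately, your bridge (b) --- that a $1$-string tangle up to isotopy rel $\partial{\mathbb B}^3$ is determined by the knot type of its closure --- is true but is never established; you correctly identify it as the main obstacle and then leave it as an assertion. It requires (for instance) homogeneity of knots plus uniqueness of regular neighborhoods to standardize the exterior trivial ball--arc pair, Alexander's trick to promote a pair homeomorphism rel boundary to an isotopy rel boundary, and triviality of the mapping class group of the twice-punctured sphere; alternatively a citation would do, but some justification is required. Since both halves of your argument lean on unestablished (and in case (a), incorrectly argued) bridges, the proposal has genuine gaps, whereas the paper's constructive proof needs neither bridge.
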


\begin{figure}[htbp]
      \begin{center}
\scalebox{0.58}{\includegraphics*{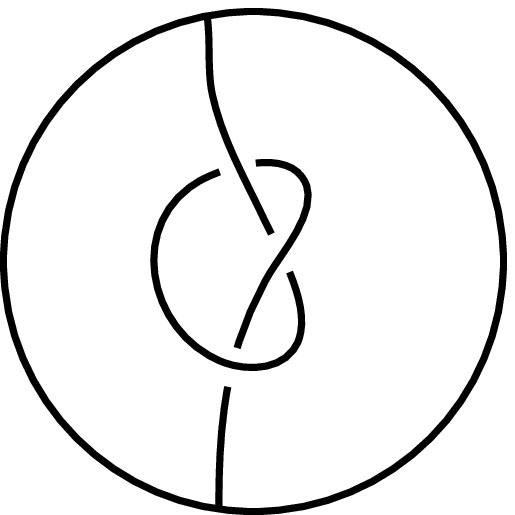}}
      \end{center}
   \caption{}
  \label{local-trefoil}
\end{figure} 

%

%
\begin{proof} 
By Lemma \ref{nugatory-erasing} we may assume that $\hat T$ has no nugatory crossings. 
We trace $\hat T$ starting from its end point and then find a tear drop disk (1-gon) $\delta$ with vertex $P$,  
as illustrated in Fig. \ref{local-trefoil-proof} (a). Then we continue to trace $\hat T$ and since $P$ is not 
a nugatory crossing we return to $\delta$ at a crossing, say $Q$. See Fig. \ref{local-trefoil-proof} (b). 
Then we add over/under crossing information to $\hat T$ by descending algorithm (except $Q$), 
as illustrated in Fig. \ref{local-trefoil-proof} (c).
\end{proof} 
\begin{figure}[htbp]
      \begin{center}
\scalebox{0.58}{\includegraphics*{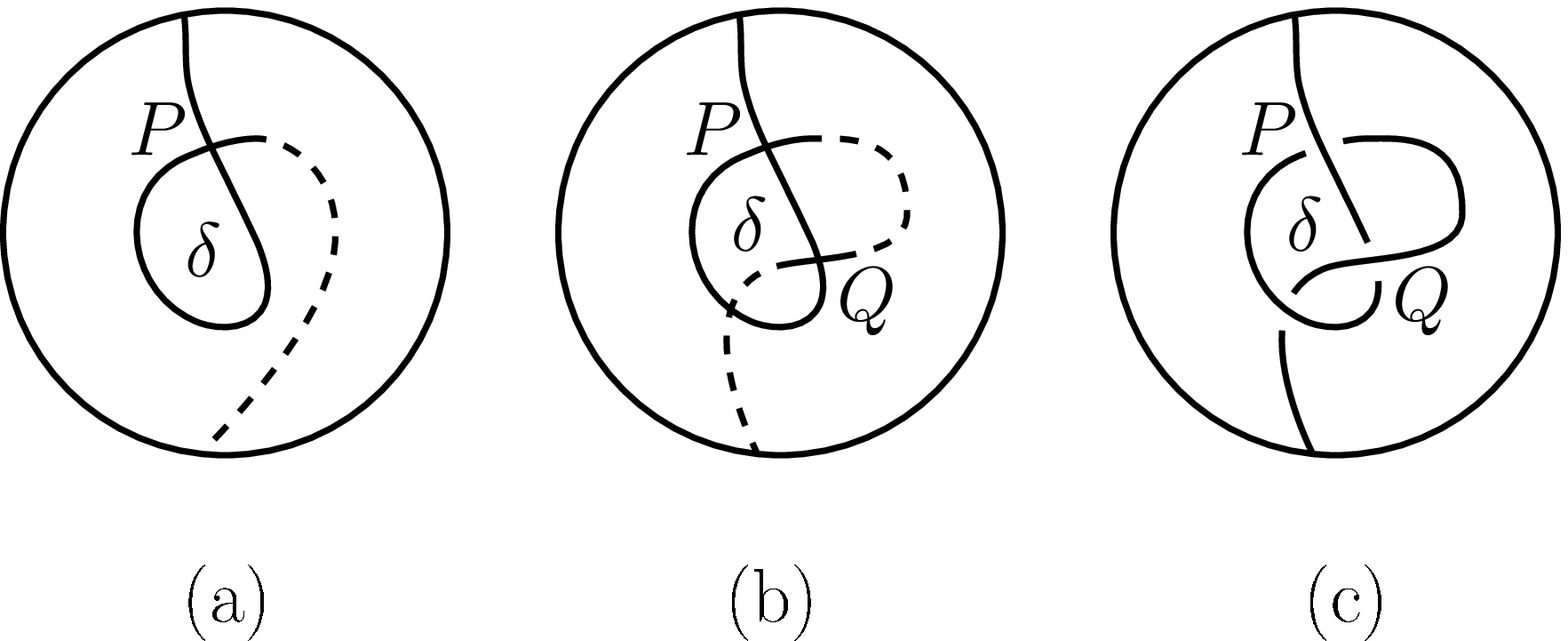}}
      \end{center}
   \caption{}
  \label{local-trefoil-proof}
\end{figure} 

%

\begin{Lemma}\label{hook-tangle}
A 2-string tangle projection $\hat T$ with vertical connection
is a projection of the tangle $T(-2)$ (Fig. \ref{vertical-hook}) (and it mirror image $T(2)$) if and only if 
$\hat T$ has mixed crossings.
\end{Lemma}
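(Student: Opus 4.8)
I prove the equivalence by treating the two implications separately, writing ${\rm TANGLE}(\hat T)$ for the tangles realized by the projection $\hat T$; recall that ``$\hat T$ is a projection of $T(-2)$'' means $T(-2)\in{\rm TANGLE}(\hat T)$, and that ${\rm TANGLE}(\hat T_1)\supset{\rm TANGLE}(\hat T_2)$ is exactly $\hat T_1\ge\hat T_2$. The easy implication is that if $T(-2)$ (equivalently $T(2)$) lies in ${\rm TANGLE}(\hat T)$, then $\hat T$ must have a mixed crossing. For if $\hat T$ had no mixed crossing, then $\hat a$ and $\hat b$ would be disjoint in ${\mathbb B}^2$; projecting to disjoint closed sets, the two strings of any tangle in ${\rm TANGLE}(\hat T)$ would lie over disjoint planar regions, hence in disjoint sub-balls, and so could be separated rel $\partial{\mathbb B}^2$. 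Since the two strings of $T(-2)$ are clasped and cannot be separated rel boundary, this forces $T(-2)\notin{\rm TANGLE}(\hat T)$, and contraposition gives the claim; the same argument applies to $T(2)$.

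For the converse the plan is to show that a mixed crossing forces $\hat T\ge\hat{T(-2)}$, where $\hat{T(-2)}$ is the two--crossing vertical clasp projection. Since $\hat{T(-2)}=\hat{T(2)}$ and both $T(-2)$ and $T(2)$ are obtained from this two--gon projection by the two clasp assignments of over/under data (while the two $R2$--assignments give the trivial tangle), the inclusion ${\rm TANGLE}(\hat T)\supset{\rm TANGLE}(\hat{T(-2)})$ will then deliver both $T(-2),T(2)\in{\rm TANGLE}(\hat T)$. First, by Lemma~\ref{nugatory-erasing} I may assume $\hat T$ reduced. Next I use Lemma~\ref{reducing-lemma} to delete self--crossings of $\hat a$ and of $\hat b$, passing to their spines so that the two strings become embedded arcs meeting only in mixed crossings; because the vertical connection makes the endpoints $A_0,A_\infty,B_0,B_\infty$ non--interleaving on $\partial{\mathbb B}^2$, the number of surviving mixed crossings is even. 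Assuming at least two remain, an innermost pair bounds a $2$--gon between the two arcs, and I strip away every other $2$--gon by repeated application of Lemma~\ref{sublemma1} and Lemma~\ref{sublemma1-2}, each step only enlarging ${\rm TANGLE}$, until a single clasp $2$--gon is left. That residual projection is $\hat{T(-2)}$, giving $\hat T\ge\hat{T(-2)}$.

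The one genuinely delicate point --- and the step I expect to be the main obstacle --- is guaranteeing that the self--crossing reduction just described does not destroy \emph{all} mixed crossings. A self--crossing loop of $\hat a$ may carry mixed crossings with $\hat b$ and yet be non--nugatory (smoothing it leaves the picture connected through $\hat b$), so passing naively to spines can separate the two strings. To control this I would not reduce blindly but instead track the outermost excursion of $\hat a$ across $\hat b$: following $\hat a$ from $A_0$, let $M$ be the first mixed crossing and $M'$ the last, so that $\hat a$ stays on its own side of $\hat b$ along $A_0M$ and along $M'A_\infty$. Since $A_0$ and $A_\infty$ lie on the \emph{same} side of $\hat b$ under a vertical connection, the excursion between $M$ and $M'$ is essential and cannot be undone, and the sub--arc of $\hat a$ it spans, together with the facing sub--arc of $\hat b$, can be isotoped into the standard clasp after trivialising all intervening self-- and mixed crossings by the descending algorithm, exactly as in the proof of Lemma~\ref{local-trefoil-lemma}. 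Assigning the two clasp crossings the over/under data producing $T(-2)$ (respectively $T(2)$) and leaving everything else descending then exhibits $T(-2)$ (respectively $T(2)$) directly as a diagram with projection $\hat T$, which both guarantees that two mixed crossings survive and completes the proof.
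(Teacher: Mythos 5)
Your repaired argument is correct and is essentially the paper's own proof: the paper likewise anchors at the first mixed crossing $A_1$ on $\hat a$, assigns over/under data so that $\tilde a$ is over (respectively under) $\tilde b$ at every other mixed crossing with both strings descending, and then deforms the diagram so that only a two-crossing clasp near $A_1$ survives. The differences are cosmetic --- the paper omits the trivial ``only if'' direction, and it splits into two cases only to keep $A_1$ positive and land exactly on $T(-2)$, whereas you obtain the two chiralities by mirroring the data --- while your self-diagnosed failure of the spine-reduction plan (loops of $\hat a$ can carry all the mixed crossings) is precisely why the paper argues by crossing assignments on $\hat T$ itself rather than by passing to projection minors.
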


\begin{figure}[htbp]
      \begin{center}
\scalebox{0.58}{\includegraphics*{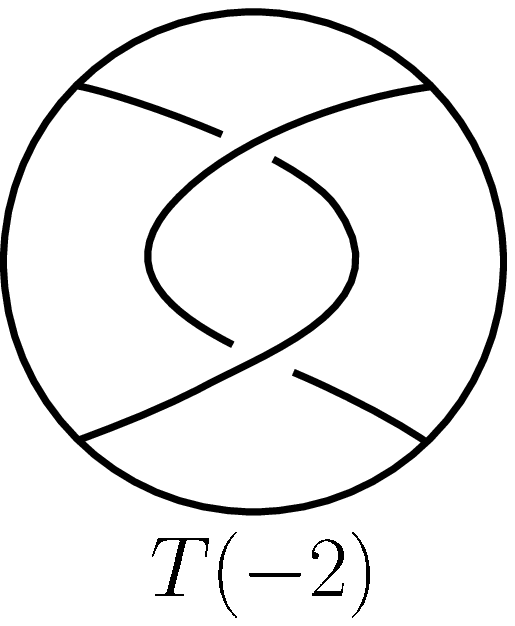}}
      \end{center}
   \caption{}
  \label{vertical-hook}
\end{figure} 

%

%
\begin{proof} Let $A_1$ be the first crossing on $\hat a$ between arcs $\hat a$ and $\hat b$. 
In particular the interior of $A_0A_1$ is disjoint from $\hat b$. We give over/under crossing information to $A_1$ so that 
it becomes a positive crossing. There are two cases depending on which string is over the other at the 
positive crossing $A_1$. We use the notation $A_i^-$ for a point on the arc $\hat a$ just before
the crossing $A_i$ and $A_i^+$ for a point just after the crossing. 
Neither $A_i^-$ nor $A_i^+$ are crossings of $\hat T$. 
We give over/under crossing information to all other crossings of the tangle projection as follows.
\begin{enumerate}
\item[(i)] 
If $\hat a$ is made to be under $\hat b$ at $A_1$ then we add over/under crossing information to all 
other crossings between arcs $\hat a$ and $\hat b$ in such a way that $\tilde a$ is over $\tilde b$ 
furthermore $\tilde a$ and $\tilde b$ are descending. 
The key point here is that  $a$ and $b$ can be now deformed in two steps: first we deform $a$ so it looks like on 
Fig. \ref{vertical-hook-proof-1}(b), in particular,  $a$ has only two crossings with $b$, 
one is $A_1$ and the other is near $A_1$. 
Then we deform $b$ to obtain the tangle $T(-2)$.  See Fig. \ref{vertical-hook-proof-1}.

\item[(ii)] If $\hat a$ is over $\hat b$ at $A_1$ then we add over/under crossing information to all 
other crossings so that $A_1^+A_\infty$ is under $\hat b$ and both $\tilde a$ and $\tilde b$ are descending. 
The key point here is that we first deform the string $a$ Fig. \ref{vertical-hook-proof-1}(b),
  so that $a$ has only two crossings with $b$, one is $A_1$ and the other is near $A_1$. 
Then we deform $b$ to obtain the tangle $T(-2)$. See Fig. \ref{vertical-hook-proof-2}.
\end{enumerate}
\end{proof} 
\begin{figure}[htbp]
      \begin{center}
\scalebox{0.58}{\includegraphics*{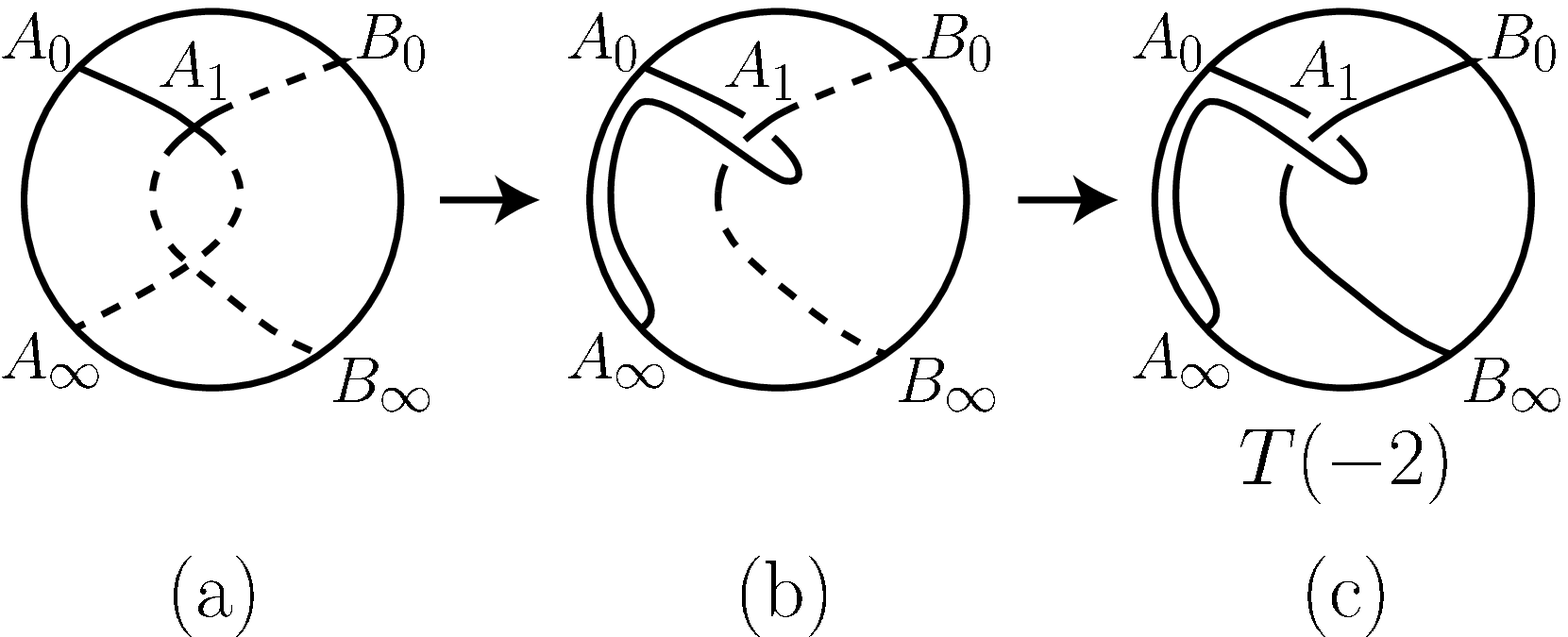}}
      \end{center}
   \caption{}
  \label{vertical-hook-proof-1}
\end{figure} 

%

%
\begin{figure}[htbp]
      \begin{center}
\scalebox{0.58}{\includegraphics*{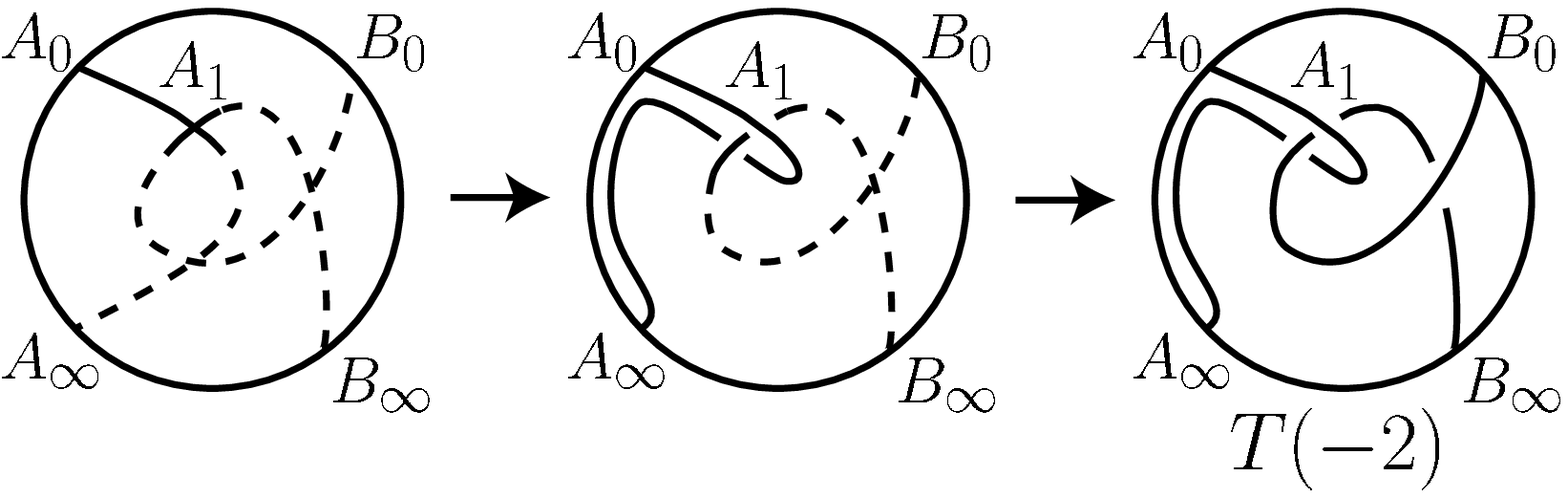}}
      \end{center}
   \caption{}
  \label{vertical-hook-proof-2}
\end{figure} 

%

\begin{Lemma}\label{sublemma}
Let $\hat T=\hat a\cup\hat b$ be a 2-string tangle projection with vertical connection. 
Suppose that $\hat T$ has mixed crossings and the spine $\hat a'$ does not intersect $\hat b$. 
Then $\hat T$ is a projection of the tangle $T(1,2)$ (Fig. \ref{vertical-trefoil-tangle}).
\end{Lemma}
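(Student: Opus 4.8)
The plan is to reduce the statement to a single crossing--assignment on $\hat T$, following the descending--algorithm technique used for Lemmas \ref{hook-tangle} and \ref{local-trefoil-lemma}, and to exploit the spine hypothesis to produce \emph{one} essential self-crossing in addition to the clasp that Lemma \ref{hook-tangle} already extracts from the mixed crossings. First I would invoke Lemma \ref{nugatory-erasing} to assume $\hat T$ has no nugatory crossings. Since $\hat a'$ is by construction an embedded arc joining $A_0$ to $A_\infty$, and it is disjoint from $\hat b$, it cuts $\mathbb B^2$ into two disks; because the vertical connection places $B_0$ and $B_\infty$ consecutively on one of the boundary arcs determined by $A_0,A_\infty$, both endpoints of $\hat b$, and hence all of $\hat b$, lie in a single one of these disks, say $R$. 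Consequently every mixed crossing occurs on a loop of $\hat a-\hat a'$, i.e.\ on some middle arc $s(\hat a,P)$ removed in forming the spine, which must protrude into $R$.

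Next I would locate a loop that $\hat b$ genuinely clasps. Each loop $s(\hat a,P)$ is a closed curve of the projection, and both $B_0$ and $B_\infty$, being on $\partial\mathbb B^2$, lie in its outer region; hence the mod-$2$ intersection number of $\hat b$ with each loop vanishes, so $\hat b$ meets each loop in an \emph{even} number of points. As $\hat T$ is assumed to have mixed crossings, at least one loop is met, and therefore met at least twice: fix such a loop, its base self-crossing $P$, and two mixed crossings $A_1,A_2$ lying on it.

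Now the key point is that to prove ``$\hat T$ is a projection of $T(1,2)$'' I only need to exhibit \emph{one} diagram with underlying projection $\hat T$ that is isotopic to $T(1,2)$, so I am free to assign every crossing. I would clasp the loop with $\hat b$ by giving $A_1$ and $A_2$ opposite over/under information (producing the summand $T(2)$, just as in the two cases of Lemma \ref{hook-tangle}), and assign the self-crossing $P$ so that the loop carries a single half-twist (the summand $T(1)$), matching Fig.\ \ref{vertical-trefoil-tangle}. Every remaining crossing is made inessential: at all other mixed crossings I put $\hat a$ over $\hat b$, so that $\hat b$ can be pushed out from under the other loops by second Reidemeister moves, and all remaining self-crossings of $\hat a$ and of $\hat b$ are made descending, so that they become removable kinks. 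After these isotopies $\hat a$ retracts to its spine together with the single clasped loop based at $P$, while $\hat b$ becomes one unknotted strand threading that clasp; this is precisely $T(1,2)$.

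The hard part will be the verification in the last step: checking that the local configuration of one self-crossing at the base of a loop clasped once with $\hat b$ is genuinely the tangle $T(1,2)$ of Fig.\ \ref{vertical-trefoil-tangle} (and that the freedom in assigning $P,A_1,A_2$ lets us hit that chirality, rather than a mirror or a different rational tangle), together with confirming that making $\hat a$ over $\hat b$ and descending the rest really trivializes all the other loops without pushing any mixed crossing onto $\hat a'$. Both are the routine, figure--chasing part of the argument, carried out in the spirit of Lemmas \ref{hook-tangle} and \ref{local-trefoil-lemma}.
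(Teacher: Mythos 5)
Your first two steps are correct, and they package nicely what the paper extracts by repeated use of Lemma \ref{reducing-lemma}: since $\hat a'$ is embedded and disjoint from $\hat b$, the whole of $\hat b$ lies in one complementary disk of the spine, so every mixed crossing lies on a loop removed in forming the spine, and the mod-$2$ count forces some loop to meet $\hat b$ at least twice. The gap is in the final assignment step, and it is not just deferred figure-chasing. The loops $s(\hat a,P)$ of the spine construction need not be embedded $1$-gons: the loop based at $P$ may itself contain a self-crossing $Q$ whose sub-loop $\lambda$ is what actually carries the two mixed crossings. Concretely, let $\hat a$ run $A_0\to P\to Q\to\lambda\to Q\to P\to A_\infty$ with $\lambda$ embedded, meeting $\hat b$ exactly twice, and with the two legs between $Q$ and $P$ parallel; here the spine is the part outside $P$, and the unique removed loop has base $P$. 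Your prescription reserves the sign at $P$ and makes $Q$ descending, i.e.\ the strand entering $\lambda$ passes over the strand leaving it. Now run your own trivialization: for one choice at $P$ the lower leg is under everything and can be pulled out, killing the crossing at $P$; for the other choice, $P$ and $Q$ form an R2 pair across the bigon and both cancel, leaving the hook tangle $T(\pm2)$. In the first case exactly one self-crossing survives next to the clasp --- and it is $Q$, not $P$ --- with its sign dictated by the descending rule and by the curl direction of $\lambda$, which the hypotheses do not control. For one of the two curl directions the result is $T(-1,\pm2)$, whose closures are the unknot and the left-handed trefoil; so for such projections no choice of the reserved signs at $P,A_1,A_2$ yields $T(1,2)$. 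In short, the claim that the remaining descending self-crossings ``become removable kinks'' is false exactly when such a crossing's loop encloses the clasp.

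This is precisely the difficulty the paper's proof is built to avoid: it first uses Lemma \ref{reducing-lemma} to pass to a minor in which the first mixed crossing lies on an embedded tear-drop $1$-gon $\delta$ and the arcs $A_0A_1$ and $B_0B_i$ are simple, it reserves the \emph{vertex of $\delta$} (an innermost crossing, whose sign is then genuinely free) for the $T(1)$ factor, and it makes the residual arcs \emph{under everything} --- not merely descending --- so they can be redrawn without contributing crossings; the verification then reduces to the two explicit pictures of Fig.\ \ref{sublemma-proof}. With those two corrections --- place the half-twist at an innermost $1$-gon adjacent to the chosen mixed crossings, and trivialize the remainder by under-everything rather than descending assignments --- your argument can be completed; as written, it fails on the configuration above.
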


\begin{figure}[htbp]
      \begin{center}
\scalebox{0.58}{\includegraphics*{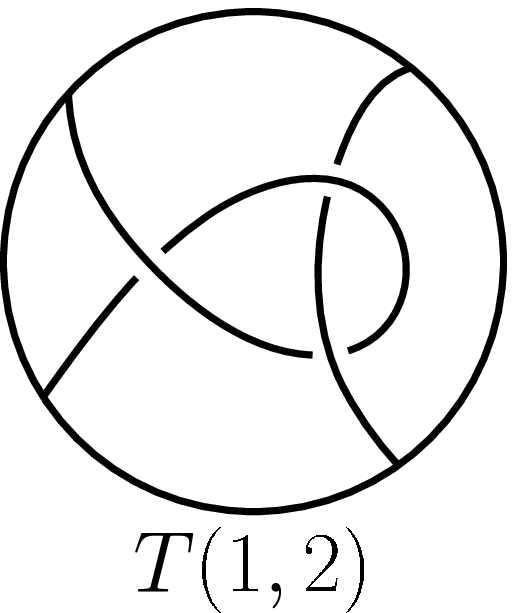}}
      \end{center}
   \caption{}
  \label{vertical-trefoil-tangle}
\end{figure} 

%

\begin{proof}
Let $A_1$ be the first mixed crossing of $\hat T$ on $\hat a$. By virtue of Lemma \ref{reducing-lemma} we may assume that $A_0A_1$ has no self-crossings. 
Since $A_1$ is not on the spine $\hat a'$ we find, again by using Lemma \ref{reducing-lemma}, a tear drop disk $\delta$ as illustrated in Fig. \ref{sublemma-proof} (a) or (b). Let $B_i$ be the first mixed crossing in $\partial\delta$ on $\hat b$. Note that $B_i$ may or may not equal to $A_1$. Again by Lemma \ref{reducing-lemma} we may assume that $B_0B_i$ is simple. 
Thus we have the situation illustrated in Fig. \ref{sublemma-proof} (a) or (b). In case (a) it is easy to see that $\hat T$ is a projection of $T(1,2)$. In case (b) we add over/under crossing information so that the dotted line part of $\hat a$ is under everything and the dotted line part of $\hat b$ is under the real line parts of $\hat a$ and $\hat b$. Let $P$ be the vertex of the 1-gon $\delta$. Since $\hat b$ does not intersect $A_0P$ we can deform $\hat b$ after deforming $\hat a$ into a neighborhood of $\partial B\cup A_0P$ as illustrated in Fig. \ref{sublemma-proof} (b).
\end{proof}

\begin{figure}[htbp]
      \begin{center}
\scalebox{0.65}{\includegraphics*{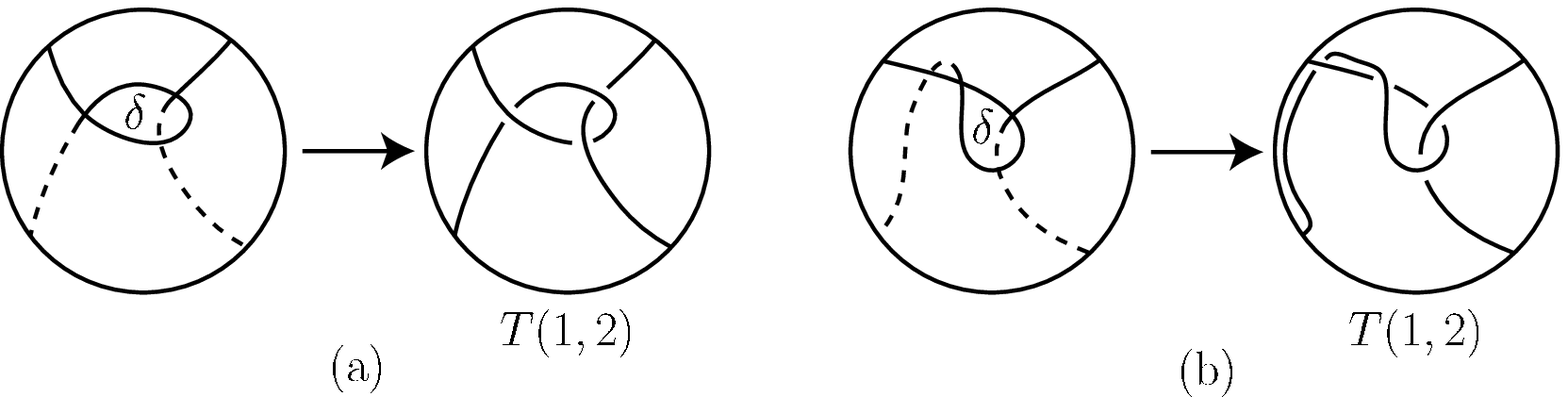}}
      \end{center}
   \caption{}
  \label{sublemma-proof}
\end{figure} 

%

\begin{Lemma}\label{vertical-trefoil-tangle-lemma}
A prime 2-string tangle projection $\hat T$ with a vertical connection
is a projection of the tangle $T(1,2)$ (Fig. \ref{vertical-trefoil-tangle}) if and only if 
$\hat T$ is not the projection $\hat T(n)$ (see Fig \ref{tangle-notation}) for any even $n$.
\end{Lemma}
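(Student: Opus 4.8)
The plan is to establish the two implications as a single dichotomy according to whether $\hat T$ possesses a self-crossing. Throughout I would use that a prime projection is in particular reduced, so that it has no nugatory crossings (Lemma~\ref{nugatory-erasing}).

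For necessity I would argue the contrapositive: if $\hat T=\hat T(n)$ with $n$ even, then $T(1,2)\notin{\rm TANGLE}(\hat T)$. In $\hat T(n)$ both strings are embedded, so every diagram over this projection has only mixed crossings; collapsing the twist region by second Reidemeister moves shows each such diagram is an integer tangle $T(m)$ with $m\equiv n\equiv0\pmod2$. But the vertical trefoil tangle $T(1,2)$ of Fig.~\ref{vertical-trefoil-tangle} admits no diagram with both strings embedded: such a diagram would again be an even integer tangle, whose numerator closure is a $(2,m)$-torus link and therefore never the trefoil, contradicting the fact that the analogous closure of $T(1,2)$ is the trefoil. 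Hence realizing $T(1,2)$ forces a self-crossing, which $\hat T(n)$ cannot supply.

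For sufficiency, assume $\hat T$ is prime with a vertical connection and $\hat T\neq\hat T(n)$ for every even $n$. If $\hat T$ has no self-crossing, then $\hat a$ and $\hat b$ are two embedded arcs joined by a vertical connection; two such arcs cross an even number of times and, up to ambient isotopy fixing $\partial{\mathbb B}^2$, their projection is the twist $\hat T(k)$ for some even $k\ge0$, contradicting the hypothesis. So some string, say $\hat a$ (the case of $\hat b$ being symmetric), has a self-crossing. Tracing $\hat a$ as in the proof of Lemma~\ref{local-trefoil-lemma}, I would pick an innermost tear-drop disk (1-gon) $\delta$ whose vertex is a self-crossing $P$. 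Since $\hat T$ is reduced, $P$ is not nugatory, and as $\delta$ is innermost it meets $\hat a$ only along $\partial\delta$; hence $\hat b$ must enter $\delta$, and since the endpoints of $\hat b$ lie outside $\delta$ it also leaves, producing at least two mixed crossings on $\partial\delta$. Choosing over/under information that turns $P$ together with two of these mixed crossings into a trefoil clasp, and making every remaining crossing descending via Lemma~\ref{reducing-lemma}, collapses everything outside the clasp to the boundary and exhibits $T(1,2)$ over $\hat T$. This is exactly the tear-drop mechanism of Lemma~\ref{sublemma}, now carried out with the self-crossing $P$ present.

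The main obstacle is the realization step in the self-crossing case: one must verify that the descending assignment away from the clasp genuinely trivializes the rest of the diagram and leaves precisely the three-crossing tangle $T(1,2)$, rather than the hook $T(\pm2)$ of Lemma~\ref{hook-tangle} or a larger tangle. I would control this by following the bookkeeping of Lemma~\ref{sublemma} verbatim, the extra self-crossing $P$ being exactly what upgrades the hook to the trefoil tangle. A secondary point needing care is the unoriented two-strand fact used in the no-self-crossing case---that two embedded arcs with a vertical connection form a twist projection---which I would settle by an innermost-bigon induction on the number of mixed crossings.
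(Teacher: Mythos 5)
Your necessity argument is essentially fine once restricted to what is actually needed (every diagram over $\hat T(n)$ is an even integer tangle $T(m)$ after second Reidemeister reductions, and $T(1,2)$ is not one, e.g.\ because its numerator closure is a knot while that of an even $T(m)$ is a two-component link); the blanket claim that $T(1,2)$ admits no diagram with embedded strings is unjustified as stated and unnecessary. The genuine gap is in sufficiency, which is the heart of the lemma: the first branch of your dichotomy rests on a false fact. Two embedded arcs with a vertical connection need \emph{not} form a twist projection $\hat T(k)$ up to isotopy rel $\partial {\mathbb B}^2$. Concretely, start with $\hat T(2)$ and push the arc of $\hat a$ from $A_0$ to the first crossing across the middle arc of $\hat b$ (these two arcs border a common region, so this finger move is admissible): the result is a prime projection with four mixed crossings, no self-crossings, and crossing order $(Y_1,Y_2,X_1,X_2)$ along $\hat a$ but $(X_1,Y_1,Y_2,X_2)$ along $\hat b$, so its permutation $\sigma$ is not the identity and it is not $\hat T(4)$. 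This projection satisfies every hypothesis of the lemma, so the lemma asserts it \emph{is} a projection of $T(1,2)$; your proof would instead declare a contradiction here and never produce the required over/under assignment. Producing it is exactly the permutation case analysis of the paper's proof (the three assignments of Fig.~\ref{vertical-trefoil-proof1}), which your proposal omits entirely. Your closing remark even proposes to verify the "twist projection" fact by an innermost-bigon induction; carried out honestly, that induction is the paper's Lemma~\ref{sublemma2}, whose conclusion is precisely that such exceptional embedded-string projections exist.

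The self-crossing branch is also not a proof as written. The tear-drop-plus-clasp mechanism is Lemma~\ref{sublemma}, but that lemma carries the hypothesis that the spine $\hat a'$ is disjoint from $\hat b$, and you never reduce to it; when the spine does meet $\hat b$ (the generic situation) there is no "verbatim bookkeeping" to follow. That case is where the paper does its real work: it replaces $\hat b$ by its spine $\hat b'$, classifies the permutation of the mixed crossings of $\hat a\cup\hat b'$, and only in the residual identity-permutation case, where all self-crossings sit in subtangles strung along a twist pattern, does it invoke Lemma~\ref{hook-tangle} or Lemma~\ref{sublemma}, running the spine argument on both strings via the symmetry $T(1,2)=V(T(1,2))=T(2,1)$. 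Two further problems in the same branch: non-nugatory only gives that \emph{something} crosses the tear-drop loop, not that $\hat b$ does (other parts of $\hat a$ may), and clearing those parts with Lemma~\ref{reducing-lemma} can delete the mixed crossings you need, since removing a self-crossing $Q$ discards everything on $s(\hat a,Q)$; and even granted an embedded loop pierced by $\hat b$, one must still check that the descending assignment leaves $T(1,2)$ rather than the hook $T(\pm2)$, which is the case analysis of Fig.~\ref{sublemma-proof}, not an automatic collapse.
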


\begin{proof} Only tangles $T(m)$ with $|m|\leq n$ has $\hat T(n)$ as a projection. 
It is well known that the tangle $T(1,2)$ is not $T(m)$ for any $m$. i
Thus we have proved the \lq\lq only if'' part. We will show the \lq\lq if'' part. 
First we note that the tangle $T(1,2)$ has the vertical symmetry $T(1,2)=V(T(1,2))=T(2,1)$. Let $\hat T=\hat a\cup\hat b$ 
be a prime 2-string tangle projection with vertical connection that is not $\hat T(n)$ for any $n$.
Let $\hat b'$ be the spine of $\hat b$. By Lemma \ref{reducing-lemma} we have that $\hat T'=\hat a\cup\hat b'$ is a minor of $\hat T$. Suppose that $\hat T'$ has mixed crossings. Let $A_{1},A_{2},\cdots ,A_{2n}$ be the mixed crossings of $\hat T'$ that appear in this order on $\hat a$. Let $B_{1},B_{2},\cdots ,B_{2n}$ be the order of them on $\hat b'$ and $\sigma$ the permutation $(\sigma (1), \sigma (2),\cdots ,\sigma (2n))$ of $(1,2,\cdots ,2n)$ defined by $A_{i}= B_{\sigma (i)}$ for each $i$ as before. We will show that $\hat T'$ is a projection of $T(1,2)$ unless $\sigma$ is an identical permutation. First suppose that $\sigma(2i-1)>\sigma(2i)$ for some $i\in\{1,2,\cdots,n\}$. Then we add over/under crossing information to $\hat T'$ as follows.
\begin{enumerate}
\item[(i)] $A_0A_{2i-1}^-$ is over everything.
\item[(ii)] $A_{2i}^+A_\infty$ is under everything.
\item[(iii)] $\hat a$ is descending.
\item[(iv)] $\hat a$ is under $\hat b'$ at $A_{2i-1}$ and over $\hat b'$ at $A_{2i}$.
\end{enumerate}
Then we have the tangle $T(1,2)$. See Fig. \ref{vertical-trefoil-proof1} (a).

Next suppose that $\sigma(2j-1)<\sigma(2j)$ for each $j\in\{1,2,\cdots,\}$ and $\sigma(2i)>\sigma(2i+1)$ for some $i\in\{1,2,\cdots,n-1\}$. We take smallest such $i$. We further divide this case into the following two cases.

Case 1. $\sigma(2i+1)<\sigma(1)$. In this case we give over/under crossing information to $\hat T'$ as follows.
\begin{enumerate}
\item[(i)] $A_1^+A_{2i+1}^-$ is over everything.
\item[(ii)] $A_{2i+1}^-A_\infty$ is under everything.
\item[(iii)] Each of $A_0A_1$, $A_1^+A_{2i+1}^-$ and $A_{2i+1}^-A_\infty$ are descending.
\item[(iv)] $\hat a$ is under $\hat b'$ at $A_1$.
\end{enumerate}
Then we have the tangle $T(1,2)$. See Fig. \ref{vertical-trefoil-proof1} (b).

Case 2. $\sigma(2i+1)>\sigma(1)$. In this case we give over/under crossing information to $\hat T'$ as follows.
\begin{enumerate}
\item[(i)] $A_1^+A_{2i}^-$ is under everything.
\item[(ii)] $A_{2i+1}^+A_\infty$ is over everything.
\item[(iii)] $A_0A_1^-$ is under $A_{2i}^+A_{2i+1}^-$.
\item[(iv)] Each of $A_0A_1$, $A_1^+A_{2i}^-$, $A_{2i}^+A_{2i+1}^-$ and $A_{2i+1}^+A_\infty$ are descending.
\item[(v)] $\hat a$ is over $\hat b'$ at $A_1$ and $A_{2i}$, and under $\hat b'$ at $A_{2i+1}$.
\end{enumerate}
Then we have the tangle $T(1,2)$. See Fig. \ref{vertical-trefoil-proof1} (c).

\begin{figure}[htbp]
      \begin{center}
\scalebox{0.65}{\includegraphics*{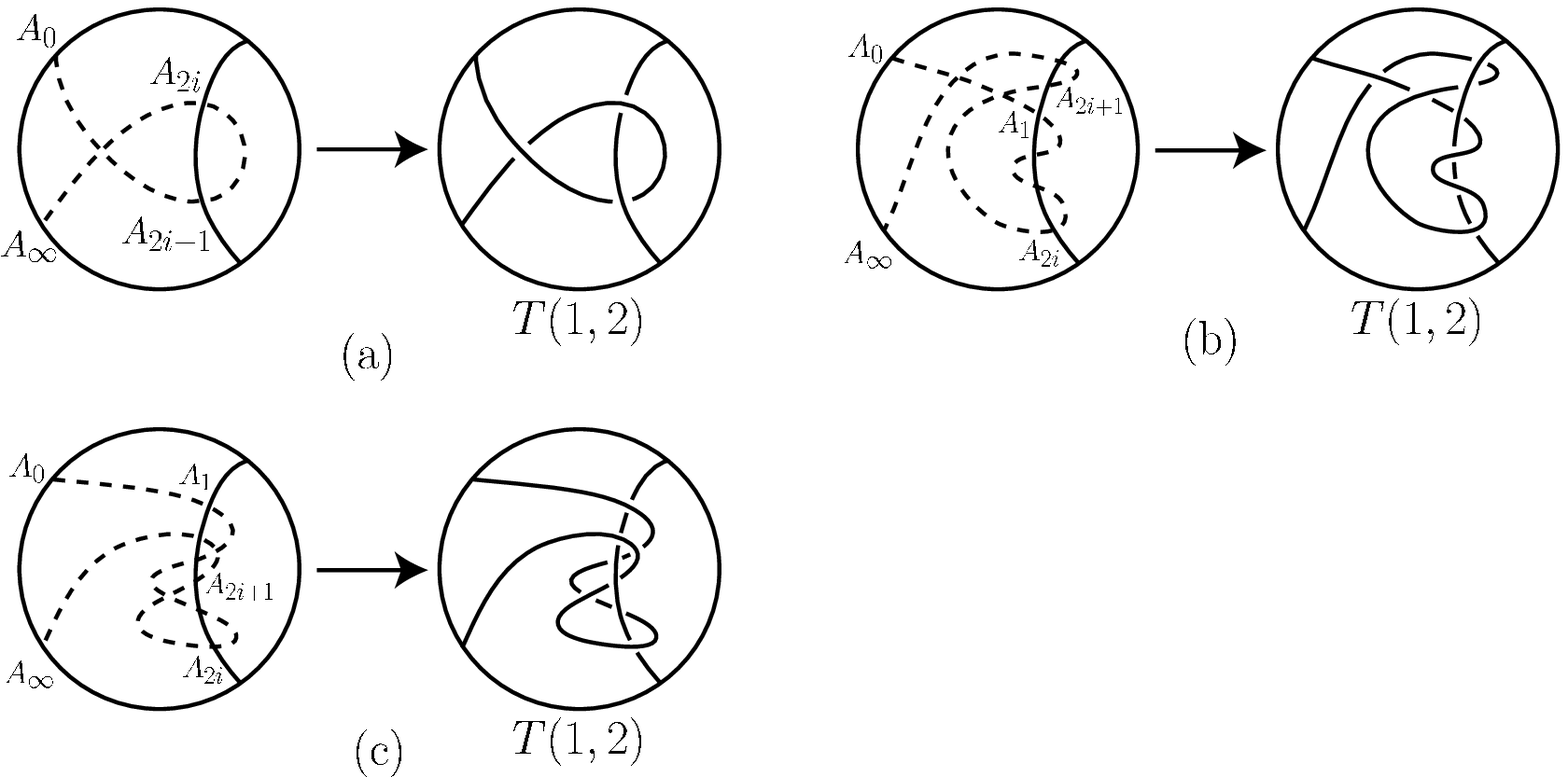}}
      \end{center}
   \caption{}
  \label{vertical-trefoil-proof1}
\end{figure} 

%

Thus the rest is the case that $\sigma(i)<\sigma(i+1)$ holds for every $i\in\{1,2,\cdots,2n-1\}$. 
This implies that $\sigma(i)=i$ for every $i$. Suppose that $A_iA_{i+1}$ intersects $A_{i+2j}A_{i+2j+1}$ 
for some $i$ and $j>0$ where we consider $A_{2n+1}=A_\infty$. Then by the use of Lemma \ref{hook-tangle} 
we have that $\hat T'$ is a projection of the tangle $T(1,2)$. See for example Fig. \ref{vertical-trefoil-proof2}.

\begin{figure}[htbp]
      \begin{center}
\scalebox{0.65}{\includegraphics*{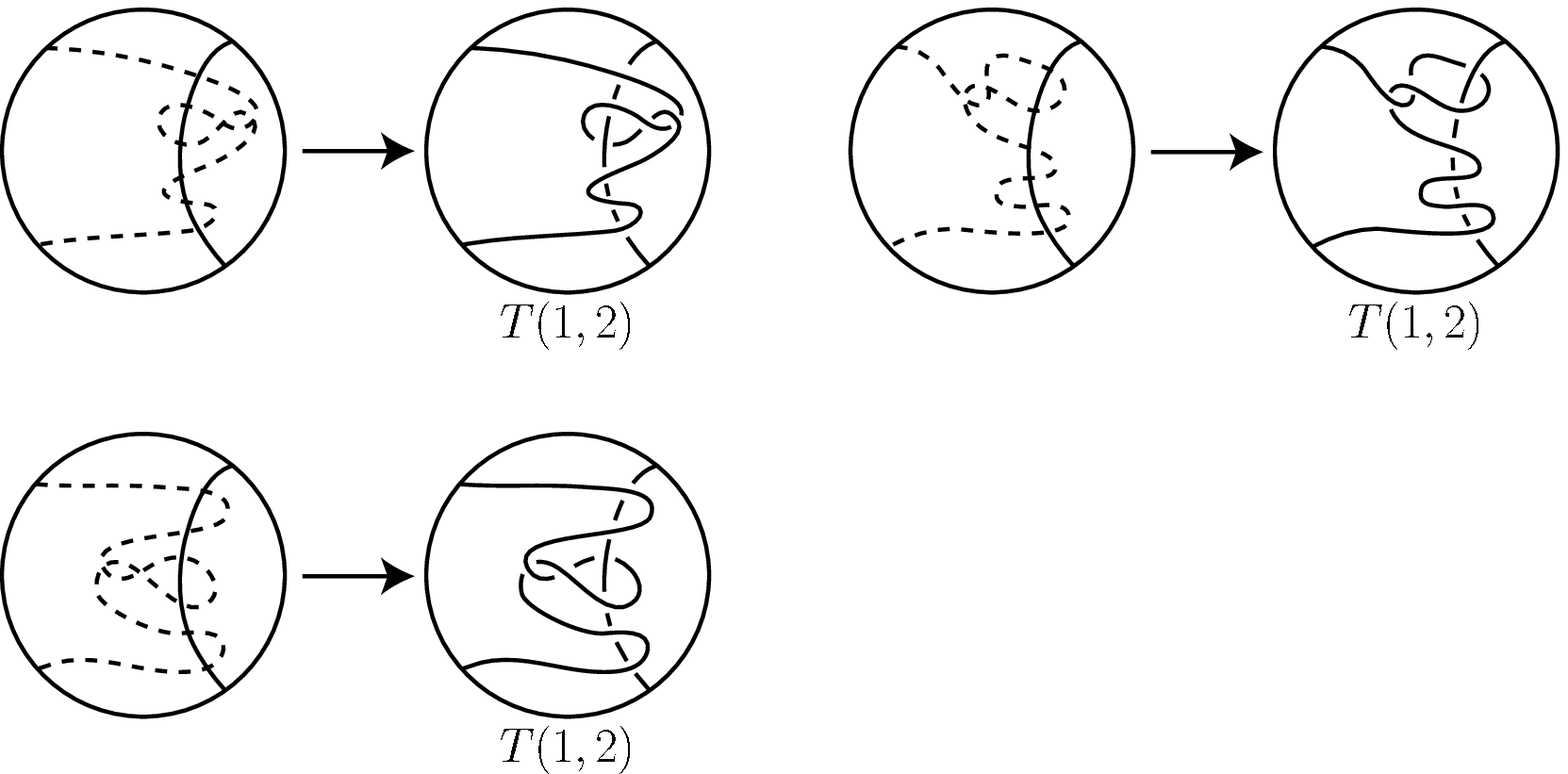}}
      \end{center}
   \caption{}
  \label{vertical-trefoil-proof2}
\end{figure} 

%

Thus we have the situation that there are mutually disjoint and possibly trivial 1-string subtangle 
projections $\hat S_1,\hat S_2,\cdots,\hat S_{2n+1}$ such that the tangle $\hat T'$ is as illustrated 
in Fig. \ref{vertical-trefoil-proof3} (a). Note that in this situation it holds that for any self 
crossing $P$ of $\hat a$ $s(\hat a,P)$ does not intersects $\hat b'$. 
Now we consider the spine $\hat a'$ and consider the tangle $\hat T''=\hat a'\cup \hat b$. 
Since the tangle $T(1,2)$ has the vertical symmetry as we have remarked at the beginning of the proof 
the same argument works for $\hat T''$ and we have the situation that there are mutually disjoint 
and possibly trivial 1-string subtangle projections $\hat U_1,\hat U_2,\cdots,\hat U_{2n+1}$ 
such that the tangle$\hat T''$ is as illustrated in Fig. \ref{vertical-trefoil-proof3} (b). 
Since for any self-crossing $P$ of $\hat b$ $s(\hat b,P)$ does not intersect $\hat a'$  
we have that the set of mixed crossings of $\hat T''$ is exactly the same as the set of 
mixed crossings of $\hat T'$. Since $\hat T$ is prime and not equal to $\hat T(n)$ 
we have that at least one of $\hat S_1,\hat S_2,\cdots,\hat S_{2n+1}$ has a crossing 
and then it must intersect at least one of $\hat U_1,\hat U_2,\cdots,\hat U_{2n+1}$. 
Then by the use of Lemma \ref{hook-tangle} or Lemma \ref{sublemma} we obtain the tangle 
$T(1,2)$ as illustrated in Fig. \ref{vertical-trefoil-proof4}.
\end{proof}

\begin{figure}[htbp]
      \begin{center}
\scalebox{0.65}{\includegraphics*{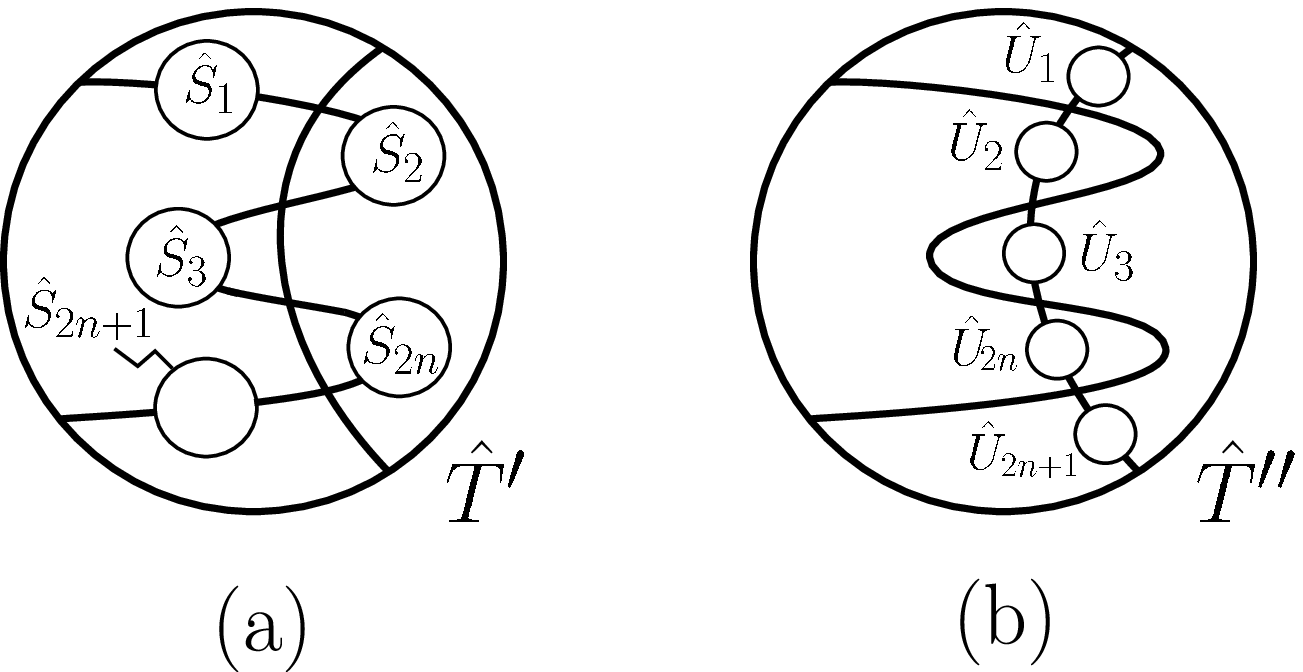}}
      \end{center}
   \caption{}
  \label{vertical-trefoil-proof3}
\end{figure} 

%

%
\begin{figure}[htbp]
      \begin{center}
\scalebox{0.65}{\includegraphics*{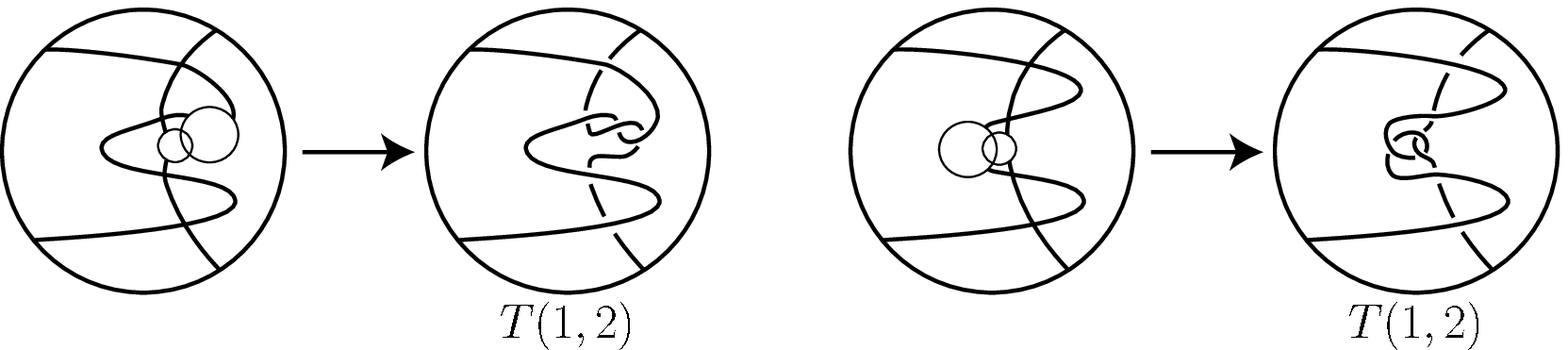}}
      \end{center}
   \caption{}
  \label{vertical-trefoil-proof4}
\end{figure} 

%

\begin{Lemma}\label{one-three-lemma}
A prime 2-string tangle projection $\hat T$ with a X-connection is a projection of the tangle $T(1/3)$ (Fig. \ref{one-three-tangle}) if and only if 
$\hat T$ is not the projection $\hat T(n)$ (see Fig \ref{tangle-notation}) for any odd $n$.
\end{Lemma}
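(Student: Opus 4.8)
The plan is to follow, almost line for line, the template of the proof of Lemma~\ref{vertical-trefoil-tangle-lemma}, replacing the vertical connection by the X-connection, the target tangle $T(1,2)$ by the three-crossing twist tangle $T(1/3)$ of Fig.~\ref{one-three-tangle}, and the parity condition ``even $n$'' by ``odd $n$''. One should first observe why this cannot simply be deduced from the previous lemma by a symmetry: every element of the dihedral symmetry group of $\mathbb{B}^2$ preserves the diagonal pairing of the four endpoints, so $V$, $H$, $R$ and their compositions all send an X-connection to an X-connection (it is the vertical and horizontal connections that are interchanged by $R$). Thus the X-connection is a fixed point of the available symmetries and the argument has to be run directly, using instead the \emph{self}-symmetry of $T(1/3)$ that matches the spine-swap used below.

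For the ``only if'' direction I would argue exactly as in the vertical case. A tangle admits the twist-region projection $\hat T(n)$ only if it is one of the twist tangles $T(m)$ with $|m|\le n$ and $m\equiv n\pmod 2$; on the other hand it is well known that $T(1/3)$ is not equal to $T(m)$ for any integer $m$. Hence $\hat T(n)$ is not a projection of $T(1/3)$ for any odd $n$, which is the contrapositive of the ``only if'' statement.

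For the ``if'' direction, let $\hat T=\hat a\cup\hat b$ be prime with X-connection and not equal to any $\hat T(n)$. First I would erase nugatory crossings by Lemma~\ref{nugatory-erasing}, pass to the spine $\hat b'$, and form the minor $\hat T'=\hat a\cup\hat b'$, which is a minor of $\hat T$ by Lemma~\ref{reducing-lemma}. Because the endpoint configuration is unchanged and $\hat b'$ still joins $B_0$ to $B_\infty$, the number of mixed crossings of $\hat T'$ is \emph{odd}, say $A_1,\dots,A_{2k+1}$ along $\hat a$, inducing a permutation $\sigma$ on $\hat b'$. I would then run the analogues of the over/under assignments (i)--(v) of Lemma~\ref{vertical-trefoil-tangle-lemma}: whenever $\sigma$ has a descent, a descending choice together with the hook produced by Lemma~\ref{hook-tangle} (applied to the genuinely vertical subconfiguration that appears) collapses $\hat T'$ onto $T(1/3)$. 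The point is that the diagonal (X) connection forces the extra half-twist that converts the ``$T(1,2)$ output'' of the vertical case into the ``$T(1/3)$ output'' here.

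The hard part will be the residual case in which $\sigma$ is the identity and no two of the arcs $A_iA_{i+1}$ cross. As in the vertical case, $\hat T'$ and the dual tangle $\hat T''=\hat a'\cup\hat b$ then decompose into mutually disjoint $1$-string subtangles $\hat S_i$ and $\hat U_j$ whose only mixed crossings are the $A_i$. The decisive step is to invoke primality of $\hat T$ together with the hypothesis $\hat T\neq\hat T(n)$ for odd $n$ to guarantee that some $\hat S_i$ carries a crossing and therefore must meet some $\hat U_j$; one application of Lemma~\ref{hook-tangle} or of the analogue of Lemma~\ref{sublemma} then yields $T(1/3)$. Getting the bookkeeping of this last step correct, and verifying throughout that the parity and connection data stay consistent so that the output is genuinely $T(1/3)$ and not some $T(m)$, is where the real work lies; the remainder is a routine transcription of the proof of Lemma~\ref{vertical-trefoil-tangle-lemma}.
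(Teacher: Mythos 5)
Your proposal matches the paper's proof essentially step for step: the paper likewise settles the ``only if'' direction by noting that only the tangles $T(m)$ project to $\hat T(n)$ and $T(1/3)\neq T(m)$ for all $m$, then passes to the minor $\hat T'=\hat a\cup\hat b'$, reduces via the descent/over-under argument of Lemma~\ref{vertical-trefoil-tangle-lemma} to the case where $\sigma$ is the identity, uses Lemma~\ref{hook-tangle} when some $A_iA_{i+1}$ meets $A_{i+2j}A_{i+2j+1}$, and finally handles the residual decomposition of $\hat T'$ and $\hat T''=\hat a'\cup\hat b$ into disjoint $1$-string subtangles by primality together with Lemma~\ref{hook-tangle} or Lemma~\ref{sublemma}. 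Your added remarks --- that no boundary symmetry carries the X-connection to the vertical connection (so the argument must be rerun rather than quoted) and that a self-symmetry of $T(1/3)$ plays the role that $T(1,2)=V(T(1,2))$ played before --- are correct glosses on what the paper leaves implicit.
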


\begin{figure}[htbp]
      \begin{center}
\scalebox{0.58}{\includegraphics*{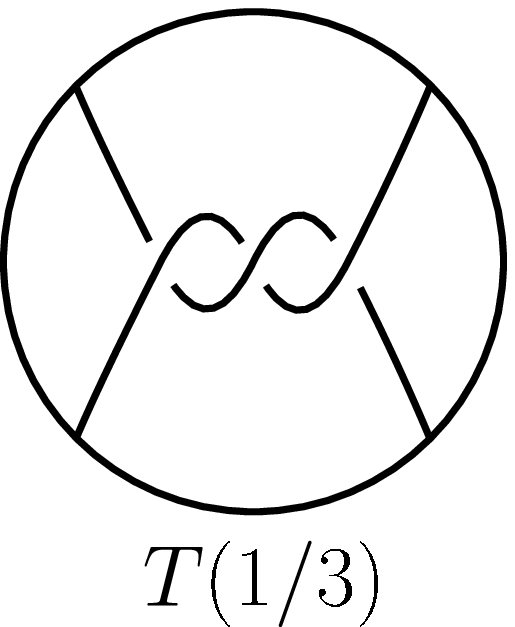}}
      \end{center}
   \caption{}
  \label{one-three-tangle}
\end{figure} 

%

\begin{proof}
The proof is quite similar to that of Lemma \ref{vertical-trefoil-tangle-lemma}.
Since the tangle $T(1/3)$ is different from any of the tangle $T(m)$, the   \lq\lq only if'' part is proved. 
We will show the \lq\lq if'' part. 
Let $\hat T=\hat a\cup\hat b$ be a prime 2-string tangle projection with X-connection that is not $\hat T(n)$ for any $n$.
Let $\hat b'$ be the spine of $\hat b$. Then $\hat T'=\hat a\cup\hat b'$ is a minor of $\hat T$. 
As in the proof of Lemma \ref{vertical-trefoil-tangle-lemma} we have that $\hat T'$ is a projection of $T(1/3)$ 
unless the permutation $\sigma$ on $\{1,2,\cdots,2n+1\}$ is the identity. See Fig. \ref{one-three-proof1}.

\begin{figure}[htbp]
      \begin{center}
\scalebox{0.65}{\includegraphics*{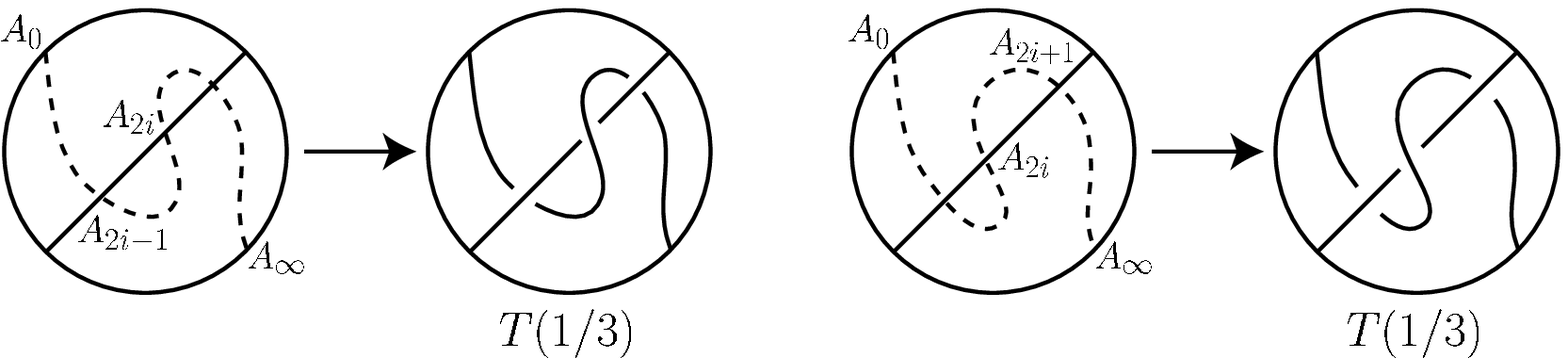}}
      \end{center}
   \caption{}
  \label{one-three-proof1}
\end{figure} 

%

Thus the rest of the proof is the analysis of the case that $\sigma(i)=i$ for every $i$. 
Suppose that $A_iA_{i+1}$ intersects $A_{i+2j}A_{i+2j+1}$ for some $i$ and $j>0$ 
where we consider $A_{2n+1}=A_\infty$. Then by the use of Lemma \ref{hook-tangle} 
we have that $\hat T'$ is a projection of the tangle $T(1/3)$. See for example Fig. \ref{one-three-proof2}.

\begin{figure}[htbp]
      \begin{center}
\scalebox{0.58}{\includegraphics*{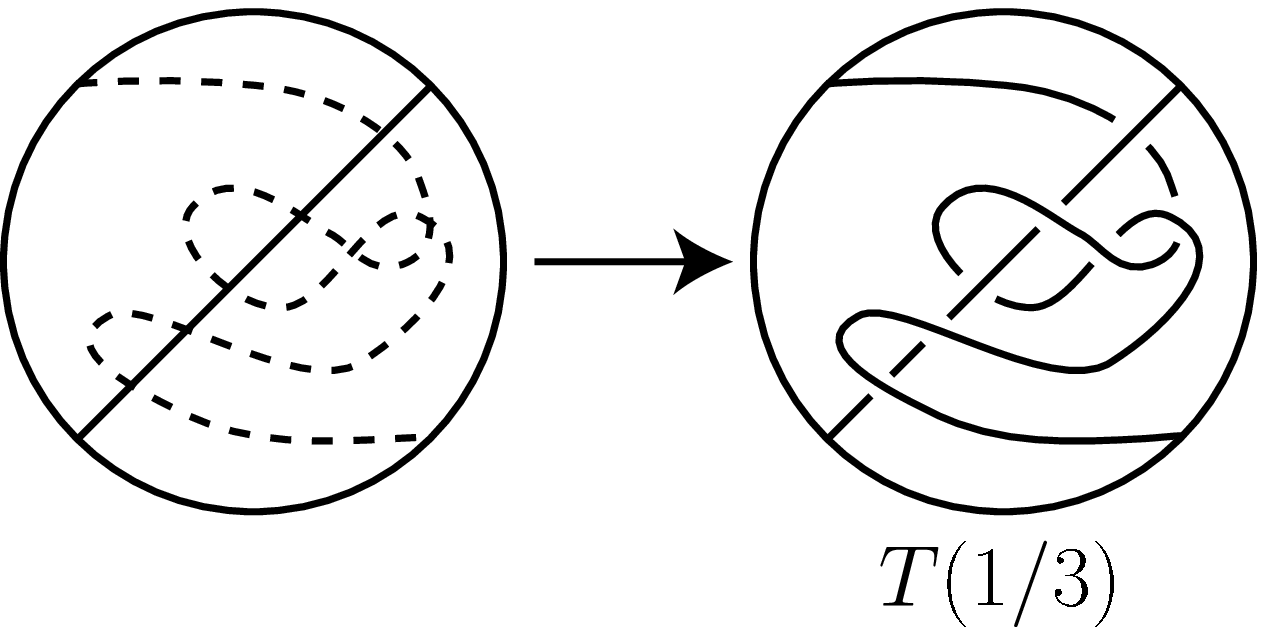}}
      \end{center}
   \caption{}
  \label{one-three-proof2}
\end{figure} 

%

Thus we have the situation that $\hat T'$ and $\hat T''=\hat a'\cup \hat b$ is as illustrated 
in Fig. \ref{one-three-proof3}.

\begin{figure}[htbp]
      \begin{center}
\scalebox{0.58}{\includegraphics*{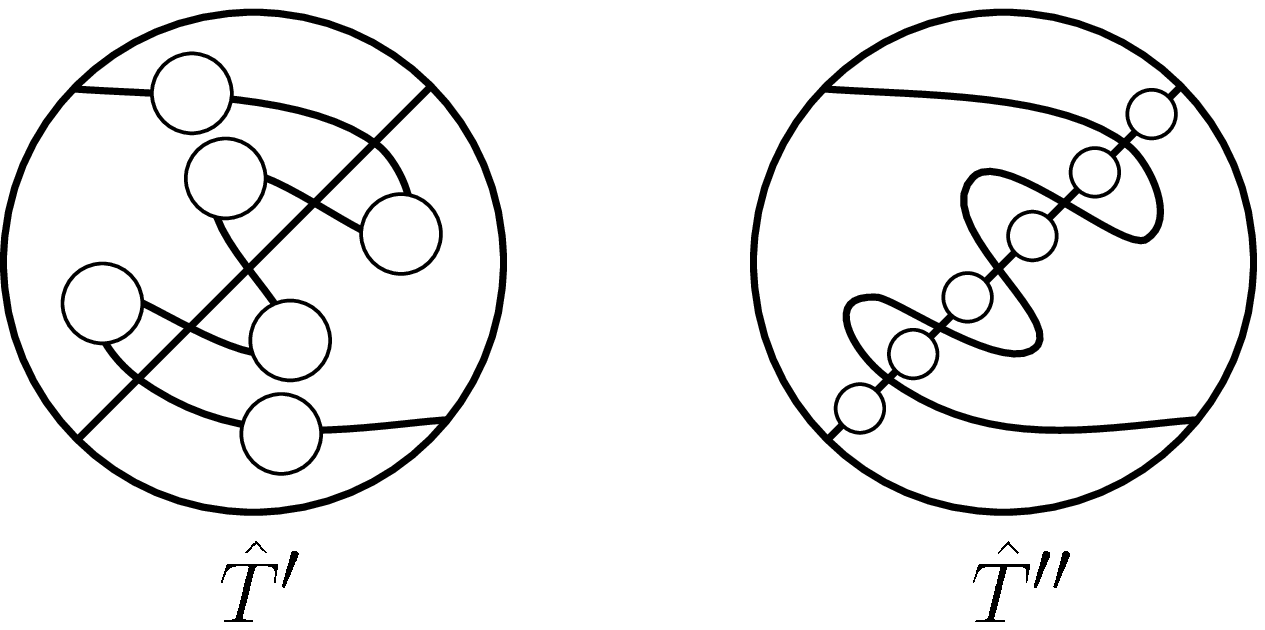}}
      \end{center}
   \caption{}
  \label{one-three-proof3}
\end{figure} 

%

Then by the use of Lemma \ref{hook-tangle} or Lemma \ref{sublemma},
 we have the tangle $T(1/3)$ as illustrated in Fig. \ref{one-three-proof4}.
\end{proof}

\begin{figure}[htbp]
      \begin{center}
\scalebox{0.65}{\includegraphics*{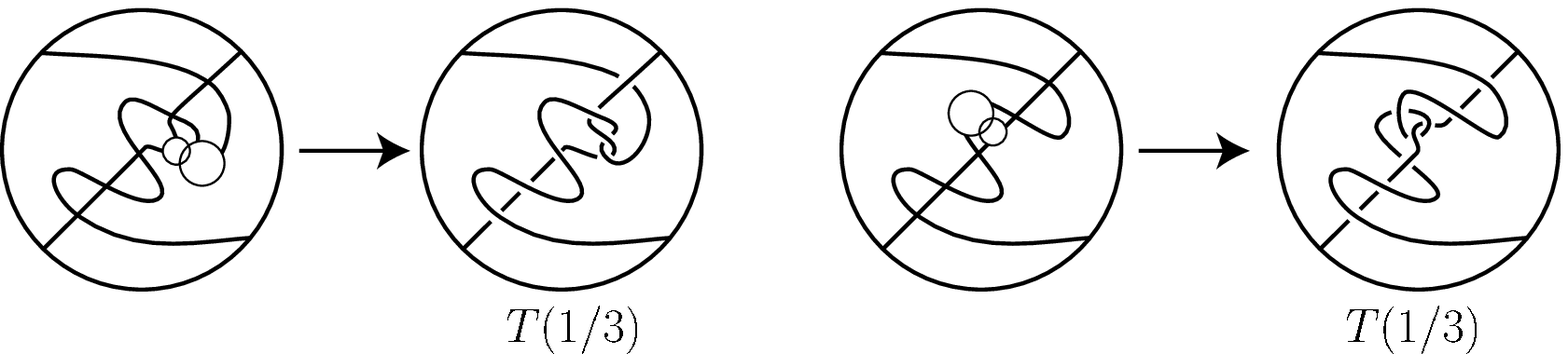}}
      \end{center}
   \caption{}
  \label{one-three-proof4}
\end{figure} 

%

\begin{Lemma}\label{-4-lemma}
Let $\hat T$ be a prime 2-string tangle projection with 
vertical connection.  Suppose that the right string $\tilde b$ has 
no self-crossings and $\hat T$ is not a
projection of the tangle $T(-4)$ (Fig. \ref{-4-tangle}). 
Then one of the following (1) or (2) holds.

(1) There are a tangle projection $\hat S$ with horizontal connection, and natural numbers $m$ and $n$ 
such that $\hat T=\hat S+R(\hat T(2m-1,2n-1))$ (Fig. \ref{pre-pretzel} (a)).

(2) There is a tangle projection $\hat S$ with X-connection and a natural number $m$ such 
that $\hat T=\hat S+\hat T(2m)$ (Fig. \ref{pre-pretzel} (b)).
\end{Lemma}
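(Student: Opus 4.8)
The plan is to exploit the fact that the embedded string $\hat b$ separates $\mathbb{B}^2$, to reduce the situation to the combinatorics of the mixed crossings exactly as in Lemmas \ref{vertical-trefoil-tangle-lemma} and \ref{one-three-lemma}, and then to split off a single twist block from the right of $\hat T$, the parity of that block deciding between alternatives (1) and (2).

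I would begin with the parity observation that organizes everything. Because $\hat b$ has no self-crossings it is embedded, and since the connection is vertical the endpoints $A_0,A_\infty$ lie in the same component of $\mathbb{B}^2-\hat b$; hence $\hat a$ meets $\hat b$ in an even number $2N$ of mixed crossings. Write $A_1,\dots,A_{2N}$ for them along $\hat a$, $B_1,\dots,B_{2N}$ along $\hat b$, and $A_i=B_{\sigma(i)}$. The arc $\hat b$ cuts off a disk $R$ bounded by $\hat b$ and the right boundary arc, and $\hat a\cap R$ is a system of $N$ excursion arcs with endpoints on $\hat b$. All self-crossings of $\hat a$, and everything $\hat a$ does in the complementary region, are destined to be absorbed into the factor $\hat S$; the relevant data is how the excursions sit in $R$, i.e. the permutation $\sigma$.

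Next I would isolate the block. Using Lemma \ref{nugatory-erasing} I may assume $\hat T$ has no nugatory crossings, and tracing $\hat a$ as in the proofs above I locate the outermost excursions, those adjacent to the right boundary arc. The aim is to produce a simple closed curve meeting $\hat T$ in two points that cuts off, on the right, a pure twist region between $\hat a$ and $\hat b$ and leaves $\hat S$ on the left. Just as in the proof of Lemma \ref{vertical-trefoil-tangle-lemma}, the behaviour is governed by $\sigma$: a reversal $\sigma(2i-1)>\sigma(2i)$ (or the analogous patterns of Cases 1 and 2 there), recognized by Lemma \ref{hook-tangle} and Lemma \ref{sublemma}, yields after rotation a pair of odd twist regions $R(\hat T(2m-1,2n-1))$, while a maximal run of in-order crossings yields an even vertical twist region $\hat T(2m)$. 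A direct check of how connection types compose under $+$ then matches the two possibilities with the stated connection of $\hat S$: the even-twist block (vertical connection) forces an X-connection on $\hat S$, giving alternative (2), and the odd-pair block forces a horizontal connection on $\hat S$, giving alternative (1); in both cases the total connection comes out vertical, as it must.

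The remaining, and I expect hardest, point is the degenerate configuration in which $\hat a$ presents no excursion pattern that can be split off as above, so that the mixed crossings form a single in-order run with no spare structure on either side and $\hat T$ reduces to a pure vertical twist region. Here primeness together with the hypothesis that $\hat T$ is not a projection of $T(-4)$ is exactly what is needed to eliminate the surviving exceptional projection. The main obstacle is thus twofold: controlling the excursions of $\hat a$, which (unlike $\hat b$) may cross themselves and one another, so that the outermost mixed crossings really do bound a clean twist region isolable by a two-point curve; and keeping the case analysis of $\sigma$ organized so that every tangle lands in exactly one of (1), (2), or the excluded $T(-4)$.
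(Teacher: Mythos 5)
Your setup coincides with the paper's: use the embeddedness of $\hat b$ to get an even number $2k$ of mixed crossings, encode the configuration in the permutation $\sigma$ defined by $A_i=B_{\sigma(i)}$, and at the very end recover the decomposition from primeness once the connecting arcs $A_i^+A_{i+1}^-$ are known to be mutually disjoint. The gap is that your outline never puts the hypothesis ``$\hat T$ is not a projection of $T(-4)$'' to work, and no argument of the shape you describe can avoid doing so. The entire content of the lemma is the implication: whenever $\sigma$ is not one of the two special patterns (or whenever two arcs $A_i^+A_{i+1}^-$, $A_j^+A_{j+1}^-$ intersect), one can choose over/under information on $\hat T$ realizing $T(-4)$, contradicting the hypothesis. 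The paper's proof is exactly this exhaustive construction: an ascent at an even position, then Cases 1, 1$'$, 2, 2$'$ and the four combinations P1Q1, P1Q2, P2Q1, P2Q2, each converted into an explicit $T(-4)$ diagram by descending assignments on pieces of $\hat a$ together with prescribed choices at the mixed crossings. Your substitute mechanism --- bad patterns are ``recognized by Lemma \ref{hook-tangle} and Lemma \ref{sublemma}'' --- cannot do this job: those lemmas conclude only that $\hat T$ is a projection of $T(-2)$, respectively of $T(1,2)$, and such conclusions are fully compatible with the hypotheses here. Indeed, by Lemma \ref{hook-tangle} \emph{every} prime vertical-connection projection in this lemma is a projection of $T(-2)$, so this ``recognition'' distinguishes nothing; these lemmas are useful only as sub-tools inside a construction of $T(-4)$, which is how the paper uses Lemma \ref{hook-tangle} in its last step.

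Your dictionary between $\sigma$-patterns and the two conclusions is also inverted. In conclusion (2) the factor $\hat S$ has X-connection and hence at least one crossing, so $\hat a$ traverses the twist block \emph{downward} while $\hat b$ goes up: the block corresponds to the fully backward permutation $\sigma(i)=2k+1-i$, not to ``a maximal run of in-order crossings.'' Conclusion (1) corresponds to a permutation consisting of exactly two decreasing runs, i.e.\ a single ascent located at an odd position with the runs correctly interleaved. Ascents at even positions, $\sigma(2i)<\sigma(2i+1)$ --- which include the in-order configurations you assign to conclusion (2) --- are precisely the patterns that must be shown to yield $T(-4)$; consistently, the pure twist $\hat T(2m)$ with $m\geq 2$ (identity permutation) is itself a projection of $T(-4)$ and is excluded by hypothesis rather than appearing in conclusion (2). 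So the ``degenerate configuration'' you flag as the hardest point is in fact immediate, while the genuinely hard part --- producing $T(-4)$ from every non-special $\sigma$ and from any intersecting pair of arcs $A_i^+A_{i+1}^-$ --- is exactly what your proposal defers to ``keeping the case analysis organized.'' A further small slip: a simple closed curve in the interior of ${\mathbb B}^2$ meeting a prime projection transversally in two points bounds a trivial disk pair by the definition of primeness, so it cannot cut off a twist block; the curve separating $\hat S$ from the block must be an arc meeting $\partial {\mathbb B}^2$.
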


\begin{figure}[htbp]
      \begin{center}
\scalebox{0.58}{\includegraphics*{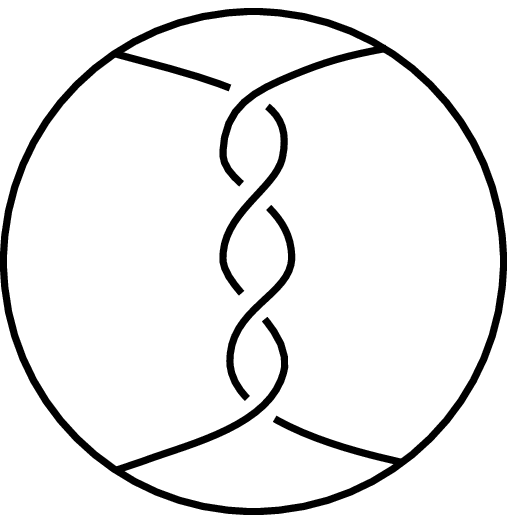}}
      \end{center}
   \caption{}
  \label{-4-tangle}
\end{figure} 

%

%
\begin{figure}[htbp]
      \begin{center}
\scalebox{0.58}{\includegraphics*{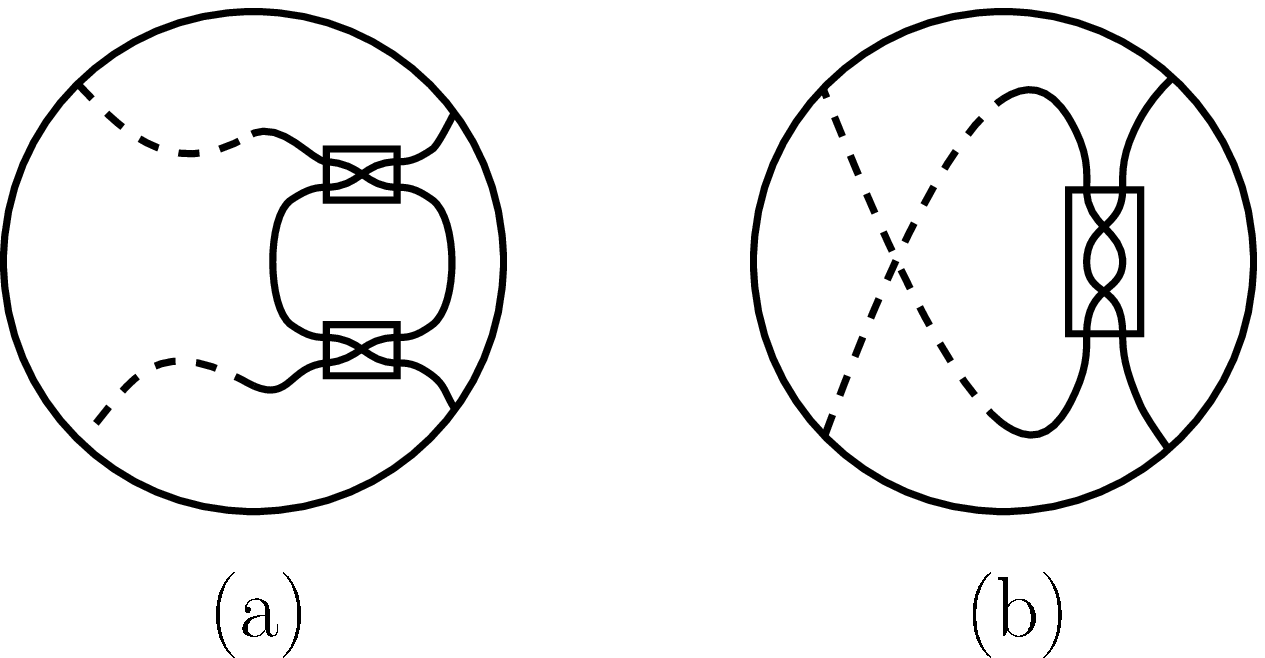}}
      \end{center}
   \caption{}
  \label{pre-pretzel}
\end{figure} 

%

\begin{proof}
We analyze a permutation $(\sigma (1),\sigma (2),\ldots ,\sigma (2k))$ of $(1,2,\ldots ,2k)$ 
defined by $A_{i}=B_{\sigma (i)}$. Note that the tangle projection in Fig. \ref{pre-pretzel} 
(b) defines the \lq\lq backward'' permutation $\sigma (2m)< \sigma (2m-1)< \cdots < \sigma (2) < \sigma (1)$. Namely $\sigma(i)=2m+1-i$ for every $i$. The tangle projection in Fig. \ref{pre-pretzel} (a) defines the permutation composed of two \lq\lq backward'' pieces, that is,
  $\sigma(2n-1) < \sigma (2n-2) < \cdots < \sigma (1) < \sigma (2m+2n-2)< \sigma (2m+2n-3)< \cdots < \sigma (2n+1) < \sigma (2n)$. 
We will show that $\hat T$ is a projection of $T(-4)$ unless 
the permutation $\sigma$ is one of these forms. 
For that purpose we watch the \lq\lq forward'' part of $\sigma$. 
First suppose that $\sigma(i) > \sigma(i+1)$ 
for every $i$. Then we have $\sigma(i)=2k+1-i$ for each $i$. Next suppose that $\sigma (2i)<\sigma (2i+1)$ for some $i$. In this case we give over/under crossing information to $\hat T$ as follows.
\begin{enumerate}
\item[(i)] $A_0A_{2i}^-$ is under everything.
\item[(ii)] $A_{2i+1}^+A_\infty$ is over everything.
\item[(iii)] $\hat a$ is descending.
\item[(iv)] $\hat a$ is over $\hat b$ at $A_{2i}$ and under $\hat b$ at $A_{2i+1}$.
\end{enumerate}
Then we have $T(-4)$ as illustrated in Fig. \ref{-4-tangle-proof1}

\begin{figure}[htbp]
      \begin{center}
\scalebox{0.58}{\includegraphics*{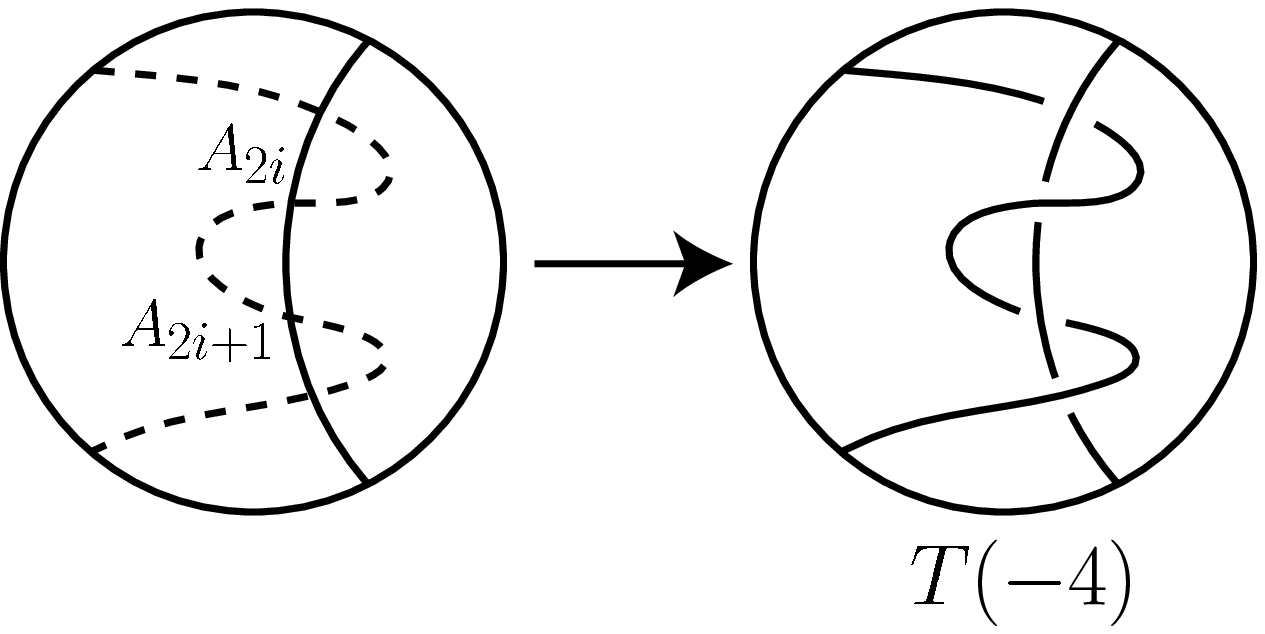}}
      \end{center}
   \caption{}
  \label{-4-tangle-proof1}
\end{figure} 

%

Next suppose that $\sigma (2j)>\sigma (2j+1)$ for every $j$ and there is $i$ such that $\sigma (2i-1)<\sigma (2i)$. We fix this $i$ and analyze how it goes after $A_{2i}$ (Case 1, Case 2) and how it goes back before $A_{2i-1}$ (Case 1$'$, Case 2$'$) .

\vskip 3mm
Case 1. There exists $j>i$ such that $\sigma (2i-1)<\sigma (2j-1)<\sigma (2j)$.

We take the smallest such $j$. Then by the assumption we have $\sigma(2j-1)<\sigma(2j-2)<\cdots<\sigma(2i)$. In this case we give over/under crossing information to $\hat T$ as follows.
\begin{enumerate}
\item[(i)] $A_0A_{2i-1}^-$ is over everything.
\item[(ii)] $A_{2j}^+A_\infty$ is under everything.
\item[(iii)] $\hat a$ is descending.
\item[(iv)] $\hat a$ is under $\hat b$ at $A_{2i-1}$ and $A_{2j-1}$ and over $\hat b$ at $A_{2i},A_{2i+1},\cdots,A_{2j-2}$ and $A_{2j}$.
\end{enumerate}
Then we have $T(-4)$ as illustrated in Fig. \ref{-4-tangle-proof2}
\begin{figure}[htbp]
      \begin{center}
\scalebox{0.58}{\includegraphics*{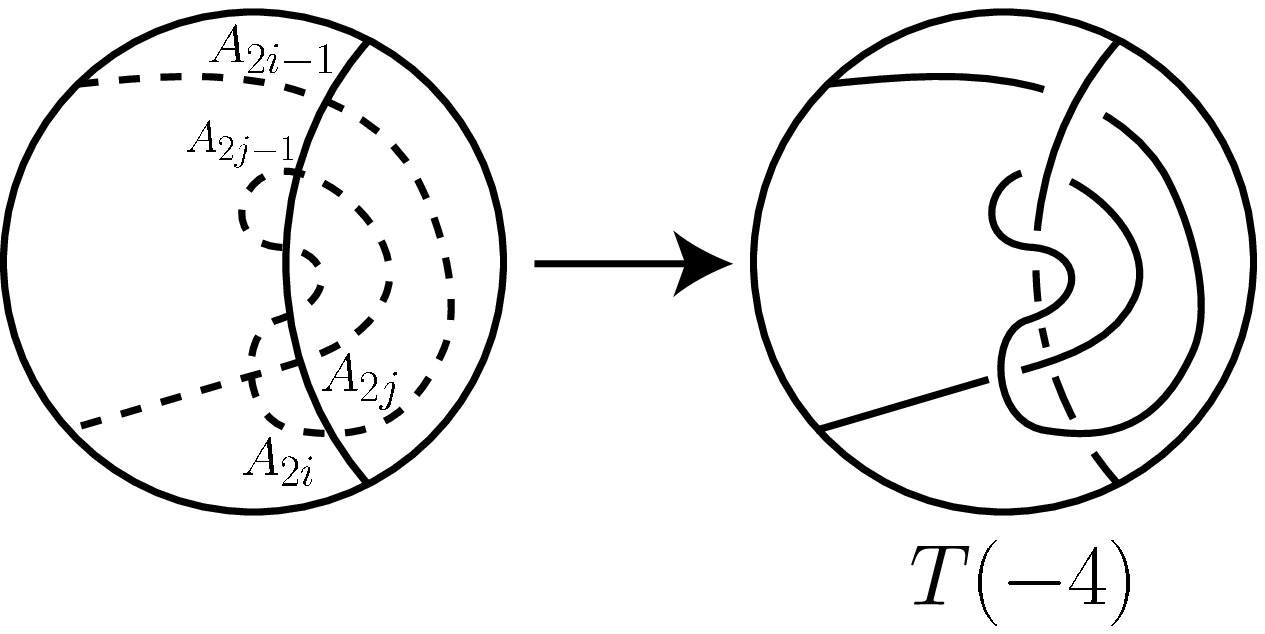}}
      \end{center}
   \caption{}
  \label{-4-tangle-proof2}
\end{figure} 

%

Case 1$'$. There exists $l<i$ such that $\sigma (2i)>\sigma (2l)>\sigma (2l-1)$.

We take the largest such $l$. Then by the assumption we have $\sigma(2l)>\sigma(2l+1)>\cdots>\sigma(2i-1)$. Note that this case is obtained from Case 1 by reversing the order. Namely $2i-1,2i,2j-1$ and $2j$ correspond to $2i,2i-1,2l$ and $2l-1$ respectively. Since the tangle $T(-4)$ has horizontal symmetry $H(T(-4))=T(-4)$ we have that $\hat T$ is a projection of $T(-4)$.

\vskip 3mm

Case 2. Case 1 does not occur and there exist $j>i$ such that $\sigma (2j)<\sigma (2i-1)<\sigma (2j-1)$.

We take the smallest such $j$. Then by the assumption we have $\sigma(2j-1)<\sigma(2j-2)<\cdots<\sigma(2i)$. In this case we give over/under crossing information to $\hat T$ as follows.
\begin{enumerate}
\item[(i)] $A_0A_{2i-1}^-$ is under everything.
\item[(ii)] $A_{2j}^+A_\infty$ is over everything.
\item[(iii)] $A_0A_{2j}$ and $A_{2j}A_\infty$ are descending.
\item[(iv)] $\hat a$ is under $\hat b$ at $A_{2i-1}$ and $A_{2j-1}$ and over $\hat b$ at $A_{2i},A_{2i+1},\cdots,A_{2j-2}$ and $A_{2j}$.
\end{enumerate}
Then we have $T(-4)$ as illustrated in Fig. \ref{-4-tangle-proof3}
\begin{figure}[htbp]
      \begin{center}
\scalebox{0.58}{\includegraphics*{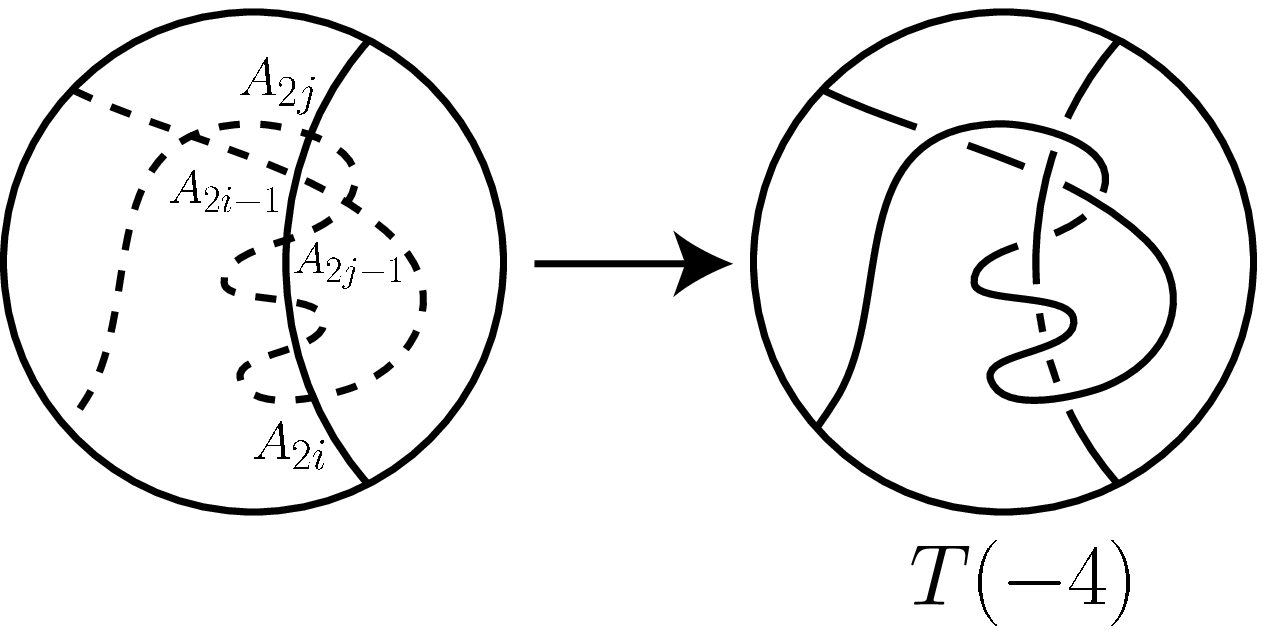}}
      \end{center}
   \caption{}
  \label{-4-tangle-proof3}
\end{figure} 

%

Case 2$'$.  Case 1$'$ does not occur and there exist $l<i$ such that $\sigma(2l-1)>\sigma(2i)>\sigma(2l)$.

By the horizontal symmetry of $T(-4)$ and by Case 2 we have that $\hat T$ is a projection of $T(-4)$.

In the following cases we assume that neither  Case 1 nor Case 1$'$, Case 2 or Case 2$'$ occurs.

Then after $A_{2i}$ we have the following two possibilities.

Case P1.  There exists $j>i$ such that $\sigma (2j-1)<\sigma (2i-1)<\sigma (2j-2)$.

In this case we take the smallest such $j$ and therefore we further have $\sigma(2j-1)<\sigma(2j-2)<\cdots<\sigma(2i)$
(or $\sigma(2j-1)<\sigma(2j-2)=\sigma(2i)$ as a special case).

Case P2. Case P1 does not occurs. Namely $\sigma(2i-1)<\sigma(2k)<\sigma(2k-1)<\cdots<\sigma(2i)$
(or $2k=2i$ as a special case).

Before $A_{2i-1}$ we have the following two possibilities.

Case Q1.  There exists $l<i$ such that $\sigma (2l)>\sigma (2i)>\sigma (2l-2)$.

In this case we take the largest such $l$ and therefore we further have $\sigma(2l)>\sigma(2l+1)>\cdots>\sigma(2i-1)$
(or $\sigma(2l)>\sigma(2l+1)=\sigma(2i-1)$ as a special case).

Case Q2. Case Q1 does not occurs. Namely $\sigma(2i)>\sigma(1)>\sigma(2)>\cdots>\sigma(2i-1)$
(or $1=2i-1$ as a special case).

Thus we have the following four cases to be considered.

Case P1Q1. In this case we give over/under crossing information to $\hat T$ as follows.
\begin{enumerate}
\item[(i)] $A_0A_{2l}^-$ is under everything.
\item[(ii)] $A_{2j-1}^+A_\infty$ is over everything.
\item[(iii)] $A_{2l}^+A_{2i-1}^-$ is over everything except $A_{2j-1}^+A_\infty$.
\item[(iv)] $A_{2i}^+A_{2j-1}^-$ is under everything except $A_0A_{2l}^-$.
\item[(v)] Each of $A_0A_{2l}$, $A_{2l}A_{2i-1}$, $A_{2i-1}A_{2i}$, $A_{2i}A_{2j-1}$ and $A_{2j-1}A_\infty$ is descending.
\item[(vi)] $\hat a$ is under $\hat b$ at $A_{2l},A_{2i-1},A_{2i+1}A_{2i+2},\cdots,A_{2j-1}$ and over $\hat b$ at $A_{2l+1},A_{2l+2},\cdots,A_{2i-2}$ and  $A_{2i}$.
\end{enumerate}
Then we have $T(-4)$ as illustrated in Fig. \ref{-4-tangle-proof4}
\begin{figure}[htbp]
      \begin{center}
\scalebox{0.58}{\includegraphics*{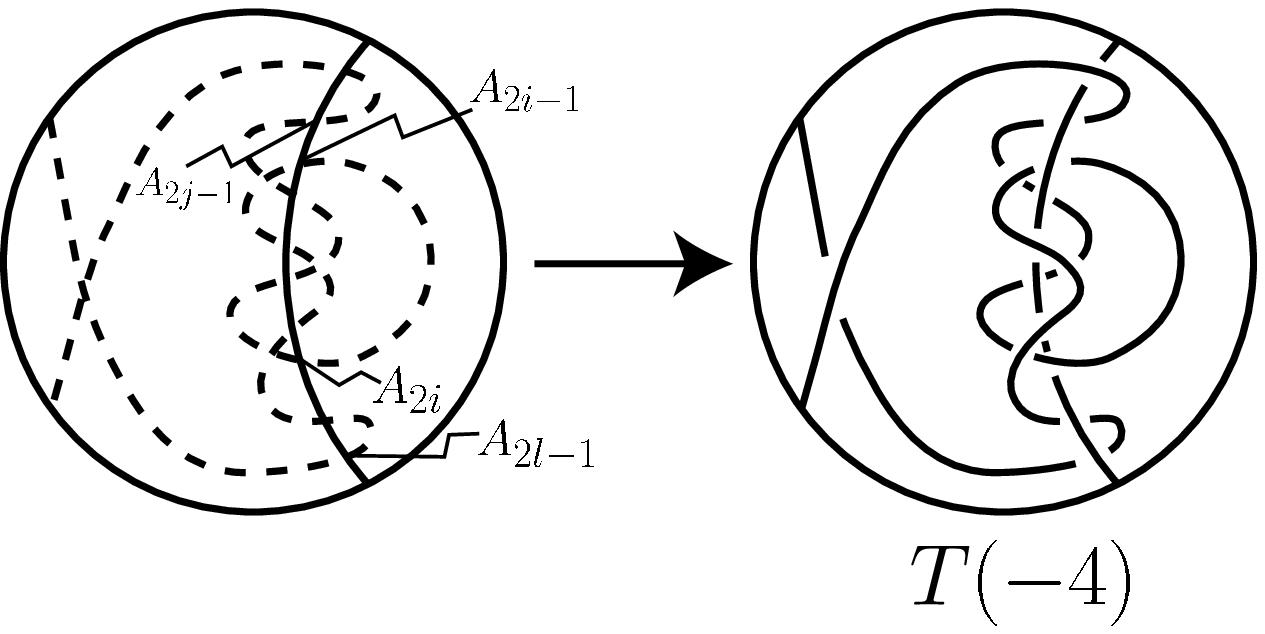}}
      \end{center}
   \caption{}
  \label{-4-tangle-proof4}
\end{figure} 

%
Case P1Q2. In this case we give over/under crossing information to $\hat T$ as follows.
\begin{enumerate}
\item[(i)] $A_{2j-1}^+A_\infty$ is over everything.
\item[(ii)] Each of $A_0A_{2j-1}$ and $A_{2j-1}A_\infty$ is descending.
\item[(iii)] $\hat a$ is over $\hat b$ at $A_{1},A_{2},\cdots,A_{2i-2}$ and $A_{2i}$ and under $\hat b$ at $A_{2i+1},A_{2i+2},\cdots,A_{2j-1}$.
\end{enumerate}
Then we have $T(-4)$ as illustrated in Fig. \ref{-4-tangle-proof5}
\begin{figure}[htbp]
      \begin{center}
\scalebox{0.58}{\includegraphics*{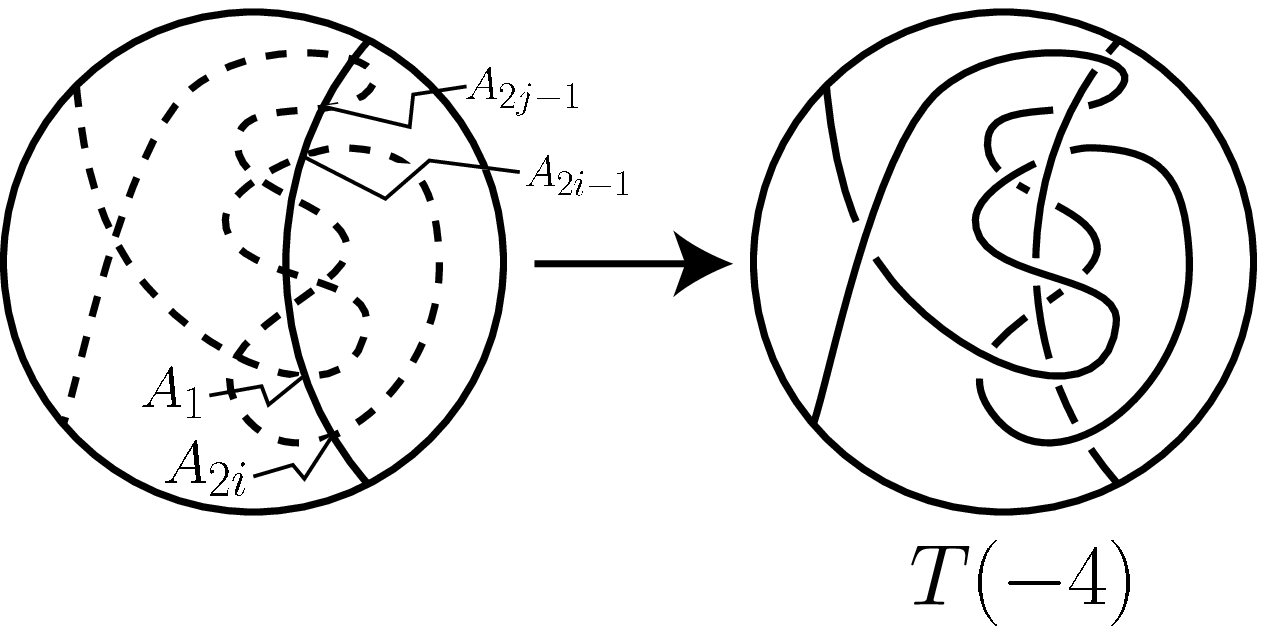}}
      \end{center}
   \caption{}
  \label{-4-tangle-proof5}
\end{figure} 

%

Case P2Q1. This case follows from Case P1Q2 and the horizontal symmetry of $T(-4)$.

Case P2Q2. Suppose that $\sigma(1)>\sigma(2k)$. Then we give over/under crossing information to $\hat T$ as follows.
\begin{enumerate}
\item[(i)] $A_{1}^+A_{2i}^-$ is over everything.
\item[(ii)] $A_{2i}^+A_{2k}^-$ is under everything.
\item[(iii)] Each of $A_0A_{1}$, $A_1A_{2i}$, $A_{2i}A_{2k}$ and $A_{2k}A_\infty$ is descending.
\item[(iv)] $A_0A_{1}$ is under $A_{2k}A_\infty$.
\item[(v)] $\hat a$ is over $\hat b$ at $A_{2i}$ and $A_{2k}$ and under $\hat b$ at $A_{1}$.
\end{enumerate}
Then we have $T(-4)$ as illustrated in Fig. \ref{-4-tangle-proof6}
\begin{figure}[htbp]
      \begin{center}
\scalebox{0.58}{\includegraphics*{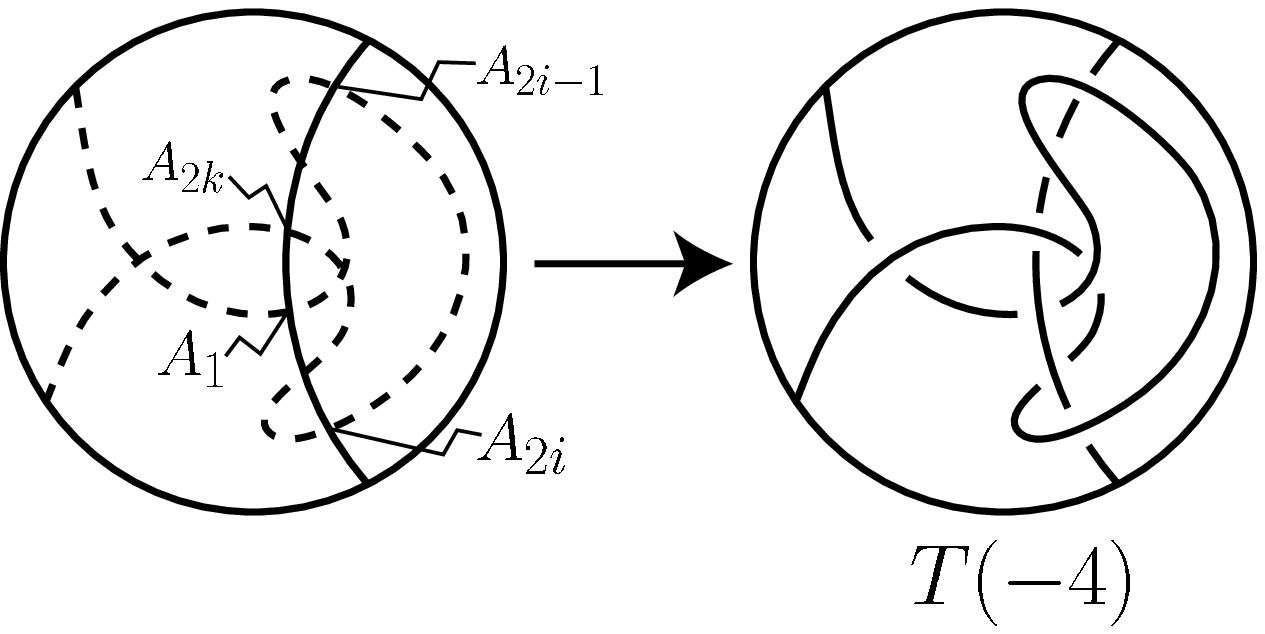}}
      \end{center}
   \caption{}
  \label{-4-tangle-proof6}
\end{figure} 

%

Thus we finally have that $\sigma(2i-1) < \sigma (2i-2) < \cdots < \sigma (1) < \sigma (2k)< \sigma (2k-1)< \cdots < \sigma (2i+1) < \sigma (2i)$. This is one of the permutations described at the beginning of the proof.

Now suppose that $\sigma (2i-1)>\sigma (2i)$ for every $i$. Namely there are no forward part in $\sigma$ 
and so we have $\sigma(2k)<\sigma(2k-1)<\cdots<\sigma(2)<\sigma(1)$. This is the second permutation 
described at the beginning of the proof. 
Suppose that in these cases there are $i$ and $j$ with $i<j$ such that $A_{i}^+A_{i+1}^-$ 
intersects $A_{j}^+A_{j+1}^-$. Then by the use of Lemma \ref{hook-tangle} we have $T(-4)$ 
as illustrated in Fig. \ref{-4-tangle-proof7}. Then by the primeness of $\hat T$ 
we reach the desired conclusion.

\end{proof}

\begin{figure}[htbp]
      \begin{center}
\scalebox{0.58}{\includegraphics*{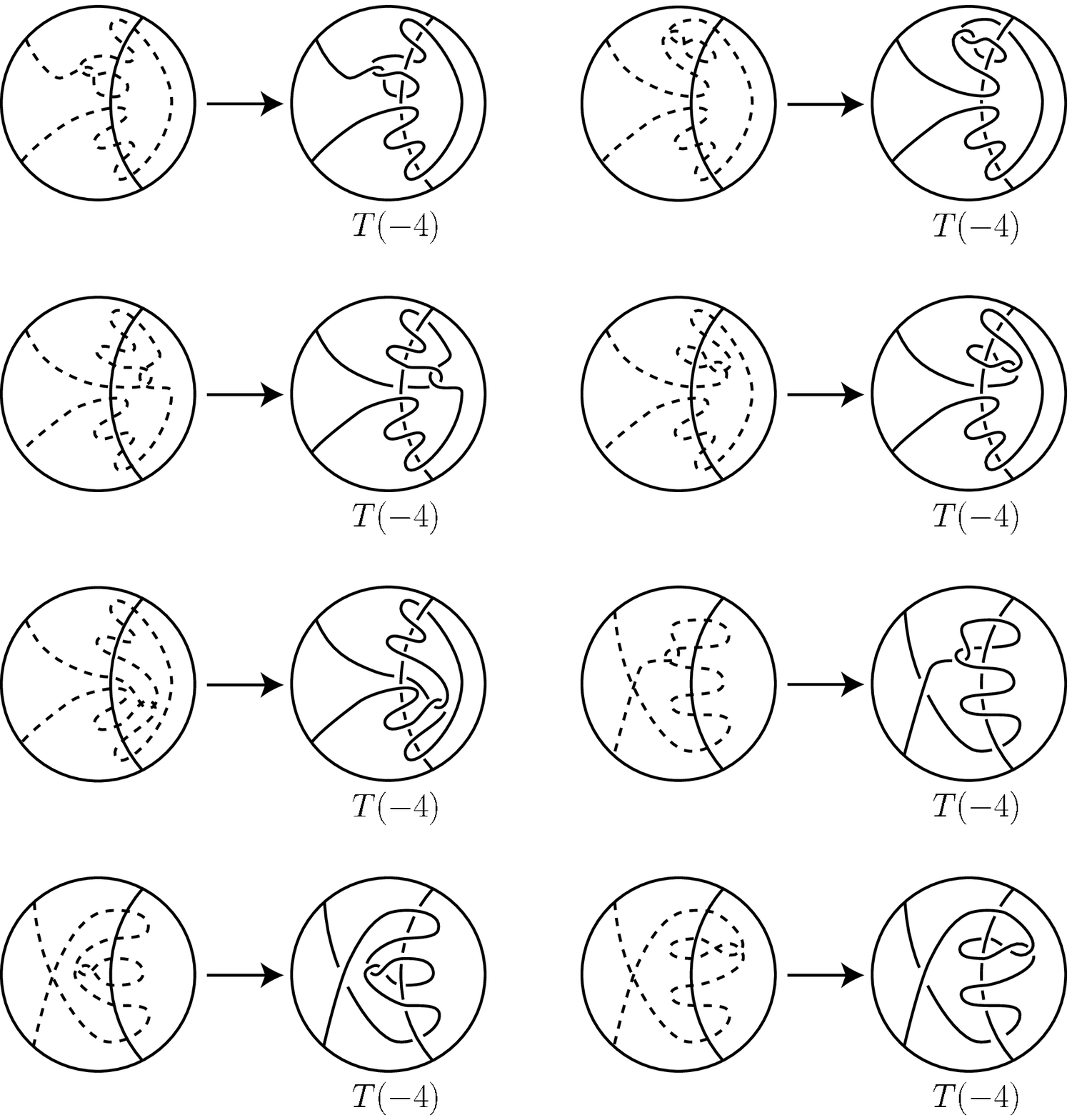}}
      \end{center}
   \caption{}
  \label{-4-tangle-proof7}
\end{figure} 

%

The following corollaries of Lemma \ref{-4-lemma} are the main tools of this section.

\begin{Corollary}\label{-4-lemma-corollary1}
Let $\hat T$ be a prime 2-string tangle projection with 
vertical connection. Suppose that the right string $\hat b$ has no self-crossings and $\hat T$ is not a  
projection of the tangle $T(-4)$, $R(T(-1,-2))+T(-2)$, or $T(-3,2)$ (Fig. \ref{tangles-for-five-one}).
Then one of the followings holds.

(1) $\hat T=\hat T(1/2l)+R(\hat T(2m-1,2n-1))$ (Fig. \ref{pretzel-tangles} (a)) for some $l\geq0$ and $m,n>0$.

(2) $\hat T=\hat T(1/(2l-1),2m)$ (Fig. \ref{pretzel-tangles} (b)) for some $l,m>0$.

\end{Corollary}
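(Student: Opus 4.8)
The plan is to bootstrap from Lemma \ref{-4-lemma}. Since $\hat T$ is prime, has vertical connection, has right string $\hat b$ without self-crossings, and is not a projection of $T(-4)$, that lemma applies directly and yields one of its two decompositions: either (I) $\hat T = \hat S + R(\hat T(2m-1,2n-1))$ with $\hat S$ a tangle projection having horizontal connection, or (II) $\hat T = \hat S + \hat T(2m)$ with $\hat S$ having X-connection. In both cases the residual summand carries only mixed crossings, so every self-crossing of $\hat a$ lives inside $\hat S$, and $\hat S$ inherits from $\hat T$ both primeness and the property that its right string has no self-crossings. The remaining task is therefore to classify $\hat S$, and the two conclusions of the corollary amount to showing $\hat S = \hat T(1/2l)$ (a horizontal twist region with an even number of crossings, matching the horizontal connection) in case (I), and $\hat S = \hat T(1/(2l-1))$ (an odd twist region, matching the X-connection) in case (II).

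Next I would re-run the tear-drop/permutation bookkeeping from the proof of Lemma \ref{-4-lemma} one level deeper, now focussing on the self-crossings of $\hat a$ that survive inside $\hat S$. The proof of that lemma already disposes of crossings between two \emph{distinct} arcs $A_i^+A_{i+1}^-$ and $A_j^+A_{j+1}^-$, since Lemma \ref{hook-tangle} turns them into a $T(-4)$ sub-pattern, which primeness forbids. What is new is to analyze self-crossings lying \emph{within} a single arc $A_i^+A_{i+1}^-$ of $\hat a$. Using that $\hat T$ is prime, hence reduced, such a self-crossing cannot be nugatory; and applying Lemma \ref{hook-tangle} (and, where a tear-drop appears, Lemma \ref{sublemma}) to it together with the neighbouring mixed crossings again realizes a $T(-4)$ sub-pattern unless the local configuration is one of finitely many minimal shapes. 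Primeness of $\hat T$ then rules out the $T(-4)$ outcome in all but those minimal shapes, forcing $\hat a$ to have no essential self-crossing inside $\hat S$; hence $\hat S$ has only mixed crossings and, being prime with the prescribed connection, is exactly a single horizontal twist region $\hat T(1/k)$ with $k$ even (case I) or odd (case II). Reading off parities yields the stated normal forms, with $l\ge 0$ in (1) (the value $l=0$ being the crossingless horizontal connection) and $l,m>0$ in (2).

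The heart of the argument, and where I expect the genuine work, is the enumeration of those minimal local configurations that escape the $T(-4)$ conclusion. These are precisely the shapes that neither normal form can absorb: a single self-crossing of $\hat a$ interacting with the mixed crossings in a way that can be pushed neither into a clean horizontal twist region on the left nor into a clean pretzel residual on the right. I expect exactly the three tangles excluded in the hypothesis, namely $T(-4)$, $R(T(-1,-2))+T(-2)$, and $T(-3,2)$, to be the complete list of survivors. Verifying completeness — keeping every orientation and over/under assignment consistent while repeatedly invoking Lemma \ref{hook-tangle}, and checking that each surviving shape really fails both normal forms — is the delicate bookkeeping; but once the reduction to within-arc self-crossings is in place it becomes a finite case check rather than a conceptual obstacle.
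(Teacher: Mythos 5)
Your first step coincides with the paper's: apply Lemma \ref{-4-lemma} to get $\hat T=\hat S+R(\hat T(2m-1,2n-1))$ or $\hat T=\hat S+\hat T(2m)$, and reduce the corollary to showing that $\hat S$ is a twist projection. But from there the proposal has a genuine gap. The paper's key move --- which you never make --- is to pass to the minor $\hat S+\hat T(2)$ of $\hat T$ (collapsing the residual twist regions, in the spirit of Lemma \ref{sublemma1}) and then invoke the two classification lemmas already proved: Lemma \ref{vertical-trefoil-tangle-lemma} applied to $\hat S$ in case (1), and Lemma \ref{one-three-lemma} in case (2). These say that $\hat S$ is either the required twist projection $\hat T(1/2l)$ resp.\ $\hat T(1/(2l-1))$, or else a projection of a $T(1,2)$- resp.\ $T(1/3)$-type tangle; in the latter case $\hat S+\hat T(2)$, hence $\hat T$, is a projection of $R(T(-1,-2))+T(-2)$ resp.\ $T(-3,2)$, contradicting the hypothesis. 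So the two extra excluded tangles arise as a sum of a classified-$\hat S$ pattern with a $T(\pm 2)$ coming from the mixed-crossing residual; they are not, as you predict, ``minimal local configurations'' of self-crossings of $\hat a$ that escape a $T(-4)$ argument, and a scheme that only hunts for $T(-4)$ sub-patterns inside $\hat S$ can never produce them.

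Your substitute plan also fails on its own terms. First, the enumeration you defer is not a ``finite case check'': $\hat S$ is an arbitrary subtangle projection with unboundedly many crossings (both crossings between its two strands and self-crossings of each strand), and taming it requires exactly the spine-induction and permutation machinery of Lemmas \ref{vertical-trefoil-tangle-lemma} and \ref{one-three-lemma}; your plan amounts to re-proving those lemmas from scratch without acknowledging that this is where the work lies. Second, the intermediate claim ``$\hat a$ has no essential self-crossing inside $\hat S$, hence $\hat S$, being prime with the prescribed connection, is a single twist region'' is doubly wrong: conclusion (1) of the corollary allows $\hat S=\hat T(1/2l)$ with $l>0$, so $\hat S$ may well carry self-crossings of $\hat a$; and even a prime tangle projection whose strands have no self-crossings need not be a twist region (pretzel-type projections such as $R(\hat T(n_1,\dots,n_k))$ are standing counterexamples; compare the exceptional tangles in Lemmas \ref{sublemma2} and \ref{sublemma3}). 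Finally, primeness of $\hat T$ cannot ``rule out the $T(-4)$ outcome'' as you assert --- only the hypothesis that $\hat T$ is not a projection of $T(-4)$ does that; primeness merely guarantees that Lemma \ref{-4-lemma} applies in the first place.
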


\begin{figure}[htbp]
      \begin{center}
\scalebox{0.58}{\includegraphics*{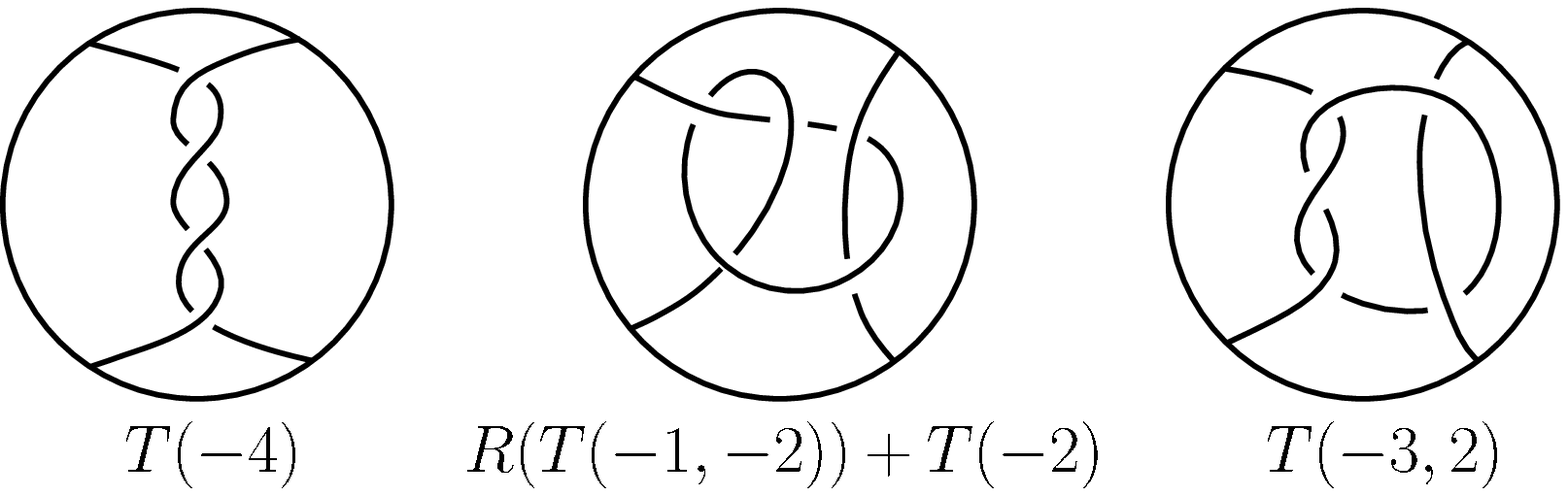}}
      \end{center}
   \caption{}
  \label{tangles-for-five-one}
\end{figure} 

%

%
\begin{figure}[htbp]
      \begin{center}
\scalebox{0.58}{\includegraphics*{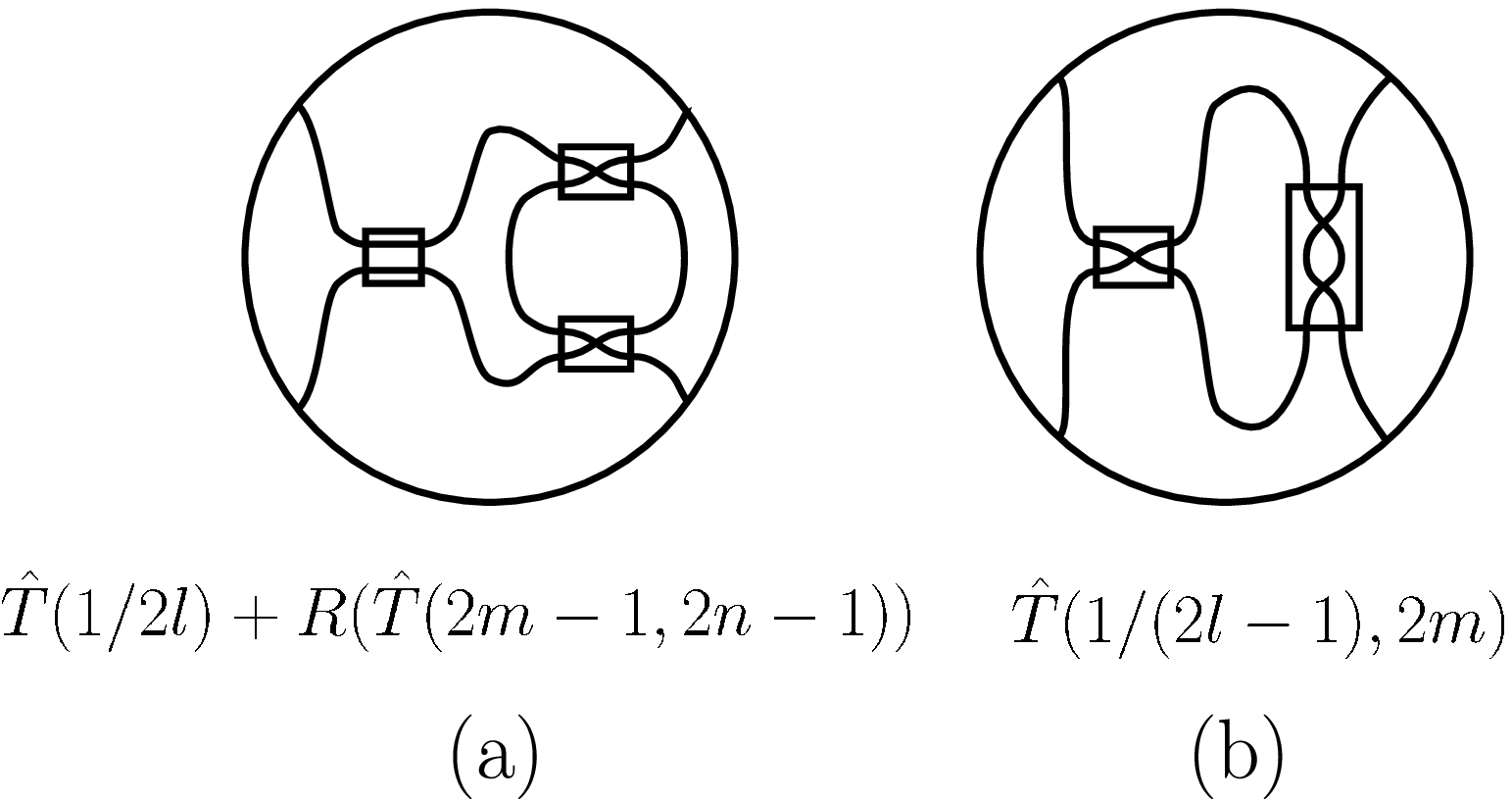}}
      \end{center}
   \caption{}
  \label{pretzel-tangles}
\end{figure} 

%

\begin{proof}
Assume that $\hat T$ is not a projection of the tangle $T(-4)$ then (1) or (2) of Lemma \ref{-4-lemma} holds.

First suppose that the condition (1) of Lemma \ref{-4-lemma} holds. Note that the tangle projection $\hat S+R(\hat T(1,1))=\hat S+\hat T(2)$ is a minor of $\hat S+R(\hat T(2m-1,2n-1))$. We apply Lemma \ref{vertical-trefoil-tangle-lemma} to $\hat S$ and have that $\hat S+\hat T(2)$ is a projection of $R(T(-1,-2))+T(-2)$ unless $\hat S=\hat T(1/2l)$ for some $l$. Thus we have the conclusion (1).

Next suppose that the condition (2) of Lemma \ref{-4-lemma} holds. Note that the tangle projection $\hat S+\hat T(2)$ is a minor of $\hat S+\hat T(2m)$. We apply Lemma \ref{one-three-lemma} to $\hat S$ and have that $\hat S+\hat T(2)$ is a projection of $T(-3,2)$ unless $\hat S=\hat T(1/(2l-1))$ for some $l$. Thus we have the conclusion (2).
\end{proof}

\begin{Lemma}\label{five-one-projection-lemma}
Let $\hat K$ be a knot projection. Suppose that 
$\hat K$ is not an underlying projection of the $(2,5)$-torus knot. 
Then $\hat K$ is a connected sum of some knot projections each of which is one of the projections in 
Fig. \ref{pretzel-knot-projection}. 
\end{Lemma}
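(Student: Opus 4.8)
The plan is to reduce to the prime case and then analyze a single self-crossing through its complementary tangle. First I would remove nugatory crossings using Lemma~\ref{nugatory-erasing} and apply the connected-sum decomposition of projections (Fig.~\ref{composite-projection}): writing $\hat K$ as an iterated connected sum of prime knot projections, it suffices to treat each prime factor. Thus the heart of the matter is to show that a \emph{prime} knot projection with at least one crossing is either the underlying projection of the $(2,5)$-torus knot or one of the projections in Fig.~\ref{pretzel-knot-projection}; the general statement then follows by reassembling the connected sum.

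So assume $\hat K$ is prime. I would choose a base point and an orientation, trace $\hat K$, and let $P$ be the first self-crossing encountered. By construction the loop $s(\hat K,P)$ based at $P$ meets no earlier crossing, and hence is a simple embedded loop with no self-crossings of its own. Deleting a small disk centered at $P$ yields the complementary $2$-string tangle projection $\hat T=\hat a\cup\hat b$, in which I take $\hat b=s(\hat K,P)$ as the right string; by the previous sentence $\hat b$ has no self-crossings. Since a knot projection is a single component, the two strings of $\hat T$ join adjacent pairs of endpoints, so after a $\pi/2$-rotation if necessary I may present $\hat T$ with a vertical connection, and reinserting the crossing at $P$ recovers $\hat K$ as an $X$-closure of $\hat T$. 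Primeness of $\hat K$ forces $\hat T$ to be prime, so all hypotheses of Corollary~\ref{-4-lemma-corollary1} are in place.

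Applying Corollary~\ref{-4-lemma-corollary1} leaves two possibilities. Either $\hat T$ is a projection of one of the three exceptional tangles $T(-4)$, $R(T(-1,-2))+T(-2)$, or $T(-3,2)$, or $\hat T$ is a pretzel tangle of the form $\hat T(1/2l)+R(\hat T(2m-1,2n-1))$ or $\hat T(1/(2l-1),2m)$. In each case I would compute the $X$-closure explicitly: closing $T(-4)$ adds the reinserted crossing to the four half-twists and produces exactly the $(2,5)$-torus knot projection (the advertised exception), while the other two exceptional tangles and both pretzel-tangle families close up to the pretzel knot projections displayed in Fig.~\ref{pretzel-knot-projection}. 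Combining this with the connected-sum reduction of the first paragraph gives the stated decomposition.

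The main obstacle is this final bookkeeping: verifying that the $X$-closure of each tangle delivered by Corollary~\ref{-4-lemma-corollary1} is the claimed projection, and in particular isolating $T(-4)$ as the sole source of the $(2,5)$-torus knot projection. A secondary point to confirm before invoking the corollary is that the first-self-crossing loop genuinely presents $\hat T$ with a vertical connection and that $\hat T$ inherits primeness from $\hat K$; both follow from the single-component structure of a knot projection and the definition of primeness, but should be checked carefully since the entire argument rests on landing squarely in the hypotheses of Corollary~\ref{-4-lemma-corollary1}.
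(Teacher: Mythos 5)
Your setup — prime decomposition, tracing to the first crossing where the curve meets its own trace so that the loop $s(\hat K,P)$ is simple, and feeding the complementary tangle (prime, vertical connection, right string without self-crossings) into Corollary~\ref{-4-lemma-corollary1} — is exactly the paper's argument. But the final step, which you yourself flag as the main bookkeeping, is wrong, and wrong in a way that breaks the proof. You claim that only $T(-4)$ closes up to the $(2,5)$-torus knot, while $R(T(-1,-2))+T(-2)$ and $T(-3,2)$ ``close up to the pretzel knot projections displayed in Fig.~\ref{pretzel-knot-projection}.'' In fact \emph{all three} exceptional tangles have $X$-closures that are (non-minimal) diagrams of the $(2,5)$-torus knot; this is precisely what Fig.~\ref{five-one} illustrates. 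For instance, the $X$-closure of $T(-3,2)$ with a negative closure crossing is the pretzel diagram $P(-3,2,-1)$, whose determinant is $\lvert(-3)(2)+(2)(-1)+(-1)(-3)\rvert=5$ and whose double branched cover is $\pm L(5,1)$, so it is the $(2,5)$-torus knot, not any pretzel knot $L(p_1,p_2,p_3)$ with $p_i$ odd and positive (those have signature $-2$, while $\sigma(5_1)=\pm4$). The same happens for $R(T(-1,-2))+T(-2)$. So ``isolating $T(-4)$ as the sole source'' of the torus knot is impossible, and your Case A for the other two tangles does not land in the desired conclusion.

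There is also a logical gap independent of the knot identifications: even if the closures of those two tangles \emph{were} pretzel knots, you could not conclude that $\hat K$ is one of the specific projections in Fig.~\ref{pretzel-knot-projection}. The statement ``$\hat T$ is a projection of the tangle $X$'' only says that some choice of over/under information on $\hat T$ realizes $X$; it gives no structural description of the projection $\hat T$ itself, whereas the Lemma's conclusion is structural. Structure comes only from the positive alternatives (1) and (2) of Corollary~\ref{-4-lemma-corollary1}. The correct use of the dichotomy is therefore the reverse of yours: by hypothesis each prime factor is \emph{not} a projection of the $(2,5)$-torus knot; since every one of the three exceptional tangles closes to that knot, $\hat T$ cannot be a projection of \emph{any} of them; hence the corollary applies and yields $\hat T=\hat T(1/2l)+R(\hat T(2m-1,2n-1))$ or $\hat T=\hat T(1/(2l-1),2m)$ as projections, whose closures are exactly the projections of Fig.~\ref{pretzel-knot-projection}. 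Two smaller points to repair as well: the primeness and connectivity of $\hat T$ require the prime factor to be reduced, which holds only when it has two or more crossings (the one-crossing kink projection must be absorbed directly into the list), and erasing nugatory crossings via Lemma~\ref{nugatory-erasing} changes the projection, so it cannot be used if the conclusion is to describe $\hat K$ itself rather than a simplification of it.
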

\begin{figure}[htbp]
      \begin{center}
\scalebox{0.58}{\includegraphics*{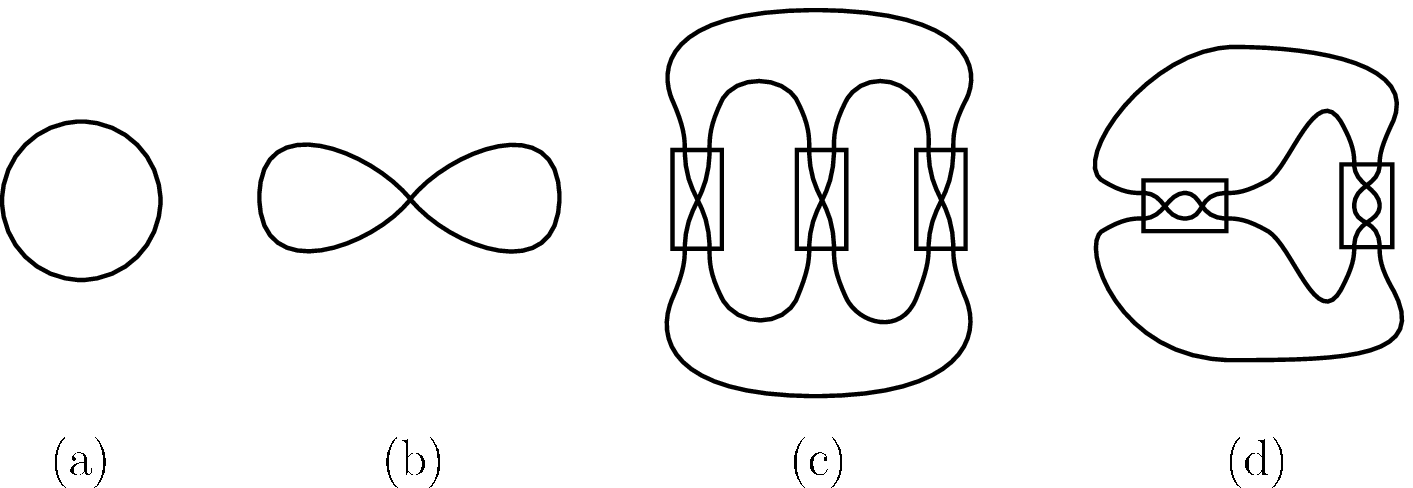}}
      \end{center}
   \caption{}
  \label{pretzel-knot-projection}
\end{figure} 

%

%
\begin{proof}
Suppose that $\hat K$ is a connected sum of prime projections $\hat K_1,\cdots,\hat K_k$. Then each $\hat K_i$ is not an underlying projection of the $(2,5)$-torus knot. We will show that each $\hat K_i$ is one of the projections in Fig. \ref{pretzel-knot-projection}.
Suppose that $\hat K_i$ has two or more crossings. Since $\hat K_i$ is prime we have that $\hat K_i$ is reduced.
Choose a 1-gon disk (tear drop disk) $\delta$ in $\hat K_i$ as follows:
orient $\hat K_i$ and choose a basepoint $P$ on $\hat K_i$ that is not a crossing point. Follow
$\hat K_i$, starting from $P$, and let $P_0$ be 
the first point (crossing) at which one  meets his trace. 
The simple closed curve which was traced divides the sphere of the 
projection into two disks and $\delta$ is the disk which 
does not contain $P$.  Let $\hat T$  be the complementary 
tangle projection of the vertex $P_0$. If $\hat T$ is a projection 
of the tangle $T(-4)$, $R(T(-1,-2))+T(-2)$, or $T(-3,2)$ then $\hat K_i$ is a projection of 
the $(2,5)$-torus knot as illustrated in Fig. \ref{five-one}.
Then we have by Lemma \ref{-4-lemma-corollary1} that
$\hat T=\hat T(1/2l)+R(\hat T(2m-1,2n-1))$ or $\hat T=\hat T(1/(2l-1),2m)$.
Thus we have that $\hat K_i$ is one of the projections in Fig. \ref{pretzel-knot-projection}.
\end{proof}
\begin{figure}[htbp]
      \begin{center}
\scalebox{0.5}{\includegraphics*{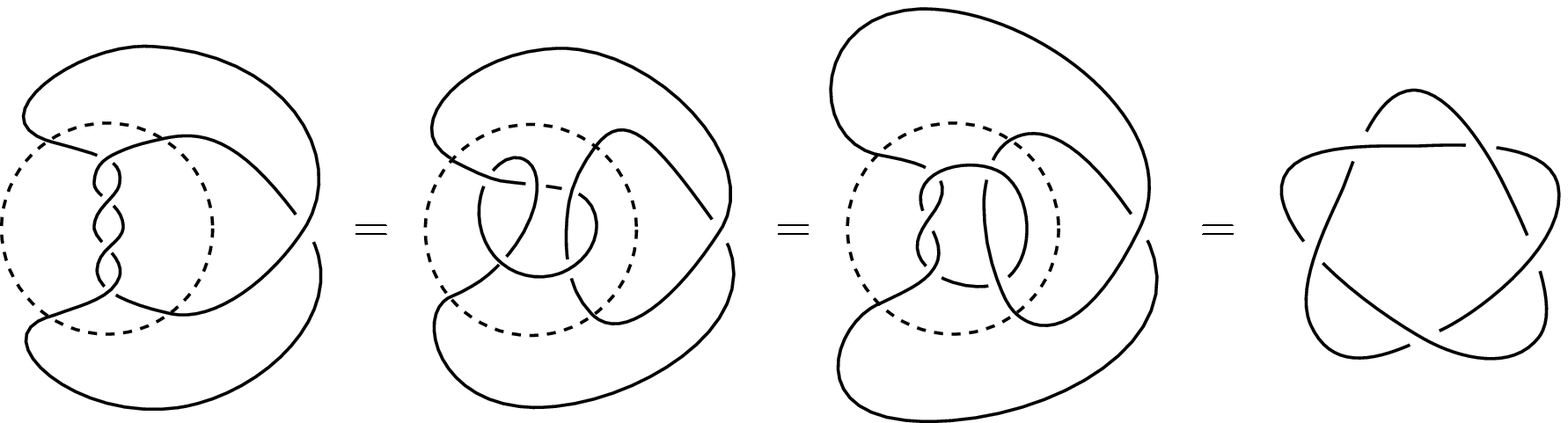}}
      \end{center}
   \caption{}
  \label{five-one}
\end{figure} 

%

\subsection{The main results on Positive Links.} \ \\

\begin{Theorem}\label{projection-theorem}
Let $\hat L=\hat K_1\cup\hat K_2\cup\cdots\cup\hat K_\mu$ be an oriented link projection. Then either

\item[(A)]
$\hat L$ is an underlying projection of the $(2,5)$-torus knot (Fig. \ref{five-one}) (with possible trivial components) or

\item[(B)]
Each $\hat K_i$ is a knot projection described in Lemma \ref{five-one-projection-lemma} and

\item[(1)]
$\hat L$ is an underlying projection of the $(2,4)$-torus link (Fig. \ref{6_2-Whitehead-3-comp} (c)) (with possible trivial components) or

\item[(2)]
$\hat L$ is an underlying projection of the $(3,3)$-torus link (Fig. \ref{three-three} (a)) (with possible trivial components) or

\item[(3)]
$\hat L$ is an underlying projection of $8^3_{10}$ (Fig. \ref{three-three} (b)) (with possible trivial components) or

\item[(4)]
$\hat L$ is an underlying projection of a connected or disjoint sum of the right-handed trefoil
knot and the right-handed Hopf link (with possible trivial components) or

\item[(5)]
$\hat L$ is an underlying projection of a disjoint and/or connected sum of three right-handed Hopf links 
 (with possible trivial components) or

\item[(6)]
$\hat L$ has no mixed crossings or

\item[(7)]
$\hat L$ is an R1 augmentation of $N(\hat T(p_1,p_2,p_3))$ with anti-parallel 
orientations of components for some positive even numbers $p_1,p_2$ and $p_3$ 
(Fig. \ref{pretzel-link-projection}) with possible trivial circles or

\item[(8)]
$\hat L$ is an R1 augmentation of $D(\hat T(2k))$ with anti-parallel orientation of 
components for some positive number $k$ (Fig. \ref{torus-link-projection}) with possible 
trivial circles or a connected or disjoint sum of two copies 
of such link projections.

\end{Theorem}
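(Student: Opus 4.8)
The plan is to rerun the tear-drop/descending analysis that proved Lemma \ref{five-one-projection-lemma}, but now keeping track of the mixed crossings, and to split the argument according to whether $\hat L$ dominates the $(2,5)$-torus knot. First I would regard each $\hat K_i$ as a knot projection in its own right (ignoring the other components) and apply Lemma \ref{five-one-projection-lemma}: either some $\hat K_i$ is an underlying projection of the $(2,5)$-torus knot, or every $\hat K_i$ is a connected sum of the pretzel projections of Fig. \ref{pretzel-knot-projection}. In the former case I assign $\hat K_i$ its $(2,5)$-realizing crossings and give all remaining crossings over/under information by the descending algorithm, putting every other component under $\hat K_i$ and making it descending; this splits and trivializes the other components, so the $(2,5)$-torus knot (with possible trivial components) lies in ${\rm LINK}(\hat L)$ and we are in alternative (A). In the latter case the first assertion of (B) already holds, and it remains to pin down the mixed-crossing pattern, i.e. to produce one of (1)--(8).

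For the mixed analysis I would first dispose of the case with no mixed crossings, which is exactly (6). Assuming mixed crossings are present, I would clean up the projection: delete nugatory crossings by Lemma \ref{nugatory-erasing} and reduce self-crossings by passing to minors via Lemma \ref{reducing-lemma}, so that the interaction between distinct components is carried by a reduced, prime skeleton. Since any two components are disjoint simple closed curves on $S^2$, each pair meets in an even number of mixed crossings, the smallest nontrivial interaction being a double-point clasp. I would then examine, for a pair of linked components, the complementary tangle at a chosen clasp and apply the recognition lemmas: Lemma \ref{hook-tangle} to read off a Hopf-type piece $T(\pm2)$, Lemma \ref{vertical-trefoil-tangle-lemma} and Lemma \ref{one-three-lemma} to recognize the $T(1,2)$ and $T(1/3)$ pieces, and Corollary \ref{-4-lemma-corollary1} to recognize the pretzel and torus twist regions $\hat T(1/2l)+R(\hat T(2m-1,2n-1))$ and $\hat T(1/(2l-1),2m)$.

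The bookkeeping is then organized by the number of pairwise-linked components and by the orientation, parallel versus anti-parallel, along each twist region. A single component whose two strands run anti-parallel through a pretzel twist region is recognized, via Corollary \ref{-4-lemma-corollary1}, as an R1 augmentation of $N(\hat T(p_1,p_2,p_3))$ or of $D(\hat T(2k))$, giving alternatives (7) and (8); here the R1 augmentation absorbs the trivializable kinks left by the reductions. Distinct components linked in the parallel (torus) fashion assemble into the $(2,4)$-torus link, the $(3,3)$-torus link, $8^3_{10}$, the sum of right-handed trefoil and right-handed Hopf link, or the sum of three right-handed Hopf links, matching (1)--(5); primeness together with the connected-sum decomposition of Fig. \ref{composite-projection} lets me treat one prime piece at a time and then reassemble.

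I expect the main obstacle to be the exhaustive verification that these eight families are complete and mutually exhaustive, and in particular the correct separation of the parallel orientation, which makes a twist region read as a torus link, from the anti-parallel orientation, which makes it read as a pretzel. The most delicate point is ensuring that the self-crossing reductions of Lemma \ref{reducing-lemma} never destroy the mixed clasp that is doing the work: I would track the spine and depth data for each component so that reducing its self-crossings leaves its essential linking with the other components intact, and I would carry the connected-sum decomposition along throughout so that primeness is maintained at each recognition step.
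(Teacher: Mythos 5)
Your reduction via Lemma \ref{five-one-projection-lemma} and your handling of alternatives (A) and (6) match the paper, but the core of the theorem --- the classification of mixed-crossing patterns into (1)--(5), (7), (8) --- has genuine gaps. The most concrete one: you read (7) and (8) as single-component phenomena (``a single component whose two strands run anti-parallel through a pretzel twist region''), but $N(\hat T(p_1,p_2,p_3))$ with all $p_i$ even is a \emph{three-component} link projection and $D(\hat T(2k))$ a \emph{two-component} one; the anti-parallel orientation refers to how distinct components run through the shared twist regions, so a single component can never be an R1 augmentation of either. Correspondingly, Corollary \ref{-4-lemma-corollary1} is the wrong tool there: it concerns vertical-connection tangles with one embedded string and is the engine inside Lemma \ref{five-one-projection-lemma} for single knot projections. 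For a mixed crossing between two components the complementary tangle has X-connection, and the relevant dichotomy is Lemma \ref{one-three-lemma}: either the pair of components is a projection of $T(1/3)$, which closes up to the $(2,4)$-torus link (case (1)), or the pair is an R1 augmentation of $D(\hat T(2n))$ with anti-parallel orientation --- and it is these anti-parallel pairs that later produce cases (2), (3), (7) and (8).

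Second, the proposal stops exactly where the hard work starts. Once every linked pair of almost trivial components is known to be a $(2,4)$-torus projection or an anti-parallel $D(\hat T(2n))$ pattern, one must still: (i) handle components that are \emph{not} almost trivial but have mixed crossings --- these give the trefoil-plus-Hopf cases (4)/(5), and your clean-up step of reducing all self-crossings by Lemma \ref{reducing-lemma} would discard precisely the self-crossings needed for the trefoil summand; (ii) control the combinatorics of many linked components --- the paper does this by counting knot projections per connected piece and using a spanning-tree/ordering argument to show that four or more mutually linked knot projections, or three connected pieces each containing linking, force a connected and/or disjoint sum of three right-handed Hopf links (case (5)); and (iii) run the three-component analysis (the paper's two cases for $\mu(1)=3$, Figures \ref{three-three-proof1}--\ref{three-three-proof5}), which is where the $(3,3)$-torus link (2), the link $8^3_{10}$ (3), and the pretzel link projection (7) actually arise. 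Your third paragraph asserts these outcomes (``assemble into \dots'') rather than deriving them, and the organizing dichotomy you propose is inverted: (2), (3) and (7) come out of configurations whose pairwise patterns are all \emph{anti-parallel}, not ``parallel (torus) fashion.''
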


\begin{figure}[htbp]
      \begin{center}
\scalebox{0.58}{\includegraphics*{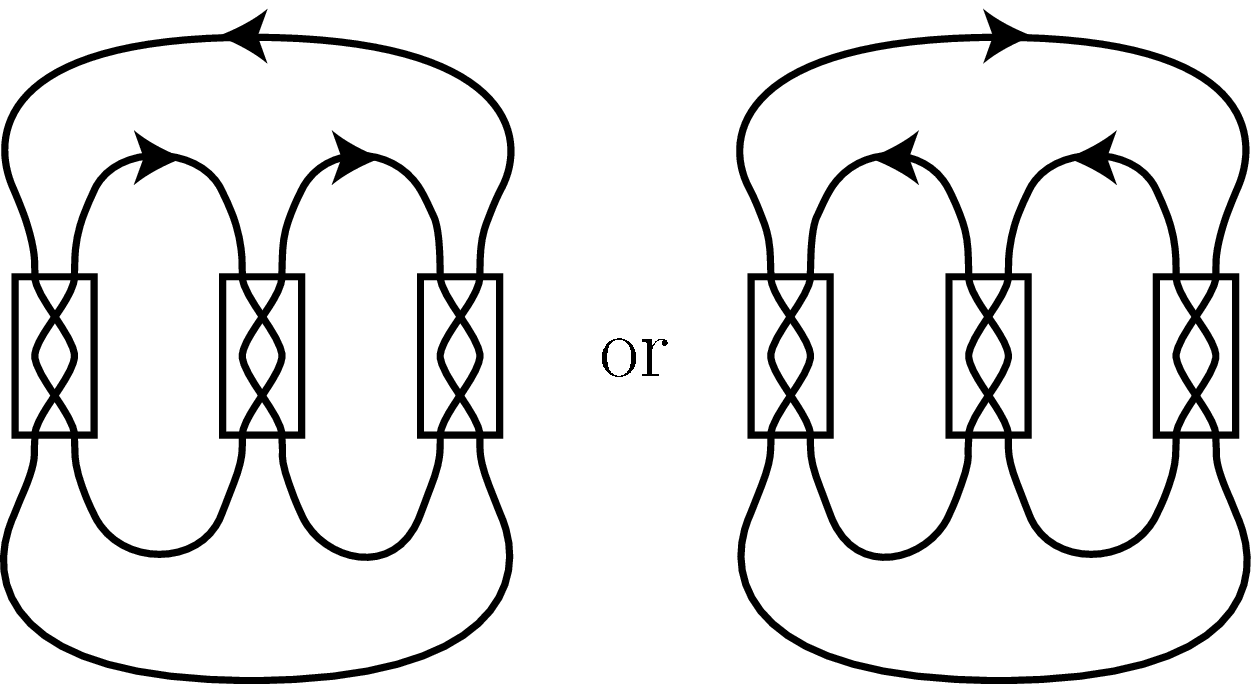}}
      \end{center}
   \caption{}
  \label{pretzel-link-projection}
\end{figure} 

%

%
\begin{figure}[htbp]
      \begin{center}
\scalebox{0.58}{\includegraphics*{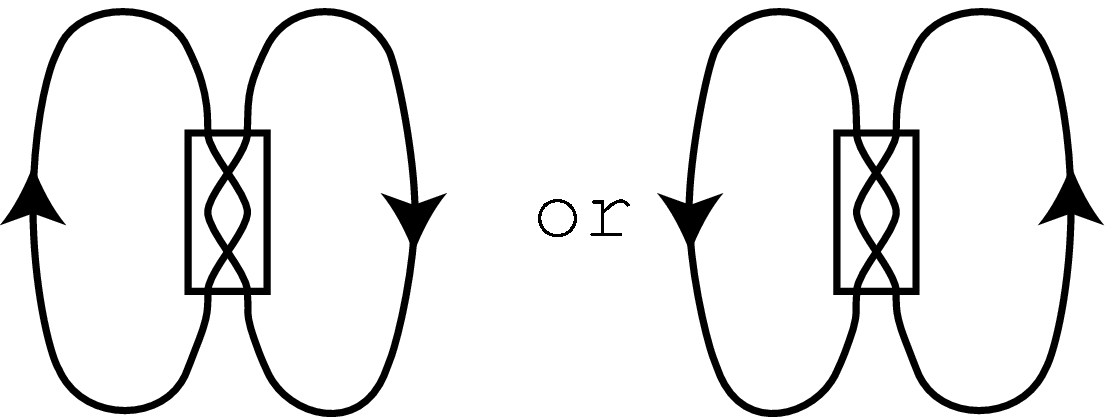}}
      \end{center}
   \caption{}
  \label{torus-link-projection}
\end{figure} 

%


\begin{proof} Suppose that some $\hat K_i$ is a projection of the $(2,5)$-torus knot. 
Then we are dealing with Case (A). Suppose that each $\hat K_i$ is not a projection of the $(2,5)$-torus knot. 
Then by Lemma \ref{five-one-projection-lemma} we have that $\hat K_i$ is a connected 
sum of some knot projections each of which is a projection in Fig. \ref{pretzel-knot-projection}. 

Suppose that $\hat K_i$ is not almost trivial and $\hat K_i$ has some mixed crossings. 
Let $P$ be a mixed crossing on $\hat K_i$ with $\hat K_j$ where $\hat K_i$ crosses from left to right of $\hat K_j$. Then we add over/under crossing information to $\hat K_i\cup\hat K_j$ so that $\hat K_j$ produces a trivial knot, $\hat K_i$ produces a right-handed trefoil knot and $\hat K_j$ is over $\hat K_i$ except $P$. Then the result is a connected sum of a right-handed trefoil knot and a right-handed Hopf link. Thus we have case (B) (4). Suppose that $\hat K_i$ has no mixed crossings but some $\hat K_j$ and $\hat K_l$ has a common mixed crossing. Then by a similar argument we have that $\hat L$ is an underlying projection of a disjoint sum of a right-handed trefoil knot and a right-handed Hopf link. Namely we again have case (B) (5). Suppose that $\hat L$ has no mixed crossings. Then we have case (B) (6). Thus in the following we assume that each $\hat K_i$ is an almost trivial projection.

Now suppose that $\hat K_i$ and $\hat K_j$ has a common mixed crossing, say $P$. We see the complementary tangle projection $\hat T$ of $\hat K_i\cup\hat K_j$ at $P$. We apply Lemma \ref{one-three-lemma} to $\hat T$ and have that $\hat K_i\cup\hat K_j$ is a projection of the $(2,4)$-torus link (Fig. \ref{6_2-Whitehead-3-comp} (c)) unless $\hat K_i\cup\hat K_j$ is an R1 augmentation of the link projection $D(\hat T(2n))$ with anti-parallel orientation for some natural number $n$ (Fig. \ref{torus-link-projection}). If $\hat K_i\cup\hat K_j$ is a projection of the $(2,4)$-torus link then we have the case (B) (1). Thus we consider the case that for any $\hat K_i$ and $\hat K_j$ that has a common mixed crossing the union $\hat K_i\cup\hat K_j$ is an R1 augmentation of the link projection $D(\hat T(2n))$ with anti-parallel orientation for some natural number $n$.

Let $\hat L=\hat L_1\cup\hat L_2\cup\cdots\cup\hat L_u$ be the connected components of $\hat L$. Namely each $\hat L_i$ is a connected projection and $\hat L_i$ and $\hat L_j$ are disjoint for $i\neq j$.

Let $\mu(i)$ be the number of knot projections that compose $\hat L_i$. We may suppose without loss of generality that $\mu(1)\geq\mu(2)\geq\cdots\geq\mu(u)$. Suppose that $\mu(1)\geq\mu(2)\geq\mu(3)\geq2$. Then for each $i=1,2,3$, choose two knot projections, say $\hat K_a$ and $\hat K_b$ in $\hat L_i$ that has a mixed crossing, say $P$. We may suppose without loss of generality that $\hat K_b$ crosses from left to right of $\hat K_a$ at $P$. Then we add over/under crossing information to $\hat L_i$ such that $\hat K_a$ is over $\hat K_b$ except at $P$. We trivialize other parts and we have that each $\hat L_i$ is a projection of a right-handed Hopf link with possible trivial components. Thus we have case (B) (5). Next we consider the case that $\mu(3)=1$ or $u\leq2$. Suppose that $\mu(1)\geq4$. Let $\hat K_{a_1},\hat K_{a_2},\cdots,\hat K_{a_{\mu(1)}}$ be the knot projections in $\hat L_1$. Consider an abstract graph $G$ with vertices $\hat K_{a_1},\hat K_{a_2},\cdots,\hat K_{a_{\mu(1)}}$ and edges $\hat K_{a_i}\hat K_{a_j}$ where $\hat K_{a_i}$ and $\hat K_{a_j}$ have mixed crossings. Let $H_0$ be a spanning tree of $G$. 
By re-ordering the vertices if necessary we may suppose that $K_{a_1}$ is a free vertex (degree one vertex) of $H_0$. Let $P_1$ be a mixed crossing on $\hat K_{a_1}$ at which $\hat K_{a_1}$ crosses from right to left of the other component. Let $H_1$ be the graphs obtained from $H_0$ by removing the vertex $K_{a_1}$ together with the edge incident to it. We may again suppose without loss of generality that $K_{a_2}$ is a free vertex of $H_1$. Let $P_2$ be a mixed crossing on $\hat K_{a_2}$ at which $\hat K_{a_2}$ crosses from right to left of the other component. Let $H_2$ be the graphs obtained from $H_1$ by removing the vertex $K_{a_2}$ together with the edge incident to it. 
We continue this process and finally reach a graph $H_{\mu(1)-1}$ on a single vertex $K_{a_{\mu(1)}}$. 
Now we give over/under crossing information to $\hat L_1$ so that $\hat K_{a_i}$ is over $\hat K_{a_j}$ if $i<j$ 
except at $P_1,P_2,\cdots,P_{\mu(1)-2}$ and $P_{\mu(1)-1}$. Then we have that the result is a connected sum of $\mu(1)-1$ right-handed Hopf links whose linking number structure respects the tree $H_0$. Thus the rest is the case that $2\leq\mu(1)\leq3$. Suppose that $\mu(1)=3$ and $\mu(2)\geq2$. Then by giving over/under crossing information to $\hat L$ in a similar way we have that $\hat L$ is a projection of the disjoint sum of two links, one is a connected sum of two right-handed Hopf links, and the other is a right-handed Hopf link. Namely we again have the case (B) (5). 

Suppose that $\mu(1)=3$ and $\mu(2)=1$ or $u=1$. We may suppose that $\hat K_1,\hat K_2$ and $\hat K_3$ are the components of $\hat L_1$. 
We may suppose without loss of generality that $\hat K_1$ and $\hat K_2$ have common mixed crossings and $\hat K_2$ and $\hat K_3$ have common mixed crossings. Since $\hat K_1\cup \hat K_2$ is an R1 augmentation of the link projection $D(\hat T(2n))$ with anti-parallel orientation it is as illustrated in Fig. \ref{three-three-proof1} where the tangles are almost trivial 1-tangles. Let $A_1,A_2,\cdots,A_{2n}$ be the common mixed crossings of $\hat K_1$ and $\hat K_2$ that appears in this order on $\hat K_2$ along the orientation of $\hat K_2$. 

\begin{figure}[htbp]
      \begin{center}
\scalebox{0.48}{\includegraphics*{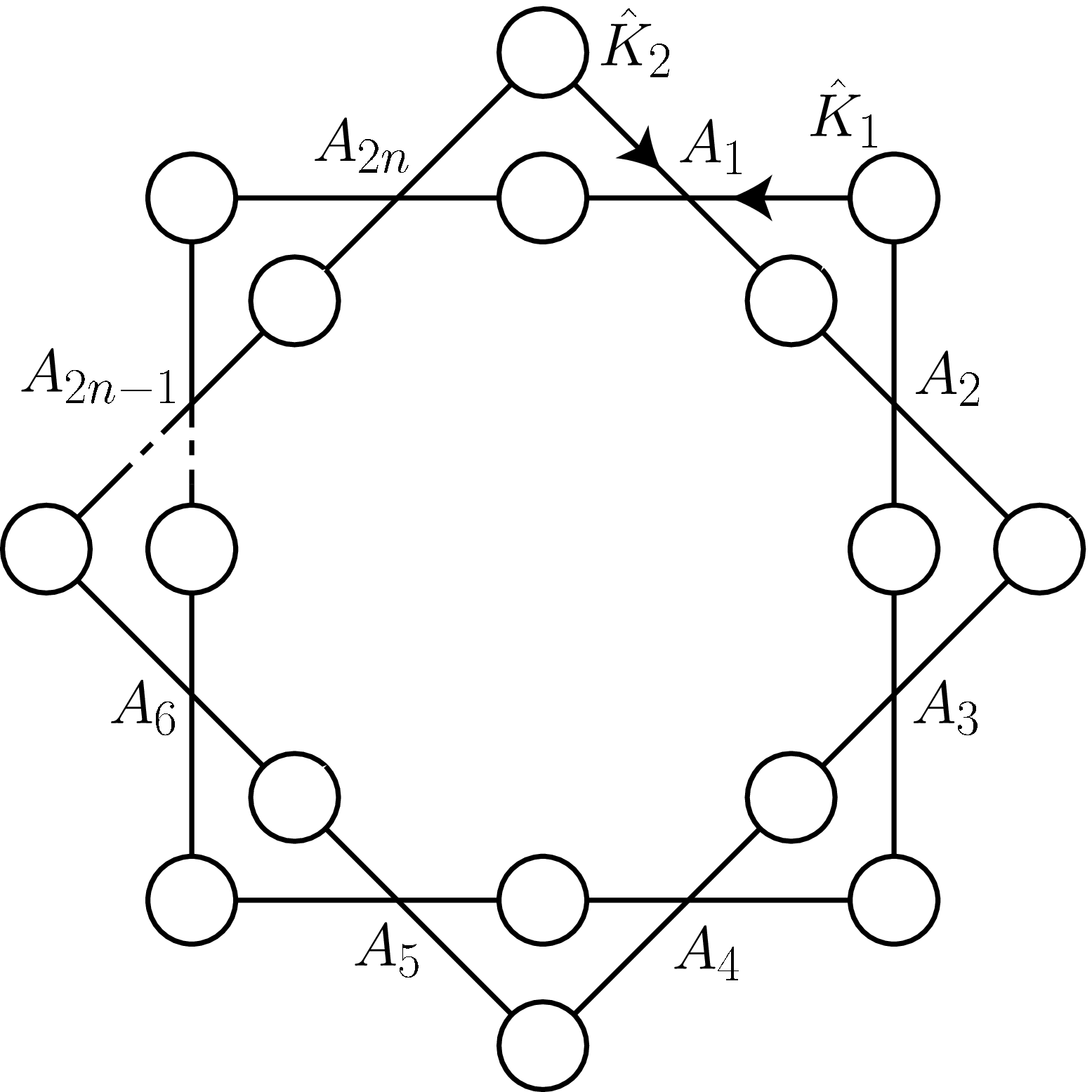}}
      \end{center}
   \caption{}
  \label{three-three-proof1}
\end{figure} 

%

Case 1. $\hat K_1$ and $\hat K_3$ have no common mixed crossings.

First suppose that $\hat K_3$ has intersection with only one of $A_1A_2,A_2A_3,\cdots,A_{2n-1}A_{2n}$. 
Then we have that the projection $\hat L_1$ is a connected sum of two link projections and we have the case (B) (8). Next suppose that  $\hat K_3$ has intersection with at least two of $A_1A_2,A_2A_3,\cdots,A_{2n-1}A_{2n}$, say $A_1A_2$ and $A_{2i-1}A_{2i}$. Since $\hat K_2\cup \hat K_3$ is also an R1 augmentation of the link projection $D(\hat T(2m))$ with anti-parallel orientation for some natural number $m$ it has the property that if we do 
smoothing at a self-crossing of $\hat K_2$ only one of the two components of the result has intersection with $\hat K_3$. This implies that $\hat K_3$ do not intersects the R1 residual of $\hat K_2$. Thus in this case we may assume that each of $\hat K_1,\hat K_2$ and $\hat K_3$ is a simple closed curve on ${\mathbb S}^2$. Then we have that $\hat L_1$ is as illustrated in Fig. \ref{three-three-proof2} (a) and it is a projection of the link $8^3_{10}$. See for example Fig. \ref{three-three-proof2} (b). Thus we have case (B) (3).

\begin{figure}[htbp]
      \begin{center}
\scalebox{0.58}{\includegraphics*{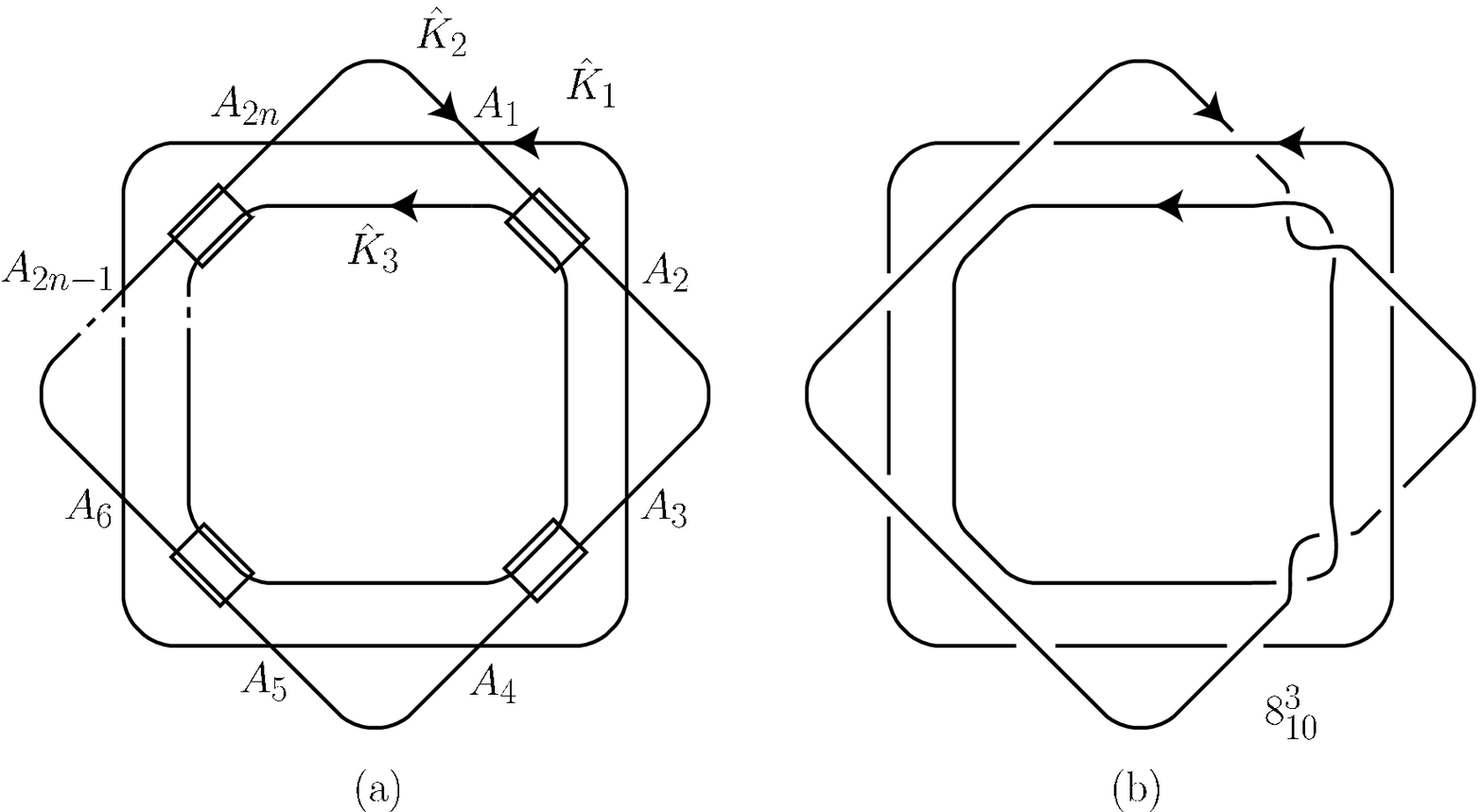}}
      \end{center}
   \caption{}
  \label{three-three-proof2}
\end{figure} 

%

Case 2. $\hat K_1$ and $\hat K_3$ have common mixed crossings.

First suppose that $\hat K_3$ intersects R1 residual of $\hat K_1$ or $\hat K_2$, say $\hat K_2$. 
Then by taking a minor of $\hat L_1$ using Lemma \ref{reducing-lemma} if necessary 
we have one of the situations illustrated in Fig. \ref{three-three-proof3} and 
Fig. \ref{three-three-proof4}. Then we have that $\hat L_1$ is a projection of the $(3,3)$-torus link 
as is also illustrated in \ref{three-three-proof3} and Fig. \ref{three-three-proof4}.

\begin{figure}[htbp]
      \begin{center}
\scalebox{0.68}{\includegraphics*{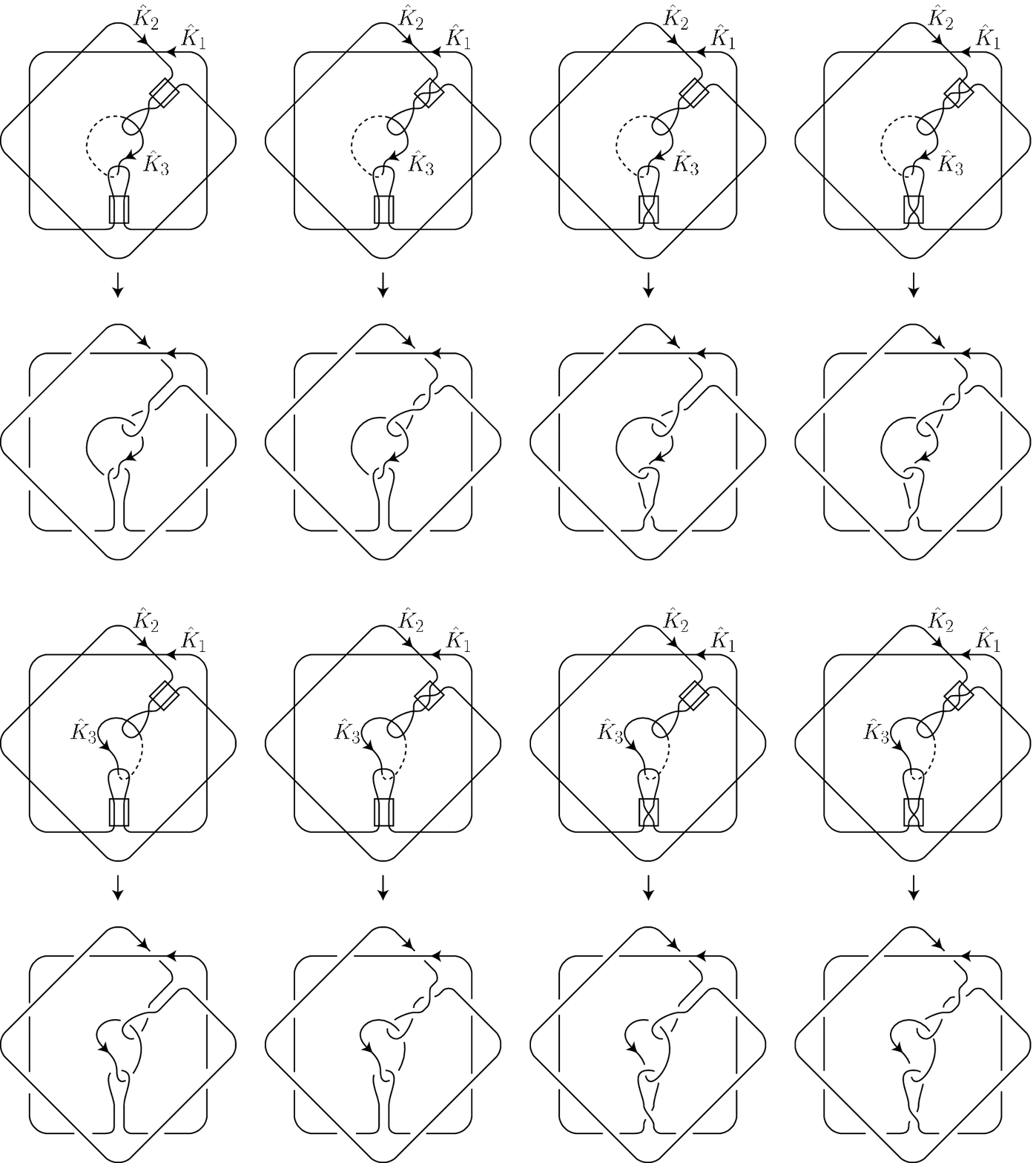}}
      \end{center}
   \caption{}
  \label{three-three-proof3}
\end{figure} 

%

%
\begin{figure}[htbp]
      \begin{center}
\scalebox{0.68}{\includegraphics*{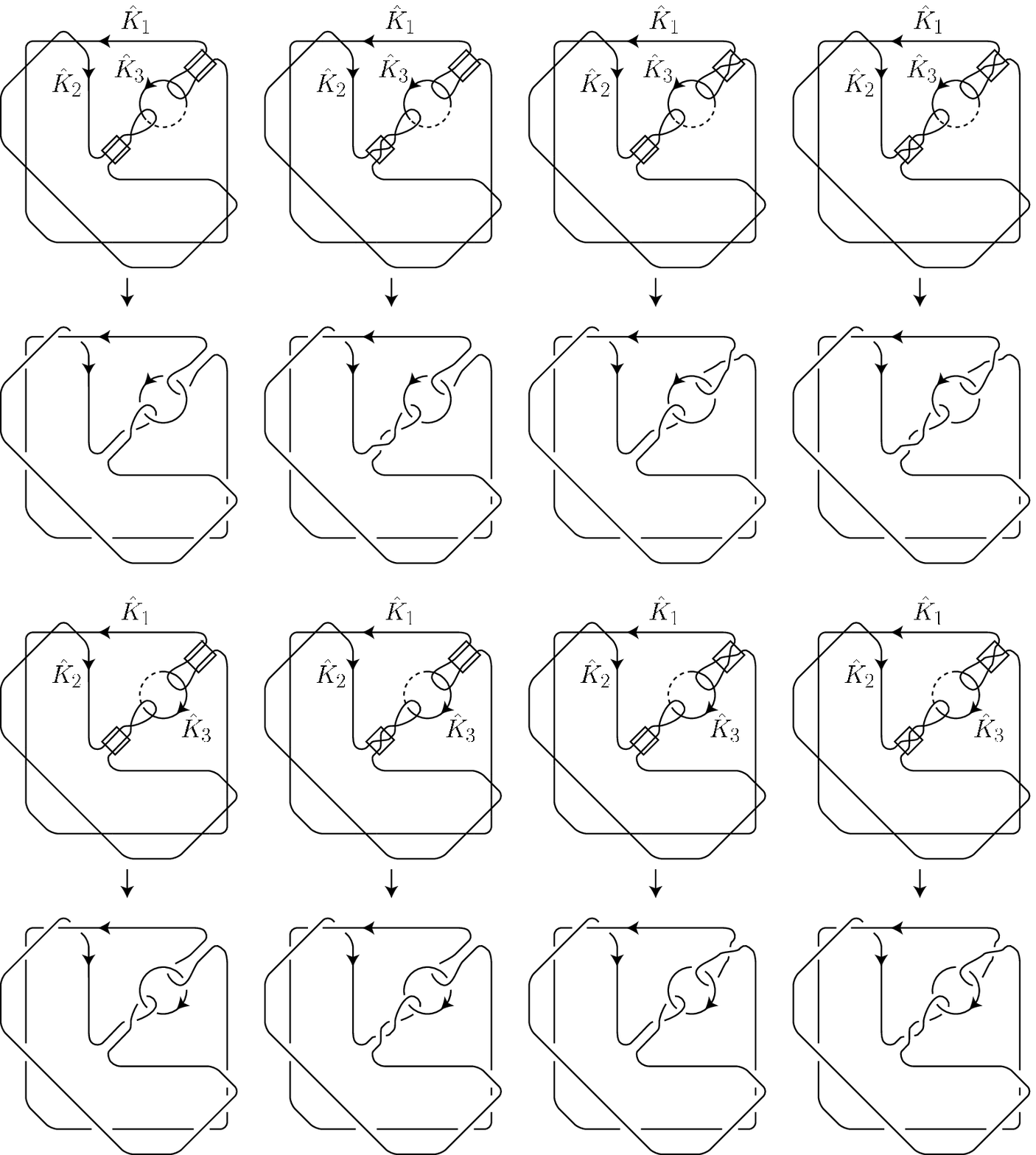}}
      \end{center}
   \caption{}
  \label{three-three-proof4}
\end{figure} 

%

Thus we may assume that $\hat K_3$ has no intersection with R1 residuals of $\hat K_1$ and $\hat K_2$. 
Therefore we may assume that both $\hat K_1$ and $\hat K_2$ are simple closed curves on ${\mathbb S}^2$. 
By changing the role of $\hat K_2$ and $\hat K_3$ and doing the same argument we may further assume 
that $\hat K_1$ is also a simple closed curve on ${\mathbb S}^2$. Suppose that there are consecutive 
crossings on $\hat K_3$ each of which $\hat K_3$ crosses from left to right, or from right to left. 
Then we have that $\hat L_1$ is a projection of $(3,3)$-torus link as illustrated in Fig. \ref{three-three-proof5} (a). 
Thus we have that two consecutive crossings $P_1$ and $P_2$ on $\hat K_3$ are placed as illustrated in 
Fig \ref{three-three-proof5} (b). (Since $(3,3)$-torus link is invertible and $\hat K_1$ and $\hat K_2$ 
are exchangeable omit another orientation possibility.) If the next crossing $P_3$ on $\hat K_3$ is 
before $P_2$ with respect to the orientation of the other curve then we have the  $(3,3)$-torus link 
as illustrated in Fig. \ref{three-three-proof5} (c). Therefore we have the situation illustrated 
in Fig. \ref{three-three-proof5} (d). 
Then we have that only possible projection is a projection in Fig. \ref{pretzel-link-projection}. 
See Fig. \ref{three-three-proof5} (e). Thus we have case (B) (7).

\begin{figure}[htbp]
      \begin{center}
\scalebox{0.51}{\includegraphics*{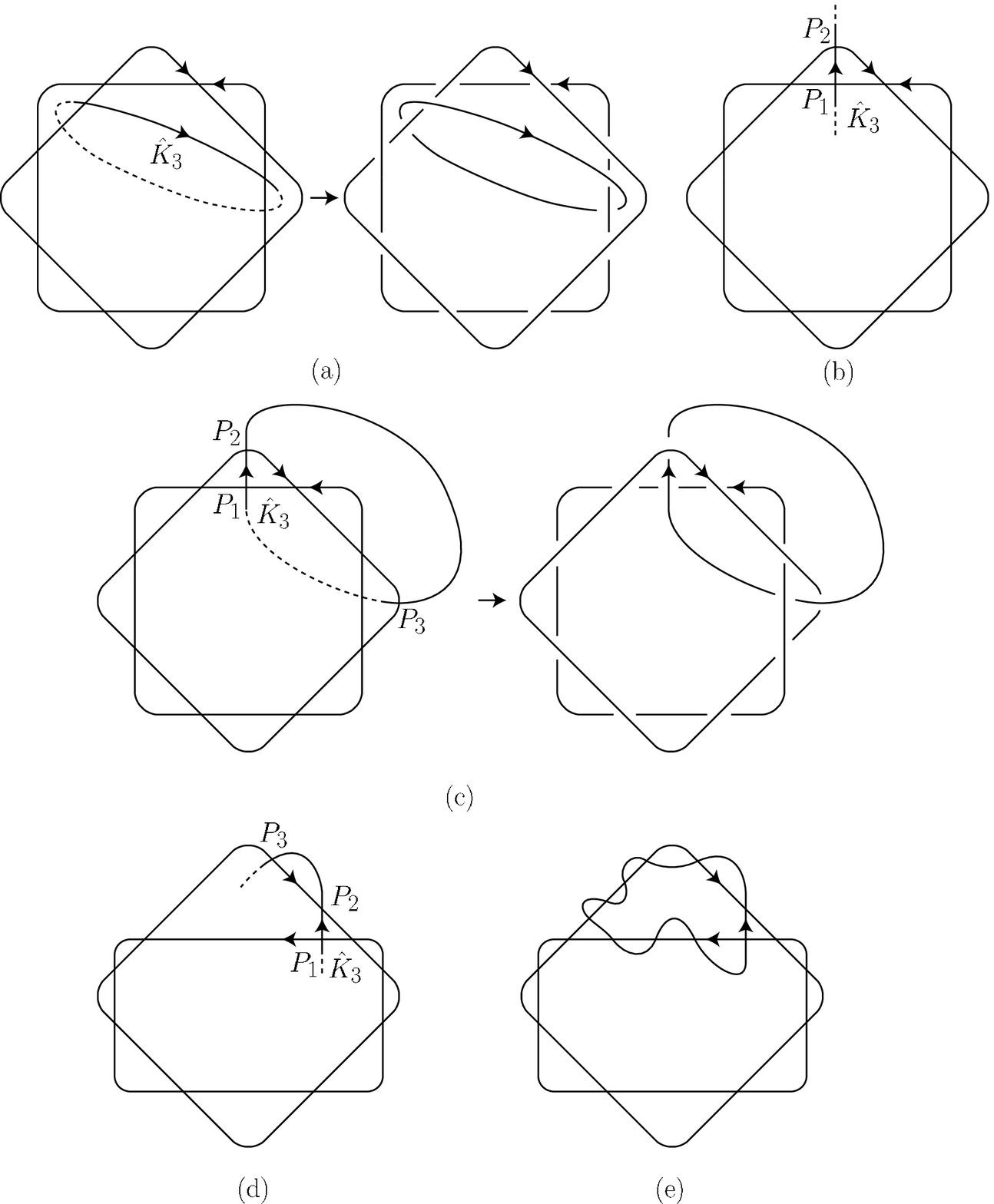}}
      \end{center}
   \caption{}
  \label{three-three-proof5}
\end{figure} 

%

Finally suppose that $\mu(1)=2$. Then we have the case (B) (8).
This completes the proof.
\end{proof}


\begin{Theorem}\label{positive-theorem}
Let $L$ be a nontrivial positive link. Then either

\item[(A)]
$L\geq$ $(2,5)$-torus knot (Fig. \ref{five-one}) (with possible trivial components) or

\item[(B)]
Each component of $L$ is a connected sum of pretzel knots 
$L(p_{1},p_{2},p_{3})$ where $p_1,p_2$ and $p_3$ are positive odd numbers (Fig. \ref{pretzel-knot}) or the unknot and

\item[(1)]
$L\geq$ $(2,4)$-torus link (Fig. \ref{6_2-Whitehead-3-comp} (c)) (with possible trivial components) or

\item[(2)]
$L\geq$ $(3,3)$-torus link (Fig. \ref{three-three} (a)) (with possible trivial components) or

\item[(3)]
$L\geq$ $8^3_{10}$ (Fig. \ref{three-three} (b)) (with possible trivial components) or

\item[(4)]
$L\geq$ connected or disjoint sum of the right-handed trefoil
knot and the right-handed Hopf link (with possible trivial components) or

\item[(5)]
$L\geq$ disjoint and/or connected sum of three right-handed Hopf links 
 (with possible trivial components) or

\item[(6)]
$L$ is a (connected and/or disjoint sum of) pretzel knot(s) 
$L(p_{1},p_{2},p_{3})$ (Fig. \ref{pretzel-knot})  (with possible trivial components) or

\item[(7)]
$L$ is a three-component pretzel link  $L(p_{1},p_{2},p_{3})$ (Fig. \ref{three-three} (c))
(with possible trivial components) or

\item[(8)]
$L$ is a $(2,2k)$-torus link with anti-parallel orientation of 
components (Fig. \ref{three-three} (d)) or the connected or disjoint sum of two copies 
of such links (with possible trivial components).

\end{Theorem}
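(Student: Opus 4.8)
The plan is to derive this link-level statement directly from the projection-level classification of Theorem \ref{projection-theorem} by equipping a positive diagram with exactly the crossing data that the earlier theorem produces. First I would fix a positive diagram $\tilde L$ representing $L$ and pass to its underlying projection $\hat L$. Applying Theorem \ref{projection-theorem} to $\hat L$ sorts it into case (A) or one of the cases (B)(1)--(8). The bridge from projections back to positive links is the basic positivity principle recorded in the preliminary section: if $\tilde L$ is positive and $\hat L$ is an underlying projection of some link $L_0$, then $L \geq L_0$. Alongside this, the component-level assertion of part (B) is obtained by applying Lemma \ref{five-one-projection-lemma} componentwise, so that each component projection (not being a $(2,5)$-torus knot projection) is a connected sum of the pretzel projections of Fig. \ref{pretzel-knot-projection}, which fill positively to positive pretzel knots $L(p_1,p_2,p_3)$ (with $p_1,p_2,p_3$ positive odd) or to the unknot.

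For the cases expressed through the $\geq$ relation this is immediate. If $\hat L$ is a projection of the $(2,5)$-torus knot (with possible trivial components), positivity of $\tilde L$ gives $L \geq (2,5)$-torus knot, which is case (A). Likewise, in projection cases (B)(1)--(5) the projection $\hat L$ is by construction an underlying projection of the $(2,4)$-torus link, the $(3,3)$-torus link, $8^3_{10}$, the sum of a right-handed trefoil and a right-handed Hopf link, or the sum of three right-handed Hopf links; applying the positivity principle to each yields $L \geq$ the corresponding target, which is precisely the content of cases (B)(1)--(5).

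The cases (B)(6)--(8) are where I expect the real work, because the conclusion there is an \emph{equality} of link types rather than a lower bound, so the positivity principle alone does not suffice. The point is that the projections arising in these cases are rigid enough that the positive filling is forced to be the named link. When $\hat L$ has no mixed crossings (projection case (B)(6)) the components are unlinked, and by the componentwise analysis above $L$ is a disjoint and/or connected sum of positive pretzel knots $L(p_1,p_2,p_3)$, giving case (B)(6). In projection case (B)(7), where $\hat L$ is an R1 augmentation of $N(\hat T(p_1,p_2,p_3))$ with anti-parallel orientation, the positive filling is exactly the three-component pretzel link $L(p_1,p_2,p_3)$ of Fig. \ref{three-three} (c); and in projection case (B)(8), where $\hat L$ is an R1 augmentation of $D(\hat T(2k))$ with anti-parallel orientation, the positive filling is the $(2,2k)$-torus link with anti-parallel orientation (or the indicated sum of two copies), giving case (B)(8).

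The main obstacle will be the bookkeeping in cases (7) and (8): one must verify that the first-Reidemeister augmentations do not alter the positive link type, that the anti-parallel orientations indeed produce the pretzel, respectively torus, link rather than some parallel-oriented variant, and that the circles created by the augmentations contribute only trivial components. These checks are geometric rather than computational, and I would organize them so that the uniform statement of part (B) — each component a connected sum of the positive pretzel knots $L(p_1,p_2,p_3)$ or the unknot — is read off once from Lemma \ref{five-one-projection-lemma} and then reused across all of the subcases (1)--(8).
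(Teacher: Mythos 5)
Your proposal is correct and follows essentially the same route as the paper: the paper's own proof also fixes a positive diagram of $L$, applies Theorem \ref{projection-theorem} to its underlying projection, and invokes the positivity principle, with the only explicitly recorded supplementary facts being that a positive filling of the projection of Fig. \ref{pretzel-knot-projection} (d) is the pretzel knot $L(1,p_2,p_3)$ (Fig. \ref{pretzel-knot2}) and that positive pretzel knots dominate the right-handed trefoil. The checks you flag for cases (B)(6)--(8) --- that the rigid projections fill positively to exactly the named links --- are precisely what the paper's terse one-line conclusion leaves implicit.
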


\begin{figure}[htbp]
      \begin{center}
\scalebox{0.58}{\includegraphics*{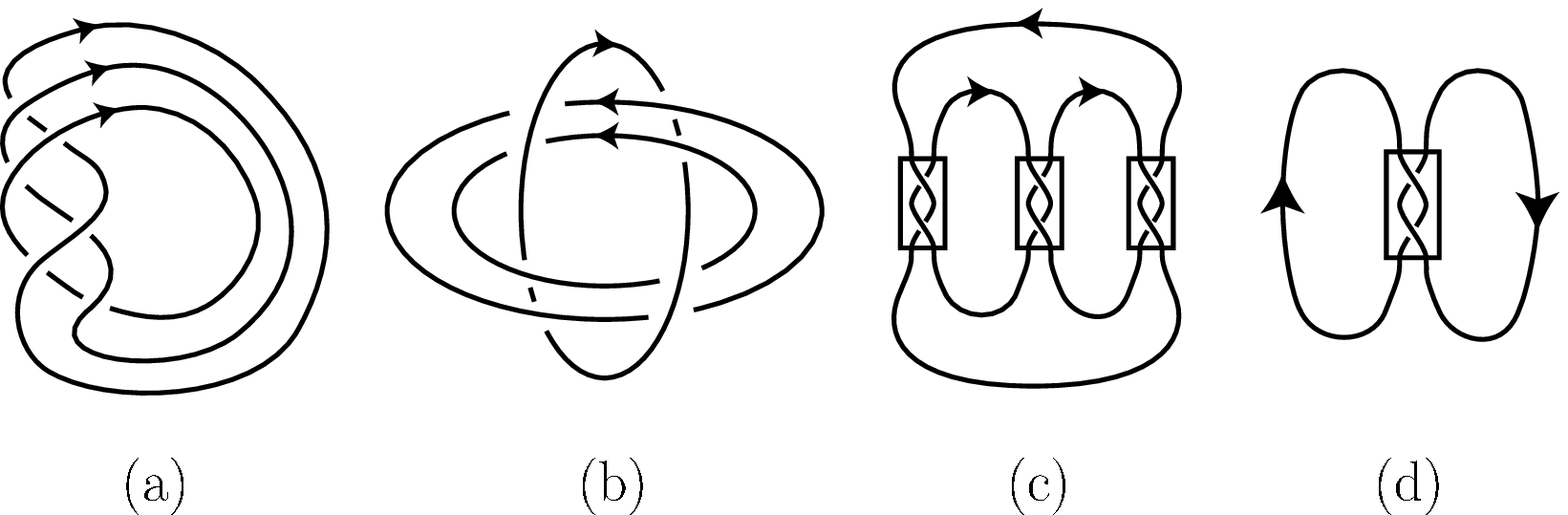}}
      \end{center}
   \caption{}
  \label{three-three}
\end{figure} 

%

%
\begin{proof}
First note that a positive diagram whose underlying projection is the projection illustrated in 
Fig. \ref{pretzel-knot-projection} (d) represents a pretzel knot $L(1,p_2,p_3)$ for some positive 
odd numbers $p_2$ and $p_3$. See Fig. \ref{pretzel-knot2}. Also note that a pretzel knot 
$L(p_1,p_2,p_3)$ where $p_1,p_2$ and $p_3$ are positive odd numbers is greater than or 
equal to the right-handed trefoil knot $L(1,1,1)$.
Let $\tilde L$ be a positive diagram of $L$ and $\hat L$ its underlying projection. 
Then by Theorem \ref{projection-theorem} and the facts noted above we have the result.
\end{proof}
\begin{figure}[htbp]
      \begin{center}
\scalebox{0.58}{\includegraphics*{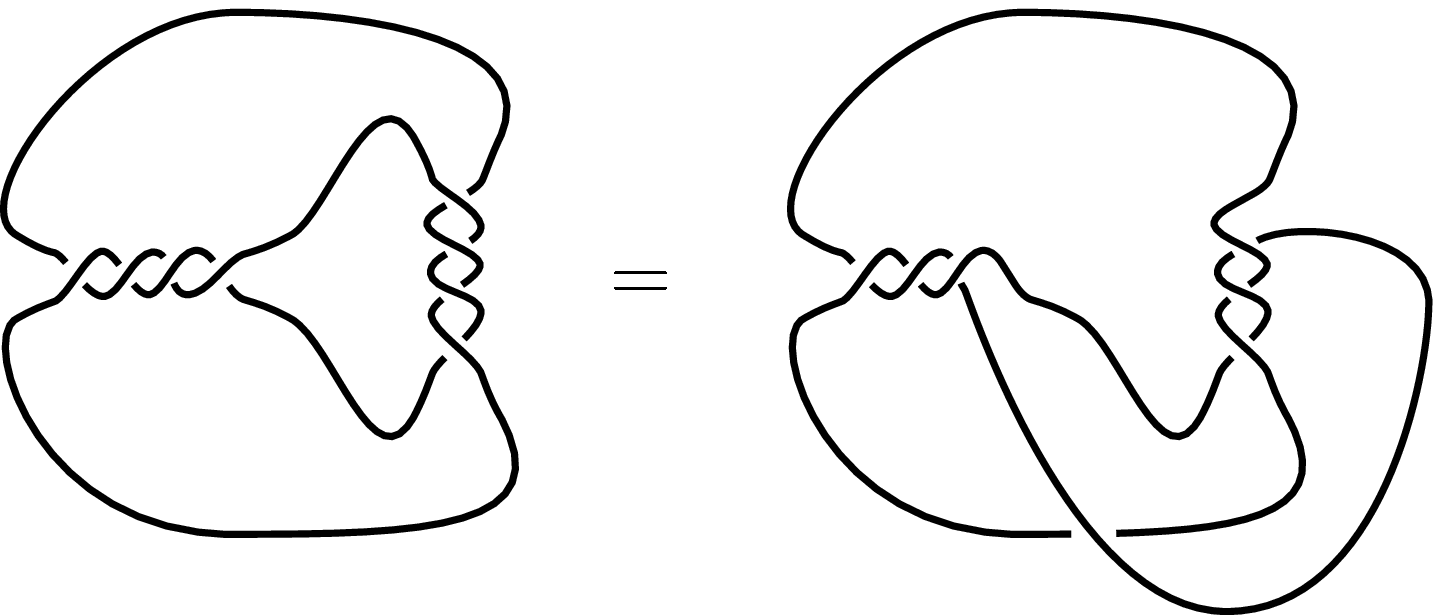}}
      \end{center}
   \caption{}
  \label{pretzel-knot2}
\end{figure} 

%

\begin{Corollary}\label{positive-corollary}
Let $L$ be a nontrivial positive link, then either $\sigma (L)\leq -3$ or:

\begin{enumerate}
\item[(a)]
$L$ is a  pretzel knot  $L(p_{1},p_{2},p_{3})$ for some positive odd numbers $p_1,p_2$ and $p_3$ (Fig. \ref{pretzel-knot}) 
(with possible trivial components); then $\sigma (L)=-2$ or

\item[(b)]
$L$ is a  three component pretzel link  
$L(p_{1},p_{2},p_{3})$ for some positive even numbers $p_1,p_2$ and $p_3$ (Fig. \ref{three-three} (c))  (with possible trivial components);
then $\sigma (L)=-2$ or

\item[(c)]
$L$ is a  $(2,2k)$-torus link with anti-parallel orientation of components (Fig. \ref{torus-link2}) (with possible trivial components); then $\sigma (L)=-1$ or

\item[(d)]
$L$ is a connected or disjoint sum of two copies of links from (c) 
(with possible trivial components); then $\sigma (L)=-2$.
\end{enumerate}
In particular if $K$ is a nontrivial positive knot, then either 
$\sigma (L)\leq -4$ or (a) holds.
\end{Corollary}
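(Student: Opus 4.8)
The plan is to derive this corollary directly from Theorem \ref{positive-theorem} by feeding each of its alternatives into the Giller inequality together with the additivity of the signature. Recall that $\sigma(L_1 \# L_2) = \sigma(L_1) + \sigma(L_2)$ and $\sigma(L_1 \sqcup L_2) = \sigma(L_1) + \sigma(L_2)$, and that trivial (unknotted, unlinked) components contribute $0$; hence throughout I may ignore trivial components. The one other ingredient is the Giller inequality $L_1 \geq L_2 \Rightarrow \sigma(L_1) \leq \sigma(L_2)$, which lets me bound $\sigma(L)$ from above by the signature of whichever model link $L$ dominates.

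First I would assemble the signatures of the model links occurring in Theorem \ref{positive-theorem}. The right-handed trefoil has signature $-2$ and the right-handed Hopf link has signature $-1$; the $(2,5)$-torus knot has signature $-4$ and the (parallel) $(2,4)$-torus link has signature $-3$; and a direct Seifert-form computation gives $\sigma((3,3)\text{-torus link}) \leq -3$ and $\sigma(8^3_{10}) \leq -3$. For the exceptional families I record $\sigma(L(p_1,p_2,p_3)) = -2$ for positive odd $p_i$ (this is Corollary \ref{p3}), $\sigma(L(p_1,p_2,p_3)) = -2$ for the three-component pretzel link with positive even $p_i$, and $\sigma = -1$ for the $(2,2k)$-torus link with anti-parallel orientation. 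The last value is transparent: an anti-parallel $(2,2k)$-torus link bounds a once-twisted annulus, so its Seifert form is $1\times 1$ and its signature is $\pm 1$, the positivity forcing $-1$.

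Then I would run through the alternatives of Theorem \ref{positive-theorem}. In case (A) the Giller inequality gives $\sigma(L) \leq -4$. In cases (B)(1)--(3), $L$ dominates a link of signature $\leq -3$, so $\sigma(L) \leq -3$. In case (B)(4), $L$ dominates a link of signature $-2 + (-1) = -3$, and in case (B)(5) a link of signature $3 \cdot (-1) = -3$, so $\sigma(L) \leq -3$ in both. The three remaining cases are those in which $L$ \emph{is} (rather than merely dominates) a specific link, and here I compute exactly: in case (B)(6) a single pretzel summand yields alternative (a) with $\sigma(L) = -2$, while two or more summands give $\sigma(L) \leq -4$ by additivity; case (B)(7) yields alternative (b) with $\sigma(L) = -2$; and in case (B)(8) a single anti-parallel torus link yields alternative (c) with $\sigma(L) = -1$, while the sum of two copies yields alternative (d) with $\sigma(L) = -2$. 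For the final ``in particular'' clause I would note that a crossing change preserves the number of components, so a nontrivial positive knot can only fall under (A), giving $\sigma \leq -4$, or under (B)(6) with connected summands, giving alternative (a) for one summand and $\sigma \leq -4$ for more, which is exactly the asserted dichotomy.

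The main obstacle is the exact evaluation of the signatures in the non-dominating cases (a)--(d), and the bounds for the $(3,3)$-torus link and $8^3_{10}$. The Giller inequality only furnishes upper bounds, so to obtain the stated equalities $\sigma = -2$ and $\sigma = -1$ I must compute each signature directly from a Seifert surface or Goeritz matrix of the relevant pretzel and anti-parallel torus links; the three-component even pretzel link is the most delicate of these, while the annulus argument makes the anti-parallel torus link immediate and Corollary \ref{p3} disposes of the odd pretzel knot.
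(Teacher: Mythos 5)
Your proposal is correct and follows essentially the same route as the paper: the paper's proof likewise deduces the corollary from Theorem \ref{positive-theorem} via the Giller inequality, citing the signature values $-4,-3,-4,-3$ of the $(2,5)$-torus knot, $(2,4)$-torus link, $(3,3)$-torus link and $8^3_{10}$, and leaving the additivity bookkeeping and the exact evaluations in the exceptional cases (a)--(d) implicit, just as you spell them out (your weaker bounds $\sigma\leq -3$ for the $(3,3)$-torus link and $8^3_{10}$ suffice equally well).
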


\begin{proof}
It follows immediately from the fact that the signatures of 
the $(2,5)$-torus knot, $(2,4)$-torus link, $(3,3)$-torus link, and the link 
$8^3_{10}$ (Fig. \ref{three-three} (b)) are -4,-3, -4 and -3 
respectively, and that if $L_{1}\geq L_{2}$ then, by Giller inequality, $\sigma (L_{1})\leq\sigma (L_{2})$.
\end{proof}

\begin{Remark}\label{Remark 2.17}
We work, in the next section, with almost positive links and demonstrate there, 
in particular, the title result of the paper that a nontrivial {\it Almost Positive Links 
have negative Signature} (Corollary 1.7). However Corollary 1.7 can be also derived 
from Corollary 2.16.
The outline of this derivation is as follows.\ Let $\tilde L$ be an almost positive diagram of a link $L$. 
Let $\tilde L_+$ be the positive diagram obtained from $\tilde L$ by changing the negative 
crossing to a positive crossing. Let $L_+$ be the link represented by $\tilde L_+$. 
If $\sigma(L_+)\leq-3$ then we have $\sigma(L)\leq-1<0$ as desired. 
Then by checking the exceptional cases (a)-(d) we obtain the result. 
In the next section we give a different proof of Corollary 1.7. 
It applies Theorem 3.2 which is based on Lemma \ref{almost-positive-tangle-lemma}. 
This lemma plays an important role also in Section 4.

\end{Remark}


\section{Almost positive links}\label{Almost positive links}

\begin{Lemma}\label{almost-positive-tangle-lemma}
Let $\tilde T$ be a reduced almost positive 2-string tangle diagram 
with vertical connection.
Suppose that the negative crossing is a mixed crossing. 
If $\tilde T$ is not greater than
or equal to any of the tangles $T(-2),T(3_1,0_1)$ and $T(0_1,3_1)$ in Fig. \ref{three-tangles}, then $\tilde T$ is one of 
the diagrams $\tilde T_{1+},\tilde T_{1-},\tilde T_{2+}$ and $\tilde T_{2-}$ in Fig. \ref{almost-positive-tangles}.
\end{Lemma}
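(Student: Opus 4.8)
The plan is to run the same descending-algorithm and spine machinery used in Lemmas \ref{reducing-lemma}--\ref{vertical-trefoil-tangle-lemma}, now keeping careful track of the single negative crossing $N$. Write $\tilde T=\tilde a\cup\tilde b$ and recall that, since $N$ is mixed, \emph{every} self-crossing of $\tilde a$ and of $\tilde b$ is positive. I would argue by contraposition: assuming that $\tilde T$ dominates none of $T(-2)$, $T(3_1,0_1)$, $T(0_1,3_1)$, I will whittle $\tilde T$ down to one of the four listed diagrams.

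First I would dispose of the self-crossings. Suppose $\tilde a$ has a self-crossing $P$. Because $\tilde T$ is reduced, $P$ is not nugatory, so its teardrop meets the rest of the projection. If it meets $\hat a$ in a way making $\hat a$ fail to be almost trivial, then by Lemma \ref{local-trefoil-lemma} the strand $\hat a$ carries a local trefoil built entirely from positive crossings; trivializing and separating $\tilde b$ off by the descending algorithm realizes $\tilde T\ge T(3_1,0_1)$, against our assumption, and symmetrically $\tilde b$ yields $\tilde T\ge T(0_1,3_1)$. Hence both strands are almost trivial. Any surviving self-crossing of $\tilde a$ must then, by reducedness, have its teardrop pierced by $\tilde b$; Lemma \ref{hook-tangle} then lets me form a hook between the two strands, and placing $N$ so that it realizes the negative clasp gives $\tilde T\ge T(-2)$. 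Excluding this, both $\hat a$ and $\hat b$ are embedded arcs and every crossing of $\tilde T$ is mixed. The one subtlety throughout is that these reductions may only \emph{change positive crossings to negative}, never the reverse; this is exactly why I must invoke the diagram version, Lemma \ref{diagram-reducing-lemma}, and verify its hypothesis that the discarded sub-arc lies uniformly over or under the retained arc at each negative crossing.

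With $\tilde T$ reduced to two embedded arcs meeting only at mixed crossings, one of which is $N$, I would analyze the matching permutation $\sigma$ defined by $A_i=B_{\sigma(i)}$ exactly as in Lemma \ref{vertical-trefoil-tangle-lemma}. Whenever two mixed crossings are arranged so that their connecting arcs clasp, Lemma \ref{hook-tangle} lets me keep $N$ negative and choose the over/under data of its partner so that $\tilde a$ and $\tilde b$ form a negative hook, giving $\tilde T\ge T(-2)$. Ruling this out forces $\sigma$ into its most degenerate form and bounds the number of mixed crossings, so that $N$ together with at most one adjacent positive crossing constitutes the entire interaction of $\tilde a$ and $\tilde b$. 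Enumerating these minimal pictures according to the local shape (the two types indexed $1,2$) and the orientation and placement of $N$ (the two variants $\pm$) yields exactly $\tilde T_{1+},\tilde T_{1-},\tilde T_{2+},\tilde T_{2-}$.

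I expect the main obstacle to be the orientation-sensitive bookkeeping around $N$: each time I want to read off an obstruction I may only push positive crossings to negative, so every application of the reducing and hook lemmas must be accompanied by a check that the negative crossing sits on the admissible side, which is precisely hypothesis (2) of Lemma \ref{diagram-reducing-lemma}. Organizing the case split on $\sigma$ so that these side-conditions hold automatically — rather than tracking a combinatorial explosion of positions for $N$ — is where the real work concentrates, and the vertical-connection hypothesis is what keeps the list of surviving minimal tangles down to these four.
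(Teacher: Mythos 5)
Your opening step does coincide with the paper's: the exclusions $T(3_1,0_1)$ and $T(0_1,3_1)$ together with Lemma~\ref{local-trefoil-lemma} force both strands to be almost trivial. The argument breaks immediately after that, and the break is not a matter of bookkeeping: it comes from a misreading of what $T(-2)$ is. With the paper's orientation conventions (each strand runs from its $0$-endpoint to its $\infty$-endpoint), the tangle called $T(-2)$ in Fig.~\ref{three-tangles} is the clasp whose two crossings are \emph{positive}; this is forced by how the lemma is used later, where the $X_+$-closure of $T(-2)$ is asserted to be the \emph{right-handed} trefoil (proofs of Theorems~\ref{almost-positive-theorem} and~\ref{2-almost-positive-theorem}). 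Now note that the sum of the signs of the mixed crossings of a $2$-string tangle diagram is unchanged by Reidemeister moves, hence is an invariant of the oriented tangle, and it equals $+2$ for $T(-2)$. Since $N$ is pinned at $-1$ and domination only turns $+$ into $-$, a diagram can dominate $T(-2)$ only if it has at least \emph{three positive} mixed crossings. Hence your key reduction --- ``placing $N$ so that it realizes the negative clasp gives $\tilde T\ge T(-2)$'' --- is false: any clasp built through $N$ is the mirror tangle $T(2)$, never $T(-2)$, and Lemma~\ref{hook-tangle}, being a projection lemma, cannot be invoked on a configuration containing $N$ at all (in the paper it is only ever applied to positive sub-diagrams such as $r(\tilde T,P)$, where the over/under data genuinely is free). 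The same sign count yields a concrete counterexample to your claim that self-crossings can be eliminated: let $\tilde b$ pass once through a positive kink of $\tilde a$, meeting the teardrop in exactly two mixed crossings, one of them $N$. This diagram is reduced, almost positive, both strands are unknotted arcs (so neither local trefoil is dominated), and it has only one positive mixed crossing (so $T(-2)$ cannot be dominated); yet it has a self-crossing. So your intermediate conclusion that ``both $\hat a$ and $\hat b$ are embedded arcs'' is wrong, and the enumeration built on it collapses.

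This is exactly where the paper's proof spends its effort, and why it is structured as it is. Self-crossings are removed only when there are at least four mixed crossings (Case 2.1), and even then not by clasping with $N$: one first uses the root and multiplicity of $N$ together with Lemma~\ref{diagram-reducing-lemma} to force $N$ to be rightmost, and then manufactures $T(-2)$ out of \emph{two positive} mixed crossings by the ``over and under technique,'' the choice of which arc goes over everything and which under everything being dictated by the position of $N$ so that $N$ never needs to be changed. Moreover, the surviving diagrams are not all of the shape ``$N$ plus one adjacent positive crossing'': besides the two-mixed-crossing case (Case~1, producing $\tilde T_{1\pm}$), the paper's Case~2.2 shows that diagrams with \emph{no} self-crossings and \emph{four or more} mixed crossings also survive, provided $N$ is the first or last crossing on $\tilde b$ (these are $\tilde T_{2\pm}$). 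This is consistent with the sign count: with exactly four mixed crossings one can have three positive ones, realizing a positive clasp of the other rational type (not isotopic to $T(-2)$) together with a cancelling pair containing $N$. Your cap of two mixed crossings on the exceptional diagrams is asserted rather than derived, and it is precisely the assertion that Cases 2.1.1, 2.1.2 and 2.2 of the paper exist to settle --- with an answer different from yours.
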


\begin{figure}[htbp]
      \begin{center}
\scalebox{0.58}{\includegraphics*{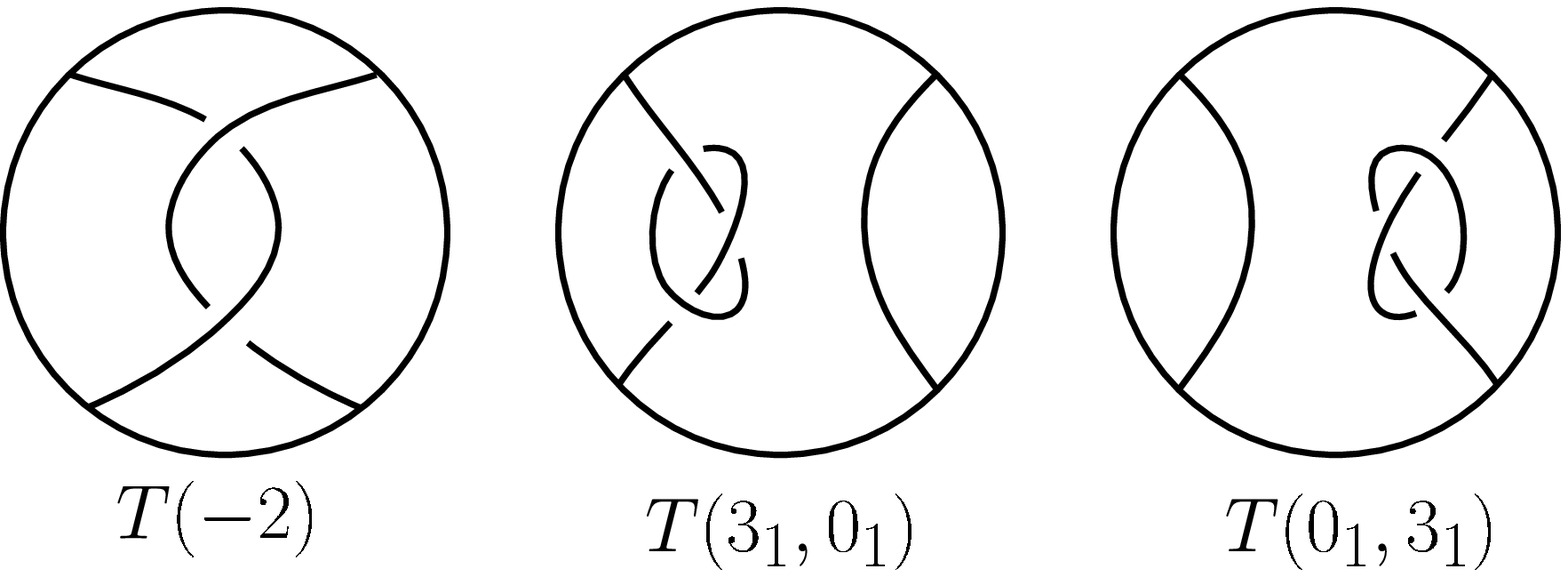}}
      \end{center}
   \caption{}
  \label{three-tangles}
\end{figure} 

%

%
\begin{figure}[htbp]
      \begin{center}
\scalebox{0.65}{\includegraphics*{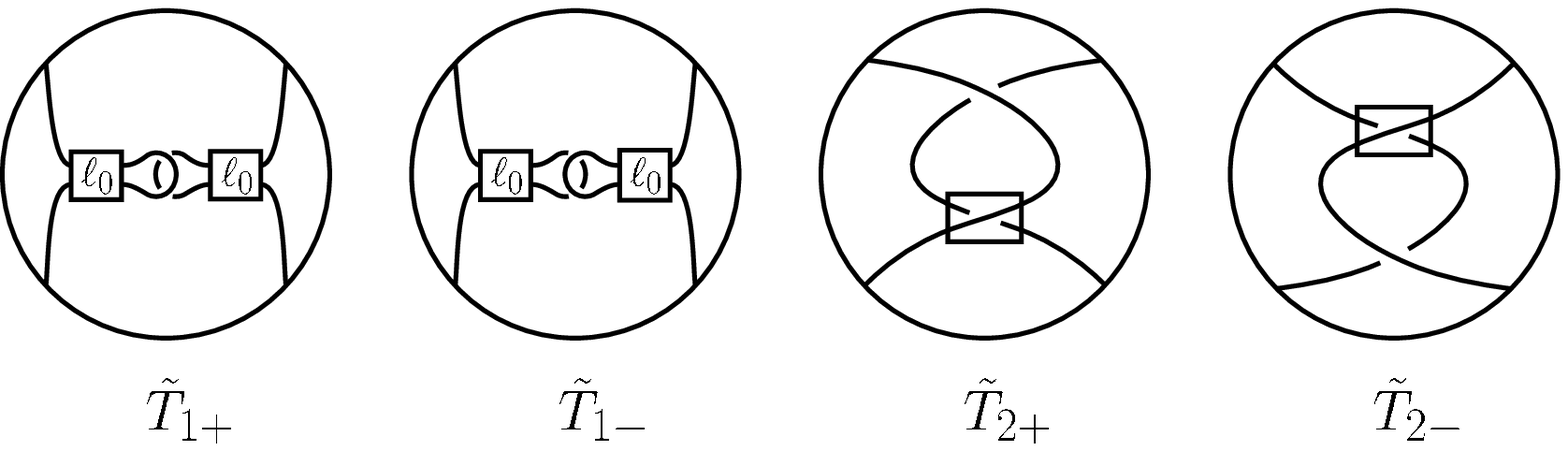}}
      \end{center}
   \caption{}
  \label{almost-positive-tangles}
\end{figure} 

%

\begin{proof}
Let $N$ be the negative crossing of $\tilde T$. We have by Lemma \ref{local-trefoil-lemma} that both components 
of $\tilde T$ are almost trivial.

\begin{enumerate}
\item[Case 1.]
$\tilde T$ has just two mixed crossings.

Then we have that these crossings are related on both strings and we have that $\tilde T$ is one of $\tilde T_{1\pm}$.

\item[Case 2.]
$\tilde T$ has four or more mixed crossings.

\item[Case 2.1.] $\tilde T$ has a self-crossing.
We may suppose without loss of generality that $\tilde a$ has a self-crossing.

First suppose that $m(N,\tilde a)\geq1$ and $P$ is the root of $N$ on $\tilde a$. 
Suppose that the positive tangle $r(\tilde T,P)$ has mutual crossings. Then by Lemma \ref{hook-tangle} we have that $r(\tilde T,P)\geq T(-2)$. Since we have $\tilde T\geq r(\tilde T,P)$ by Lemma \ref{diagram-reducing-lemma} we have $\tilde T\geq T(-2)$. Thus we may suppose that if $m(N,\tilde a)\geq1$ and $P$ is the root of $N$ on $\tilde a$, then $r(\tilde T,P)$ has no mutual crossings. Then we have that $N$ is rightmost.
We have one of the following:

\item[Case 2.1.1.]
There are two positive mixed crossings that are related on $\tilde a$ which are rightmost and have multiplicity greater than zero on $\tilde a$.

\item[Case 2.1.2.]
Case 2.1.1 does not hold and there are two positive crossings that are related on $\tilde a$
which are not rightmost.

In Case 2.1.1 we can choose such crossings $B_{i}$ and $B_{j}$ $(i<j)$ so that $i$ is odd
and $B_{k}$ is not related to $B_{i}$ on $\tilde\alpha$ for $i<k<j$.

Then we can operate, according to the position of $N$, one of the following two crossing
changes without changing the negative crossing $N$.

\item[(1)]
$B_{0}B_{i}^-$ is over the other parts and $B_{j}^+B_{\infty}$ is under the other parts.

\item[(2)]
$B_{0}B_{i}^-$ is under the other parts and $B_{j}^+B_{\infty}$ is over the other parts.
\end{enumerate}

Then we have $\tilde T\geq T(-2)$ as illustrated in Fig. \ref{almost-positive-proof1}.

\begin{figure}[htbp]
      \begin{center}
\scalebox{0.65}{\includegraphics*{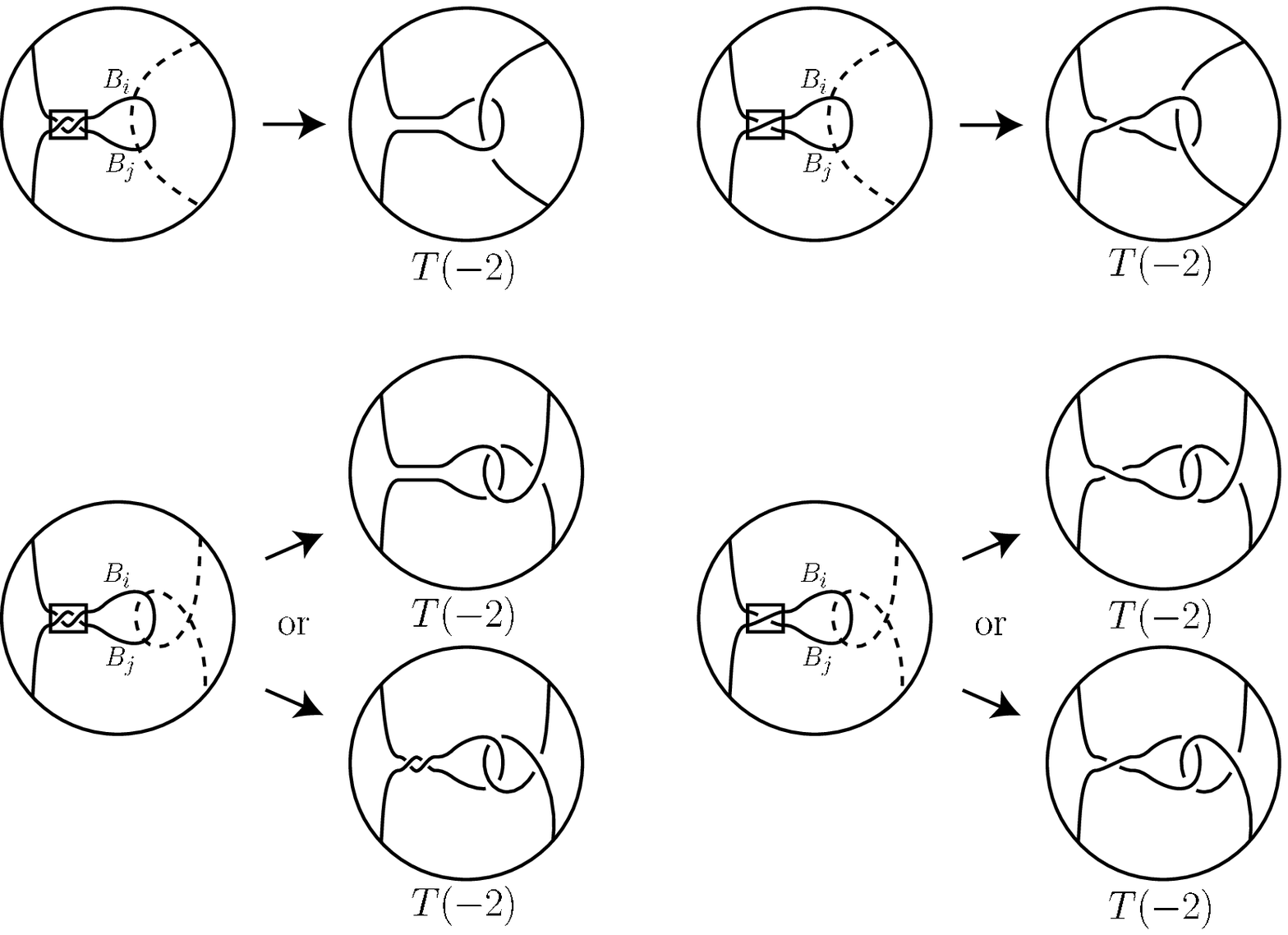}}
      \end{center}
   \caption{}
  \label{almost-positive-proof1}
\end{figure} 

%

In Case 2.1.2, using also Lemma \ref{hook-tangle}, we have $\tilde T\geq T(-2)$ as illustrated (up to horizontal symmetry) in Fig. \ref{almost-positive-proof2}.

\begin{figure}[htbp]
      \begin{center}
\scalebox{0.65}{\includegraphics*{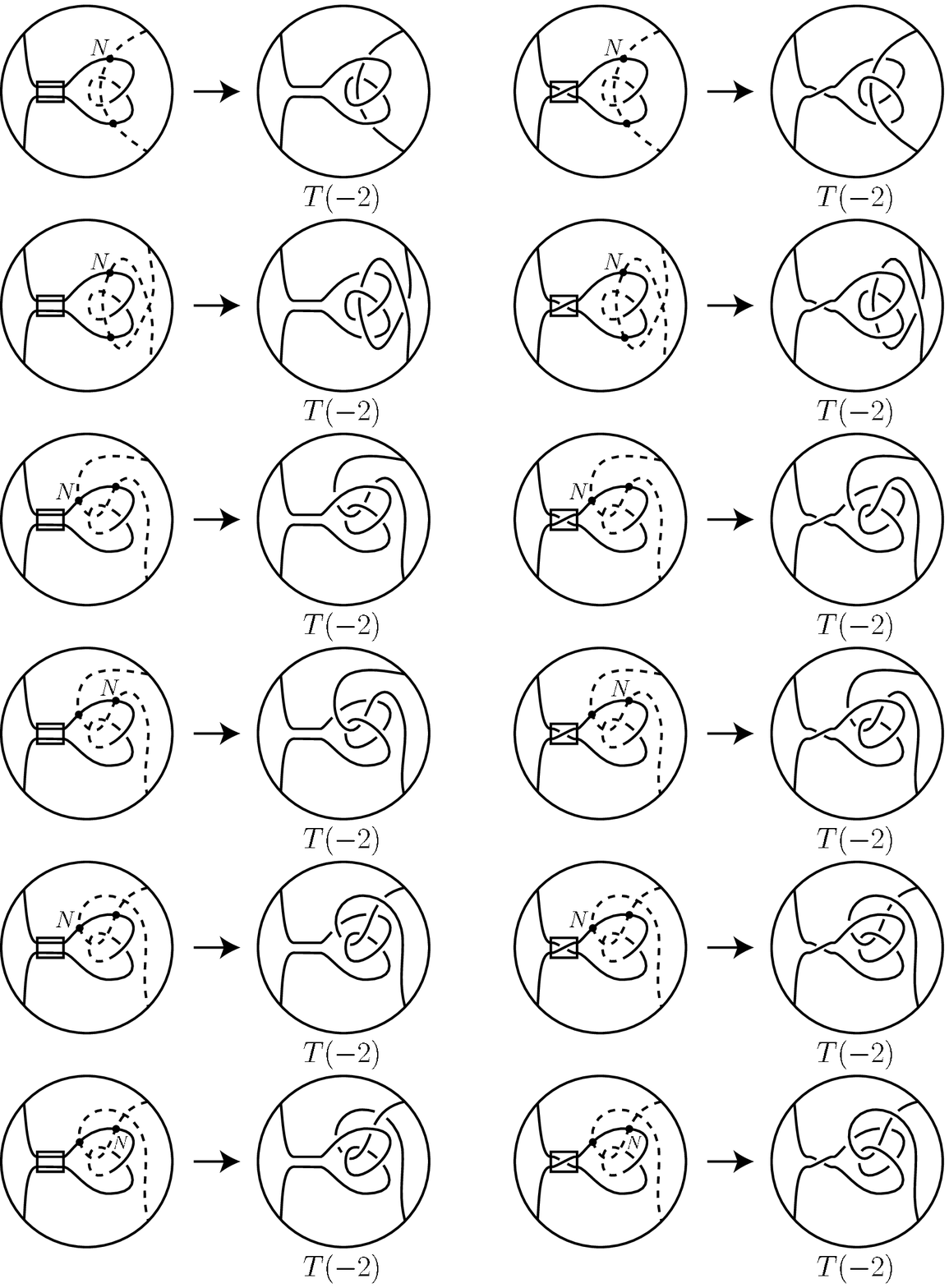}}
      \end{center}
   \caption{}
  \label{almost-positive-proof2}
\end{figure} 

%

\begin{enumerate}
\item[Case 2.2.]
$\tilde T$ has no self-crossings.
\end{enumerate}

Suppose that $N$ is neither the first crossing nor the last crossing on $\tilde b$. 
Then we have one of the situations illustrated in Fig. \ref{almost-positive-proof3} and then have  
$\tilde T\geq T(-2)$ as illustrated.

\begin{figure}[htbp]
      \begin{center}
\scalebox{0.65}{\includegraphics*{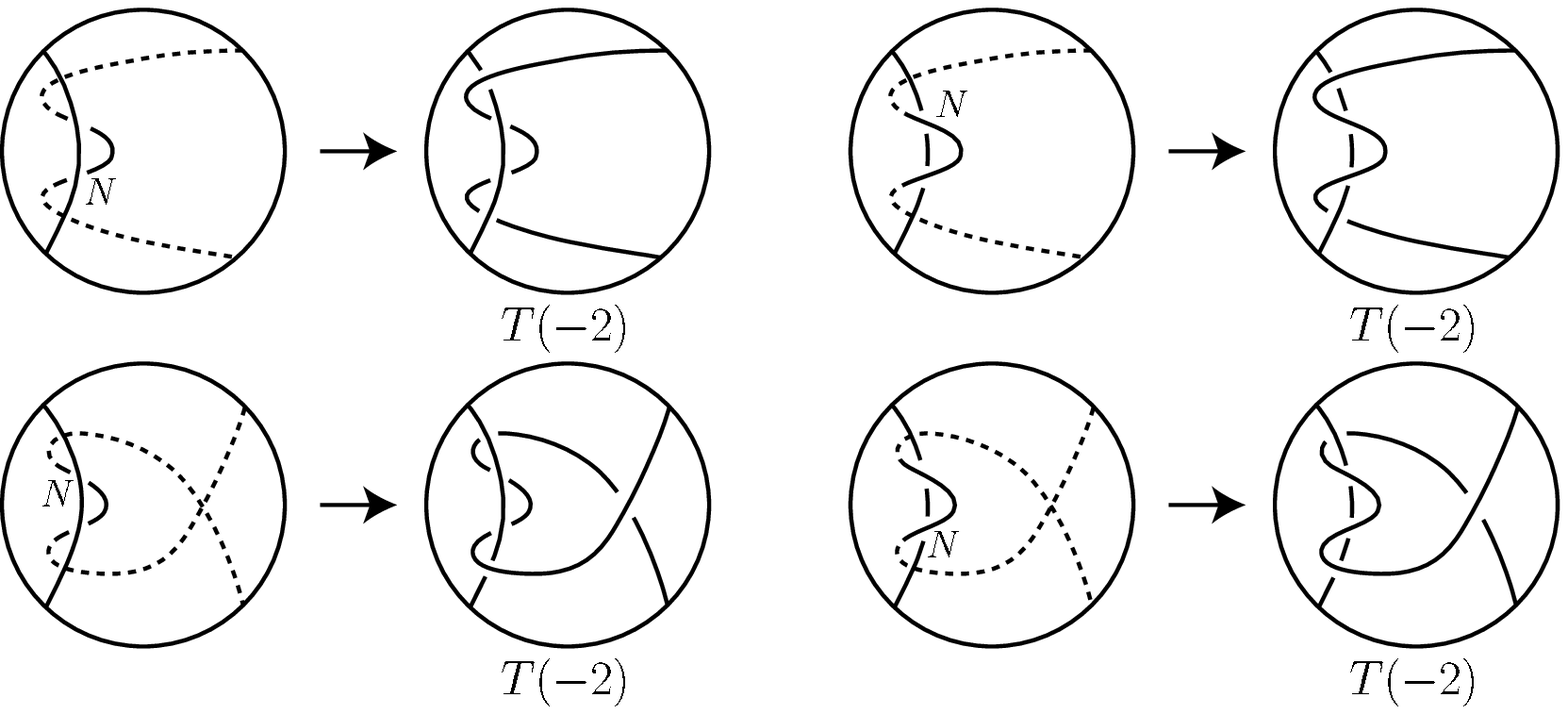}}
      \end{center}
   \caption{}
  \label{almost-positive-proof3}
\end{figure} 

%

Therefore $N$ must be the first or the last crossing on $\tilde b$. 
In addition if there is a part in $\tilde T$ illustrated in Fig. \ref{almost-positive-proof4} then we have $\tilde T\geq T(-2)$.
Then an easy analysis forces $\tilde T$ to be $\tilde T_{\pm}$. This completes the proof.
\end{proof}

\begin{figure}[htbp]
      \begin{center}
\scalebox{0.65}{\includegraphics*{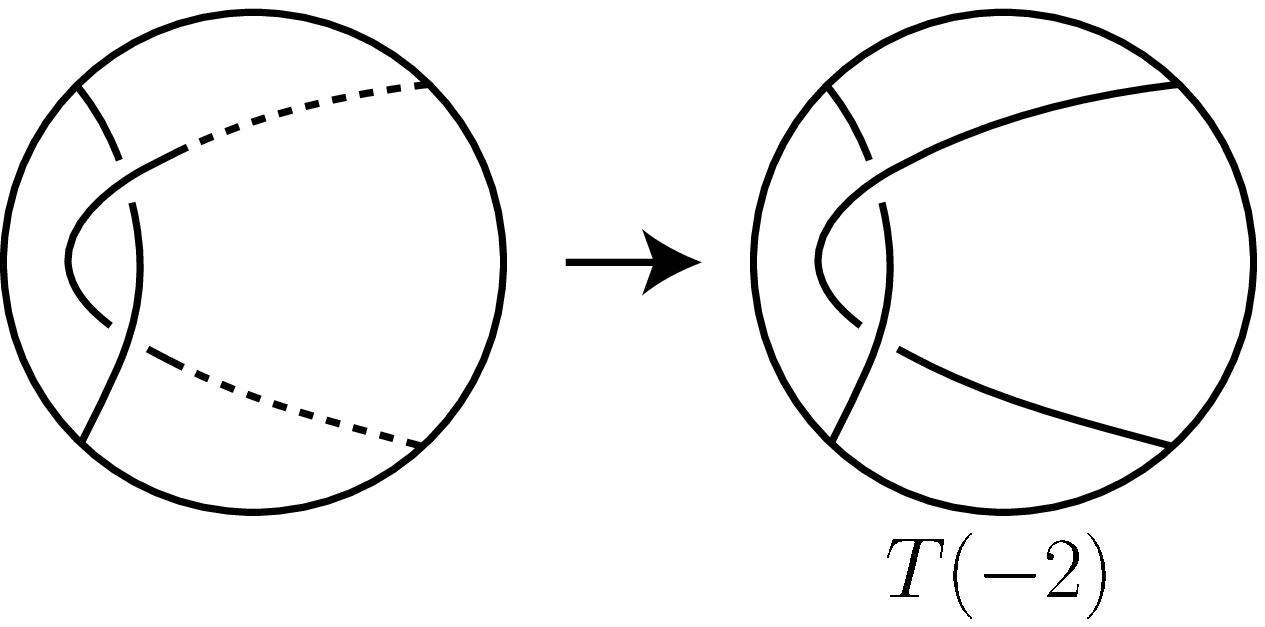}}
      \end{center}
   \caption{}
  \label{almost-positive-proof4}
\end{figure} 

%

In the following we call the technique used in Case 2.1.1 as \lq\lq over and under technique''.

\begin{Theorem}\label{almost-positive-theorem}
Let $\tilde L$ be a reduced almost positive diagram of a link $L$. If $\tilde L$ is not a 
diagram in Fig. \ref{almost-positive-trivial} (plus trivial circles), then

$L\geq$ right-handed trefoil knot (plus trivial components) or

$L\geq$ right-handed Hopf link (plus trivial components).
\end{Theorem}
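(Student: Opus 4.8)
The plan is to let $N$ denote the unique negative crossing of $\tilde L$ and to argue according to whether $N$ is a self-crossing or a mixed crossing. In every case the goal is to realize a right-handed trefoil or a right-handed Hopf link by changing \emph{only} positive crossings to negative ones (so that $L$ dominates it in the sense of $\geq$, and the Giller inequality yields negative signature), unless the moves collapse $\tilde L$ onto one of the diagrams of Fig.~\ref{almost-positive-trivial}. I would first dispose of the situation where some component $\tilde\ell$ other than the one carrying $N$ (if $N$ is a self-crossing), or any component (if $N$ is mixed), fails to be almost trivial: by the result of \cite{P-1,T-1} that a non-almost-trivial knot projection is a trefoil projection, $\hat\ell$ carries a local trefoil built from self-crossings of $\tilde\ell$; realizing it right-handed and pushing every other component over everything (each over/under switch being a legal positive-to-negative change, and each such circle made descending so it splits off trivially even through $N$) shows $L\geq$ right-handed trefoil. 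Hence I may assume every component is almost trivial, so after erasing nugatory crossings by Lemma~\ref{nugatory-erasing} each component is a simple closed curve.

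Suppose $N$ is a self-crossing of $\tilde\ell$. It splits $\tilde\ell$ into two loops; taking $s(\tilde\ell,N)$ to be the loop passing over at $N$, condition (2) of Lemma~\ref{diagram-reducing-lemma} holds since $N$ is the only negative crossing, while condition (1) holds because $s(\tilde\ell,N)$ is a positive knot diagram, which can be made descending using only positive-to-negative changes and is therefore $\geq$ the trivial knot. Thus $\tilde L\geq r(\tilde L,N)=:\tilde L'$, and $\tilde L'$ is now a \emph{positive} diagram. Invoking the positive theory, if $\hat{L'}$ has a non-almost-trivial component or a mixed crossing then, using \cite{P-1,T-1} and the fact that a positive diagram dominates every link on the same projection, $\tilde L'\geq$ right-handed trefoil or right-handed Hopf link (plus trivial components), and hence so does $\tilde L$. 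The only way to fail is that $\hat{L'}$ is a trivial projection; running the same reduction with the other loop as $s$, if both reductions give trivial projections then $\tilde\ell$ is a bounded clasp/twist region with both loops trivial, and direct inspection identifies $\tilde L$ with a diagram of Fig.~\ref{almost-positive-trivial}.

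Suppose instead $N$ is a mixed crossing between $\tilde\ell_1$ and $\tilde\ell_2$. Having reduced to the case of simple-closed-curve components, I would push every component other than $\tilde\ell_1,\tilde\ell_2$ over everything to split them off as trivial circles, leaving a two-component almost positive diagram $\tilde\ell_1\cup\tilde\ell_2$ whose sole negative crossing is the mixed crossing $N$. Cutting $S^2$ along a curve meeting the diagram in four points so that all mixed crossings lie inside and the two strings connect vertically, then erasing nugatory crossings by Lemma~\ref{nugatory-erasing}, yields a reduced almost positive $2$-string tangle $\tilde T$ with vertical connection whose negative crossing is mixed. Lemma~\ref{almost-positive-tangle-lemma} now applies; since both components are unknots, the knotted alternatives $T(3_1,0_1)$ and $T(0_1,3_1)$ cannot occur. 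Hence either $\tilde T\geq T(-2)$, whose relevant closure is a right-handed Hopf link, so $L\geq$ right-handed Hopf link (plus trivial components); or $\tilde T$ is one of $\tilde T_{1\pm},\tilde T_{2\pm}$, whose closures are trivial two-component links, placing $\tilde L$ among the diagrams of Fig.~\ref{almost-positive-trivial}.

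The hard part will be twofold and lies in the mixed-crossing case: first, carrying out the reductions (splitting off spare components and self-crossings via Lemmas~\ref{reducing-lemma}, \ref{diagram-reducing-lemma}, \ref{nugatory-erasing}) while provably keeping $N$ as the sole, still-mixed, interior negative crossing and arranging a genuine vertical connection, so that Lemma~\ref{almost-positive-tangle-lemma} is legitimately applicable; and second, the orientation bookkeeping that certifies the closure of $T(-2)$ (and of the local trefoil produced above) is \emph{right}-handed rather than left-handed, since only the right-handed versions carry the negative signature the theorem demands. Verifying that the exceptional tangles $\tilde T_{1\pm},\tilde T_{2\pm}$ and the doubly-trivial self-crossing configurations close up to exactly the diagrams of Fig.~\ref{almost-positive-trivial} is routine but must be checked case by case.
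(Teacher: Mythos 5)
Your mixed-crossing case is broadly workable, but your self-crossing case has a genuine gap at exactly the point where the real work of this theorem lies. After you apply Lemma~\ref{diagram-reducing-lemma} at $N$ (which is fine: condition (2) is vacuous because $N$ is the only negative crossing, and condition (1) holds since the deleted loop is positive), the exceptional branch is that \emph{both} reductions --- deleting either loop of $N$ --- give trivial positive diagrams. You then assert that ``direct inspection identifies $\tilde L$ with a diagram of Fig.~\ref{almost-positive-trivial}.'' This is false: each reduction deletes \emph{all} crossings between the two loops of $N$, so the triviality of both reductions places no restriction whatever on the pattern of inter-loop crossings, which can be any even number of positive crossings in any permutation pattern. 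For example, if the two loops form a twist region with four positive crossings, both reductions are trivial, yet $\tilde L$ represents the right-handed trefoil and is not in Fig.~\ref{almost-positive-trivial}; for non-twist patterns one must still \emph{produce} a trefoil, and that is not an inspection. This configuration --- two almost trivial loops joined at the negative crossing $N$ with positive mixed crossings between them --- is precisely the hard case of the theorem. The paper handles it by choosing a positive crossing $P$ between the two loops, passing to the complementary tangle diagram at $P$ (a reduced almost positive tangle with vertical connection in which $N$ becomes a mixed crossing), and invoking Lemma~\ref{almost-positive-tangle-lemma}, whose $X_+$-closures give the right-handed trefoil or the exceptional diagram $\tilde T_2$ of Fig.~\ref{almost-positive-trivial}. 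Your proposal never brings Lemma~\ref{almost-positive-tangle-lemma} (or Lemma~\ref{vertical-trefoil-tangle-lemma}) to bear on this case, so the heart of the proof is missing.

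Two secondary problems. First, your preliminary reduction ``every component is a simple closed curve'' is unjustified: a self-crossing of an almost trivial component need not be nugatory in the whole diagram (another component may thread through both of its loops), and a reduced diagram has no nugatory crossings for Lemma~\ref{nugatory-erasing} to erase. This is survivable --- Lemma~\ref{almost-positive-tangle-lemma} allows self-crossings on the strings, and the alternatives $T(3_1,0_1)$, $T(0_1,3_1)$ that you dismiss as impossible in fact just yield the right-handed trefoil plus a trivial component, a permitted conclusion --- but your stated reasoning is wrong. Second, in the mixed case you split off the remaining components by changing crossings; if one of them actually crosses $\tilde\ell_1\cup\tilde\ell_2$ and the tangle then turns out to be exceptional, you have only shown $L\geq$ trivial link, whereas the theorem demands a trefoil or a Hopf link, since $\tilde L$ is then \emph{not} a figure diagram plus trivial circles. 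You need to dispose of that situation first, as the paper does: any mixed crossing in a pair other than the one carrying $N$ involves only positive crossings, so Lemma~\ref{hook-tangle} gives the right-handed Hopf link outright. (Where your mixed-case route does work, note it is heavier than the paper's: since the complementary tangle at $N$ is entirely positive, the paper gets away with the projection-level Lemma~\ref{one-three-lemma} there, reserving Lemma~\ref{almost-positive-tangle-lemma} for the self-crossing case that you left unproved.)
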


\begin{proof}
First suppose that the negative crossing $N$ of $\tilde L$ is a self-crossing of a component
say $\tilde\ell$. If $\tilde L$ has a mixed crossing then we have $L\geq$ right-handed Hopf
link (plus trivial components) by Lemma 2.2. Suppose that $\tilde\ell$ has no mixed crossings with other 
components. The negative crossing $N$ divides $\tilde\ell$ into two parts. If the two parts 
have no common crossings except $N$, then either we have the right-handed trefoil knot or
$\tilde\ell$ is almost trivial. Otherwise, choose such a crossing (which is positive)
and look at the complementary tangle diagram of the crossing. Note that $N$ is a mixed crossing of that tangle diagram. 
Then by Lemma \ref{almost-positive-tangle-lemma} either we have $\tilde\ell\geq$ 
right-handed trefoil knot or $\tilde\ell$ is the diagram $\tilde T_2$ of Fig. \ref{almost-positive-trivial}.

Next suppose that $N$ is a mixed crossing of the components, say $\tilde\ell_{1}$
and $\tilde\ell_{2}$. If there are components $\tilde\ell_{i}$ and $\tilde\ell_{j}$
with $\tilde\ell_{i}\cap\tilde\ell_{j}\neq\emptyset$ and $\{i,j\}\neq\{1,2\}$, then we have the
right-handed Hopf link. If not, then applying Lemma \ref{one-three-lemma} to the complementary tangle projection
of $N$, we have $\tilde\ell_{1}\cup\tilde\ell_{2}\geq$right-handed Hopf link or
$\tilde\ell_{2}\cup\tilde\ell_{2}$ is the diagram $\tilde T_3$ of Fig. \ref{almost-positive-trivial}. This 
completes the proof.
\end{proof}

Theorem \ref{ap1}, Theorem \ref{ap2}, Corollary \ref{ap3} and Corollary \ref{ap4} are immediate corollaries of 
Theorem \ref{almost-positive-theorem}.

\section{2-almost positive links}

We start from  some preparatory lemmas that are used to analyze 2-almost positive diagrams. 
In the following proofs of these lemmas we use sometimes basic facts from 
Lemma \ref{reducing-lemma}, Lemma \ref{local-trefoil-lemma}, Lemma \ref{hook-tangle} etc. from Section 1 without 
explicitly mentioning it. When we give over/under crossing information to a projection, we divide a projection 
into some arcs and circles and give over/under crossing information to the self-crossings of these arcs and 
circles by descending or ascending algorithm without explicitly mentioning it.

\begin{Lemma}\label{lemma1}
Let $\hat T$ be a prime 2-string tangle projection with vertical connection. If $\hat T$
is not an underlying projection of any of the tangles in Fig. \ref{tangles1}, then $\hat T$ is one of 
the projections illustrated in Fig. \ref{projections1}
\end{Lemma}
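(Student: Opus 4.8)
The plan is to follow the same permutation-analysis strategy that drives Lemmas \ref{vertical-trefoil-tangle-lemma}, \ref{one-three-lemma} and \ref{-4-lemma}, now applied to a \emph{prime} vertical-connection tangle projection $\hat T=\hat a\cup\hat b$ in order to show that, unless $\hat T$ dominates one of the finitely many ``bad'' tangles in Fig.~\ref{tangles1}, it must coincide with one of the normal forms in Fig.~\ref{projections1}. First I would reduce to spines: by Lemma \ref{reducing-lemma} the tangle $\hat T'=\hat a\cup\hat b'$ (replacing $\hat b$ by its spine $\hat b'$) is a minor of $\hat T$, and likewise $\hat T''=\hat a'\cup\hat b$, so it suffices to control the mixed crossings, which are unchanged when we pass to spines. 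By Lemma \ref{local-trefoil-lemma} any non–almost-trivial component would already force a local trefoil, hence a tangle dominating one of the listed knotted tangles; so I may assume both $\hat a$ and $\hat b$ are almost trivial and focus entirely on how the mixed crossings interleave.

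Next I would set up the permutation $(\sigma(1),\dots,\sigma(2n))$ defined by $A_i=B_{\sigma(i)}$ exactly as in the earlier lemmas, and run the ``over and under'' crossing-assignment argument: whenever $\sigma$ has a descent or an interleaving of the arcs $A_iA_{i+1}$ and $A_jA_{j+1}$, I would choose over/under information (one segment over everything, a later segment under everything, the rest descending) to extract an explicit negative clasp, i.e.\ to produce $\hat T\ge T(-2)$ via Lemma \ref{hook-tangle}, or to produce one of the other forbidden tangles of Fig.~\ref{tangles1}. The point is that each ``forbidden'' interleaving pattern of the permutation is matched to one of the tangles in Fig.~\ref{tangles1}, so that if $\hat T$ dominates none of them, $\sigma$ is forced into a very restricted shape (essentially an identity or a single ``backward'' block, as in the predecessors). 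Once $\sigma$ is pinned down and the strands are shown not to interleave, primeness of $\hat T$ lets me rule out disconnecting simple closed curves and thereby identify $\hat T$ with one of the explicit projections in Fig.~\ref{projections1}.

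The main obstacle I expect is bookkeeping the case split: because this is a \emph{2-almost positive} analysis rather than the positive/almost-positive case, the collection of exceptional tangles in Fig.~\ref{tangles1} is larger, and correspondingly the permutation patterns that must be excluded (and matched to a specific figure) proliferate, much as the Case~P1Q1/P1Q2/P2Q1/P2Q2 branching ballooned in Lemma \ref{-4-lemma}. Managing the ``after $A_{2i}$'' versus ``before $A_{2i-1}$'' sub-branches, and invoking the horizontal and vertical symmetries of the target tangles to collapse symmetric cases, will be where the real work lies. The cleanest organization is to first dispose of all self-crossing situations by reducing to spines via Lemma \ref{reducing-lemma} and Lemma \ref{diagram-reducing-lemma}, then treat the no-self-crossing case purely combinatorially through the permutation, exactly paralleling the structure of the earlier proofs, and finally appeal to primeness to eliminate the remaining connect-sum ambiguities and land on Fig.~\ref{projections1}.
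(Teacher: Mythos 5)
Your outline shares two ingredients with the paper's proof --- almost triviality of both strings via Lemma \ref{local-trefoil-lemma}, and a permutation analysis in the spirit of Lemma \ref{-4-lemma} at the end --- but the step you dispose of in one line, the self-crossings, is where the real content of this lemma lies, and your way of handling it does not work. You claim that after replacing a string by its spine ``it suffices to control the mixed crossings, which are unchanged when we pass to spines.'' Both halves fail. Mixed crossings are not preserved: any mixed crossing sitting on a loop $s(\hat a,P)$ is erased together with that loop. More fundamentally, $\hat a'\cup\hat b$ is only a \emph{minor} of $\hat T$, and the lemma asserts a dichotomy about $\hat T$ itself: either $\hat T$ projects onto a forbidden tangle, or $\hat T$ \emph{equals} one of the projections of Fig. \ref{projections1}. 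If the spine-reduction turns out to be a normal form, nothing follows about $\hat T$: for instance, a loop of $\hat a$ that encircles strands of $\hat b$ (a mixed crossing of positive depth, or a rightmost mixed crossing of multiplicity greater than one) vanishes when you pass to $\hat a'$, yet it makes $\hat T$ a projection of $T(-4)$ or $T(1/2,-2)$ --- and these are exactly the forbidden tangles that must be produced. This is what the bulk of the paper's proof does: it shows that mixed crossings of multiplicity greater than one or of positive depth force $T(1/2,-2)$ or $T(-4)$; that if a string has self-crossings then its spine can carry no mixed crossings at all (a tear-drop-disk case analysis); and that both strings cannot simultaneously have self-crossings (Lemma \ref{hook-tangle} gives $T(1/2,-2)$ otherwise). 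Only after all of that is one string known to be simple, so that Lemma \ref{-4-lemma} (whose hypothesis is precisely this) can be quoted, together with Lemmas \ref{hook-tangle} and \ref{one-three-lemma}, to finish; there is no need to re-run the descent/interleaving argument as you propose, but there is also no way to start it before the self-crossing analysis is complete.

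Two smaller points. Since this is a statement about projections, the forbidden list cannot contain the bare clasp $T(-2)$: by Lemma \ref{hook-tangle} \emph{every} vertical-connection projection with a mixed crossing is a projection of $T(-2)$, so ``extracting a negative clasp'' proves nothing here (the exceptional tangles are of the type $T(1/2,-2)$ and $T(-4)$, while the good projections of Fig. \ref{projections1}, such as $\hat T(1,2n)$ used later in the proof of Theorem \ref{2-almost-positive-theorem}, do have mixed crossings). Likewise Lemma \ref{diagram-reducing-lemma} is a statement about diagrams with over/under hypotheses; in this purely projective setting Lemma \ref{reducing-lemma} is the only reduction tool available.
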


\begin{figure}[htbp]
      \begin{center}
\scalebox{0.65}{\includegraphics*{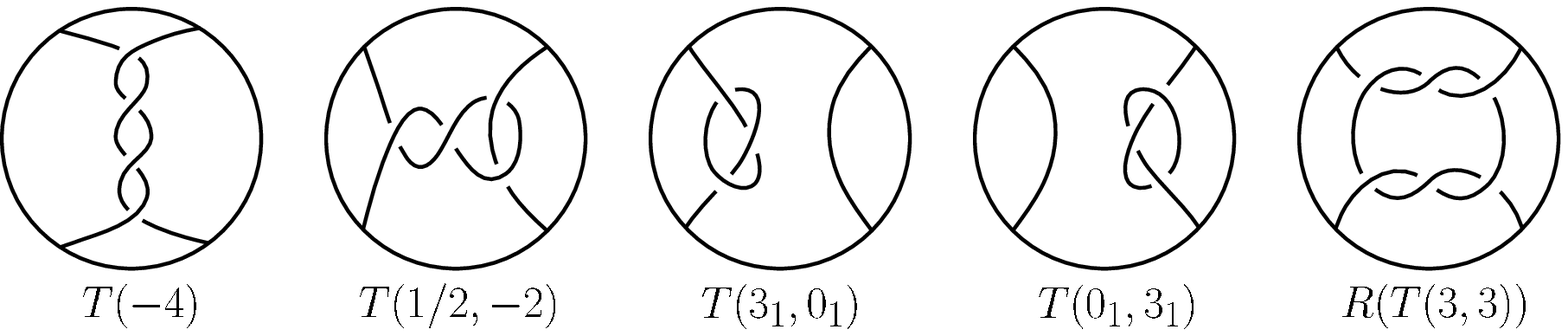}}
      \end{center}
   \caption{}
  \label{tangles1}
\end{figure} 

%

%
\begin{figure}[htbp]
      \begin{center}
\scalebox{0.65}{\includegraphics*{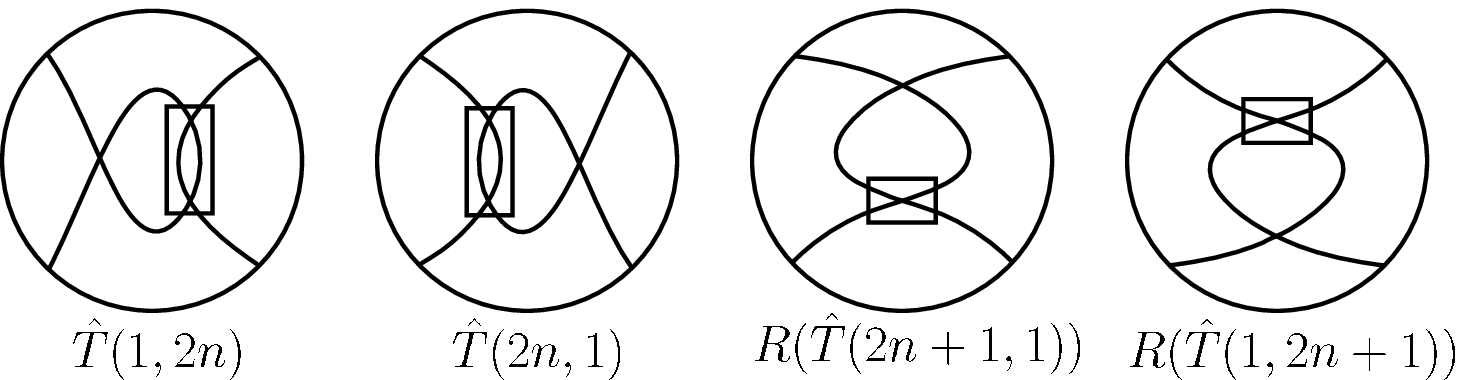}}
      \end{center}
   \caption{}
  \label{projections1}
\end{figure} 

%

\begin{proof}
We have that both $\hat a$ and $\hat b$ are almost trivial by Lemma \ref{local-trefoil-lemma}. 
Suppose that there is a rightmost mixed crossing on $\hat a$ with its multiplicity greater than one, 
or a leftmost mixed crossing on $\hat b$ with its multiplicity greater than one. 
Suppose that we have the former case. Then possibly after taking a minor of $\hat T$, still denoted by $\hat T$ using Lemma \ref{reducing-lemma}, we find a tear drop disk $\delta$ and two rightmost mixed crossings $B_i$ and $B_{i+1}$ on $\partial\delta$ with multiplicity greater than one and $B_iB_{i+1}\subset\delta$. Then we add over/under crossing information to $\hat T$ respecting the multiplicity and the position of $B_i$ and $B_{i+1}$ and have the tangle $T(1/2,-2)$ as illustrated in Fig. \ref{lemma1-proof1}.

\begin{figure}[htbp]
      \begin{center}
\scalebox{0.65}{\includegraphics*{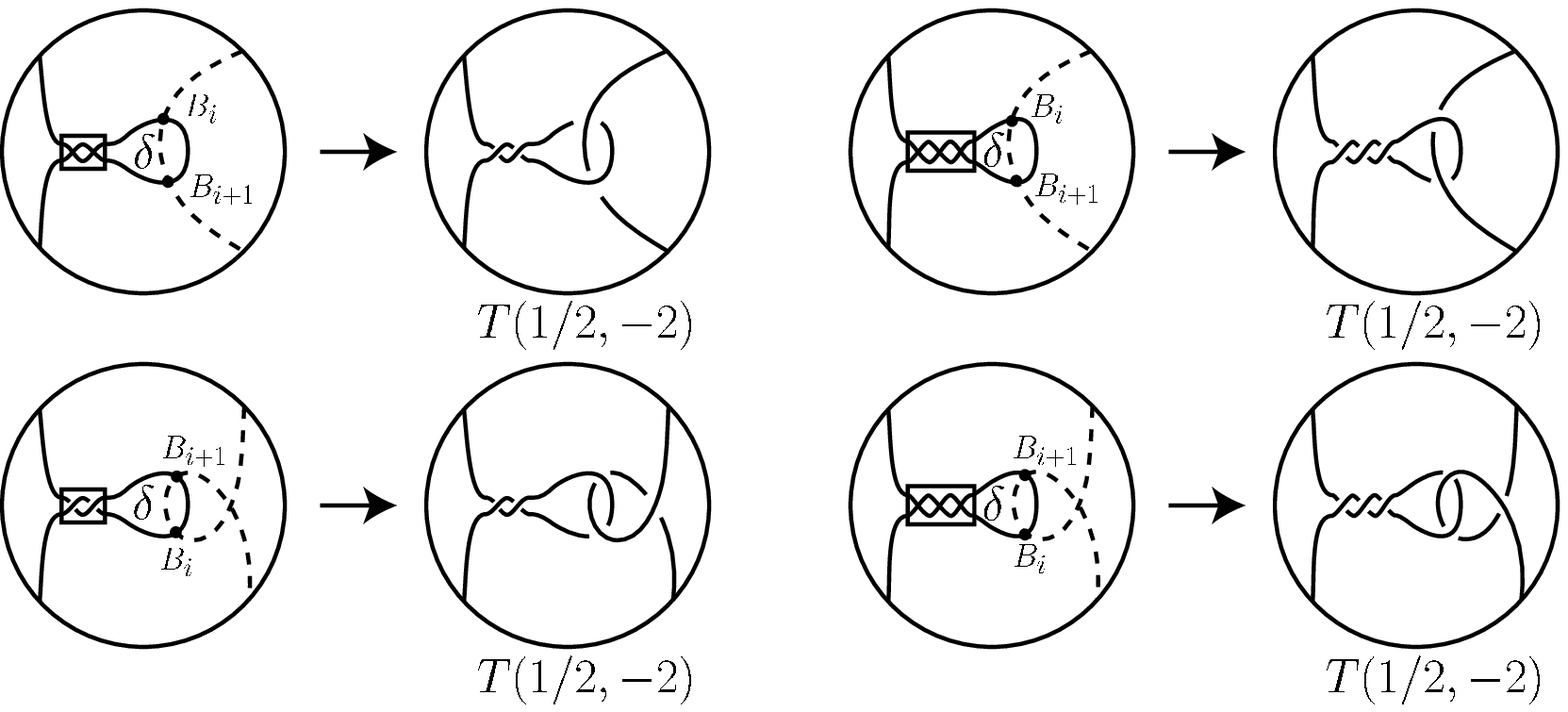}}
      \end{center}
   \caption{}
  \label{lemma1-proof1}
\end{figure} 

%

Suppose that there is a mixed crossing with its depth greater than zero. Then there is a mixed crossing, say $B_i$ on $\hat a$ with depth one. Then using Lemma \ref{reducing-lemma} we may assume that there is a tear drop disk $\delta$ with $B_i\in\partial \delta$. Then by re-choosing $B_i$ if necessary we may further assume that $B_{i+1}\in\partial\delta$ and $B_iB_{i+1}\subset\delta$. We divide $\hat b$ into $B_0B_i,B_iB_{i+1}$ and $B_{i+1}B_\infty$ and give over/under crossing information to $\hat T$ so that $B_0B_i^-$ is over everything and $B_{i+1}^+B_\infty$ is under everything and other crossings are as illustrated in Fig. \ref{lemma1-proof2} and we have the tangle $T(-4)$.

\begin{figure}[htbp]
      \begin{center}
\scalebox{0.65}{\includegraphics*{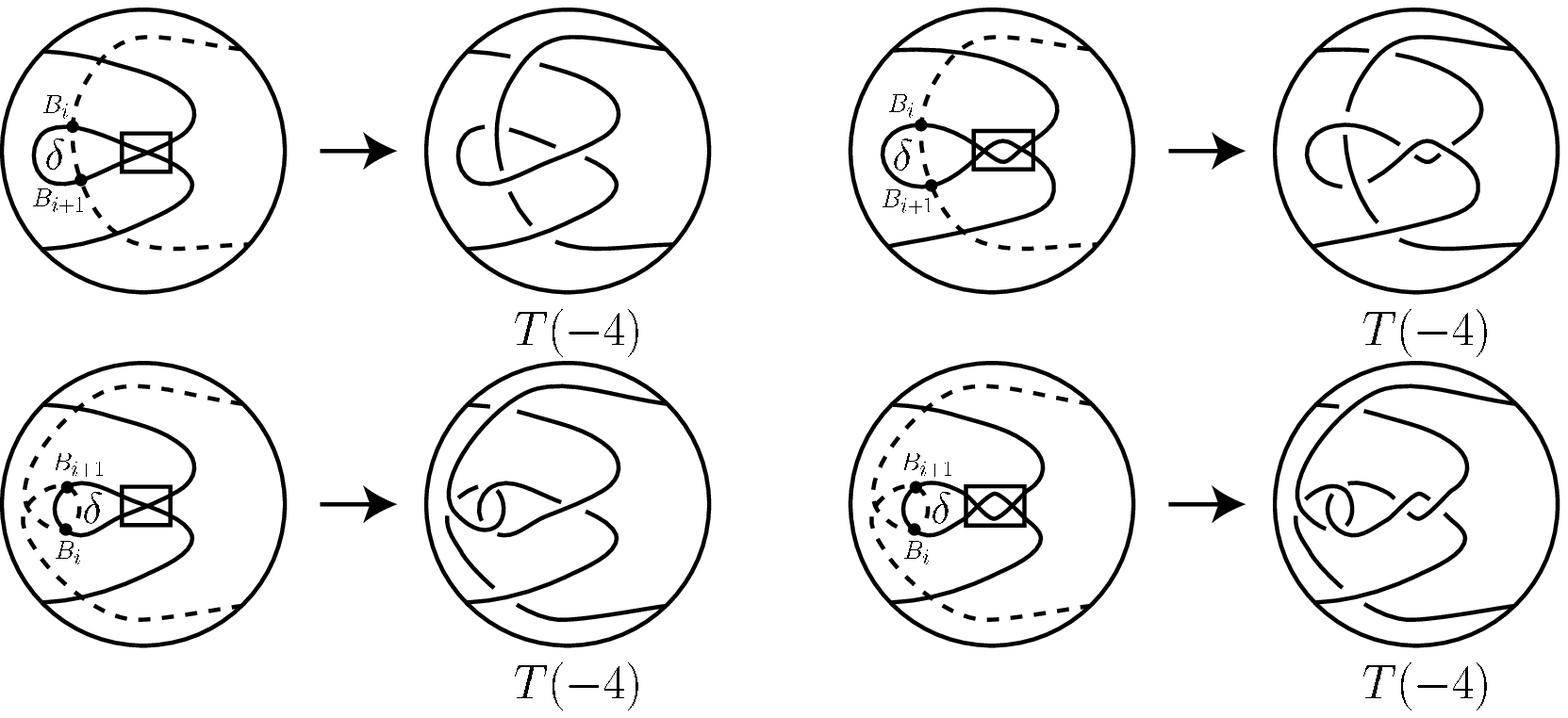}}
      \end{center}
   \caption{}
  \label{lemma1-proof2}
\end{figure} 

%

Therefore we have that $\hat a$ is of the form illustrated in Fig. \ref{lemma1-proof3}

\begin{figure}[htbp]
      \begin{center}
\scalebox{0.65}{\includegraphics*{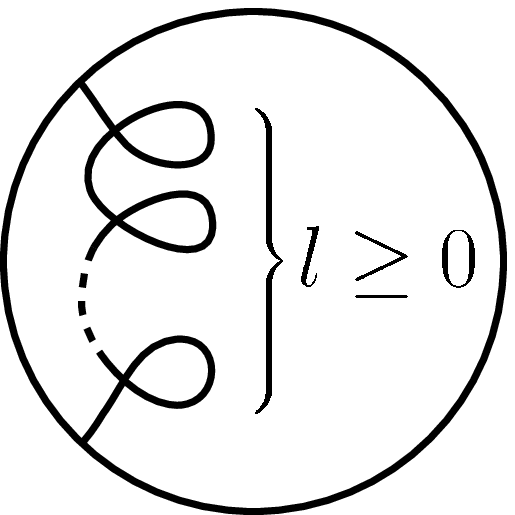}}
      \end{center}
   \caption{}
  \label{lemma1-proof3}
\end{figure} 

%

Suppose $l>0$ and the spine $\hat a'$ of $\hat a$ has mixed crossings. 
Using Lemma \ref{reducing-lemma} we reduce $\hat a$ so that $\hat a$ bounds just one 1-gon $\delta$. Let $V$ be the vertex of $\delta$.

\begin{enumerate}
\item[Case 1]
The first mixed crossing $B_{1}$ of $\hat b$ is on $A_{0}V$.

Then we have the tangle $T(-4)$ as illustrated in Fig. \ref{lemma1-proof4} (a) where $B_i$ denotes the first crossing of $\hat b$ at which $\hat b$ meets $\delta$. 

\item[Case 2.]
$B_{1}$ is on $VA_{\infty}$.

\item[Case 2.1.]
The last crossing of $\hat b$ on $B_{0}B_{1}\cup B_{1}A_{\infty}$ is on $B_{0}B_{1}$.

We have the tangle $T(1/2,-2)$ as illustrated in Fig. \ref{lemma1-proof4} (b).

\item[Case 2.2.]
The last crossing of $\hat b$ on $B_{0}B_{1}\cup B_{1}A_{\infty}$ is on $B_{1}A_{\infty}$.

We have the tangle $T(-4)$ as illustrated in Fig. \ref{lemma1-proof4} (c).

\item[Case 3.]
$B_{1}$ is on $\partial\delta$.

\item[Case 3.1.]
The first crossing of $\hat b$ on $\hat a'=A_{0}V\cup VA_{\infty}$ is on $A_{0}V$.

We have the tangle $T(1/2,-2)$. See Fig. \ref{lemma1-proof4} (d).

\item[Case 3.2.]
The first crossing of $\hat b$ on $\hat a'=A_{0}V\cup VA_{\infty}$ is on $VA_{\infty}$.

We have the tangle $T(-4)$ as illustrated in Fig. \ref{lemma1-proof4} (e).

\end{enumerate}

Therefore we have that $\hat a'$ has no mixed crossings of $\hat T$.
Similarly, if $\hat b$ is not simple, then $\hat b'$ has no mixed crossings
of $\hat T$. If both $\hat a$ and $\hat b$ have self-crossings, then we
have the tangle $T(1/2,-2)$ by Lemma \ref{hook-tangle}.
Then we have that at least one of $\hat a$ and $\hat b$ has no self
crossings. Then the result follows from Lemma \ref{-4-lemma} together with Lemma \ref{hook-tangle} and Lemma \ref{one-three-lemma}.
\end{proof}

\begin{figure}[htbp]
      \begin{center}
\scalebox{0.65}{\includegraphics*{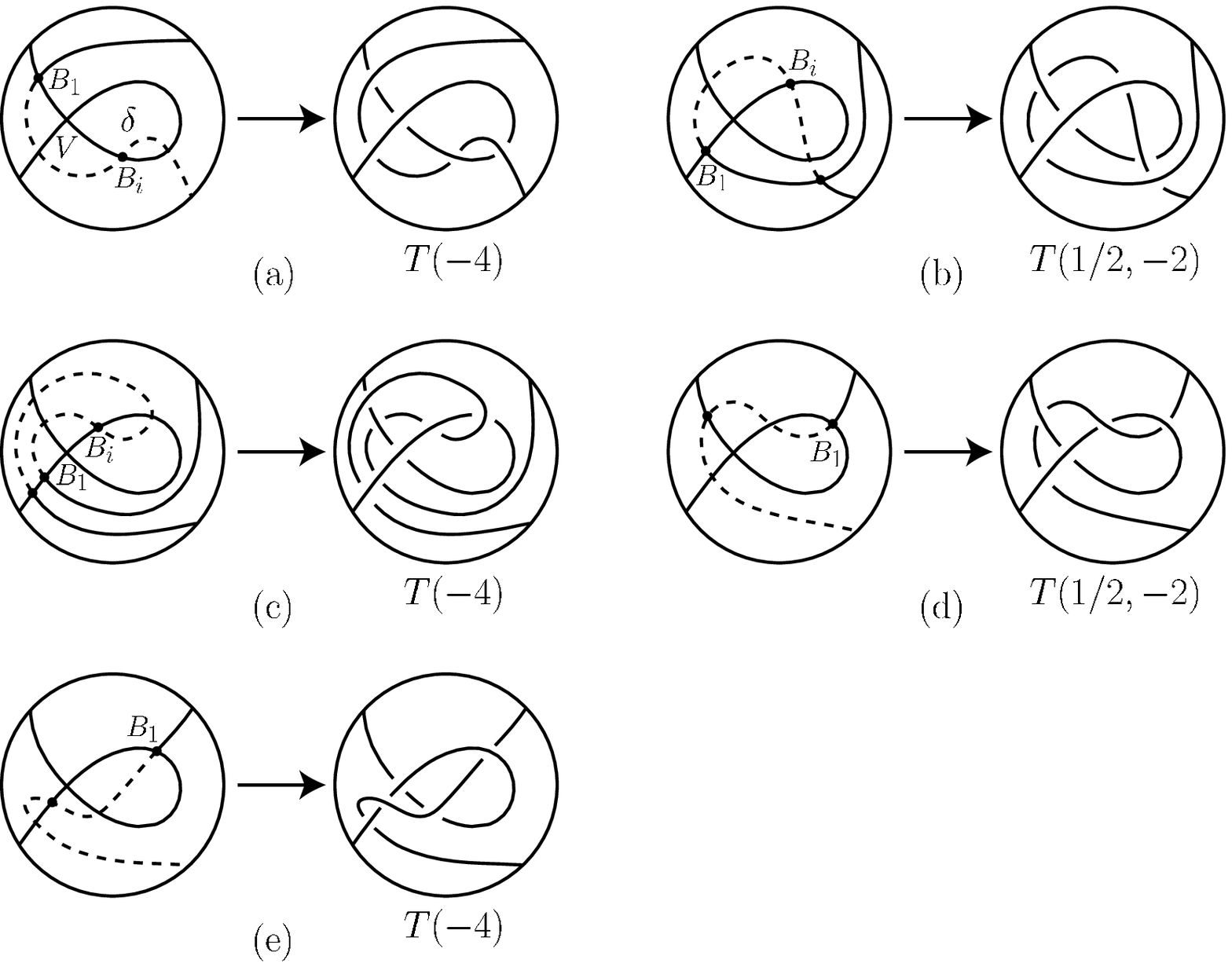}}
      \end{center}
   \caption{}
  \label{lemma1-proof4}
\end{figure} 

%

\begin{Lemma}\label{lemma2}
Let $\tilde T$ be a prime R2-reduced 2-almost positive 2-string 
tangle diagram with vertical connection. 
Suppose that $\tilde T$ has a negative self-crossing on the left 
string $\tilde a$ and a negative mixed crossing. 
If $\tilde T$ is not greater than or equal to any of the tangles in
Fig. \ref{tangles2}, then $\tilde T$ is one of the forms illustrated 
in Fig. \ref{diagrams2} where dotted lines
have no crossings with the real lines.
\end{Lemma}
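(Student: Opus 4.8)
The plan is to run a descending-algorithm case analysis in the spirit of the proofs of Lemma \ref{almost-positive-tangle-lemma} and Lemma \ref{lemma1}, but now carrying two distinguished negative crossings throughout: the negative self-crossing $M$ on $\tilde a$ and the negative mixed crossing $N$. First I would apply Lemma \ref{local-trefoil-lemma} to both strings. If $\hat b$ (or $\hat a$, through a self-crossing other than $M$) fails to be almost trivial, then it contains a local trefoil, and assigning the remaining crossings by the descending algorithm while leaving $M$ and $N$ untouched exhibits $\tilde T$ as greater than or equal to one of the trefoil-carrying tangles of Fig. \ref{tangles2}. Hence I may assume both $\hat a$ and $\hat b$ are almost trivial. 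Since $\tilde T$ is R2-reduced, $M$ is non-nugatory, so the loop $s(\tilde a,M)$ cut off by $M$ cannot be empty; as $\hat a$ is almost trivial this forces $\tilde b$ to thread through the loop. This is the geometric picture to organize the rest.

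The cleanest branch to dispose of next is when the loop $s(\tilde a,M)$ can be erased by Lemma \ref{diagram-reducing-lemma}, giving $\tilde T \geq r(\tilde T,M)$. The point requiring care is hypothesis (2) of that lemma: $s(\tilde a,M)$ must lie entirely over, or entirely under, $r(\tilde a,M)$ at the negative crossings between them. Because the only negatives present are $M$ and $N$, this hypothesis can fail only through the position of $N$, so I would split according to whether $N$ lies on $s(\tilde a,M)$ or on $r(\tilde a,M)\cup\tilde b$. When $N$ is off the loop, erasing the loop leaves an almost positive (one negative, mixed) tangle, and Lemma \ref{almost-positive-tangle-lemma} applies: either $r(\tilde T,M)$ dominates $T(-2)$, $T(3_1,0_1)$ or $T(0_1,3_1)$, so by transitivity $\tilde T$ dominates a tangle of Fig. \ref{tangles2}, or $r(\tilde T,M)$ is one of the standard forms $\tilde T_{1\pm},\tilde T_{2\pm}$, and re-inserting the loop at $M$ yields one of the diagrams of Fig. \ref{diagrams2}.

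The heart of the argument is the obstructed case, where $N$ sits on $s(\tilde a,M)$ and $\tilde b$ is threaded through the loop, so that neither the loop can be peeled off nor an extra negative hook planted without disturbing $M$ or $N$. Here I would argue as in Lemma \ref{lemma1}: control the depth and multiplicity of the mixed crossings, observing that a mixed crossing of positive depth, or two related rightmost mixed crossings of multiplicity $\geq 1$, permits building $T(-4)$ or $T(1/2,-2)$ by the over-and-under technique of Lemma \ref{almost-positive-tangle-lemma} together with Lemma \ref{hook-tangle}, hence dominates a tangle of Fig. \ref{tangles2}. This forces the spines $\hat a'$ and $\hat b'$ to carry no mixed crossings, reducing both strings to essentially simple arcs bearing only $M$ and $N$; an elementary examination of how $\tilde b$ can pass through the single loop at $M$ while keeping $N$ as the unique mixed negative crossing then leaves only the finitely many configurations of Fig. \ref{diagrams2}, with the dotted arcs free of crossings.

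I expect the main obstacle to be precisely this bookkeeping. Every reduction must be carried out without flipping $M$ or $N$ to a positive crossing, so each invocation of Lemma \ref{diagram-reducing-lemma} demands checking its sign hypothesis against the current positions of the two negatives, and each use of the over-and-under technique must route the ``over everything'' and ``under everything'' arcs so as to avoid altering $M$ and $N$. Confirming that the genuinely irreducible diagrams coincide exactly with Fig. \ref{diagrams2}, and that no further case slips through between the ``loop-erasable'' branch and the depth/multiplicity branch, is the delicate endgame.
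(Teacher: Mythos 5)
Your opening reduction is where the proposal breaks. You claim that if $\hat a$ fails to be almost trivial ``through a self-crossing other than $M$'', then the descending algorithm, leaving $M$ and $N$ untouched, realizes a local trefoil and hence exhibits $\tilde T\geq T(3_1,0_1)$. This is false precisely because of the fixed negative crossing $M$. A concrete counterexample is the left string obtained by cutting open the standard trefoil projection carrying one negative crossing $M$ and two positive crossings (this is the cut-open form of the diagram $\tilde T_2$ of Fig.~\ref{almost-positive-trivial}): it is not almost trivial, its self-crossings other than $M$ are non-nugatory, and yet no change of positive crossings to negative ever produces a right-handed trefoil --- the only knots obtainable are the unknot and the left-handed trefoil. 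Lemma~\ref{local-trefoil-lemma} is a statement about projections; the crossing information it prescribes to realize the local trefoil may conflict with the sign of $M$, so it cannot be invoked ``leaving $M$ untouched''. Consequently the assumption that both strings may be taken almost trivial is unjustified, and the entire branch in which $\tilde a$ has this clasp form is missing from your argument; everything downstream (the loop-erasing branch and the depth/multiplicity branch) operates only under the almost-trivial hypothesis.

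This missing branch is not a marginal case: it is the bulk of the paper's proof. The paper's first move is not Lemma~\ref{local-trefoil-lemma} on $\tilde a$ but Theorem~\ref{almost-positive-theorem} applied to $\tilde a$ as an almost positive diagram, which yields a trichotomy: either $\tilde a\geq$ right-handed trefoil (giving $T(3_1,0_1)$), or $\tilde a$ comes from an R1-augmentation of the cut-open $\tilde T_1$ (your almost trivial case, the paper's Case~2), or from the cut-open $\tilde T_2$ (the clasp, the paper's Case~1). In Case~1 the dominations that must be manufactured are of $T(-2)$, $T_1$, $T_3,\dots,T_6$ --- not of the trefoil-carrying tangles --- and they arise from the interaction of $\tilde a$ with $\tilde b$ (mixed crossings forced by primeness and R2-reducedness), via Lemma~\ref{diagram-reducing-lemma}, Lemma~\ref{almost-positive-tangle-lemma}, Lemma~\ref{hook-tangle} and the over-and-under technique; nothing in the self-structure of $\tilde a$ alone produces them. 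Your treatment of the remaining case is in the right spirit and matches the paper's Case~2 --- erase the loop at $M$ by Lemma~\ref{diagram-reducing-lemma} when the sign hypothesis permits, apply Lemma~\ref{almost-positive-tangle-lemma} to $r(\tilde T,M)$, then classify how the loop re-threads to obtain Fig.~\ref{diagrams2} --- but even granting all of that ``bookkeeping'', the proof cannot close without a genuine substitute for Case~1.
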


\begin{figure}[htbp]
      \begin{center}
\scalebox{0.65}{\includegraphics*{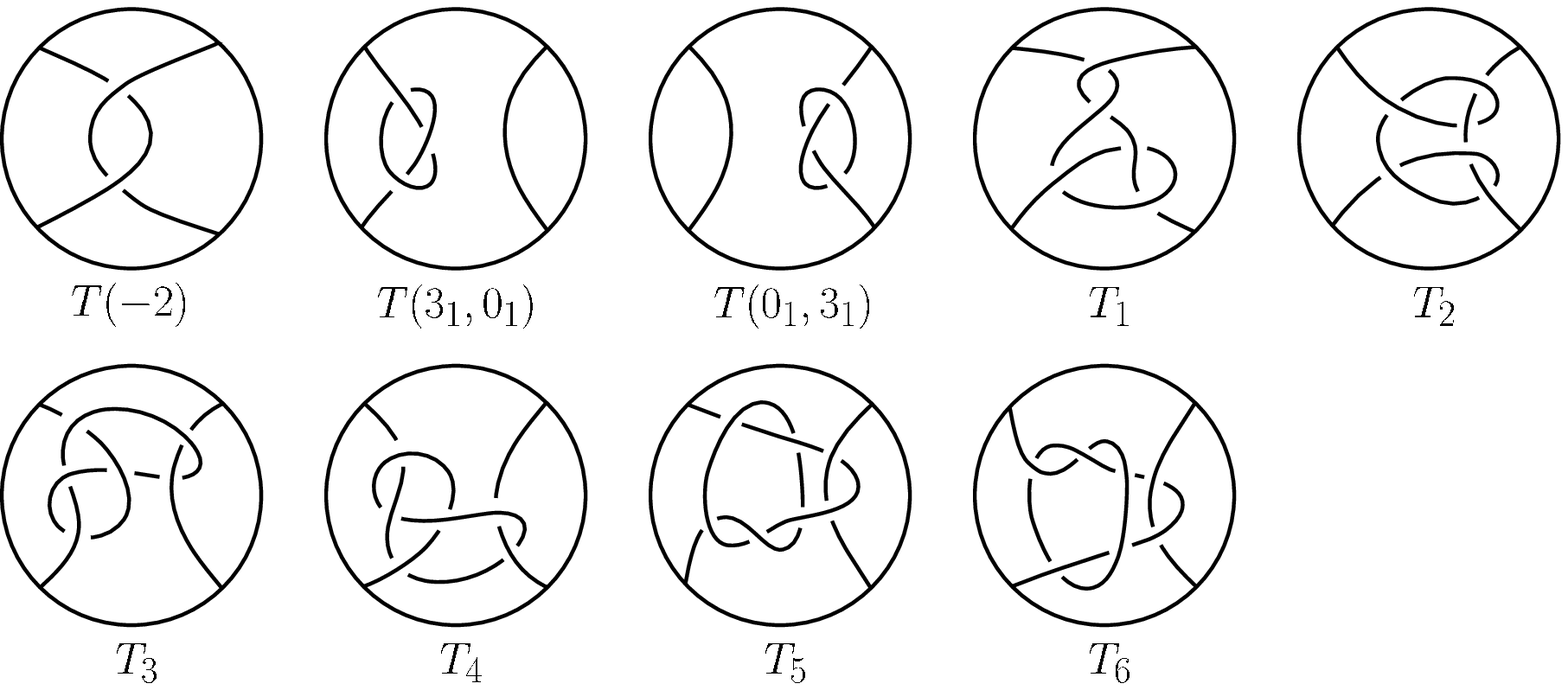}}
      \end{center}
   \caption{}
  \label{tangles2}
\end{figure} 

%

%
\begin{figure}[htbp]
      \begin{center}
\scalebox{0.65}{\includegraphics*{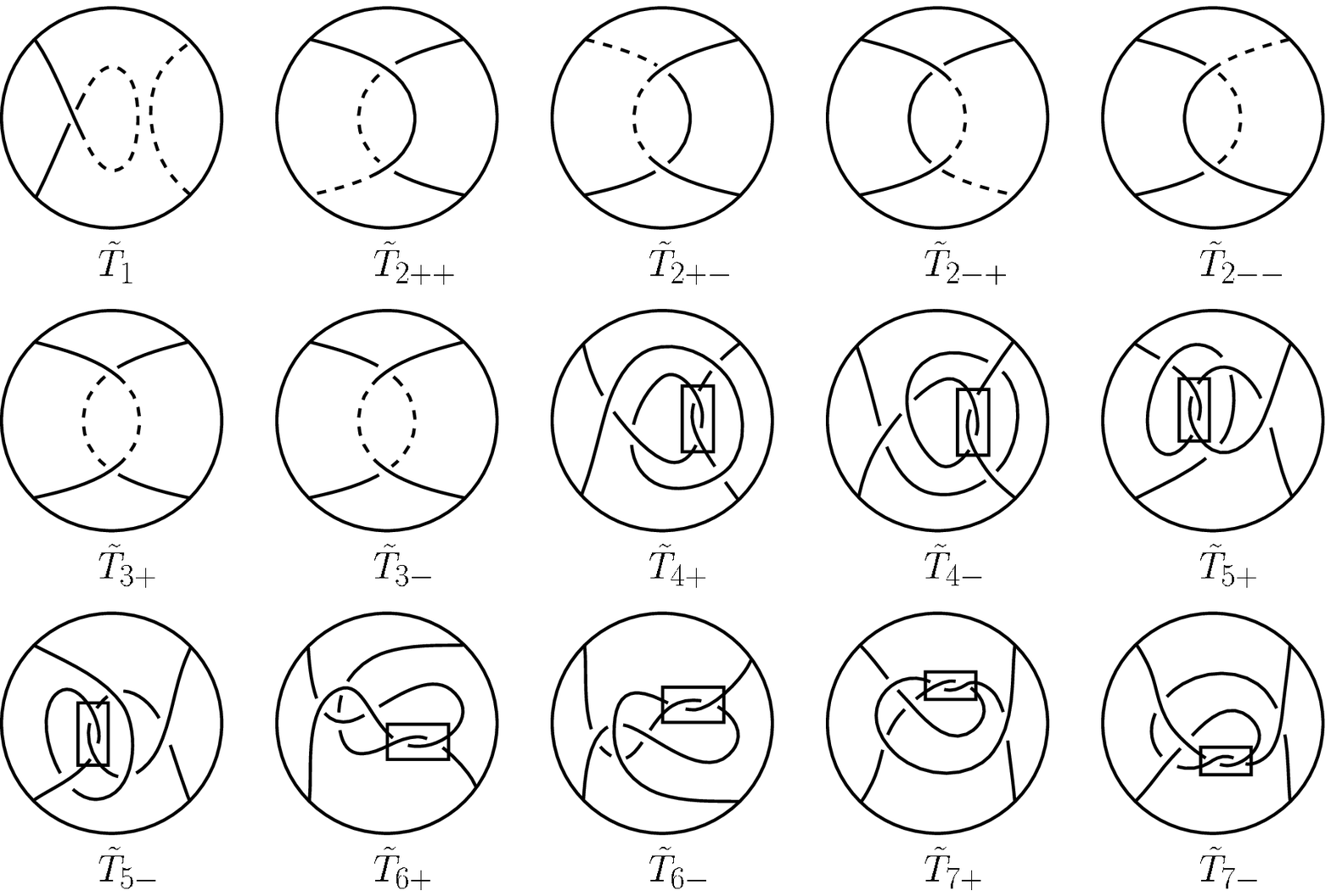}}
      \end{center}
   \caption{}
  \label{diagrams2}
\end{figure} 

%

This lemma is proved by a steady consideration of various cases. 

\begin{proof}
First we note that the tangles in Fig. \ref{tangles2} are closed under horizontal symmetry. In particular $H(T_1)=T_1$.
The diagrams in Fig. \ref{diagrams2} are also closed under horizontal symmetry.
We also note here that $\tilde T_{4\pm}$ and $\tilde T_{5\pm}$, $\tilde T_{6\pm}$ and $\tilde T_{7\pm}$ are flypes each other respectively.
We denote the negative mixed crossing by $N$ and the negative self-crossing by $M$.
As $\tilde T$ is not greater than or equal to $T(3_1,0_1)$, we have by Theorem \ref{almost-positive-theorem} that $\tilde a$ is 
obtained from an R1-augmentation of the diagram $\tilde T_{1}$ or $\tilde T_{2}$ in Fig. \ref{almost-positive-trivial} by deleting a trivial disk pair $(D^{2},D^{1})$. As $\tilde T$ is not greater than or equal to $T(0_1,3_1)$, we have by Lemma \ref{local-trefoil-lemma} that $\tilde b$ is almost trivial.

\begin{enumerate}
\item[Case 1.]
$\tilde a$ is obtained from the diagram $\tilde T_{2}$ in Fig. \ref{almost-positive-trivial}.

Suppose $\tilde a$ is obtained from an R1-augmentation $\tilde S_{2}$ of $\tilde T_{2}$ by deleting a trivial disk pair.

Suppose that a component of 
$(\tilde S_{2}-\tilde T_{2})\backslash {\rm int}D^{2}$ is incident to
$\partial D^{2}$ and has a mixed crossing of $\tilde T$.
Since $\tilde T$ is R2-reduced we can apply Lemma \ref{almost-positive-tangle-lemma} after erasing the negative 
self-crossing $M$ using Lemma \ref{diagram-reducing-lemma} and have the tangle $T(-2)$. See Fig. \ref{lemma2-proof1}.

\begin{figure}[htbp]
      \begin{center}
\scalebox{0.65}{\includegraphics*{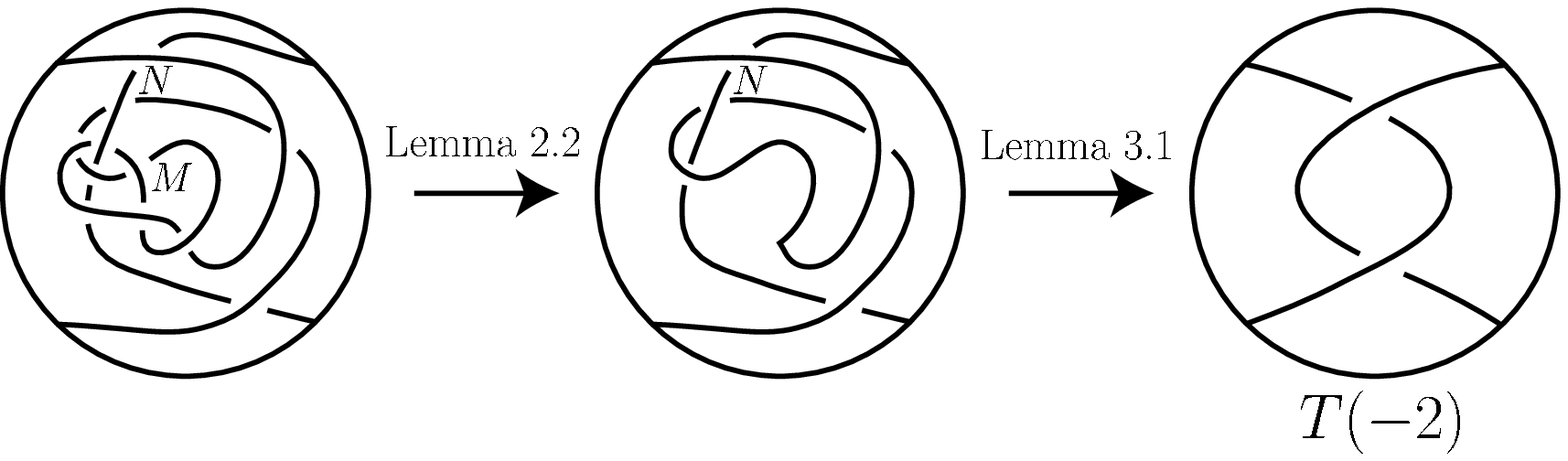}}
      \end{center}
   \caption{}
  \label{lemma2-proof1}
\end{figure} 

%

By similar arguments based on Lemma \ref{diagram-reducing-lemma} and Lemma \ref{almost-positive-tangle-lemma} we may suppose that $\tilde a$ is an R1-augmentation 
of one of the diagrams illustrated in Fig. \ref{lemma2-proof2} and their horizontal symmetries, and that both
$A_{0}P_{+}$ and $P_{+}A_{\infty}$ have no mixed crossings for $p>0$.

\begin{figure}[htbp]
      \begin{center}
\scalebox{0.65}{\includegraphics*{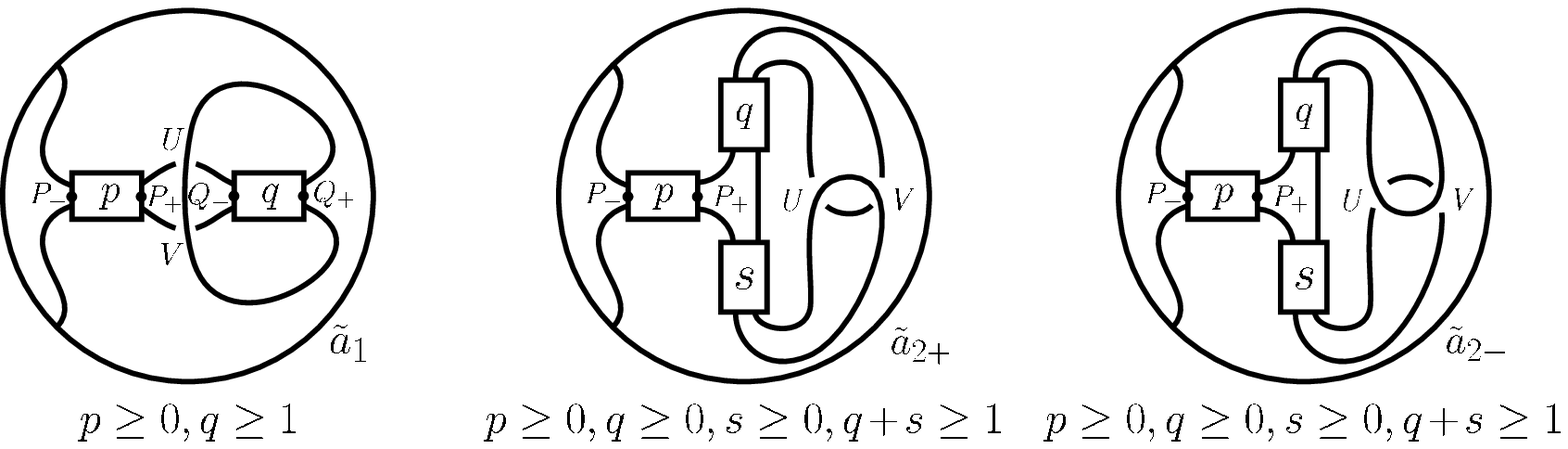}}
      \end{center}
   \caption{}
  \label{lemma2-proof2}
\end{figure} 

%

We remark that as $\tilde T$ is prime R2-reduced, both $s(\tilde a_{1},Q_{+})$ and the 2-gon $UVU$ of $\tilde a_{2\pm}$ have mixed crossings. 
We will show that in Case 1 $\tilde T\geq T(-2),T_1,T_3,T_4,T_5$ or $T_6$.

As $A_{0}P_{+}$ and $P_{+}A_{\infty}$ have no mixed crossings we may suppose, by taking a flype of $\tilde T$, still denoted by $\tilde T$ if necessary  that $p=0$.

\item[Case 1.1.]
$\tilde a$ is an R1-augmentation of the diagram $\tilde a_{1}$ of Fig. \ref{lemma2-proof2} with $p=0$ and $q$ odd. Then we have $U=M$. 
See Fig. \ref{lemma2-proof3}.

\begin{figure}[htbp]
      \begin{center}
\scalebox{0.65}{\includegraphics*{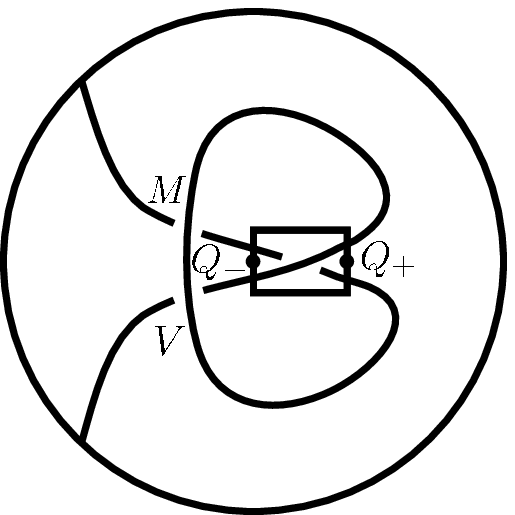}}
      \end{center}
   \caption{}
  \label{lemma2-proof3}
\end{figure} 

%

Suppose that $N$ is on $s(\tilde a  ,M)$. If $r(\tilde a  ,M)$ have mixed crossings then we 
have the tangle $T(-2)$ by Lemma \ref{diagram-reducing-lemma} and Lemma \ref{hook-tangle}. If not then we have $q\geq 3$ by the R2-reducibility.
Note that by Lemma \ref{diagram-reducing-lemma} we have that $r(\tilde T,V)$ is a minor of $\tilde T$. Then by applying Lemma \ref{almost-positive-tangle-lemma} to $r(\tilde T,V)$ we have the tangle $T(-2)$, $T_3$ or $T_4$. See Fig. \ref{lemma2-proof4}.

\begin{figure}[htbp]
      \begin{center}
\scalebox{0.65}{\includegraphics*{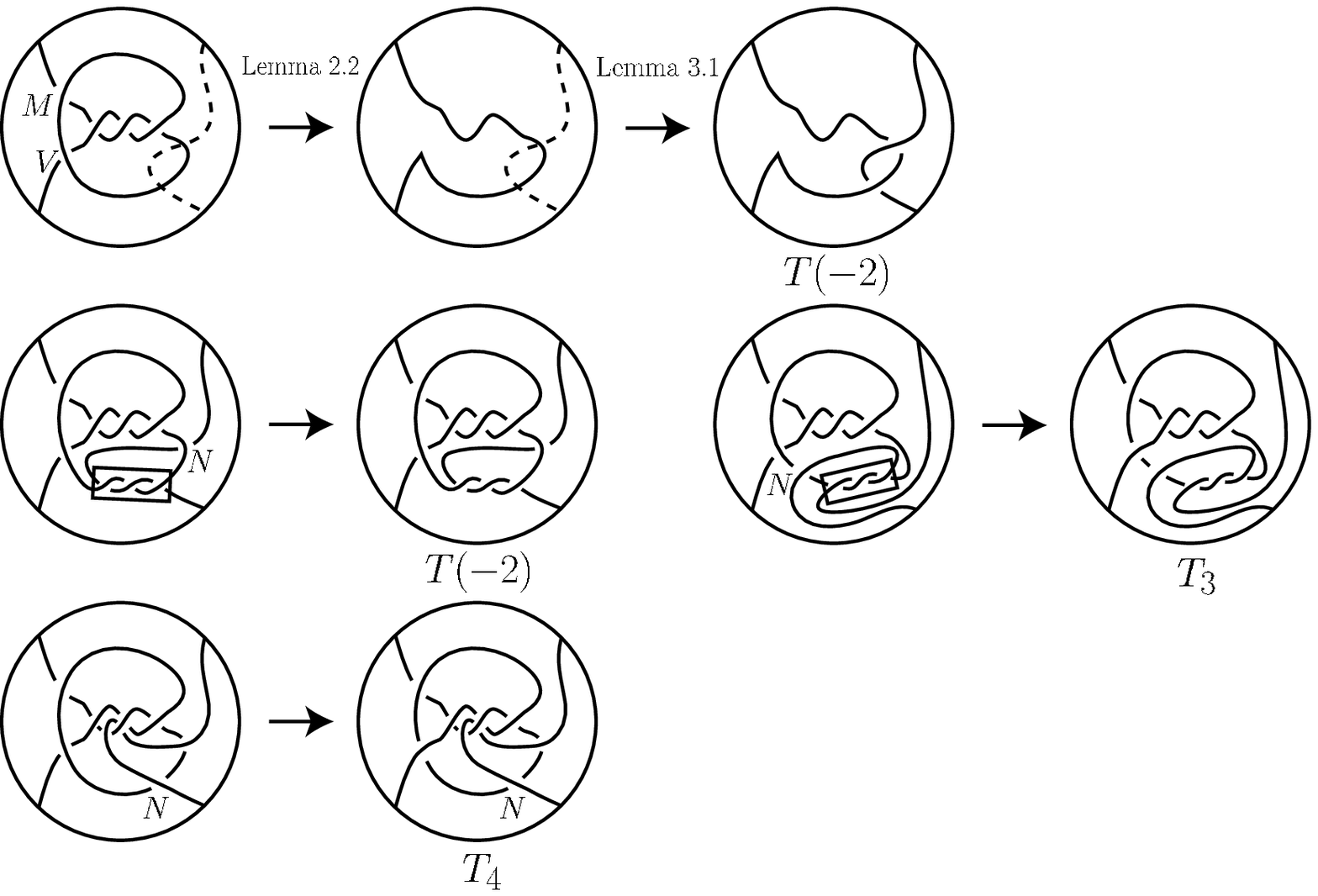}}
      \end{center}
   \caption{}
  \label{lemma2-proof4}
\end{figure} 

%

Therefore we may suppose that $N$ is on $r(\tilde a ,M)$. Since $r(\tilde a ,M)$ is a minor of $\tilde T$ by Lemma \ref{diagram-reducing-lemma} we may assume by Lemma \ref{almost-positive-tangle-lemma} that $r(\tilde T,M)$ is an R1 augmentation of one of the diagrams $\tilde T_{1+},\tilde T_{1-},\tilde T_{2+}$ and $\tilde T_{2-}$ in Fig. \ref{almost-positive-tangles}. We will show that $\tilde T$ is greater than or equal to $T(-2)$ or $T_1$ in all cases as follows.

\item[Case 1.1.1.]
The first crossing of $\tilde b$ with $r(\tilde a,M)$ is $N$ and is on $A_{0}M$.

We have $\tilde T\geq T(-2)$ as illustrated in Fig. \ref{lemma2-proof5}. Here Fig. \ref{lemma2-proof5} (a) describes 
the case that $N$ is not the first mixed crossing of $\tilde b$. The case $N$ is the first mixed crossing 
is illustrated in other figures. 
Because $\tilde T$ is prime and R2-reduced, we have one of 
the situations in Fig. \ref{lemma2-proof5}. Note that there are at most two dotted lines in 
each figure so that we can trivialize them by adding over/under crossing information as one is over 
everything and the other is under everything. 
We also note that the crossings may be on the R1 residual of $\tilde a$ with respect to $\tilde a_{1}$ of Fig. \ref{lemma2-proof2}. That case is not illustrated in Fig. \ref{lemma2-proof5} for the simplicity. However it reduces to the case that the crossings are on the core. 
See Fig. \ref{lemma2-proof6} how to reduce it. In Fig. \ref{lemma2-proof6} (a) the dotted part is supposed to be given over/under crossing information so that it is under everything. In Fig. \ref{lemma2-proof6} (b), (c), (d), (e) the dotted line bounded by two marked crossings are contained in the tear drop disk. Then we have the same result as (a) keeping the conditions that another dotted line is given over/under crossing information so that it is under everything.

\begin{figure}[htbp]
      \begin{center}
\scalebox{0.65}{\includegraphics*{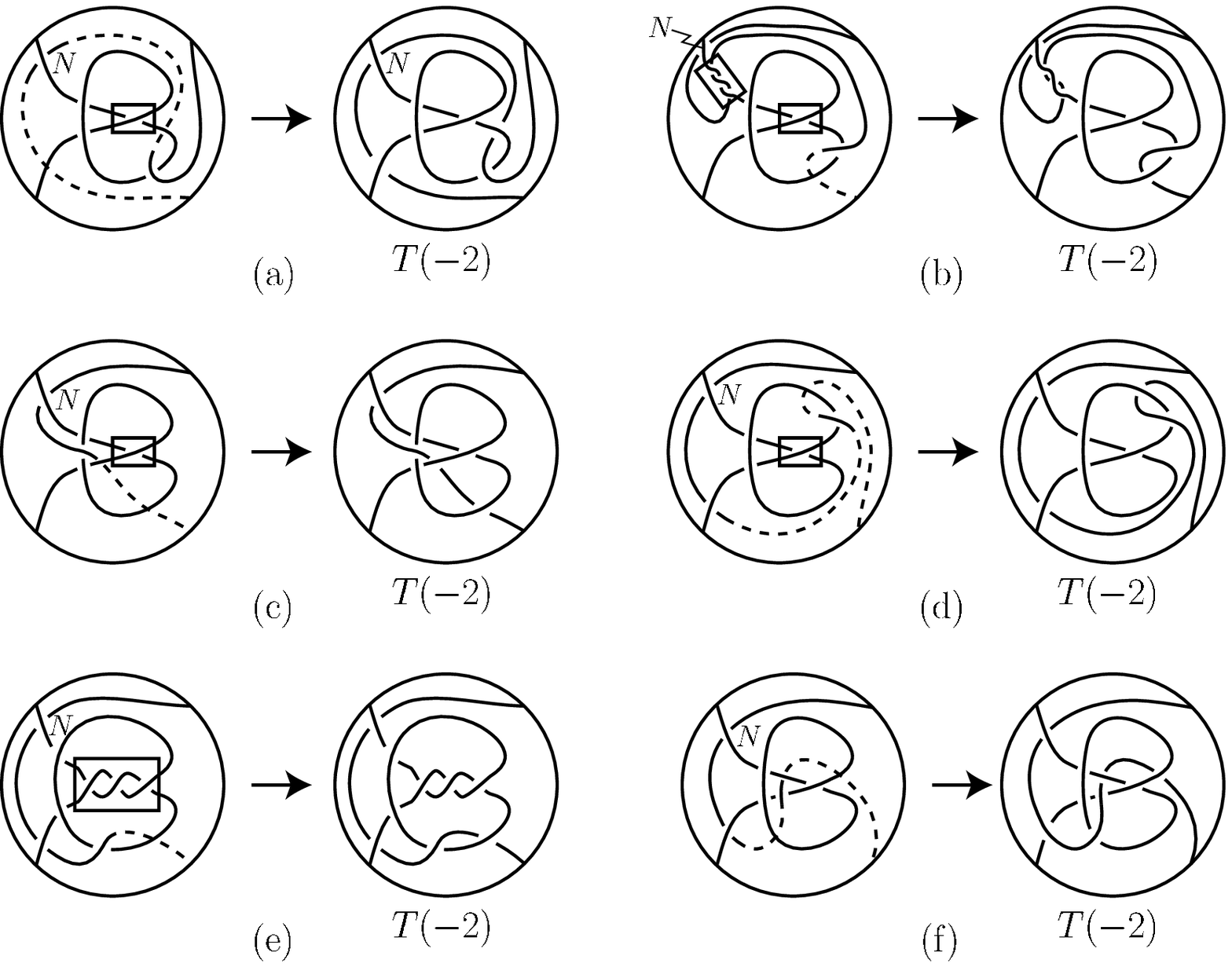}}
      \end{center}
   \caption{}
  \label{lemma2-proof5}
\end{figure} 

%

%
\begin{figure}[htbp]
      \begin{center}
\scalebox{0.65}{\includegraphics*{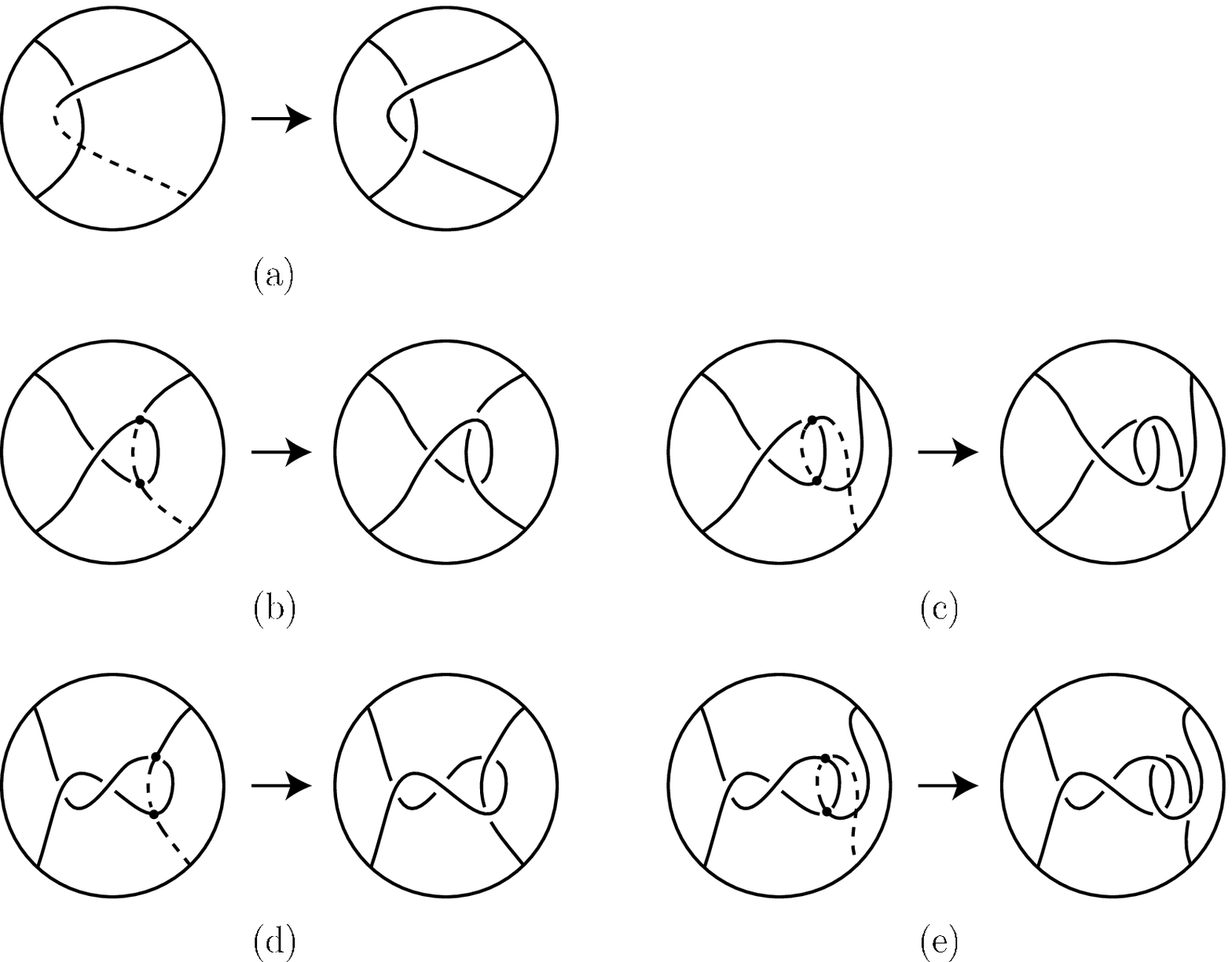}}
      \end{center}
   \caption{}
  \label{lemma2-proof6}
\end{figure} 

%

\item[Case 1.1.2.]
The first crossing of $\tilde b$ with $r(\tilde a , M)$ is $N$ and on $MQ_{+}$.

By applying Lemma \ref{almost-positive-tangle-lemma} to $r(\tilde T,M)$ we may suppose that $\tilde a$ is over $\tilde b$ at $N$. 
Then we may suppose that $r(\tilde a ,Q_{+})\cap\tilde b =\emptyset$, and $r(\tilde a ,V)\cap\tilde b=\emptyset$
otherwise we have $\tilde T\geq T(-2)$ by Lemma \ref{diagram-reducing-lemma} and Lemma \ref{hook-tangle}. Then we have 
$\tilde T\geq T(-2)$ as illustrated in Fig. \ref{lemma2-proof7}

\begin{figure}[htbp]
      \begin{center}
\scalebox{0.65}{\includegraphics*{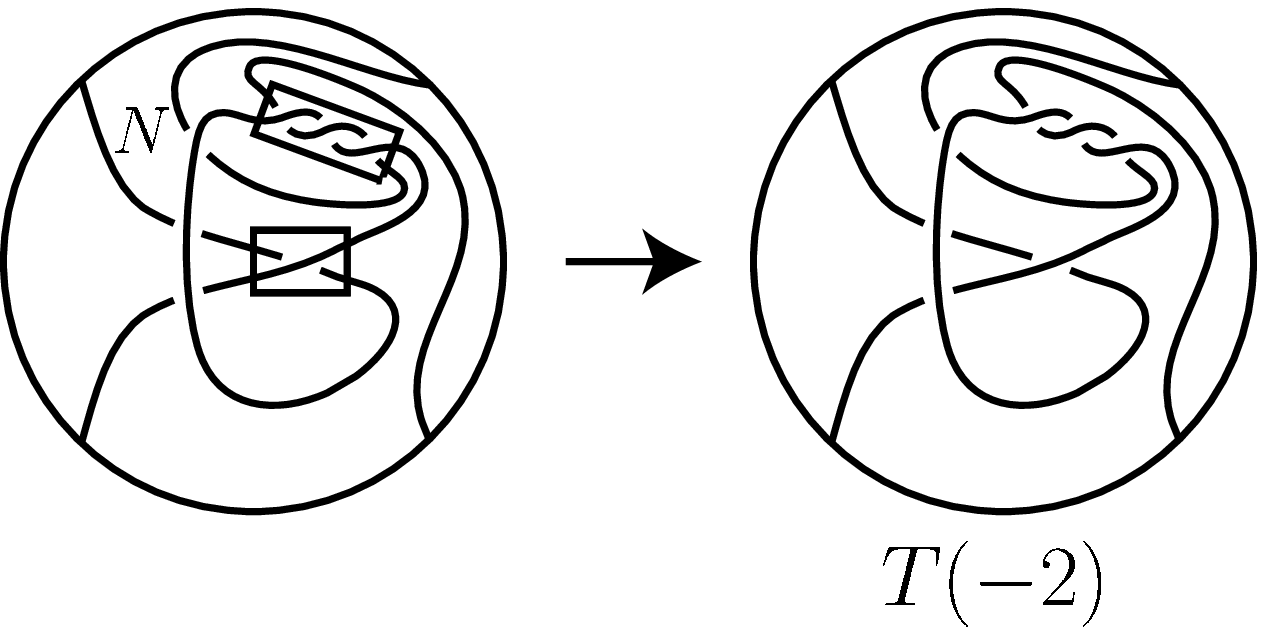}}
      \end{center}
   \caption{}
  \label{lemma2-proof7}
\end{figure} 

%

\item[Case 1.1.3.]
The first crossing of $\tilde b$ with $r(\tilde a,M)$ is $N$ and on $Q_{+}Q_{-}A_{\infty}$. 

If the first mixed crossing of $\tilde b$ is not $N$ then we have $T(-2)$ as illustrated in Fig. \ref{lemma2-proof8} (a). If $N$ is the first mixed crossing of $\tilde b$ then we have the situation illustrated in Fig. \ref{lemma2-proof8} (b) and then also have $T(-2)$.

\begin{figure}[htbp]
      \begin{center}
\scalebox{0.65}{\includegraphics*{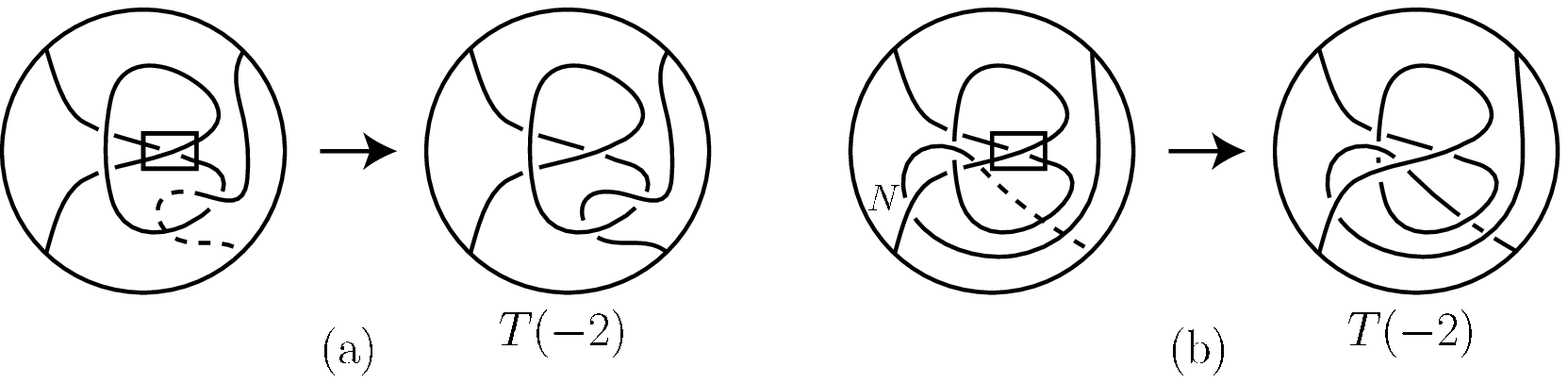}}
      \end{center}
   \caption{}
  \label{lemma2-proof8}
\end{figure} 

%

\item[Case 1.1.4.]
The last crossing of $\tilde b$ with $r(\tilde a , M)$ is $N$ and on $VA_{\infty}$. 
See Fig. \ref{lemma2-proof9}.

\begin{figure}[htbp]
      \begin{center}
\scalebox{0.65}{\includegraphics*{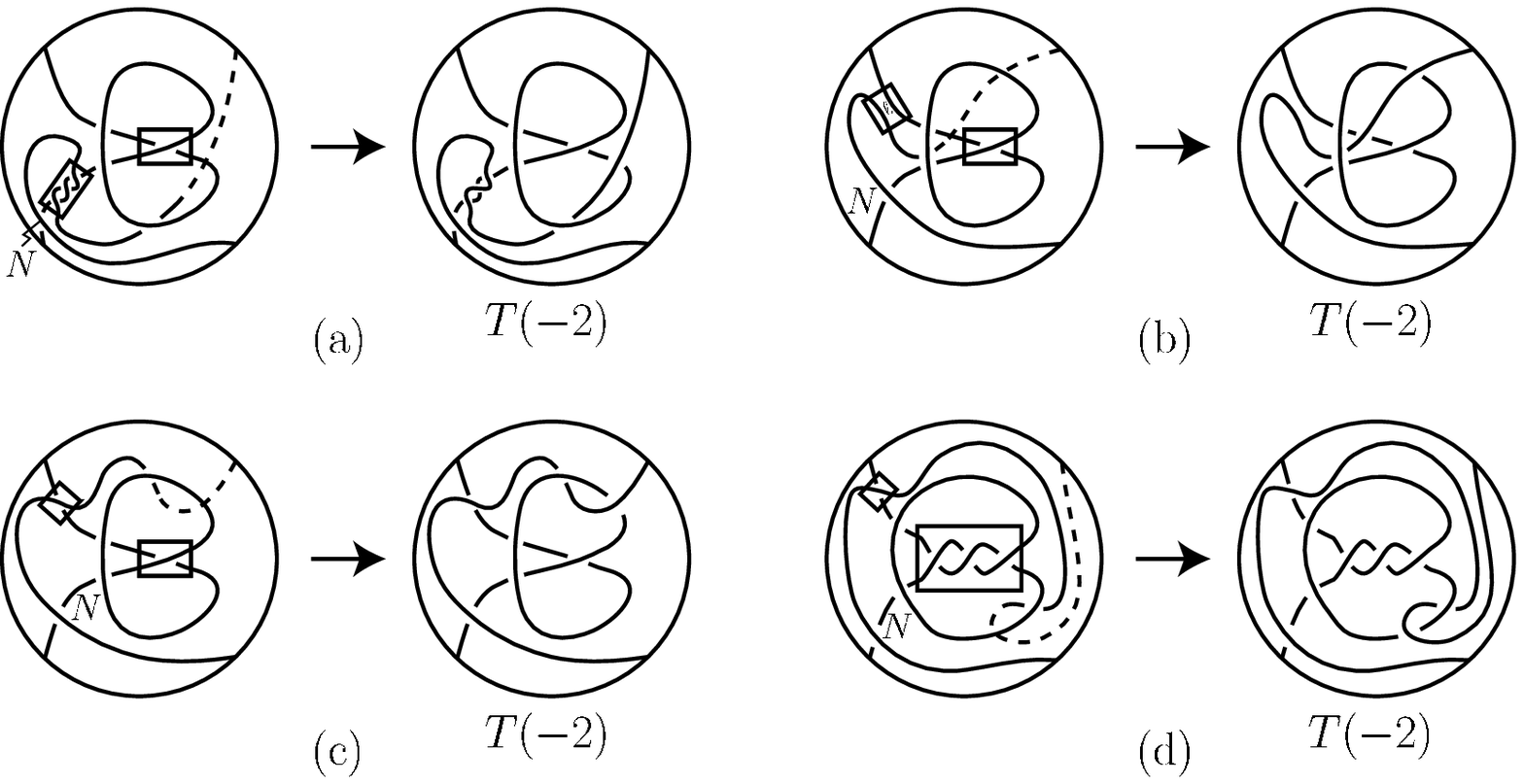}}
      \end{center}
   \caption{}
  \label{lemma2-proof9}
\end{figure} 

%

\item[Case 1.1.5.]
The last crossing of $\tilde b$ with $r(\tilde a , M)$ is $N$ and on $A_{0}M\cup MQ_{+}Q_{-}V$.
See Fig. \ref{lemma2-proof10}.

\begin{figure}[htbp]
      \begin{center}
\scalebox{0.65}{\includegraphics*{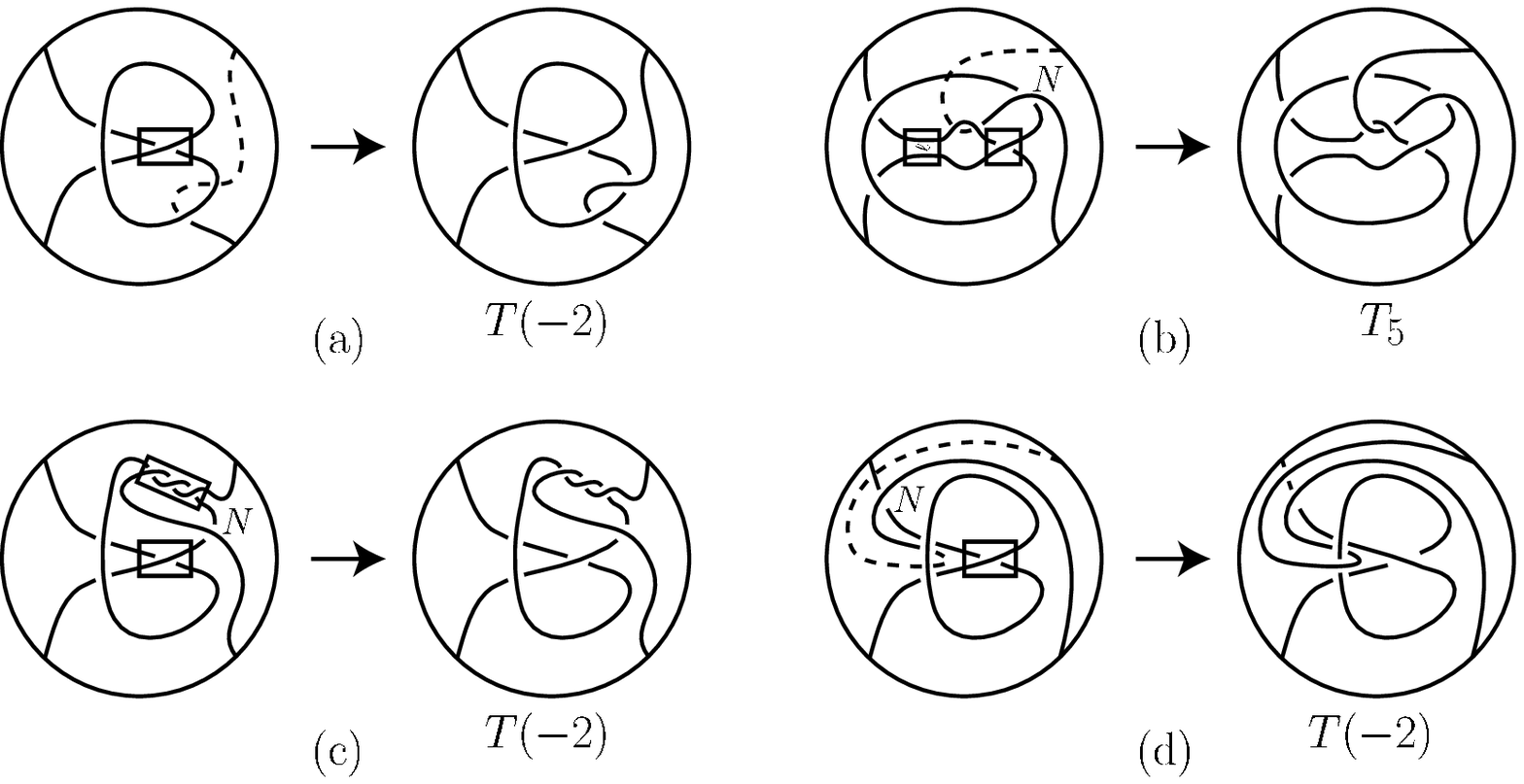}}
      \end{center}
   \caption{}
  \label{lemma2-proof10}
\end{figure} 

%

\item[Case 1.2.]
$\tilde a$ is an R1-augmentation of the diagram $\tilde a_{1}$ of Fig. \ref{lemma2-proof2} with $p=0$ and
$q$ even, $q\geq 2$. Then we have $V=M$. See Fig. \ref{lemma2-proof11}.

\begin{figure}[htbp]
      \begin{center}
\scalebox{0.65}{\includegraphics*{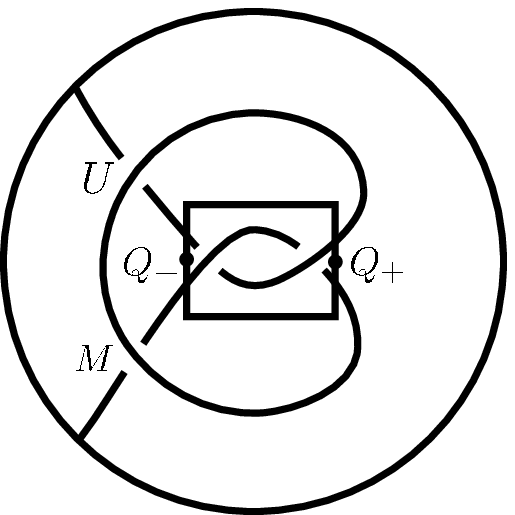}}
      \end{center}
   \caption{}
  \label{lemma2-proof11}
\end{figure} 

%

Suppose $s(\tilde a ,U)\cup s(\tilde a ,M)$ has no mixed crossings. Then using Lemma \ref{almost-positive-tangle-lemma} to
$r(r(\tilde T ,U),M)$ we have the tangle $T(-2)$ as illustrated in Fig. \ref{lemma2-proof12}.

\begin{figure}[htbp]
      \begin{center}
\scalebox{0.65}{\includegraphics*{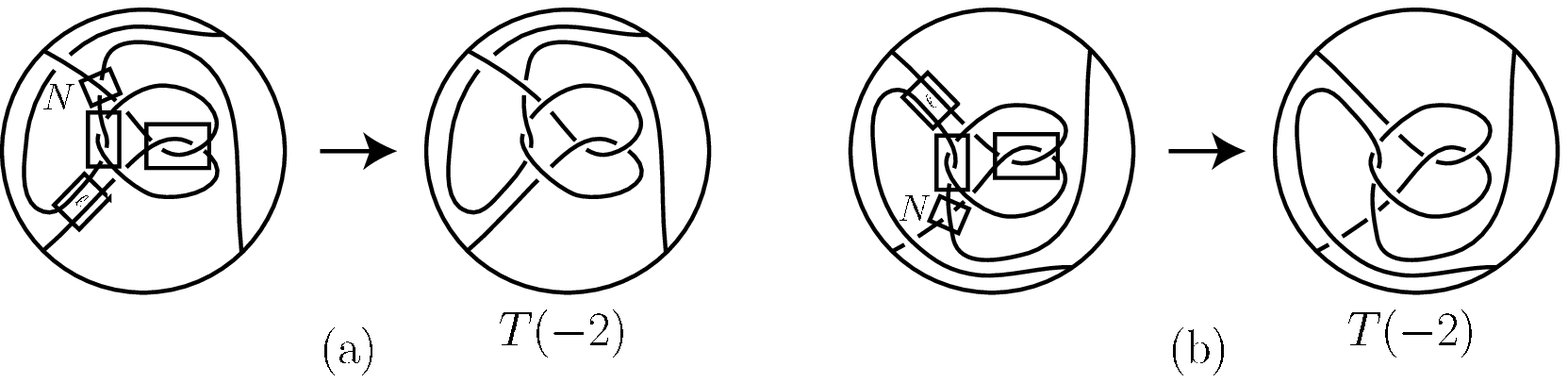}}
      \end{center}
   \caption{}
  \label{lemma2-proof12}
\end{figure} 

%

Next suppose $s(\tilde a ,U)$ has two positive crossings. Then we have $\tilde T\geq
r(\tilde T ,M)\geq T(-2)$ by Lemma \ref{diagram-reducing-lemma} and Lemma \ref{almost-positive-tangle-lemma}. 
Thus we may suppose that $s(\tilde a ,U)$ does not have two positive crossings.
If $N$ is on $s(\tilde a ,U)\cup s(\tilde a ,M)$ and  $r(r(\tilde a ,U),M)$ has mixed crossings then we also have $T(-2)$. 
If  $r(r(\tilde a ,U),M)$ has  no mixed crossings then because $\tilde T$ is R2-reduced,
 we have $s(\tilde a ,U)\cup s(\tilde a ,M)$ has four mixed crossings. 
Then $s(\tilde a ,M)$ has two positive mixed crossings.
If $N$ is on  $r(r(\tilde a ,U),M)$ then by the primeness of $\tilde T$ we have that $s(\tilde a ,M)$ 
has two positive mixed crossings.
Therefore we may suppose that $s(\tilde a ,M)$ has two positive mixed crossings. Then we may
suppose that they are successive on $\tilde b$. Namely the arc of $\tilde b$ bounded by them are contained in a region bounded by $s(\tilde a,M)$. We may also assume by taking a minor of $\tilde T$, still denoted by $\tilde T$ using Lemma \ref{diagram-reducing-lemma} that these two positive mixed crossings considered on $r(\tilde a,U)$ are rightmost.
Note that we may also suppose that either $s(\tilde a ,U)$ has no mixed crossings or it has just two mixed crossings, one is positive the other is $N$.
Then we have $T(-2)$ or $T_1$ by \lq\lq over and
under'' or \lq\lq under and over" technique used in the proof of Lemma \ref{almost-positive-tangle-lemma} in the cases illustrated in Fig. \ref{lemma2-proof13} and Fig. \ref{lemma2-proof13-2}.

\begin{figure}[htbp]
      \begin{center}
\scalebox{0.65}{\includegraphics*{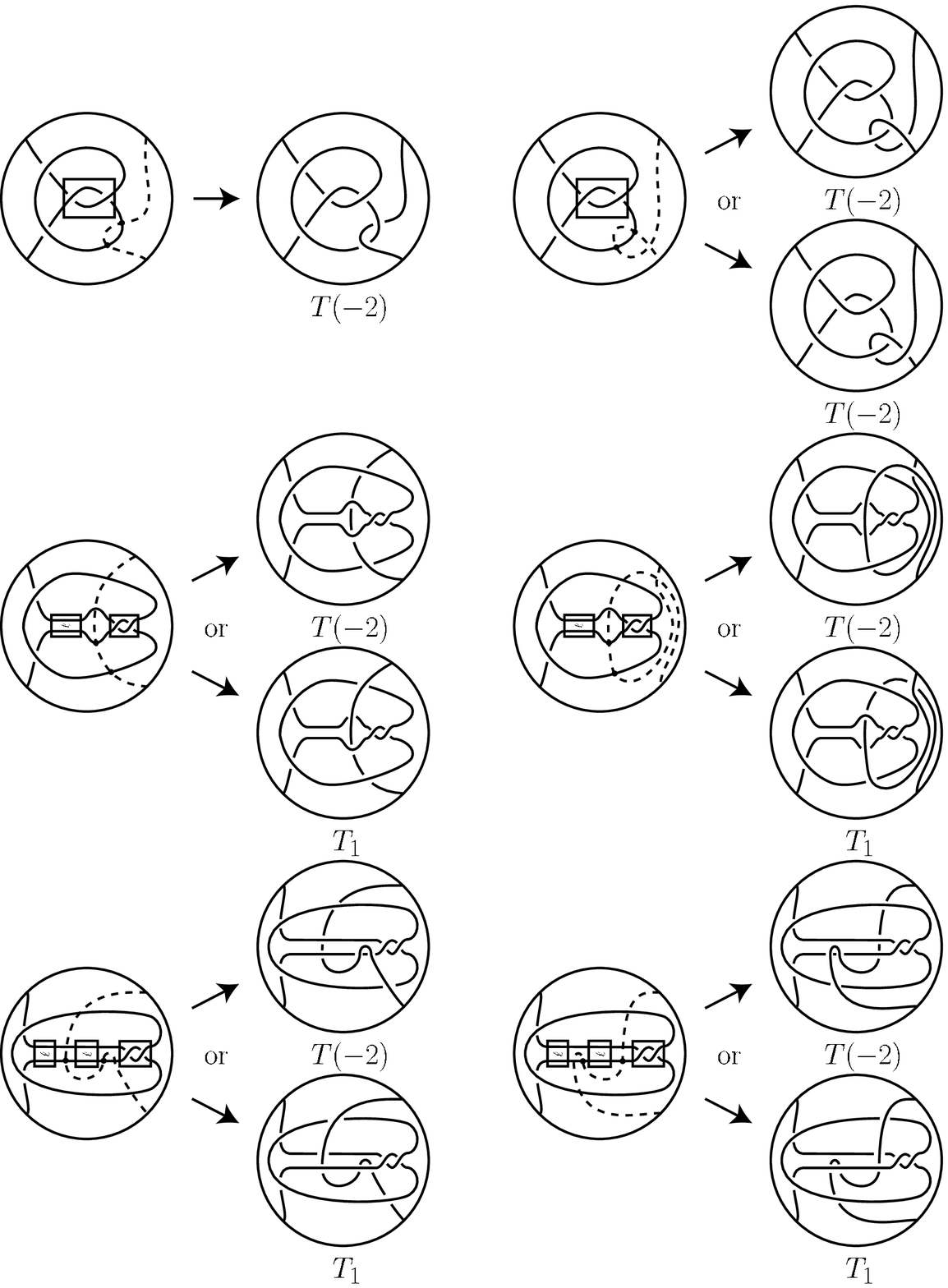}}
      \end{center}
   \caption{}
  \label{lemma2-proof13}
\end{figure} 

%

%
\begin{figure}[htbp]
      \begin{center}
\scalebox{0.65}{\includegraphics*{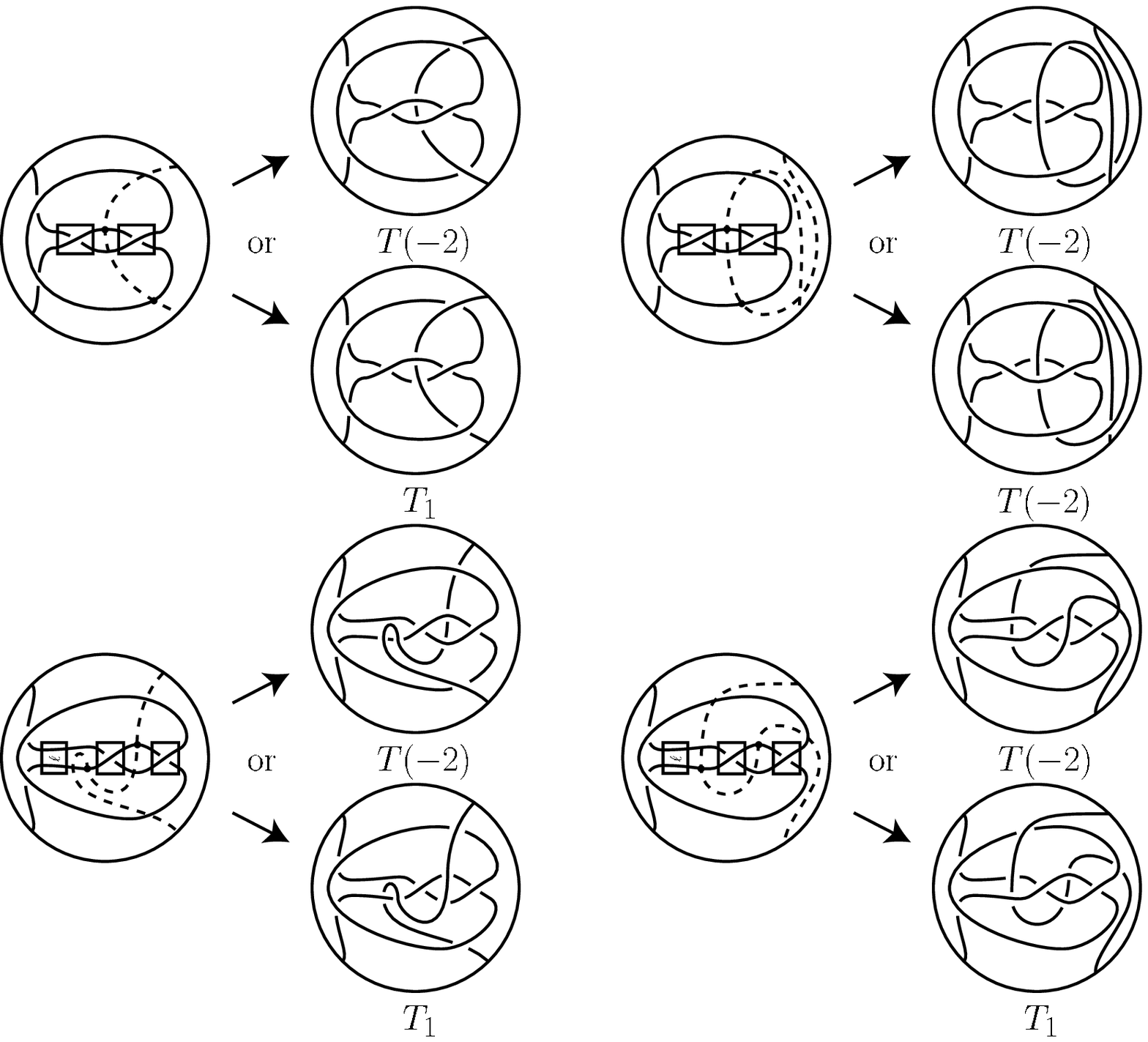}}
      \end{center}
   \caption{}
  \label{lemma2-proof13-2}
\end{figure} 

%

The rest is the case of Fig. \ref{lemma2-proof14}.

\begin{figure}[htbp]
      \begin{center}
\scalebox{0.65}{\includegraphics*{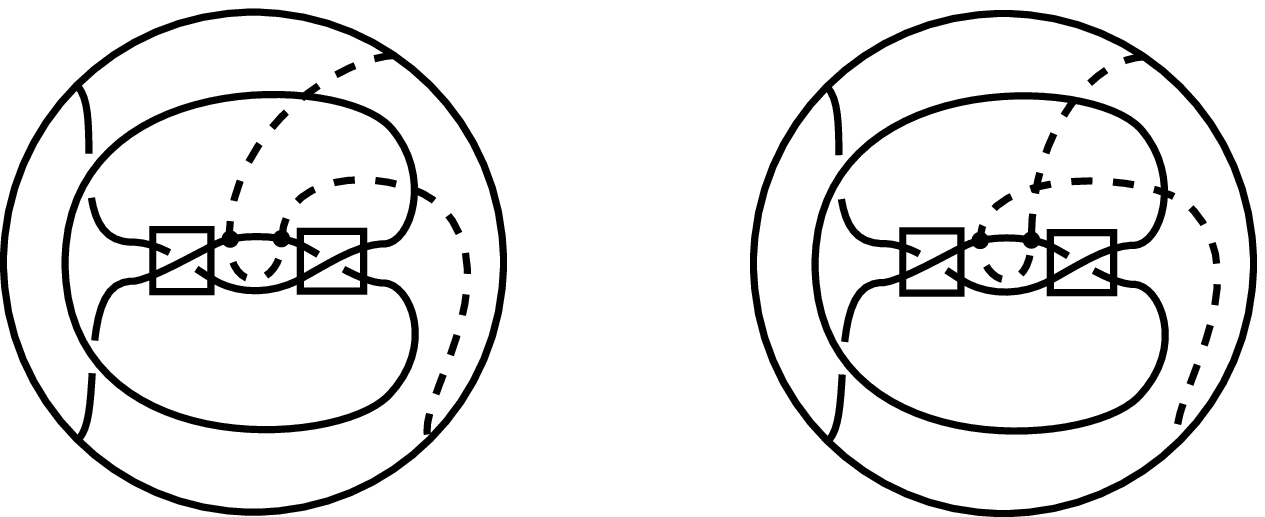}}
      \end{center}
   \caption{}
  \label{lemma2-proof14}
\end{figure} 

%

Note that in this case $s(\tilde a ,U)$ has two mixed crossings and therefore $N$ is on it.
Then either we can apply Lemma \ref{vertical-trefoil-tangle-lemma} for a sub-tangle diagram of $r(\tilde T ,U)$ to have the tangle $T(-2)$ as 
illustrated in Fig. \ref{lemma2-proof15} or we have the tangle $T(-2)$ or $T_3$ as illustrated in Fig. \ref{lemma2-proof16}.

\begin{figure}[htbp]
      \begin{center}
\scalebox{0.65}{\includegraphics*{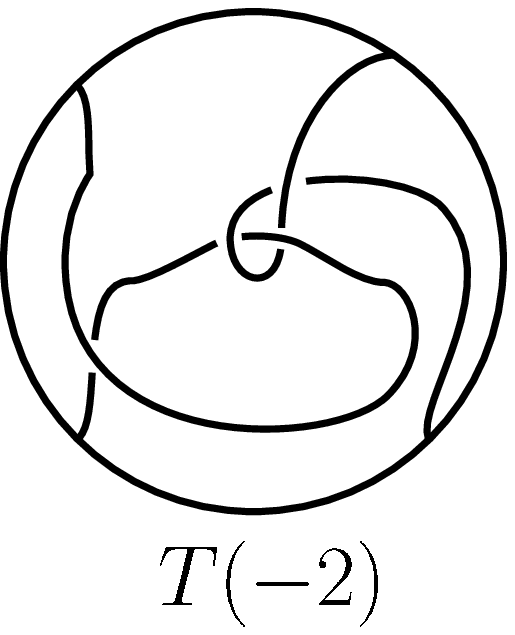}}
      \end{center}
   \caption{}
  \label{lemma2-proof15}
\end{figure} 

%

%
\begin{figure}[htbp]
      \begin{center}
\scalebox{0.65}{\includegraphics*{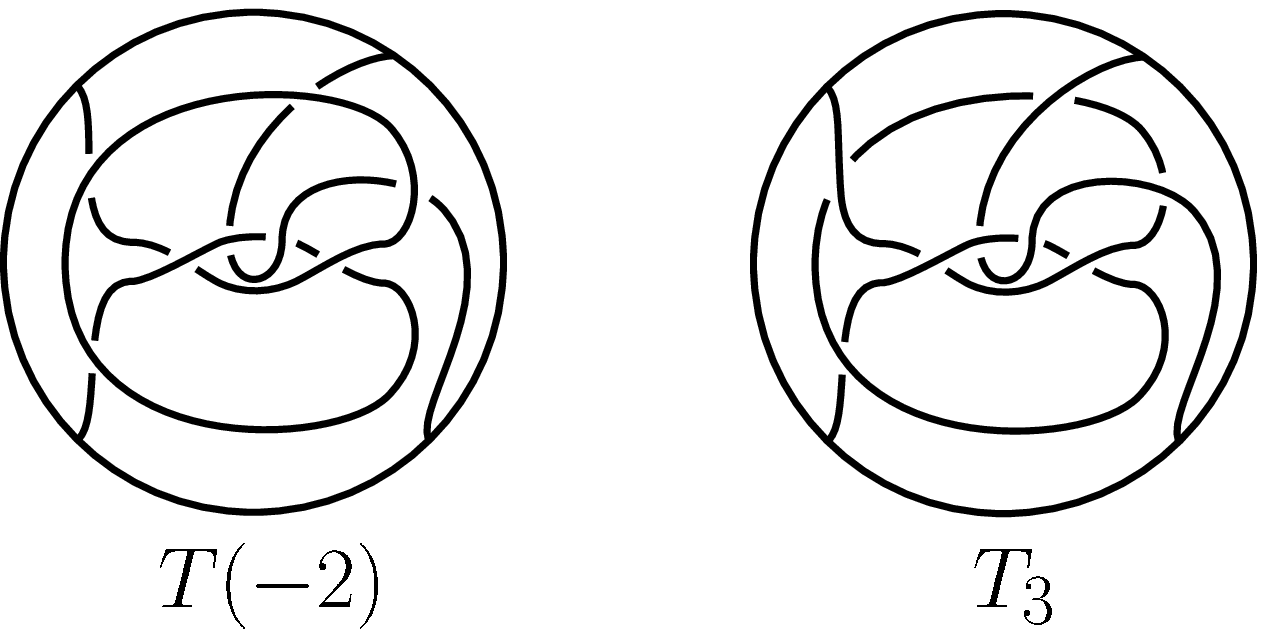}}
      \end{center}
   \caption{}
  \label{lemma2-proof16}
\end{figure} 

%

\item[Case 1.3.]
$\tilde a$ is an R1-augmentation of the diagram $\tilde a_{2\pm}$ in Fig. \ref{lemma2-proof2} with $p=0$
or its horizontal symmetry. See Fig. \ref{lemma2-proof17}.

\begin{figure}[htbp]
      \begin{center}
\scalebox{0.65}{\includegraphics*{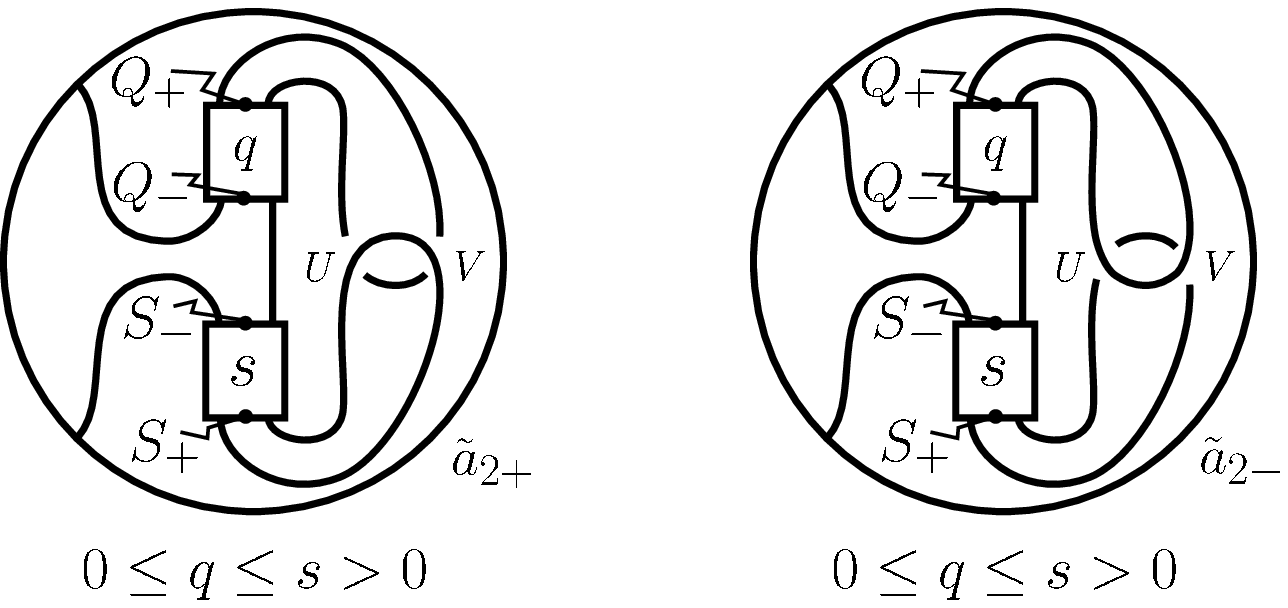}}
      \end{center}
   \caption{}
  \label{lemma2-proof17}
\end{figure} 

%

We may suppose without loss of generality that $q\geq r$ by horizontal symmetry. We will
show that we have the tangle $T(-2),T_1$ or $T_5$ in this case.
Suppose that $Q_-Q_+\cup Q_+Q_-\cup S_-S_+\cup S_+S_-$ has mixed crossings. Suppose for example that there are two successive mixed crossings $P_1$ and $P_2$. If one of them is $N$ then by Lemma \ref{diagram-reducing-lemma} we have $\tilde T\geq r(\tilde T,S_+)$ and taking the fact that the 2-gon $UVU$ has mixed crossings then because $\tilde T$ is R2-reduced,
 we have $r(\tilde T,S_+)\geq T(-2)$ by Lemma \ref{almost-positive-tangle-lemma}. Suppose that 
both $P_1$ and $P_2$ are positive crossings. Then by \lq\lq over and under technique'' we have $\tilde T\geq T(-2)$.
Thus we have that $Q_-Q_+\cup Q_+Q_-\cup S_-S_+\cup S_+S_-$ has no mixed crossings.
Therefore we have that any $\tilde T$ is greater than or equal to a tangle, still denoted by $\tilde T$ of the following:

$q=0$ and $s=2$, or $q=0$ and $s=1$, or $q=s=1$.

\item[Case 1.3.1]
$q=0$ and $s=2$.

For the diagram $\tilde a_{2-}$ of Fig. \ref{lemma2-proof17}, we have $U=M$ and have the tangle $T(-2)$
from $r(\tilde T ,M)$ by Lemma \ref{almost-positive-tangle-lemma}. Therefore we consider the diagram of Fig. \ref{lemma2-proof18}.

\begin{figure}[htbp]
      \begin{center}
\scalebox{0.65}{\includegraphics*{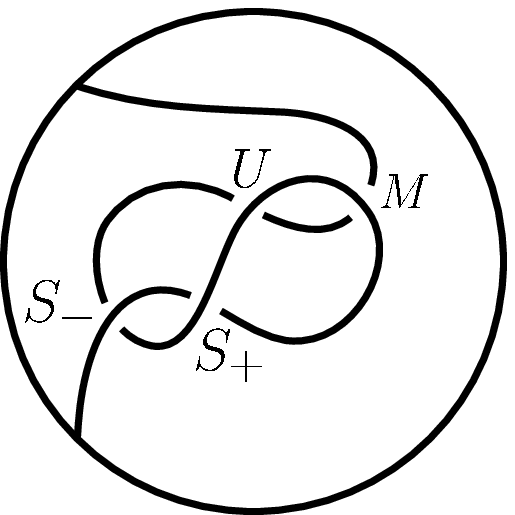}}
      \end{center}
   \caption{}
  \label{lemma2-proof18}
\end{figure} 

%

If $N$ is on $s(\tilde a ,M)$ then we have $T(-2)$. If $N$ is not on $s(\tilde a ,M)$ but on $s(\tilde a,S_+)$ and $\tilde a$ is 
over $\tilde b$ at $N$ then we have $T(-2)$ by applying Lemma \ref{hook-tangle} to $r(\tilde T,S_-)$.
Thus we have that $\tilde a$ is under $\tilde b$ at $N$.
Then applying Lemma \ref{almost-positive-tangle-lemma} to $r(\tilde T ,M)$ we have the result as 
illustrated in Fig. \ref{lemma2-proof19}. Note that Fig. \ref{lemma2-proof19} (a), (b) and (c) do not occur since $\tilde T$ is R2-reduced. Note that Fig. \ref{lemma2-proof19} (b) and (c)  are the cases that $r(\tilde T,M)$ has just two mixed crossings.

\begin{figure}[htbp]
      \begin{center}
\scalebox{0.65}{\includegraphics*{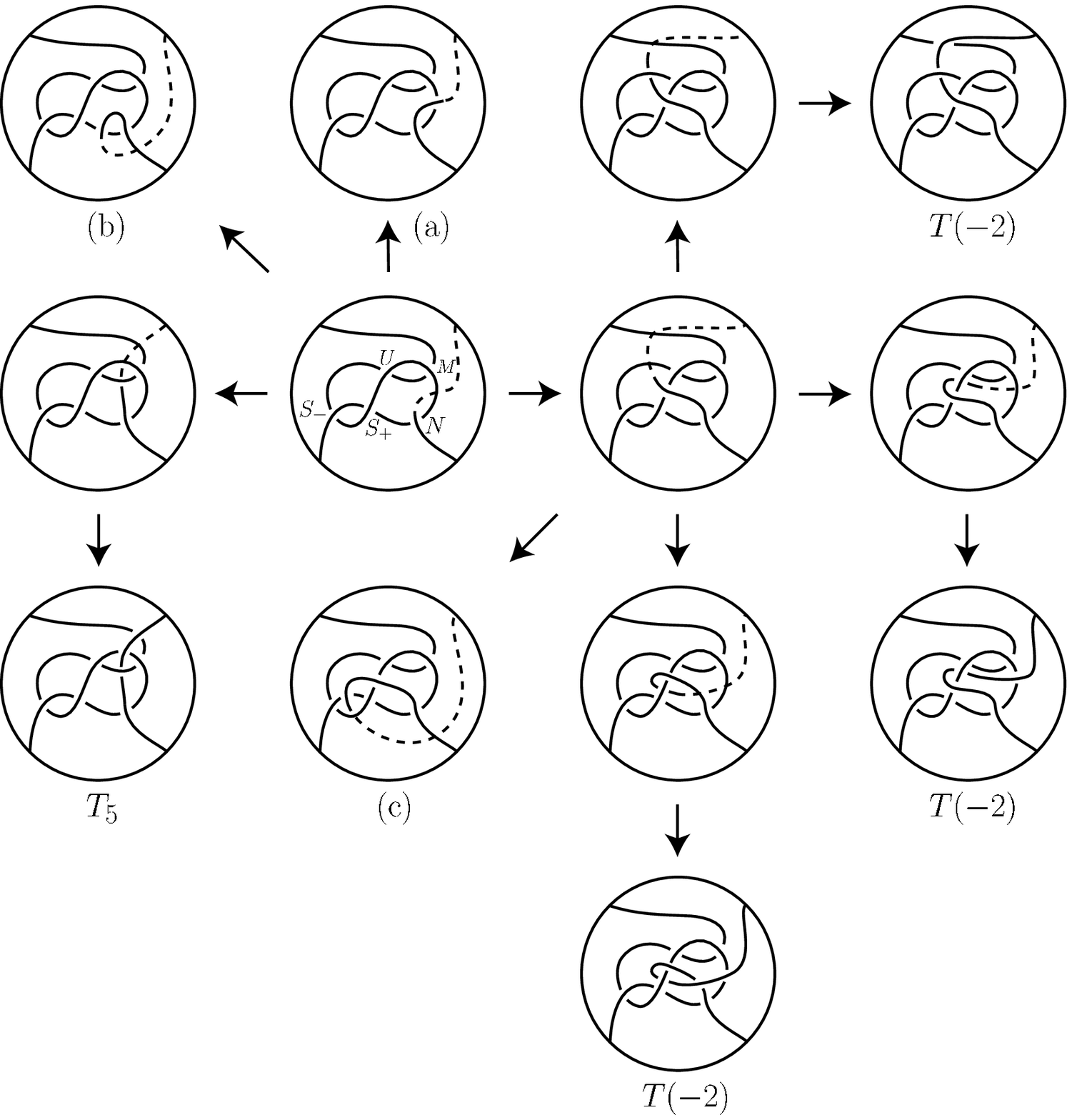}}
      \end{center}
   \caption{}
  \label{lemma2-proof19}
\end{figure} 

%

Suppose $N$ is on otherwise. Then there are at least two positive mixed crossings on $s(\tilde a,S_+)$. Then we have the result by \lq\lq over and under technique'' as illustrated in Fig. \ref{lemma2-proof20} or Fig. \ref{lemma2-proof21}. Note that in the case illustrated in Fig. \ref{lemma2-proof20} (a) we have that $\tilde T\geq r(\tilde T,S_-)\geq T(-2)$ and the case illustrated in Fig. \ref{lemma2-proof20} (b) reduces to other cases. In Fig. \ref{lemma2-proof21} we describe the situation that $s(\tilde a,S_+)\setminus s(\tilde a,M)$ has no mixed crossings. Then, because $\tilde T$ is R2-reduced,
 we have that the middle dotted line of $\tilde b$ illustrated in Fig. \ref{lemma2-proof21} (a) and (b) must 
intersects $MU$.  Then applying Lemma \ref{almost-positive-tangle-lemma} 
to $r(\tilde T ,S_-)$ we have the situation illustrated in Fig. \ref{lemma2-proof21} (c) or (d) and 
we have that $\tilde T$ is greater than or equal to $T(-2)$.

\begin{figure}[htbp]
      \begin{center}
\scalebox{0.65}{\includegraphics*{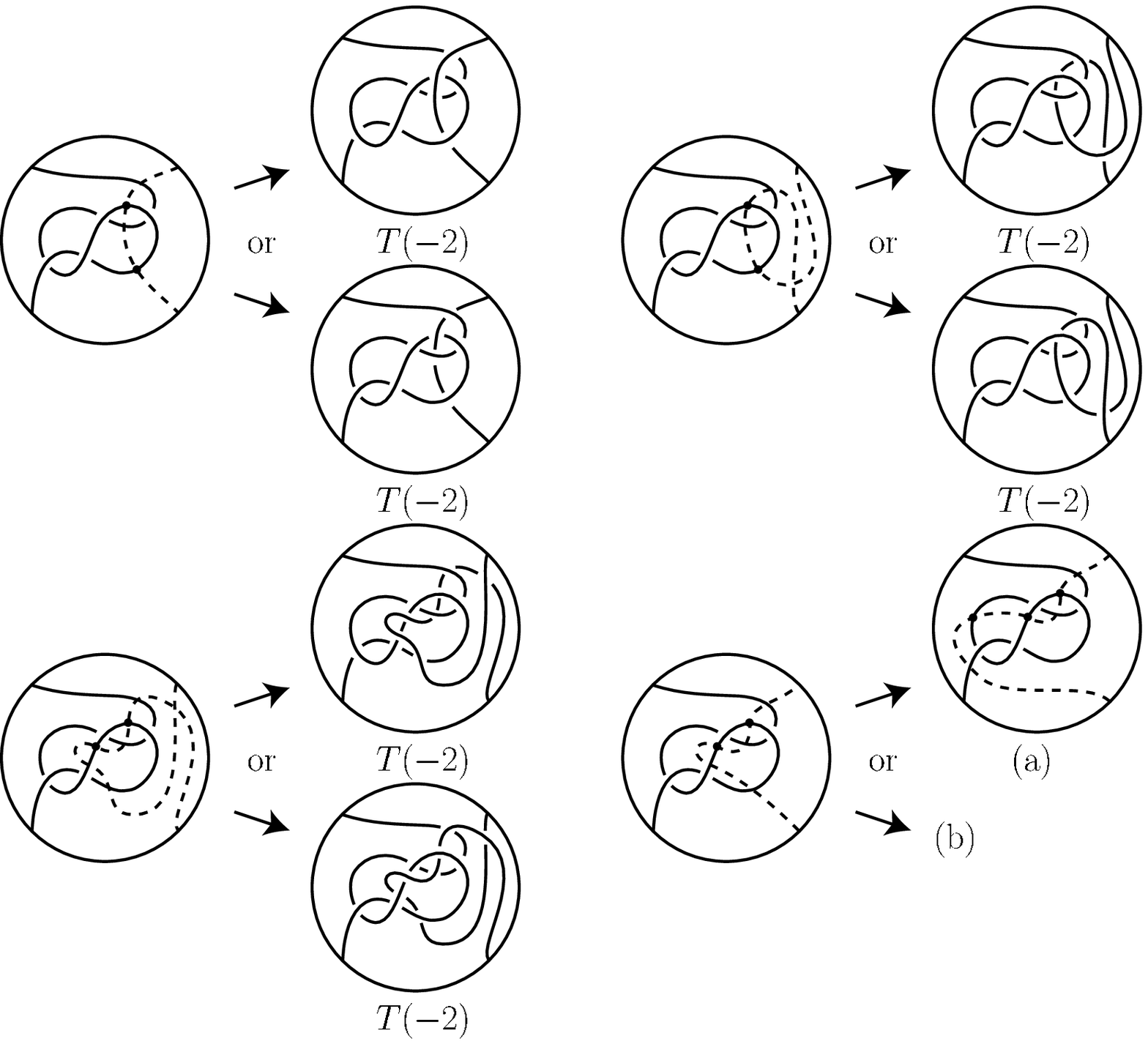}}
      \end{center}
   \caption{}
  \label{lemma2-proof20}
\end{figure} 

%

%
\begin{figure}[htbp]
      \begin{center}
\scalebox{0.65}{\includegraphics*{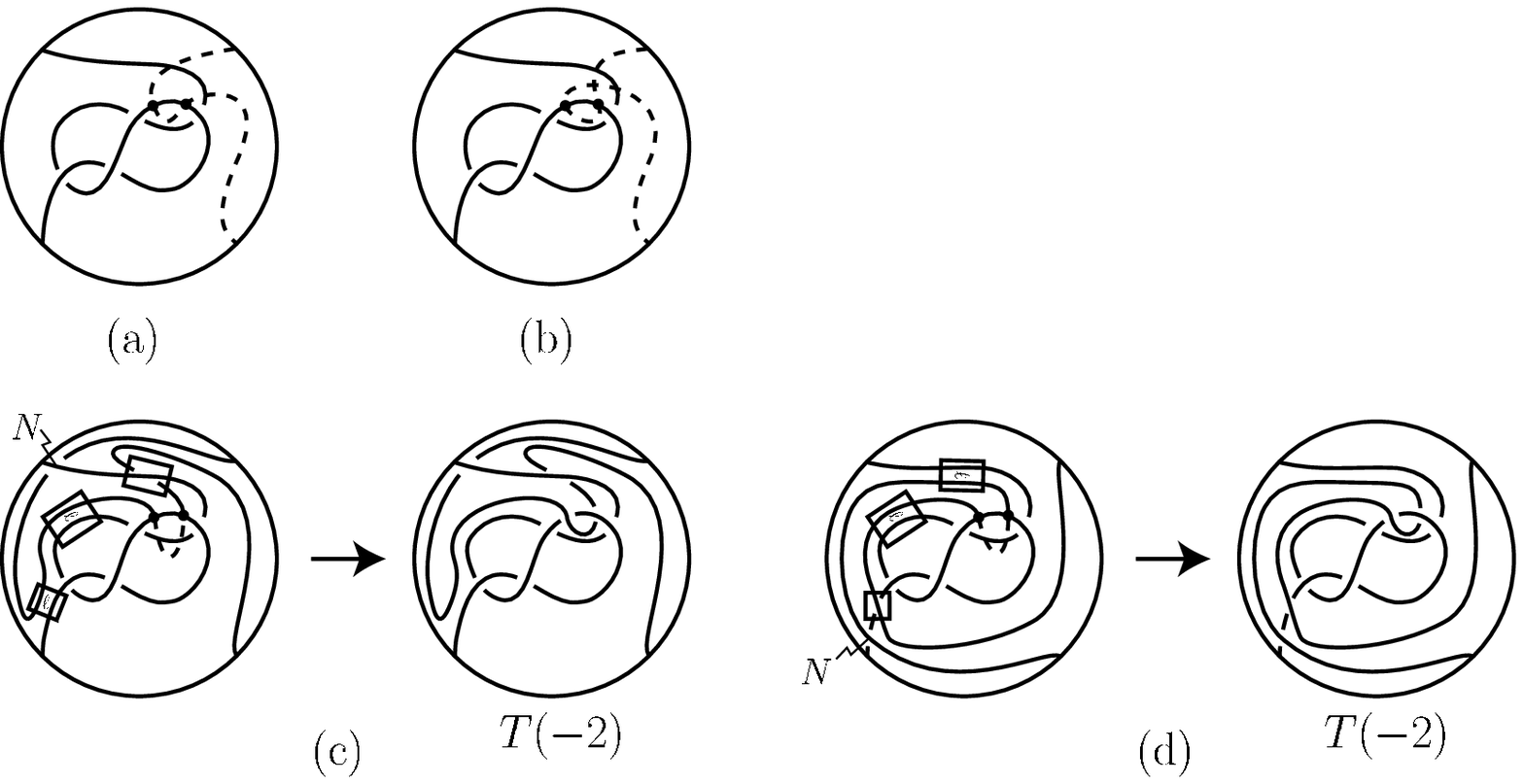}}
      \end{center}
   \caption{}
  \label{lemma2-proof21}
\end{figure} 

%

\item[Case 1.3.2.]
$q=0$ and $s=1$.

First we consider the case that $\tilde a=\tilde a_{2+}$ in Fig. \ref{lemma2-proof17}.
Then we have $M=U$. See Fig. \ref{lemma2-proof22}.

\begin{figure}[htbp]
      \begin{center}
\scalebox{0.65}{\includegraphics*{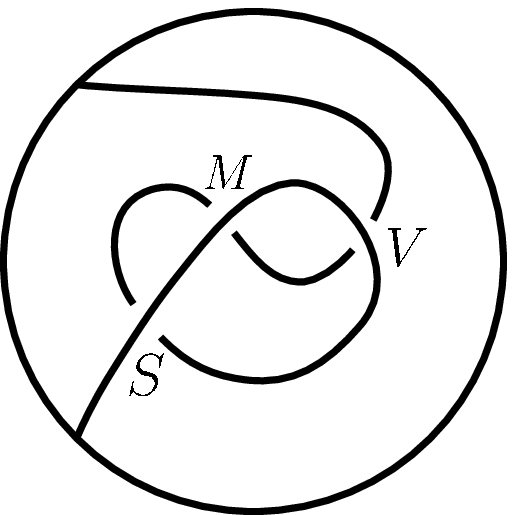}}
      \end{center}
   \caption{}
  \label{lemma2-proof22}
\end{figure} 

%

Suppose that $N$ is on $s(\tilde a,M)$. Since $\tilde T$ is R2-reduced we have that $r(\tilde a,M)$ has mixed crossings. Then by Lemma \ref{diagram-reducing-lemma} and Lemma \ref{hook-tangle} we have $T(-2)$. Thus we have that $N$ is not on $s(\tilde a,M)$. 
Suppose that $N$ is on $MS$ in $s(\tilde a,S)$. If $\tilde a$ is over $\tilde b$ at $N$ then we have $\tilde T\geq r(\tilde T,S)$ by Lemma \ref{diagram-reducing-lemma} and we have $r(\tilde T,S)\geq T(-2)$ by Lemma \ref{hook-tangle}. Thus we have that $\tilde a$ is under $\tilde b$ at $N$.
Then by applying Lemma \ref{almost-positive-tangle-lemma} to $r(\tilde T,M)$ we have that $N$ is the last mixed crossing of $\tilde b$ with $r(\tilde T,M)$. Therefore we have the situation illustrated in Fig. \ref{lemma2-proof23} and have the tangle $T(-2)$.
\begin{figure}[htbp]
      \begin{center}
\scalebox{0.65}{\includegraphics*{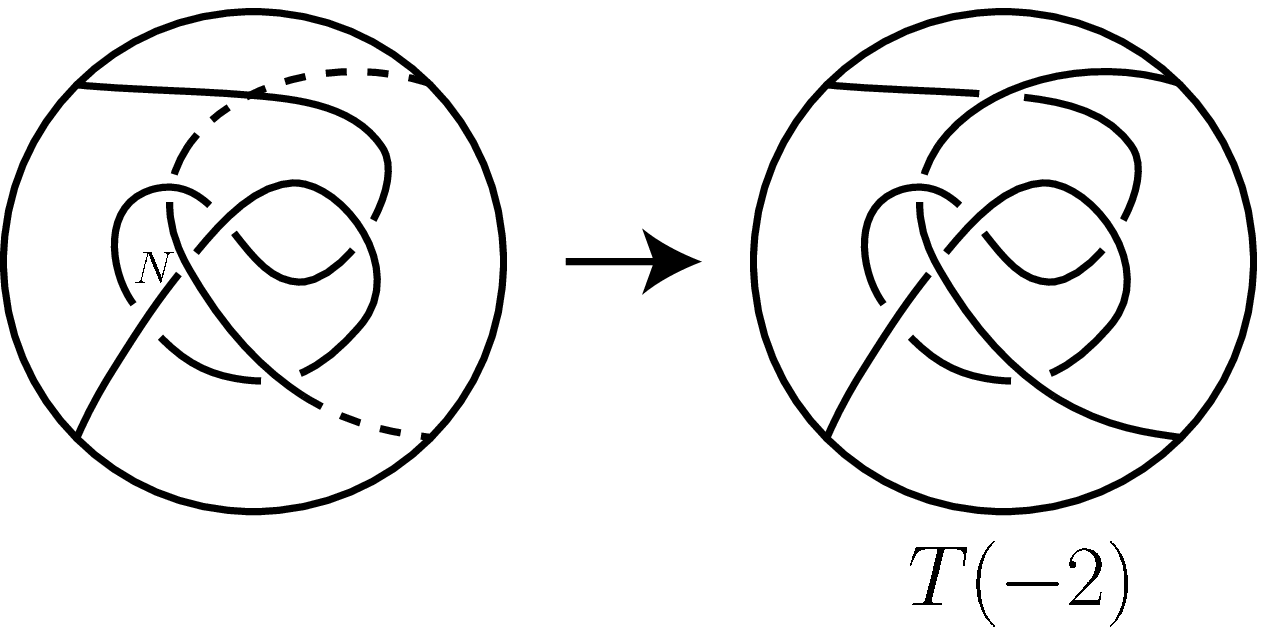}}
      \end{center}
   \caption{}
  \label{lemma2-proof23}
\end{figure} 

%

Thus we have that $N$ is not on $s(\tilde a,S)$. Then we have that there are two successive positive mixed crossings on $s(\tilde a,S)$. If they are not on $VM$ of $s(\tilde a,S)$ then we have $T(-2)$ by \lq\lq over and under technique''. We also have $T(-2)$ in the cases illustrated in Fig. \ref{lemma2-proof24} (a) and (b). For the case illustrated in Fig. \ref{lemma2-proof24} (c) we do not have the situation illustrated in Fig. \ref{lemma2-proof24} (d) because in this situation the second dotted line has a mixed crossing with $r(\tilde a,S)$ and then we have $r(\tilde a,S)\geq T(-2)$ by Lemma \ref{almost-positive-tangle-lemma}. Thus we have that the case illustrated in Fig. \ref{lemma2-proof24} (c) reduces to other cases. In the case illustrated in Fig. \ref{lemma2-proof24} (e) we have by taking the position of $N$ on $r(\tilde a,M)$ into account using Lemma \ref{almost-positive-tangle-lemma} we have that the first dotted line of $\tilde b$ can be under everything, the third dotted line can be over everything, and the second dotted line can be over everything except the third dotted line. Thus we have the tangle $T(-2)$ as illustrated.  The final case is the case that the two successive positive mixed crossings are on $VM$ of $s(\tilde a,S)$. 
Then because  $\tilde T$ is R2-reduced, we have that the arc of $\tilde b$ bounded by them 
intersects $VM$ of $r(\tilde a,M)$. Then by applying Lemma \ref{almost-positive-tangle-lemma} 
to $r(\tilde T,M)$ and $r(\tilde T,S)$, and because $\tilde T$ is R2-reduced, we have the situation 
illustrated in Fig. \ref{lemma2-proof24} (f) and then have the tangle $T(-2)$ as illustrated.

\begin{figure}[htbp]
      \begin{center}
\scalebox{0.65}{\includegraphics*{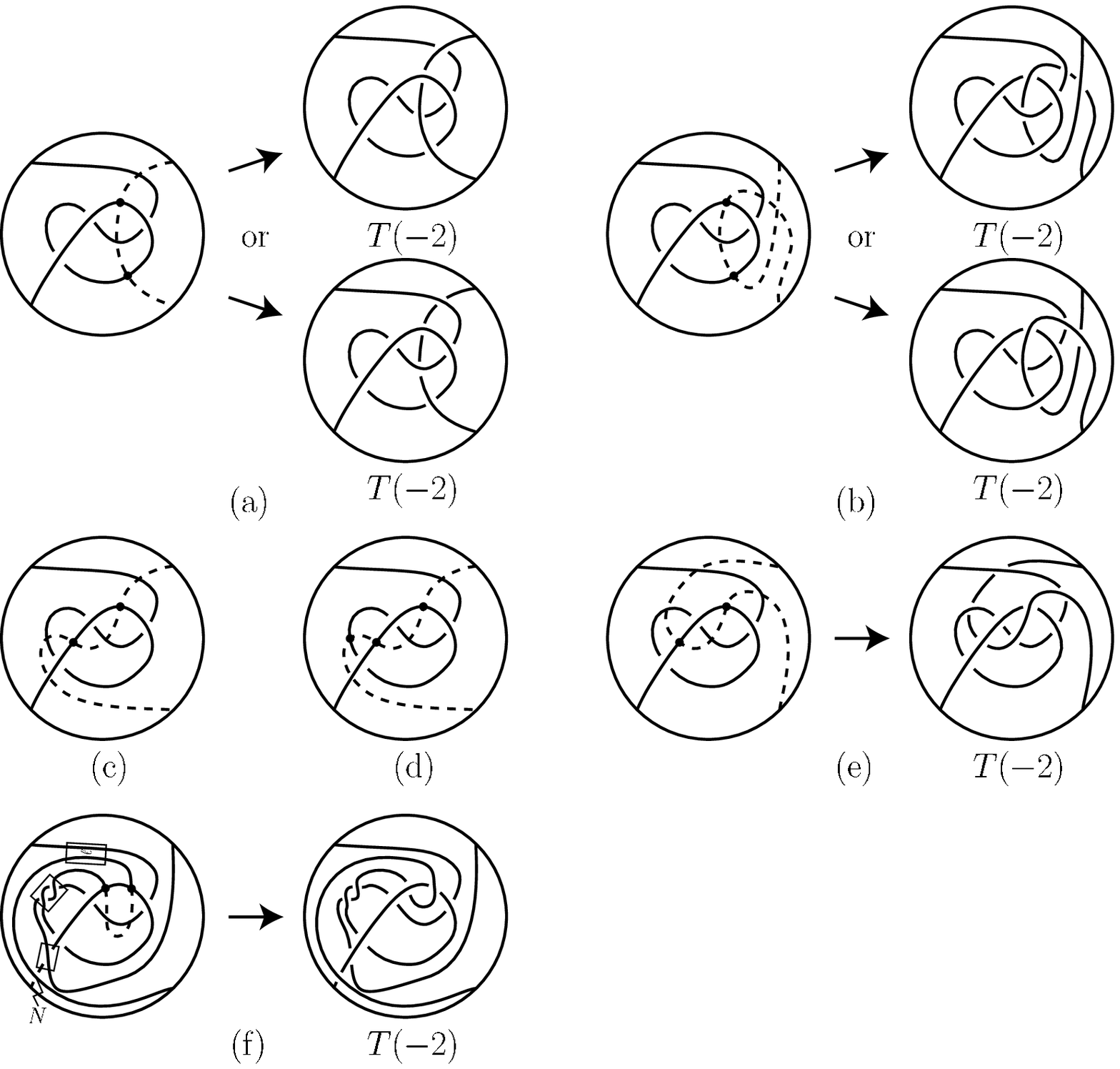}}
      \end{center}
   \caption{}
  \label{lemma2-proof24}
\end{figure} 

%

Next we consider the case that $\tilde a=\tilde a_{2-}$ in Fig. \ref{lemma2-proof17}.
Then we have $M=V$. See Fig. \ref{lemma2-proof25}.

\begin{figure}[htbp]
      \begin{center}
\scalebox{0.65}{\includegraphics*{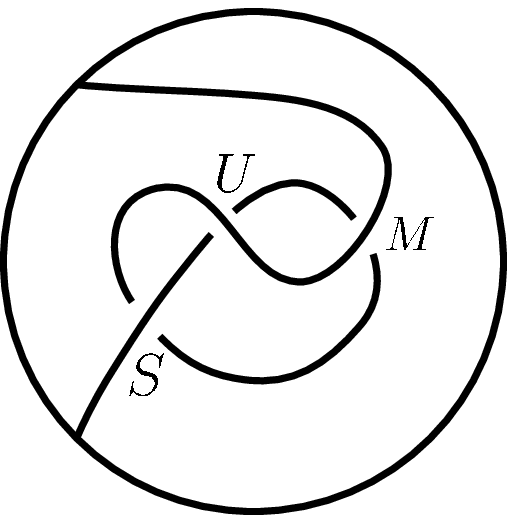}}
      \end{center}
   \caption{}
  \label{lemma2-proof25}
\end{figure} 

%

First suppose that $N$ is on $s(\tilde a,M)$. If $r(\tilde a,M)$ has mixed crossings then we have $T(-2)$. 
Therefore we have $r(\tilde a,M)$ has no mixed crossings. 
Suppose that $N$ is on $MU$ of $s(\tilde a,M)$. 
Then after changing some mixed crossings on $MU$ of $s(\tilde a,M)$ except $N$ and deforming $MU$ 
of $s(\tilde a,M)$ away from $s(\tilde a,S)$ we have the tangle $T(-2)$ by the use of Lemma \ref{hook-tangle}. 
Therefore we may suppose that $N$ is on $SM$. 
Then we may suppose that $\tilde a$ is over $\tilde b$ at $N$, otherwise we can apply 
Lemma \ref{diagram-reducing-lemma} to $r(\tilde T,S)$.
Let $P$ be the first mixed crossing of $\tilde b$ with $MU$ of $s(\tilde a,M)$. 
Then we have the tangle $T_1$ as illustrated in Fig. \ref{lemma2-proof26} (a), 
or we have the situation illustrated in Fig. \ref{lemma2-proof26} (b). 
Then we give over/under crossing information so that the third dotted line is under everything, 
the first dotted line is under everything except the third dotted line, and the second dotted 
line is over everything. Note that we can first deform the third dotted line such that it does not 
intersect the first dotted line as illustrated in Fig. \ref{lemma2-proof26} (c) and we have the tangle $T_1$.

\begin{figure}[htbp]
      \begin{center}
\scalebox{0.65}{\includegraphics*{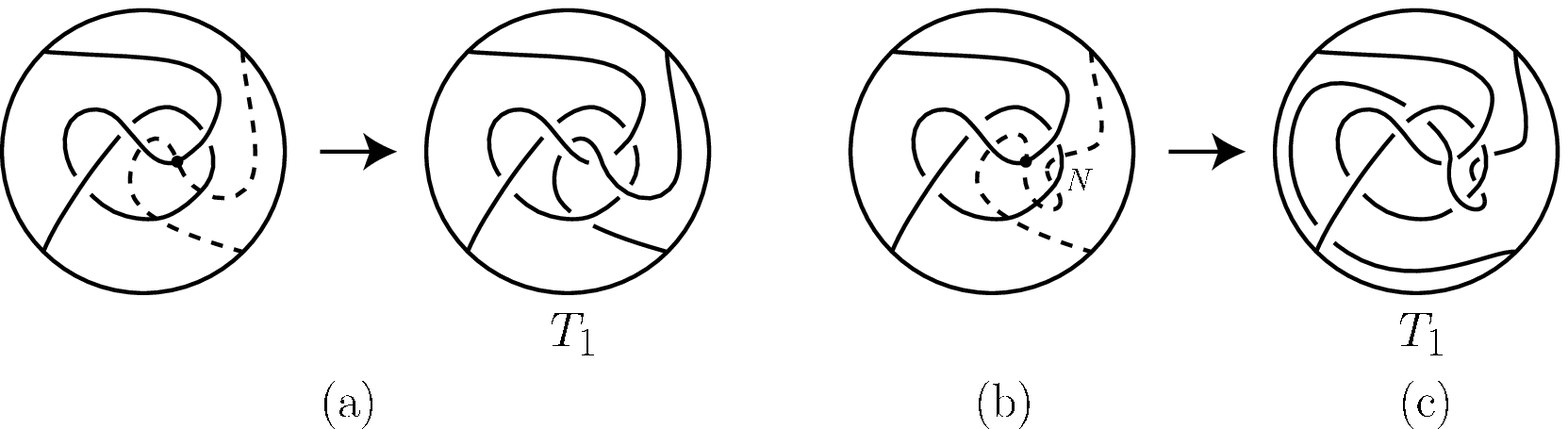}}
      \end{center}
   \caption{}
  \label{lemma2-proof26}
\end{figure} 

%

Thus we have that $N$ is not on $s(\tilde a,M)$. Now suppose that $N$ is on $s(\tilde a,S)$. Then we have one of the situations illustrated in Fig. \ref{lemma2-proof27} and we have $T(-2)$ or $T_1$. Note that in the situation illustrated in Fig. \ref{lemma2-proof27} (c) there are just two mixed crossings of $r(\tilde T,M)$.

\begin{figure}[htbp]
      \begin{center}
\scalebox{0.65}{\includegraphics*{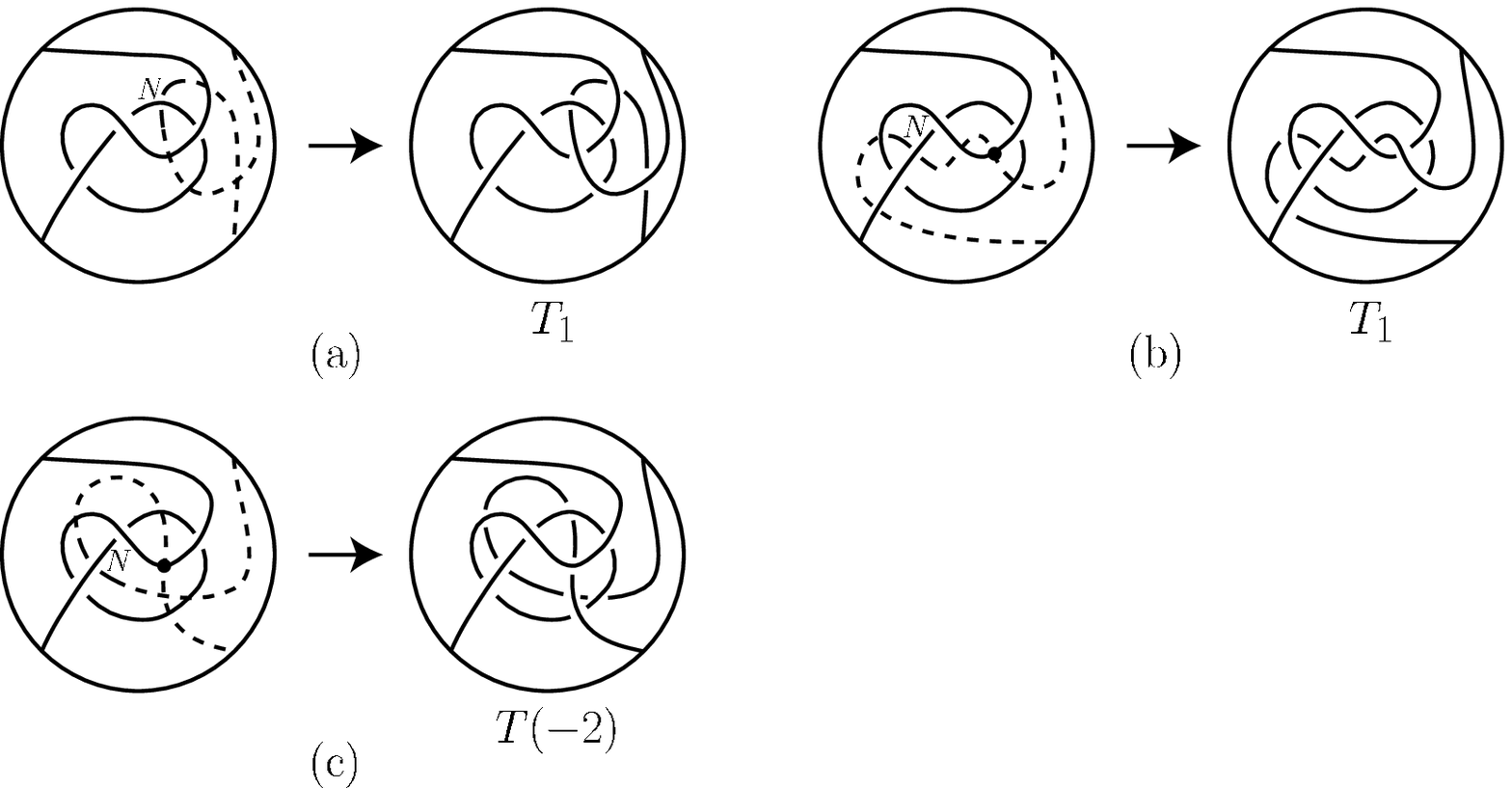}}
      \end{center}
   \caption{}
  \label{lemma2-proof27}
\end{figure} 

%

Thus we have that $N$ is not on $s(\tilde a,M)\cup s(\tilde a,S)$. Then there are two successive positive mixed crossings on $s(\tilde a,S)$.
If they are not on $MU$ then we have $T(-2)$ by \lq\lq over and under technique''. If they are as illustrated in Fig. \ref{lemma2-proof28} (a), (b), (c) or (d) then we have $T(-2)$ or $T_1$. Then we assume that there are no such situations. In the situation illustrated in Fig. \ref{lemma2-proof29} (a) we either have $T_1$, or if the third dotted line do not intersects $SA_\infty$ then we deform $A_0MUS$ first and then we have $T(-2)$ as illustrated. If the third dotted line intersects $SA_\infty$ then we have the situation illustrated in Fig. \ref{lemma2-proof29} (b) and we have $T(-2)$. In the case illustrated in Fig. \ref{lemma2-proof29} (c) we apply Lemma \ref{almost-positive-tangle-lemma} to $r(\tilde T,M)$ and we have $T(-2)$.

\begin{figure}[htbp]
      \begin{center}
\scalebox{0.65}{\includegraphics*{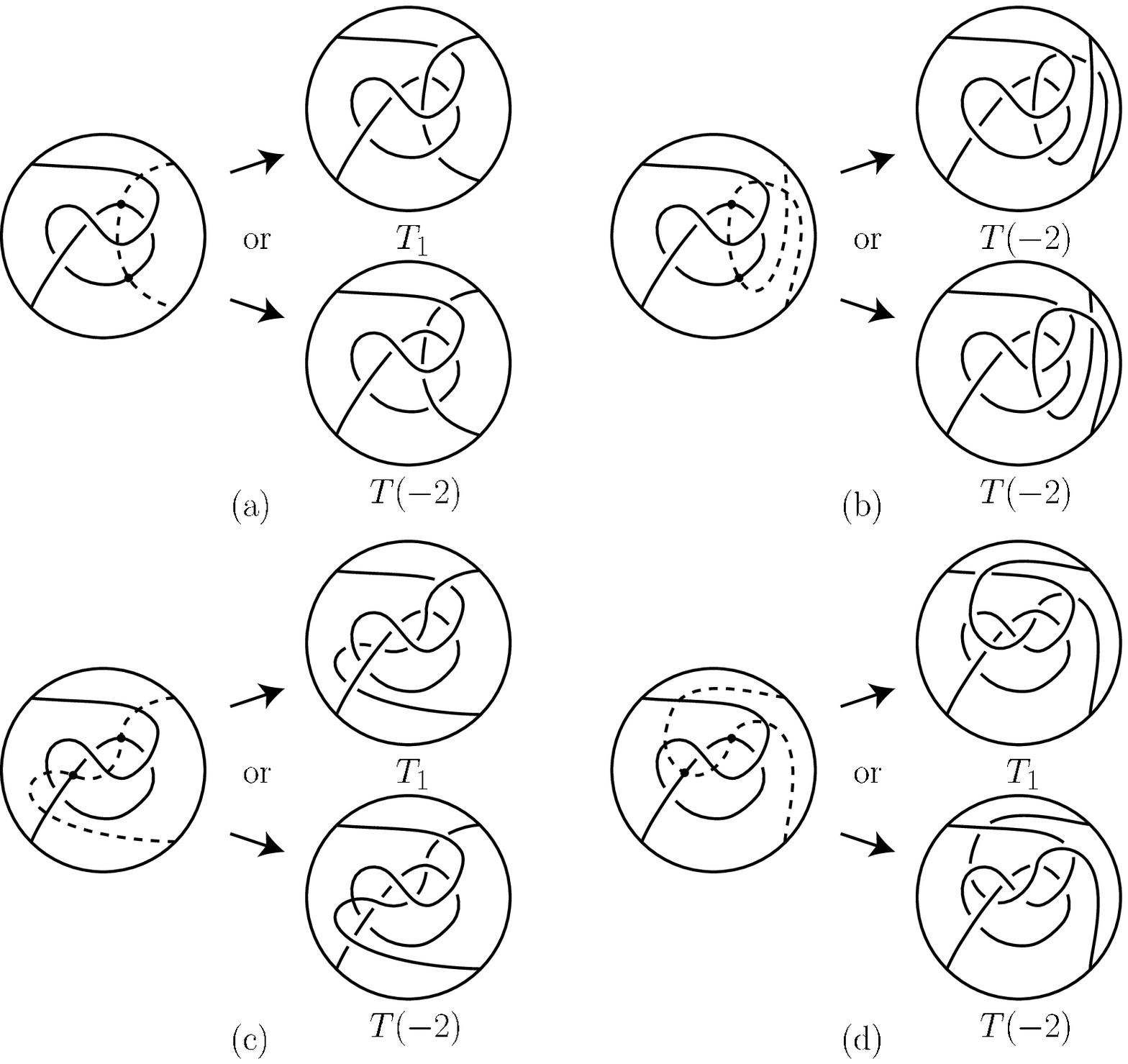}}
      \end{center}
   \caption{}
  \label{lemma2-proof28}
\end{figure} 

%

%
\begin{figure}[htbp]
      \begin{center}
\scalebox{0.65}{\includegraphics*{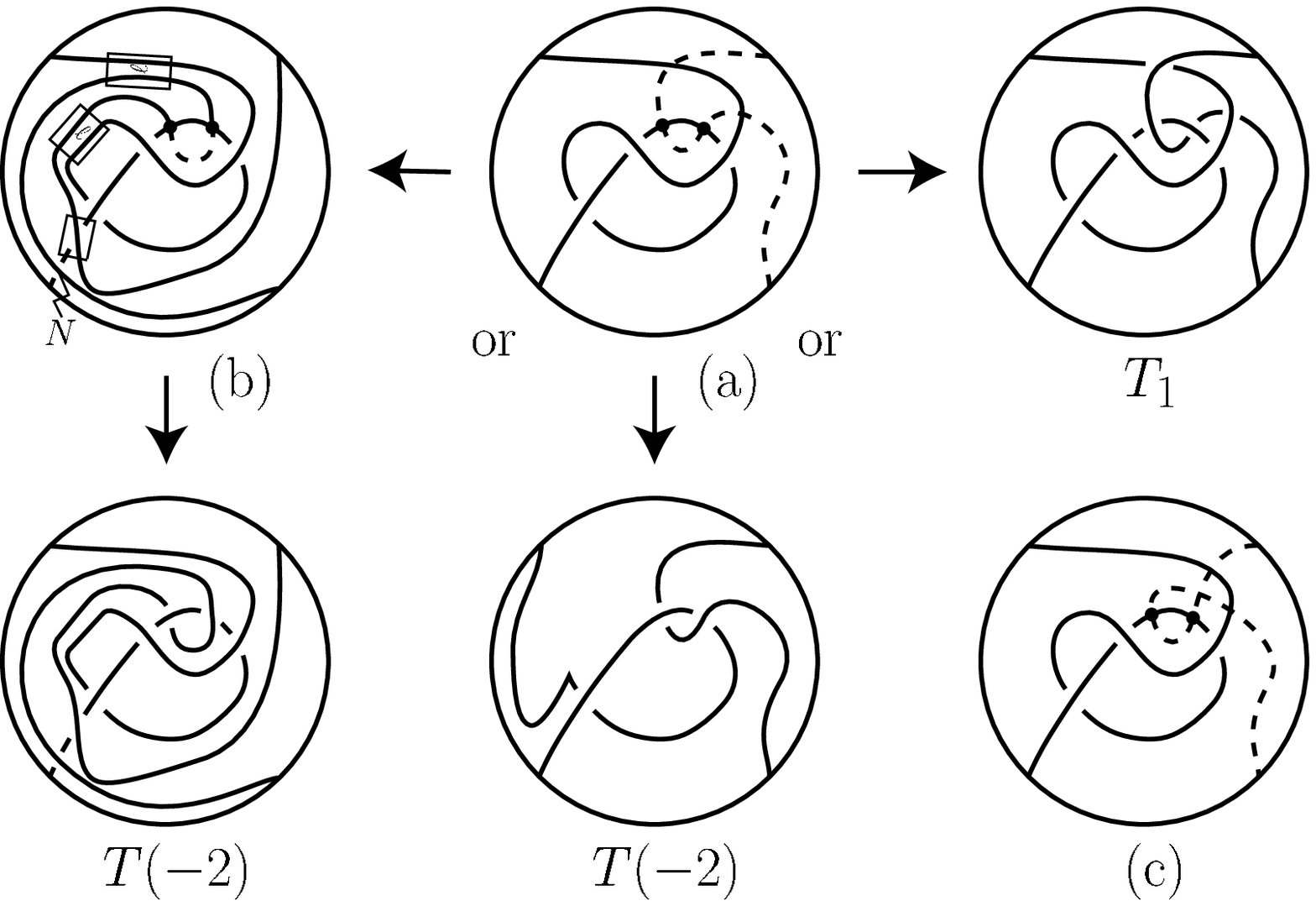}}
      \end{center}
   \caption{}
  \label{lemma2-proof29}
\end{figure} 

%

\item[Case 1.3.3.]
$q=s=1$.

First we consider the case that $\tilde a=\tilde a_{2+}$ in Fig. \ref{lemma2-proof17}.
Then we have $M=V$. By Lemma \ref{diagram-reducing-lemma} we have $\tilde T\geq r(\tilde T,M)$.
Note that, because $\tilde T$ is R2-reduced, the 2-gon $UMU$ has mixed crossings. 
Then by Lemma \ref{almost-positive-tangle-lemma} we have that $r(\tilde T,M)\geq T(-2)$.

Next we consider the case that $\tilde a=\tilde a_{2-}$ in Fig. \ref{lemma2-proof17}.
Then we have $M=U$. See Fig. \ref{lemma2-proof30}.

\begin{figure}[htbp]
      \begin{center}
\scalebox{0.65}{\includegraphics*{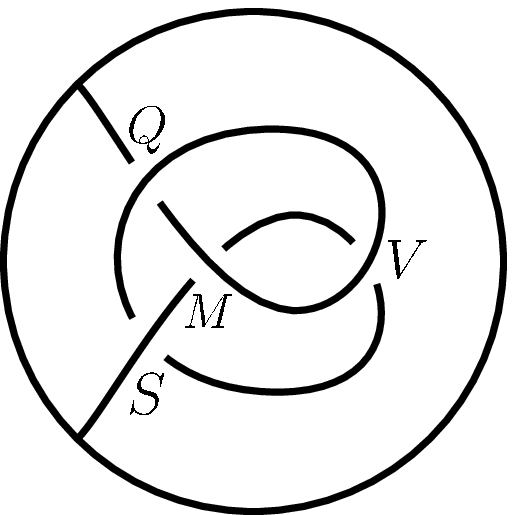}}
      \end{center}
   \caption{}
  \label{lemma2-proof30}
\end{figure} 

%

Note that, because $\tilde T$ is R2-reduced, the 2-gon $MVM$ has mixed crossings. 
Therefore we have that both $s(\tilde a,Q)$ and $s(\tilde a,S)$ have mixed crossings.
Suppose that $N$ is on $r(r(\tilde a,Q),S)$. By Lemma \ref{diagram-reducing-lemma} we have $\tilde T\geq r(\tilde T,Q)$ and $\tilde T\geq r(\tilde T,S)$. Then by Lemma \ref{almost-positive-tangle-lemma} we have $r(\tilde T,Q)\geq T(-2)$. Therefore we have that $N$ is not on $r(r(\tilde a,Q),S)$. Then up to horizontal symmetry we may suppose that $N$ is on $s(\tilde a,S)$. By Lemma \ref{diagram-reducing-lemma} we have $\tilde T\geq r(\tilde T,Q)$.
If $\tilde a$ is under $\tilde b$ at $N$ then by Lemma \ref{diagram-reducing-lemma} we have $\tilde T\geq r(\tilde T,S)$. Then since $r(\tilde T,S)$ is a positive diagram we have $r(\tilde T,S)\geq T(-2)$ by Lemma \ref{hook-tangle}.
Therefore we have that $\tilde a$ is over $\tilde b$ at $N$. We have by Lemma \ref{almost-positive-tangle-lemma} that $r(\tilde T,Q)$ is an R1-augmentation of the diagram $\tilde T_{1+}$ in Fig. \ref{almost-positive-tangles}. 
Now we consider the position of two successive positive mixed crossings on $s(\tilde a,Q)$. If they are not on $MV$ then we have $T(-2)$ by \lq\lq over and under technique''. Suppose that one of them is on $QM$ and the other is on $MV$. Since $s(\tilde a,A)$ has just two mixed crossings we have that this case reduces to the previous case. Suppose that one of them is on $MV$ and the other is on $VQ$. Then we have $T(-2)$ or $T_5$ as illustrated in Fig. \ref{lemma2-proof31} (a) and (b). Finally suppose that no former cases occur and both of them are on $MV$. Then we consider the position of the first mixed crossing of $\tilde b$ with $s(\tilde a,Q)$,  the last mixed crossing of $\tilde b$ with $s(\tilde a,Q)$ and the two mixed crossings of $\tilde b$ with $s(\tilde a,S)$. Then we have $T(-2)$ or $T_1$ as illustrated in Fig. \ref{lemma2-proof31} (c).

\begin{figure}[htbp]
      \begin{center}
\scalebox{0.65}{\includegraphics*{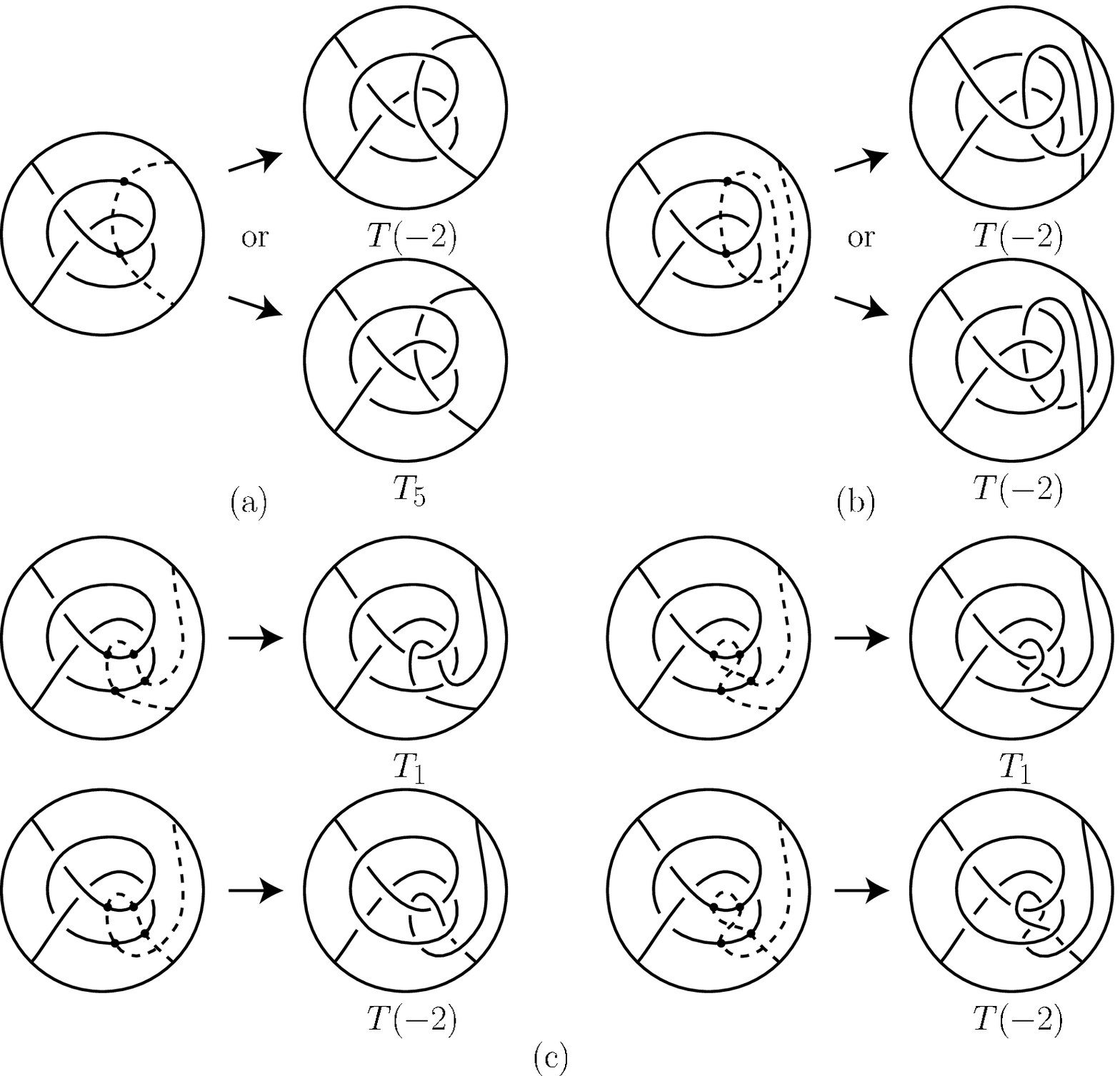}}
      \end{center}
   \caption{}
  \label{lemma2-proof31}
\end{figure} 

%

\item[Case 2.]
$\tilde a$ is almost trivial.

By Lemma \ref{almost-positive-tangle-lemma} we may suppose that $r(\tilde T,M)$ is up to horizontal symmetry, an 
R1-augmentation of one of the diagrams in Fig. \ref{lemma2-proof32}. Note that because  
 $\tilde T$ is R2-reduced, $r(\tilde a,M)$ has no R1-residuals.

\begin{figure}[htbp]
      \begin{center}
\scalebox{0.65}{\includegraphics*{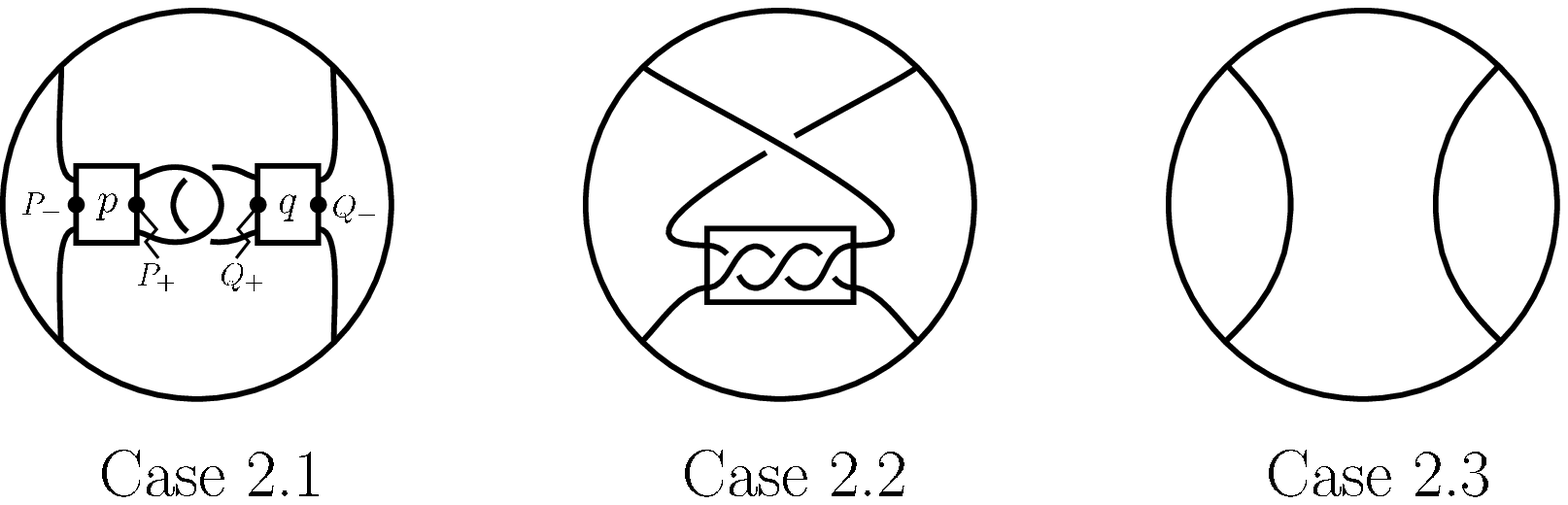}}
      \end{center}
   \caption{}
  \label{lemma2-proof32}
\end{figure} 

%

\item[Case 2.1.]

Suppose that $B_0Q_+\cup Q_+B_\infty$ has mixed crossings. Then by applying Lemma \ref{hook-tangle} to $r(\tilde T,Q_+)$ we have the tangle $T(-2)$. Therefore we have that $B_0Q_+\cup Q_+B_\infty$ has no mixed crossings. Then by flyping we may suppose that $q=0$.
In the following we consider the case $p\leq1$. By that argument it is easily seen that in the case $p\geq2$ we have the tangle $T(-2)$.

First suppose $p=1$.
We consider the position of $M$ on $\tilde a$ and where $\tilde b$ $s(\tilde a,M)$ has intersection with $\tilde b$.
A dotted arrow in Fig. \ref{lemma2-proof33} implies that $M$ is on its initial point and $s(\tilde a,M)$ has intersection with the part of $\tilde b$ where the terminal point is there. For example Arrow (1) describes the case that $M$ is on $UP$ and $s(\tilde a,M)$ has intersection with $UN$ of $\tilde b$. We denote this case by Case (1).
Because $\tilde T$ is R2-reduced, there are only five possible cases illustrated in Fig. \ref{lemma2-proof33}. 
Note that Case (3) and Case (5) may occur simultaneously. In Cases (1), (2) and (3) we have 
the tangle $T(-2)$ by Lemma \ref{hook-tangle} as illustrated in Fig. \ref{lemma2-proof34}.
Thus we have that these cases do not happen. In Case (4) we have $T(-2)$ by Lemma \ref{vertical-trefoil-tangle-lemma} 
unless $\tilde T$ is a diagram $\tilde T_{4+}$ in Fig. \ref{diagrams2} as illustrated in Fig. \ref{lemma2-proof34}. In Case (5) we have $T(-2)$ by Lemma \ref{vertical-trefoil-tangle-lemma} unless $\tilde T$ is a diagram $\tilde T_{6+}$ in Fig. \ref{diagrams2} as illustrated in Fig. \ref{lemma2-proof34}.

\begin{figure}[htbp]
      \begin{center}
\scalebox{0.65}{\includegraphics*{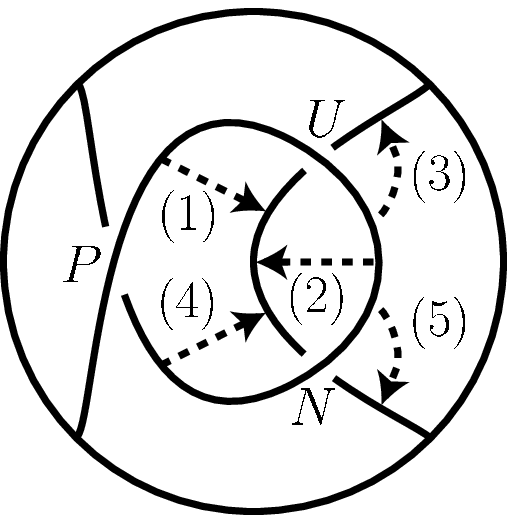}}
      \end{center}
   \caption{}
  \label{lemma2-proof33}
\end{figure} 

%

%
\begin{figure}[htbp]
      \begin{center}
\scalebox{0.65}{\includegraphics*{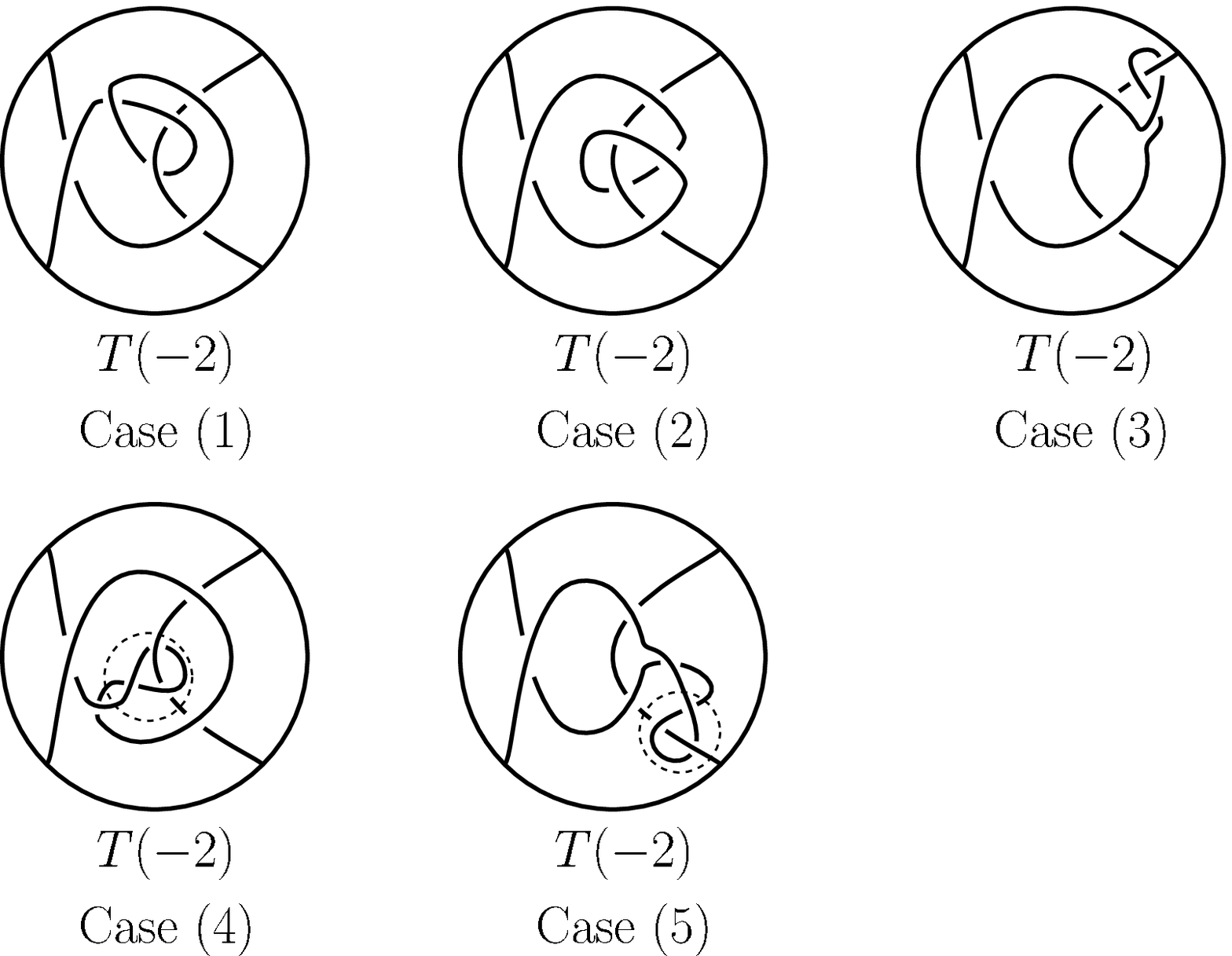}}
      \end{center}
   \caption{}
  \label{lemma2-proof34}
\end{figure} 

%

Next suppose $p=0$.
Then we have the cases illustrated in Fig. \ref{lemma2-proof35}. We have $T(-2)$ in Cases (1) and (2). Cases (3), (4) and (5) correspond the diagrams $\tilde T_{3+}$, $\tilde T_{2++}$ and $\tilde T_{2-+}$ in Fig. \ref{diagrams2} respectively.

\item[Case 2.2.]

In the cases illustrated in Fig. \ref{lemma2-proof35} we have the tangle $T(-2)$ or $T_2$ as illustrated in Fig. \ref{lemma2-proof36}.
Here we denote by Case (i)$'$ a subcase of Case (i). The proof for Case (i)$'$ is entirely analogous to that of Case (i) after removing extra crossings.
Other cases are illustrated in Fig. \ref{lemma2-proof37}. Then we have in Cases (6), (7), (8), (9) and (10) the diagram $\tilde T_{2-+}$ in Fig. \ref{diagrams2} and in Cases (11), (12), (13), (14) and (15) the diagram $\tilde T_{2++}$ in Fig. \ref{diagrams2}.

\begin{figure}[htbp]
      \begin{center}
\scalebox{0.7}{\includegraphics*{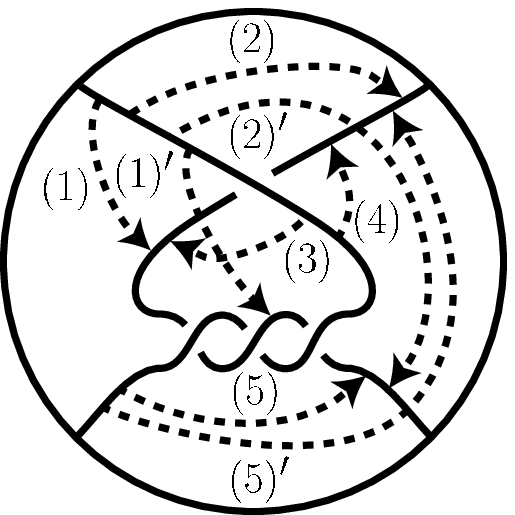}}
      \end{center}
   \caption{}
  \label{lemma2-proof35}
\end{figure} 

%

%
\begin{figure}[htbp]
      \begin{center}
\scalebox{0.65}{\includegraphics*{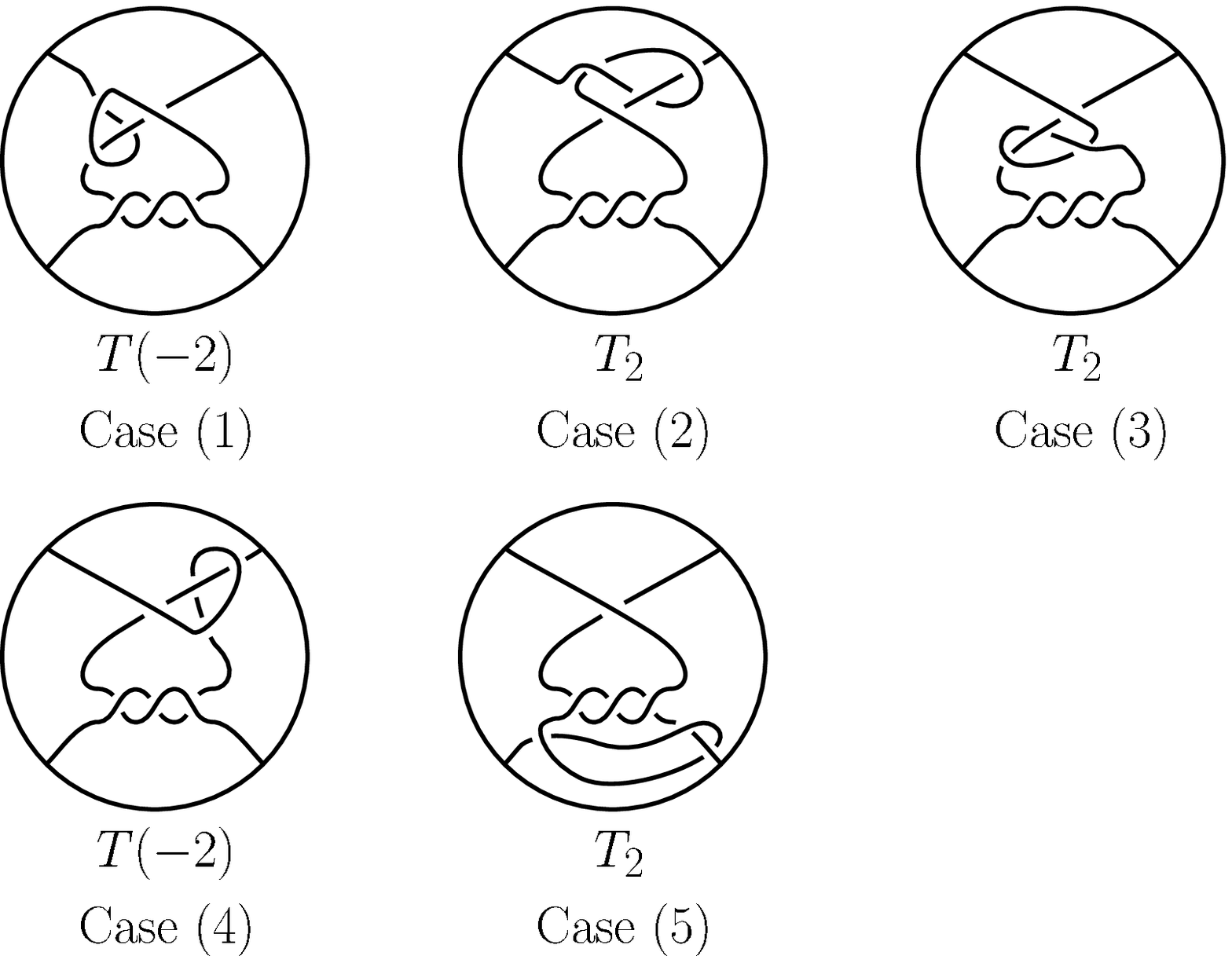}}
      \end{center}
   \caption{}
  \label{lemma2-proof36}
\end{figure} 

%

%
\begin{figure}[htbp]
      \begin{center}
\scalebox{0.7}{\includegraphics*{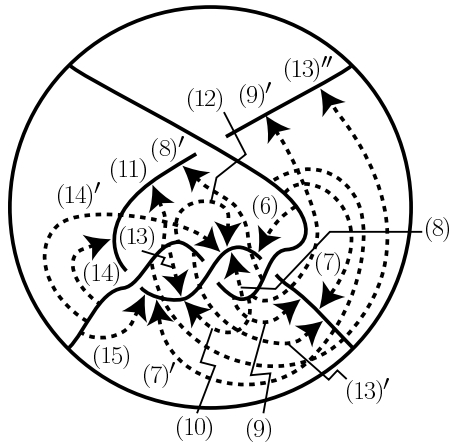}}
      \end{center}
   \caption{Case 2.2 of the proof of Lemma 4.2}
  \label{lemma2-proof37}
\end{figure} 

%

\item[Case 2.3.]
In this case we have the diagram $\tilde T_{1}$ in Fig. \ref{diagrams2}.

\end{enumerate}

This completes the proof of Lemma \ref{lemma2}.
\end{proof}


\begin{Lemma}\label{sublemma2}
Let $\hat T$ be a 2-string tangle projection with vertical connection. 
Suppose that $\hat T$ has no self-crossings.
If $\hat T$ is not a projection of the tangle $R(T(3,1))$ illustrated in Fig. \ref{lemma3tangles} then $\hat T=\hat T(2n)$ (see Fig. \ref{tangle-notation}) for some non-negative integer $n$.
\end{Lemma}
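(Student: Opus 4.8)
The plan is to mimic the permutation analysis used in the proofs of Lemma \ref{vertical-trefoil-tangle-lemma}, Lemma \ref{one-three-lemma} and Lemma \ref{-4-lemma}, which is considerably streamlined here because $\hat T$ has no self-crossings. First I would record what that hypothesis buys us: each of $\hat a$ and $\hat b$ is an embedded arc, so it equals its own spine, every crossing of $\hat T$ is mixed, and since the connection is vertical the four endpoints $A_0,A_\infty,B_0,B_\infty$ do not interleave on $\partial{\mathbb B}^2$, whence the number of mixed crossings is even, say $2n$. If $n=0$ then $\hat T=\hat T(0)$ and there is nothing to prove, so assume $n\ge1$ and introduce the permutation $(\sigma(1),\dots,\sigma(2n))$ defined by $A_i=B_{\sigma(i)}$.

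The easy half of the dichotomy is that $\hat T(2n)$ is never a projection of $R(T(3,1))$: as noted in the proof of Lemma \ref{vertical-trefoil-tangle-lemma}, the only tangles with underlying projection $\hat T(2n)$ are the integer tangles $T(m)$ with $|m|\le 2n$, and the rational tangle $R(T(3,1))$ is none of these. For the substantive half I would show that $\hat T$ is a projection of $R(T(3,1))$ whenever $\sigma$ is \emph{not} the identity. Choosing the first index at which the order of the mixed crossings along $\hat a$ disagrees with their order along $\hat b$, I would assign over/under information by the familiar recipe — one initial sub-arc of $\hat a$ over everything, one terminal sub-arc under everything (or vice versa), every remaining sub-arc descending, and the two crossings straddling the disagreement fixed so that Lemma \ref{hook-tangle} converts them into a clasp — arranged so that the clasp together with the surviving twists is exactly $R(T(3,1))$. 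This is the same bookkeeping as the cases $\sigma(2i-1)>\sigma(2i)$, $\sigma(2i)>\sigma(2i+1)$, etc., of Lemma \ref{vertical-trefoil-tangle-lemma}, but with no self-crossings to drag along.

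It then remains to handle $\sigma=\mathrm{id}$. In the earlier lemmas this case still yielded the exceptional tangle whenever some arc $A_iA_{i+1}$ met a later arc $A_{i+2j}A_{i+2j+1}$, but those intersections were self-crossings of a single strand and cannot occur here. Consequently, since $\sigma=\mathrm{id}$, each pair $A_i,A_{i+1}$ of crossings consecutive on $\hat a$ is also consecutive on $\hat b$, and the two sub-arcs joining them bound an embedded bigon meeting the rest of $\hat T$ only along its boundary; removing these bigons one at a time (innermost first) exhibits $\hat T$ as $n$ stacked bigons, that is $\hat T=\hat T(2n)$.

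The main obstacle will be the middle step: verifying that every non-identity permutation that can actually be realized by two embedded vertically-connected arcs admits a crossing assignment producing \emph{precisely} $R(T(3,1))$, and that the several ``over everything''/``under everything'' prescriptions are mutually compatible. A related point, needed to keep the dichotomy clean, is that for embedded arcs a non-trivial permutation already forces at least four crossings, so that the $2n=2$ configuration is unambiguously the bigon $\hat T(2)$ rather than a spurious instance of the $R(T(3,1))$ pattern.
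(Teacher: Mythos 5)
Your dichotomy has the right shape, and the easy parts are fine: the parity count, the observation that a non-identity permutation forces at least four crossings, and the treatment of $\sigma=\mathrm{id}$ (consecutive crossings bound empty bigons, which stack to give $\hat T(2n)$). The genuine gap is the step you yourself flag as the main obstacle, and it is not a technicality---it is the whole content of the lemma. You propose to settle $\sigma\neq\mathrm{id}$ by the recipe of Lemma \ref{vertical-trefoil-tangle-lemma} (initial sub-arc over everything, terminal sub-arc under everything, descending in between, two crossings fixed via Lemma \ref{hook-tangle}), but that recipe is engineered to \emph{discard} crossings and collapse the diagram onto a three-crossing tangle: run verbatim it produces $T(1,2)$ (or $T(1/3)$, $T(-4)$ in its variants), never the four-crossing tangle $R(T(3,1))$. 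Worse, in the minimal non-identity configurations---for $2n=4$ the only realizable non-identity patterns are $\sigma=(1,4,3,2)$ and $\sigma=(3,2,1,4)$, mirror to each other---the projection has exactly four crossings, so an assignment realizing $R(T(3,1))$ cannot discard anything: all four crossings must survive, with the over/under data of a minimal diagram of $R(T(3,1))$, so the over-everything/under-everything bookkeeping degenerates completely and a different argument is needed; for larger $2n$ you must additionally show that the discarding lands on exactly this four-crossing pattern. Nothing in the proposal establishes any of this, and doing it uniformly over all realizable $\sigma$ would be a case analysis on the scale of Lemma \ref{-4-lemma}.

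The idea you are missing is that your empty-bigon observation should be used in \emph{all} cases, not only for $\sigma=\mathrm{id}$: since $\hat T$ has no self-crossings, every such projection with at least one crossing contains an innermost (empty) bigon, and Lemma \ref{sublemma1} says that collapsing it yields a minor of $\hat T$. The paper therefore takes a minimal counterexample: the collapsed projection $\hat T'$ is a minor of $\hat T$, so it is again not a projection of $R(T(3,1))$, hence equals $\hat T(2(n-1))$ by minimality; one is then reduced to checking the finitely many local ways of re-inserting a bigon between an $\hat a$-arc and a $\hat b$-arc of the standard twist, where the parallel insertion returns $\hat T(2n)$ and every other insertion visibly exhibits $R(T(3,1))$. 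This one-move local check replaces the global permutation analysis entirely; if you persist with your route, you will in effect have to reprove that same check inside every branch of the $\sigma\neq\mathrm{id}$ case.
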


\begin{proof}
The proof here is given by contradiction. Suppose that $\hat T$ is a 2-string tangle projection with vertical connection with $2n$ mixed crossings and no self-crossings that is not equal to $\hat T(2n)$ such that it is not a projection of the tangle $R(T(3,1))$. Suppose that $n$ is smallest among such tangle projections. 
It is clear that $\hat T$ has a 2-gon as $\hat T_1$ illustrated in Fig. \ref{R2-projections}. Let $\hat T'$ be a tangle projection obtained from $\hat T$ by replacing the 2-gon by a pair of parallel arcs as $\hat T_2$ illustrated in Fig. \ref{R2-projections}. Then by Lemma \ref{sublemma1} we have that $\hat T'=\hat T(2(n-1))$. However it is easy to check that all tangle projections without mixed crossings obtained from $\hat T(2(n-1))$ by replacing a pair of parallel arcs by a 2-gon except $\hat T(2n)$ is a projection of $R(T(3,1))$. See Fig. \ref{sublemma2-proof}. This completes the proof.
\end{proof}

\begin{figure}[htbp]
      \begin{center}
\scalebox{0.65}{\includegraphics*{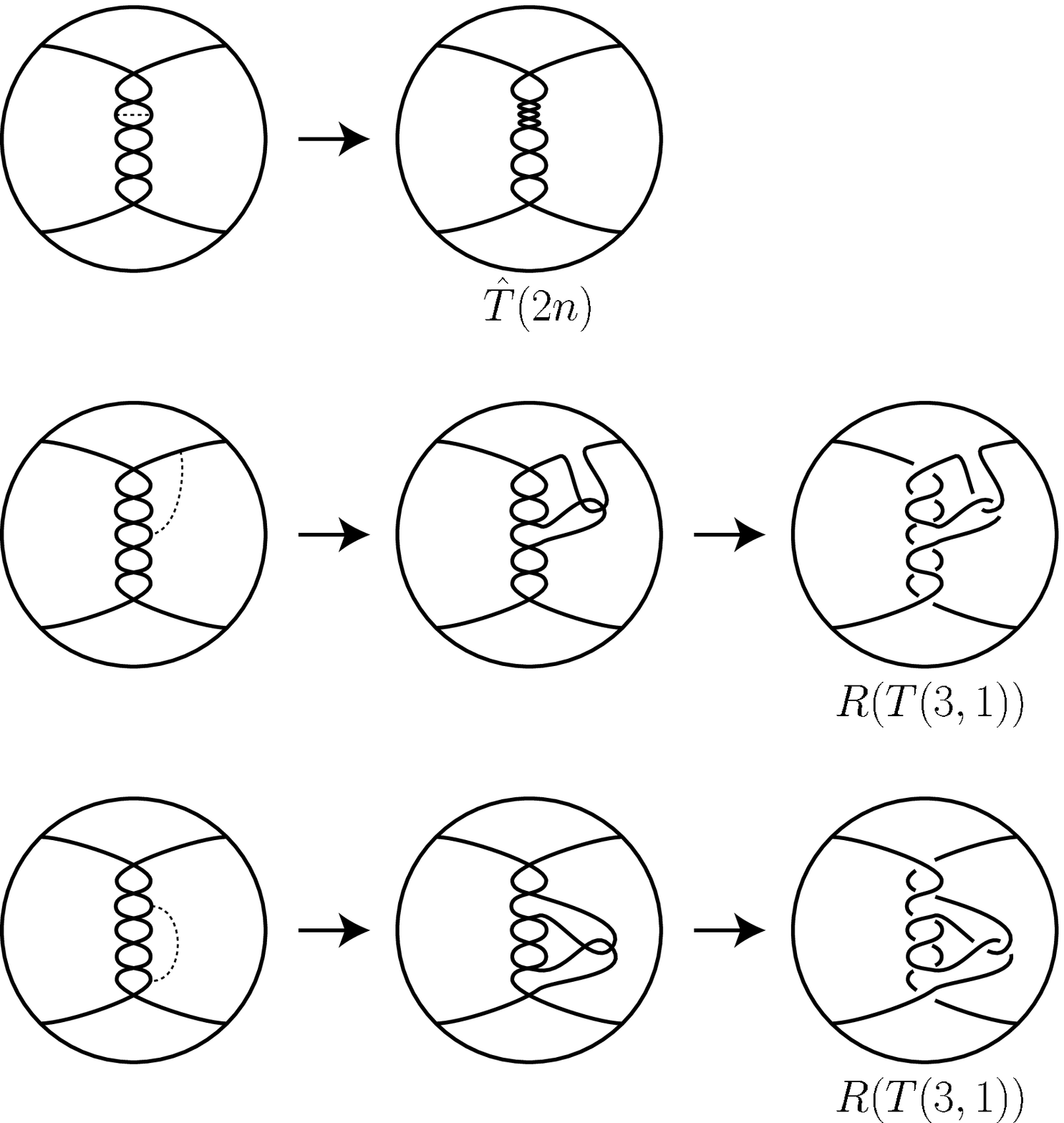}}
      \end{center}
   \caption{}
  \label{sublemma2-proof}
\end{figure} 

%

\begin{Lemma}\label{lemma3}
Let $\hat T$ be a reduced 2-string tangle projection with vertical connection. If $\hat T$ is not
a projection of any of the tangles in Fig. \ref{lemma3tangles} then $\hat T$ is one of the projections
illustrated in Fig. \ref{lemma3projections}
\end{Lemma}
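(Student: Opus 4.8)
The plan is to reduce the reduced (but possibly non-prime) case to the prime case already settled in Lemma \ref{lemma1}, by means of the connected-sum decomposition of projections set up in Section \ref{Section 1}. First I would apply Lemma \ref{local-trefoil-lemma} to each of the two strands $\hat a$ and $\hat b$: if either strand is not \emph{almost trivial} it carries a local trefoil, so $\hat T$ dominates one of the trefoil-bearing tangles of Fig.~\ref{lemma3tangles} and we are done. Thus I may assume both strands are almost trivial. Note, however, that \emph{almost trivial} refers to the arc alone while \emph{reduced} refers to all of $\hat T$, so a strand may still carry self-crossings that are non-nugatory in $\hat T$ because the other strand passes through the relevant region; these are exactly the crossings I will have to track with Lemma \ref{reducing-lemma} throughout.

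If $\hat T$ is prime, the conclusion is immediate from Lemma \ref{lemma1}: either $\hat T$ is a projection of one of the tangles of Fig.~\ref{tangles1}, all of which are among the tangles of Fig.~\ref{lemma3tangles}, or $\hat T$ is one of the prime projections of Fig.~\ref{projections1}, which occur among those of Fig.~\ref{lemma3projections}. So suppose $\hat T$ is not prime, and fix a simple closed curve $\gamma$ in $\mathbb{B}^2$ meeting $\hat T$ transversally in two points that does not bound a trivial disk pair. If both points lie on one strand, the enclosed arc is a $1$-string subtangle, which by Lemma \ref{local-trefoil-lemma} either is almost trivial (so, after reducing by Lemma \ref{reducing-lemma}, $\gamma$ was inessential) or contains a local trefoil, giving again a bad minor. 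Hence the two points lie on different strands, and $\gamma$ exhibits $\hat T$ as a sum $\hat T=\hat S_1+\hat S_2$ of two reduced $2$-string tangle projections, each with strictly fewer crossings.

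I would then induct on the number of crossings, applying the lemma (or directly Lemma \ref{lemma1} when a summand is prime) to $\hat S_1$ and $\hat S_2$. If either summand dominates one of the exceptional tangles, then so does $\hat T$: I trivialize the complementary summand by the descending algorithm of Lemma \ref{reducing-lemma} and transport the bad minor across the sum, so that $\hat T$ lands in Fig.~\ref{lemma3tangles}. Otherwise every summand is one of the good pieces; for the summands free of self-crossings I invoke Lemma \ref{sublemma2} to identify each as $\hat T(2n)$ or as a projection of $R(T(3,1))$, and for the remaining summands I use their classification from the prime case. It then remains to assemble finitely many good pieces under the sum operation into the normal forms of Fig.~\ref{lemma3projections}.

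The main obstacle is precisely this last assembly step: for each way of summing the admissible prime pieces I must decide whether the result is again a listed good projection or instead acquires a $2$-gon or hook configuration forcing, via Lemma \ref{hook-tangle}, a new bad minor such as $T(-4)$ or $T(1/2,-2)$. Two points need special care. First, the sum operation does not in general preserve the vertical connection, so I must track the connection type of each summand and of $\hat T$, invoking the X-connection analysis where appropriate. Second, the self-crossings on the almost-trivial strands that are non-nugatory in $\hat T$ must be pushed through the reductions with Lemma \ref{reducing-lemma} without destroying primeness of the summands; checking that this is compatible with the sum decomposition is the delicate combinatorial core, entirely parallel to the case analysis already carried out in Lemma \ref{lemma1} and Lemma \ref{lemma2}.
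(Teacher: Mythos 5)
Your reduction to Lemma \ref{lemma1} fails at the prime case, and that is where the entire content of Lemma \ref{lemma3} lies. You assume that the exceptional tangles of Fig. \ref{tangles1} are among those of Fig. \ref{lemma3tangles}, and that the projections of Fig. \ref{projections1} occur among those of Fig. \ref{lemma3projections}. The first assumption is false: the exceptions produced in the proof of Lemma \ref{lemma1} are $T(1/2,-2)$ and $T(-4)$, whereas the exceptions of Lemma \ref{lemma3} are the local-trefoil tangles $T(3_1,0_1)$, $T(0_1,3_1)$ and $R(T(\pm 3,\pm 1))$ (this is what the paper's proof of Lemma \ref{lemma3} derives, and what its application in Case 1.1 of the proof of Theorem \ref{2-almost-positive-theorem} uses). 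These lists are incomparable. For example, the twist projection $\hat T(2n)$ with $n\geq 2$ is prime and reduced, it is a projection of $T(-4)\in$ Fig. \ref{tangles1}, yet it realizes no tangle of Fig. \ref{lemma3tangles} --- its diagrams give only twist tangles, never $R(T(\pm 3,\pm 1))$ and never a tangle with a knotted strand --- and in fact it belongs to the good list Fig. \ref{lemma3projections}. So precisely on those prime projections that do realize $T(-4)$ or $T(1/2,-2)$, Lemma \ref{lemma1} gives you nothing, while Lemma \ref{lemma3} still makes a nontrivial assertion; your plan supplies no argument for them. This is the case the paper handles by a direct analysis: it shows that unless $\hat T$ realizes $R(T(3,1))$, all mixed crossings are related on both strands, so that $\hat T$ is a nest of loops around a self-crossing-free core $\hat T_0$, to which Lemma \ref{-4-lemma} (when the number of loops is odd) or Lemma \ref{sublemma2} (when it is even) is applied. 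None of this appears in your proposal.

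Second, your non-prime step rests on a decomposition that cannot occur. An interior simple closed curve meets each strand of a 2-string tangle projection in an even number of points, since both endpoints of each strand lie on $\partial{\mathbb B}^2$; hence a curve meeting $\hat T$ transversally in exactly two points has both points on the \emph{same} strand, and an essential such curve always cuts off a 1-string subtangle (a knot-projection summand on one strand), never a sum $\hat S_1+\hat S_2$ of two 2-string tangles. The sum of Fig. \ref{tangle-notation} corresponds to curves or arcs meeting the projection in four points, which the primeness hypothesis does not exclude --- this is exactly why the conclusions of Lemma \ref{lemma1} and Lemma \ref{-4-lemma}, proved under that hypothesis, themselves contain such sums. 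So your induction-and-assembly scheme never gets off the ground. (In fact the genuinely non-prime case is the easy one: since $\hat T$ is reduced, the cut-off 1-string subtangle is itself reduced and carries a crossing, hence is not almost trivial, and Lemma \ref{local-trefoil-lemma} produces a local trefoil; but your text does not make this argument, and it would not rescue the prime case above.)
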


\begin{figure}[htbp]
      \begin{center}
\scalebox{0.65}{\includegraphics*{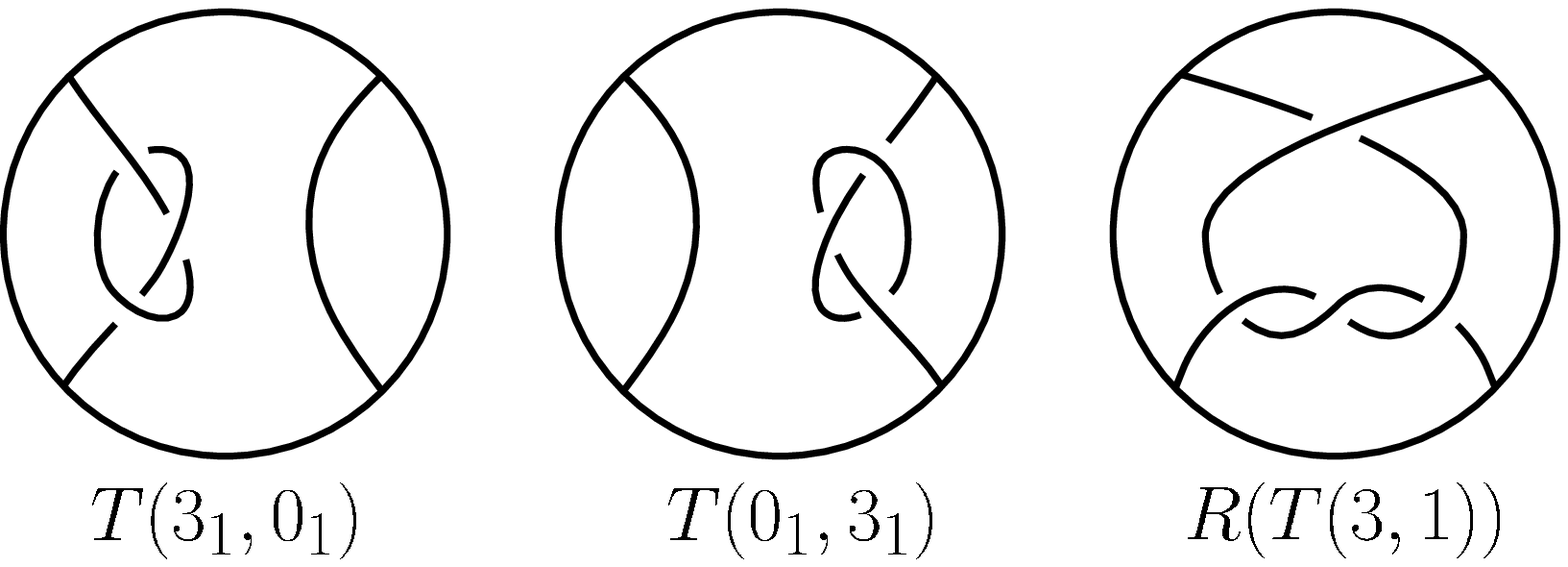}}
      \end{center}
   \caption{}
  \label{lemma3tangles}
\end{figure} 

%

%
\begin{figure}[htbp]
      \begin{center}
\scalebox{0.65}{\includegraphics*{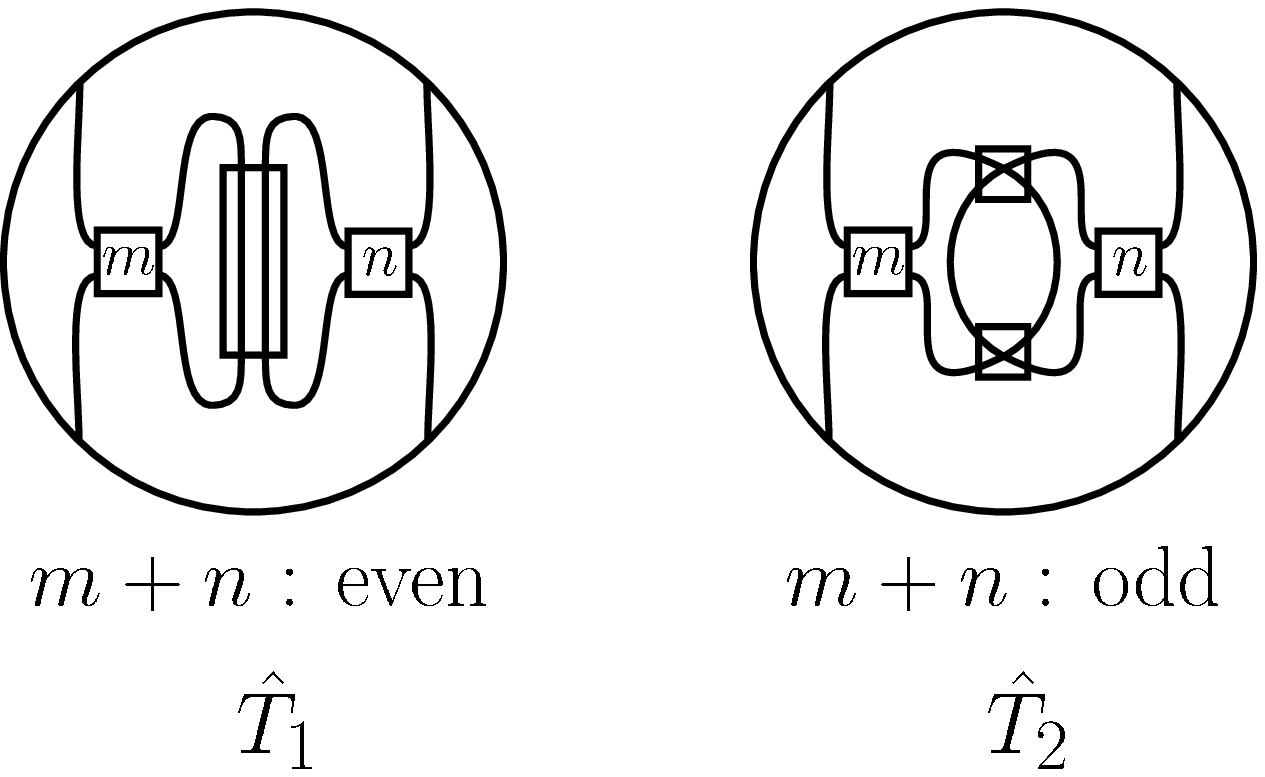}}
      \end{center}
   \caption{}
  \label{lemma3projections}
\end{figure} 

%

\begin{proof}
We have that both $\hat a$ and $\hat b$ are almost trivial.

First suppose that $\hat a$ has self-crossings. 
If there is a self-crossing of $\hat a$ which is not rightmost then we have $R(T(3,1))$ using 
Lemma \ref{reducing-lemma} as illustrated in Fig. \ref{lemma3-proof1}.

\begin{figure}[htbp]
      \begin{center}
\scalebox{0.65}{\includegraphics*{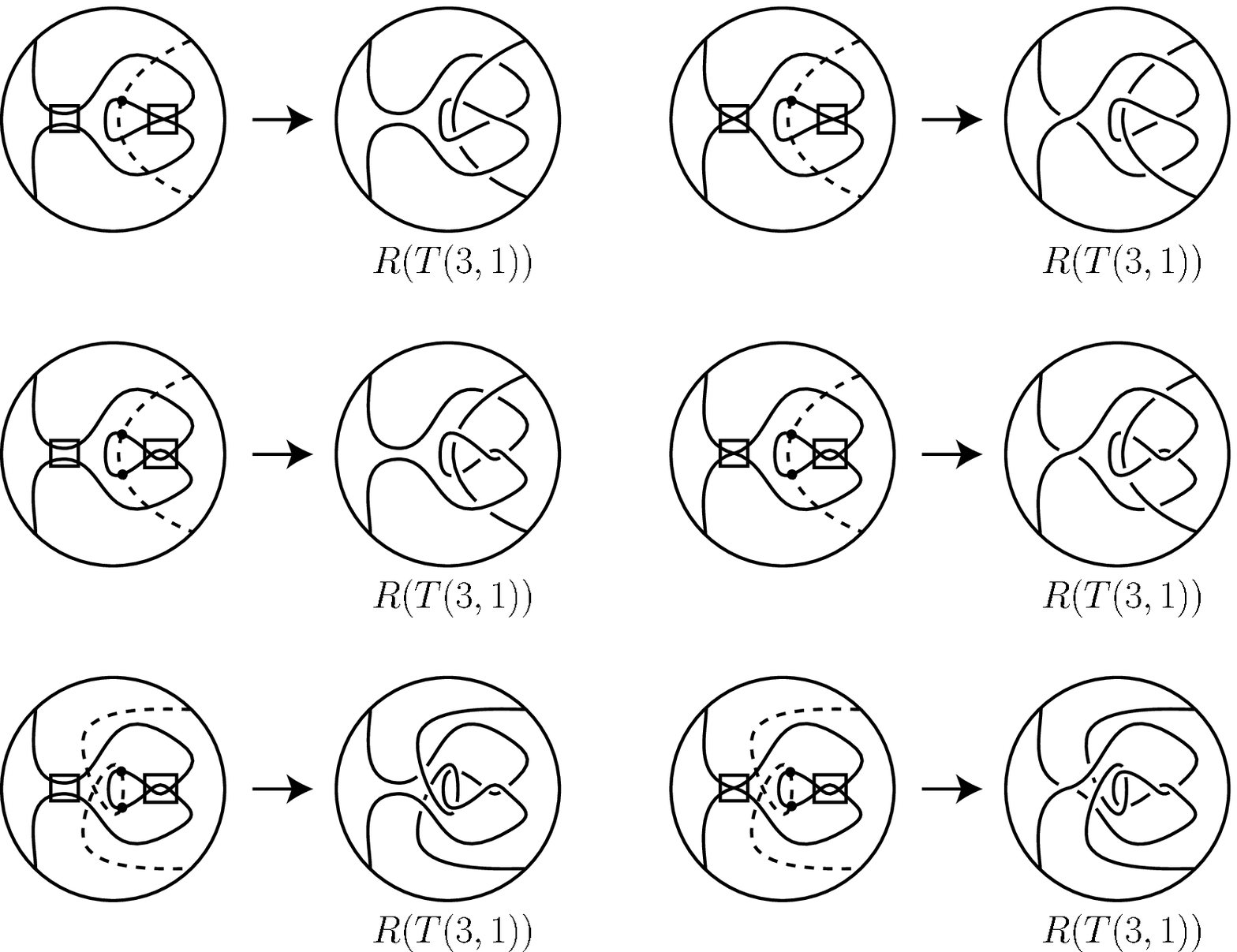}}
      \end{center}
   \caption{}
  \label{lemma3-proof1}
\end{figure} 

%

Therefore all mixed crossings on $\hat a$ are rightmost. Let $B_{2i+1}$ be a mixed crossing with 
maximal multiplicity among all mixed crossings on $\hat a$. Then we have that $B_{2i+2}$ also has the maximal multiplicity.
We consider the position of successive mixed crossings $B_{2i}$, $B_{2i+1}$ and $B_{2i+2}$, or $B_{2i+1}$, $B_{2i+2}$ and $B_{2i+3}$. If they are positioned as illustrated in Fig. \ref{lemma3-proof2}, where only the positions of $B_{2i}$, $B_{2i+1}$ and $B_{2i+2}$ are illustrated, then we have the tangle $R(T(3,1))$. Up to horizontal symmetry we have the tangle $R(T(3,1))$ for the positions of $B_{2i+1}$, $B_{2i+2}$ and $B_{2i+3}$ corresponding to that of $B_{2i}$, $B_{2i+1}$ illustrated in Fig. \ref{lemma3-proof2}. Note that in Fig. \ref{lemma3-proof2} the position of $B_{2i}$ is not precisely described. Namely the multiplicity of $B_{2i}$ on $\hat a$ may be greater than that illustrated in Fig. \ref{lemma3-proof2}. However by the argument described in Fig. \ref{lemma2-proof6} we have the same result for these cases.

\begin{figure}[htbp]
      \begin{center}
\scalebox{0.65}{\includegraphics*{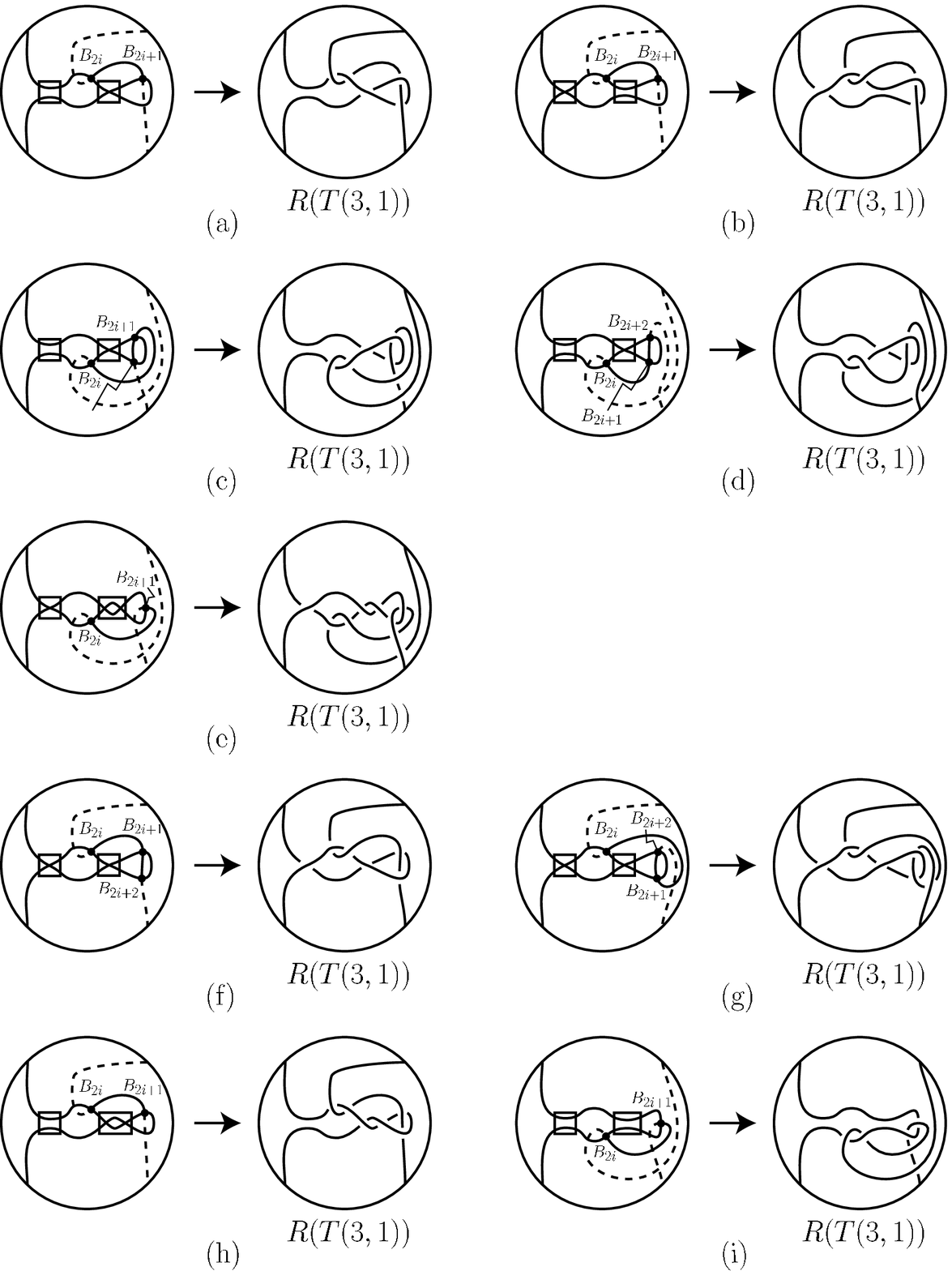}}
      \end{center}
   \caption{}
  \label{lemma3-proof2}
\end{figure} 

%

Note that all cases that the multiplicity $m(B_{2i+1})$ is odd are illustrated in Fig. \ref{lemma3-proof2} (a), (b), (c), (d) and (e). 
Suppose that $B_{2j}$, $B_{2i+1}$ and $B_{2i+2}$ are positioned as illustrated in Fig. \ref{lemma3-proof3} (a) where $j\leq i$ and $B_{k}$ are related to $B_{2i+1}$ on $\hat a$ for $2j< k\leq 2i$. Let $P$ be the root of $B_{2i+1}$ on $\hat a$. Then we have that $B_{2j}B_{2i+1}$ and $A_0P\cup PA_\infty$ intersect only at $B_{2j}$. Therefore after deforming $B_0B_{2j}$ as illustrated in Fig. \ref{lemma3-proof3} (a) we have the tangle $R(T(3,1))$. 
Suppose that $B_{2i}$ and $B_{2i+1}$ are positioned as illustrated in Fig. \ref{lemma3-proof3} (b). 
Then either we have the case illustrated in Fig. \ref{lemma3-proof3} (a) or the case that is the horizontal
symmetry of the case illustrated in Fig. \ref{lemma3-proof2} (f) or (i), or we have the case illustrated in Fig. \ref{lemma3-proof3} (c). In any case we 
have $R(T(3,1))$.

\begin{figure}[htbp]
      \begin{center}
\scalebox{0.65}{\includegraphics*{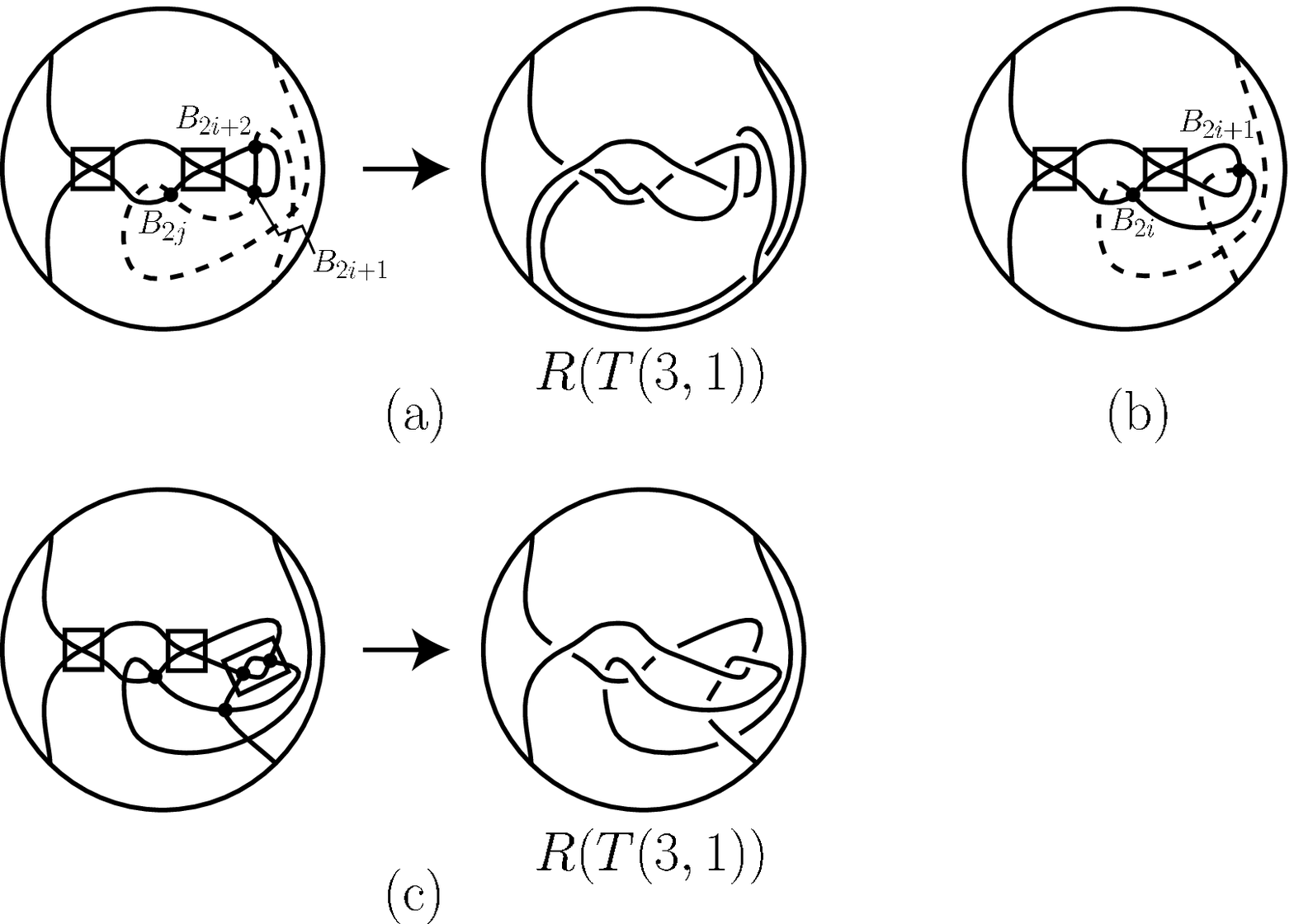}}
      \end{center}
   \caption{}
  \label{lemma3-proof3}
\end{figure} 

%

Therefore we have that all mixed crossings are related on $\hat a$. Similarly we have that all mixed crossings are related on $\hat b$. Therefore we have the situation that there is a subtangle projection $\hat T_{0}$ of $\hat T$ as illustrated in Fig. \ref{lemma3-proof4} (a) where $m$ and $n$ are non-negative integers such that $\hat T_0$ has no self-crossings.

First suppose that $m+n$ is odd. If $\hat T_{0}$ is a
projection of the tangle $T(4)$ then we have $R(T(3,1))$ as illustrated in Fig. \ref{lemma3-proof4} (b). Then we have the conclusion by applying Lemma \ref{-4-lemma} to $\hat T_0$. 

Next suppose that $m+n$ is even. If $\hat T_{0}$ is a projection of the tangle $R(T(3,1))$ then we clearly have that $\hat T$ is also a projection of $R(T(3,1))$. Then we have the conclusion by applying Lemma \ref{sublemma2} to $\hat T_0$. This completes the proof.
\end{proof}

\begin{figure}[htbp]
      \begin{center}
\scalebox{0.65}{\includegraphics*{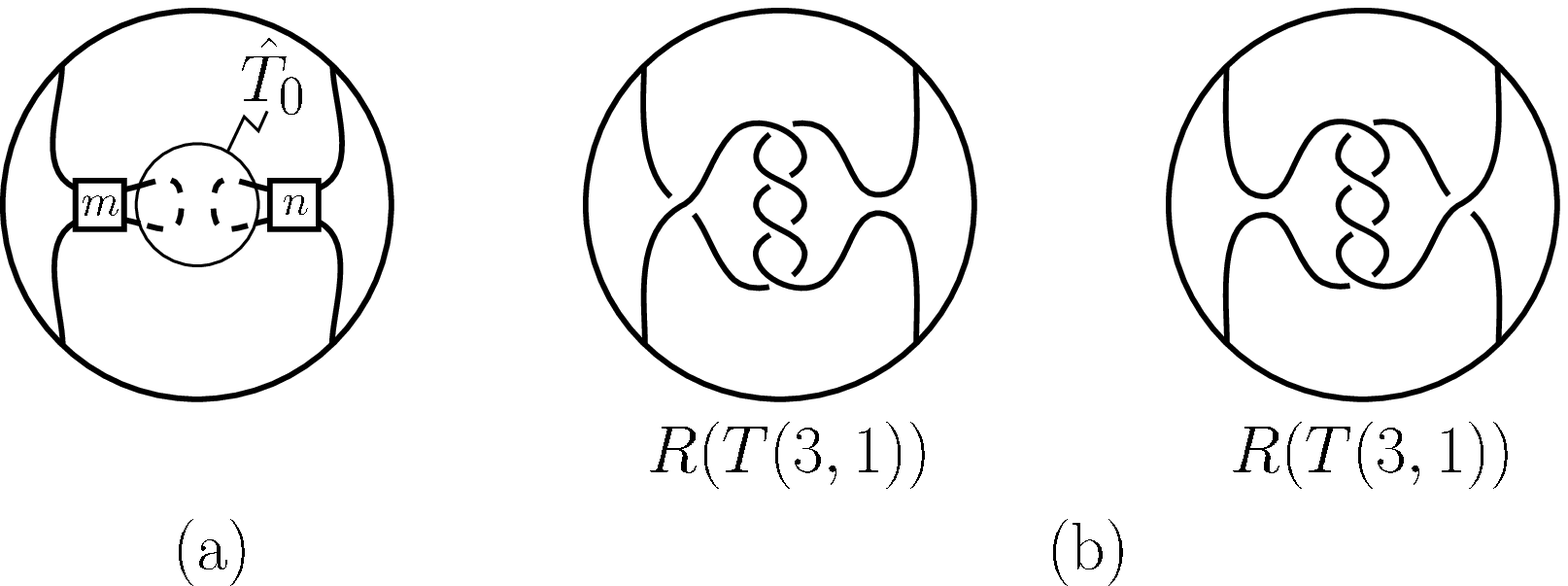}}
      \end{center}
   \caption{}
  \label{lemma3-proof4}
\end{figure} 

%


\begin{Lemma}\label{sublemma3}
Let $\hat T$ be a 2-string tangle projection with X-connection. 
Suppose that $\hat T$ has no self-crossings.
Suppose that $\hat T$ is not a projection of the tangle $T(1/2,-3)$ illustrated in Fig. \ref{lemma4tangles}. 
Then there are odd numbers $n_1,n_2,\cdots,n_k$ such that $\hat T=R(\hat T(n_1,n_2,\cdots,n_k))$ (Fig. \ref{sublemma3projection}).
\end{Lemma}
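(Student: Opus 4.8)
The plan is to mirror the proof of Lemma \ref{sublemma2} almost verbatim, with the single vertical twist region $\hat T(2n)$ replaced by the rational form $R(\hat T(n_1,\dots,n_k))$ and the exceptional tangle $R(T(3,1))$ replaced by $T(1/2,-3)$, running an induction on the number of crossings by means of bigon reduction.

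First I would record the free structural facts. Since $\hat T$ has no self-crossings, every crossing is mixed, so each of $\hat a,\hat b$ is a simple (embedded) arc with both endpoints on $\partial\mathbb{B}^2$. In particular no single mixed crossing can be nugatory, since deleting it leaves the two arcs still joined to one another through $\partial\mathbb{B}^2$; hence $\hat T$ is automatically reduced. Because the connection is an X-connection, the two arcs must cross an odd number of times, and the minimal such projection is the single crossing, which is exactly $R(\hat T(1))$, already of the desired form with $k=1$ and $n_1=1$.

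Now I argue by contradiction as in Lemma \ref{sublemma2}. Suppose $\hat T$ is a projection that is neither a projection of $T(1/2,-3)$ nor equal to $R(\hat T(n_1,\dots,n_k))$ for any odd $n_1,\dots,n_k$, and choose such a counterexample with the fewest crossings; by the previous paragraph it has at least three crossings. Since $\hat a$ and $\hat b$ are simple arcs crossing more than the minimal ($=1$) number of times, they bound an innermost (empty) bigon, which is necessarily a mixed $2$-gon exactly as $\hat T_1$ in Fig. \ref{R2-projections}. Replacing it by a pair of parallel arcs as in $\hat T_2$ of Fig. \ref{R2-projections} yields a projection $\hat T'$ with two fewer crossings, still with X-connection (the parity of the crossing number is preserved) and still without self-crossings. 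By Lemma \ref{sublemma1} we have $\hat T\geq\hat T'$, that is ${\rm TANGLE}(\hat T')\subset{\rm TANGLE}(\hat T)$; concretely, any diagram realizing a tangle on $\hat T'$ extends to a diagram on $\hat T$ by giving the reinserted $2$-gon a canceling (second Reidemeister) pair of crossings. Consequently, were $\hat T'$ a projection of $T(1/2,-3)$, then $\hat T$ would be one too, contradicting the choice of $\hat T$. Hence, by minimality of the counterexample, $\hat T'=R(\hat T(n_1,\dots,n_k))$ for some odd $n_1,\dots,n_k$.

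It then remains to recover $\hat T$ from $\hat T'=R(\hat T(n_1,\dots,n_k))$ by the inverse operation, namely replacing some pair of parallel arcs of $\hat T'$ by a $2$-gon. The decisive step — and the one I expect to be the genuine work, to be carried out by an exhaustive figure in the spirit of Fig. \ref{sublemma2-proof} — is to verify that every projection produced this way is either again of the form $R(\hat T(m_1,\dots,m_l))$ with all $m_j$ odd (when the $2$-gon is absorbed into a twist region, merely replacing some $n_i$ by $n_i+2$ and preserving oddness), or else is a projection of $T(1/2,-3)$ (the remaining, ``algebraic'' way of inserting the $2$-gon, which introduces an isolated even block of twists across two consecutive regions and so leaves the rational family). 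Since $\hat T$ is by assumption neither of these, this is the desired contradiction, completing the proof. The main obstacle is precisely this last enumeration: one must be sure that the list of positions at which a $2$-gon can be inserted into a rational X-tangle shadow is exhausted by these two outcomes, which is the analogue of the case check encoded in Fig. \ref{sublemma2-proof} for Lemma \ref{sublemma2}.
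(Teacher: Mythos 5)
Your proposal is correct and follows essentially the same route as the paper: a minimal counterexample, reduction of an (innermost) $2$-gon via Lemma \ref{sublemma1} to conclude $\hat T'=R(\hat T(n_1,\dots,n_k))$, and then the case check that reinserting a $2$-gon into such a rational projection yields either another $R(\hat T(n'_1,\dots,n'_{k'}))$ with odd entries or a projection of $T(1/2,-3)$. The enumeration you defer to ``an exhaustive figure'' is exactly what the paper does in Fig. \ref{sublemma3-proof}, where only typical cases are drawn and the rest are declared analogous, so your level of detail matches the paper's own proof.
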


\begin{figure}[htbp]
      \begin{center}
\scalebox{0.45}{\includegraphics*{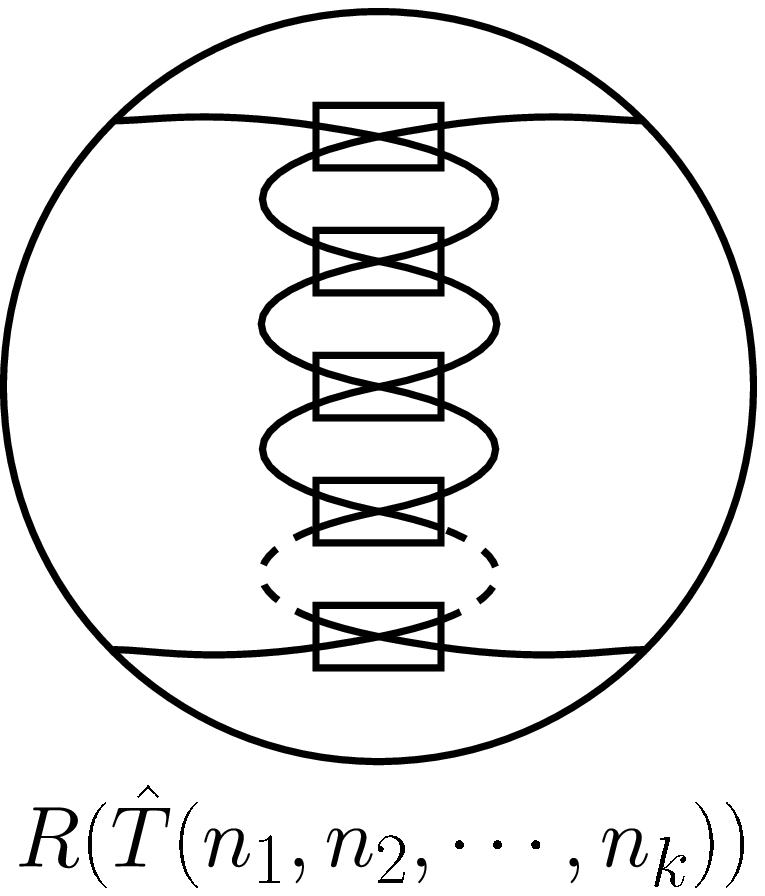}}
      \end{center}
   \caption{}
  \label{sublemma3projection}
\end{figure} 

%

\begin{proof}
The proof here is given by contradiction. Suppose that $\hat T$ is a 2-string tangle projection with X-connection with $n$ mixed crossings and no self-crossings that is not equal to any $R(\hat T(n_1,n_2,\cdots,n_k))$ such that it is not a projection of the tangle $T(1/2,-3)$. Suppose that $n$ is smallest among such tangle projections. 
It is clear that $\hat T$ has a 2-gon as $\hat T_1$ illustrated in Fig. \ref{R2-projections}. Let $\hat T'$ be a tangle projection obtained from $\hat T$ by replacing the 2-gon by a pair of parallel arcs as $\hat T_2$ illustrated in Fig. \ref{R2-projections}. Then by Lemma \ref{sublemma1} we have that $\hat T'=R(\hat T(n_1,n_2,\cdots,n_k))$ for some odd numbers $n_1,n_2,\cdots,n_k$. However it is easy to check that a tangle projection without mixed crossings obtained from $R(\hat T(n_1,n_2,\cdots,n_k))$ by replacing a pair of parallel arcs by a 2-gon is either a projection of $T(1/2,-3)$, or equal to $R(\hat T(n'_1,n'_2,\cdots,n'_{k'}))$ for some odd numbers $n'_1,n'_2,\cdots,n'_{k'}$. See Fig. \ref{sublemma3-proof}. In Fig. \ref{sublemma3-proof} some typical cases are illustrated. Any other case is essentially the same as one of the typical cases. In Fig. \ref{sublemma3-proof} (a) some typical cases that will produce some $R(\hat T(n'_1,n'_2,\cdots,n'_{k'}))$ are illustrated.
This completes the proof.
\end{proof}

\begin{figure}[htbp]
      \begin{center}
\scalebox{0.65}{\includegraphics*{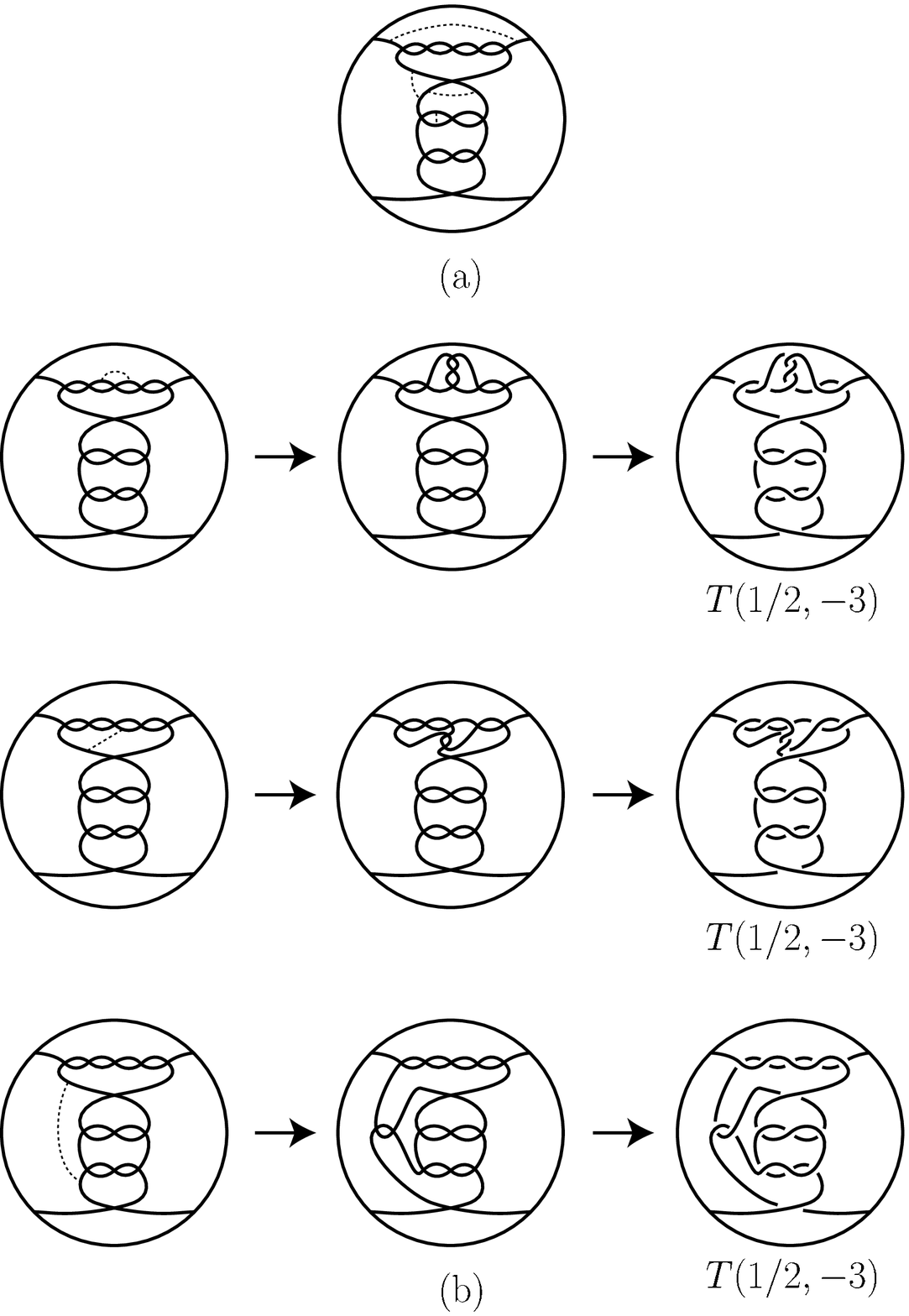}}
      \end{center}
   \caption{}
  \label{sublemma3-proof}
\end{figure} 

%


\begin{Lemma}\label{lemma4}
Let $\hat T$ be a reduced 2-string projection with X-connection which is not a projection of any 
of the tangles in Fig. \ref{lemma4tangles}. 
Then $\hat T$ is (a flype of) (the vertical and/or horizontal symmetry of) one of the tangle projections in Fig. \ref{lemma4projections}.
\end{Lemma}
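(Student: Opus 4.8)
The plan is to mirror the proof of Lemma \ref{lemma3} essentially verbatim, since the present statement is its X-connection analogue. First I would invoke Lemma \ref{local-trefoil-lemma} to conclude that both strings $\hat a$ and $\hat b$ must be almost trivial: if either string contained a non-nugatory self-crossing it would carry a local trefoil, and adding the crossing information furnished by Lemma \ref{local-trefoil-lemma} would exhibit $\hat T$ as a projection of one of the tangles in Fig. \ref{lemma4tangles} that contains a $3_1$ summand. With both strings almost trivial, every self-crossing is nugatory-free but still organized by the multiplicity/root structure introduced in Section \ref{Section 1}.

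Next I would run the same self-crossing analysis as in Lemma \ref{lemma3}, with ``rightmost'' replaced by the appropriate outermost condition for the X-connection. Using Lemma \ref{reducing-lemma} to pass to minors, I would show that a self-crossing of $\hat a$ (resp. $\hat b$) that fails to be outermost forces $\hat T$ to be a projection of one of the excluded tangles in Fig. \ref{lemma4tangles}; hence all mixed crossings on each string are outermost. Then, by examining the positions of three successive mixed crossings $B_{2i},B_{2i+1},B_{2i+2}$ (and the shifted triple) relative to their roots, and comparing multiplicities exactly as in Fig. \ref{lemma3-proof2} and Fig. \ref{lemma3-proof3}, I would conclude that every configuration other than ``all mixed crossings related on $\hat a$ and on $\hat b$'' produces an excluded tangle via Lemma \ref{hook-tangle}. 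This leaves a subtangle projection $\hat T_0$ that carries all the mixed crossings and has no self-crossings.

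Finally I would apply Lemma \ref{sublemma3} to $\hat T_0$: since $\hat T$ is assumed not to be a projection of $T(1/2,-3)$ (nor of the other tangles in Fig. \ref{lemma4tangles}), Lemma \ref{sublemma3} gives $\hat T_0=R(\hat T(n_1,n_2,\dots,n_k))$ for odd $n_1,\dots,n_k$, and reattaching the (trivial) spines of $\hat a$ and $\hat b$ then places $\hat T$, up to flype and vertical/horizontal symmetry, among the projections of Fig. \ref{lemma4projections}; where needed Lemma \ref{one-three-lemma} handles the intersection of residual arcs. I expect the main obstacle to be the bookkeeping in the middle step: verifying that the position analysis of successive outermost mixed crossings is genuinely exhausted by the finite picture list, and that each ``bad'' position is matched to a specific tangle in Fig. \ref{lemma4tangles}. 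A secondary but real difficulty, absent from Lemma \ref{lemma3}, is that the conclusion here is stated only up to flyping and the vertical/horizontal symmetries, so I must track these symmetries carefully when reducing cases (using that the exceptional family and the target family are both closed under these operations) rather than pinning down a single normal form.
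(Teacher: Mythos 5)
Your plan to transplant the proof of Lemma \ref{lemma3} verbatim does not match what the paper actually does, and two of your steps have genuine gaps. The paper's proof runs in the opposite order: it applies Lemma \ref{sublemma3} at the very start, not to a residual mixed-crossing block but to the core $\hat T'=\hat a'\cup\hat b'$ (the union of the two spines, which has no self-crossings by construction and is a minor of $\hat T$ by Lemma \ref{reducing-lemma}); the exclusion of $T(-5)$, $T(1/2,-3)$ and $R(T(3,1,3))$ then pins the core down to a flype of one of the few small projections of Fig. \ref{lemma4proof1}, and the remaining work classifies how self-crossing loops attach to that core: self-crossings in the first case give $T(-5)$ via Lemma \ref{hook-tangle} (leaving $\hat T_2$); a core with three or more mixed crossings is handled by Lemma \ref{hook-tangle} and Lemma \ref{vertical-trefoil-tangle-lemma} (leaving $\hat T_3$); and a core with exactly one mixed crossing is handled by applying Lemma \ref{lemma1} to the vertical-connection subtangles formed by the loops based on the core (leaving $\hat T_4$, $\hat T_5$, $\hat T_6$). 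This organization is forced by the X-connection: the two spines must cross each other an odd number of times, so at least one mixed crossing lies on the core, and the Lemma \ref{lemma3} picture (Fig. \ref{lemma3-proof4}) in which \emph{all} mixed crossings are segregated into a block disjoint from the self-crossing analysis cannot be reproduced literally.

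Concretely, your first and last steps fail. First, the claim that a non-almost-trivial string exhibits $\hat T$ as a projection of a tangle in Fig. \ref{lemma4tangles} ``that contains a $3_1$ summand'' presupposes the content of that figure: the only members the paper's proof of Lemma \ref{lemma4} ever produces are $T(-5)$, $T(1/2,-3)$, $R(T(3,1,3))$ and $T_1$, and, unlike the proof of Lemma \ref{lemma3} (whose excluded figure demonstrably contains $T(3_1,0_1)$ and $T(0_1,3_1)$, as one sees from its use in the proof of Theorem \ref{2-almost-positive-theorem}), the paper never derives or uses almost-triviality here. A reduced projection consisting of a local trefoil on one string together with a single mixed crossing violates your almost-triviality claim yet has too few crossings to project onto $T(-5)$, $T(1/2,-3)$ or $R(T(3,1,3))$; such configurations are exactly what the paper's Case 2.2 and Lemma \ref{lemma1} are designed to process, so they cannot simply be excluded at the outset. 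Second, your endgame applies only Lemma \ref{sublemma3} to the residual subtangle $\hat T_0$, but the endgame of Lemma \ref{lemma3} itself requires a parity split, with $m+n$ odd handled by Lemma \ref{-4-lemma} and $m+n$ even by Lemma \ref{sublemma2}, because the wrapping loops change which tangle must be excluded from the block and hence which auxiliary lemma applies. In the X-connection analogue one of the two parity classes produces a block to which Lemma \ref{sublemma3} does not apply (its hypotheses concern the wrong connection and the wrong excluded tangle), and the vertical-connection tools (Lemma \ref{sublemma2}, Lemma \ref{-4-lemma}, or Lemma \ref{lemma1}) are needed instead. Without repairing both points, the proposal does not close.
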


\begin{figure}[htbp]
      \begin{center}
\scalebox{0.65}{\includegraphics*{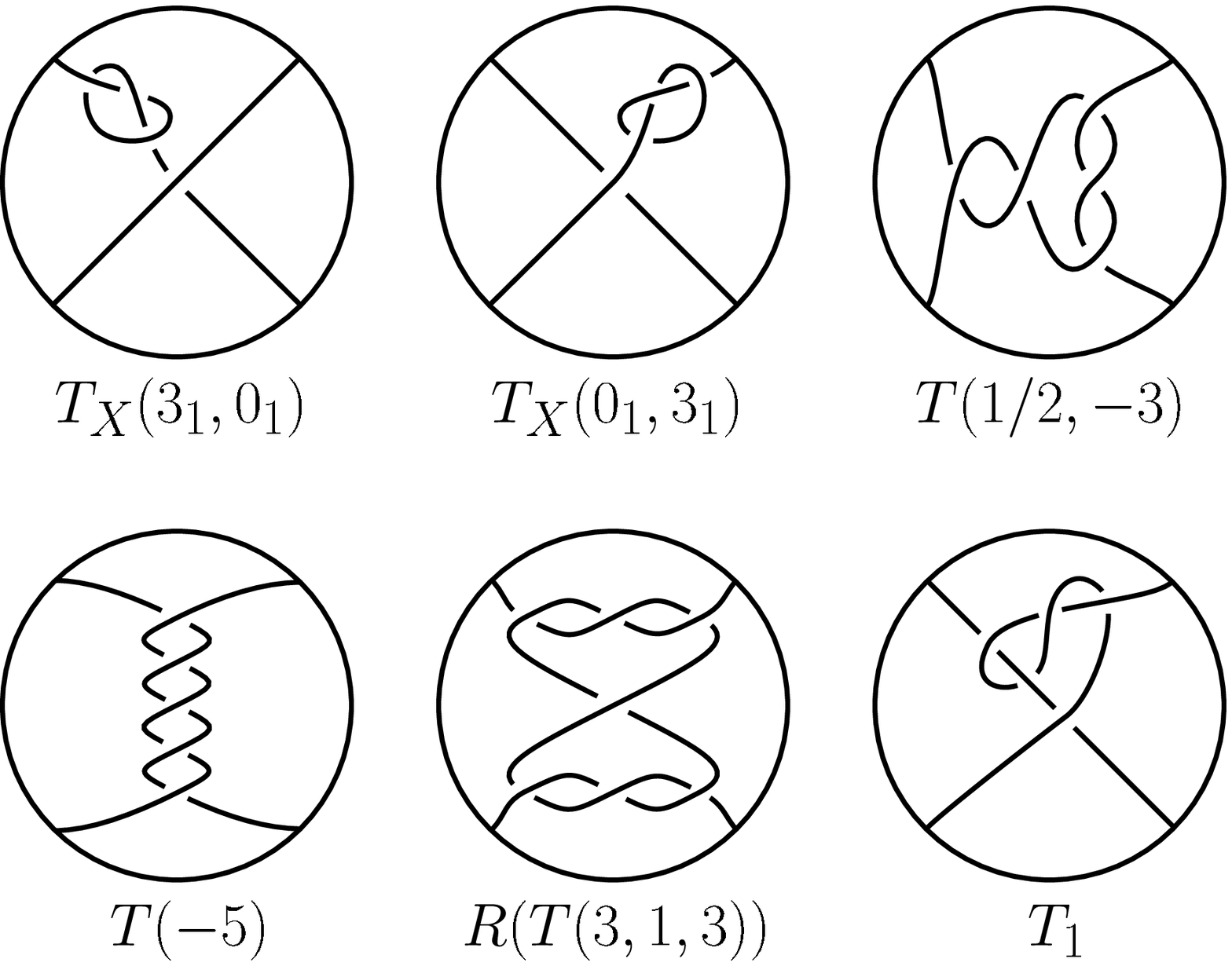}}
      \end{center}
   \caption{}
  \label{lemma4tangles}
\end{figure} 

%

%
\begin{figure}[htbp]
      \begin{center}
\scalebox{0.65}{\includegraphics*{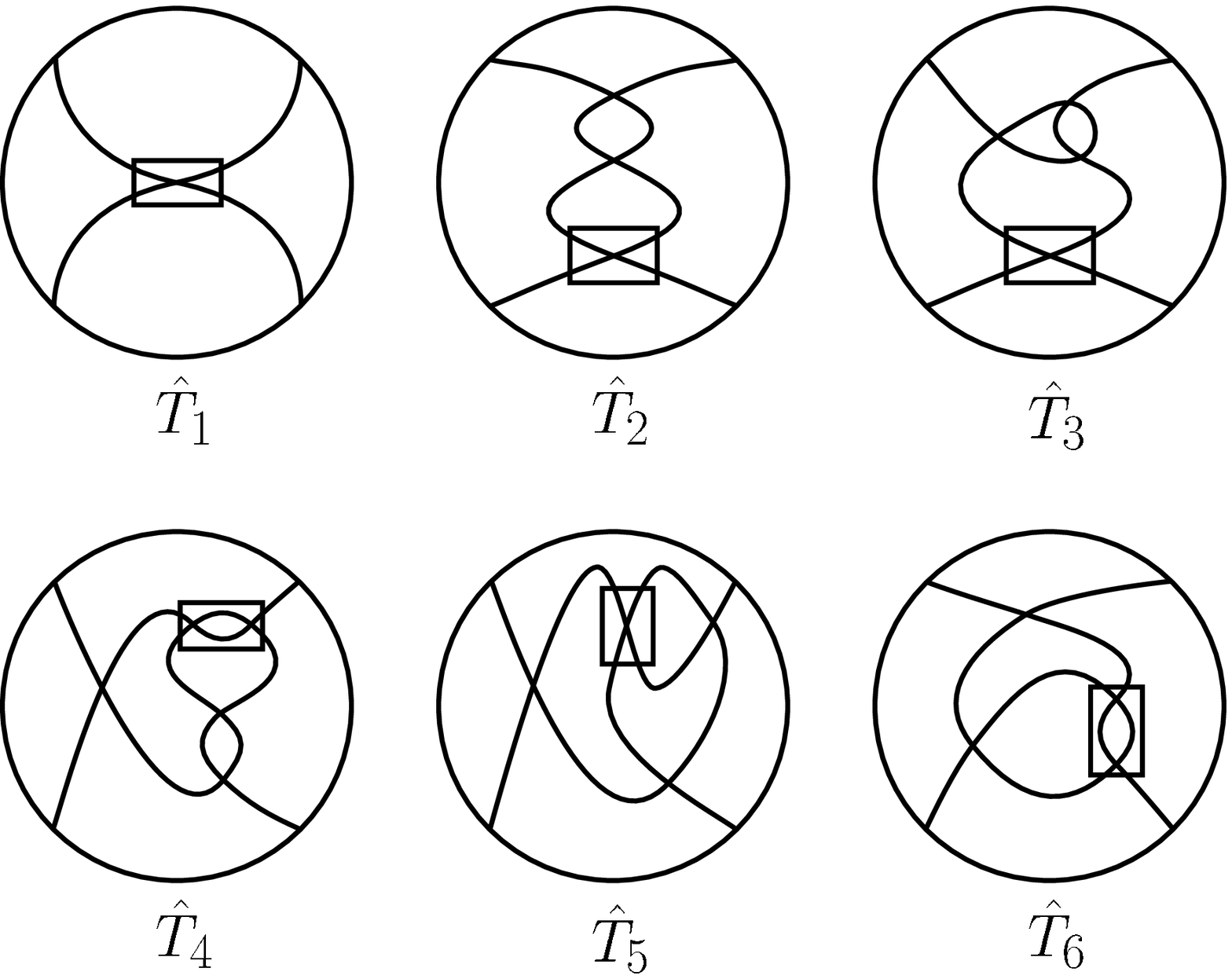}}
      \end{center}
   \caption{}
  \label{lemma4projections}
\end{figure} 

%

\begin{proof}
By Lemma \ref{sublemma3} and by the fact that $\hat T$ is not a projection of $T(1/2,-3)$, $T(-5)$ or $R(T(3,1,3))$ we have that the core $\hat T'=\hat a'\cup\hat b'$ is (a flype of) a projection in Fig. \ref{lemma4proof1}.

\begin{figure}[htbp]
      \begin{center}
\scalebox{0.65}{\includegraphics*{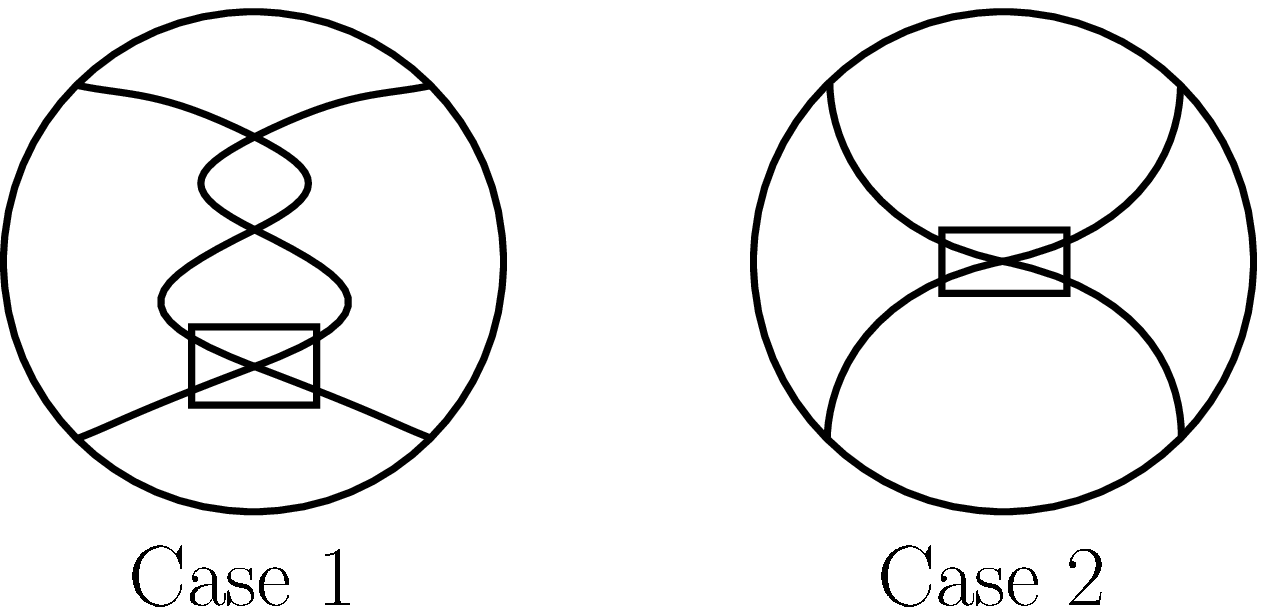}}
      \end{center}
   \caption{}
  \label{lemma4proof1}
\end{figure} 

%

\begin{enumerate}
\item[Case 1.]
If $\hat T$ is not equal to (a flype of) $\hat T_2$ then  $\hat T$ has self-crossing.
Then using Lemma \ref{hook-tangle} we have the tangle $T(-5)$ as typically illustrated in 
Fig. \ref{lemma4proof2} and we have the conclusion. In Fig. \ref{lemma4proof2} a dotted line expresses that there is a 
self-crossing, say $P$ of $\hat T$ on the core $\hat T'$ such that $P$ is on one end of the dotted line and $s(\hat T,P)$ has an intersection with a part of $\hat T$ where the other end is on. Up to symmetry and flyping Fig. \ref{lemma4proof2} illustrates all cases.

\begin{figure}[htbp]
      \begin{center}
\scalebox{0.65}{\includegraphics*{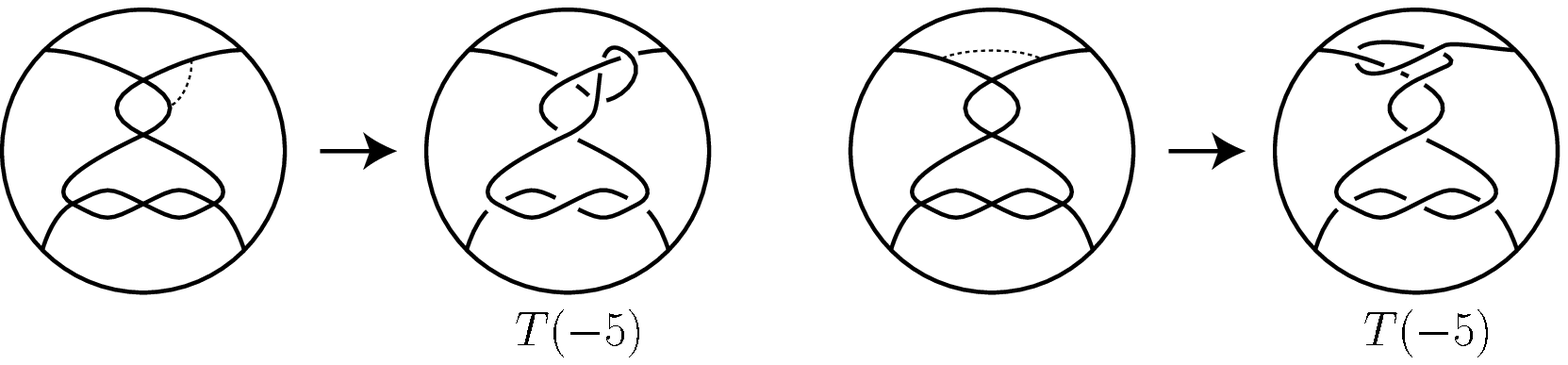}}
      \end{center}
   \caption{}
  \label{lemma4proof2}
\end{figure} 

%

\item[Case 2.1]
$\hat T'$ has three or more mixed crossings.

If $\hat T$ is not equal to (a flype of) $\hat T_3$ then using Lemma \ref{hook-tangle} and Lemma \ref{vertical-trefoil-tangle-lemma} we
have the tangle $T(1/2,-3)$, $R(T(3,1,3))$ or $T_1$ as illustrated in Fig. \ref{lemma4proof3}.

\begin{figure}[htbp]
      \begin{center}
\scalebox{0.65}{\includegraphics*{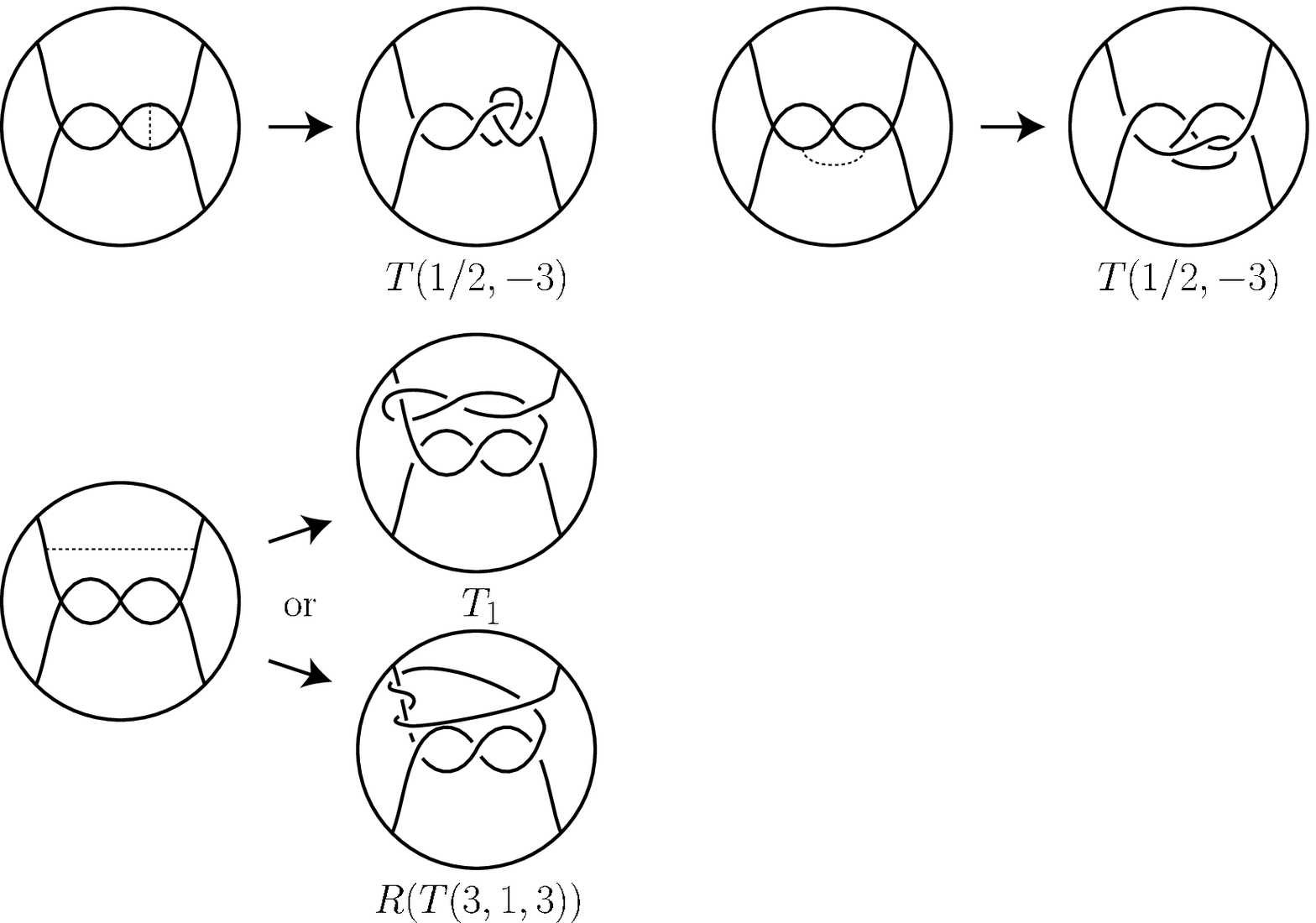}}
      \end{center}
   \caption{}
  \label{lemma4proof3}
\end{figure} 

%

\item[Case 2.2.]
$\hat T'$ has just one mixed crossing.

First suppose that there is a self-crossing $P$ on the core $\hat T'$ such that $r(\hat T,P)$ has just one mixed crossing.
Then by we apply Lemma \ref{lemma1} to a certain subtangle projection of $\hat T$ and have either $\hat T$ is a projection of $T(-5)$, $T(1/2,-3)$, $R(T(3,1,3))$ or $T_1$, or $\hat T$ is equal to (a flype of) $\hat T_4$, $\hat T_5$ or $\hat T_6$. See Fig. \ref{lemma4proof4}.

\begin{figure}[htbp]
      \begin{center}
\scalebox{0.65}{\includegraphics*{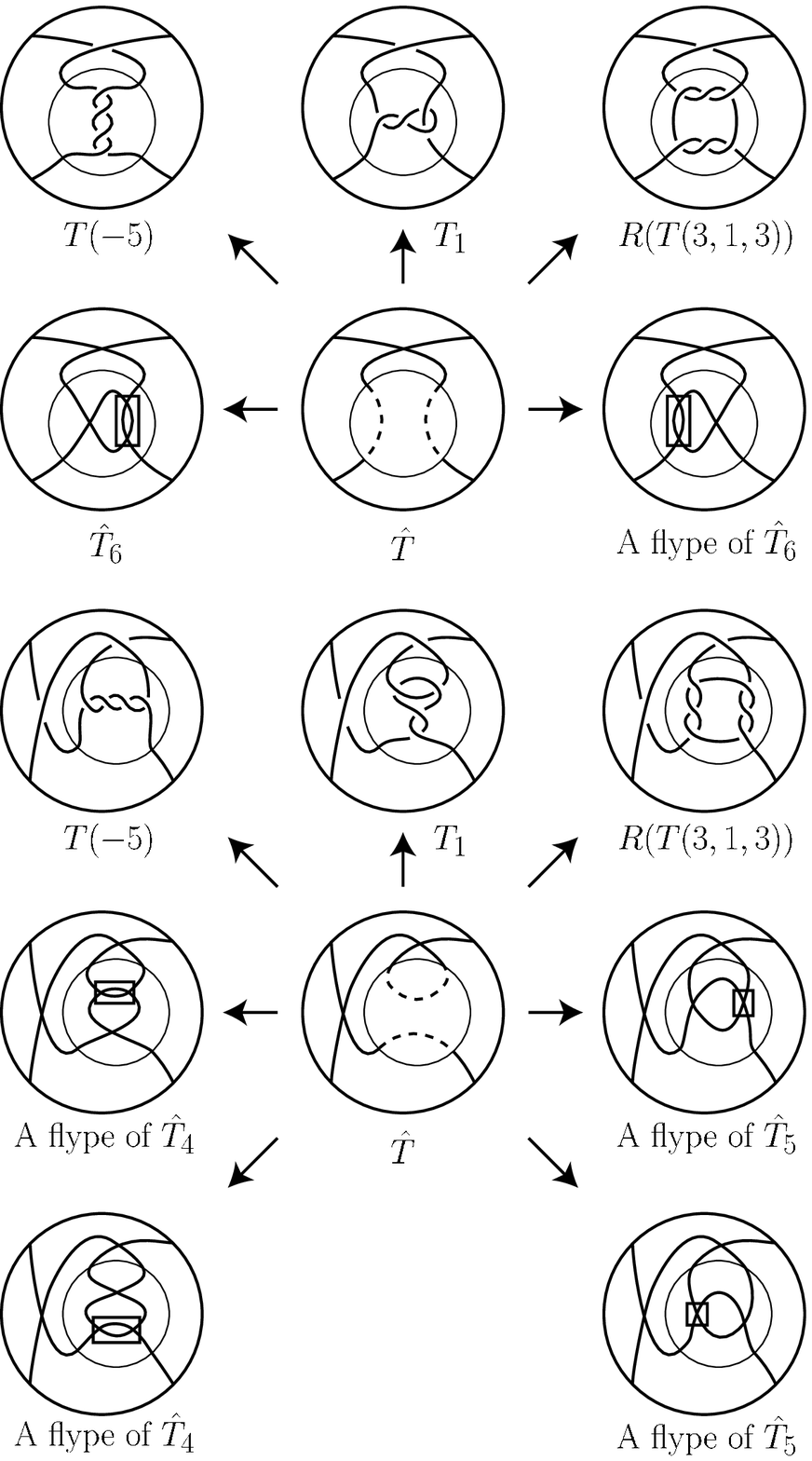}}
      \end{center}
   \caption{}
  \label{lemma4proof4}
\end{figure} 

%

Next suppose that for any self-crossing $P$ on the core $\hat T'$ $r(\hat T,P)$ has three or more mixed crossings.
Then we take self-crossings $P_1,P_2,\cdots,P_n$ on the core $\hat T'$ such that $r(r(\cdots r(\hat T,P_1),P_2),\cdots,P_{n-1})$ has three or more mixed crossings and $r(r(\cdots r(\hat T,P_1),P_2),\cdots,P_n)$ has just one mixed crossing. Set $\hat S_0=r(r(\cdots r(\hat T,P_1),P_2),\cdots,P_{n-2})$ and $\hat S=r(r(\cdots r(\hat T,P_1),P_2),\cdots,P_{n-1})$.
Then we have that $\hat S=r(\hat S_0,P_{n-1})$ is equal to (a flype of) $\hat T_{4}$, $\hat T_{5}$ or $\hat T_{6}$. 
Then by applying Lemma \ref{hook-tangle} we have the tangle $T(-5)$, $T(1/2,-3)$, or $T_1$ as typically illustrated in Fig. \ref{lemma4proof5},  \ref{lemma4proof6} and \ref{lemma4proof7}.
Note that in Fig. \ref{lemma4proof5}, \ref{lemma4proof6} and \ref{lemma4proof7} a dotted line express the situation that $P_{n-1}$ is at one end and $s(\hat S_0,P_{n-1})$ has intersection with the part of $\hat S$ where the other end is on. All typical cases up to flyping are illustrated in Fig. \ref{lemma4proof5}, \ref{lemma4proof6} and \ref{lemma4proof7} and other cases are entirely analogous.
This completes the proof.
\end{enumerate}
\end{proof}

\begin{figure}[htbp]
      \begin{center}
\scalebox{0.65}{\includegraphics*{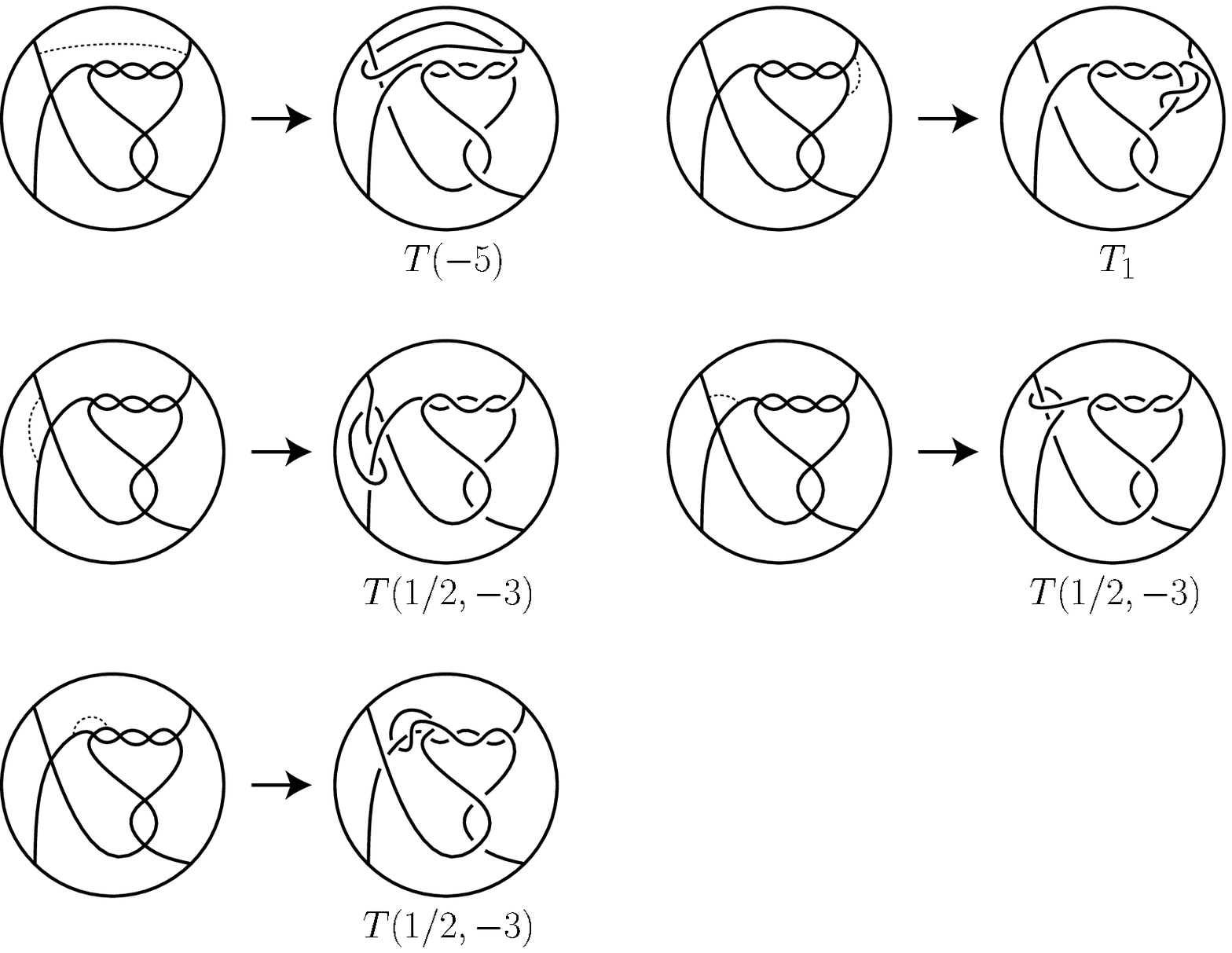}}
      \end{center}
   \caption{}
  \label{lemma4proof5}
\end{figure} 

%

%
\begin{figure}[htbp]
      \begin{center}
\scalebox{0.65}{\includegraphics*{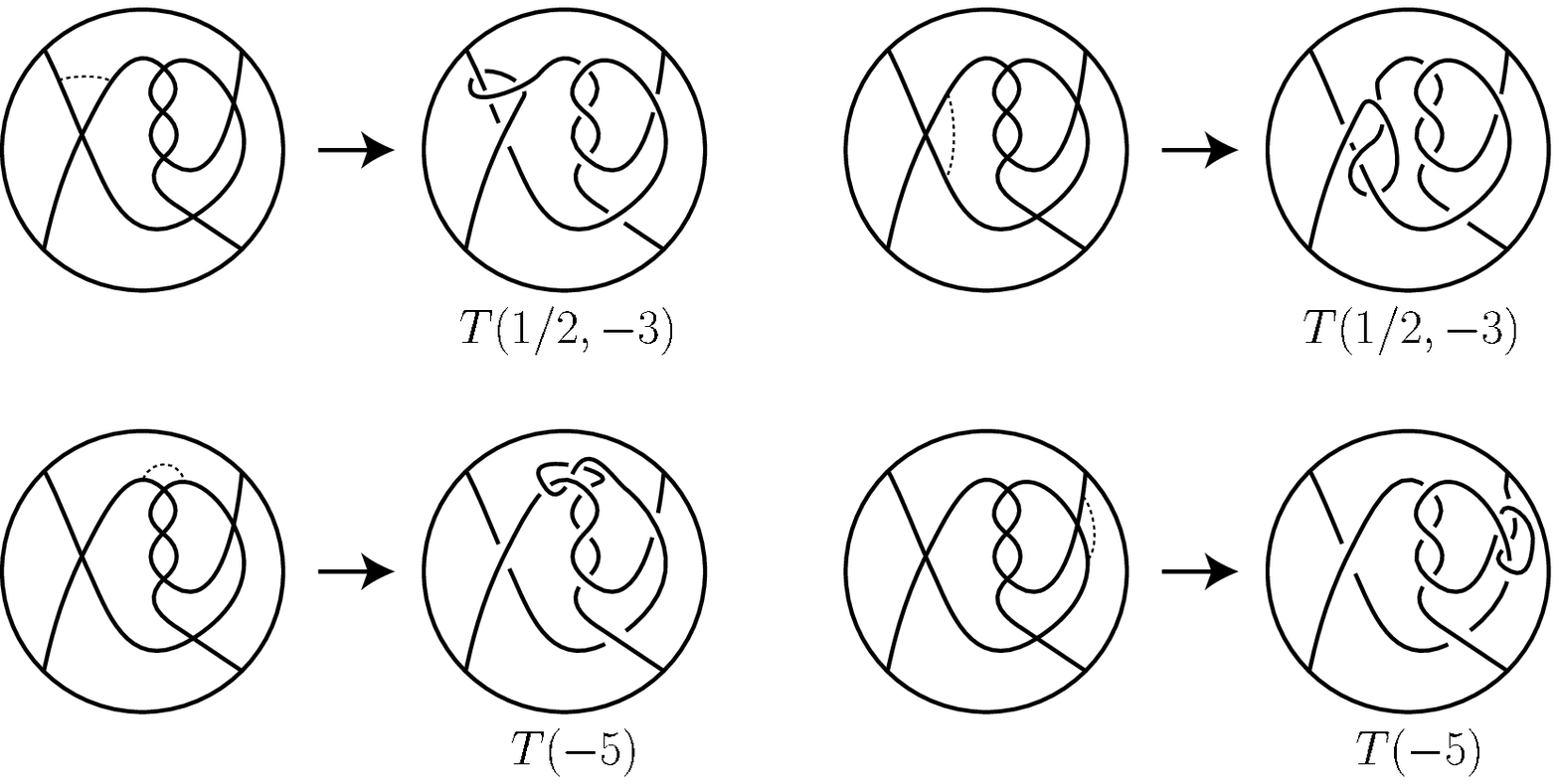}}
      \end{center}
   \caption{}
  \label{lemma4proof6}
\end{figure} 

%

%
\begin{figure}[htbp]
      \begin{center}
\scalebox{0.65}{\includegraphics*{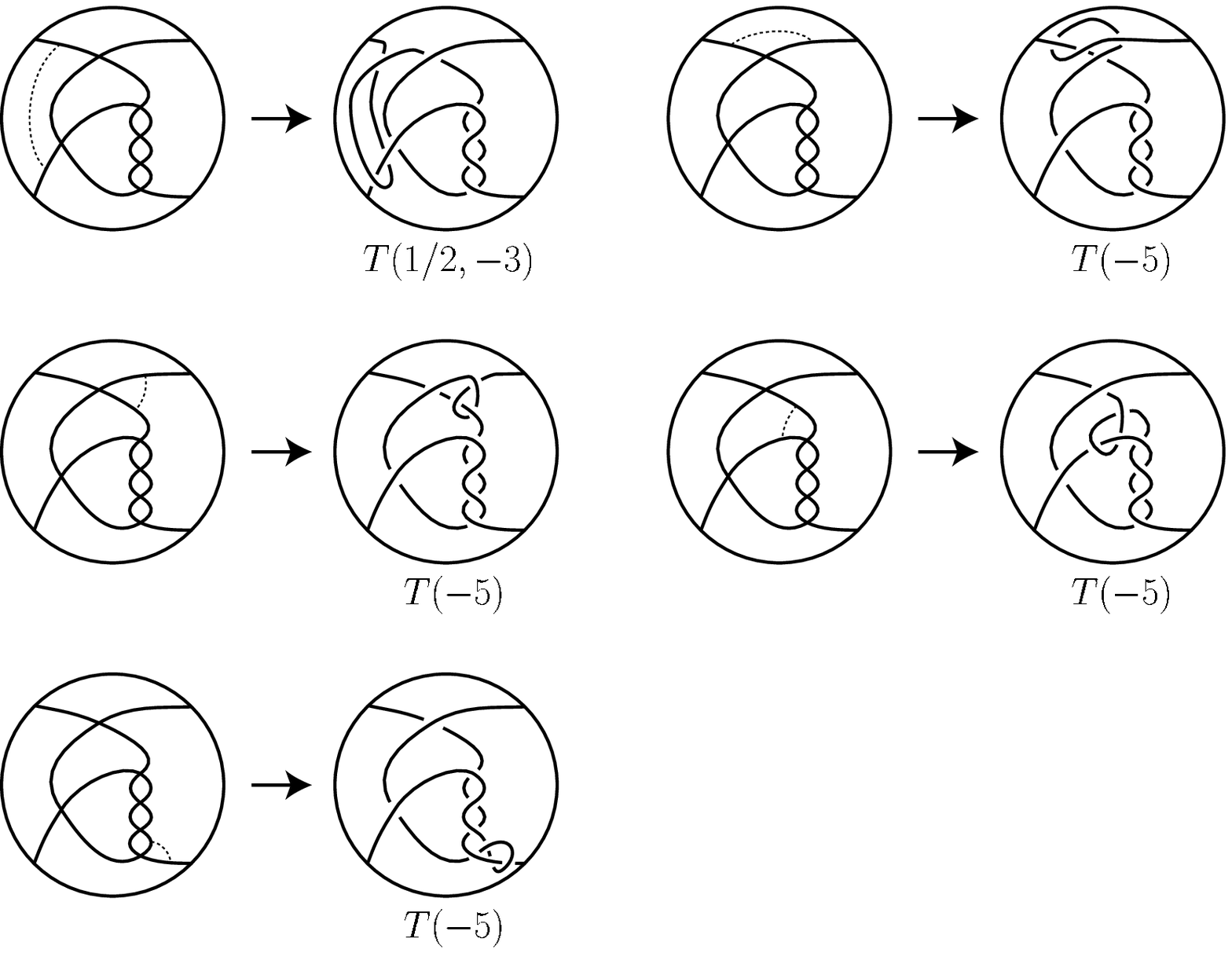}}
      \end{center}
   \caption{}
  \label{lemma4proof7}
\end{figure} 

%

\begin{Lemma}\label{sublemma4}
Let $\tilde T$ be an almost positive 2-string tangle diagram with X-connection.
Suppose that $\tilde T$ has no self-crossings and $\tilde T$ is not greater than or 
equal to the tangle $T(-3)$ nor $T(1/3)$ (Fig. \ref{lemma5tangles}).
Then the underlying projection $\hat T$ of $\tilde T$ is equal to $\hat T(1)$, $\hat T(3)$ or $\hat T(1/3)$.
\end{Lemma}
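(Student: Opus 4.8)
The plan is to pass to the underlying projection, classify it with Lemma \ref{sublemma3}, and then track the single negative crossing through the standard crossing-change arguments. Since $\tilde T$ has no self-crossings, every crossing is a mixed crossing of $\hat a$ with $\hat b$, so $\hat T$ is a $2$-string X-connection projection with no self-crossings; denote by $N$ its unique negative crossing. After first deleting any nugatory crossings by Lemma \ref{nugatory-erasing}, I would apply Lemma \ref{sublemma3} to $\hat T$: either $\hat T$ is a projection of $T(1/2,-3)$, or $\hat T=R(\hat T(n_1,\ldots,n_k))$ for some odd numbers $n_1,\ldots,n_k$.

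In the first case I claim $\tilde T\ge T(-3)$ or $\tilde T\ge T(1/3)$. The projection of $T(1/2,-3)$ carries a two-crossing clasp block sitting on a three-crossing block, so it has enough crossings to work with: I would assign over/under information so that the clasp block is removed by a second Reidemeister move while the remaining three crossings are turned negative, which realizes $T(-3)$ (and, with the dual assignment exploiting the X-connection, $T(1/3)$). Because $N$ may be taken to lie inside a block that is being made negative anyway, this assignment is compatible with keeping $N$ negative, so it is a genuine instance of $\ge$ starting from $\tilde T$; hence this case cannot occur under the hypotheses.

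In the second case I would run a case analysis on the odd sequence $(n_1,\ldots,n_k)$ and on the location of $N$. When $\hat T$ reduces to one of $\hat T(1)$, $\hat T(3)$ or $\hat T(1/3)$ we are already in the stated conclusion. Otherwise $R(\hat T(n_1,\ldots,n_k))$ contains a twist block carrying at least three crossings (some $n_i\ge5$, or the block of length $3$ sits alongside another) or two separate twist blocks; in each such configuration I would place $N$ in one of these blocks and apply the over and under technique together with Lemma \ref{one-three-lemma} and Lemma \ref{hook-tangle} to change positive crossings to negative, producing $T(-3)$ when the dominated block is an integer block and $T(1/3)$ when it is a rotated block, leaving $N$ negative throughout. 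The surviving projections are then exactly $\hat T(1)$, $\hat T(3)$ and $\hat T(1/3)$.

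The main obstacle is the bookkeeping forced by the single negative crossing $N$. All of the domination tools (Lemma \ref{one-three-lemma}, Lemma \ref{hook-tangle}, and the over and under technique) prescribe a definite over/under pattern, and in every configuration I must check that this pattern is compatible with $N$ remaining negative, i.e. that $N$ is never required to be a positive crossing of the target $T(-3)$ or $T(1/3)$. The genuinely delicate point is the converse direction needed to justify the three exceptions: for $\hat T(1)$ there are too few crossings to reach a reduced three-crossing tangle, while for $\hat T(3)$ and $\hat T(1/3)$ almost-positivity pins down the position of $N$, and since only positive crossings may be flipped to negative (and, by Lemma \ref{one-three-lemma}, an $\hat T(\text{odd})$ projection cannot even project $T(1/3)$), a direct inspection shows that no admissible sign change turns $\tilde T$ into $T(-3)$ or $T(1/3)$.
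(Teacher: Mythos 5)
Your overall strategy---classify $\hat T$ by Lemma \ref{sublemma3} and then analyze the location of the negative crossing---is genuinely different from the paper's, but as written it has a gap it does not survive. The recurring problem is that you treat the position of the negative crossing $N$ as something you may choose: ``$N$ may be taken to lie inside a block that is being made negative anyway,'' ``I would place $N$ in one of these blocks.'' But $N$ is part of the data of $\tilde T$; the entire content of the lemma is that the conclusion holds for an arbitrary, \emph{fixed} $N$. All of the tools you invoke (Lemma \ref{sublemma3}, Lemma \ref{one-three-lemma}, Lemma \ref{hook-tangle}, the over-and-under technique) are projection-level statements: they produce \emph{some} over/under assignment of all crossings realizing the target tangle, with no control over which crossings come out negative. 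So ``this assignment is compatible with keeping $N$ negative'' is precisely the claim that needs proof, and nothing in your argument supplies it. The problem is worst in your first case: ``$\hat T$ is a projection of $T(1/2,-3)$'' means only that some assignment of the crossings of $\hat T$ yields the tangle $T(1/2,-3)$; it does not mean $\hat T$ is the standard five-crossing picture, so the ``clasp block'' and ``three-crossing block'' on which your R2-and-negate move operates need not exist in $\hat T$ at all (and if the realizing assignment happens to be positive at $N$, you cannot even conclude $\tilde T\geq T(1/2,-3)$). Two smaller points: Lemma \ref{one-three-lemma} requires a \emph{prime} projection, while $R(\hat T(n_1,\ldots,n_k))$ is typically not prime; and your final paragraph labors over a converse (that diagrams over $\hat T(3)$, $\hat T(1/3)$ do not dominate $T(-3)$ or $T(1/3)$) that the lemma, being a one-way implication, never asserts.

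The paper's proof is designed exactly to avoid needing sign-controlled versions of the projection lemmas. It takes a minimal counterexample with at least five crossings; an X-connection diagram with no self-crossings and that many crossings contains a 2-gon. If $N$ is one of the two crossings of the 2-gon, the 2-gon is an R2 pair, and the second Reidemeister move deletes $N$ together with its partner, producing a \emph{positive} diagram of the same tangle with at least three crossings; for positive diagrams domination is equivalent to realizability over the projection, so Lemma \ref{one-three-lemma} applies with complete freedom and yields $T(-3)$ or $T(1/3)$, a contradiction. If instead both crossings of the 2-gon are positive, deleting the 2-gon gives a strictly smaller almost positive counterexample, contradicting minimality once the three-crossing base case (the six almost positive diagrams over $\hat T(3)$ and $\hat T(1/3)$) is checked by hand. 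The idea you are missing is to use the R2 move to eliminate $N$ itself and reduce to the positive case, rather than trying to steer projection-level lemmas around a crossing they cannot see.
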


\begin{proof}
We give the  proof by the contradiction. Suppose that there is an almost positive 2-string 
tangle diagram $\tilde T$ with X-connection without self-crossings that is not greater than 
or equal to the tangle $T(-3)$ nor $T(1/3)$ that contains five or more mixed crossings.
We take such $\tilde T$ with minimal mixed crossings.
It is clear that a tangle with X-connection without self-crossings with three or more mixed crossings has a 2-gon.
Thus we have that $\tilde T$ has a 2-gon.
If one of the two crossings of the 2-gon is the negative crossing then we have a positive diagram $\tilde T_0$ from $\tilde T$ by applying the second Reidemeister move. Then $\tilde T_0$ represents the same tangle as $\tilde T$ and has three or more mixed crossings. Then by Lemma \ref{one-three-lemma} we have that $\tilde T_0$ is greater than or equal to $T(-3)$ or $T(1/3)$. Thus we have that this case does not happen.
Therefore we have that the crossings of the 2-gon are both positive.
Let $\tilde T_1$ be a diagram obtained from $\tilde T$ by replacing that 2-gon by a pair of parallel arcs. 
It is clear that $\tilde T$ is greater than or equal to $\tilde T_1$. Therefore we have that $\tilde T_1$ is not greater than or equal to the tangle $T(-3)$ nor $T(1/3)$. By the minimality of $\tilde T$ we have that $\tilde T_1$ has just three mixed crossings.
Therefore we have that the underlying projection of $\tilde T_1$ is $\hat T(3)$ or $\hat T(1/3)$ and by considering the position of the negative crossing we have that $\tilde T_1$ is one of the six tangle diagrams whose underlying projection is $\hat T(3)$ or $\hat T(1/3)$.
But then it is easy to check that any tangle diagram obtained from $\tilde T_1$ by replacing a pair of parallel arcs by a 2-gon with positive crossings is a diagram of $T(-3)$ of $T(1/3)$. This is a contradiction.
\end{proof}

\begin{Lemma}\label{lemma5}
Let $\tilde T$ be a prime R2-reduced almost positive 2-string tangle diagram with X-connection.
Suppose that the negative crossing of $\tilde T$ is a mixed crossing and $\tilde T$ is not
greater than or equal to any of the tangles in Fig. \ref{lemma5tangles}.
Then $\tilde T$ is one of the diagrams illustrated in Fig. \ref{lemma5diagrams}.
\end{Lemma}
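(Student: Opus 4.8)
The plan is to mirror the proof of Lemma~\ref{almost-positive-tangle-lemma} step by step, replacing the vertical-connection tools by their X-connection counterparts. Concretely, the role played there by the hook tangle $T(-2)$ together with Lemma~\ref{hook-tangle} will now be played by the tangle $T(1/3)$ together with Lemma~\ref{one-three-lemma}, and the reduction of the self-crossing-free situation will be handled by the freshly established Lemma~\ref{sublemma4}. Let $N$ denote the negative (mixed) crossing. I would first record the two structural facts that get the analysis started: since $\tilde T$ is not greater than or equal to the local-trefoil tangles $T(3_1,0_1)$ and $T(0_1,3_1)$ of Fig.~\ref{lemma5tangles}, Lemma~\ref{local-trefoil-lemma} forces both strings $\tilde a$ and $\tilde b$ to be almost trivial, so that the notions of spine, multiplicity, and root from Section~\ref{Section 1} apply to each string.

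Next I would split on the number of mixed crossings and on the presence of self-crossings, exactly as in Lemma~\ref{almost-positive-tangle-lemma}. If $\tilde T$ has only the minimal number of mixed crossings, the two mixed crossings must be related on both strings, and this pins $\tilde T$ down to the base diagrams of Fig.~\ref{lemma5diagrams}. If $\tilde T$ has no self-crossings, I would invoke Lemma~\ref{sublemma4}: since $\tilde T$ is not greater than or equal to $T(-3)$ nor $T(1/3)$, its underlying projection is $\hat T(1)$, $\hat T(3)$, or $\hat T(1/3)$, and then the position of $N$ (whether it is over or under, and whether it is a first, middle, or last mixed crossing on $\tilde b$) determines which of the listed diagrams arises; the non-listed placements will be shown to produce $T(-3)$ or $T(1/3)$ by feeding the positive residual into Lemma~\ref{one-three-lemma}. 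The remaining case, where $\tilde T$ carries a self-crossing (say on $\tilde a$), is treated through the root $P$ of $N$: by Lemma~\ref{diagram-reducing-lemma} one has $\tilde T\geq r(\tilde T,P)$, and whenever the positive tangle $r(\tilde T,P)$ still has mixed crossings, Lemma~\ref{one-three-lemma} yields $r(\tilde T,P)\geq T(1/3)$, hence $\tilde T\geq T(1/3)$. This reduces matters to the configuration in which $N$ is forced to be rightmost (outermost with respect to the relevant boundary arc), and there the \lq\lq over and under technique'' developed in the proof of Lemma~\ref{almost-positive-tangle-lemma} is applied to any pair of related positive mixed crossings to produce $T(1/3)$ unless no such pair exists, in which case $\tilde T$ is one of the exceptional diagrams.

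I expect the main obstacle to be the self-crossing case, specifically the bookkeeping that pins down the exact position of $N$ after the reductions above and verifies that every placement not listed in Fig.~\ref{lemma5diagrams} forces domination of $T(-3)$ or $T(1/3)$. The X-connection introduces the genuinely three-strand flavor of the $T(1/3)$ tangle (as opposed to the simpler two-strand $T(-2)$ hook), so the geometric deformations underlying the application of Lemma~\ref{one-three-lemma} are more delicate, and the interaction between a self-crossing on $\tilde a$ and the mixed crossings of $\tilde b$ must be controlled by repeated use of Lemma~\ref{reducing-lemma} to take minors without disturbing $N$. As in Lemma~\ref{almost-positive-tangle-lemma}, the argument will close by an exhaustive but routine inspection of the finitely many surviving local pictures, each of which is either shown to be greater than or equal to one of the tangles of Fig.~\ref{lemma5tangles} or identified with one of the diagrams of Fig.~\ref{lemma5diagrams}.
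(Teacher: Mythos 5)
Your overall frame (split on the presence of self-crossings, use Lemma~\ref{sublemma4} for the self-crossing-free case, use Lemma~\ref{one-three-lemma} as the X-connection substitute for Lemma~\ref{hook-tangle}) matches the paper's strategy, and your no-self-crossing case is exactly the paper's Case~2. But your self-crossing analysis has a genuine gap caused by the parity of an X-connection: the two strings of an X-connection tangle always meet in an odd number of mixed crossings, so after reducing at the root $P$ of $N$ the positive tangle $r(\tilde T,P)$ still has at least one mixed crossing --- the alternative ``$r(\tilde T,P)$ has no mixed crossings, hence $N$ is rightmost,'' which drives Case~2.1 of Lemma~\ref{almost-positive-tangle-lemma}, simply never occurs here. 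Moreover, Lemma~\ref{one-three-lemma} does not give what you claim: when the projection of $r(\tilde T,P)$ is $\hat T(n)$ with odd $n\geq 3$ one gets $T(-3)$ rather than $T(1/3)$ (harmless), but when $r(\tilde T,P)$ has exactly one mixed crossing its projection is $\hat T(1)$ and the lemma yields nothing at all --- and this surviving case is precisely the hard one. The paper's key move there, absent from your proposal, is to apply the \emph{vertical-connection} Lemma~\ref{almost-positive-tangle-lemma} to the subtangle formed by the loop $s(\tilde a,P)$ together with $\tilde b$ (Fig.~\ref{lemma5proof1}); that is, the paper imports the vertical lemma into the X-connection proof rather than relying only on X-analogues.

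Second, your root-based reduction says nothing about the case in which $\tilde T$ has self-crossings but $N$ lies on the spine, so that $m(N)=0$ and $N$ has no root; your fallback (the ``over and under technique'' applied to related positive mixed crossings) is transplanted from the vertical proof without justification and cannot by itself generate the list of exceptional diagrams. The paper's Case~3 handles this by noting $\tilde T\geq\tilde T'$ (the spine), deducing from the Case~2 argument that the spine equals the single-negative-crossing diagram $\tilde T_1$, and then applying Lemma~\ref{vertical-trefoil-tangle-lemma} and Lemma~\ref{hook-tangle} to the subtangles formed by the loops (Fig.~\ref{lemma5proof2}); this is where the diagrams of Fig.~\ref{lemma5diagrams} other than $\tilde T_1$ actually arise. (A smaller slip: your opening case ``the two mixed crossings must be related on both strings'' is a vertical-connection artifact --- for an X-connection the minimal number of mixed crossings is one, namely $N$ itself.)
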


\begin{figure}[htbp]
      \begin{center}
\scalebox{0.65}{\includegraphics*{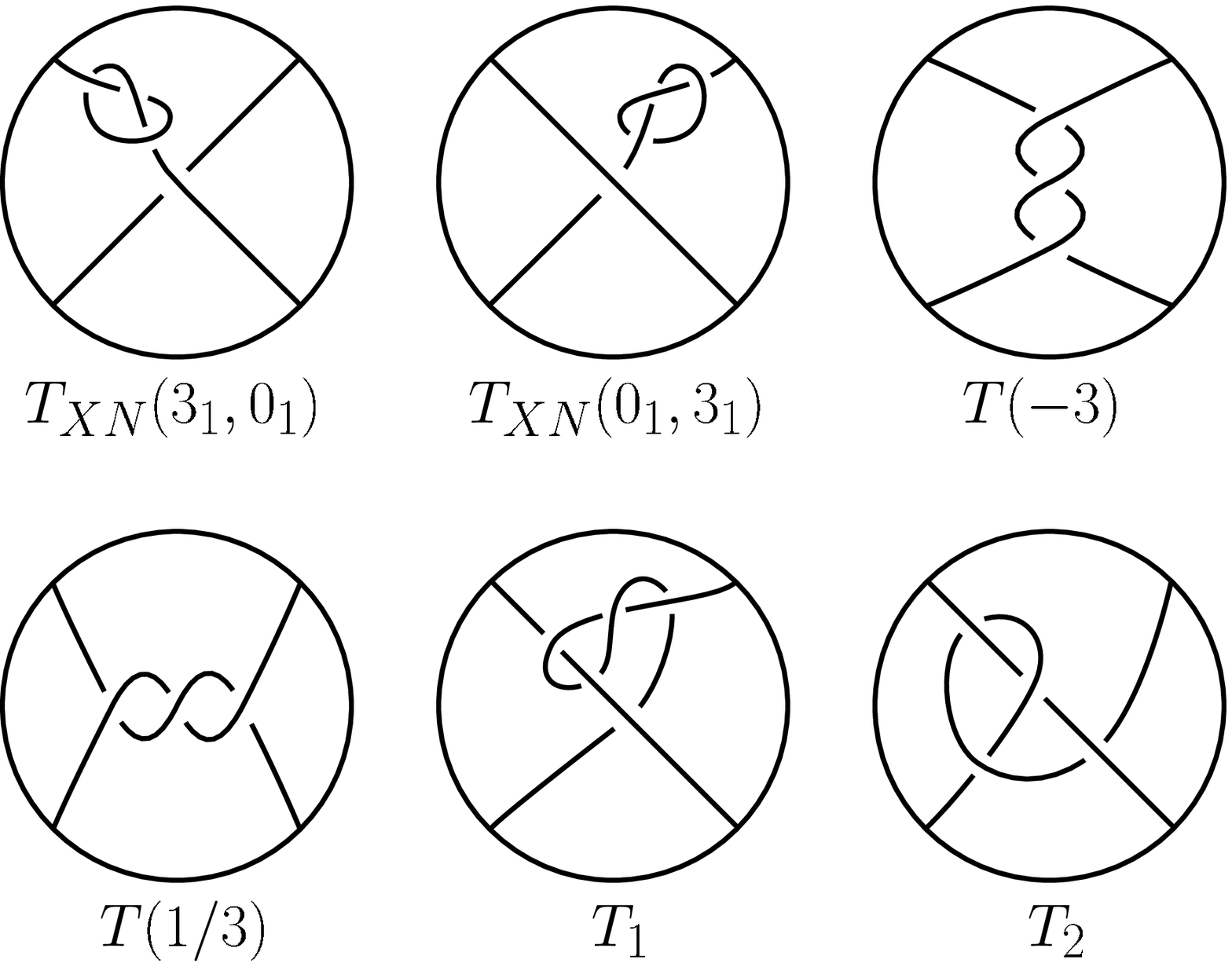}}
      \end{center}
   \caption{}
  \label{lemma5tangles}
\end{figure} 

%

%
\begin{figure}[htbp]
      \begin{center}
\scalebox{0.65}{\includegraphics*{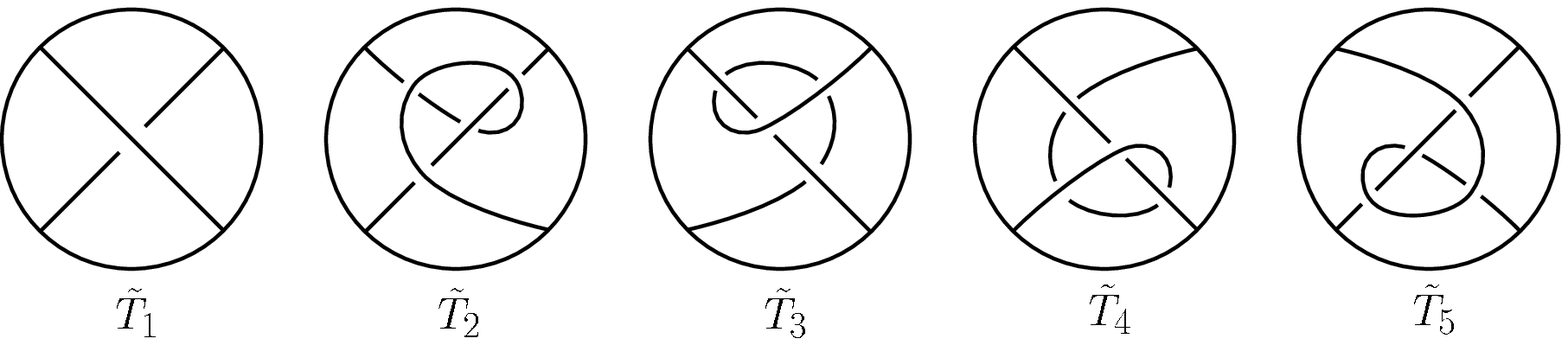}}
      \end{center}
   \caption{}
  \label{lemma5diagrams}
\end{figure} 

%

\begin{proof}
We consider the following cases.

\begin{enumerate}
\item[Case 1.]
There is a self-crossing $P$ such that $r(\tilde T,P)$ is a positive diagram.
Suppose that there are three or more mixed crossings of $r(\tilde T,P)$.
Then by Lemma \ref{one-three-lemma} we have $T(1/3)$ or $T(-3)$.
Thus we have that $r(\tilde T,P)$ has just one mixed crossing.
Then by Lemma \ref{almost-positive-tangle-lemma} we have the tangle $T(-3)$ or $T(1/3)$ as illustrated in Fig. \ref{lemma5proof1}.

\begin{figure}[htbp]
      \begin{center}
\scalebox{0.65}{\includegraphics*{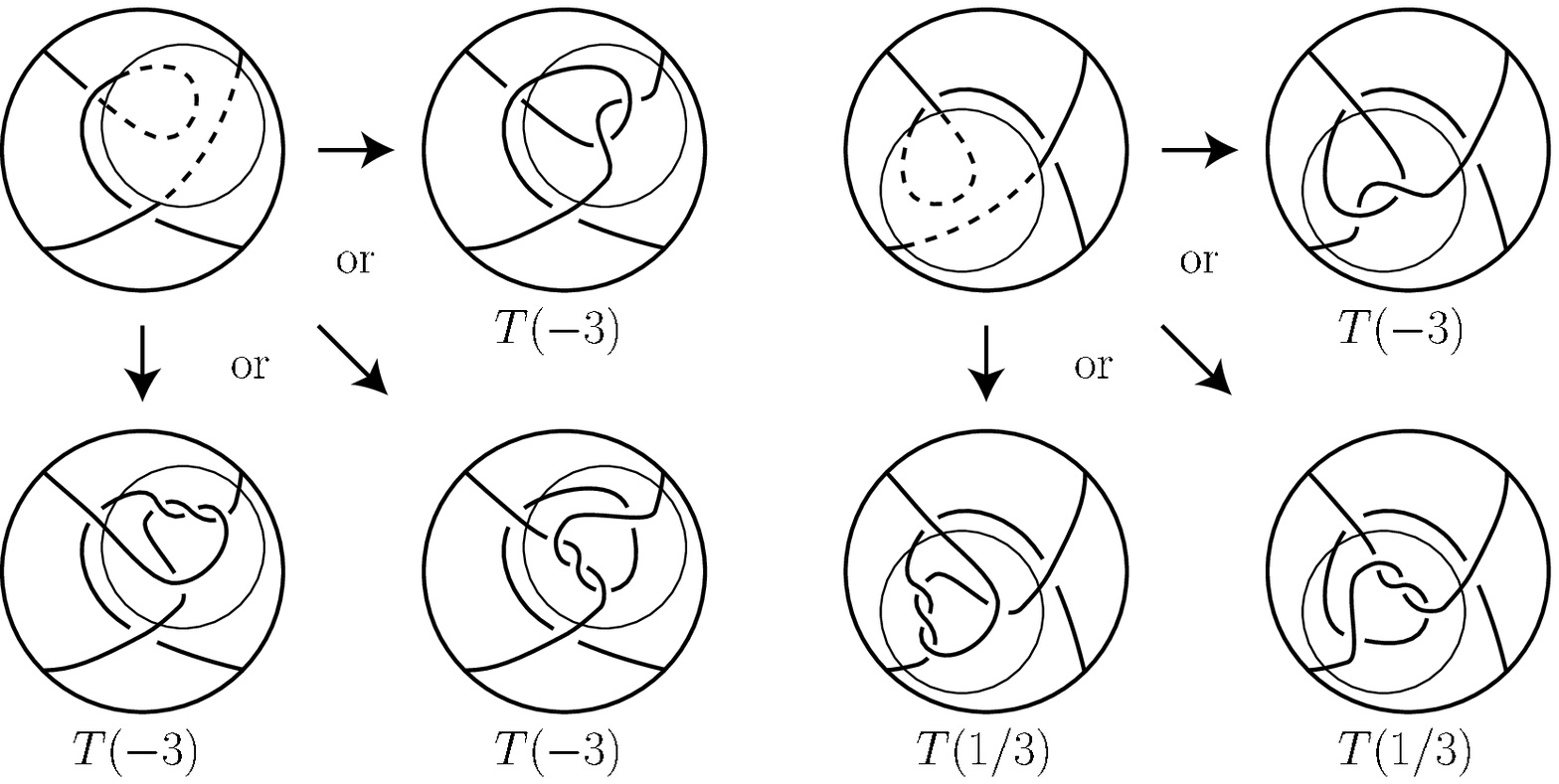}}
      \end{center}
   \caption{}
  \label{lemma5proof1}
\end{figure} 

%

\item[Case 2.]
$\tilde T$ has no self-crossings.

In this case we have $\tilde T$ is equal to $\tilde T_1$ by Lemma \ref{sublemma4}.

\item[Case 3.]
Otherwise.

By Case 2 we have that the spine $\tilde T'$ of $\tilde T$ is equal to $\tilde T_1$.
Then by applying Lemma \ref{vertical-trefoil-tangle-lemma} or Lemma \ref{hook-tangle} as illustrated in Fig. \ref{lemma5proof2} we have the result. 
This completes the proof.
\end{enumerate}
\end{proof}

\begin{figure}[htbp]
      \begin{center}
\scalebox{0.65}{\includegraphics*{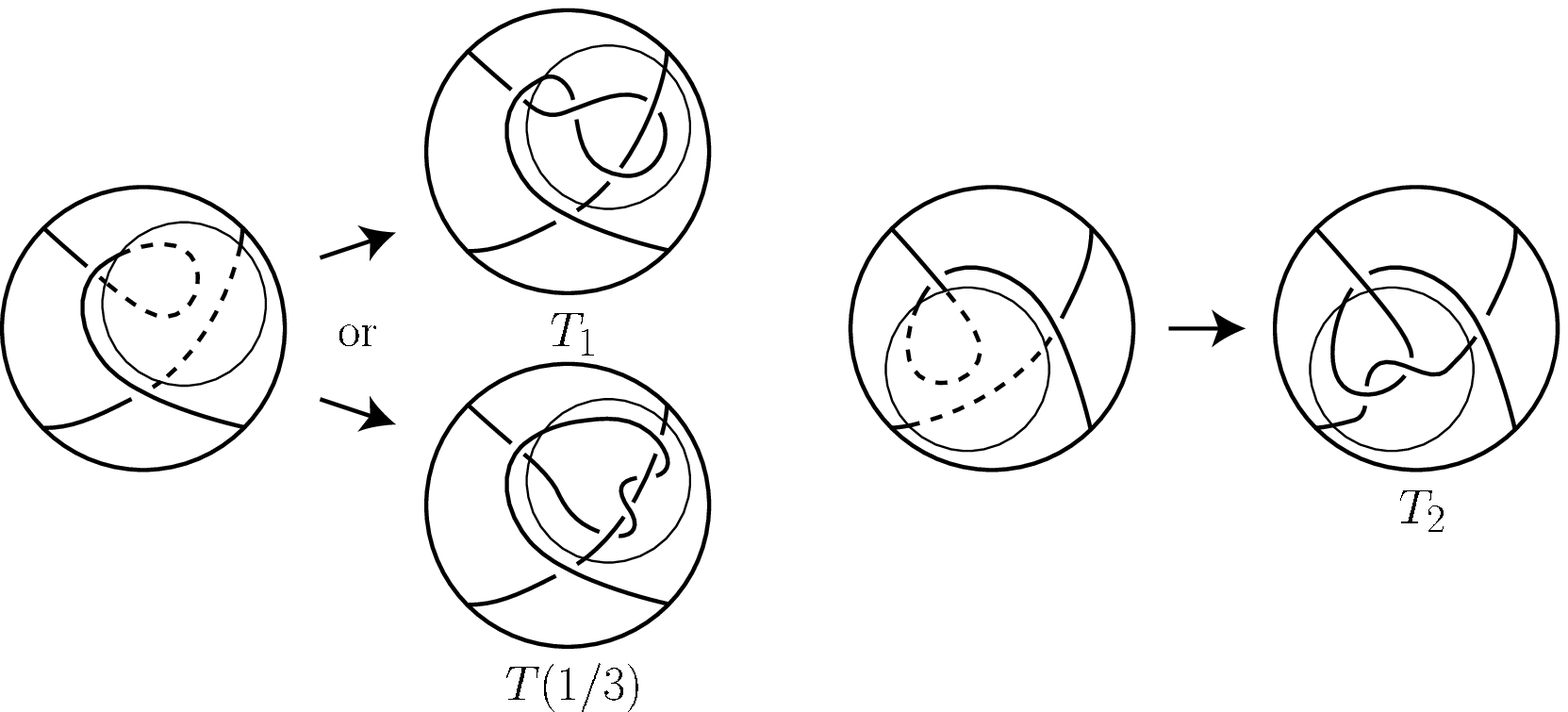}}
      \end{center}
   \caption{}
  \label{lemma5proof2}
\end{figure} 

%


\begin{Lemma}\label{lemma6}
Let $\tilde L=\tilde\ell_1\cup\tilde\ell_2$ be a 2-almost positive link diagram of a 2-component link $L$ such that the negative crossings are mixed crossings. Suppose that $\tilde L$ has four or more mixed crossings. Then $L$ is greater than or equal to the 2-component trivial link.
\end{Lemma}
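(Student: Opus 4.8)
The plan is to show that $\tilde L$ can be converted, by changing only positive crossings to negative ones, into a descending diagram, which necessarily represents the $2$-component trivial link. First I would recast the hypotheses: the mixed crossings between $\tilde\ell_1$ and $\tilde\ell_2$ occur in even number and exactly two of them, the negative crossings $N_1$ and $N_2$, are negative; hence there are at least two positive mixed crossings. The tool is the descending algorithm recalled in the Fundamental Technique subsection: having fixed an ordering of the two components and a base point on each, one traces the diagram and assigns every crossing to be passed as an overpass first, and the outcome is always the trivial link. Such an assignment is obtainable from $\tilde L$ by changing only positive crossings to negative precisely when every crossing whose over/under information must change is currently positive, so the only possible obstructions are $N_1$ and $N_2$, each of which must already be an overpass-first crossing for the chosen traversal.

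Since at a mixed crossing being overpass-first in the ordering that traces $\tilde\ell_i$ first means exactly that $\tilde\ell_i$ is the over-strand, and since every self-crossing is positive (so a suitable base point renders each component descending using only positive flips), the argument splits on the over-strands at $N_1$ and $N_2$. If one component, say $\tilde\ell_1$, is over at both $N_1$ and $N_2$, I would trace $\tilde\ell_1$ first: the target descending diagram has $\tilde\ell_1$ over $\tilde\ell_2$ at every mixed crossing, in agreement with $N_1$ and $N_2$, so it is realised by flipping only positive crossings, and $L\ge$ trivial link. The symmetric conclusion holds if $\tilde\ell_2$ is over at both.

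The remaining case is when the two negative crossings present different over-strands, say $\tilde\ell_1$ over $\tilde\ell_2$ at $N_1$ and $\tilde\ell_2$ over $\tilde\ell_1$ at $N_2$. Here no choice of ordering makes both negative crossings overpass-first, and simply leaving $N_1$ untouched would produce a diagram differing from the trivial link by a single crossing change, generically the left-handed Hopf link, rather than the trivial link itself. The idea is therefore to spend one of the (at least two) positive mixed crossings to remove $N_1$ entirely: passing to the complementary tangle of $\tilde L$ at $N_1$, which is a $1$-almost positive $2$-string tangle whose only negative crossing is the mixed crossing $N_2$, I would use Lemma \ref{reducing-lemma} and Lemma \ref{diagram-reducing-lemma} to clear away self-crossings and then the over/under (descending) technique, together with Lemma \ref{hook-tangle}, to bring a positive mixed crossing into a bigon with $N_1$ in which $\tilde\ell_1$ lies over $\tilde\ell_2$; a second Reidemeister move then cancels $N_1$ against that positive crossing. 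This yields a diagram $\tilde L'$ with $L\ge L'$ whose unique negative crossing is $N_2$, still with $\tilde\ell_2$ over $\tilde\ell_1$. Tracing $\tilde\ell_2$ first now makes $N_2$ overpass-first, so the descending algorithm trivialises $\tilde L'$ using only positive flips, giving $L\ge L'\ge$ trivial link.

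The principal difficulty is exactly the cancellation step in this last case: extracting from the positive mixed crossings one that forms a removable bigon with $N_1$. This is where the hypothesis of four or more mixed crossings is indispensable (if there were only two mixed crossings, both negative and opposed, one is stuck with the left-handed Hopf link), and it is the part that calls for the same kind of careful diagrammatic bookkeeping that underlies Lemmas \ref{hook-tangle}, \ref{one-three-lemma} and \ref{almost-positive-tangle-lemma}, rather than admitting a one-line justification.
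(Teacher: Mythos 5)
Your first case is sound: if one component, say $\tilde\ell_1$, is the over-strand at both negative crossings, then ordering the components with $\tilde\ell_1$ first and running the descending algorithm requires changing only positive crossings (every self-crossing is positive, and every mixed crossing that must be flipped is positive), so $L\geq$ trivial link; as you note, this needs no hypothesis on the number of mixed crossings. The problem is the remaining case, where the two negative crossings have different over-strands: this is where the whole content of the lemma lies, and your treatment of it is a plan, not a proof. The decisive step --- that by changing only positive crossings of $\tilde L$ one can bring some positive mixed crossing into a 2-gon with $N_1$ and cancel the pair by a second Reidemeister move --- is simply asserted, with an appeal to Lemmas \ref{reducing-lemma}, \ref{diagram-reducing-lemma} and \ref{hook-tangle} and the over/under technique, and you concede yourself that it ``calls for careful diagrammatic bookkeeping rather than admitting a one-line justification.'' No analysis is given of which positive mixed crossing could serve, nor of how the strands between it and $N_1$ are to be cleared so that a cancelling bigon actually forms; since this is precisely where the hypothesis of four or more mixed crossings must enter (with only the two opposed negative crossings one is left with the left-handed Hopf link), the essential case remains unproven.

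For contrast, the paper never splits on over-strands at the negative crossings. It lets $M,N$ cut $\tilde\ell_1$ into arcs $\tilde a,\tilde b$ and $\tilde\ell_2$ into $\tilde c,\tilde d$, and first disposes of any self-crossing between $\tilde a$ and $\tilde b$ (or between $\tilde c$ and $\tilde d$) by Lemma \ref{diagram-reducing-lemma}: deleting such a loop removes one of the two negative crossings, and the resulting almost positive diagram dominates the unlink by exactly your descending argument. It may then assume (WLOG) a positive crossing between $\tilde a$ and $\tilde c$ and, after making $\tilde d$ simple, distinguishes two concrete configurations: if $\tilde a$ also meets $\tilde d$ in a positive crossing, explicit crossing changes fixing $M$ and $N$ produce the unlink (Fig.~\ref{lemma6proof1}); otherwise it changes crossings so that $\tilde b$ runs parallel to $\tilde d$ and applies Lemma \ref{hook-tangle} to the tangle complementary to a neighborhood of $\tilde d$, turning $\tilde a\cup\tilde c$ into a clasp of sign opposite to the clasp $\{M,N\}$, so that the whole diagram becomes the unlink (Fig.~\ref{lemma6proof2}). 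Note that in this last configuration both negative crossings are cancelled simultaneously against a positive clasp; the triviality is achieved globally, not by the local cancellation of $N_1$ alone that your outline relies on. To complete your proof you would have to replace your third paragraph by a case analysis of this kind.
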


\begin{proof}
Let $M$ and $N$ be the negative mixed crossings. Let $\tilde a$ be the arc on $\tilde\ell_1$ from $M$ to $N$ and $\tilde b$ the arc on $\tilde\ell_1$ from $N$ to $M$. Let $\tilde c$ be the arc on $\tilde\ell_2$ from $M$ to $N$ and $\tilde d$ the arc on $\tilde\ell_2$ from $N$ to $M$. Suppose that there is a self-crossing of $\tilde\ell_1$ 
between $\tilde a$ and $\tilde b$. Then by Lemma \ref{diagram-reducing-lemma} we have the result. Similarly if there is a 
self-crossing of $\tilde\ell_2$ between $\tilde c$ and $\tilde d$ then we have the result. Therefore we may suppose that there are no such self-crossings. We may suppose without loss of generality that $\tilde a$ and $\tilde c$ has a positive mixed crossing. Using Lemma \ref{diagram-reducing-lemma} we may suppose that $\tilde d$ is a simple arc. First suppose that $\tilde a$ has a positive mixed crossing on $\tilde d$. Then we have the trivial link by changing the crossings without changing $M$ and $N$ as illustrated in Fig \ref{lemma6proof1}. Next suppose that $\tilde a$ has no mixed crossings on $\tilde d$ except $M$ and $N$. Then we change the crossings of $\tilde L$ so that $\tilde b$ is parallel to $\tilde d$ and apply Lemma \ref{hook-tangle} on the tangle diagram that is complementary to a neighborhood of $\tilde d$ and we have the trivial link. See Fig. \ref{lemma6proof2}.
\end{proof}

\begin{figure}[htbp]
      \begin{center}
\scalebox{0.65}{\includegraphics*{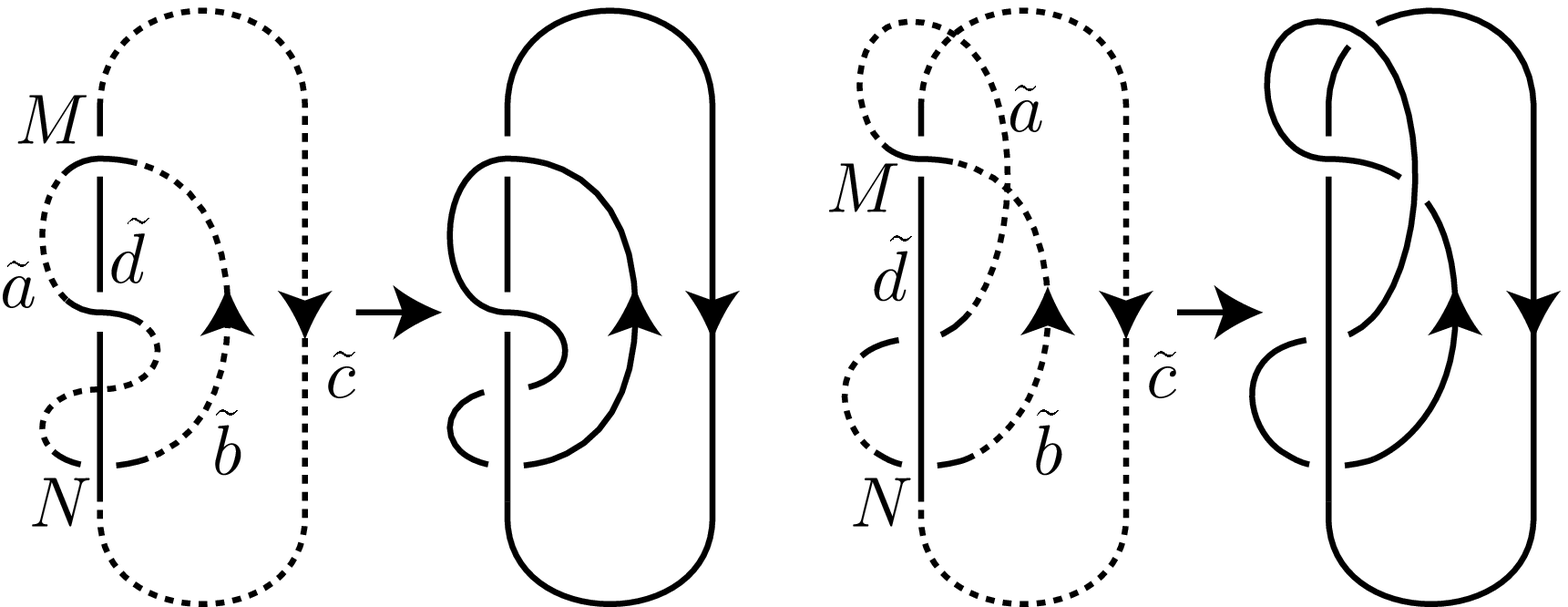}}
      \end{center}
   \caption{}
  \label{lemma6proof1}
\end{figure} 

%

%
\begin{figure}[htbp]
      \begin{center}
\scalebox{0.65}{\includegraphics*{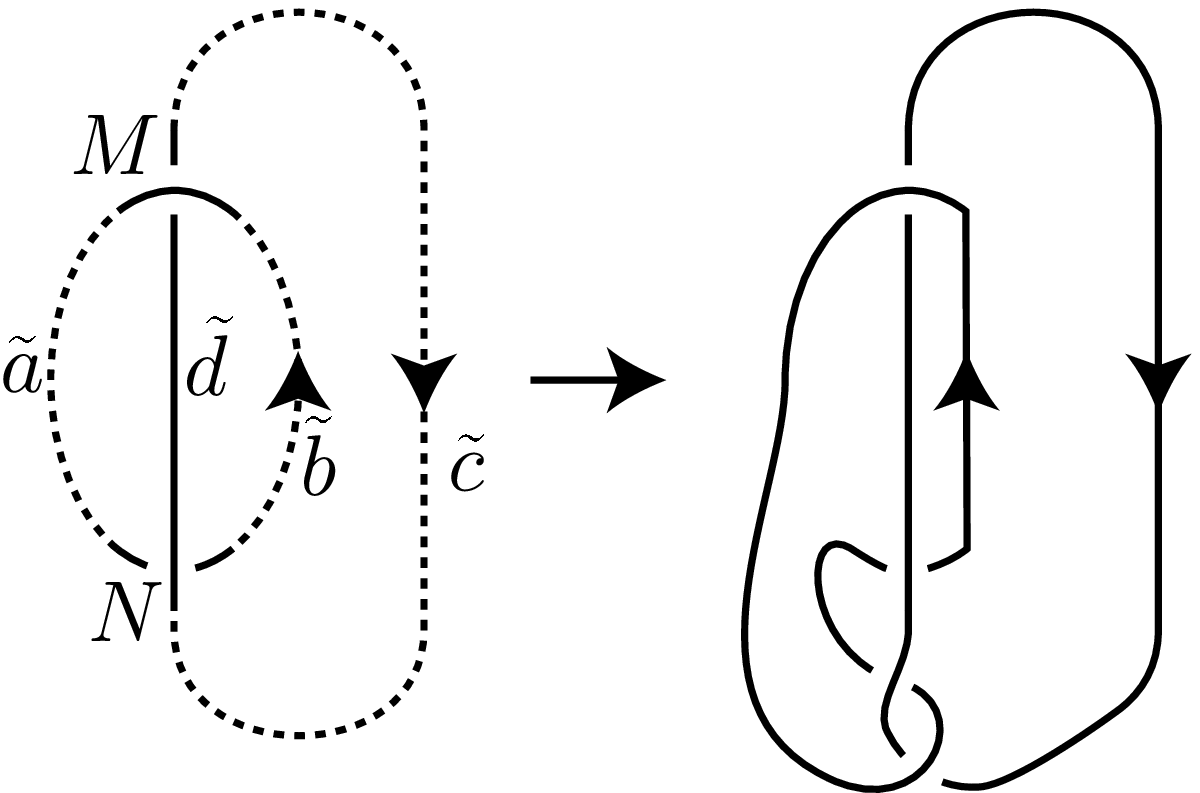}}
      \end{center}
   \caption{}
  \label{lemma6proof2}
\end{figure} 

%

\begin{Lemma}\label{lemma7}
Let $\tilde L=\tilde\ell_1\cup\tilde\ell_2\cup\tilde\ell_3$ be a 2-almost positive diagram of a 3-component link $L$.
Suppose that $\tilde\ell_1\cap\tilde\ell_2$ has the two negative mixed crossings and has no other mixed crossing.
Suppose that $\tilde\ell_3$ has mixed crossings with both of $\tilde\ell_1$ and $\tilde\ell_2$.
Then $L$ is greater than or equal to the link of Fig. \ref{6_2-Whitehead-3-comp} (d).
\end{Lemma}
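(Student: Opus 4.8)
The plan is to normalize the diagram by first recording the sign pattern forced by the hypotheses, then to reduce each component to an embedded curve, and finally to strip $\tilde\ell_3$ down to minimal clasps with $\tilde\ell_1$ and $\tilde\ell_2$. Since the two negative crossings $M,N$ are the only non-positive crossings and both of them are mixed crossings between $\tilde\ell_1$ and $\tilde\ell_2$, every self-crossing of every component is positive, every mixed crossing between $\tilde\ell_1$ and $\tilde\ell_2$ other than $M,N$ does not exist, and every mixed crossing involving $\tilde\ell_3$ is positive. I would then use Lemma~\ref{reducing-lemma} and Lemma~\ref{diagram-reducing-lemma} repeatedly to pass to a minor in which each $\tilde\ell_i$ is a simple closed curve on $S^2$: each self-crossing $P$ lies on a loop $s(\tilde\ell_i,P)$ whose own crossings are all positive, so the knot it represents is $\geq$ the trivial knot and condition~(1) of Lemma~\ref{diagram-reducing-lemma} holds, while condition~(2) is vacuous since there are no \emph{negative} self-crossings to obstruct the over/under choice. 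Throughout this reduction I would choose the smoothings so that $M$, $N$, and at least one positive crossing of $\tilde\ell_3$ with each of $\tilde\ell_1,\tilde\ell_2$ are always retained on the spines. By Lemma~\ref{local-trefoil-lemma} no component is forced to be a local trefoil, consistent with our target being a three-component link.

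Once all three components are simple closed curves, I would identify the sublink $\tilde\ell_1\cup\tilde\ell_2$. It is now a two-crossing diagram between two unknotted simple closed curves, with both crossings negative. Because any two simple closed curves on $S^2$ meet in an even number of points and two crossings of the same sign are never an $R2$ pair, this diagram is reduced and has linking number $\tfrac12(-1-1)=-1$; hence $\tilde\ell_1\cup\tilde\ell_2$ is the left-handed Hopf link. The two curves cut $S^2$ into four faces (two vertices $M,N$, four edges, four regions), and $\tilde\ell_3$ threads through these faces.

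Next I would reduce the interaction of $\tilde\ell_3$ with the other two components. Each of $\tilde\ell_3\cap\tilde\ell_1$ and $\tilde\ell_3\cap\tilde\ell_2$ is a positive, even, nonempty set of crossings. Looking at an innermost complementary tangle collecting the crossings of $\tilde\ell_3$ with $\tilde\ell_1$, I would apply Lemma~\ref{hook-tangle} to realize a single positive clasp $T(2)$ (a right-handed Hopf pattern) there, trivializing the remaining crossings of that block by the descending algorithm; I would do the same for $\tilde\ell_3\cap\tilde\ell_2$. Each of these steps only changes positive crossings to negative or passes to a minor, so it keeps us above the simplified link in the order $\geq$. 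The outcome is a diagram in which $\tilde\ell_3$ forms one positive clasp with each of $\tilde\ell_1,\tilde\ell_2$ while $\tilde\ell_1\cup\tilde\ell_2$ retains the negative clasp $M,N$; checking against the figure, this is exactly the link of Fig.~\ref{6_2-Whitehead-3-comp}(d). Transitivity of $\geq$ then yields $L\geq$ (Fig.~\ref{6_2-Whitehead-3-comp}(d)).

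The hard part will be the geometric bookkeeping in the last two reductions. I must ensure that simplifying the $\tilde\ell_3$–$\tilde\ell_1$ crossings does not inadvertently unlink $\tilde\ell_3$ from $\tilde\ell_2$, that reducing the self-crossings of $\tilde\ell_1$ never deletes $M$ or $N$, and — most delicately — that $\tilde\ell_3$ can always be routed through the four faces of $\tilde\ell_1\cup\tilde\ell_2$ in the single way pictured in Fig.~\ref{6_2-Whitehead-3-comp}(d) rather than in a topologically distinct fashion. This reduces to a region-by-region analysis of which faces the arcs of $\tilde\ell_3$ enter, together with an argument, via Lemma~\ref{reducing-lemma} and Lemma~\ref{hook-tangle}, that any excursion of $\tilde\ell_3$ not contributing to its minimal linking with $\tilde\ell_1$ and $\tilde\ell_2$ can be absorbed without disturbing the two clasps or the fixed crossings $M,N$.
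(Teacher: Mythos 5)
There is a genuine gap, and it sits in the step you treat as routine: the normalization of all three components to simple closed curves. Consider the configuration in which $\tilde\ell_1$ has a single positive self-crossing $P$ whose two loops separate the structure you need: one loop of $\tilde\ell_1$ carries both negative crossings $M,N$ (the negative clasp with $\tilde\ell_2$), and the other loop carries \emph{all} of the crossings of $\tilde\ell_1$ with $\tilde\ell_3$ (picture $\tilde\ell_1$ as a figure-eight whose large lobe clasps $\tilde\ell_2$ and whose small lobe clasps $\tilde\ell_3$, with $\tilde\ell_3$ also clasping $\tilde\ell_2$). First, your reading of condition (2) of Lemma~\ref{diagram-reducing-lemma} is incorrect: for the lemma's proof (and for its conclusion to be true at all) that condition must constrain the negative crossings between the deleted loop and the \emph{rest of the diagram}, including mixed crossings such as $M$ and $N$, not merely negative self-crossings; this is exactly how the paper uses it in Lemma~\ref{lemma6} and Theorem~\ref{3ap1-1}. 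Now, since both crossings of a clasp are negative, $\tilde\ell_1$ necessarily passes over $\tilde\ell_2$ at one of $M,N$ and under at the other (if it passed over at both, the two crossings would form an R2 pair and have opposite signs), so condition (2) \emph{fails} for the lobe containing $M,N$, and deleting that lobe is in fact not a legitimate domination move (it would change $lk(\tilde\ell_1,\tilde\ell_2)$ from $-1$ to $0$, which no positive-to-negative crossing changes can undo). You may delete the other, all-positive lobe, but that erases every $\tilde\ell_1$--$\tilde\ell_3$ crossing, and since crossing changes never create new mixed crossings, the resulting minor cannot dominate the link of Fig.~\ref{6_2-Whitehead-3-comp} (d), whose components are pairwise clasped. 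So in this configuration your safeguard (``choose the smoothings so that $M$, $N$, and at least one crossing of $\tilde\ell_3$ with each component are retained'') simply cannot be honored, and the normalization halts. The lemma is still true here --- the diagram, with suitable choices at its positive crossings, \emph{is} the link of Fig.~\ref{6_2-Whitehead-3-comp} (d), the small lobe of $\tilde\ell_1$ serving as one of the clasps --- but seeing that requires arguing on the un-normalized diagram directly, which your plan never does.

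The second gap is that even where normalization succeeds, what remains is not bookkeeping but the actual content of the lemma, and you defer it. Once $\tilde\ell_1\cup\tilde\ell_2$ is the standard two-crossing diagram of the left-handed Hopf link, the way $\tilde\ell_3$ threads the regions of that diagram genuinely matters: producing one positive clasp with each of $\tilde\ell_1,\tilde\ell_2$ via Lemma~\ref{hook-tangle} does not always land on the link of Fig.~\ref{6_2-Whitehead-3-comp} (d); depending on which regions $\tilde\ell_3$ visits (in particular whether it runs through the bigon between $M$ and $N$), one can land on a topologically different three-component link. This is precisely the structure of the paper's proof: either the diagram contains a local part as in Fig.~\ref{lemma7proof} (a), which yields the target at once, or it contains a part as in Fig.~\ref{lemma7proof} (b), which yields either the target or a different link $L_0$, and then a further, separate crossing-change argument (Fig.~\ref{lemma7proof} (c)) is needed to show $L_0\geq$ the link of Fig.~\ref{6_2-Whitehead-3-comp} (d). Your proposal contains no counterpart of this second case; the sentence asserting that the outcome ``is exactly the link of Fig.~\ref{6_2-Whitehead-3-comp} (d)'' is the very claim that needs proof.
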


\begin{proof}
If we find a part as illustrated in Fig. \ref{lemma7proof} (a) in $\tilde L$ then we have the link of Fig. \ref{6_2-Whitehead-3-comp} (d).
If there are no such parts then we have one of the parts illustrated in Fig. \ref{lemma7proof} (b). Then we have the link of Fig. \ref{6_2-Whitehead-3-comp} (d) or the link $L_0$ in Fig. \ref{lemma7proof} (b) that is greater than or equal to the link of Fig. \ref{6_2-Whitehead-3-comp} (d) as illustrated in Fig. \ref{lemma7proof} (c).
\end{proof}

\begin{figure}[htbp]
      \begin{center}
\scalebox{0.65}{\includegraphics*{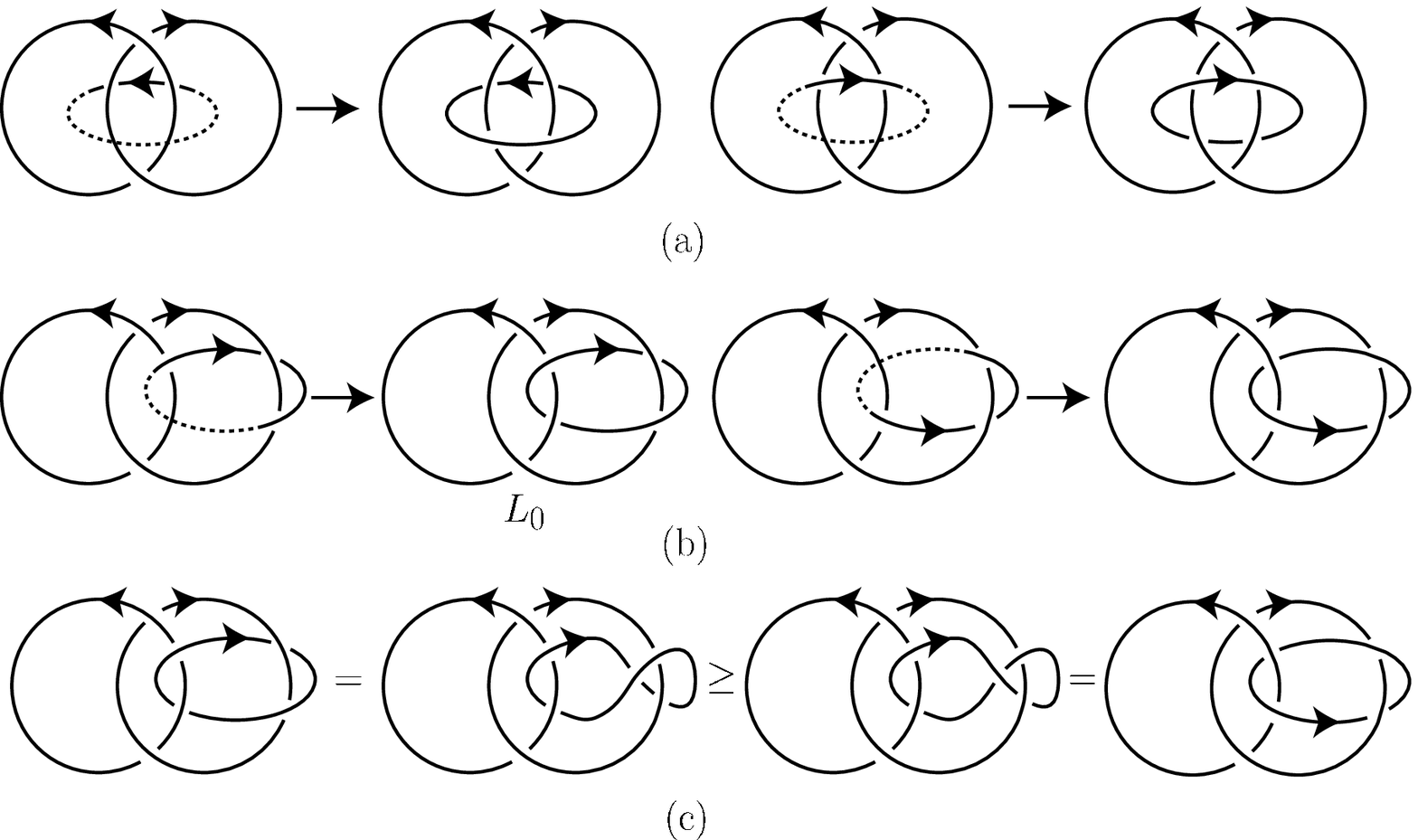}}
      \end{center}
   \caption{}
  \label{lemma7proof}
\end{figure} 

%


\begin{Theorem}\label{2-almost-positive-theorem}
Let $\tilde L$ be a 2-almost positive diagram of a link $L$.
Then

\begin{enumerate}
\item[(1)]
$L\geq$ right-handed trefoil knot (plus trivial components) or

\item[(2)]
$L\geq$ $6_2$ (Fig. \ref{6_2-Whitehead-3-comp} (a)) (plus trivial components) or

\item[(3)]
$L\geq$ right-handed Hopf link (plus trivial components) or

\item[(4)]
$L\geq$ disjoint or connected sum of right-handed trefoil knot and left handed Hopf link
(plus trivial components) or

\item[(5)]
$L\geq$ Whitehead link (Fig. \ref{6_2-Whitehead-3-comp} (b)) (plus trivial components) or

\item[(6)]
$L\geq$ disjoint and/or connected sum of two right-handed Hopf links and a left-handed
Hopf link (plus trivial components) or

\item[(7)]
$L\geq$ disjoint or connected sum of $(2,4)$-torus link (Fig. \ref{6_2-Whitehead-3-comp} (c)) and a left-handed Hopf link (plus trivial components) or

\item[(8)]
$L\geq$ the link of Fig. \ref{6_2-Whitehead-3-comp} (d) (plus trivial components) or

\item[(9)]
$\tilde L$ is a diagram obtained from the diagrams in Fig. \ref{almost-positive-trivial} and 
Fig. \ref{2ap-trivial} and their 
reversals on ${\mathbb S}^2=R^2\cup \infty$ (reflection in $y$ axis)
 by performing, possibly, diagram-disjoint sum operation, diagram-connected sum operation, and first and second 
Reidemeister moves which increase the number of crossings, or

\item[(10)]
$\tilde L$ is a diagram obtained from $\tilde L_1$, $\tilde L_2$ or $\tilde L_3$ of Fig. \ref{2-almost-positive-diagrams} 
by performing, possibly, first Reidemeister moves which increase the number of crossings and adding trivial circles, or

\item[(11)]
$\tilde L$ is a diagram obtained from $\tilde L_3$ of Fig. \ref{2-almost-positive-diagrams} and $\tilde T_1$ or $\tilde T_2$ of Fig. \ref{torus-link-diagrams} 
by, possibly,  performing first Reidemeister moves which increase the number of crossings, 
diagram-disjoint sum operation or diagram-connected sum operation (plus almost trivial components).
\end{enumerate}
\end{Theorem}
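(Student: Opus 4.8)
The plan is to mirror the argument used for Theorem \ref{almost-positive-theorem}, now running a case analysis on the two negative crossings $M$ and $N$ of $\tilde L$ and feeding each configuration into the tangle-classification lemmas of this section. First I would normalize $\tilde L$: by Lemma \ref{nugatory-erasing} I may delete nugatory crossings and record them as an R1 augmentation (harmless, since cases (9)--(11) are stated up to such augmentations), and by pushing through $2$-gons via Lemma \ref{sublemma1} and Lemma \ref{sublemma1-2} I may assume each prime summand of $\tilde L$ is R2-reduced. Components carrying no crossing are set aside once and for all as the ``trivial components'' allowed throughout. The case division is then: (I) both $M$ and $N$ are self-crossings; (II) $M$ is a self-crossing and $N$ is mixed; (III) both $M$ and $N$ are mixed. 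In every case the engine is the same: choose a crossing, pass to the complementary tangle diagram there, classify it, and close up — using the fact (recorded at the end of Section \ref{Section 1}) that if the complementary tangle $\tilde T\ge T_0$ then the corresponding closure of $\tilde L$ dominates the corresponding closure of $T_0$.

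For Case (II) I would take the complementary tangle at a suitably chosen positive crossing, arranged (after a flype if needed) so that it has vertical connection with the negative self-crossing $M$ on the left string $\tilde a$ and the negative mixed crossing $N$ still present. This is precisely the hypothesis of Lemma \ref{lemma2}. That lemma either exhibits $\tilde T\ge$ one of the tangles of Fig. \ref{tangles2}, whose $D$-, $N$-, and $X_\pm$-closures I would match against the links of cases (1)--(8), or else pins $\tilde T$ down to one of the explicit diagrams of Fig. \ref{diagrams2}, whose closures are exactly the structured families $\tilde L_1,\tilde L_2,\tilde L_3$ appearing in cases (10)--(11).

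For Case (III) I would subdivide according to how many components meet $M$ and $N$. If $M,N$ join the \emph{same} pair $\tilde\ell_1\cup\tilde\ell_2$ and are the only mixed crossings between them, then the complementary tangle at the negative crossing $M$ is a prime R2-reduced \emph{almost} positive $2$-string tangle with X-connection whose surviving negative crossing $N$ is mixed, so Lemma \ref{lemma5} (via Lemma \ref{sublemma4}) returns either a tangle of Fig. \ref{lemma5tangles}, giving cases (1),(3),(5),(7) after closing, or an exceptional diagram of Fig. \ref{lemma5diagrams}, feeding case (9)/(10). When $\tilde\ell_1\cup\tilde\ell_2$ carries four or more mixed crossings I would instead invoke Lemma \ref{lemma6} to split the pair, and when $M,N$ lie on two different pairs with a third component $\tilde\ell_3$ meeting both, Lemma \ref{lemma7} delivers the link of Fig. \ref{6_2-Whitehead-3-comp}(d), i.e. case (8). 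Case (I) is mildest: if either negative self-crossing can be erased by Lemma \ref{diagram-reducing-lemma} I drop to an almost positive diagram and quote Theorem \ref{almost-positive-theorem} (landing in (1)--(3) or (9)), while the residual subcases, where neither can be erased, are handled by the reduced vertical-tangle classification of Lemma \ref{lemma3}, whose output tangles again close to listed links and whose exceptional projections yield the torus-link diagrams of Fig. \ref{torus-link-diagrams} in case (11).

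The hard part will be the bookkeeping rather than any single idea. Each classification lemma is phrased ``up to a finite list of forbidden tangles,'' and the genuine work is to verify, for every forbidden tangle, that its reinsertion into the ambient closure is dominated by one of the eleven listed links, and to check that the surviving (non-dominating) tangle diagrams reassemble \emph{exactly} into the families of Fig. \ref{almost-positive-trivial}, Fig. \ref{2ap-trivial}, Fig. \ref{2-almost-positive-diagrams}, and Fig. \ref{torus-link-diagrams} under R1/R2 augmentation and (dis)connected sum, with no omissions and no spurious cases. I expect the most delicate point to be the subcases with proliferating summands --- the ``two right-handed Hopf links and a left-handed Hopf link'' of case (6) and the ``two copies'' phenomena in (8),(11) --- since there one cannot read everything off a single complementary tangle but must track the connected/disjoint sum structure across a decomposition into prime projections, exactly as in the proof of Theorem \ref{projection-theorem}.
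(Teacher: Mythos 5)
Your Cases (II) and (III) do track the paper's proof (its Cases 3 and 4): Lemma \ref{lemma2} applied to a complementary tangle at a positive crossing handles the self-plus-mixed configuration, and Lemmas \ref{lemma5}, \ref{lemma6}, \ref{lemma7} handle the two-mixed configuration. Two slips there are repairable but real: you have the hypothesis of Lemma \ref{lemma7} backwards --- it applies when \emph{both} negative crossings lie between the same pair $\tilde\ell_1,\tilde\ell_2$ (being their only mixed crossings) and $\tilde\ell_3$ meets both; when $M$ and $N$ lie on different pairs the paper reaches case (3) by a direct crossing-change argument, not by that lemma. You also omit the paper's opening reduction: if the two negative crossings fall in different prime factors, each factor is at worst almost positive and Theorem \ref{almost-positive-theorem} already finishes.

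The genuine gap is Case (I), which you call ``mildest'' but which is in fact the bulk of the paper's proof. Your plan --- erase a negative self-crossing by Lemma \ref{diagram-reducing-lemma} and quote Theorem \ref{almost-positive-theorem} --- cannot deliver conclusions (9)--(11). First, the erasure is essentially \emph{always} available: since $M$ and $N$ are the only negative crossings, either there are no negative crossings between $s(\tilde\ell,M)$ and $r(\tilde\ell,M)$ or $N$ is the unique one, so condition (2) of Lemma \ref{diagram-reducing-lemma} is automatic, and condition (1) holds because $s(\tilde\ell,M)$ has at most one negative self-crossing (positive and almost positive knots dominate the unknot). Hence your ``residual subcases where neither can be erased,'' where you planned to invoke Lemma \ref{lemma3}, are empty. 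Second, and fatally, erasing $M$ deletes the whole arc $s(\tilde\ell,M)$ together with every crossing on it --- including $N$ when the two loops interleave --- so the conclusion is merely $L\geq$ (some positive link), possibly the unknot. For precisely the exceptional diagrams of the theorem (the twist knots with negative clasp $\tilde L_1,\tilde L_2$ of Fig. \ref{2-almost-positive-diagrams}, the unknot diagrams of Fig. \ref{2ap-trivial}) this yields only $L\geq$ trivial knot, which is none of (1)--(8); and it cannot yield (9)--(11) either, because those are assertions about the diagram $\tilde L$ itself, and ``delete an arc'' is not among the operations (R1/R2 augmentation, connected/disjoint sum) permitted there. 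This is why the paper never erases in its Case 1: after reducing to $\tilde L=\tilde\ell_1$, it passes to the complementary tangle at a crossing where the loops of $M$ and $N$ intersect, so that $M$ survives as a negative self-crossing and $N$ as a negative mixed crossing of a vertical tangle, and applies Lemma \ref{lemma2} --- this is exactly where $6_2$ and the diagrams $\tilde T_1,\tilde T_2$ of Fig. \ref{2ap-trivial} arise --- and when the loops are disjoint it applies Lemma \ref{lemma3} with Lemma \ref{vertical-trefoil-tangle-lemma}, or Lemma \ref{one-three-lemma}, or Lemma \ref{lemma4} with Lemma \ref{lemma1}, to the two separating disks, reading the exceptional diagrams of (9)--(11) off the exceptional outputs of those lemmas. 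As written, your proposal proves only the weakened statement in which (9)--(11) are replaced by ``$L\geq$ trivial link.''
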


\begin{figure}[htbp]
      \begin{center}
\scalebox{0.65}{\includegraphics*{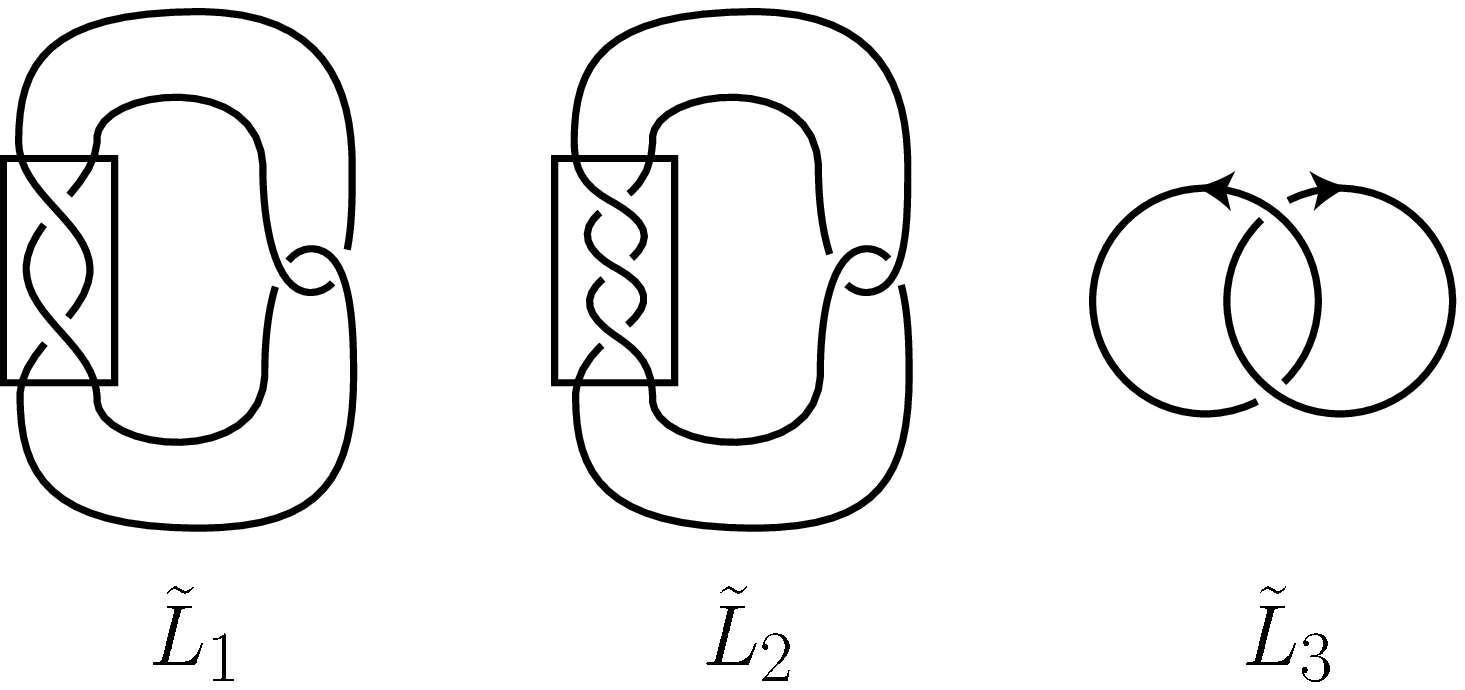}}
      \end{center}
   \caption{}
  \label{2-almost-positive-diagrams}
\end{figure} 

%

%
\begin{figure}[htbp]
      \begin{center}
\scalebox{0.65}{\includegraphics*{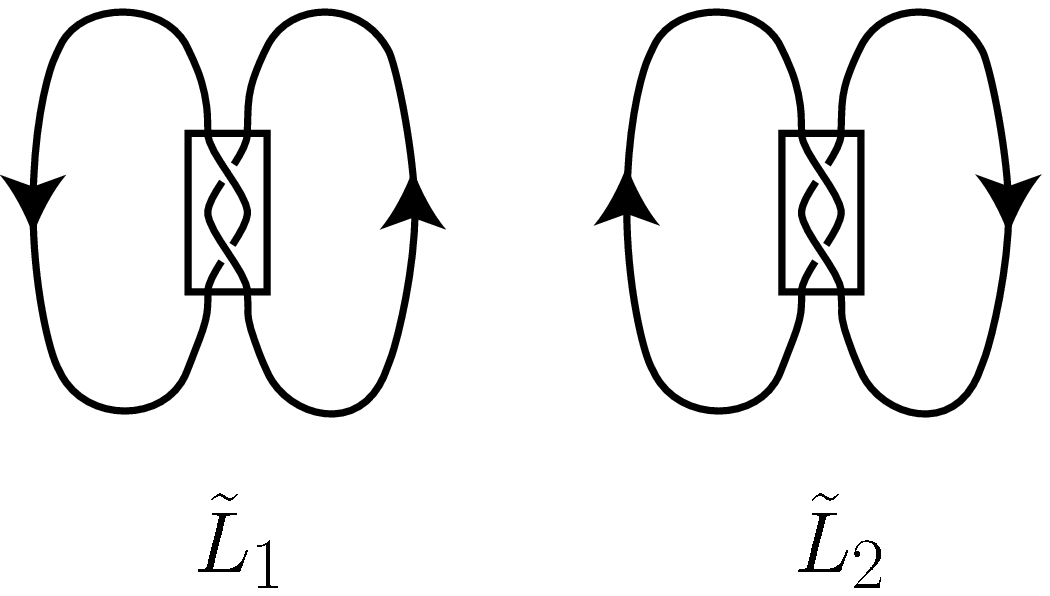}}
      \end{center}
   \caption{}
  \label{torus-link-diagrams}
\end{figure} 

%

\begin{proof}
We decompose $\tilde L$ into prime factors. If the two negative crossings
belong to different prime factors then the result follows by Theorem \ref{almost-positive-theorem}.
Therefore it is sufficient to consider the case that $\tilde L$ is prime and has just two negative crossings, say $M$ and $N$.
Then the case that $\tilde L$ is not prime immediately follows.

\begin{enumerate}
\item[Case 1.]
$M$ and $N$ are self-crossings of a component, say $\tilde\ell_{1}$ of $\tilde L$.

We first note that by applying Lemma \ref{diagram-reducing-lemma} we have that $\tilde\ell_1$ is greater than or equal to a trivial knot.
Therefore if $\tilde L$ has other components then we have $L\geq$ right-handed Hopf link.
Hence we may suppose $\tilde L=\tilde\ell_{1}$.
We have the following three cases according to the position of $M$ and $N$ as illustrated in Fig. \ref{2-ap-theorem-proof1}

\begin{figure}[htbp]
      \begin{center}
\scalebox{0.61}{\includegraphics*{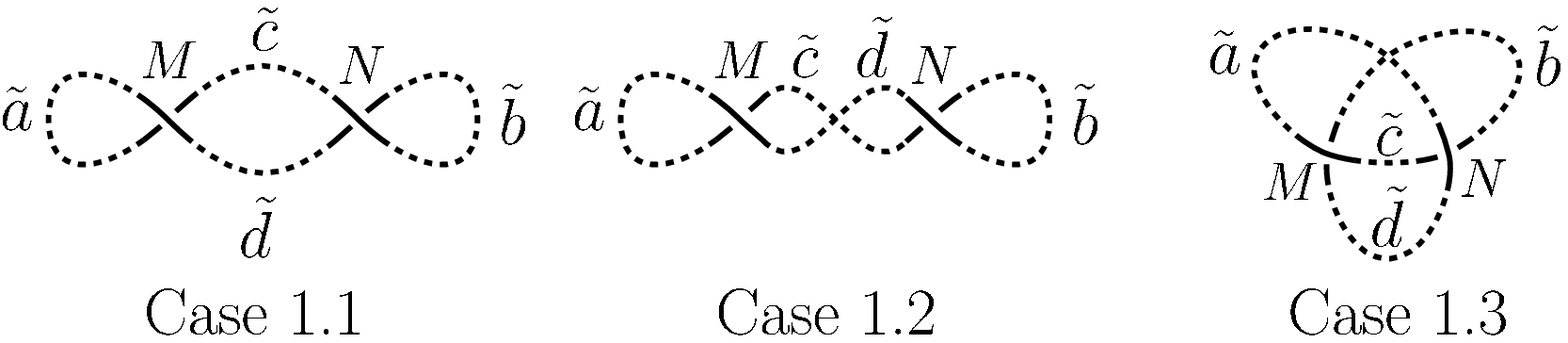}}
      \end{center}
   \caption{}
  \label{2-ap-theorem-proof1}
\end{figure} 

%

First suppose $(\tilde a\cup\tilde b)\cap (\tilde c\cup\tilde d)\neq
\emptyset$. 
Let $P$ be a crossing of $(\tilde a\cup\tilde b)\cap (\tilde c\cup\tilde d)$. Let $\tilde T$ be the complementary tangle diagram of $\tilde L$ at $P$. 
Then $\tilde T$ is a prime R2-reduced 2-string tangle diagram with vertical connection with just two
negative crossings, one is a mixed crossing and the other is a self-crossing.
Note that the $X_+$-closures of the tangles $T(-2),T(3_1,0_1),T(0_1,3_1)$ and $T_2$ in Fig. \ref{tangles2} are right-handed trefoil knots and the $X_+$-closures of the tangles $T_1,T_3$ and $T_4$ in Fig. \ref{tangles2} are $6_2$. The $X_+$-closures of the tangles $T_5$ and $T_6$ in Fig. \ref{tangles2} are greater than or equal to the $X_-$-closures of the tangles $T_5$ and $T_6$ in Fig. \ref{tangles2} respectively and these knots are $6_2$.
Note also that the $X_+$-closures of the tangle diagrams $\tilde T_1,\tilde T_{2++},\tilde T_{2+-},\tilde T_{2-+},\tilde T_{2--},\tilde T_{3+}$ and $\tilde T_{3-}$ in Fig. \ref{diagrams2} are not R2-reduced. The $X_+$-closures of the tangle diagrams $\tilde T_{4+}$ and $\tilde T_{5-}$ in Fig. \ref{diagrams2} are equal to the link diagram $\tilde T_2$ of Fig. \ref{2ap-trivial}. The $X_+$-closures of the tangle diagrams $\tilde T_{4-}$ and $\tilde T_{5+}$ in Fig. \ref{diagrams2} are equal to the reversal of the link diagram $\tilde T_2$ of Fig. \ref{2ap-trivial}. The $X_+$-closures of the tangle diagrams $\tilde T_{6+}$ and $\tilde T_{7-}$ in Fig. \ref{diagrams2} are equal to the link diagram $\tilde T_1$ of Fig. \ref{2ap-trivial}. The $X_+$-closures of the tangle diagrams $\tilde T_{6-}$ and $\tilde T_{7+}$ in Fig. \ref{diagrams2} are equal to the reversal of the link diagram $\tilde T_1$ of Fig. \ref{2ap-trivial}.
Then by Lemma \ref{lemma2} we have $L\geq$ right-handed trefoil knot or $L\geq$ $6_2$ if
and only if $\tilde L$ is not equal to the link diagrams 
$\tilde T_1$ or $\tilde T_2$ of Fig. \ref{2ap-trivial} or their reversals (reflections in $y$ axis).

Next suppose $(\tilde a\cup\tilde b)\cap(\tilde c\cup\tilde d)=\emptyset$. Choose disjoint disks 
$D_1$ and $D_2$ on ${\mathbb S}^2$ such that $D_1$ contains $\tilde a$ and 
$\tilde b$ and does not contain $M$ and $N$, $D_{2}$ contains $\tilde c$ and $\tilde d$ and
does not contain $M$ and $N$. See Fig. \ref{2-ap-theorem-proof1-2}.
In Case 1.1 we apply Lemma \ref{lemma3} to $D_{1}$ and Lemma \ref{vertical-trefoil-tangle-lemma} to $D_{2}$. 
Note that if we have the tangle $T(3_1,0_1)$ or $T(0_1,3_1)$ on $D_1$ then by trivializing the tangle on $D_2$ we have the right-handed trefoil knot.
If we have the tangle $R(T(-3,-1))$ on $D_1$ then by trivializing the tangle on $D_2$ we have $6_2$.
If we have the tangle $T(-1,-2)$ on $D_2$ then by trivializing the tangle on $D_1$ we have the right-handed trefoil knot.
Then taking the fact that $\tilde T$ is R2 reduced into account we have that $L\geq$ right-handed trefoil knot or $L\geq$ $6_2$ if and only if $\tilde L$ is not $\tilde T_1$ in Fig. \ref{2-almost-positive-diagrams}. 
In Case 1.2 we have the right-handed trefoil knot by applying Lemma \ref{one-three-lemma} to $D_2$ and trivializing the tangle on $D_1$.

In Case 1.3 we apply Lemma \ref{lemma4} to $D_1$ and Lemma \ref{lemma1} to $D_2$. 
If we have one of the tangles in Fig. \ref{lemma4tangles} except $T(1/2,-3)$ on $D_1$ then by trivializing the tangle on $D_2$ we have the right-handed trefoil knot. If we have the tangle $T(1/2,-3)$ on $D_1$ then by trivializing the tangle on $D_2$ we have $6_2$. 
If we have one of the tangles in Fig. \ref{tangles1} on $D_2$ then by taking the positive tangle with one crossing on $D_1$ we have the right-handed trefoil knot. Therefore, because $\tilde L$ is R2-reduced, 
the diagram on $D_1$ is a positive diagram whose underlying projection is one of the projections 
$\hat T_1,\hat T_3,\hat T_4$ and $\hat T_5$ in Fig. \ref{lemma4projections} or their flypes. 
Also because $\tilde L$ is R2-reduced, we have  that the diagram on $D_2$ is a positive diagram 
whose underlying projection is $\hat T(1,2n)$ or $\hat T(2n,1)$ in Fig. \ref{projections1} unless it is a trivial diagram. Suppose that it is not a trivial diagram. Then by Lemma \ref{hook-tangle} and Lemma \ref{one-three-lemma} we have the right-handed trefoil knot unless the underlying projection of the diagram on $D_1$ is $\hat T_1$ in Fig. \ref{lemma4projections}. See Fig. \ref{2-ap-theorem-proof2}. If the underlying projection of the diagram on $D_1$ is $\hat T_1$ in Fig. \ref{lemma4projections} then we either have the right-handed trefoil knot or the diagram $\tilde T_{9}$ in Fig. \ref{2ap-trivial}. See Fig. \ref{2-ap-theorem-proof3}.
Suppose that the diagram on $D_2$ is trivial. Then we have that $\tilde L$ is (a reversal of) $\tilde T_3$, $\tilde T_4$, $\tilde T_5$, $\tilde T_6$, $\tilde T_7$, $\tilde T_8$ or $\tilde T_{10}$ in Fig. \ref{2ap-trivial} or $\tilde L_2$ in Fig. \ref{2-almost-positive-diagrams}.

\begin{figure}[htbp]
      \begin{center}
\scalebox{0.61}{\includegraphics*{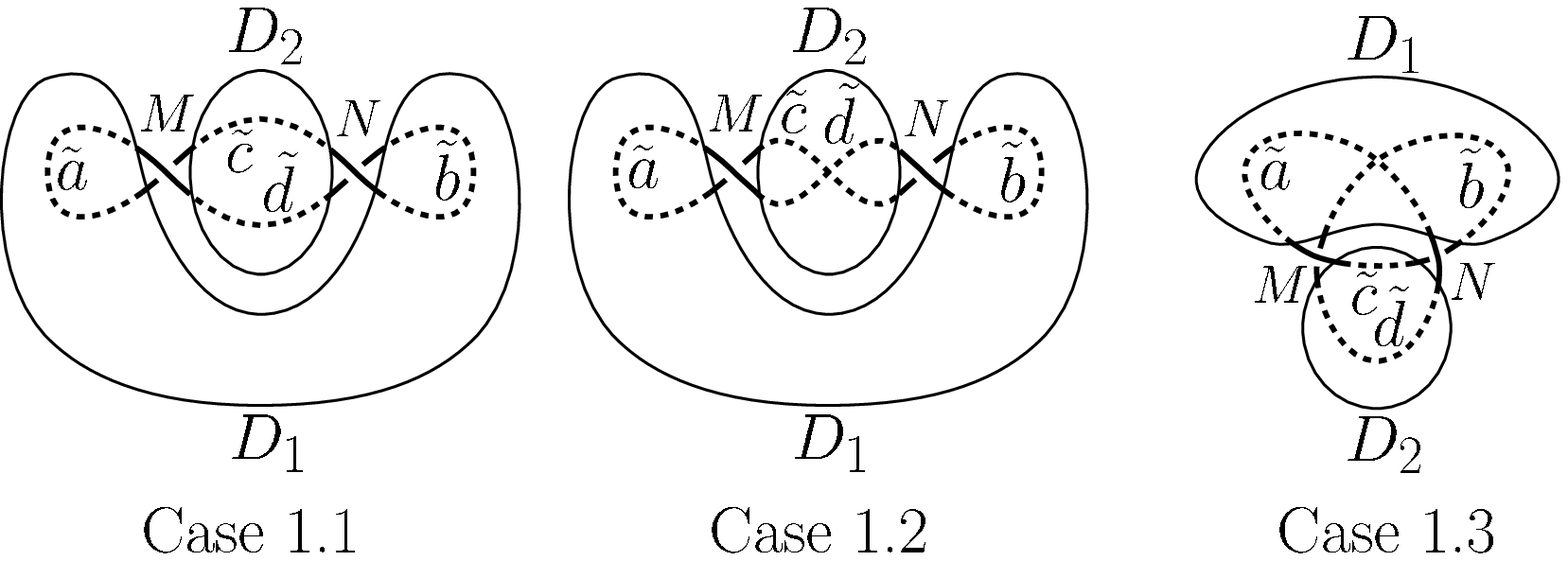}}
      \end{center}
   \caption{}
  \label{2-ap-theorem-proof1-2}
\end{figure} 

%

%
\begin{figure}[htbp]
      \begin{center}
\scalebox{0.61}{\includegraphics*{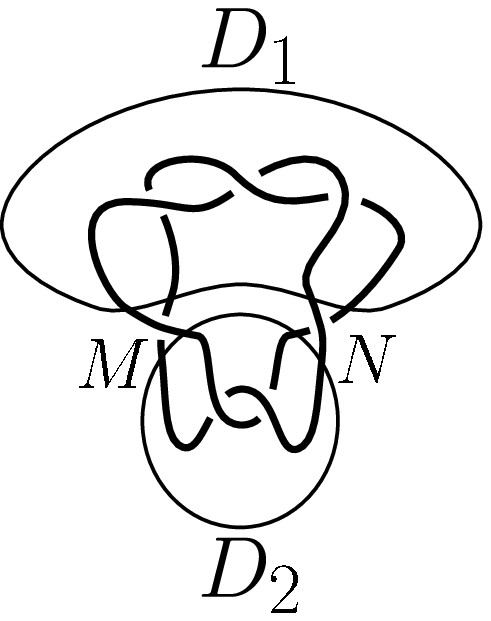}}
      \end{center}
   \caption{}
  \label{2-ap-theorem-proof2}
\end{figure} 

%

%
\begin{figure}[htbp]
      \begin{center}
\scalebox{0.61}{\includegraphics*{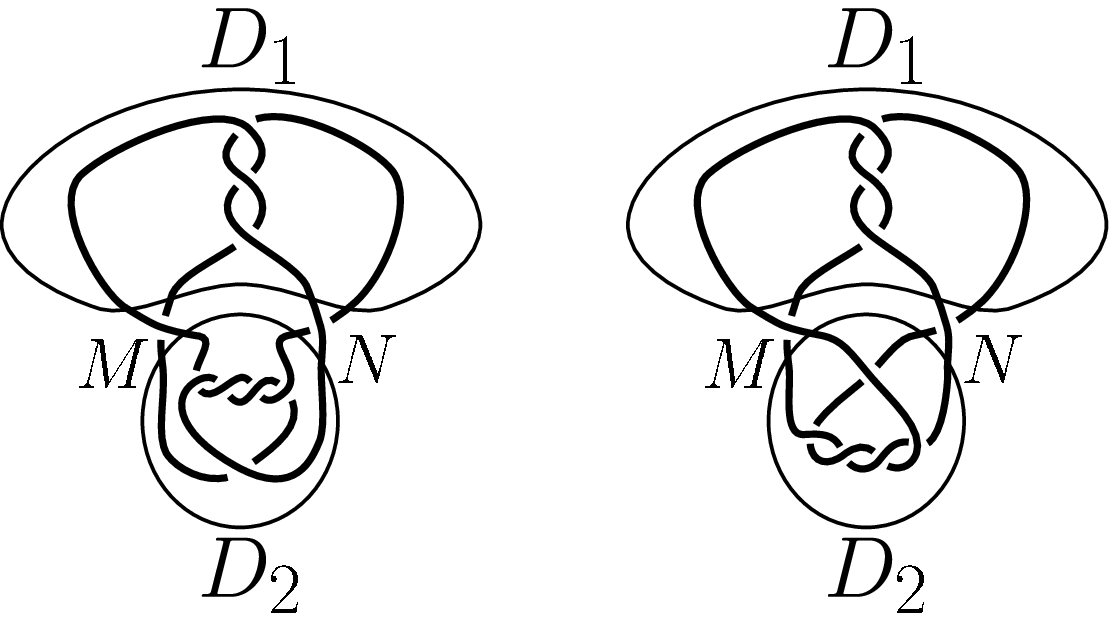}}
      \end{center}
   \caption{}
  \label{2-ap-theorem-proof3}
\end{figure} 

%

\item[Case 2.]
$M$ and $N$ are self-crossings of different components of $\tilde L$.

In this case we easily have $L\geq$ right-handed Hopf link (plus trivial components).

\item[Case 3.]
One of $M$ and $N$, say $M$, is a self-crossing of a component, say
$\tilde\ell_{1}$, and $N$ is a mixed crossing of components, say
$\tilde\ell_{i}$ and $\tilde\ell_{j}$.

Then we have either $L\geq$ right-handed Hopf link (plus trivial components) or $\tilde L=\tilde\ell_{i}\cup\tilde\ell_{j}$.
If $\tilde L=\tilde\ell_{i}\cup\tilde\ell_{j}$ then choose a small disk as illustrated in Fig. \ref{2-ap-theorem-proof4} 
and apply Lemma \ref{lemma2} to the tangle diagram on the complementary disk on ${\mathbb S}^2$. 
Note that because  $\tilde L$ is R2-reduced, we 
have no tangle diagrams in Fig. \ref{diagrams2}. 
Then we have $L\geq$ right-handed trefoil knot and an unknot or $L\geq$ right-handed Hopf link or $L\geq$ Whitehead link.

\begin{figure}[htbp]
      \begin{center}
\scalebox{0.65}{\includegraphics*{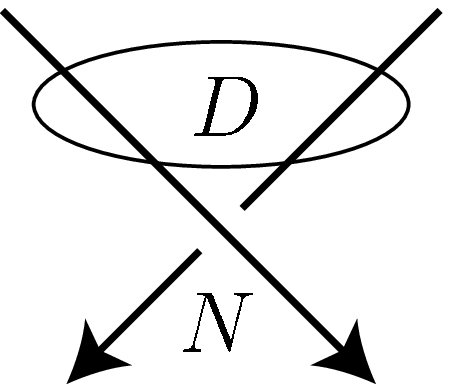}}
      \end{center}
   \caption{}
  \label{2-ap-theorem-proof4}
\end{figure} 

%

\item[Case 4.]
Both $M$ and $N$ are mixed crossings.

First suppose that $\tilde L$ has four or more components. Then using Lemma \ref{lemma6} we have (3) or (6).
Suppose that $\tilde L$ has three components, say $\tilde\ell_1,\tilde\ell_2$ and $\tilde\ell_3$. Suppose that $M$ and $N$ are mixed crossings of different pair of components. We may suppose without loss of generality that $M$ is a mixed crossing between $\tilde\ell_1$ and $\tilde\ell_2$ and $N$ is  between $\tilde\ell_2$ and $\tilde\ell_3$. Suppose that $\tilde\ell_1$ and  $\tilde\ell_2$ has four or more mixed crossings between them. Then by applying Lemma \ref{one-three-lemma} to a complementary tangle diagram of $\tilde\ell_1\cup\tilde\ell_2$ at $M$ we have that the diagram $\tilde\ell_1\cup\tilde\ell_2$ is greater than or equal to the right-handed Hopf link. Then we have (3). Therefore we have that $\tilde\ell_1$ and  $\tilde\ell_2$  have just two mixed crossings between them. Then by the primeness of $\tilde L$ we have that $\tilde\ell_1$ and $\tilde\ell_3$ has mixed crossings between them. Note that there is a mixed crossing, say $P$ between $\tilde\ell_1$ and $\tilde\ell_3$ such that $\tilde\ell_1$ is over $\tilde\ell_3$ at $P$, and there is another mixed crossing, say $Q$ between $\tilde\ell_1$ and $\tilde\ell_3$ such that $\tilde\ell_1$ is under $\tilde\ell_3$ at $Q$. If $\tilde\ell_2$ is both over $\tilde\ell_1$ at $M$ and over $\tilde\ell_3$ at $N$, or both under $\tilde\ell_1$ at $M$ and under $\tilde\ell_3$ at $N$, then we have (3). Suppose that $\tilde\ell_2$ is over $\tilde\ell_1$ at $M$ and under $\tilde\ell_3$ at $N$. Then we change crossings of $\tilde L$ so that $\tilde\ell_1$ is under everything except at $P$, and $\tilde\ell_3$ is over everything except at $P$. Then without changing $M$ and $N$ we have a right-handed Hopf link. The other case is similar.
Next suppose that $M$ and $N$ are between the same pair of components, say $\tilde\ell_1$ and $\tilde\ell_2$. If $\tilde\ell_1$ and $\tilde\ell_2$ have four or more mixed crossings between them then we have (3) by Lemma \ref{lemma6}. Thus we have that $\tilde\ell_1$ and $\tilde\ell_2$ have no other mixed crossings between them. Then by the primeness of $\tilde L$ and by Lemma \ref{lemma7} we have (8).
Suppose that $\tilde L$ has just two components. Then using Lemma \ref{lemma5} to the complementary tangle diagram of $M$ we have (3), (4) or
(5) if $\tilde L$ is not $\tilde T_{11}$ of Fig. \ref{2ap-trivial} nor $\tilde L_3$ of Fig. \ref{2-almost-positive-diagrams}.
This completes the proof.
\end{enumerate}
\end{proof}

Theorem \ref{2ap1}, Theorem \ref{2ap2} and Corollary \ref{2ap3} are immediate corollaries of Theorem \ref{2-almost-positive-theorem}.

\section{3-almost positive knots}

\begin{Theorem}\label{3ap1-1}
Let $K$ be a 3-almost positive knot. 
Then either $K\geq$ trivial knot or $K$ is the left-handed trefoil knot (plus positive knots as connected summands). 
\end{Theorem}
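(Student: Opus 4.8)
The plan is to follow the template of the proofs of Theorem~\ref{almost-positive-theorem} and Theorem~\ref{2-almost-positive-theorem}: reduce to a reduced, prime diagram, strip off the connected summands carrying few negative crossings by the results already proved, and then settle the single prime factor that carries all three negative crossings by cutting at a negative crossing and invoking the classification of $2$-almost positive tangles.

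First I would normalize and decompose. By Lemma~\ref{reducing-lemma} and Lemma~\ref{nugatory-erasing} I may assume the given $3$-almost positive diagram $\tilde L$ of $K$ is reduced, and I write it as a connected sum $\tilde L_1\#\cdots\#\tilde L_m$ of prime factors. The three negative crossings are distributed among the $\tilde L_i$, so each factor carries $0$, $1$, $2$, or $3$ of them. A factor with no negative crossing is a positive knot, hence $\geq$ trivial by the descending algorithm; a factor with one negative crossing is almost positive, hence $\geq$ right-handed trefoil $\geq$ trivial by Theorem~\ref{almost-positive-theorem}; and a factor with two negative crossings is $2$-almost positive, so by Theorem~\ref{2-almost-positive-theorem} (for a single component only the conclusions $\geq$ right-handed trefoil, $\geq 6_2$, or a twist knot with negative clasp survive) it is again $\geq$ trivial. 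The first two are $\geq$ trivial because the right-handed trefoil and $6_2$ have unknotting number one realized by flipping a positive crossing, and the twist knot with negative clasp is $\geq$ trivial by successively flipping the positive twist crossings, each flip cancelling a pair by a second Reidemeister move until the twist region is empty and the clasp closes to the unknot. Since a connected sum of knots each $\geq$ trivial is itself $\geq$ trivial, the theorem holds unless all three negative crossings lie in one prime factor; so I may assume $\tilde L$ is prime with exactly three negative crossings $M,N,O$, which are all self-crossings of the single component.

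Next I would cut at a negative crossing. Deleting a small disk about $M$ produces a $2$-string tangle diagram $\tilde T$ whose only negative crossings are $N$ and $O$, so $\tilde T$ is $2$-almost positive and $K$ is recovered as its $X_-$-closure in the sense of Fig.~\ref{tangle-closure} (the reinserted crossing $M$ is negative). After making $\tilde T$ prime and R2-reduced, I would classify it by the $2$-almost positive tangle lemmas according to the connection type at $M$ and the nature of $N,O$: Lemma~\ref{lemma2} when $\tilde T$ has a vertical connection with one negative self-crossing and one negative mixed crossing, and otherwise a preliminary reduction via Lemma~\ref{diagram-reducing-lemma} and Lemma~\ref{hook-tangle} (to absorb one of the two negatives) followed by the almost positive tangle classifications Lemma~\ref{almost-positive-tangle-lemma} and Lemma~\ref{lemma5}. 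Each lemma either shows $\tilde T\geq$ one of a short list of model tangles (Fig.~\ref{tangles2} and its analogues) or places $\tilde T$ among finitely many explicit exceptional diagrams (Fig.~\ref{diagrams2} and its analogues). For every model tangle $S$ the closure $X_-(S)$ is readily identified as the unknot, the right-handed trefoil, or $6_2$, each of which is $\geq$ trivial as above; moreover, passing from $X_+(S)$ to $X_-(S)$ only flips the closure crossing, so by the Giller inequality the signature of $X_-(S)$ is at most $0$ and $X_-(S)$ can never be the left-handed trefoil. Hence $\tilde T\geq S$ gives $K=X_-(\tilde T)\geq X_-(S)\geq$ trivial.

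It then remains to read off $X_-(\tilde T)$ for the finitely many exceptional tangle diagrams, which I would compute one by one. In every case the resulting knot is $\geq$ trivial, with the single exception of the tangle in which $N$, $O$, and the reinserted negative crossing $M$ form three mutually linked negative crossings; there $X_-(\tilde T)$ is the standard all-negative diagram of the left-handed trefoil, giving the exceptional conclusion. The step I expect to be the main obstacle is exactly this closing bookkeeping: organizing the $X_-$-closure computation over all the exceptional forms produced by Lemma~\ref{lemma2} and its X-connection analogues, and confirming that the unique configuration failing to be $\geq$ trivial is the fully negative trefoil. As in the proof of Lemma~\ref{lemma2}, this demands careful tracking of how the third crossing $M$ interacts with the two negatives inside $\tilde T$, together with repeated appeals to primeness and R2-reducedness to discard the degenerate diagrams; it is the same steady, diagram-by-diagram case analysis carried out there, now run one level higher.
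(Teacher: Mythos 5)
Your reduction to a single prime factor carrying all three negative crossings is sound (modulo two asserted-but-unproved facts: that $6_2$ dominates the unknot via a positive crossing change, and that the $X_-$-closures of all the model tangles are among the unknot, right-handed trefoil and $6_2$ --- both plausible, the first in fact true, but neither is free). The fatal problem comes later, at exactly the case that produces the exception. When the three negative crossings are mutually interleaved along the knot --- in particular for the standard diagram of the left-handed trefoil --- cutting out the negative crossing $M$ yields a complementary tangle (necessarily with vertical or horizontal connection, since the closure of an X-connection tangle by a crossing has two components) in which \emph{both} remaining negative crossings are \emph{mixed} crossings of the tangle. None of the lemmas you cite covers this configuration: Lemma \ref{lemma2} requires one negative self-crossing and one negative mixed crossing; Lemma \ref{lemma5} requires a single negative crossing and X-connection; Lemma \ref{diagram-reducing-lemma} can only ``absorb'' a negative \emph{self}-crossing (a loop satisfying its over/under condition), never a mixed one; and Lemma \ref{hook-tangle} is a statement about projections, so it yields dominations only from positive data and removes no negatives either. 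So precisely where the left-handed trefoil must emerge, your toolkit is empty, and filling the hole would mean proving a new classification of 2-almost positive tangles with two negative mixed crossings --- a piece of work on the scale of Lemma \ref{lemma2} itself.

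For comparison, the paper's proof never decomposes into prime factors and never touches the 2-almost positive tangle machinery. It applies Lemma \ref{diagram-reducing-lemma} directly on the knot diagram: unless the loop at each negative crossing passes through both of the other two negative crossings, once as an over-crossing and once as an under-crossing, that lemma already gives $K\geq$ trivial knot. The surviving configuration is rigid (Fig.~\ref{3ap-proof1}); the paper then studies intersections among the six arcs $\tilde a_i$, reducing via Lemma \ref{diagram-reducing-lemma} and Lemma \ref{hook-tangle} to diagrams with fewer negative crossings, and when no such reduction applies the spines $\tilde a_i'$ assemble into the standard left-handed trefoil diagram with positive connected summands. If you want to salvage your plan, the ``both negatives mixed'' case would in effect have to be handled by this direct interleaving argument anyway, so you should adopt it from the start.
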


\begin{proof}
Let $\tilde K$ be a diagram of $K$ with three negative crossings $N_1$, $N_2$ and $N_3$. 
Suppose for example that a loop from $N_1$ to $N_1$ passes through at most one of $N_2$ and $N_3$, 
or passes through both of them as over-crossing, or passes through both of them as under-crossing. 
Then by Lemma \ref{diagram-reducing-lemma} we have that $K$ is greater than or equal to the trivial knot. 
Thus we are left with the case in which every loop from $N_i$ to $N_i$ passes through two other 
negative crossings, one as over-crossing the other as under-crossing. This situation is 
illustrated in Fig. \ref{3ap-proof1}.

\begin{figure}[htbp]
      \begin{center}
\scalebox{0.58}{\includegraphics*{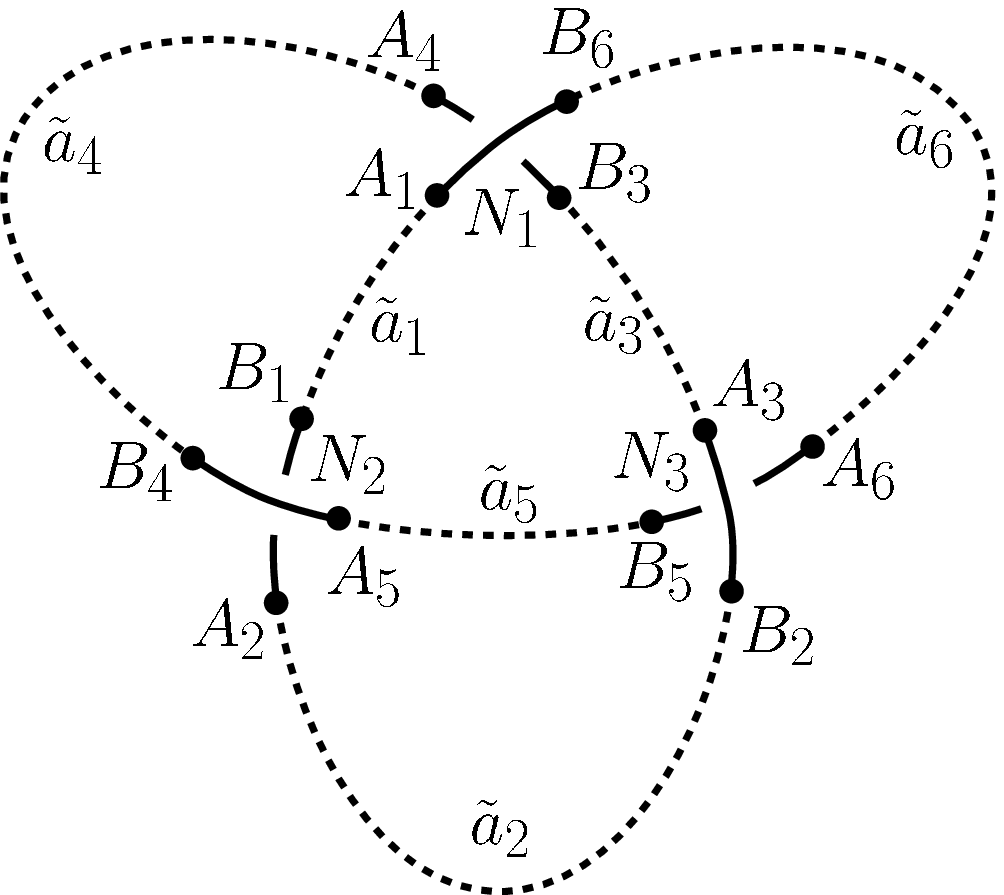}}
      \end{center}
   \caption{}
  \label{3ap-proof1}
\end{figure} 

%

We name the points as in Fig. \ref{3ap-proof1}. Let $\tilde a_{i}=A_{i}B_{i}$. We consider the suffix modulo six. 
If $\tilde a_{i}\cap\tilde a_{i+1}$ is nonempty for some $i$, then it reduces to the case with two negative crossings 
as any crossing of $\tilde a_{i}\cap\tilde a_{i+1}$ can serve as a crossing $P$ from Lemma \ref{diagram-reducing-lemma}. 
Suppose $\tilde a'_{i}\cap(\tilde a_{i+2}\cup\tilde a_{i-2})$ is nonempty (the spine $\tilde a'_{i}$ of 
$\tilde a_{i}$ has been defined in Section 2.1). We may assume without loss of generality 
that  $\tilde a'_{i}\cap\tilde a_{i+2}$ is nonempty. Let $P$ be the first crossing starting from $A_{i+2}$ 
with $\tilde a_{i}$. We reduce $A_{i+2}P$ and $\tilde a_{i+1}$ to become simple arcs
 using Lemma \ref{diagram-reducing-lemma}. Then we pull down $PB_{i+2}A_{i+3}B_{i+3}$,
pull up $\tilde a_{i+4}$ and we have a knot with no negative crossings as illustrated in 
Fig. \ref{3ap-proof2} and the case is completed. 

\begin{figure}[htbp]
      \begin{center}
\scalebox{0.58}{\includegraphics*{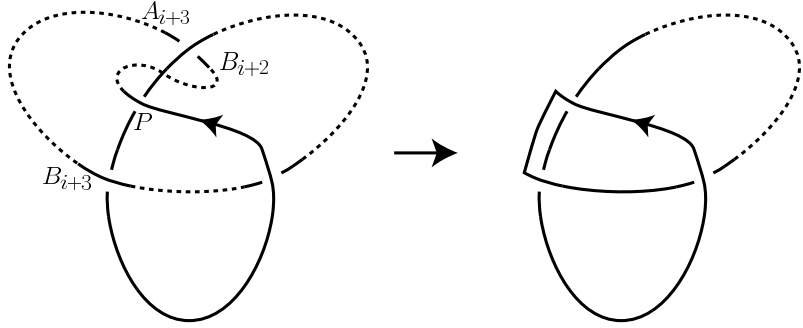}}
      \end{center}
   \caption{Reduction in the case of $\tilde a'_{i}\cap\tilde a_{i+2}\neq \emptyset$}
  \label{3ap-proof2}
\end{figure} 

%

Now suppose $\tilde a'_{i}\cap \tilde a_{i+3}$ is nonempty. 
We simplify $\tilde a_{i+1}$ and $\tilde a_{i+2}$. 
Let $P$ be a crossing of $\tilde a'_{i}\cup \tilde a_{i+3}$. 
We pull down $A_{i+3}P$ and pull up $PB_{i+3}A_{i+4}B_{i+4}$. 
Then we have a diagram with only one negative crossing as illustrated in Fig. \ref{3ap-proof3}.

\begin{figure}[htbp]
      \begin{center}
\scalebox{0.58}{\includegraphics*{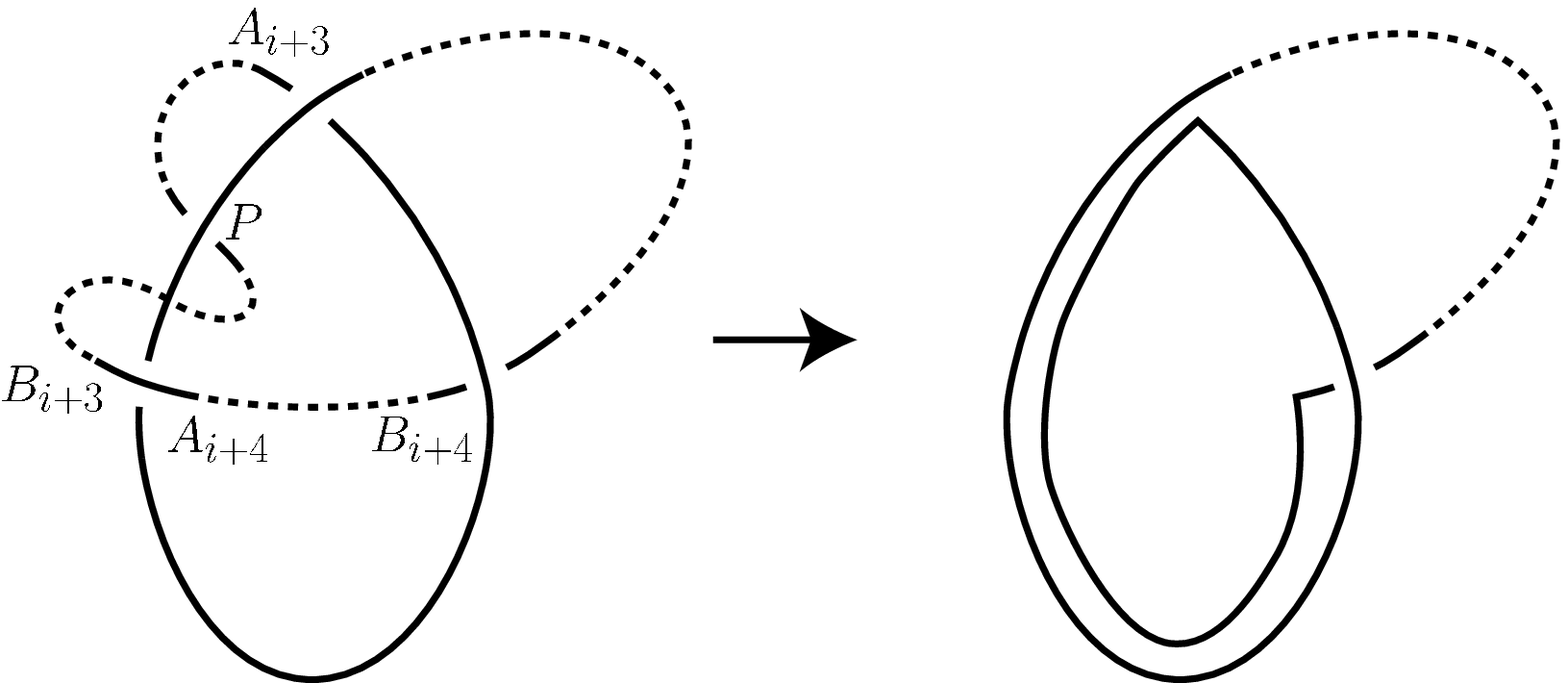}}
      \end{center}
   \caption{}
  \label{3ap-proof3}
\end{figure} 

%

Therefore we may suppose that no $\tilde a'_{i}$ has crossings with other $\tilde a_{j}$'s. 
Then we have that the union of all $\tilde a'_{i}$ and $B_iA_{i+1}$ forms a standard diagram of a left-handed trefoil knot.
Now suppose $\tilde a_{i}\cap\tilde a_{j}\neq\emptyset$. Then the result follows by Lemma \ref{hook-tangle} as illustrated in Fig. \ref{3ap-proof4}.

\begin{figure}[htbp]
      \begin{center}
\scalebox{0.58}{\includegraphics*{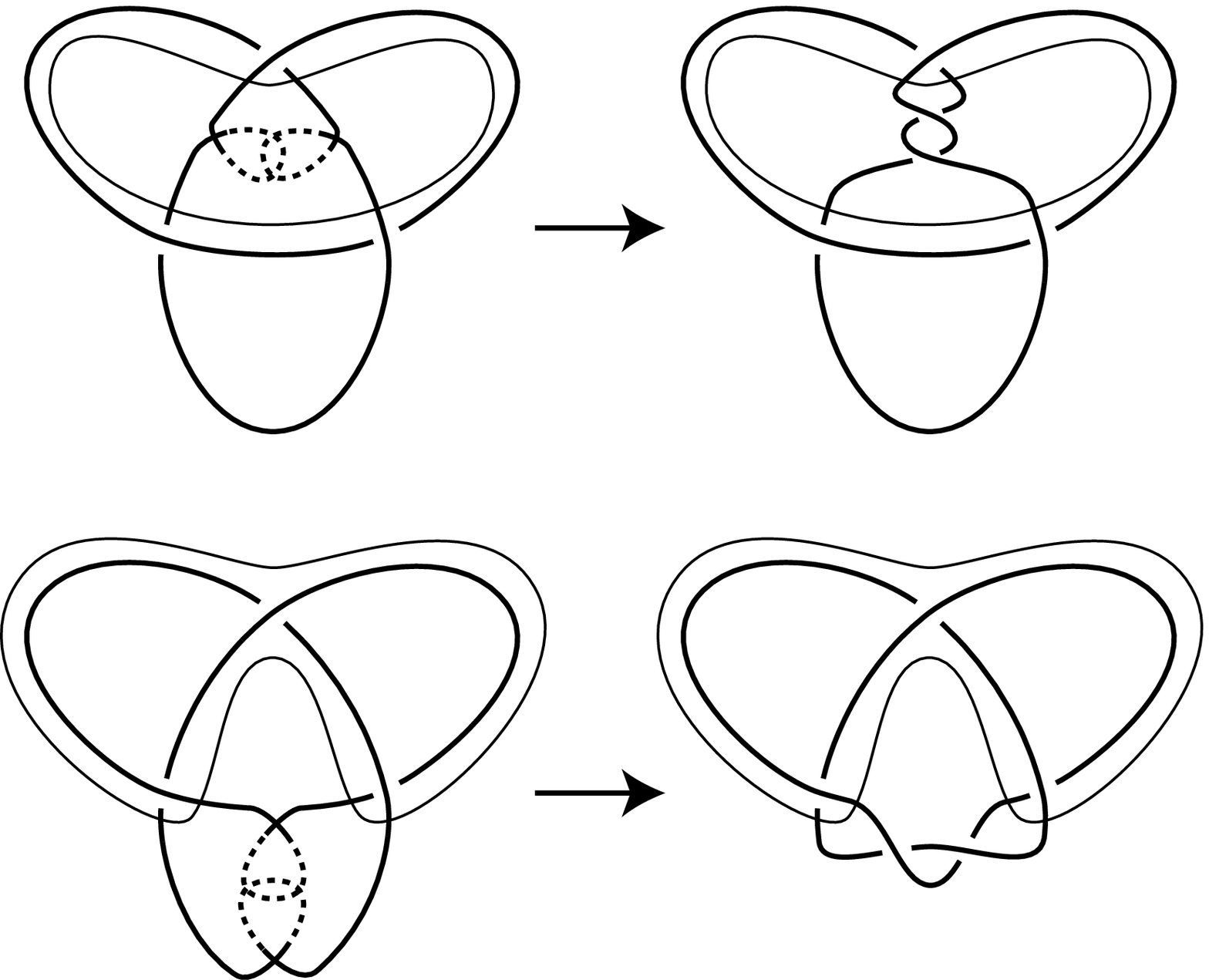}}
      \end{center}
   \caption{}
  \label{3ap-proof4}
\end{figure} 

%

Therefore $\tilde a_{i}\cap\tilde a_{j}$ is empty for all $i\neq j$.
Then the knot $K$ is the left-handed trefoil knot (possibly with some positive knots connected summed). This completes the proof.
\end{proof}

Corollary \ref{3ap2} follows immediately from Theorem \ref{3ap1} and the fact that the connected sum of the left-handed trefoil knot and a nontrivial positive knot has non-positive signature.

We do not know whether any knot with the left-handed trefoil knot as a connected sum summand can dominate 
the trivial knot. We think it is highly unlikely.





\section{Corollaries}\label{Section 6}
In the previous sections we have been applying our ``domination" technique mainly to 
the classical (Trotter-Murasugi) signature, $\sigma (L)$. Here we give several other 
applications of the theory including amphicheirality, sliceness, bounds on 
Jones polynomial, and Tristram-Levine signatures.

\begin{Corollary}\label{corollary1}
\begin{enumerate}
\item[(a)]
A nontrivial almost positive link is not amphicheiral.

\item[(b)]
2-almost positive link $L$ without a trivial component is amphicheiral if and only if either

\begin{enumerate}
\item[(i)]
$L$ is a connected or disjoint sum of right-handed and left-handed
Hopf links, or

\item[(ii)]
$L$ is the figure eight knot.

\end{enumerate}
\end{enumerate}

\end{Corollary}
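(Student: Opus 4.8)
The plan is to use the signature as an obstruction to amphicheirality. Writing $\overline{L}$ for the mirror image of $L$, one has $\sigma(\overline{L}) = -\sigma(L)$, so if $L$ is amphicheiral (i.e.\ $L \cong \overline{L}$) then $\sigma(L) = -\sigma(L)$, whence $\sigma(L) = 0$. Part (a) then follows at once: by Corollary \ref{ap4} a nontrivial almost positive link satisfies $\sigma(L) < 0$, which contradicts $\sigma(L) = 0$, so no such link is amphicheiral.

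For part (b) I would treat the two directions separately. The ``if'' direction is direct: mirroring interchanges right-handed and left-handed Hopf links, so the mirror of a connected (resp.\ disjoint) sum of a right-handed and a left-handed Hopf link merely permutes its summands and, by commutativity of the connected (resp.\ disjoint) sum, returns the same link; hence it is amphicheiral, which gives (i). The figure eight knot is amphicheiral by the classical fact, giving (ii). For the ``only if'' direction, assume $L$ is a $2$-almost positive link with no trivial component and $L \cong \overline{L}$; the first paragraph gives $\sigma(L) = 0$. By Corollary \ref{2ap3} a nontrivial $2$-almost positive link has $\sigma(L) \ge 0$ only in the three listed cases, and of these only cases (1) and (3) have $\sigma = 0$ (case (2), the left-handed Hopf link, has $\sigma = 1$). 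Thus $L$ is either a twist knot with negative clasp, or a disjoint or connected sum of a left-handed Hopf link and a $(2,n)$-torus link with anti-parallel orientation (Fig.\ \ref{torus-link2}).

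It remains to isolate, within each of these two $\sigma = 0$ families, the members that are genuinely amphicheiral. For the sum family (3) I would invoke the uniqueness of the decomposition of a link into connected-sum prime factors (and into disjoint components): writing $L = H_- \,\#\, T$ with $T$ the positive $(2,n)$-torus factor, one has $\overline{L} = H_+ \,\#\, \overline{T}$, and $L \cong \overline{L}$ forces the two multisets of prime factors to agree. Since $H_+ \ne H_-$ and $\overline{T} \ne T$ once $T$ is chiral, this can only happen when $T$ is itself a Hopf link, i.e.\ when the torus factor is the smallest one; in that case $L$ is a connected (resp.\ disjoint) sum of a right-handed and a left-handed Hopf link, which is exactly (i). For the twist-knot family (1) the ordinary signature is useless, since by Corollary \ref{2ap3} every twist knot with negative clasp has $\sigma = 0$, so I would bring in a chirality-sensitive invariant: either the classification of amphicheiral two-bridge knots (a two-bridge knot $\mathfrak{b}(p,q)$ is amphicheiral iff $q^2 \equiv -1 \pmod{p}$, which among twist knots holds only for the figure eight), or the Tristram-Levine signature function discussed in Section 6, which vanishes identically only for the figure eight among these knots. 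Either route singles out alternative (ii).

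The main obstacle is precisely this last step, the twist-knot case: because the ordinary signature cannot detect it, one must introduce an auxiliary invariant sensitive to chirality and combine it with the classification of the relevant knots. By contrast, the sum family (3) is handled cleanly by uniqueness of prime decomposition, and the remaining reductions all rest on the signature computation already packaged in Corollary \ref{2ap3}.
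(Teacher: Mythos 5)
Your part (a) and the skeleton of part (b) are exactly the paper's argument: amphicheirality forces $\sigma(L)=0$, Corollary \ref{ap4} gives (a), and Corollary \ref{2ap3} reduces (b) to the twist knots with negative clasp and to the family $H_-\# T^{an}_{2,2k}$ (or the corresponding disjoint sum). The differences lie in how you dispose of these two families. For the Hopf-plus-torus family the paper does not invoke unique prime factorization; it uses a second, equally elementary amphicheirality obstruction, the global linking number: since $lk(\overline{L})=-lk(L)$, amphicheirality forces $lk(L)=0$, while $lk\bigl(H_-\# T^{an}_{2,2k}\bigr)=-1+k$, so $k=1$ at once. Your route via unique factorization of links into prime (and split) summands is valid, but it is heavier machinery for the same one-line conclusion, and you still need the same small identification of $T^{an}_{2,2}$ with $H_+$ at the end.

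For the twist-knot family, your first route --- Schubert's classification of $2$-bridge knots, amphicheirality being $q^2\equiv-1\pmod p$ --- is precisely what the paper cites, and it is correct. Your proposed alternative, however, fails: the Tristram--Levine signature function does not single out the figure eight among twist knots with negative clasp. Such a twist knot has Alexander polynomial $nt-(2n+1)+nt^{-1}$, whose discriminant $4n+1$ is positive, so its roots are real, positive, and off the unit circle; consequently $\sigma_\psi\equiv 0$ identically for \emph{every} knot in this family. The stevedore knot $6_1$ is a concrete counterexample: it is chiral, yet all of its Tristram--Levine signatures vanish. So of your two suggested finishes for the twist-knot case, only the $2$-bridge classification works --- which is the one the paper uses.
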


\begin{proof}
If $L$ is amphicheiral then the signature $\sigma (L)=0$, so (a) follows from 
Corollary \ref{ap4}. To prove (b), we  apply Corollary \ref{2ap3}. 
The cases (1) and (3) of Corollary \ref{2ap3} have $\sigma (L)=0$. An amphicheiral link $L$ has the global
linking number $lk(L)=0$. Among the links described in Corollary \ref{2ap3} only connected or disjoint sums
of right and left-handed Hopf links (plus possibly trivial components) have $lk(L)=0$, and they
are amphicheiral. For a twist knot it is well known by the classification of 2-bridge knots \cite{Sch}
that it is amphicheiral if and only if it is the trivial knot or the figure eight knot.
\end{proof}

\begin{Corollary}\label{corollary2}
\begin{enumerate}
\item[(a)]
A nontrivial almost positive link is not a slice link.

\item[(b)]
A  nontrivial 2-almost positive link is a (smoothly) slice link if and only if
it is the Stevedore knot (possibly with additional trivial components).
\end{enumerate}
\end{Corollary}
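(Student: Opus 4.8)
The plan is to deduce both parts from the signature and linking-number information already assembled in Corollaries \ref{ap4} and \ref{2ap3}, using two standard facts about slice links: a slice link is concordant to a trivial link, so its signature vanishes, $\sigma(L)=0$; and if the components bound disjoint disks in $B^4$ then every pairwise linking number must vanish. With these in hand, sliceness becomes a condition strong enough to eliminate all but one of the cases produced by our structure theorems.

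For part (a), I would simply combine the first fact with Corollary \ref{ap4}. If $L$ is a nontrivial almost positive link, then $\sigma(L)<0$ by Corollary \ref{ap4}, so in particular $\sigma(L)\neq 0$; since a slice link has $\sigma(L)=0$, the link $L$ cannot be slice. This disposes of (a) immediately.

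For the forward direction of part (b), suppose $L$ is a nontrivial $2$-almost positive slice link, so $\sigma(L)=0$. Then $L$ has nonnegative signature, so by Corollary \ref{2ap3} it must be one of the three exceptional types listed there. Type (2), the left-handed Hopf link, has $\sigma(L)=1\neq 0$ and is excluded. Type (3), a disjoint or connected sum involving a left-handed Hopf link, contains a pair of components with linking number $\pm 1$; since a slice link has all pairwise linking numbers equal to zero, this type is also excluded. Hence $L$ must be of type (1): a twist knot with negative clasp, possibly with split trivial (unknotted) components. At this point I would invoke Theorem \ref{appl1}(c) (equivalently, the classification of slice $2$-bridge knots \cite{Sch}), which forces the knotted part to be the Stevedore knot. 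The split trivial components are themselves unknots and hence slice, so they do not obstruct sliceness. The converse direction is the classical fact that the Stevedore knot (the $6_1$ twist knot) is smoothly slice, and adding split trivial components preserves sliceness.

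The main obstacle is not the case analysis, which is short, but the justification of the two background facts in the precise generality needed. In particular, asserting $\sigma(L)=0$ for a slice \emph{link} at the value $\omega=-1$ requires care, since the ordinary signature is a priori only a concordance invariant away from the roots of the Alexander polynomial; I would handle this either by concordance-invariance of the Tristram--Levine signatures together with a continuity argument at $\omega=-1$, or by exhibiting the slice disks as a metabolizer for the relevant intersection form. The second delicate point is the identification of \emph{which} twist knot with negative clasp is slice, which is exactly the content already proved for knots in Theorem \ref{appl1}(c) and which I would cite rather than reprove.
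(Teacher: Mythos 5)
Your proposal is correct and follows essentially the same route as the paper: slice links have vanishing signature and vanishing pairwise linking numbers, which combined with Corollary \ref{ap4} gives (a), and combined with Corollary \ref{2ap3} reduces (b) to the twist-knot case, settled by the fact that the only nontrivial smoothly slice twist knot is the stevedore knot. One caution: do not invoke Theorem \ref{appl1}(c) here, since in the paper that statement is itself deduced from this corollary (so the citation would be circular); the correct external reference for the sliceness of twist knots is Casson--Gordon \cite{Ca-G} (not \cite{Sch}), which is exactly what the paper uses.
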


\begin{proof}
If $L$ is a slice link then $\sigma (L)=0$ and any pair of components
have linking number zero. Therefore (a) follows from Corollary 1.7 and (b) from 
Corollary 1.10 by observing that a twist knot is smoothly slice  if and only if 
it is the trivial knot or the stevedore knot (Casson and Gordon \cite{Ca-G}).
\end{proof}

Recall that the Jones polynomial \cite{Jo} of a link $L$ is an element of
the ring of the Laurent polynomials in variable $\sqrt t$. That is
$V_L(t)\in Z[{\sqrt t}^{\pm 1}]$ satisfying the skein relation:
$$ t^{-1}V_{L_+}(t) -tV_{L_-}(t)=(\sqrt t - \frac{1}{\sqrt t})V_{L_0}(t)$$
and normalized to be $1$ at the trivial knot.

Let $d_{min} V_{L}(t)$ denote the lowest power of $t$ in  $V_{L}(t)$. Notice that
 $V_{L}(t)\in{\rm Z}[t^{\pm 1}]$ if the number of components of $L$,
$\mu (L)$, is odd and $\sqrt {t} V_{L}(t)\in{\rm Z}[t^{\pm 1}]$ if
$\mu (L)$ is even, so the inequalities in Corollary \ref{corollary3} (a)-(c) are sharp
in the case of a knot.

\begin{Corollary}\label{corollary3}
Let $L$ be a nontrivial nonsplit link, then
\begin{enumerate}
\item[(a)]
If $L$ is positive then $d_{min}
V_{L}(t)\geq 3/2$ unless
\begin{enumerate}
\item[(i)]
$L$ is a $(2,2k)$-torus link with anti-parallel orientation of components (Fig. \ref{torus-link2}) 
(denote it $T^{an}_{2,2k}$) then $d_{min}
V_{L}(t)=1/2$ or,

\item[(ii)]
$L=T^{an}_{2,2k}\#T^{an}_{2,2k}$; then $d_{min}
V_{L}(t)=1$ or,

\item[(iii)]
$L$ is a pretzel knot $L(p_{1},p_{2},p_{3})$ of Fig. \ref{pretzel-knot} or a 3-component
pretzel link $L(q_{1},q_{2},q_{3})$ of Fig. \ref{three-three} (c); then  $d_{min}
V_{L}(t)=1$.

\end{enumerate}

\item[(b)]
If $L$ is an almost positive link then $d_{min}
V_{L}(t)\geq 1/2$.

\item[(c)]
If $L$ is a 2-almost positive link then $d_{min}
V_{L}(t)\geq -3/2$ unless either

\begin{enumerate}
\item[(i)]
$L$
is a left-handed Hopf link (denoted $H_{-}$) then $d_{min}
V_{L}(t)=-5/2$ or,

\item[(ii)]
$L=H_{-}\#T^{an}_{2,2k}$ then $d_{min}
V_{L}(t)=-2$ or,

\item[(iii)]
$L$ is a twist knot for which $d_{min}
V_{L}(t)=-2$.
\end{enumerate}

\item[(d)]
If $K$ is a 3-almost positive knot then $d_{min}
V_{L}(t)\geq -3$ except the left-handed trefoil knot for which $d_{min}
V_{L}(t)=-4$.
\end{enumerate}
\end{Corollary}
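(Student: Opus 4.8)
The plan is to reduce the whole corollary to two inputs and then feed in the classification theorems of Sections 2--5. The first input is an exact evaluation of $d_{min}V_L$ for \emph{positive} links: for a connected positive diagram $D$ with $c$ crossings and $s$ Seifert circles the all-$A$ Kauffman state is the Seifert state and $D$ is $A$-adequate, so the top $A$-degree of the bracket survives without cancellation; writing $V_L(t)=(-A)^{-3w}\langle D\rangle$ with $t=A^{-4}$ and $w=c$ yields $d_{min}V_L=(c-s+1)/2$. The second input is the Jones-polynomial analogue of the Giller inequality: if $L_1\ge L_2$ then $d_{min}V_{L_1}\ge d_{min}V_{L_2}$. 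Granting these, the corollary becomes bookkeeping over the model links produced by the earlier structure theorems, together with additivity $d_{min}V_{K_1\#K_2}=d_{min}V_{K_1}+d_{min}V_{K_2}$.

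First I would treat (a). Using $d_{min}V_L=(c-s+1)/2$ I run through the cases of Theorem \ref{positive-theorem}: when $L\ge(2,5)$-torus knot, the $(2,4)$- or $(3,3)$-torus link, $8^3_{10}$, or a sum of right-handed trefoils and Hopf links, the count gives $(c-s+1)/2\ge 3/2$, while the remaining possibilities are exactly the pretzel knots $L(p_1,p_2,p_3)$ (where $s=c-1$, so $d_{min}=1$), the three-component pretzel links ($d_{min}=1$), and the anti-parallel $(2,2k)$-torus links and their pairwise sums ($s=c$, giving $1/2$ resp.\ $1$). These are precisely the exceptions (i)--(iii).

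For (b)--(d) the Giller-type inequality does the work. By Theorem \ref{ap1} a nontrivial almost positive $L$ dominates the right-handed trefoil ($d_{min}=1$) or the right-handed Hopf link ($d_{min}=1/2$), so $d_{min}V_L\ge 1/2$, which is (b) with no exceptions. For (c) I would go through the eleven cases of Theorem \ref{2ap1}: in the dominating cases (1)--(8) one computes $d_{min}V$ of the model link and checks it is $\ge -3/2$ — the right-handed trefoil, $6_2$, right-handed Hopf, the $(2,4)$-torus summand cases lie strictly above $-3/2$, while the right-handed (positive-chirality) Whitehead link, the trefoil-plus-left-Hopf sum, and the two-right-plus-one-left Hopf sum all sit exactly at $-3/2$ — and cases (9)--(11) produce precisely the three exceptions: the left-handed Hopf link ($-5/2$), its sum with an anti-parallel torus link ($-2$), and the negative-clasp twist knots ($-2$). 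For (d), Theorem \ref{3ap1} gives $K\ge$ unknot, whence $d_{min}V_K\ge 0\ge -3$, or $K$ is the left-handed trefoil with positive knots connect-summed; since $d_{min}$ is additive under $\#$ and each nontrivial positive summand contributes $\ge 1$ by (a), such a $K$ has $d_{min}V_K\ge -4+1=-3$ unless it is the bare left-handed trefoil, where $d_{min}V=-4$.

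The hard part will be the Giller-type inequality itself, which — unlike the signature version — is \emph{not} a formal consequence of the skein relation. Rearranging $t^{-1}V_{L_+}-tV_{L_-}=(\sqrt t-1/\sqrt t)V_{L_0}$ gives the one-sided bound $d_{min}V_{L_-}\ge\min(d_{min}V_{L_+}-2,\,d_{min}V_{L_0}-3/2)$, but this is too lossy to recover the stated bounds, and the two-sided comparison can fail only through cancellation: when a positive crossing is switched, the lowest terms contributed by $t^{-2}V_{L_+}$ and by $(t^{-1/2}-t^{-3/2})V_{L_0}$ occupy the same degree and could annihilate, letting $d_{min}$ jump upward. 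I expect to rule this out exactly as in the positive case, by tracking leading coefficients: the relevant positive (adequate) links have $\pm1$ lowest coefficient, and the classification pins down which model link actually occurs, so that in every branch of the resolution either the two lowest terms provably do not cancel or the cancelled configuration is itself one of the listed exceptional links. Establishing this non-cancellation branch by branch, rather than any single estimate, is where the genuine effort lies.
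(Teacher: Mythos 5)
Your reduction stands or falls with the ``Jones-polynomial analogue of the Giller inequality'': if $L_1\ge L_2$ then $d_{min}V_{L_1}(t)\ge d_{min}V_{L_2}(t)$. This lemma is false, and not merely delicate. The figure-eight knot is a negative-clasp twist knot: changing one \emph{positive} twist crossing in its standard diagram creates a cancelling pair and yields the unknot, so $4_1\ge$ unknot; yet $d_{min}V_{4_1}(t)=-2<0=d_{min}V_{\mathrm{unknot}}(t)$. The same happens along the whole family of negative-clasp twist knots ($K_n\ge K_{n-2}\ge\cdots\ge$ unknot, all with $d_{min}=-2$). This directly refutes the inference you make in part (d): the figure-eight knot is $3$-almost positive and dominates the trivial knot, but ``$K\ge$ unknot, whence $d_{min}V_K\ge 0$'' is false for it. Parts (b) and (c) rest on the same implication, and part (a)'s claim that ``the count gives $(c-s+1)/2\ge 3/2$'' in the dominating cases is likewise unjustified without it, since domination by a model link gives no bound on $c(D)-s(D)+1$ for the diagram of $L$. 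Nor can the repair you propose (excluding cancellation of lowest terms branch by branch) succeed: each branch contains infinitely many links whose Jones polynomials are simply not controlled by the domination relation, and the twist-knot examples show that the cancellation you hope to exclude genuinely occurs inside the families produced by Theorems \ref{ap1}, \ref{2ap1} and \ref{3ap1}. (Even granting the lemma, the bookkeeping in (c) does not close: in case (4) of Theorem \ref{2ap1} the model may be a \emph{disjoint} sum of the right trefoil and the left Hopf link, whose $d_{min}$ is $1-\frac{5}{2}-\frac{1}{2}=-2$, below the target $-\frac{3}{2}$.)

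The paper's actual route avoids any such lemma: it keeps the (true) Giller monotonicity on the signature side and converts it to Jones bounds via Murasugi's inequality (\cite{M-3}, Theorem 13.3), $d_{min}V_L(t)\ge -c_-(D)-\frac{1}{2}\sigma(L)$ for a diagram $D$ of a nonsplit link $L$. Applied to positive, $2$-almost positive and $3$-almost positive diagrams, this yields (a), (c) and (d) directly from Corollaries \ref{positive-corollary}, \ref{2ap3} and \ref{3ap2}, with the exceptional values in (a)(i)--(iii) and (c)(i)--(iii) read off from the fact that Murasugi's inequality is an equality for (connected sums of) alternating diagrams. Your first input --- the exact value $d_{min}V_L=\frac{1}{2}(c(D)-s(D)+1)$ for positive diagrams via $A$-adequacy of the Seifert state --- is correct and coincides with the paper's Theorem \ref{6-4}(a) and Lemma \ref{5.5}; the paper needs exactly this tool, extended to diagrams with one negative crossing (Theorem \ref{6-4}(b)) plus a small-cyclomatic-number analysis, for part (b), where Murasugi's inequality together with $\sigma(L)<0$ gives only $d_{min}V_L\ge -\frac{1}{2}$. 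So the missing ingredient in your proposal is Murasugi's inequality (or some substitute linking $d_{min}V$ to an invariant that \emph{is} monotone under positive-to-negative crossing changes); domination alone cannot bound $d_{min}V$ from below.
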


\begin{proof}
K.~Murasugi showed (\cite{M-3} Theorem 13.3) that if $D$ is a diagram of 
a nonsplit link $L$ with $c_{-}(D)$  negative crossings then 
$$d_{min} V_{L}(t)\geq -c_{-}(D)-\frac{1}{2}\sigma (L).$$

Murasugi's inequality and Corollary \ref{3ap2} (a) give Corollary \ref{corollary3} (d). 
Murasugi's inequality and Corollary \ref{2ap3} give Corollary \ref{corollary3} (c) 
(there is no need to perform calculations for exceptional cases (i)-(iii) 
because for alternating diagrams (or their
connected sums) Murasugi's inequality becomes the equality (\cite{M-3}). 
Murasugi's inequality and Corollary \ref{p3} suffice to prove (a). 
In the case (b) Murasugi's inequality and the fact that
$\sigma (L)<0$ are not sufficient (we get only $d_{min}
V_{L}(t)\geq -1/2$; or in the case of knots $d_{min} V_{L}(t)\geq 0$) 
and to improve this we would have to show that 
Murasugi's inequality is a strict inequality in our case (an almost positive diagram with 
alternating connected summands has a nugatory crossing). We choose, however, a different
method which is of interest on its own  and generalizes Corollary \ref{corollary3} (a) and (b).

\begin{Theorem}\label{6-4}
Let $D$ be a diagram of an oriented link $L$, then

\begin{enumerate}
\item[(a)]
If $D$ is positive then 

$$d_{min}
V_{L}(t)= \frac{1}{2}(c(D)-s(D)+1)$$

where $c(D)$ is the number of crossings of $D$ and $s(D)$ is the number
of Seifert circles of $D$. In particular, if $L$ is a knot then $d_{min}V_{L}(t)= g(D)$, where 
$g(D)$ is the genus of the Seifert surface obtained from $D$ by the Seifert algorithm\footnote{After 
the first version of the paper was written, Kronheimer and Mrowka \cite{KM} and then Rasmussen \cite{Ras}
 proved that $g(D)$ is equal to genus of the knot $L$ as well as a slice genus of $L$.}. 

\item[(b)]
If $D$ has one negative crossing say $P$ then either
\begin{enumerate}
\item[(i)]
$P$ is a singular crossing (i.e. there are no other crossings joining
the same, as $P$, Seifert circles of $D$) then

$$d_{min}
V_{L}(t)= \frac{1}{2}(c(D)-s(D)+1)$$

or

\item[(ii)]
$P$ is not a singular crossing and then

$$d_{min}
V_{L}(t)= \frac{1}{2}(c(D)-s(D)+1)-1 = \frac{1}{2}(c_{+}(D)-c_{-}(D)-s(D)+1)$$

\end{enumerate}
\end{enumerate}
In particular Corollary \ref{corollary3} (b) follows.
\end{Theorem}
\end{proof}
To prove Theorem \ref{6-4} we need the following technical Lemma.

\begin{Lemma}\label{5.5}
Let $D$ be an oriented link diagram and $F=P_{1},P_{2},\ldots ,P_{k}$ 
a family of different crossings of $D$.

\begin{enumerate}
\item[(a)]
Assume that $A$-splittings 
$ \left(
\begin{minipage}{49pt}
\scalebox{0.2}{\includegraphics{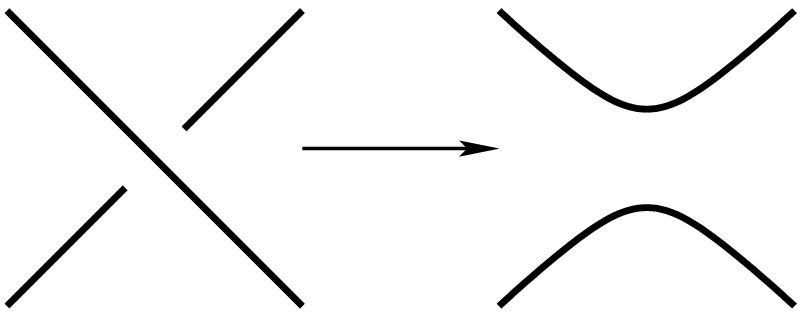}}
\end{minipage}
\right) $
 of crossings $P_{1},P_{2},\ldots ,P_{k}$
produce a diagram $D'$ such that two arcs obtained from the splitting of $P_{i}$, for any $i$,
are on different connected components of $D'$ considered as a graph (we say that $D$ is $+$-adequate 
with respect to crossings $F$). Then

$$4d_{min}
V_{D}(t)= 4d_{min}
V_{D'}(t) + 3(c_{+}(D)-c_{-}(D) - (c_{+}(D') -c_{-}(D')) - c(F)$$

where  $c(F)$  is the number of crossings in the family $F$.

\item[(b)]
Assume that all crossings in $F$ are positive and $D'$ is obtained from $D$
by smoothing 
$ \left( 
\begin{minipage}{49pt}
\scalebox{0.2}{\includegraphics{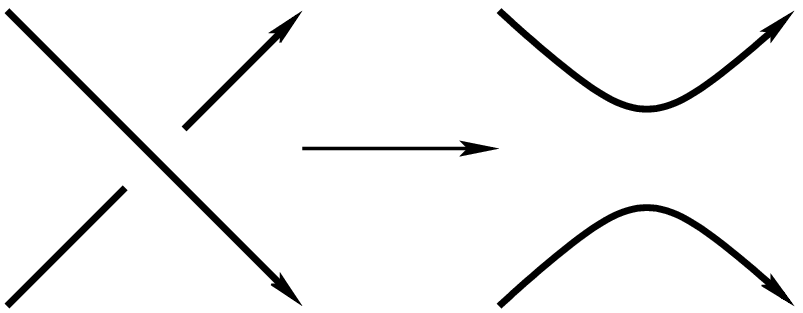}}
\end{minipage}
\right) $
 all crossings of $F$, and for any $i$ two arcs obtained from 
the smoothing of $P_{i}$ are on different connected components of $D'$
considered as a graph. Then 

$$d_{min}
V_{D}(t)= d_{min}
V_{D'}(t) + \frac{1}{2}c(F)$$

\item[(c)] (compare \cite{M-P-1})\ \ 
Let $D=D_{1}*D_{2}$ (planar star (Murasugi) product) 
and $D_{2}$ be a positive diagram. Then

$$d_{min}
V_{D}(t)= d_{min}
V_{D_{1}}(t)+d_{min}
V_{D_{2}}(t)=d_{min}
V_{D_{1}}(t)+\frac{1}{2}(c(D_{2})-s(D_{2})+1).$$

\end{enumerate}
\end{Lemma}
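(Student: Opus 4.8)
The plan is to pass from the Jones polynomial to the Kauffman bracket $\langle D\rangle\in\mathbb{Z}[A^{\pm1}]$, for which $V_L(t)=(-A^3)^{-w(D)}\langle D\rangle$ under the substitution $t=A^{-4}$, where $w(D)=c_+(D)-c_-(D)$. Since $t=A^{-4}$, the lowest $t$-power corresponds to the highest $A$-power: writing $M(E)=\max\deg_A\langle E\rangle$, one computes $4\,d_{min}V_E=3w(E)-M(E)$. Hence the identity in (a) is equivalent to the purely bracket-theoretic statement $M(D)=M(D')+c(F)$, and this is what I would establish. Throughout I would use the Kauffman state sum
\[ \langle E\rangle=\sum_{s}A^{\,a(s)-b(s)}(-A^2-A^{-2})^{|s|-1}, \]
where $s$ runs over all assignments of $A$- or $B$-smoothings, $a(s),b(s)$ count the two types, $|s|$ is the number of loops, and the $A$-degree contributed by the state $s$ is $a(s)-b(s)+2(|s|-1)$.

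For part (a) I would induct on $c(F)$, $A$-splitting one crossing $P=P_k$ of $F$ at a time and noting that the hypothesis (relative adequacy) restricts to the smaller family. The first observation is that changing a single smoothing from $B$ to $A$ raises $a-b$ by $2$ and alters $|s|$ by $\pm1$, hence changes the state degree by $+4$ or $0$; so the state-sum maximum is always attained with $P_k$ $A$-smoothed. The force of the hypothesis is that the two arcs produced by $A$-splitting $P_k$ lie on different connected components of $D'$; since passing to the all-$A$ state only refines components, those arcs lie on distinct all-$A$ loops, i.e. $P_k$ is $A$-adequate. Writing $D''$ for the $A$-split and $D_B$ for the $B$-split at $P_k$, the skein expansion $\langle D\rangle=A\langle D''\rangle+A^{-1}\langle D_B\rangle$ has leading $A$-degrees $1+M(D'')$ and $-1+M(D_B)$; because the $B$-smoothing at $P_k$ merges two all-$A$ loops, the degree ceiling for $D_B$ drops by $4$, which I expect to keep $M(D_B)$ strictly below $M(D'')+2$. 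The leading term of $A\langle D''\rangle$ is then uncancelled, giving $M(D)=M(D'')+1$, and iterating yields $M(D)=M(D')+c(F)$.

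Part (b) I would obtain as the special case of (a) in which every crossing of $F$ is positive, using that for a positive crossing the oriented (Seifert) smoothing is exactly its $A$-splitting (this is the convention forced by the positive formula below, where the Seifert state coincides with the all-$A$ state). Then $D'$ is the oriented-smoothed diagram, $c_-(D')=c_-(D)$ and $c_+(D')=c_+(D)-c(F)$, so $w(D)-w(D')=c(F)$; substituting into the formula of (a) collapses it to $d_{min}V_D=d_{min}V_{D'}+\tfrac12 c(F)$. Part (c) I would treat by the multiplicativity of the extreme bracket degree under the planar star (Murasugi) product, as in \cite{M-P-1}: the bottom $t$-term of $\langle D_1*D_2\rangle$ factors through those of $\langle D_1\rangle$ and $\langle D_2\rangle$, whence $d_{min}V_{D_1*D_2}=d_{min}V_{D_1}+d_{min}V_{D_2}$; inserting the positive-diagram value $d_{min}V_{D_2}=\tfrac12(c(D_2)-s(D_2)+1)$ (available from Theorem \ref{6-4}(a), or directly since a reduced positive diagram is $A$-adequate and so realizes $M(D_2)=c(D_2)+2(s(D_2)-1)$) gives the stated identity.

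The main obstacle throughout is the non-cancellation of the leading coefficient. Everything reduces to showing that relative adequacy at the crossings of $F$ prevents the subdominant ($B$-smoothed) states from interfering with the top-degree term; concretely, in the single-crossing step I must rule out the possibility that a large cancellation defect in $D''$ lets $M(D_B)$ climb to $M(D'')+2$ and annihilate the leading term. The inductive single-crossing reduction above is designed precisely to isolate and control this interference, and making that control rigorous — tracking how the extreme coefficient propagates under each $A$-splitting — is the technical heart of the argument.
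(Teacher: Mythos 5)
Your strategy is the same as the paper's: pass to the Kauffman bracket via $4d_{min}V_E=3w(E)-M(E)$, reduce (a) to the bracket statement $M(D)=M(D')+c(F)$, and obtain (b) from (a) using the fact that the Seifert smoothing of a positive crossing is its $A$-smoothing. The paper proves (a) by expanding all crossings of $F$ simultaneously, following Lickorish--Thistlethwaite (so $\langle D\rangle=A^{c(F)}\langle D'\rangle+R$, with every state having a $B$ on $F$ lying at least $4$ below the all-$A$ ceiling), whereas you $A$-split one crossing of $F$ at a time. The gap is exactly where you flag it, and it is not a routine bookkeeping step. In your inductive step you must rule out $M(D_B)\geq M(D'')+2$, but what your argument provides is $M(D_B)\leq \mathrm{ceil}(D'')-2$, where $\mathrm{ceil}$ denotes the \emph{potential} top degree $(c-1)+2(\abs{s_A}-1)$ of the state sum. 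This suffices only if $\langle D''\rangle$ actually attains its ceiling, i.e.\ has no cancellation in its top potential degree. Relative adequacy does not guarantee that: it constrains only the crossings of $F$, while cancellation can be caused by crossings of $D'$ itself. This is not a hypothetical worry --- it occurs in the paper's own application of the lemma: in the proof of Theorem \ref{6-4}(b)(i), $D'$ is the family of Seifert circles joined by the single negative crossing $P$, and there the two potential top contributions of $\langle D'\rangle$ cancel (a one-crossing negative figure-eight has bracket $-A^{-3}$, four below its ceiling). In such a situation your inequality permits $M(D_B)=M(D'')+2$, precisely the interference you need to exclude, and nothing in your argument excludes it.

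There is a second, structural problem with the crossing-by-crossing reduction: the induction has no fallback on the $B$-side. If some other crossing of $F$ joins the same two components of $D'$ as $P_k$ (a clasp --- the typical situation in applications), then $D_B$ is \emph{not} relatively adequate with respect to $F\setminus\{P_k\}$, since the two arcs of that other crossing now lie on a single component of the $A$-split of $D_B$; so the inductive hypothesis cannot be applied to $D_B$ at all, and the crude ceiling bound is all you have --- which, as above, is insufficient. The paper's simultaneous expansion is terse about the same cancellation issue (it defers to the adequate-diagram argument of Lickorish--Thistlethwaite, which is complete when $\langle D'\rangle$ attains its ceiling, e.g.\ when $D'$ is crossingless as in Theorem \ref{6-4}(a)), but at least it isolates the difficulty in one statement about $R$ and does not destroy its own hypothesis when recursing. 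Finally, for (c) your route is weaker than the paper's: additivity of $d_{min}V$ under the planar star product is not available off the shelf (the bracket is \emph{not} multiplicative under $*$; that additivity is essentially the content of the claim), whereas the paper simply applies (b) to the family of all crossings of $D_2$, which turns $D_1*D_2$ into $D_1$ plus $s(D_2)-1$ disjoint circles; your parenthetical fallback (a positive diagram is $A$-adequate) is correct and avoids circularity, but the "factorization" you lead with is unjustified.
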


\begin{proof}
We use the Kauffman bracket approach to $V_{L}(t)$.\\
Recall that the Kauffman bracket polynomial of a link diagram,
$\langle  D \rangle \in Z[A^{\pm 1}]$, satisfies
the Kauffman bracket skein relations \cite{Ka-2}:
\[
\langle  D_+ \rangle = A \langle  D_0 \rangle +
A^{-1}\langle  D_{\infty} \rangle,\ \langle  D \sqcup O \rangle = (-A^2 - A^{-2})\langle  D \rangle,
\]
and is normalized to be $1$ at the crossingless diagram of the unknot.\ 
The Jones polynomial $V_L(t) \in Z[t^{\pm \frac{1}{2}}]$ can be obtained from
the Kauffman bracket polynomial of any oriented diagram $D$ of a link $L$ by putting $t=A^{-4}$ in
$V_L(t) = (-A^3)^{-w(D)}\langle  D\rangle$, where $w(D)$ is the writhe or Tait number of an oriented
diagram $D$ (that is, $w(D)=c_+(D)-c_-(D) = {\sum}_p {\rm sgn} (p)$ where the sum is taken over all
crossings $p$ of oriented diagram~$D$).

\begin{enumerate}
 \item[(a)]
We can repeat essentially the proofs of [K], \cite{L-T} and [T] (see for example
the proof of Proposition 1 in \cite{L-T} about the maximal and minimal terms of 
the bracket $<D>$ of an adequate diagram $D$). In particular, we get immediately that 
$A$-smoothing along $F$ contributes to the maximal term in $<D>$, so  
$d_{max \ A}<D> = d_{max \ A}<D'> + c(F)$, and thus $-4d_{min \ t}V_D(t) + 3w(D) = 
-4d_{min \ t}V_{D'}(t) + 3w(D') + c(F)$.

\item[(b)]
Observe that a smoothing at a positive crossing is the same as A-splitting
the crossing so (b) follows from (a) (Figure 6.1 illustrates this fact).
\ \\
\begin{figure}[htbp]
      \begin{center}
\scalebox{0.5}{\includegraphics{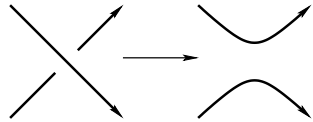}}
      \end{center}
   \caption{$A$-splitting agrees with smoothing for a positive crossing}
  \label{smoothing-A}
\end{figure}

\item[(c)]
The family of crossings of $D_{2}$ satisfies the assumptions of (b) so (c)
follows from (b).

\end{enumerate}
\end{proof}

PROOF OF THEOREM \ref{6-4}:

\begin{enumerate}
\item[(a)]
We apply Lemma \ref{5.5} (a) to family $F$ of all crossings of $D$, then $D'$ is
a collection of all Seifert circles of $D$ so $d_{min}V_{D'}(t) =
1/2(-s(D)+1)$. Therefore $d_{min}V_{D}(t)=1/2(c(D)-s(D)+1).$

\item[(b)]
\begin{enumerate}
\item[(i)]
$P$ is a singular crossing and $F$ a family of all other crossings of $D$.
Let $D_{+}^{P}$ be a positive diagram obtained from $D$ by changing the crossing
$P$ from negative to positive. Finally let $D'$  (resp. $(D_{+}^{P})'$)
denote the diagram obtained from $D$ (resp. $D_{+}^{P}$) by smoothing
all crossings from family $F$. Notice that $D'$ represents the link ambient isotopic
to $(D_{+}^{P})'$ and in both cases we can use Lemma \ref{5.5} (b) and family $F$
(see Fig. 6.2).

\ \\
\begin{figure}[htbp]
      \begin{center}
\scalebox{0.5}{\includegraphics{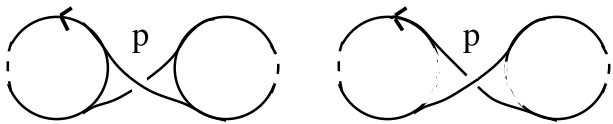}}
      \end{center}
   \caption{$P$ is a nugatory crossing in $D'$ and $(D^P_+)'$}
  \label{Figure 6.2}
\end{figure}


Therefore

$$d_{min}
V_{D}(t)= d_{min}
V_{D'}(t)+\frac{1}{2}c(F)=d_{min}
V_{(D_{+}^{P})'}(t)+\frac{1}{2}c(F)=d_{min}
V_{D_{+}^{P}}(t)$$

Furthermore $D_{+}^{P}$ is a positive diagram so by part (a) of Theorem \ref{6-4}

$$d_{min}
V_{D_{+}^{P}}(t)=\frac{1}{2}(c(D_{+}^{P})-s(D_{+}^{P})-1).$$

The formula from (b)(i) holds because $c(D_{+}^{P})=c(D)$ and
$s(D_{+}^{P})=s(D)$.

\item[(ii)]
Let $Q_{1},Q_{2},\ldots ,Q_{n}$ be all other crossings of $D$ which join
the same Seifert circles at $P$. Let $F$ be the set of crossings of $D$
different from $P$ and  $Q_{1},Q_{2},\ldots ,Q_{n}$. Furthermore let
$D_{0}=D_{00}^{PQ_{1}}$ denote the diagram obtained from $D$ by smoothing
crossings $P$ and $Q_{1}$. Finally let $D'$ (resp. $D'_{0}$) denote the 
diagram obtained from $D$ (resp. $D_{0}$) by smoothing all crossings from
family $F$. Notice that $D'$ represents a link isotopic to $D'_{0}$ and in
both cases we can apply Lemma \ref{5.5} (b) using family $F$ (see Fig. 6.3).

\ \\
\begin{figure}[htbp]
      \begin{center}
\scalebox{0.4}{\includegraphics{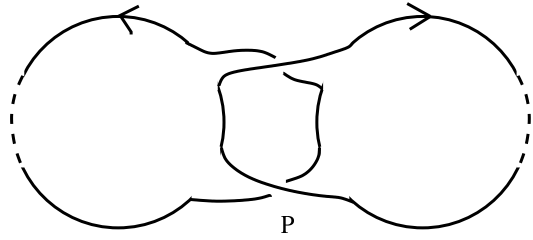}}
      \end{center}
   \caption{$P$ is a negative crossing which is non-singular and cancels in $D'$}
  \label{smoothing-A}
\end{figure}


Therefore (as in Case (i)), $d_{min}V_{D}(t)= d_{min}V_{D_{0}}(t)$.
Also $D_{0}$ is a positive diagram so by part (a) of Theorem 5.5,

$$d_{min}V_{D_{0}}(t)=\frac{1}{2}(c(D_{0})-s(D_{0})-1).$$

Because $c(D)=c(D_{0})+2$, $s(D)=s(D_{0})$, we get the formula from 
Theorem \ref{6-4} (b)(ii).
\end{enumerate}

Now assume that $D$ is a connected diagram, then $c(D)-s(D)+1\geq 0$,
and the equality holds if and only if
 all crossings are nugatory so if $L$ is nontrivial
then $c(D)-s(D)+1\geq 1$. Now assume additionally that $P$ is the only negative crossing of $D$,
$P$ is not singular and $D$ has no nugatory crossings.  If
 \begin{enumerate}
\item[(1)]
$c(D)-s(D)+1=1$ then $D$ looks as in Fig. 6.4 (i) so it represents a split link.

\item[(2)]
$c(D)-s(D)+1=2$ then $D$ looks as in Fig. 6.4 (ii) or (iii) so it represents the trivial knot or a split link.

Corollary \ref{corollary3} (b) follows.
\end{enumerate}
\end{enumerate}

\ \\
\begin{figure}[htbp]
      \begin{center}
\scalebox{0.6}{\includegraphics{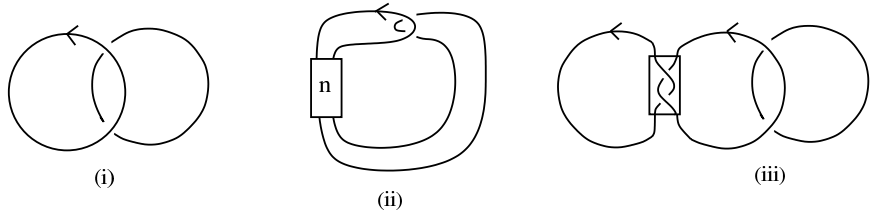}}
      \end{center}
   \caption{$c(D)-s(D)+1=1$ or $c(D)-s(D)+1=2$ with a non-singular negative crossing}
  \label{smoothing-A}
\end{figure}

\ \\
\begin{Remark}\label{Remark 6.6}
If we consider the Seifert graph associated to $D$, say $\Gamma (D)$
(see \cite{Cr-Mo} or \cite{M-P-2}) then $c(D)-s(D)+1$ is the first Betti 
number of $\Gamma (D)$ (called cyclomatic number in graph theory). 
Our method is to analyze diagrams such that $\Gamma (D)$
has small cyclomatic number. In essence we can then extend 
Corollary \ref{corollary3} (c) to $d_{min}V_{L}(t)\geq 1/2$ 
unless... But the list of exceptions would be rather
long so not very interesting (unless we will try to use this 
to determine which links are 2-almost positive).
\end{Remark}
\ \\
\begin{figure}[htbp]
      \begin{center}
\scalebox{0.5}{\includegraphics{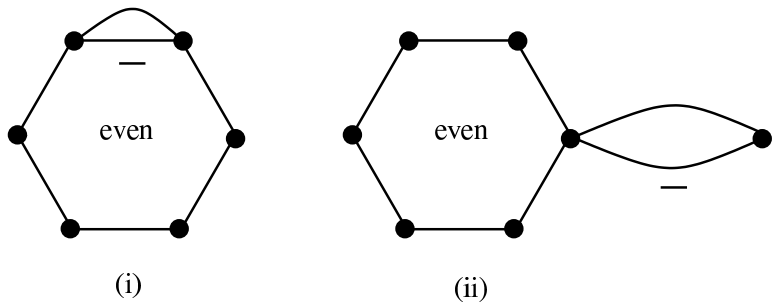}}
      \end{center}
   \caption{The Seifert graph in (i) is the graph of Fig. 6.4 (ii) for $n$ odd. The Seifert graph 
in (ii) is the graph of Fig. 6.4 (ii) for $n$ even or of Fig. 6.4 (iii)}
  \label{Seifert graphs}
\end{figure}

\begin{Corollary}\label{5.7}
A nontrivial positive link $L$ has unknotting number one if and only if 
$L$ is a right-handed Hopf link (plus, possibly, trivial components) 
or $L$ is a (positive)
twist knot ( possibly with additional trivial components).
\end{Corollary}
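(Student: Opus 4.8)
The plan is to prove both implications, with the forward direction driven by the signature classification of Corollary \ref{positive-corollary} together with an elementary linking-number obstruction, and with the single hard residue being the knot case. For the easy ($\Leftarrow$) direction I would just exhibit the unknotting moves: the right-handed Hopf link has a two-crossing diagram in which switching either crossing yields the two-component unlink, and adjoining split trivial components changes nothing; a positive twist knot is a clasp (two crossings) together with a twist region, and switching one clasp crossing opens the clasp and lets the twists cancel. Hence every link on the right-hand side of the statement has unknotting number one.

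For the ($\Rightarrow$) direction, suppose $L$ is a nontrivial positive link with $u(L)=1$. Then $L$ and the trivial link differ by a single crossing change, so the Giller inequality gives $|\sigma(L)|\le 2$, since the trivial link has signature $0$. In particular $\sigma(L)\le -3$ is impossible, so by Corollary \ref{positive-corollary} the link $L$ must be one of the exceptional types (a)--(d) recorded there: a positive pretzel knot $L(p_1,p_2,p_3)$ with the $p_i$ odd, a three-component positive pretzel link $L(p_1,p_2,p_3)$ with the $p_i$ even, a $(2,2k)$-torus link with anti-parallel orientation, or a connected or disjoint sum of two copies of the latter.

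The multi-component cases fall to linking numbers. The key observation is that a single crossing change alters the linking number of at most one pair of components, and by at most $1$; since the trivial link has all pairwise linking numbers zero, a link with $u=1$ can have at most one pair of components with nonzero linking number, and that value must be $\pm 1$. In case (c) the two components have linking number $\pm k$, forcing $k=1$ and hence the right-handed Hopf link. In case (b) the band structure makes the three pairwise linking numbers $\pm p_1/2,\pm p_2/2,\pm p_3/2$, all of absolute value $\ge 1$; three nonzero pairs are incompatible with $u=1$, so this case is excluded. In case (d) the two summands contribute at least two nonzero-linking pairs, whether the sum is disjoint (four components) or connected along one component (a three-component chain), again excluding it. Thus the Hopf link is the only surviving multi-component possibility.

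The remaining, and decisive, case is the knot case (a): a positive pretzel knot $K=L(p_1,p_2,p_3)$ with the $p_i$ odd and $u(K)=1$, which must be shown to be a positive twist knot, equivalently that at least two of the $p_i$ equal $1$. This is the main obstacle, because $\sigma(K)=-2$ holds for \emph{every} such pretzel knot and gives no further leverage. The plan is to pass to the double branched cover $\Sigma_2(K)$, a Seifert fibered space over $S^2$ with three exceptional fibers of multiplicities $p_1,p_2,p_3$, and to apply the Montesinos trick: $u(K)=1$ forces $\Sigma_2(K)$ to be obtained by a half-integral surgery on a knot in $S^3$. When two of the $p_i$ exceed $1$ this is obstructed, whereas if exactly one exceeds $1$ then $\Sigma_2(K)$ is a lens space, $K$ is two-bridge, and the classification of unknotting-number-one two-bridge knots isolates precisely the twist knots (the non-twist examples, such as $7_4=L(1,3,3)$, having $u=2$). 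An alternative, more self-contained route would argue that the unknotting crossing change of such a positive knot can be realized inside a positive diagram, producing an almost positive diagram of the unknot, and then invoke the classification of almost positive trivial diagrams (Theorem \ref{ap2}) to read off the twist-knot structure; here the delicate point is justifying that the unknotting move can be taken within a positive diagram.
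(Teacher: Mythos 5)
Your backward direction and your reduction of the forward direction are sound and in fact coincide with the paper's: the paper likewise combines the signature bound $2u(L)\geq|\sigma(L)|$ (from \cite{M-1}) with the linking-number bound $u(L)\geq\sum_{i<j}|lk(\ell_{i},\ell_{j})|$ and the classification of Corollary \ref{positive-corollary}, which kills the multi-component exceptional cases apart from the right-handed Hopf link and leaves exactly the pretzel knots $L(p_1,p_2,p_3)$ with odd positive parameters. The problem is the pretzel-knot case, which you rightly call decisive and which neither of your two proposed routes actually closes.

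Your primary route, the Montesinos trick, rests on the claim that when two of the $p_i$ exceed $1$ the double branched cover cannot be obtained by half-integral surgery on a knot. You give no argument, and the claim is false at the level of generality at which you state it: Seifert fibered spaces over $S^2$ with three exceptional fibers, even with all multiplicities odd and greater than one, do arise as half-integral surgeries on knots (by Moser's classification, $37/2$-surgery on the $(3,5)$-torus knot is Seifert fibered with exceptional fibers of orders $3$, $5$ and $7$). Any genuine obstruction must use the precise Seifert invariants of $\Sigma_2(L(p_1,p_2,p_3))$, and that demands surgery-theoretic input (cyclic surgery theorem, classification of Seifert fibered surgeries) that you do not supply. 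Your case division is also internally inconsistent: $7_4=L(1,3,3)$, your own example, has \emph{two} parameters exceeding $1$, yet its double cover is a lens space (one tangle parameter equals $1$), so it sits in the case you declare ``obstructed'' rather than in your two-bridge case. Your fallback route founders on exactly the point you flag: an unknotting crossing change need not be realizable in a positive (or any prescribed) diagram, so Theorem \ref{ap2} cannot be invoked. The paper closes the case in three lines with an idea absent from your proposal: the knots $L(p_1,p_2,p_3)$ bound genus one Seifert surfaces in a natural way; by Kobayashi \cite{Ko} and Scharlemann--Thompson \cite{S-T}, a genus one knot of unknotting number one is a doubled knot; and since pretzel knots are simple \cite{Ko}, the only doubled knots among them are the twist knots. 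Substituting this genus-one argument for your branched-cover paragraph would complete your proof.
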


\begin{proof}
A Hopf link and twist knots have unknotting number one, we will show 
the converse.
Let $u(L)$ be the unknotting number of $L$. 
Then $2u(L)\geq |\sigma (L)|$ \cite{M-1} and
clearly $u(L)\geq\sum_{i<j}|lk(\ell_{i},\ell_{j})|$ 
where the sum is taken over all different pairs
of components of $L$. Therefore by Corollary 1.9 we are left 
with pretzel knots $L(p_{1},p_{2},p_{3})$ of Fig. \ref{pretzel-knot}. 
These pretzel knots bound genus one Seifert
surfaces in a natural manner. Since genus one unknotting 
number one knots are doubled knots \cite{Ko}, \cite{S-T} 
and pretzel knots are simple \cite{Ko}, 
we are left with twist knots.
\end{proof}

In the next Theorem we use our domination theorems to show, generalizing \cite{P-1}, 
that Tristram-Levine signature satisfies some inequalities for $k$-almost positive links.

Let $\sigma_{\psi}(L)$ be the Tristram-Levine signature of $L$; we assume $|\psi|=1$. 
We use the convention that $\sigma_{\psi}(L)$ is the signature of the Hermitian matrix 
$H=\bar\psi A_L+\psi A_{L}^{T}$, where $A_L$ is a Seifert matrix of $L$.
Because for  $Re (\psi) \geq 0$ we have $\sigma_{\psi}(L_+) \leq \sigma_{\psi}(L_-)$ 
(\cite{P-Tr-1,P-2,P-3})   
we can use domination results: Theorems 2.15, 3.2, 4.11 and 5.1 to bound  
 Tristram-Levine signature of positive, almost positive, 2-almost positive and 3-almost positive 
links. In particular, we have. 
\begin{Theorem}\label{Theorem 6.8} 
(i) (\cite{P-1}): If $L$ is a nontrivial positive link and $Re (\psi) > \frac{1}{2}$, 
then $\sigma_{\psi}(L) < 0 $.\\
(ii)  If $K$ is a nontrivial positive knot and $Re (\psi) \geq 0 $ then $\sigma_{\psi}(K) \leq \sigma_{\psi}(\bar 5_1)$ 
or $K$ is a connected sum of some pretzel knots of type
$L(2k_1+1,2k_2+1,2k_3+1)$, where $k_i \geq 0$; see Figure \ref{pretzel-knot}.
 Furthermore we have 
(compare e.g. Examples 5.16, and 5.14 of \cite{P-3}):
 $$ \sigma_{\psi}(\bar 5_1)= 
\left\{
\begin{array}{ccc}
-4 & if & Re(\psi) > \frac{1+\sqrt 5}{4} \\
-3 & if & Re(\psi) = Re (e^{\pi i/5})= \frac{1+\sqrt 5}{4} \approx 0.809... \\
-2 & if & Re (e^{2\pi i/5}) \leq Re(\psi) < Re (e^{\pi i/5}) \\
-1 & if & Re(\psi) = Re (e^{2\pi i/5}) = \frac{\sqrt 5 -1}{4} \approx 0.309...\\
0  & if & 0 \leq Re(\psi) < Re (e^{2\pi i/5}).
\end{array}\right. $$
For $1+k_1+k_2+k_3 + k_1k_2+k_1k_3+k_2k_3 > 0$,
$$\sigma_{\psi}(L(2k_1+1,2k_2+1,2k_3+1))=
\left\{
\begin{array}{ccc}
-2 & if & Re(\psi) > \frac{1}{2\sqrt {1+k_1+k_2+k_3 + k_1k_2+k_1k_3+k_2k_3}} \\
-1 & if & Re(\psi) = \frac{1}{2\sqrt {1+k_1+k_2+k_3 + k_1k_2+k_1k_3+k_2k_3}} \\
 0 & if & 0 \leq Re(\psi) < \frac{1}{2\sqrt {1+k_1+k_2+k_3 + k_1k_2+k_1k_3+k_2k_3}}.
\end{array}\right. $$
In the case that $k_1=k_2=k_3=0$ represents the right handed trefoil knot, $\bar 3_1$.\\
(iii) If $L$ is a nontrivial almost positive link and $Re (\psi) \geq 0$ then \\
$\sigma_{\psi}(L) \leq \sigma_{\psi}(\bar 3_1)$ or $\sigma_{\psi}(L) \leq \sigma_{\psi}(H_+)$,  
and if $L$ has a nontrivial component and $Re(\psi) > \frac{1}{2}$, we have  $\sigma_{\psi}(L) \leq -2$

(iv) If $K$ is a nontrivial 2-almost positive knot and $Re (\psi) \geq 0$ then, \\
$\sigma_{\psi}(K) \leq \sigma_{\psi}(\bar 3_1)$ or $\sigma_{\psi}(K) \leq \sigma_{\psi}(6_2)$ or 
$K$ is a twist knot with a negative clasp (see Figure 1.3). We have the Tristram-Levin signature 
equal to zero in the last case, furthermore for $ \sigma_{\psi}(6_2)$ we 
have\footnote{In the convention of \cite{Gor,Ch-L} 
one defines the Tristram-Levine signature function of variable $\xi$ ($|\xi|=1$) as 
$\sigma_L(\xi)=\sigma((1-\bar\xi) A + (1-\xi) A^T))$. For $Re (\psi) \geq 0$, one has 
$\sigma_{\psi}(L) = \sigma_L(\xi)$, where $\xi=-\psi^2$. 
In {\it knotinfo} Web page \cite{Ch-L}, the parameter $s$ 
satisfying $\xi=e^{\pi i s}$ is used. In particular, $\sigma_{6_2}(\xi)=-1$ 
for $Re (\xi) = \frac{3-\sqrt 5}{4}=1-\cos (\pi/5) \approx 0.191 $, and $s\approx 0.44$; 
compare\cite{P-3}. }:
$$\sigma_{\psi}(6_2)=
\left\{
\begin{array}{ccc}
-2 & if & Re(\psi) > \frac{1}{2}\sqrt{\frac{1+\sqrt 5}{2}} \\
-1 & if & Re(\psi) = \frac{1}{2}\sqrt{\frac{1+\sqrt 5}{2}} \approx 0.636... \\
 0 & if & 0 \leq Re(\psi) < \frac{1}{2}\sqrt{\frac{1+\sqrt 5}{2}}.
\end{array}\right. $$

(v) If $K$ is a 3-almost positive knot, and $Re (\psi) \geq 0$ then \\
$\sigma_{\psi}(K) \leq 0$ or $K$ is the left handed trefoil knot.
\end{Theorem}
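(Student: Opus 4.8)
The plan is to combine the domination theorems of Sections 2--5 with the monotonicity of the Tristram-Levine signature under crossing changes, in exactly the way the classical signature bounds were obtained from the Giller inequality. First I would record the Tristram-Levine analogue of that inequality: since for $Re(\psi)\geq 0$ one has $\sigma_\psi(L_+)\leq\sigma_\psi(L_-)$ (the references \cite{P-Tr-1,P-2,P-3}), iterating over a sequence of positive-to-negative crossing changes gives that $L_1\geq L_2$ implies $\sigma_\psi(L_1)\leq\sigma_\psi(L_2)$ for every $\psi$ with $Re(\psi)\geq 0$. Adjoining trivial split components does not affect this, since $\sigma_\psi$ vanishes on them, and connected summing is harmless because $\sigma_\psi$ is additive under connected sum. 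Part (i) is already established in \cite{P-1}, so I would simply cite it and use it below.

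For parts (ii)--(v) the scheme is uniform: apply the relevant domination theorem to replace $L$ by the finitely many extremal models it dominates, and then pass the inequality $\sigma_\psi(L)\leq\sigma_\psi(\text{model})$ through the monotonicity just recorded. For (ii), Theorem \ref{positive-theorem} gives that a nontrivial positive knot either satisfies $K\geq\bar 5_1$, whence $\sigma_\psi(K)\leq\sigma_\psi(\bar 5_1)$, or is a connected sum of genus-one pretzel knots $L(2k_1+1,2k_2+1,2k_3+1)$, whose signature is the sum of the summand signatures. For (iii), Theorem \ref{almost-positive-theorem} yields $L\geq\bar 3_1$ or $L\geq H_+$, producing the two alternative bounds; the sharper $\sigma_\psi(L)\leq -2$ for $Re(\psi)>\tfrac12$ follows from the strict negativity of part (i) applied to the underlying positive model. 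For (iv), Theorem \ref{2-almost-positive-theorem} reduces a 2-almost positive knot to $\bar 3_1$, to $6_2$, or to a twist knot with negative clasp, the last of which has vanishing Tristram-Levine signature. For (v), Theorem \ref{3ap1-1} gives $K\geq$ trivial knot, whence $\sigma_\psi(K)\leq 0$, unless $K$ is the left-handed trefoil with positive connected summands, in which case additivity keeps the signature nonpositive.

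The remaining work is the explicit evaluation of $\sigma_\psi$ on the extremal models $\bar 5_1$, $L(2k_1+1,2k_2+1,2k_3+1)$, $6_2$, $\bar 3_1$ and $H_+$ as a function of $Re(\psi)$. For each I would write down a Seifert matrix $A$ from the standard genus-minimizing surface and analyze the Hermitian pencil $H(\psi)=\bar\psi A+\psi A^T$; its signature is locally constant away from the finitely many $\psi$ with $\det H(\psi)=0$, that is, where $-\psi^2$ (the variable $\xi$ of the footnote) is a root of the Alexander polynomial on the unit circle, and one computes the value on each interval and the jump sizes. For the genus-one pretzel knots $A$ is $2\times 2$, and the single jump sits at $Re(\psi)=\tfrac12(1+k_1+k_2+k_3+k_1k_2+k_1k_3+k_2k_3)^{-1/2}$, matching the determinant; for $\bar 5_1$ the relevant roots are the primitive tenth roots of unity, giving jumps at $Re(\psi)=\cos(\pi/5)=\tfrac{1+\sqrt5}{4}$ and $\cos(2\pi/5)=\tfrac{\sqrt5-1}{4}$; and $6_2$ is handled identically from its genus-two Seifert matrix, the golden-ratio jump location $\tfrac12\sqrt{(1+\sqrt5)/2}$ emerging from its Alexander polynomial. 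These computations are recorded in \cite{P-3} and can be quoted.

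The main obstacle will be bookkeeping rather than any single deep step: one must check that the domination theorems place every non-exceptional link strictly below one of the listed models across the whole relevant range of $\psi$, and that the threshold behaviour at $Re(\psi)=\tfrac12$, which separates the strict and non-strict inequalities in (i) and (iii), is faithfully reflected in the extremal jump computations. Care is also needed so that additivity and the vanishing on trivial components interact correctly in the connected-sum cases of (ii) and (v), and so that the sign convention for $\sigma_\psi$ and the change of variable $\xi=-\psi^2$ are applied consistently throughout the jump calculations.
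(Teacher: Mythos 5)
Your proposal follows exactly the paper's own route: the paper establishes Theorem \ref{Theorem 6.8} by combining the inequality $\sigma_{\psi}(L_+)\leq\sigma_{\psi}(L_-)$ for $Re(\psi)\geq 0$ (cited from \cite{P-Tr-1,P-2,P-3}) with the domination results, Theorems \ref{positive-theorem}, \ref{almost-positive-theorem}, \ref{2-almost-positive-theorem} and \ref{3ap1-1}, and quotes the explicit Seifert-matrix evaluations of $\sigma_{\psi}$ on $\bar 5_1$, the pretzel knots $L(2k_1+1,2k_2+1,2k_3+1)$ and $6_2$ from \cite{P-3}. Your blind reconstruction (including citing part (i) from \cite{P-1} and handling connected sums and trivial components via additivity) is correct and essentially identical to the paper's argument.
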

In \cite{P-2} we observe that domination results can be applied to any signature-like invariant $\sigma$ 
satisfying Trotter type inequalities. In 1987 or 1991 we could only speculate about existence 
of such invariants\footnote{Khovanov homology and Heegaard-Floer homology allow construction 
of such signatures; e.g. Rasmussen invariant\cite{Ras}.} \cite{P-1,P-Tr-3}.

Finally, we  prove Theorem 1.13(b) following Cochran and  Gompf paper \cite{Co-G}, using  Theorem 1.8, 
and the computation by S.~Akbulut (compare Problem 4.2 of \cite{Kir}).\\
{\bf Theorem 1.13 (b)}. If $K$ is a $2$-almost positive knot different from a twist knot with 
a negative clasp then $K(1/n)$ (i.e. $1/n$ surgery on $K$, $n>0$) is a homology 3-sphere 
that does not bound a compact, smooth homology $4$-ball. 
Furthermore, $K(1/n)$ has a nontrivial Floer homology.
\begin{proof} 
It is proved in  \cite{Co-Li,Co-G} that if $K_1 \geq K_2$ then $K_1 \succeq K_2$. Furthermore, 
Cochran and Gompf prove  in \cite{Co-G} that if 
$K_1 \succeq K_2$ and $K_2(1/1)$ is a homology sphere which bounds NSPD (non-standard positive definite) 
homology sphere, then for any $n>0$, the homology sphere $K_1(1/n)$ bounds NSPD homology sphere. Then, 
using Donaldson result they conclude that $K_1(1/n)$ does not  bound a compact, smooth homology 
$4$-ball\footnote{Homology $4$-ball glued along $K(1/n)$ to an NSPD homology sphere would produce 
a closed smooth $4$-manifold with NSPD intersection form, contradicting Donaldson Theorem.}. 
The fact that $\bar 3_1(1/1)$ bounds NSPD homology sphere is an important tool in \cite{Co-G}. 
Because our generalization of Cochran-Gompf result uses Theorem 1.8 we need to show that $6_2(1/1)$ 
bounds NSPD homology sphere. The proof of this fact was provided to us by S.~Akbulut 
(in letter sent February 11, 1992) in which he demonstrated (using Kirby calculus) that 
 $6_2(1/1)$ bounds a homology sphere with $E_8$ intersection form.
Furthermore, according to \cite{Co-G}: {\it Andreas Floer pointed out to us that if a homology sphere 
$\Sigma$ bounds NSPD, then the Floer homology $I_*(\Sigma)$ is non-trivial, because Donaldson's Theorem 
will hold for $X$ with boundary $\Sigma$ as long as $I_*(\Sigma)$ is trivial.}
\end{proof}

\section{ Acknowledgements}

We would like to thank Selman Akbulut for providing us with the proof that $6_2(\frac{1}{1})$ 
bounds a homology sphere with $E_8$ intersection form.


\section{After Twenty Years}
The first version of this paper was written in 1990, almost 19 years before this 
arXiv version of the paper appears. Many results 
of the work (which has several abstracts published \cite{P-1,P-2,T-3,T-4} and was available from 
the authors but 
was not published (and it was the time before arXiv), has been rediscovered and/or 
generalized. We should mention here the series of papers by A.~Stoimenov \cite{St-1,St-2,St-3,St-4,St-5}, 
and  papers by T.~Nakamura \cite{Nak}, L.~Rudolph \cite{R-2}, and M.~Hirasawa \cite{Hir}. 
Furthermore Ozawa proved that positive diagrams of a 
composite knots are visually composite \cite{Oza} (see also \cite{Cr}).


\begin{center}
\begin{tabular}{l  @{\hspace{2.5 cm}} l}
                            &                         \\
Kouki Taniyama        &     J\'ozef~H. Przytycki\\
e-mail:\  taniyama@waseda.jp      &     e-mail:\ przytyck@gwu.edu\\
Waseda University and GWU &    George Washington University                            
\end{tabular}
\end{center}


\begin{thebibliography}{99}

\bibitem
[Ca-G] {Ca-G} 
A.~J.~Casson, C.~McA.~Gordon, On slice knots in dimension three,
Proceedings of Symposia in Pure Mathematics, Volume 32, 1978,
39-53.

\bibitem [Ch-L]{Ch-L}
J.~C.~Cha, C.~Livingston, KnotInfo: Table of Knot Invariants,\\
 \ {\tt http://www.indiana.edu∼knotinfo}.

\bibitem
[Co-G] {Co-G} 
T.~Cochran, E.~Gompf, Applications of Donaldson's theorems to  
classical knot concordance, homology $3$-spheres and property $P$,  
Topology 27 (1988), no.  4, 495--512.  

\bibitem
[Co-Li]{Co-Li}
T.~Cochran, W.~B.~R.~Lickorish, Unknotting information from 4-manifolds, 
{\it Trans. Amer. Math. Soc.}, 297(1), 1986, 125-142.

\bibitem [Con] {Con}
J.~H.~Conway, An enumeration of knots and links,
{\it Computational problems in abstract algebra} (ed. J.Leech),
Pergamon Press, 1969, 329 - 358.

\bibitem 
[Cr-Mo] {Cr-Mo} 
P.~R.~Cromwell, H.~R.~Morton,  
Positivity of knot polynomials on positive links, 
{\it J. Knot Theory Ramifications} 1(2), 1992, 203-206. 

\bibitem
[Cr]{Cr}
P.~R.~Cromwell, 
Positive braids are visually prime, 
{\it Proc. London Math. Soc.}, (3) 67, 1993, no. 2, 384--424.  

\bibitem [Gi]{Gi}  C.~A.~Giller, A Family of links and the Conway calculus,
{\it Trans. Amer. Math. Soc.}, 270(1), 1982, 75-109.

\bibitem [Gor]{Gor}
C.~McA.~Gordon, Some aspects of classical knot theory,
{\it In: Knot theory}, L.N.M. 685, 1978, 1-60.

\bibitem [Hir]{Hir}
M.~Hirasawa,
Triviality and splittability of special almost alternating links via canonical Seifert surfaces,
{\it Topology Appl.} 102, 2000, no. 1, 89--100.

\bibitem [Jo]{Jo} 
V.~F.~R.~Jones, Hecke algebra representations of braid groups
and link polynomials, {\it Ann. of Math.} 126(2), 1987, 335-388.

\bibitem [Ka-1]{Ka-1}
L.~H.~Kauffman, {\it Formal knot theory}, Mathematical Notes 30,
             Princeton University Press, 1983 (the Dover edition 2006).

\bibitem [Ka-2]{Ka-2}
L.~H.~Kauffman, An invariant of regular isotopy, {\it Trans. Amer. Math. Soc.},
318(2), 1990, 417--471.


\bibitem[Kir]{Kir}
 R.Kirby, Problems in low-dimensional topology; Geometric Topology
(Proceedings of the Georgia International Topology Conference, 1993),
Studies in Advanced Mathematics, Volume 2 part 2., Ed. W.Kazez, AMS/IP,
1997, 35-473; 
(extended version of {\it Proc. Symp. Pure Math.}, 32, AMS, 1978, 35-56.

\bibitem
[Ko]{Ko}
T.~Kobayashi,  
Minimal genus Seifert surfaces for unknotting number $1$ knots. 
{\it Kobe J. Math.} 6(1), 1989, 53-62. 

\bibitem [KM]{KM}
P.~B.~Kronheimer, T.~S.~Mrowka, Gauge theory for embedded surfaces. I,
{\it Topology}, 32(4), 1993, 773--826.

\bibitem [L-T]{L-T}
W.~B.~R.~Lickorish, M.~B.~Thistlethwaite, Some links
with non-trivial polynomials and their crossing-numbers,
{\it Comment. Math. Helv.}, 63, 1988, 527-539.

\bibitem
[M-1]{M-1} 
K.~Murasugi, On a certain numerical invariant of link types. 
{\it Trans. Amer. Math. Soc.} 117, 1965, 387-422, 

\bibitem
[M-2]{M-2}
K.~Murasugi, 
On the signature of links {\it Topology}, 9, 1970, 283-298.  

\bibitem
[M-3]{M-3}
K.~Murasugi, On invariants of graphs with application to knot theory,
{\it Trans. Amer. Math. Soc.}, 314, 1989, 1-49.

\bibitem [M-P-1]{M-P-1}
K.~Murasugi, J.~H~. Przytycki,
The Skein polynomial of a planar star product of two links,
{\it Math. Proc. Cambridge Phil. Soc.}, 106, 1989, 273-276.


\bibitem
[M-P-2]{M-P-2}
K.~Murasugi, J.~H~.Przytycki, 
An index of a graph with applications to knot theory,
{\em Memoirs of the American Math. Soc.}, Vol. 106, Number 508,
November 1993, 101 pages.

\bibitem [Nak]{Nak}
Takuji Nakamura, 
Four-genus and unknotting number of positive knots and links,
{\it Osaka J. Math.} 37, 2000, no. 2, 441--451.

\bibitem [Oza]{Oza}
Makoto Ozawa, Closed incompressible surfaces in the 
complements of positive knots,
{\it Comment. Math. Helv.}, 77, 2002, 235--243.

\bibitem [P-1]{P-1}
J.~H.~Przytycki, 
Survey on recent invariants  in classical knot theory,
Warsaw University Preprints 6,8,9; 1986 (in English); a part of the
book: {\it Knots: combinatorial approach to the knot theory}, Warsaw 1995 (in Polish); \\
e-print: \ {\tt http://front.math.ucdavis.edu/0810.4191}



\bibitem
[P-2]{P-2} 
J.~H.~Przytycki, Positive knots have negative signature,  
{\it Bull. Ac. Pol. Math.} 37, 1989, 559-562.  \ \
See also {\it Abstracts of AMS}, January 1988, vol. 9 No. 1, p. 147, 
where the conjecture about non-positivity of the signature of a link 
with a diagram with two negative crossings (with exception of 
a negative Hopf link, plus possible trivial components) 
is formulated. Corollary 1.10 proves and generalizes this conjecture.

\bibitem
[P-3]{P-3}
J.~H.~Przytycki, Goeritz and Seifert matrices, Chapter IV of the
book: {\it {\bf KNOTS:} From combinatorics of knot diagrams to the
combinatorial topology based on knots}, Cambridge University Press,
accepted for publication, to appear 2011, pp. 600; \\
e-print: \ {\tt arXiv Spring 2009; to appear}

\bibitem
[P-Ta] {P-Ta}
J.~H.~Przytycki, K.~Taniyama, Almost positive links have negative  
signature, {\it Abstracts of AMS}, June 1991, Issue 75, Vol. 12(3),
 p.327, *91T-57-69.  

\bibitem
[P-Tr-1]{P-Tr-1} 
J.~H.~Przytycki, P.~Traczyk, Conway algebras and skein equivalence of links, 
Preprint 1/86, Uniwersytet Warszawski, Instytut Matematyki, Warszawa 1986, 14 pages.\\ 
Partially published in \cite{P-Tr-2}.

\bibitem
[P-Tr-2]{P-Tr-2} 
J.~H.~Przytycki, P.~Traczyk, 
Conway algebras and skein equivalence of links,
{\it Proc. Amer. Math. Soc.}, 100(4), 1987, 744-748.

\bibitem [P-Tr-3]{P-Tr-3}
J.~H.~Przytycki, P.~Traczyk, Supersignature and Milnor conjecture, unfinished manuscript.

\bibitem [Ras]{Ras}
J.~Rasmussen, Khovanov homology and the slice genus,\\
http://xxx.lanl.gov/abs/math.GT/0402131

\bibitem
[R-1]{R-1} 
L.~Rudolph, Nontrivial positive braids have positive signature,
    {\it Topology} 21(3), 1982, 325-327.

\bibitem
[R-2]{R-2}
L.~Rudolph, Positive links are strongly quasipositive, 
Geometry and Topology Monographs, 2: Proceedings of the Kirbyfest, 1999

\bibitem
[S-T]{S-T}
M.~Scharlemann, A.~Thompson,  
Link genus and the Conway moves,
{\it Comment. Math. Helv.}, 64(4), 1989, 527-535.  

\bibitem [Sch]{Sch}
H.~Schubert, Knoten mit zwei Br\"ucken, {\it Math. Z.}, 65, 1956, 133-170.

\bibitem [St-1]{St-1} A.~Stoimenov,
The signature of 2-almost positive knots.
{\it J. Knot Theory Ramifications}, 9(6), 2000, 813--845.

\bibitem [St-2]{St-2}
A.~Stoimenov, 
Gauss diagram sums on almost positive knots, 
{\it Compos. Math.} 140, 2004, no. 1, 228--254.


\bibitem [St-3]{St-3}
A.~Stoimenov, On some restrictions to the values of the Jones polynomial,
{\it Indiana Univ. Math. J.}, 54, 2005, no. 2, 557--574.


\bibitem [St-4]{St-4}
A.~Stoimenov, Genus generators and the positivity of the signature.
Algebr. Geom. Topol. 6 (2006), 2351--2393.

\bibitem [St-5]{St-5}
A.~Stoimenov,
Bennequin's inequality and the positivity of the signature.
{\it Trans. Amer. Math. Soc.}, 360(10), 2008, 5173--5199.

\bibitem
[T-1]{T-1} 
K.~Taniyama, A partial order of knots, {\it Tokyo J. Math.}
      12(1), 1989, 205-229. 

\bibitem
[T-2]{T-2} 
K.~Taniyama, A partial order of links, {\it Tokyo J. Math.}
      12(2), 1989, 475-484.

\bibitem  
[T-3]{T-3}   
K.~Taniyama, Musubime no hanjunjo to sono ohyo (A partial order of knots
and its applications). Proceedings of the 38th Topology Symposium, 
Kumamoto University, July 18-20, 1991, 81-96 (in Japanese).

\bibitem [T-4]{T-4} 
K.~Taniyama, Almost positive links have negative
signature, Abstracts of talks presented at the Topology Section of the
 Annual Meeting of the Japanese Mathematical Society, April 1991, 10-11.

\bibitem [T-5]{T-5} 
K.~Taniyama, Links with two negative crossings,
Abstracts of talks presented at the Topology Section of the 
 Annual Meeting of the Japanese Mathematical Society, October 1991, 6-7.

\bibitem
[Tr]{Tr}
P.~Traczyk,  Nontrivial negative links have positive signature. 
      {\it Manuscripta Math.} 61 (1988), no. 3, 279--284. 

\bibitem
[Yo]{Yo}
Y.~Yokota,  
Polynomial invariants of positive links, 
{\it Topology} 31(4), 1992, 805-811. 

\end{thebibliography}
\end{document}